\newcounter{imagerow}[figure]
\newcounter{imagecolumn}[imagerow]
\newlength{\imagewidth}
\newcommand*\textmathversion{\csname textmv@\math@version\endcsname}
\newcommand*\textmv@normal{m}
\newcommand*\textmv@bold{b}
\newtcbox{\mymath}[1][]{%
	nobeforeafter, math upper, tcbox raise base,
	enhanced, colframe=blue!30!black,
	colback=blue!5, boxrule=0.5pt, top=1mm,bottom=1mm,
	#1}
 \newtcbox{\mybox}{colback=blue!5,
	colframe=blue!30!black, center, enhanced, varwidth upper}
\newtheorem{theorem}{Theorem}[section]
\newtheorem{proposition}[theorem]{Proposition}
\newtheorem{lemma}[theorem]{Lemma}
\newtheorem{corollary}[theorem]{Corollary}
\newtheorem{claim}[theorem]{Claim}
\newtheorem{definition}[theorem]{Definition}
\newtheorem{assumption}{Assumption}
\newtheorem{pconfig}{Configuration}
\newcommand{\changelocaltocdepth}[1]{%
  \addtocontents{toc}{\protect\setcounter{tocdepth}{#1}}%
  \setcounter{tocdepth}{#1}%
}
\newcommand\MD[1]{{\color{magenta}Mateo: ``#1''}}
\newcommand\MDA[1]{
}
\newcommand{\EEE}{{\bf E}}
\newcommand{\YY}{{\bf Y}}
\newcommand{\cA}{\mathcal{A}}
\newcommand{\cI}{\mathcal{I}}
\newcommand{\cM}{\mathcal{M}}
\newcommand{\cQ}{\mathcal{Q}}
\newcommand{\cS}{{\mathcal{S}}}
\newcommand{\cX}{\mathcal{X}}
\newcommand{\cZ}{\mathcal{Z}}
\newcommand{\R}{\mathbf{R}}
\newcommand{\B}{\mathbf{B}}
\newcommand{\RR}{{\R}}
\newcommand{\SSS}{\cS}
\newcommand{\Ima}{\operatorname{Im}}
\newcommand{\xs}{x^\star}
\newcommand{\Ts}{T^\star}
\newcommand{\rank}{\operatorname{rank}}
\newcommand{\rankk}[1]{\operatorname{rank}\left( #1 \right)}
\newcommand{\dimm}[1]{\operatorname{dim}\left( #1 \right)}
\newcommand{\tr}{\operatorname{tr}}
\newcommand{\diag}{\operatorname{diag}}
\newcommand{\dist}{\operatorname{dist}}
\newcommand{\Vect}{\operatorname{vec}}
\newcommand{\EE}{\operatorname{\mathbb E}}
\newcommand{\supp}{\operatorname{supp}}
\newcommand{\argmin}{\operatornamewithlimits{argmin}}
\newcommand{\argmax}{\operatornamewithlimits{argmax}}
\newcommand{\dom}{\operatorname{dom}}
\def\shortdisplay{\setlength{\abovedisplayskip}{5pt}%
	\setlength{\belowdisplayskip}{5pt}%
	\setlength{\abovedisplayshortskip}{2pt}%
	\setlength{\belowdisplayshortskip}{2pt}}
\let\oldselectfont\selectfont
\def\selectfont{\oldselectfont\shortdisplay}
\newcommand{\cmark}{\textcolor{olive}{\ding{51}}}  %
\newcommand{\xmark}{\textcolor{purple}{\ding{55}}}    %
\newcommand{\yes}{{\color{olive}\bf Yes}}
\newcommand{\no}{{\color{purple} \bf No}}
\newcommand{\h}{h}
\newcommand{\zs}{z^{\star}}
\renewcommand{\c}{F}
\renewcommand{\P}{\Pi}
\newcommand{\mus}{\alpha} %
\newcommand{\Lhs}{\beta} %
\newcommand{\Ws}{{W^\star}}
\newcommand{\Xs}{{X^\star}}
\newcommand{\Ys}{{Y^\star}}
\newcommand{\Zs}{Z^\star}
\newcommand{\Ms}{M^\star}
\newcommand{\alignvarepsilon}{{\varepsilon_{\xs}}}
\newcommand{\jacvarepsilon}{{\varepsilon_{\nabla F}}}
\newcommand{\epsassymmatrix}{
 \frac{1}{16\sqrt{2}} \frac{ \min  \left\{  \sigma_{\rs}^2\left(\Xs\right), \sigma_{\rs}^2\left(\Ys\right) \right\}}  {\max \left\{ {\sigma_1\left(\Xs\right)}, {\sigma_1\left(\Ys\right)}  \right\}}
}
\newcommand{\epsassymmatrixhalf}{  \frac{1}{32\sqrt{2}} \frac{ \min  \left\{  \sigma_{\rs}^2\left(\Xs\right), \sigma_{\rs}^2\left(\Ys\right) \right\}}  {\max \left\{ {\sigma_1\left(\Xs\right)}, {\sigma_1\left(\Ys\right)}  \right\}}}
\newcommand{\epssymtensor}{\min\left\{R, \frac{\sigma_{dr}\left( \nabla \Fsym(\Xs)  \right)}{24\norm{\Xs}{F}}, \|\Xs\|_F  \right\}}
\newcommand{\epsAsymtensor}{\min\left\{R, \frac{\sigma_{(d_1+d_2+d_3-2)r}\left( \nabla \Fasym(\Ws,\Xs,\Zs)  \right)}{8\sqrt{3}\norm{({\Ws},{\Xs},{\Ys})}{F}}, \norm{({\Ws},{\Xs},{\Ys})}{F}  \right\}}
\newcommand{\epsAsymtensorhalf}{\frac{1}{2}\epsAsymtensor}
\newcommand{\proj}{{\rm proj}}
\newcommand{\norm}[2]{ \left\| #1 \right\|_{#2}}
\newcommand{\card}[1]{\# #1}
\newcommand{\inner}[2]{ \left\langle #1, #2 \right\rangle }
\newcommand{\vect}[1]{\operatorname{vec} \left( #1\right)}
\newcommand{\diagg}[1]{\operatorname{diag} \left( #1\right)}
\newcommand{\spann}[1]{\operatorname{span} \left( #1\right)}
\newcommand{\clb}{C_{\texttt{lb}}}
\newcommand{\cub}{C_{\texttt{ub}}}
\newcommand{\lc}{L_{\nabla \c}}
\newcommand{\lh}{L}
\newcommand{\sig}{s}
\newcommand{\rloc}{\delta}
\newcommand{\otimeskron}{\overset{}{\otimes}_{\rm{Kr}}}
\newcommand{\Fsym}{\ifbool{showSquare}{F_{\operatorname{sym}}}{F_{\operatorname{sym}}}}
\newcommand{\Fasym}{F_{\operatorname{asym}}}
\newcommand{\ball}[2]{
	\B_{#2}\left(#1\right)}
\newcommand{\minn}[1]{
	\min\left\{#1\right\}}
\newcommand{\rs}{r^\star}
\newcommand{\svd}[1]{U^{#1} \Sigma^{#1} \left( V^{#1} \right)^\top}
\newcommand{\um}{U^M}
\newcommand{\un}{U^N}
\newcommand{\ux}{U^{X}}
\newcommand{\uxprime}{U^{\Xprime}}
\newcommand{\uy}{U^Y}
\newcommand{\lambdax}[1]{\lambda_{#1}\left( XX^\top \right)}
\newcommand{\lambdaa}[2]{\lambda_{#2} \left( {#1} {#1}^\top \right)  }
\newcommand{\Wprime}{\tilde{W}}
\newcommand{\Xprime}{\tilde{X}}
\newcommand{\Mprime}{\tilde{M}}
\newcommand{\Rprime}{\tilde{R}}
\newcommand{\Yprime}{\tilde{Y}}
\newcommand{\Zprime}{\tilde{Z}}
\newcommand{\rprime}{\tilde{r}}
\newcommand{\ext}[2]{\psi( #1, #2 )}
\newcommand{\extone}[1]{\psi( #1, #1 )}
\newcommand{\trace}[1]{ \operatorname{trace} \left( #1 \right) }
\newcommand{\nulll}[1]{ \operatorname{null} \left( #1 \right) }
\newcommand{\abs}[1]{ \left| #1 \right|}
\newcommand{\dotp}[1]{\left\langle #1\right\rangle}
\newcolumntype{I}{>{\refstepcounter{imagecolumn}}{c}<{}}
\newcommand{\dl}{\omega_1}
\newcommand{\du}{\omega_2}
\newcommand{\constI}{\omega_0}
\newcommand{\pfail}{p_{\rm{fail}}}
\newcommand{\ik}{\underline{i}}
\newcommand{\jk}{\underline{j}}
\title{Preconditioned subgradient method for composite optimization:\\ overparameterization and fast convergence}
\author{Mateo D\'{i}az\thanks{Department of Applied Mathematics and Statistics, Johns Hopkins University, Baltimore, MD 21218, USA;	\texttt{https://mateodd25.github.io}. MD was partially supported by NSF awards CCF 2442615 and DMS 2502377.} \and  Liwei Jiang\thanks{Edwardson School of Industrial Engineering, Purdue University, West Lafayette, IN 47906, USA;	 \texttt{https://liwei-jiang97.github.io}.} \and Abdel Ghani Labassi\thanks{Department of Applied Mathematics and Statistics, Johns Hopkins University, Baltimore, MD 21218, USA;	\texttt{https://aglabassi.github.io/website/}.}}
\date{}
\begin{document}

	\maketitle
\begin{abstract}
  Composite optimization problems involve minimizing the composition of a smooth map with a convex function. Such objectives arise in numerous data science and signal processing applications, including phase retrieval, blind deconvolution, and collaborative filtering.
  The subgradient method achieves local linear convergence when the composite loss is well-conditioned. However, if the smooth map is, in a certain sense, ill-conditioned or overparameterized, the subgradient method exhibits much slower sublinear convergence even when the convex function is well-conditioned. To overcome this limitation, we introduce a Levenberg-Morrison-Marquardt subgradient method that converges linearly under mild regularity conditions at a rate determined solely by the convex function. Further, we demonstrate that these regularity conditions hold for several problems of practical interest, including square-variable formulations, matrix sensing, and tensor factorization. Numerical experiments illustrate the benefits of our method.
\end{abstract}
\newpage    \tableofcontents
\newpage
\newpage

\changelocaltocdepth{1}
\section{Introduction}
The goal of \emph{composite optimization problems} is to minimize  
\begin{equation}\label{eq:core-problem}
	\min_{x \in \RR^d} f(x) \quad \text{with} \quad f = \h \circ \c,
\end{equation}
where $\h \colon \RR^m \rightarrow \RR$ is a---possibly nonsmooth---convex function and $\c \colon \RR^d \rightarrow \RR^m$ is a smooth mapping. Taking $h$ or $\c$ as the identity map recovers convex and smooth optimization; thus, this formulation strictly extends both and amounts to a much richer class of nonsmooth nonconvex problems. Classical nonlinear least squares are a prominent example of this framework \cite{nocedal1999numerical, LH95, bjorck2024numerical}. Recently, composite optimization has gained renewed interest due to its applications in data science, including phase retrieval, matrix completion, and tensor factorization \cite{charisopoulos2021low, drusvyatskiy2017composite, duchi2017solving, tong2022scaling}. 

First-order methods, such as gradient descent, are the dominant algorithmic solution for large-scale composite problems. Under favorable growth conditions of the loss function, these methods converge linearly towards solutions provided good initialization \cite{chi2019nonconvex, charisopoulos2021low}. For instance, when the objective function $f$ is $\beta$-smooth and is locally $\mus$-strongly convex, gradient descent converges linearly at a rate that depends on the condition number $\beta/\mus$. 
Inconveniently, this condition number might worsen drastically depending on the choice of the smooth map $\c$. 
To illustrate this point, it is useful to think of the smooth map $\c$ as a \emph{parameterization}: minimizing \eqref{eq:core-problem} is akin to solving a constrained problem 
\begin{equation}\label{eq:constrained-problem}
    \min_{x \in \RR^d} f(x) = \min_{z \in \Ima \c} h(z),
\end{equation}
where $\Ima \c$ denotes the image of $\c$. Intuitively, when $\Ima \c$ is sufficiently ``benign,'' the intrinsic complexity of \eqref{eq:constrained-problem} should be dictated by the conditioning of $h$ restricted to $\Ima\c$ and not by the specific parameterization $\c$. 

Two factors concerning the parameterization $\c$ cause the conditioning of $f$ to differ from that of $h$ on $\Ima\c$: $(i)$ ill-conditionedness, or, even worse, $(ii)$ an excess of parameters.
For concreteness, consider a simple example,  
suppose we want to factorize a rank-$r^\star$ positive semidefinite (PSD) matrix $M^\star.$ 
In large-scale settings---where direct eigen- or singular-value decompositions are prohibitively costly---researchers turn to iterative schemes on low-rank parameterizations. The celebrated Burer-Monteiro approach \cite{burer2003nonlinear, burer2005local} parameterizes low-rank matrices via an explicit factorization, $\c(U) = UU^\top$ with $U \in \RR^{d \times r}$ and $r \ge r^\star$, and aims to solve
\begin{equation}\label{eqn: sensing_obj}
	\min_{U \in \RR^{d \times r}} \frac{1}{2}\|UU^\top - M^\star\|_F^2.
\end{equation} 
This can be seen as a nonconvex composite problem where $\h(M) = \frac{1}{2}\|M - M^\star\|_F^2.$  
A straightforward computation reveals that even though the condition number of the convex function $\h$ is one, the condition number of the composition $f = \h \circ \c$ near minimizers scales 
like $\sigma_1(M^\star)/\sigma_r(M^\star)$.
This leads to two potential issues for the convergence of gradient descent.  On the one hand, in the exactly parameterized regime, i.e., $r = r^\star$, the condition number of $f$ is proportional to the condition number $ \kappa(M^\star),$ which could lead to arbitrarily slow linear convergence depending on the matrix $M^\star$.\footnote{The condition number of a matrix $A$ is given by $\kappa(A) := \sigma_{\max}(A)/\sigma_{\min}(A)$ where $\sigma_{\min}(A)$ is the smallest nonzero singular value.} On the other, in the overparameterized regime, i.e., $r> r^\star$, 
$\sigma_r(M^\star)$ is zero, leading to an infinite condition number, which in turn results in sublinear convergence~\cite{zhuo2024computational}.
Both of these situations happen in practice since it is common to encounter ill-conditioned matrices with unknown rank.  
\begin{figure}
  \centering
  \includegraphics[width=0.5\linewidth]{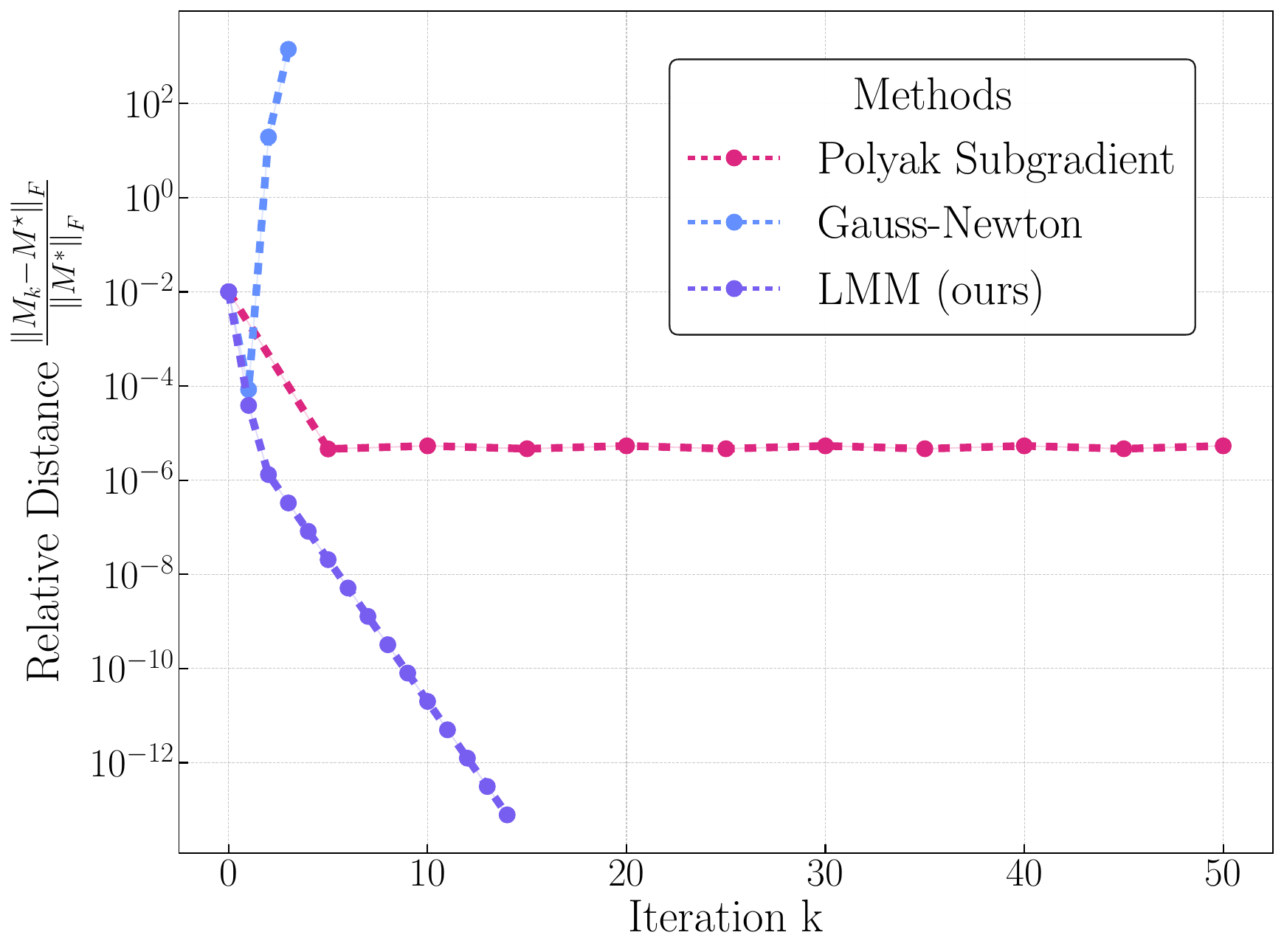}
  \caption{
  Relative distance to the solution against
    iteration count for Algorithm~\ref{eq:main-update} applied to an overparameterized nonsmooth matrix factorization problem with $F(U) = UU^{\top}, h(M) = \|M - M^{\star}\|_F,  M^{\star} \in \cS^{50}_{+}, \text{ and }U \in \RR^{50 \times 3}$  with $\rank(M^{\star}) = 2 < r = 3$ and $\kappa(M^\star) = 1$. All algorithms use the Polyak stepsize.}
  \label{fig:intro-example}
\end{figure}

These issues go beyond this simple, smooth problem. Indeed, for nonsmooth functions that are sharp and Lipschitz, the Polyak subgradient method converges at a linear rate, yet the rate might drastically slow down depending on the parameterization \cite{davis2022linearly, chi2019nonconvex}, and may even decay exponentially when overparametrization occurs; see Figure~\ref{fig:intro-example}.
Motivated by these drawbacks, numerous works have proposed ad-hoc preconditioned (sub)gradient methods that exhibit linear convergence at a rate independent of the parameterization $\c$ \cite{tong2021accelerating,xu2023power,zhang2021preconditioned,davis2022linearly}. Despite the breadth of this line of work, much of it focuses on concrete formulations, e.g., smooth low-rank matrix recovery, and the proposed methods do not systematically generalize to composite optimization problems, which motivates the main question of this work.

\mybox{\it\centering Is there a preconditioned subgradient method for general composite optimization problems that exhibits local linear convergence 
depending only on the convex outer function $h$
?}
We answer this question in the affirmative under mild assumptions. Borrowing ideas that date back to the work of Levenberg \cite{levenberg1944method}, Morrison~\cite{morrison1960methods}, and Marquardt \cite{marquardt1963algorithm} on nonlinear least squares, we propose a preconditioned subgradient method that updates
\begin{align}\label{eq:main-update}
	x_{k+1} &\leftarrow x_k - \gamma_k (\nabla \c(x_k)^\top \nabla \c(x_k) + \lambda_k I)^{-1} \nabla \c(x_k)^\top v_k,
\end{align}
with $v_k \in \partial \h ( \c(x_k))$ where $\partial \h$ denotes the convex subdifferential of $\h$. Let us comment on this algorithm and its underlying motivation. 
The method applies to both smooth and nonsmooth composite problems. 
The term $\nabla \c(x_k)^\top v_k$ corresponds to a subgradient of $f$, thus $(\nabla \c(x_k)^\top \nabla \c(x_k) + \lambda_k I)^{-1}$ acts as a preconditioner.
For structured problems, the cost of solving the linear system involved at each iteration is low.
For instance, for low-rank matrix recovery problems, the cost is proportional to that of solving an $r \times r$ linear system. 
When 
the convex function $\h$ corresponds to the $\ell_{2}$ norm squared,
update \eqref{eq:main-update} recovers the classical Levenberg-Morrison-Marquadt (LMM) method;\footnote{LMM is often only attributed to Levenberg and Marquardt.} moreover, when $\lambda_k = 0$, it reduces to the Gauss-Newton method. Recently, Davis and Jiang \cite{davis2022linearly} introduced a Gauss-Newton subgradient method (GNP) for general composite problems, which was the main inspiration for this work. Davis and Jiang showed that if $\nabla \c$ has full rank near a minimizer, then GNP converges at a linear rate that only depends on the conditioning of $\h$. However, for overparameterized problems, $\nabla \c$ does not have full rank near minimizers, leading to potentially ill-posed preconditioners. Indeed, even mild overparameterization in low-rank matrix factorization leads to the divergence of GNP; see Figure~\ref{fig:intro-example}. To overcome this issue, we regularize the preconditioner
, which improves numerical stability. 
 \paragraph{Main contributions.} Let us summarize our three core contributions.
 \begin{itemize}[leftmargin=5mm]
   \item[] (\textbf{Method}) We propose a \emph{Levenberg–Morrison-Marquardt} subgradient method (Algorithm~\ref{alg:LM}) along with a concrete choice of stepsizes, $\gamma_k$, inspired by the Polyak stepsize \cite{polyak1969minimization}, and damping coefficients, $\lambda_k$, that displays rapid local convergence {universally} across all combinations of smooth and nonsmooth, overparameterized and exactly parameterized settings. \ifbool{showSquare}{Since our parameter choice heavily relies on information about the function that may not be readily available to practitioners, we also present another parameter configuration based on geometrically decaying schedules \cite{goffin1977convergence, davis2018subgradient}, which only requires rough bounds on the function parameters.}{} %

\item[] (\textbf{General-purpose convergence guarantees}) Under mild assumptions, we show that our parameter configurations guarantee linear convergence at a rate depending solely on the convex function $\h$. Our results rely on nearly decoupled assumptions for $\h$ and the smooth map $\c$, allowing one to combine these functions freely while still achieving rapid convergence. 
\ifbool{showSquare}{In particular, we require that $\h$ is in some sense well-conditioned on the image of $\c$---quadratic growth with Lipschitz gradient in smooth settings, or sharp and Lipschitz in nonsmooth settings---while $\c$ must satisfy that its image and Jacobian are in a certain sense aligned near minimizers.}{} %

\item[] (\textbf{Consequences for statistical recovery problems})
         To complement our convergence guarantees, we study their implications for various data science tasks. In particular, we show that the geometric assumptions required for our general-purpose convergence results hold for \ifbool{showSquare}{$(i)$ nonnegative least squares formulations, $(ii)$ (overparameterized) low-rank matrix recovery, and $(iii)$ canonical polyadic (CP) tensor factorization problems.}{$(i)$ (overparameterized) low-rank matrix recovery and $(ii)$ canonical polyadic (CP) tensor factorization problems.} \ifbool{showSquare}{As a result, we establish the linear convergence of the Levenberg–Morrison-Marquardt subgradient method~\eqref{eq:main-update} for all these problems at a rate that only depends on the conditioning of the convex function $\h$. This recovers existing convergence guarantees in certain settings and provides the first such results for others.}{}
\end{itemize}
\paragraph{Outline of the paper.} 
\ifbool{showSquare}{We conclude this section with related work. Section~\ref{sec:prelims} sets out notation and necessary background.}{} In Section~\ref{sec:algorithm}, we formally introduce composite problems, our key assumptions, and the algorithm we propose. After that, Section~\ref{sec:guarantees} provides general-purpose convergence guarantees under suitable regularity conditions. In Section~\ref{sec:examples}, we verify these conditions for several statistical recovery problems. 
Section~\ref{sec:experiments} contains numerical experiments showcasing the benefits of our method. \ifbool{showSquare}{We defer long and technical proofs to the appendix.}{}

\ifbool{showSquare}{
\subsection{Related work}
\paragraph{Nonlinear least squares.}
Nonlinear least squares problems \cite{bjorck2024numerical} form a widely studied instance of~\eqref{eq:core-problem}, where the outer function is the squared Euclidean norm. Although Newton’s method enjoys local quadratic convergence under mild regularity, forming or even applying the Hessian is often prohibitively expensive at scale. The Gauss-Newton algorithm is a computationally cheaper alternative that enjoys similar guarantees \cite{nocedal1999numerical, nesterov2007modified} and has been widely used in the sciences and engineering \cite{gn1, gn2, gn3}. Gauss--Newton can fail when the iteration’s linear system is singular, making the update ill-defined.  
To overcome this issue, Levenberg \cite{levenberg1944method}, Morrison \cite{morrison1960methods}, and Marquardt \cite{marquardt1963algorithm} independently introduced an additional damping term ensuring invertability. %
The LMM method has been extensively analyzed for nonlinear least squares \cite{more2006levenberg,lm2,lm3,fischer2024levenberg} and is widely used in applications \cite{lm1a,lm3a,lm5a,lm6a}. %

\paragraph{Composite optimization.} Splitting methods are a popular alternative for composite objectives. The term `composite optimization' is often used to refer to the subclass of additive composite problems where the loss can be expressed as a sum of a convex and a smooth function. Classical schemes for this subclass include the forward-backward (proximal-gradient) splitting
\cite{lions1979splitting,chen1997convergence,drusvyatskiy2016error} and optimal accelerated versions
\cite{beck2009fast,lan2012optimal,lan2016gradient}. %
For general composite objectives, the {prox-linear} method linearizes $F$ and computes a proximal step of composition of the linearization with the convex outer function each iteration \cite{nesterov2007modified,cartis2011evaluation,lewis2015proximal,drusvyatskiy2016error,drusvyatskiy2017composite}. This scheme is closely related to classical trust region variants of Gauss-Newton \cite{fletcher2009model, burke1985descent, burke1995gauss, wright1990convergence, cartis2011evaluation}. Only recently has the local and global convergence of subgradient methods for composite problems been established \cite{davis2018stochastic,davis2018subgradient}.

\begin{table}[t]
    \centering
    \resizebox{\textwidth}{!}{
    \begin{tabular}{lccccccc}
        \toprule
        \multirow{2}{*}{\textbf{Algorithm}}
        & \multicolumn{5}{c}{\textbf{Low-rank matrix recovery}}
        & \multirow{2}{*}{\textbf{Converges to}}
        & \multirow{2}{*}{\parbox{3.6cm}{\centering \textbf{Applicable beyond}\\\textbf{matrix recovery}}} \\
        \cmidrule(lr){2-6}
        & Overparam. & Symm. & Asymm. & Smooth & Nonsmooth
        &  &  \\
        \midrule
        ScaledGD \cite{tong2021accelerating}
        & \xmark & \cmark & \cmark & \cmark & \xmark
        & Solution
        & \no\textsuperscript{*} \\
        ScaledGD($\lambda$) \cite{xu2023power}
        & \cmark & \cmark & \cmark & \cmark & \xmark
        & Neighborhood\textsuperscript{$\sharp$}
        & \no\textsuperscript{*} \\

        ScaledSM \cite{tong2021scaledsubgradient}
        & \xmark & \cmark & \cmark & \xmark & \cmark
        & Solution
        & \no\textsuperscript{*} \\
      PreconditionedGD \cite{zhang2021preconditioned}
        & \cmark & \cmark & \xmark & \cmark & \xmark
        & Solution        & \no \\
             Asymmetric PreconditionedGD \cite{cheng2024accelerating}
        & \cmark & \xmark & \cmark & \cmark & \xmark
        & Solution        & \no \\
        OPSA \cite{giampouras2024guarantees}
        & \cmark & \xmark & \cmark & \xmark & \cmark
        & Solution\textsuperscript{\ddag}
        & \no \\
        APGD~\cite{liu2025efficient} &\cmark &\xmark & \cmark &\cmark &\xmark & Solution & \no\\
         Approximated GN \cite{jiaglobally} & \cmark & \cmark & \cmark & \cmark & \xmark & Solution & \no \\
        GNP \cite{davis2022linearly}
        & \xmark & \cmark & \cmark & \cmark & \cmark
        & Solution
        & \yes \\
        Algorithm~\ref{alg:LM} (ours)
        & \cmark & \cmark & \cmark & \cmark & \cmark
        & Solution
        & {\color{olive} \it \bf Yes} \\
        \bottomrule
    \end{tabular}}
  \caption[Caption for LOF]{Comparison of methods for low-rank matrix recovery. These are problems where $\c(U) = U U^{\top}$ (symmetric) or $\c(U, V)= UV^{\top}$ (asymmetric). A check mark \cmark\, indicates that the method exhibits local linear convergence (depending only on $h$) for that particular setting.\\
    {\small \textsuperscript{*} The same authors modified ScaledGD to extend to tensor problems \cite{tong2022scaling}.\\}
    {\small \textsuperscript{$\sharp$} Converges arbitrarily close to a solution, with the final distance controlled by a parameter.}\\
  {\small \textsuperscript{\ddag} The method converges to the solution of a regularized problem, which might differ from the original one.}}
    \label{tab:comparison}
\end{table}

\paragraph{(Sub)gradient methods for matrix recovery.}
Low-rank matrix recovery via the factorization approach has been the subject of intensive study over the past decade
\cite{tu2016low, li2020nonconvex, zhang2021general, ma2023geometric, ye2021global}. 
For the exactly parameterized regime, it is known that the optimization landscape of smooth objectives is benign and randomly initialized gradient descent finds global minimizers \cite{ge2017no, zhu2018global, chen2015fast, bhojanapalli2016global, chi2019nonconvex}. %
Yet, all local convergence rates depend on the condition number of the ground truth \cite{charisopoulos2021low, ma2019implicit}. %
Recent work has focused on the rank-overparameterized setting, where subgradient methods still find global minimizers, yet they exhibit a sublinear local rate of convergence due to flattened local geometry caused by overparameterization~\cite{zhuo2024computational,ding2021rank}.  
Several works have proposed strategies to accelerate the convergence of these methods based on small initialization with early stopping and alternating small and long steps \cite{jiang2023algorithmic, wind2023asymmetric, li2018algorithmic, stoger2021small,ma2023global,ding2022validation, jin2023understanding, xu2023power, soltanolkotabi2025implicit, xiong2023over, davis2024gradient}. Even though these methods achieve linear convergence, their rates still depend on the condition number
of the solution matrix. %

\paragraph{Overparameterized matrix recovery problems.} The seminal works \cite{tong2021accelerating, tong2021scaledsubgradient} proposed preconditioned (sub)gradient methods to overcome the dependence on the conditioning of the ground truth matrix in the exactly parameterized setting. Soon after, \cite{zhang2021preconditioned} introduced a preconditioned gradient method that additionally handles overparameterization for smooth objectives with PSD matrices. Subsequently, many other works introduced methods based on preconditioning together with small overparameterization \cite{xu2023power}, alternating minimization \cite{liu2025efficient}, and Gauss-Newton type methods \cite{jiaglobally, laufer2025rgnmr}.  Closer to our work, \cite{giampouras2024guarantees} introduced the Overparameterized Preconditioned Subgradient Algorithm (OPSA), which achieves local convergence rates for nonsmooth, overparameterized, asymmetric objectives. However, OPSA converges to the solution to a regularized problem, which might differ from the ground truth.
Beyond matrix problems, recent literature has also studied preconditioned methods for low Tucker-rank tensor recovery \cite{tong2022accelerating, tong2022scaling, luo2023low, wu2025guaranteed}. 
Given that much of the existing literature focuses on matrix recovery problems, we include a comparison in Table~\ref{tab:comparison}.\footnote{This table is an oversimplification for pedagogical purposes; each statement holds under potential additional assumptions of the respective paper.}

}{ 
} %

\section{Preliminaries}\label{sec:prelims}

\paragraph{Linear algebra.} The symbol $[m]$ will be shorthand for the set $\{1, \dots, m\}.$ Further, for a finite set $S$, the symbol $\# S$ denote its cardinality.  We will use the symbol $\EEE$ to denote a finite-dimensional Euclidean space with inner product $\langle\cdot,\cdot \rangle$ and the induced norm $\|x\|=\sqrt{\langle x,x\rangle}$.  %
The closed ball of radius $r > 0$ around $x \in \EEE$ will be denoted by $\B_{r}(x).$
For any point $x\in \EEE$ and a set  $Q\subset\EEE$,  the distance and the nearest-point projection (with respect to the Euclidean norm) are defined by
\begin{align*}
\dist(x;Q)=\inf_{y\in Q} \|x-y\|\qquad \textrm{and}\qquad\proj_Q(x)=\argmin_{y\in Q} \|x-y\|,
\end{align*}
respectively.
Given a linear map between Euclidean spaces, $\cA\colon\EEE\to{\bf Y}$, its adjoint map will be written as $\cA^*\colon {\bf Y}\to \EEE$. We will use $I_d$ for the $d$-dimensional identity matrix, while ${\bm 0}_{d}$ denotes the $d$-dimensional origin. We use $\R$ and $\R_{+}$ to denote the reals and the positive reals, respectively. We always endow the Euclidean space of vectors $\R^d$ with the usual dot-product $\langle x,y \rangle=x^{\top}y$ and the induced $\ell_2$-norm $\|x\|_2 = \sqrt{\dotp{x,x}}$. For any $x \in \R^{d}$, we use $\supp(x)$ to denote the indices of nonzero entries of $x$. Given two vectors $x, y \in \RR^{d}$, we let $x \odot y \in \RR^{d}$ denote their Hadamard or component-wise product.

Similarly, we will equip the space of rectangular matrices $\R^{d_1\times d_2}$ with the trace product $\langle X,Y\rangle=\tr(X^\top Y)$ and the induced Frobenius norm $\|X\|_F=\sqrt{\tr(X^\top X)}$.
The operator norm of a matrix $X\in \R^{d_1\times d_2}$ will be written as $\|X\|_{\textrm{op}}$. The symbol $\sigma(X)$ will denote the vector of singular values of a matrix $X$ in nonincreasing order. We use $\sigma_{\max}(X)$ and $\sigma_{\min}(X)$ to denote the largest and smallest nonzero singular values. Similarly, for a given symmetric matrix %
The symbols $\mathcal{S}^{d}$ and $\mathcal{S}^{d}_+$ denote the sets of $d \times d$ symmetric and positive semidefinite, respectively. We use $O(d, r)$ to denote the set of matrices with orthogonal columns, i.e., $Q \in \RR^{d \times r}$ such that $Q^{\top}Q = I.$ Given a matrix $X \in \mathcal{S}^{d}$, the symbol $\lambda(X)$ denotes the vector of eigenvalues of $X$ in nonincreasing order.
Given two matrices $A, B$ of potentially different sizes, we let $A \otimeskron B$ denote their Kroncker product. On the other hand, $\otimes$ denotes the tensor product. Note that the inputs and outputs of Kronecker products are always matrices, while the inputs and outputs of the tensor product might be higher-order tensors. Given a matrix $X \in \RR^{d \times r}$, we use $X_{i:}$ and $X_{j}$ to denote its $i$th row and column, respectively, and $X_S$ with $S\subseteq[r]$ to denote the submatrix of $X$ with columns indexed by $S$.

\paragraph{Nonsmooth analysis.} Nonsmooth functions are central to this work. Therefore, we will utilize a few basic constructions of generalized differentiation; we refer the interested reader to the monographs \cite{clarke2008nonsmooth, rockafellar2009variational,Mord_1,cov_lift}.
Consider a function $f\colon\EEE\to\R\cup\{+\infty\}$ and a point $x$, with $f(x)$ finite. We use the convention $\frac{1}{0} = +\infty.$ The {\em subdifferential} of $f$ at $x$, denoted by $\partial f(x)$, is the set of all vectors $\xi\in\EEE$ satisfying
\begin{equation}\label{eqn:subgrad_defn}
f(y)\geq f(x)+\langle \xi,y-x\rangle +o(\|y-x\|)\quad \textrm{as }y\to x,
\end{equation}
where $o(r)$ denotes any function satisfying $o(r)/r\to 0$ as $r\to 0$.
Standard results show that for a convex function $f$ the subdifferential $\partial f(x)$ reduces to the subdifferential in the sense of convex analysis, while for a differentiable function, it consists only of the gradient: $\partial f(x)=\{\nabla f(x)\}$. For any closed convex functions $h\colon\YY\to\R$ and
$C^1$-smooth map $F\colon\EEE\to\YY$, the chain rule holds \cite[Theorem 10.6]{rockafellar2009variational}:
$$\partial (h\circ F)(x)=\nabla F(x)^*\partial h(F(x)).
$$

\changelocaltocdepth{2}
\section{Algorithm and assumptions}\label{sec:algorithm}
In this section, we formally introduce the problem class we study, the different assumptions we make, and the Levenberg-Morrison-Marquardt subgradient method we use for the rest of the paper. As mentioned in the introduction, we consider
\begin{align}\label{eqn:problem}
\min_{x\in \EEE} f(x)  \quad \text{with} \quad f := \h \circ \c.
\end{align}
Here $\EEE$ and $\YY$ are finite-dimensional Euclidean spaces, $\h \colon \YY \rightarrow \R$ is a convex function, and $\c:\EEE \rightarrow \YY$ is a continuously differentiable map. Instantiations of this template include: low-rank symmetric matrix problems where $\EEE = \RR^{d \times r}, \YY = \cS^{d}_{+},$ and $\c(U) = UU^{\top}$, and asymmetric matrix problems where $\EEE = \R^{d_{0} \times r} \times \R^{d_{2} \times r}, \YY = \RR^{d_{1} \times d_{2}},$ and $\c(U, V) = UV^{\top}.$
\ifbool{showSquare}{From now on, the symbol $\cX^{\star}$ denotes the set of minimizers of \eqref{eqn:problem}.}{}

We say that problem \eqref{eqn:problem} is \emph{overparameterized} if for some $x^{\star} \in \cX^{\star},$ there is a sequence $\left(x_{j}\right)_{j} \subseteq \EEE$ converging to $x^{\star}$ such that the rank of $\nabla \c(x_{j})$ exceeds the rank of $\nabla \c(x^{\star}).$ Intuitively, this means that there are points arbitrarily close to a minimizer $x^{\star}$ for which a linear approximation of $\c$ requires more parameters than at $x^{\star}$ itself. This definition matches the natural notion of overparameterization for low-rank problems; we defer additional details to Section~\ref{sec:examples}.

\subsection{Algorithmic description}
\begin{algorithm}[t]
	\caption{Levenberg-Morrison-Marquardt Subgradient Method (LMM) \hfill} \label{alg:LM}
	
	{\bf Input}: Initial $x_0\in\EEE$, stepsizes $(\gamma_k)_{k\geq 0}\subset (0,\infty),$ and damping coefficients $(\lambda_{k})_{k \geq 0} \subset (0, \infty)$
	
	\vspace{.1cm}
	{\bf Step} $k\geq 0$:
	\begin{equation*}
	\begin{aligned}
	&\textrm{Pick~}  v_{k} \in \partial \h(\c(x_{k})).   \\
	&\textrm{Set~} x_{k+1} \leftarrow x_k- \gamma_{k}  \left(\nabla \c(x_k)^\top \nabla \c(x_k) + \lambda_k I\right)^{-1} \nabla \c(x_k)^\top v_k.
	\end{aligned}
	\end{equation*}
\end{algorithm}

The Levenberg-Morrison-Marquardt subgradient method is summarized in Algorithm~\ref{alg:LM}.
For structured problems, the linear system at each iteration can often be solved efficiently. For instance, for low-rank matrix recovery, it reduces to solving a much smaller linear system\ifbool{showSquare}{; we defer the details to Appendix~\ref{efficient}}{}. This method builds upon the GNP method introduced in \cite{davis2022linearly}, which sets $\lambda_{k} =0.$ The inspiration for GNP stems from a simple observation: when $\nabla \c$ has constant rank near $x^{\star}$---i.e., under exact parameterization---the image of $\c$ forms a manifold $\cM$ around $F(x^\star)$. An elegant argument shows that in this regime, the mapped iterates of GNP $ z_k = \c(x_{k})$ are akin to the iterates of a Riemannian subgradient method on $\cM$ with objective $\h$ \cite{davis2022linearly}. Consequently, the iterates $\c(x_k)$ are unaffected by the ill-conditioning of $\c.$ In contrast, overparameterization leads to problems where the image of $\c$ fails to form a manifold. In such cases, the Gauss-Newton preconditioner is not even well-defined since $\nabla \c(x_k)$ might not have constant rank. Our method bypasses this issue by adding a damping term to the preconditioner, 
thereby ensuring its invertibility and stability. 

We propose two ways to set the hyperparameters $\gamma_{k}$ and $\lambda_{k}$ of Algorithm~\ref{alg:LM}, which we dub ``configurations.'' {The first configuration} is based on the Polyak stepsize \cite{polyak1969minimization} and the damping parameter for the preconditioned gradient descent method in~\cite{zhang2021preconditioned}. From now on, we will use $h^\star$ and $\cZ^\star$ to denote the minimum value and set of minimizers of $\min_{z \in \Ima \c} h(z).$ Further, we use $\Pi^x $ as a shorthand for the projection matrix $\proj_{\Ima \nabla \c(x)}.$
\begin{pconfig}[Polyak]\label{assum: dampingparameter} %
	Set the stepsize to $\gamma_k = \gamma \frac{\h(z_k) - \h^{\star}}{\|\Pi^{x_k} v_k\|^2}$ %
    where  $\gamma >0$ is a tuning parameter. %
    Additionally, set $\lambda_{k}$ such that there exist constants $0 < \clb \leq \cub$ satisfying
	$$
	\clb \dist(z_k, \cZ^\star) \le \lambda_k \le \cub \dist(z_k, \cZ^\star).
	$$
\end{pconfig}
\noindent This parameter choice relies on detailed information about the loss function and the distance to the solution set—quantities that may not be readily available in practice. To address this limitation, we introduce \ifbool{showSquare}{two alternative configurations tailored for nonsmooth and smooth problems, which do not}{an alternative configuration, which does not} depend as heavily on such information.

\ifbool{showSquare}{\begin{pconfig}[Nonsmooth]\label{assum:geometric} Set $\gamma_k = \gamma q^k$ and $\lambda_k = \lambda q^k$, with $\gamma, \lambda > 0$ and $q \in (0,1)$. \end{pconfig}

\begin{pconfig}[Smooth]\label{assum:constant_stepsize} Set $\gamma_k = \gamma$ and $\lambda_k = \lambda q^k$, with $\gamma, \lambda > 0$ and $q \in (0,1)$. \end{pconfig}}{\begin{pconfig}\label{assum:geometric} Set $\gamma_k = \gamma q^k$ and $\lambda_k = \lambda q^k$, with $\gamma, \lambda > 0$ and $q \in (0,1)$. \end{pconfig}}

\ifbool{showSquare}{\noindent The primary difference between these two configurations is that the stepsize decreases geometrically for nonsmooth functions while it remains constant for smooth functions. These strategies are designed to emulate the behavior of the Polyak stepsize \cite{goffin1977convergence, davis2018subgradient}. In the following section, we demonstrate that these configurations achieve convergence rates comparable to those of the Polyak-based approach.}{}

\subsection{Regularity of the parameterization}
Overparameterization and the addition of the damping term complicate the analysis and require more nuanced regularity conditions on the parameterization $\c$. Conveniently, these conditions are independent of the conditioning assumptions for the outer function $\h$, allowing us to pair any sufficiently regular parameterization $\c$ with any well-conditioned $\h$\ifbool{showSquare}{, whether smooth or not}{}. The remainder of this section introduces these assumptions\ifbool{showSquare}{ and collects some algorithmic consequences. We begin with a standard assumption on $\c$}{}.

\begin{assumption}[Smooth parameterization]\label{assum: assumptiononc} \label{assum:mapc:smoothness}The map $\c$ is continuously differentiable, and there is a constant $\lc \geq 0$ %
such that
	\begin{align*}
	\norm{\nabla \c(x) - \nabla \c(y)}{\mathrm{op}} \le \lc \norm{x-y}{}, \quad \text{for all } x, y \in \EEE.
	\end{align*}
\end{assumption}

\noindent Henceforth, we use the following notation 
\begin{equation}\label{eq:matrix-P}
    P(x, \lambda) := \nabla \c(x) (\nabla \c(x)^\top \nabla \c(x) + \lambda I)^{-1} \nabla \c(x)^\top.
\end{equation}
The matrix $P(x, \lambda)$ \ifbool{showSquare}{will play a crucial role in our analysis.}{plays a crucial role in the analysis of the method provided in our technical report.} It can be seen as a regularized projection matrix; indeed, when $\lambda = 0$ and $\nabla \c(x)$ has full column rank, it reduces to the orthogonal projection onto the range of $\nabla \c(x)$, which corresponds to $\Pi^x.$
In what follows, we use the placeholders $z_k := \c(x_k)$\ifbool{showSquare}{ and
$$
P_k := P(x_k, \lambda_k).
$$}{.}
\ifbool{showSquare}{The next result collects a few properties of $P_{k};$
its proof is deferred to Appendix~\ref{app:proof-lem-boundontaylor}.  We introduce a bit of simplifying notation. Given any $x \in \EEE,$ let $U^{x}$ and $\sigma^{x}$ denote respectively the matrix of left singular vectors and the vector of singular values of $\nabla \c(x),$ moreover, we let $U^x_{1:j}$ be the matrix with the top $j$ singular vectors of $\nabla F(x)$.
\begin{lemma}\label{lem:boundontaylor}
	Let $x_k$ and $x_{k+1}$ be iterates from Algorithm~\ref{alg:LM}. Let $z_{k} = \c(x_{k})$ and $z_{k+1} = \c(x_{k+1})$. The following three hold true. 
    \begin{enumerate}
		\item{{\upshape ({\bfseries Nonexpansiveness})}} The operator norm of both $P_{k}$ and $I - P_{k}$ are less than or equal to one. Moreover, $\norm{P_k v}{} \le \norm{\P^{x_k}v}{}$ for any $v \in \YY$. 
		\item{\upshape ({\bfseries Restricted eigenvalues})}  \label{item:restricted_eigen}The the eigenvalues of $I - P_{k}$ restricted to the span generated by the top $j$ left singular vectors of $\nabla \c(x_{k})$ are bounded by $\frac{\lambda_{k}}{(\sigma_{j}^{x_k})^{2} + \lambda_{k}},$ i.e.,
        $$
        \|(I-P_k) v\|\leq \frac{\lambda_{k}}{(\sigma_{j}^{x_k})^{2} + \lambda_{k}} \|v\| \qquad \text{for }v \in \spann{U^{x_k}_{1:j}}.
        $$
		\item{\upshape ({\bfseries Approximation error})}  If in addition $\nabla F$ is $\lc$-Lipschitz when restricted onto the line segment connecting $x_k$ and $x_{k+1}$, then
		$\norm{z_{k+1} - (z_k - \gamma_k P_k v_k)}{} \le \frac{\lc}{8\lambda_k} \gamma_k^2 \norm{\P^{x_k}v_k}{}^2$.
		
	\end{enumerate}
\end{lemma}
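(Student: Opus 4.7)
The plan is to reduce everything to the SVD of $\nabla F(x_k)$. Writing $\nabla F(x_k) = U^{x_k}\Sigma^{x_k}(V^{x_k})^\top$, a short computation diagonalizes the preconditioner:
\begin{equation*}
P_k \;=\; U^{x_k}\,\tilde D\,(U^{x_k})^\top, \qquad \tilde D_{ii} \;=\; \frac{(\sigma_i^{x_k})^2}{(\sigma_i^{x_k})^2+\lambda_k},
\end{equation*}
so $I-P_k = U^{x_k}(I-\tilde D)(U^{x_k})^\top$ with $(I-\tilde D)_{ii} = \lambda_k/((\sigma_i^{x_k})^2+\lambda_k)$. From this representation the first two items fall out immediately: all the diagonal entries $\tilde D_{ii}$ and $1-\tilde D_{ii}$ lie in $[0,1]$, giving the operator-norm bound; and since $\tilde D_{ii}=0$ whenever $\sigma_i^{x_k}=0$, we have $P_k = P_k\Pi^{x_k}$, yielding $\|P_k v\|\leq \|\Pi^{x_k}v\|$. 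For item 2, restricting $I-P_k$ to $\spann(U^{x_k}_{1:j})$ produces a diagonal matrix whose largest entry, using that the $\sigma_i^{x_k}$ are nonincreasing, is $\lambda_k/((\sigma_j^{x_k})^2+\lambda_k)$.

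The substantive part is item 3. The key observation is that the update direction $d_k := x_{k+1}-x_k$ satisfies $\nabla F(x_k) d_k = -\gamma_k P_k v_k$, so
\begin{equation*}
z_{k+1} - (z_k - \gamma_k P_k v_k) \;=\; F(x_{k+1}) - F(x_k) - \nabla F(x_k)(x_{k+1}-x_k),
\end{equation*}
which is simply the Taylor remainder of $F$ at $x_k$. The assumed Lipschitzness of $\nabla F$ on the segment between $x_k$ and $x_{k+1}$ yields the standard estimate $\|F(x_{k+1})-F(x_k)-\nabla F(x_k)d_k\| \leq \tfrac{\lc}{2}\|d_k\|^2$.

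It then remains to bound $\|d_k\|^2$ in terms of $\|\Pi^{x_k}v_k\|^2$. Using the SVD again,
\begin{equation*}
\|d_k\|^2 \;=\; \gamma_k^2\, v_k^\top U^{x_k} D'(U^{x_k})^\top v_k, \qquad D'_{ii} \;=\; \frac{(\sigma_i^{x_k})^2}{\bigl((\sigma_i^{x_k})^2+\lambda_k\bigr)^2}.
\end{equation*}
The scalar inequality $\tfrac{a}{(a+\lambda)^2}\leq \tfrac{1}{4\lambda}$ (optimized at $a=\lambda$) gives $D'_{ii}\leq 1/(4\lambda_k)$, and $D'_{ii}=0$ whenever $\sigma_i^{x_k}=0$, so the quadratic form only picks up the part of $v_k$ in $\Ima\nabla F(x_k)$. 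This produces $\|d_k\|^2 \leq \tfrac{\gamma_k^2}{4\lambda_k}\|\Pi^{x_k}v_k\|^2$, and combining with the Taylor bound yields the claimed constant $\lc/(8\lambda_k)$.

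No part of this is really hard; the only place that requires a bit of care is item 3, where one must recognize the preconditioned update as producing the linearization of $F$ (so that Lipschitz-gradient smoothness applies cleanly) and must use the sharp scalar bound $a/(a+\lambda)^2 \leq 1/(4\lambda)$ to get the factor of $1/(4\lambda_k)$—a weaker bound like $a/(a+\lambda)^2 \leq 1/\lambda$ would produce the wrong constant.
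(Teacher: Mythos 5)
Your proof is correct and follows essentially the same path as the paper's: diagonalize $P_k$ and the preconditioned direction via the SVD of $\nabla F(x_k)$, read off items 1 and 2 from the spectral values, identify $z_{k+1}-(z_k-\gamma_k P_k v_k)$ as the Taylor remainder of $F$, and bound $\|x_{k+1}-x_k\|$ through the AM--GM-type scalar inequality. The only cosmetic difference is that the paper applies Young's inequality to bound the singular values $\sigma_i/(\sigma_i^2+\lambda_k)\le 1/(2\sqrt{\lambda_k})$ and then squares, whereas you bound $\sigma_i^2/(\sigma_i^2+\lambda_k)^2\le 1/(4\lambda_k)$ directly in the quadratic form — the same inequality in squared form.
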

\noindent In particular, the last item follows for any pair of consecutive iterates whenever Assumption~\ref{assum: assumptiononc} holds. Intuitively, this item states that an updated mapped iterate $z_{k+1} =\c(x_{k+1})$ can be approximated by a linear step in $\YY$ space. This approximation will play a critical role in our analysis.}{}

\ifbool{showSquare}{
 The following two assumptions are crucial to obtain linear convergence for ill-conditioned and overparameterized problems, respectively. These assumptions are stated near a point $\zs,$ which the reader can deem as a minimizer of $\h$ over the image of $\c.$ We let $\P_{j}^{x} = U^x_{1:j} (U^x_{1:j})^\top$ be the projection onto the subspace spanned by the top $j$ left singular vectors of $\nabla \c(x)$.
\begin{assumption}[Strong alignment]\label{ass:strong-alignment}
	For a given $\zs \in \Ima \c$, there exist a function $\rloc\colon \RR_{+} \rightarrow \RR_{+}$ and a constant $\sig > 0$ such that for any $\rho >0$
	and $z = \c(x) \in \B_{\rloc(\rho)}(z^{\star})$ there is an index $j$ for which
	\begin{align*}
	\big\|(I- \P^{x}_{j})(z - \zs)\big\|_{}  \le  \rho \norm{z - \zs}{} \qquad \text{and} \qquad {(\sigma_{j}^{x})^2} \ge \sig.
	\end{align*}
\end{assumption}
\noindent Intuitively, this assumption amounts to the alignment between the image of $\c$ and a low-dimensional linear approximation around $\zs$.
As alluded to earlier, in the exact parameterization regime when the rank of $\nabla \c (\cdot)$ is locally constant, the image of $\c$ forms a manifold $\cM$ of dimension $j = \rank (\nabla \c(\xs)).$
As such, $\P^{x}_{j}$ corresponds to the projection onto the tangent of $\cM$ at $\c(x).$ In turn, the error between centered manifold elements $z - \zs$ and the tangent increases at most quadratically in the norm of $z - \zs$, and one can easily derive that this assumption holds; see Lemma~\ref{constant_rank_implies_strong_alignment} in the appendix.

However, for overparameterized problems, there is no manifold structure, and Assumption~\ref{ass:strong-alignment} cannot hold. To overcome this issue, we introduce a weaker condition, allowing the singular values of the linear approximation to decrease gracefully as we approach $\zs.$}{The following assumption is crucial to obtain linear convergence for ill-conditioned and overparameterized problems. The assumption is stated near a point $\zs,$ which the reader can deem as a minimizer of $\h$ over the image of $\c.$ We let $\P_{j}^{x}$
be the projection onto the subspace spanned by the top $j$ left singular vectors of $\nabla \c(x)$.}

\begin{assumption}[Weak alignment]\label{ass:weak-alignment}
	For a given $\zs \in \Ima \c$, there exist functions $\rloc \colon \RR_{+} \rightarrow \RR_{+}$ and $\sig\colon \RR_{+} \rightarrow \RR_{+}$ such that for any $\rho >0$ and any $z = \c(x) \in \B_{\rloc(\rho)}(z^{\star})$ there is an index $j$ for which 
	\begin{align*}
	\big\|(I- \P^{x}_{j})(z - \zs)\big\|  \le  \rho \norm{z - \zs}{} \qquad \text{and} \qquad \left(\sigma_{j}^{x}\right)^{2} \ge \sig(\rho) \norm{z - \zs}{}.
	\end{align*}
\end{assumption}

\ifbool{showSquare}{\noindent In Section~\ref{sec:examples}, we will see that this assumption holds for a variety of overparameterized problems.}{\noindent As we shall see, this assumption provably holds a number of parameterizations, including the Burer-Monteiro factorization and the canonical polyadic (CP) tensor rank decomposition.} \ifbool{showSquare}{It is immediately clear that Assumption~\ref{ass:strong-alignment} implies Assumption~\ref{ass:weak-alignment}. %
}{}We emphasize that both assumptions on $\c$ are independent of the outer function $\h$.

\subsection{Regularity \ifbool{showSquare}{for nonsmooth outer functions}{of the outer function}}
Next, we introduce the regularity conditions on the outer function $\h$. Intuitively, regularity ensures that the function $\h$ is well-conditioned when restricted to the image of $\c.$ \ifbool{showSquare}{We present two different notions of conditions depending on the smoothness of the problem. We start with conditions for nonsmooth losses. {Recall that we use $\Pi^x $ as a shorthand for the projection matrix $\proj_{\Ima \nabla \c(x)}.$}}{}   

\begin{assumption}\label{assum:nonsmoothpenalty}
	The function $\h\colon \YY \rightarrow \RR$ satisfies the following properties.
	\begin{enumerate}
		\item{(\textbf{Unique minimizer})}\label{item:nonsmoothpenalty:uniqueminimizer} The function $\h$ has a unique minimizer $\zs$ over $\Ima\c.$
		\item{(\textbf{Convexity})}\label{item:nonsmoothpenalty:convexity} The function $\h$ is convex. 
	\item{(\textbf{Restricted sharpness})}\label{item:nonsmoothpenalty:sharpness} The function $\h$ is $\mu$-sharp on $\Ima \c.$ That is,
$$ \h(z) - \h^{\star} \ge \mu \cdot \norm{z - \zs}{} \qquad \text{for all }z \in \Ima \c,$$
where $\h^\star = \h(\zs)$ is the minimum of $\h\mid_{\Ima \c}$.
 \item{(\textbf{Restricted Lipschitzness})}\label{item:nonsmoothpenalty:lip} There exists a constant $L \ge 0$ such that 
\begin{enumerate}
	\item \label{item:nonsmoothpenalty:lip_a}For any $x \in \dom F$ and  $v \in \partial h( F(x)),$
	\begin{align*}
	\|\P^x v\| \le L.
	\end{align*}
	\item \label{item:nonsmoothpenalty:lip_b} For any $x \in \EEE$, $z = F(x),$  $v \in \partial h(F(x))$, and $\lambda >0$ %
	we have
	\begin{align*}
	|\dotp{v, (I-P(x,\lambda))(z - z^\star)}| \le L \norm{(I-P(x,\lambda))(z - z^\star)}{},
	\end{align*}
    where the matrix $P(x, \lambda)$ is given by \eqref{eq:matrix-P}.
\end{enumerate}

	\end{enumerate}
\end{assumption}
\noindent 
We point out that for our results, one can drop the uniqueness of the minimizer in Assumption~\ref{assum:nonsmoothpenalty}, but we assume it for simplicity. Although the fourth condition might seem complicated, it is satisfied by any globally Lipschitz convex function $h$. \ifbool{showSquare}{Moreover, a simple argument shows that it ensures that $h(z) - \h^\star \le 2L\|z - \zs\|$ for all $z \in \Ima \c.$ These regularity conditions are well-understood in the unconstrained setting where the parameterization is an identity, $\c = I.$ The seminal work~\cite{polyak1969minimization} showed that the subgradient method coupled with the Polyak stepsize converges linearly to minimizers in this setting.}{} %
\ifbool{showSquare}{The following is a direct consequence of Assumption~\ref{assum:nonsmoothpenalty}.
\begin{lemma}[{Aiming towards solution}]\label{lem:aiming}
	Suppose that Assumption~\ref{assum:nonsmoothpenalty} holds. Then,
	$$
	\dotp{v, z - \zs} \ge \h(z) - \h(\zs)  \ge \mu \norm{z-\zs}{} \qquad \text{for all } z \in \Ima \c\text{ and }v \in \partial h(z).	$$
\end{lemma}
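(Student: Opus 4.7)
The plan is to chain two facts from Assumption~\ref{assum:nonsmoothpenalty}: convexity (item 2) to get the first inequality, and restricted sharpness (item 3) to get the second. Both inequalities are essentially one-line consequences once we unpack the definitions, so this is a short argument rather than a technical derivation.

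First, I would invoke the subgradient inequality for the convex function $h$. For any $v \in \partial h(z)$ and any point $\zs \in \YY$,
$$
h(\zs) \ge h(z) + \dotp{v, \zs - z},
$$
which after rearranging gives $\dotp{v, z - \zs} \ge h(z) - h(\zs)$. This establishes the left-hand inequality of the claim, and notably it does not require $z$ to lie in $\Ima \c$.

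Second, I would apply restricted sharpness. Since $z \in \Ima \c$ by hypothesis and $\zs$ is the unique minimizer of $h$ over $\Ima \c$ (item 1 of Assumption~\ref{assum:nonsmoothpenalty}), item 3 directly yields
$$
h(z) - h(\zs) = h(z) - h^\star \ge \mu \|z - \zs\|.
$$
Combining the two inequalities gives the stated chain. I do not anticipate any obstacle: the only subtlety is making sure $\zs$ is the sharpness reference point, which is guaranteed by the uniqueness assumption so that $h^\star = h(\zs)$. No use of Lipschitzness (item 4) or of any property of the parameterization $\c$ is needed for this lemma.
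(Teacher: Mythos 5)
Your argument is correct and is exactly the intended one: the paper states this lemma as a direct consequence of Assumption~\ref{assum:nonsmoothpenalty}, with the first inequality coming from the convex subgradient inequality and the second from restricted sharpness at the minimizer $\zs$, precisely as you wrote.
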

\noindent Thus, negative subgradients point towards the solution.}{}

\ifbool{showSquare}{
\subsection{Regularity for smooth outer functions}
Paralleling regularity for nonsmooth functions $\h$, we now introduce analog regularity conditions for the smooth setting. Intuitively, they amount to quadratic lower and upper bounds.
\begin{assumption}\label{assum:smoothpenalty}
	The function $\h \colon \YY \rightarrow \R$ satisfies the following properties.
	\begin{enumerate}
		\item{(\textbf{Unique minimizer})}\label{item:smoothpenalty:uniqueminimizer} The function $\h$ has a unique minimizer $\zs$ over $\Ima\c.$
		\item{(\textbf{Convexity})}\label{item:smoothpenalty:convexity} The function $\h$ is convex.
	{\item{(\textbf{Restricted quadratic growth})}\label{item:smoothpenalty:sharpness} The function $\h$ exhibits $\mus$-quadratic growth on $\Ima \c;$ i.e., 
		$$ \h(z) - \h^{\star} \ge \frac{\mus}{2}  \|z - \zs\|^{2}  \qquad \text{for all }z \in \Ima \c,$$
		where $\h^\star = \h(\zs)$ is the minimum of $\h\mid_{\Ima\c}$. 
		\item\label{item:smoothpenalty:lip}{(\textbf{Restricted Smoothness})}  There exists a constant $\Lhs \ge 0$ such that for any $x \in \EE$ and $z = F(x)$ the following hold true.
		\begin{enumerate}
			\item We have
			\begin{align*}
				\norm{\Pi^x \nabla h(z)}{} \le \Lhs \norm{z - z^\star}{} \qquad  \text{and} \qquad \h(z) - h^\star \ge \frac{1}{2\Lhs}\left\|\Pi^x\nabla \h(z)\right\|^2.
			\end{align*}
			\item For any  $\lambda >0$
			\begin{align*}
			|\dotp{\nabla h(z), (I-P(x,\lambda))(z - z^\star)}| \le \Lhs \left\|z- z^\star\right\| \left\|(I-P(x,\lambda))(z - z^\star)\right\|,
			\end{align*}
            where $P(x, \lambda)$ is given in \eqref{eq:matrix-P}.
		\end{enumerate}
	}
	\end{enumerate}
\end{assumption}
\noindent The first two conditions are exactly the same as in the nonsmooth setting. 
The third condition is satisfied by any globally $\mus$-strongly convex function, while the latter one holds for any globally $\Lhs$-smooth function \cite{nesterov2013introductory}; e.g., both are trivially satisfied by $h(\cdot) = \frac{1}{2}\|\cdot \|_{2}^{2}.$
We collect an analog to Lemma~\ref{lem:aiming}; the proof follows from convexity and quadratic growth.
\begin{lemma}[{Aiming towards solution}]\label{lem:aiming_smooth}
	Suppose that Assumption~\ref {assum:smoothpenalty} holds. Then,
	$$
	\dotp{\nabla \h(z), z - \zs} \ge \h(z) - \h(\zs)  \ge \frac{\alpha}{2} \norm{z-\zs}{}^2 \qquad \text{for all }z \in \EEE.
	$$
\end{lemma}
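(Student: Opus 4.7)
My plan is to prove the two inequalities separately, as each follows from one of the assumptions listed in Assumption~\ref{assum:smoothpenalty}. The lemma combines convexity (item 2) with restricted quadratic growth (item 3), and no additional ingredients should be needed.

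For the first inequality, I would invoke the convexity of $\h$ (Assumption~\ref{assum:smoothpenalty}, item 2) applied at the pair of points $z$ and $\zs$. Since $\h$ is convex and differentiable at $z$, the standard supporting hyperplane inequality yields
$$ \h(\zs) \ge \h(z) + \dotp{\nabla \h(z), \zs - z}.$$
Rearranging this expression and flipping the sign of the inner product produces exactly $\dotp{\nabla \h(z), z - \zs} \ge \h(z) - \h(\zs)$, which is the first desired bound.

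For the second inequality, I would simply invoke the restricted quadratic growth condition (Assumption~\ref{assum:smoothpenalty}, item 3), which guarantees that $\h(z) - \h^\star \ge \tfrac{\mus}{2} \|z - \zs\|^2$ for every $z$ in the relevant set (here $\alpha = \mus$). Combining the two inequalities then yields the chain in the statement.

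There is no real obstacle here; the only subtlety worth flagging is the domain of validity. The restricted quadratic growth is stated on $\Ima \c$ rather than on all of $\EEE$, so one should either interpret the lemma as holding on $\Ima \c$ (matching the analogous Lemma~\ref{lem:aiming} for the nonsmooth case) or verify that the subsequent uses of the lemma instantiate $z = \c(x)$, so that $z \in \Ima \c$ automatically. Either way, the derivation itself is a two-line consequence of the defining properties of $\h$ and requires no use of Assumption~\ref{assum: assumptiononc}, Assumption~\ref{ass:weak-alignment}, or any property of the preconditioner.
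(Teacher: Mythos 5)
Your proof is correct and matches the paper's intended argument exactly — the paper simply remarks that the result "follows from convexity and quadratic growth" without spelling out the two-line derivation you give. Your observation about the domain mismatch (the lemma is stated for all $z \in \EEE$, but restricted quadratic growth is only assumed on $\Ima \c$, so the second inequality really only holds there) is a fair catch; in the paper's usage the lemma is always instantiated with $z = \c(x) \in \Ima \c$, so nothing breaks, but the statement is slightly imprecise as written.
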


We close this section with a bound on the progress made by the approximation from Lemma~\ref{lem:boundontaylor}. We highlight that the next result applies regardless of the smoothness of $\h.$

\begin{lemma}[{Linearization progress}]\label{lem:progressfromlineaization}
	Let $x_k$ be an iterate from Algorithm~\ref{alg:LM} with Configuration~\ref{assum: dampingparameter} (Polyak stepsizes) where we set the hyperparameter $\gamma \le 1$. Suppose that $h$ is convex and $F$ is continuously differentiable. Letting $z_k = \c(x_k)$, we have 
	\begin{align*}
	\norm{z_k - \gamma_k P_k v_k- \zs}{}^2 \le  \norm{z_k - \zs}{}^2- \gamma\frac{(\h(z_k) - h^\star)^2}{\norm{\Pi^{x_k} v_k}{}^2} + 2\gamma_k \dotp{(I-P_k)v_k, z_k -  \zs}.
	\end{align*}
\end{lemma}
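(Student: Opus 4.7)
The plan is a direct expand-and-bound argument. First, expand the square to get
\[
\|z_k - \gamma_k P_k v_k - \zs\|^2 = \|z_k - \zs\|^2 - 2\gamma_k \dotp{P_k v_k,\, z_k - \zs} + \gamma_k^{2} \|P_k v_k\|^{2}.
\]
The goal is now to handle the cross term and the quadratic term separately, and to introduce both the Polyak-style quantity and the $(I-P_k)$ residual that appears in the statement.

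For the cross term, I would split $P_k v_k = v_k - (I - P_k) v_k$, so that
\[
-2\gamma_k \dotp{P_k v_k,\, z_k - \zs} = -2\gamma_k \dotp{v_k,\, z_k - \zs} + 2\gamma_k \dotp{(I-P_k)v_k,\, z_k - \zs}.
\]
Since $v_k \in \partial h(z_k)$ and $h$ is convex, the subgradient inequality gives $\dotp{v_k, z_k - \zs} \ge h(z_k) - h^{\star}$, yielding
\[
-2\gamma_k \dotp{P_k v_k,\, z_k - \zs} \le -2\gamma_k (h(z_k) - h^{\star}) + 2\gamma_k \dotp{(I-P_k)v_k,\, z_k - \zs}.
\]
This already produces the residual term exactly as it appears on the right-hand side of the lemma.

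For the quadratic term, the nonexpansiveness item of Lemma~\ref{lem:boundontaylor} gives $\|P_k v_k\| \le \|\Pi^{x_k} v_k\|$, so $\gamma_k^{2} \|P_k v_k\|^{2} \le \gamma_k^{2} \|\Pi^{x_k} v_k\|^{2}$. Now I would invoke the Polyak choice $\gamma_k = \gamma (h(z_k) - h^{\star})/\|\Pi^{x_k} v_k\|^{2}$ from Configuration~\ref{assum: dampingparameter} to rewrite $\gamma_k \|\Pi^{x_k} v_k\|^{2} = \gamma (h(z_k) - h^{\star})$, so that
\[
\gamma_k^{2} \|P_k v_k\|^{2} \le \gamma \gamma_k (h(z_k) - h^{\star}).
\]
Combining with the cross-term bound gives
\[
-2\gamma_k (h(z_k) - h^{\star}) + \gamma_k^{2}\|P_k v_k\|^{2} \le -(2-\gamma)\gamma_k (h(z_k) - h^{\star}) = -\gamma(2-\gamma) \frac{(h(z_k) - h^{\star})^{2}}{\|\Pi^{x_k} v_k\|^{2}}.
\]

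Finally, the assumption $\gamma \le 1$ yields $\gamma(2-\gamma) \ge \gamma$, so the coefficient simplifies to the desired $-\gamma$. Putting all the inequalities together delivers the claimed bound. The only delicate step is keeping careful track of signs when invoking $\gamma \le 1$; beyond that the argument is essentially an algebraic rearrangement using convexity, the norm bound from Lemma~\ref{lem:boundontaylor}, and the Polyak stepsize definition.
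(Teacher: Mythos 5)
Your proposal is correct and follows essentially the same route as the paper: expand the square, split the cross term via $P_k v_k = v_k - (I-P_k)v_k$, apply convexity, bound $\|P_k v_k\| \le \|\Pi^{x_k} v_k\|$ via Lemma~\ref{lem:boundontaylor}, substitute the Polyak stepsize, and use $\gamma \le 1$ to absorb the factor $2-\gamma$. Your write-up simply spells out the final combination $-\gamma(2-\gamma) \le -\gamma$ more explicitly than the paper does.
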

\begin{proof} Expanding
	\begin{align}
	\norm{z_k - \gamma_k P_k v_k- \zs}{}^2 \notag
	&= \norm{z_k - \zs}{}^2- 2\gamma_k \dotp{P_k v_k, z_k -  \zs} + \gamma_k^2 \norm{P_k v_k}{}^2\notag\\
	&=  \norm{z_k -  \zs}{}^2- 2\gamma_k \dotp{v_k, z_k -  \zs} + 2\gamma_k \dotp{(I-P_k)v_k, z_k -  \zs} + \gamma_k^2 \norm{P_k v_k}{}^2\notag\\
	&\le  \norm{z_k -  \zs}{}^2- \gamma \frac{(\h(z_k) - h^\star)^2}{\norm{\P^{x_k} v_k}{}^2} +  2\gamma_k \dotp{(I-P_k)v_k, z_k -  \zs},
	\end{align}
	where the last inequality eliminates the term $\dotp{v_{k}, z_{k} - \zs}$ via convexity of $h$, and upper bounds the last using Lemma~\ref{lem:boundontaylor}, Configuration~\ref{assum: dampingparameter}, and $\gamma \le 1$. This completes the proof.
\end{proof}}{}

\section{General convergence guarantees}\label{sec:guarantees}

In this section, we present our general-purpose guarantees for Algorithm~\ref{alg:LM} \ifbool{showSquare}{under the various parameter choices and smoothness assumptions introduced in Section~\ref{sec:algorithm}. We also extend these guarantees to cases where the regularity assumptions for the parameterization $\c$ hold only locally, an extension that will be particularly useful for tensor problems in later sections. Since most proofs share the same structure, we provide only the simplest versions to illustrate the core ideas and defer technical details to Appendix~\ref{app:proofs-guarantees}}{under the assumptions and hyperparameter configuration presented in Section~\ref{sec:algorithm}}. %

\ifbool{showSquare}{\subsection{Guarantees for nonsmooth losses}}{}

\ifbool{showSquare}{We provide guarantees for both the exactly parameterized and overparameterized regimes. We start with the latter due to its novelty.}{}
{ \begin{theorem}[\textbf{Convergence under weak alignment and nonsmoothness}]\label{thm: onestepimprovement}
		Suppose that Assumptions~\ref{assum: assumptiononc},~\ref{ass:weak-alignment} and~\ref{assum:nonsmoothpenalty} hold.
		Further assume that $z_0 = \c(x_0)$ satisfies
		$\norm{z_0 - \zs}{} \le \rloc\left(\frac{\mu}{8\lh}\right).$ The following two hold.
		\begin{enumerate}
			\item{(\textbf{Polyak stepsize})}\label{item:nonsmooth-weak-polyak} Suppose we ran Algorithm~\ref{alg:LM} initialized at $x_{0}$ using Configuration~\ref{assum: dampingparameter} with $\gamma \le \min\left\{1, \frac{\clb}{\lc }\right\}$ and $\cub \le \frac{\mu }{8\lh} \sig\left(\frac{\mu}{8\lh}\right)$.
			Then, the iterates $z_{k }= \c(x_{k})$ satisfy
			$$
			\norm{z_{k} - \zs}{} \le \left(1 - \frac{\gamma \mu^2}{8 \lh^2}\right)^{k/2} \norm{z_0 - \zs}{} \quad \text{for all } k\geq 0.
			$$

			\item{(\textbf{Geometrically decaying stepsize})}\label{item:nonsmooth-weak-geom}  Suppose we ran Algorithm~\ref{alg:LM} initialized at $x_{0}$ using Configuration~\ref{assum:geometric} with 
			\begin{align*}
			&
			\lambda \le \frac{ M\sig\left(\frac{\mu}{8\lh}\right)}{128} \frac{\mu}{L},
			\quad \gamma \le \frac{1}{\lh} \cdot \min \left\{ \frac{M\mu}{64\lh},%
			\sqrt{\frac{2\lambda M}{\lc}}, \frac{\lambda \mu}{2\lc\lh}\right\}
			\quad \text{and} \quad 	q \geq \max\left\{1- \frac{\gamma \mu}{4M}, \frac{1}{\sqrt{2}}\right\}
			\end{align*}
			where $M= \rloc\left(\frac{\mu}{8\lh}\right)$. Then, the iterates $z_{k} = \c(x_{k})$ satisfy
			$$\|z_{k} - \zs\|\leq M q^{k} \quad \text{for all }k \geq 0.$$
			
		\end{enumerate}
	\end{theorem}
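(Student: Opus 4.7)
The plan is to track the squared distance $\|z_k-\zs\|^2$ of the mapped iterates $z_k=F(x_k)$ and prove geometric decay by induction. For both parts I would decompose $z_{k+1}=(z_k-\gamma_k P_k v_k)+(z_{k+1}-(z_k-\gamma_k P_k v_k))$, where Lemma~\ref{lem:boundontaylor}(3) bounds the second-order remainder by $\lc\gamma_k^2\|\Pi^{x_k}v_k\|^2/(8\lambda_k)$. The common crux is controlling the mismatch term $\langle(I-P_k)v_k,z_k-\zs\rangle$, which measures how much the (regularized) Gauss--Newton direction fails to align with the correction $z_k-\zs$.

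\textbf{Part 1 (Polyak stepsize).} I would start from Lemma~\ref{lem:progressfromlineaization}, which already delivers the dominant progress term $-\gamma(h(z_k)-h^\star)^2/\|\Pi^{x_k}v_k\|^2$ and reduces the problem to handling the cross term $2\gamma_k\langle(I-P_k)v_k,z_k-\zs\rangle$. Assumption~\ref{assum:nonsmoothpenalty}(\ref{item:nonsmoothpenalty:lip_b}) bounds it by $2\gamma_k\lh\|(I-P_k)(z_k-\zs)\|$. To control the last factor, I would split $z_k-\zs$ along the span of the top $j$ left singular vectors of $\nabla F(x_k)$, with $j$ the index supplied by weak alignment at tolerance $\rho=\mu/(8\lh)$. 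Lemma~\ref{lem:boundontaylor}(2) bounds the aligned part using $\lambda_k/(\sigma_j^{x_k})^2$, weak alignment bounds the transverse part by $\rho\|z_k-\zs\|$, and the Polyak choice $\lambda_k\le\cub\|z_k-\zs\|$ together with $(\sigma_j^{x_k})^2\ge\sig(\rho)\|z_k-\zs\|$ and the hypothesized bound on $\cub$ yields $\|(I-P_k)(z_k-\zs)\|\le(\mu/(4\lh))\|z_k-\zs\|$. Converting via the Polyak scaling of $\gamma_k$ and sharpness, the cross term becomes at most half the progress term, and the Lemma~\ref{lem:boundontaylor}(3) remainder is similarly absorbed once $\gamma\le\clb/\lc$. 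Sharpness turns the surviving progress into a contraction of $\|z_k-\zs\|^2$ by $1-\gamma\mu^2/(8\lh^2)$, and induction verifies that $z_k$ stays in $\B_{\rloc(\mu/(8\lh))}(\zs)$ throughout.

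\textbf{Part 2 (Geometric stepsize).} Without Polyak adaptivity, I would directly induct on $\|z_k-\zs\|\le Mq^k$. Expanding
\[
\|z_k-\gamma_k P_k v_k-\zs\|^2\le\|z_k-\zs\|^2-2\gamma_k\langle v_k,z_k-\zs\rangle+2\gamma_k\langle(I-P_k)v_k,z_k-\zs\rangle+\gamma_k^2\|P_k v_k\|^2,
\]
Lemma~\ref{lem:aiming} converts the middle term into the drift $-2\gamma_k\mu\|z_k-\zs\|$. The mismatch term is handled exactly as in Part~1, but now with $\lambda_k=\lambda q^k$ and the inductive bound $\|z_k-\zs\|\le Mq^k$ giving a uniform ratio $\lambda_k/(\sigma_j^{x_k})^2\lesssim\lambda/(\sig(\mu/(8\lh))M)$, so the stipulated upper bound on $\lambda$ forces the mismatch to at most $(\mu/(8\lh))\|z_k-\zs\|$. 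The residual $\gamma_k^2\|P_k v_k\|^2\le\gamma_k^2\lh^2$ and the Lemma~\ref{lem:boundontaylor}(3) error are controlled by the small-$\gamma$ and small-$\lambda$ hypotheses. Finally, the lower bound $q\ge\max\{1-\gamma\mu/(4M),1/\sqrt 2\}$ closes the induction to $\|z_{k+1}-\zs\|\le Mq^{k+1}$.

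The main obstacle is parameter bookkeeping: balancing four sources of error—the cross term, the nonadaptive drift, the Gauss--Newton residual $\gamma_k^2\|P_k v_k\|^2$, and the second-order linearization error from Lemma~\ref{lem:boundontaylor}(3)—so that both induction statements close under the explicit constants stated in the theorem. The geometric-stepsize case is the more delicate one because the nonadaptive schedule tightly couples $\gamma$, $\lambda$, and $q$.
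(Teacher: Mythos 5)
Your plan for Part 1 (Polyak stepsize) is essentially the paper's own argument: a one-step linearization bound (the paper packages it as Proposition~\ref{prop:master_polyak}), the mismatch term $\langle(I-P_k)v_k,z_k-\zs\rangle$ controlled by splitting $z_k-\zs$ along $\Pi_j^{x_k}$ with $j$ from weak alignment at $\rho=\mu/(8\lh)$, the restricted-eigenvalue bound of Lemma~\ref{lem:boundontaylor}, the Polyak scaling $\lambda_k\le\cub\|z_k-\zs\|$ together with $(\sigma_j^{x_k})^2\ge\sig(\rho)\|z_k-\zs\|$ and the hypothesis on $\cub$, absorption of the Taylor remainder via $\gamma\le\clb/\lc$, and a closing induction. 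No issues there.

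Part 2 has a genuine gap. You claim the inductive bound $\|z_k-\zs\|\le Mq^k$ gives a \emph{uniform} ratio $\lambda_k/(\sigma_j^{x_k})^2\lesssim \lambda/(\sig(\mu/(8\lh))M)$, but the inequality runs the other way: weak alignment gives $(\sigma_j^{x_k})^2\ge \sig(\rho)\|z_k-\zs\|$, so $\lambda_k/(\sigma_j^{x_k})^2\le \lambda q^k/(\sig(\rho)\|z_k-\zs\|)$, and to make this small you need a \emph{lower} bound $\|z_k-\zs\|\gtrsim Mq^k$, which the induction hypothesis does not provide. This is exactly where the loss of Polyak adaptivity bites: with $\lambda_k\asymp\|z_k-\zs\|$ the ratio is automatically controlled, but with $\lambda_k=\lambda q^k$ it can be of order one when $\|z_k-\zs\|\ll q^k$, in which case the mismatch term $2\gamma_k\lh\,\tfrac{\lambda_k}{\sig\|z_k-\zs\|+\lambda_k}\|z_k-\zs\|$ cannot be dominated by the drift $-2\gamma_k\mu\|z_k-\zs\|$ (since typically $\lh\ge\mu$), and neither can the additive errors $\gamma_k^2\lh^2$ and the Taylor remainder, which scale like $q^{2k}$ and $q^k$ rather than like $\|z_k-\zs\|$. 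The paper closes the induction with a two-case analysis that your sketch omits: when $\|z_k-\zs\|\le \tfrac{M}{4}q^k$ it bounds the ratio crudely by $1$ and shows $\|z_{k+1}-\zs\|\le Mq^{k+1}$ by purely additive estimates (using $q\ge 1/\sqrt2$ and the smallness of $\gamma,\lambda$), with no contraction claimed; only in the regime $\tfrac{M}{4}q^k\le\|z_k-\zs\|\le Mq^k$ does the lower bound on $\|z_k-\zs\|$ make the ratio $O(\lambda/(\sig M))$ and yield the drift-dominated contraction $(1-\gamma\mu/M)$ that, together with $q\ge 1-\gamma\mu/(4M)$, advances the induction. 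Without this case split (or an equivalent device handling the near-converged regime), the argument as you describe it does not close.
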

}

\ifbool{showSquare}{Here, we only prove the statement concerning the Polyak stepsize and defer the proof for the geometrically decaying stepsize to Appendix~\ref{app:proof-nonsmooth-geometrically}. The proofs of all our results follow the same template. Before proving Theorem~\ref{thm: onestepimprovement}, we introduce a proposition that provides the shared machinery underlying our argument for the Polyak stepsize.}{}

\ifbool{showSquare}{
	\begin{proposition}[{{One-step progress}}]\label{prop:master_polyak}
		Suppose that $x_k$ and $x_{k+1}$ are iterates of Algorithm~\ref{alg:LM} with Configuration~\ref{assum: dampingparameter}.  Assume in addition that $h$ is convex, $F$ is continuously differentiable, and $\nabla F$ is $\lc$-Lipschitz when restricted onto the line segment connecting $x_k$ and $x_{k+1}$. Define $z_k = \c(x_k)$ and $z_{k+1} = \c(x_{k+1})$. If the  stepsize hyperparameter satisfies $\gamma \le \min\left\{1, \frac{\clb }{\lc}\right\}$ and 	\begin{align}\label{eqn:key_zk_inner_prod_bound}
		|\dotp{(I-P_k) v_k, z_k - z^\star}| \le \frac{1}{4} (h(z_k) - h^\star),
		\end{align}
		then, we have
		\begin{align*}
		\|z_{k+1} - z^\star\|^2 \le \|z_k - z^\star\|^2 - \frac{\gamma}{8} \frac{(h(z_k) - h^\star)^2}{\|\P^{x_k}v_k\|^2}.
		\end{align*}
	\end{proposition}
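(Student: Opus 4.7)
The plan is a standard two-step perturbation argument. Let $s_k := h(z_k) - h^\star$ and split $z_{k+1} - \zs = a + b$ with linearized step $a := z_k - \gamma_k P_k v_k - \zs$ and approximation error $b := z_{k+1} - (z_k - \gamma_k P_k v_k)$; Lemma~\ref{lem:boundontaylor}(3) controls $b$ while Lemma~\ref{lem:progressfromlineaization} controls $a$, and the two are glued via $\|z_{k+1} - \zs\|^2 \le \|a\|^2 + 2\|a\|\|b\| + \|b\|^2$. Two identities are used repeatedly: the Polyak stepsize gives $\gamma_k \|\Pi^{x_k} v_k\|^2 = \gamma s_k$, and Configuration~\ref{assum: dampingparameter} (taking $\zs$ to be the nearest minimizer in $\cZ^\star$, or invoking uniqueness) gives $\lambda_k \ge \clb \|z_k - \zs\|$.

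First I would apply Lemma~\ref{lem:progressfromlineaization} and plug the hypothesis $|\langle (I - P_k) v_k, z_k - \zs\rangle| \le s_k/4$ into the cross term, which then contributes at most $\gamma_k s_k/2 = (\gamma/2)\, s_k^2/\|\Pi^{x_k} v_k\|^2$. The result is
\begin{align*}
\|a\|^2 \le \|z_k - \zs\|^2 - \frac{\gamma}{2}\,\frac{s_k^2}{\|\Pi^{x_k} v_k\|^2},
\end{align*}
which also implies $\|a\| \le \|z_k - \zs\|$. Second, Lemma~\ref{lem:boundontaylor}(3) together with the Polyak identity yields $\|b\| \le \lc \gamma \gamma_k s_k / (8\lambda_k) \le \lc \gamma \gamma_k s_k / (8 \clb \|z_k - \zs\|)$. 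Combined with $\|a\| \le \|z_k - \zs\|$ and the stepsize cap $\gamma \le \clb/\lc$, this controls the cross term by $2\|a\|\|b\| \le (\gamma/4)\, s_k^2 / \|\Pi^{x_k} v_k\|^2$.

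The main obstacle is bounding $\|b\|^2$, since the naive estimate picks up an extra factor $s_k^2/(\|z_k - \zs\|^2 \|\Pi^{x_k} v_k\|^2)$ that must be tamed. To do so, I would derive the auxiliary inequality $s_k \le (4/3)\|\Pi^{x_k} v_k\|\|z_k - \zs\|$: start from convexity ($s_k \le \langle v_k, z_k - \zs\rangle$), decompose $v_k = P_k v_k + (I - P_k) v_k$, apply Cauchy-Schwarz and Lemma~\ref{lem:boundontaylor}(1) (so $\|P_k v_k\| \le \|\Pi^{x_k} v_k\|$) on the first piece, and invoke the hypothesis on the second to solve for $s_k$. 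Squaring yields $s_k^2/(\|z_k - \zs\|^2 \|\Pi^{x_k} v_k\|^2) \le 16/9$, after which $\gamma \le \clb/\lc$ and $\gamma \le 1$ collapse $\|b\|^2$ to at most $(\gamma/36)\, s_k^2/\|\Pi^{x_k} v_k\|^2$. Summing the three contributions produces a total decrease of at least $(\gamma/2 - \gamma/4 - \gamma/36)\, s_k^2 / \|\Pi^{x_k} v_k\|^2 = (2\gamma/9)\, s_k^2/\|\Pi^{x_k} v_k\|^2 \ge (\gamma/8)\, s_k^2/\|\Pi^{x_k} v_k\|^2$, which closes the proof.
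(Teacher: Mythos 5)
Your proposal is correct and follows essentially the same route as the paper: the same split into the linearized step and the Taylor-error term (your $a$ and $b$ are the paper's $T_2$ and $T_1$), the same use of Lemma~\ref{lem:boundontaylor} and Lemma~\ref{lem:progressfromlineaization}, and the same auxiliary bound $h(z_k)-h^\star \le \tfrac{4}{3}\|\Pi^{x_k}v_k\|\|z_k-\zs\|$, which is exactly the paper's Claim~\ref{claim:aligned-projected-subgradients}. The only differences are cosmetic bookkeeping of the constants ($\gamma/2-\gamma/4-\gamma/36 \ge \gamma/8$), matching the paper's final estimate.
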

	\begin{proof}[Proof of Proposition~\ref{prop:master_polyak}]
		To derive this bound, we apply the triangle inequality with a one-step linear approximation
		\begin{align}\label{eq:first-of-many-trinagles}
		\norm{z_{k+1} - \zs}{} &\leq \underbrace{\norm{z_{k+1} - \left(z_{k} - \gamma_{k} P_{k} v_{k}\right)}{}}_{T_{1}} + 	\underbrace{\norm{\left(z_{k} - \gamma_{k} P_{k} v_{k}\right) - \zs}{}}_{T_{2}}.
		\end{align}
		We focus on bounding each of these terms separately. To bound $T_{1}$ we apply Lemma~\ref{lem:boundontaylor} and obtain
		\begin{align}
		T_{1} & \leq \frac{\gamma_k^2 \lc}{8\lambda_k} \norm{\P^{x_k} v_k}{}^2 \notag \\
		&\le  \frac{\gamma_k^2 \lc}{8\clb\norm{z_{k} - \zs}{}} \norm{\P^{x_k} v_k}{}^2.	\label{eq:boundTone}
		\end{align}
		To bound $T_{2}$, we compute
		\begin{align}
		T_{2}^{2} &\leq
		\norm{z_k - \zs}{}^2- \gamma\frac{(\h(z_k) - h^\star)^2}{\norm{\P^{x_k} v_k}{}^2}  + 2\gamma_k \left|\dotp{(I-P_k)v_k, z_k -  \zs}\right|\notag\\
		&\leq
		\norm{z_k - \zs}{}^2- \gamma\frac{(\h(z_k) - h^\star)^2}{\norm{\P^{x_k} v_k}{}^2} + \frac{\gamma_k}{2}\left(\h(z_k) - h^\star\right)\notag \\
		& = \norm{z_k - \zs}{}^2- \frac{\gamma}{2}\frac{(\h(z_k) - h^\star)^2}{\norm{\P^{x_k} v_k}{}^2}, \label{eqn:T_2_bound_tighter}
		\end{align}
		where the first inequality follows from Lemma~\ref{lem:progressfromlineaization} and the second inequality follows from the bound~\eqref{eqn:key_zk_inner_prod_bound}. Next, we state a claim that we will use recurrently.
\begin{claim}\label{claim:aligned-projected-subgradients} If we have that $\left|\dotp{(I-P_k) v_k, z_k-\zs}\right| \leq \frac{1}{4}(h(z_k) - h^\star),$ then
    $$
    \frac{3}{4} \left(h(z_k) - \h(\zs)\right) \leq \dotp{P_k v_k, z_k-\zs} \leq \|\Pi^{x_k} v_k\| \|z_k -\zs\|.
    $$
\end{claim}
\begin{proof}[Proof of the Claim~\ref{claim:aligned-projected-subgradients}]
By the subgradient inequality
\begin{align*}
    h(z_k) -h^\star\leq 
    \dotp{P_k v_k, z-\zs} + \dotp{(I -P_k) v_k, z_k-\zs} \leq \dotp{P_k v_k, z_k-\zs} + \frac{1}{4} \left(h(z_k) -h^\star\right),
\end{align*}
rearranging the terms establishes the first inequality. The second inequality follows from Cauchy-Schwarz and Lemma~\ref{lem:boundontaylor}.
\end{proof}
        In particular, we trivially obtain $T_{2} \leq \|z_{k} - \zs\|_{}.$
		Invoking \eqref{eq:first-of-many-trinagles} gives
		\begin{align}
		&\|z_{k+1}-\zs\|^{2}\\
		& \leq T_{1}^{2} + T_{2}^{2} + 2T_{1}T_{2} \notag\\
		& \leq   \norm{z_k -\zs}{}^2 - \frac{\gamma}{2} \frac{(\h(z_k) - h^\star)^2}{\norm{\P^{x_k} v_k}{}^2} +\frac{\gamma^4\lc^2  }{64\clb^2  \|z_k - z^\star\|^2} \frac{(\h(z_k) -  h^\star)^4}{\norm{\P^{x_k} v_k}{}^4}	+  \frac{\gamma^2 \lc }{4\clb } \frac{(\h(z_k) - h^\star)^2}{\norm{\P^{x_k} v_k}{}^2} \notag\\
		& \leq   \norm{z_k -\zs}{}^2 - \frac{\gamma}{4} \frac{(\h(z_k) - h^\star)^2}{\norm{\P^{x_k} v_k}{}^2} +\frac{\gamma^4\lc^2 }{64\clb^2  \|z_k - z^\star\|^2} \frac{(\h(z_k) -  h^\star)^4}{\norm{\P^{x_k} v_k}{}^4}\notag\\
		& \leq   \norm{z_k -\zs}{}^2 - \frac{\gamma}{8} \frac{(\h(z_k) - h^\star)^2}{\norm{\P^{x_k}v_k}{}^2}, \label{eqn:one_step_zk_nonsmooth}
		\end{align}
		where the second inequality uses \eqref{eq:boundTone}, \eqref{eqn:T_2_bound_tighter}, and the definition of $\gamma_k$, while the third and the final inequalities follow from $\gamma \leq \min \left\{1, \frac{\clb }{\lc }\right\}$ together with Claim~\ref{claim:aligned-projected-subgradients}.
		The proof of Proposition~\ref{prop:master_polyak} is complete.
	\end{proof}

Armed with this proposition, we prove the main result of this section.
{\begin{proof}[Proof of Theorem~\ref{thm: onestepimprovement}] By induction, it suffices to show that for $z_k$ satisfying $\|z_k - z^\star\| \le \delta \left(\frac{\mu}{8\lh}\right)$,
		\begin{equation}\label{claim:nonsmooth_weak}
		\norm{z_{k+1} - \zs}{}^2 \le \left(1 - \frac{\gamma \mu^2}{8 \lh^2}\right) \norm{z_k - \zs}{}^2.
		\end{equation}
		Let $j$ be the index provided by Assumption~\ref{ass:weak-alignment} when applied to $\rho = \frac{\mu}{8\lh}$ and $z = z_{k} = \c(x_{k})$, i.e.,
		\begin{equation}\label{eq:applying-weak-aligntment}
		\norm{(I- \P_{j}^{x_{k}})(z_k - \zs)}{} \le  \frac{\mu}{8\lh}\norm{z_k - \zs}{} \quad \text{and} \quad   (\sigma_{i}^x)^2 \ge \sig\left(\frac{\mu}{8\lh}\right) \norm{z_k - \zs}{}.
		\end{equation}
		Invoking Item~\ref{item:nonsmoothpenalty:lip} of Assumption~\ref{assum:nonsmoothpenalty} and the triangle inequality, we derive
		\begin{align}
		\begin{split}\label{eq:projected-angle}
		\left|\dotp{(I-P_k)v_k, z_k -  \zs}\right| & \leq L \left\|(I - {P_{k}})(z_{k} - \zs)\right\| \\
		&\leq \lh \left( \left \|(I - {P_{k}}) \Pi^{{x_{k}}}_{j}(z_{k} - \zs) \right\| + \left\|\left(I - {P_{k}}\right) \left(I - \Pi^{{x_{k}}}_{j}\right)(z_{k} - \zs) \right\| \right). 
		\end{split}
		\end{align}
		Lemma~\ref{lem:boundontaylor} ensures that the eigenvalues of $I - P_{k}$ restricted to the span generated by the top $j$ left singular vectors of $\nabla \c(x_{k})$ are bounded by $\frac{\lambda_{k}}{(\sigma_{j}^x)^{2} + \lambda_{k}}.$ Using this fact in tandem with \eqref{eq:applying-weak-aligntment} gives
		\begin{align}\label{eqn:inner_prod_orthogonal}
		\begin{split}
		\left|\dotp{(I-P_k)v_k, z_k -  \zs}\right| &\le \lh\left(\frac{\lambda_k}{(\sigma_{j}^x)^{2}  + \lambda_k} \norm{\Pi_j^{x_k}(z_k - \zs)}{} + \frac{\mu}{8\lh} \norm{z_k - \zs}{}\right)\\
		&\le \lh  \left( \frac{\cub \norm{z_k - \zs}{}}{\sig\left(\frac{\mu}{8\lh}\right) \norm{z_k - \zs}{2} + \cub\norm{z_k - \zs}{}} + \frac{\mu}{8\lh}\right) \norm{z_k - \zs}{} \\
		&= \lh  \left( \frac{\cub }{\sig\left(\frac{\mu}{8\lh}\right)  + \cub} + \frac{\mu}{8\lh}\right) \norm{z_k - \zs}{} \\
		& \le \frac{\mu}{4}\norm{z_k - \zs}{} \\
		&\le \frac{1}{4}\left(\h(z_k) - h^\star\right).
		\end{split}
		\end{align}
		The second and third inequalities use the fact that the function $b \mapsto \frac{b}{a + b}$ is strictly increasing on $\RR_{+}$ for any given $a > 0$ together with the bounds $\lambda_{k}/\|z_{k} - \zs\|_{2} \leq \cub \leq \sig\left(\frac{\mu}{8\lh}\right) \frac{\mu}{8\lh}$ given by assumption. 
		Invoking Proposition~\ref{prop:master_polyak} and Assumption~\ref{assum:nonsmoothpenalty} gives
		\begin{align*}
		\|z_{k+1}-\zs\|^{2}
		& \leq   \norm{z_k -\zs}{}^2 - \frac{\gamma}{8} \frac{(\h(z_k) - h^\star)^2}{\norm{\P^{x_k} v_k}{}^2} 
		 \leq  \left(1 - \frac{\gamma \mu^2}{8{\lh^2}}\right)  \norm{z_k -\zs}{}^2,
		\end{align*}
		completing the proof of Theorem~\ref{thm: onestepimprovement}.
	\end{proof}
}}{} 
\ifbool{showSquare}{Let us make a few remarks about the statement for the Polyak stepsize. While our conclusions also apply to geometrically decaying stepsizes, they are more transparent in the Polyak case. This result ensures that the iterates $z_k$ will be within distance $\varepsilon > 0$ of the minimizer after $O\left( \frac{1}{\gamma} \frac{\lh^2}{\mu^2} \cdot \log\left(\frac{1}{\epsilon}\right)\right)$ iterations, which notably depends only on the conditioning of the outer function $\h.$ The parameter constraints enforce $\gamma \leq \frac{\cub}{\lc}$ and $\cub\leq \frac{\mu}{\lh} \sig\left(\frac{\mu}{\lh}\right)$. In the context of low-rank matrix recovery problems, this simplifies to $\gamma \lesssim \frac{\mu^2}{\lh^2}$ and so the best rate one can get is $O\left( \frac{\lh^4}{\mu^4} \cdot \log\left(\epsilon^{-1}\right)\right).$} {This result establishes that the iterates $z_k$ will be within distance $\varepsilon > 0$ of the minimizer after $O\left( \frac{1}{\gamma} \frac{\lh^2}{\mu^2} \cdot \log\left(\frac{1}{\epsilon}\right)\right)$ iterations, which notably depends only on the conditioning of the function $\h.$ 
Crucially, with our algorithm and parameter configuration, we obtain linear convergence in all regimes---smooth or nonsmooth, overparameterized or exactly parameterized---at a rate governed solely by the conditioning of $h$. Full proofs for every case are provided in the extended version of this paper \cite{diaz2025preconditioned}.} 

\ifbool{showSquare}{The bound on $\cub$ is likely an artifact of our proof. Indeed, in our numerical experiments, we take $\gamma$ and $\cub$ to be constants independent of $\mu/L$ and still observe linear convergence; see Section~\ref{sec:experiments}. Further, as we show in the next result, under strong alignment (Assumption~\ref{ass:strong-alignment})---which only holds for exactly parameterized problems---we can bypass the spurious bound on $\cub$ and prove a faster local rate.}{}
\ifbool{showSquare}{
{\begin{theorem}
		[\textbf{Convergence under strong alignment and nonsmoothness}]\label{thm: onestepimprovement_exact}
		Suppose that Assumptions~\ref{assum: assumptiononc},~\ref{ass:strong-alignment},  and~\ref{assum:nonsmoothpenalty} hold.
		Further assume that $z_0 = \c(x_0)$ satisfies
		$\norm{z_0 - \zs}{} \le \rloc\left(\frac{\mu}{8\lh}\right).$ The following two hold.
		\begin{enumerate}
			\item{(\textbf{Polyak stepsize})}\label{item:nonsmooth-strong-polyak} Suppose we ran Algorithm~\ref{alg:LM} initialized at $x_{0}$ using Configuration~\ref{assum: dampingparameter} with $\gamma \le \min\left\{1, \frac{\clb}{\lc}\right\}$. Further, assume  $\norm{z_0 - z^\star}{} \le \frac{\sig \mu}{8\cub \lh}$. 
			Then, the iterates $z_{k }= \c(x_{k})$ satisfy
			$$
			\norm{z_{k} - \zs}{}^2 \le \left(1 - \frac{\gamma \mu^2}{8 \lh^2}\right)^k \norm{z_0 - \zs}{}^2 \quad \text{for all } k\geq 0.
			$$
			\item{(\textbf{Geometrically decaying stepsize})}  Suppose we ran Algorithm~\ref{alg:LM} initialized at $x_{0}$ using Configuration~\ref{assum:geometric} with
			\begin{align*}
			&
			\lambda \le \frac{\sig}{32}\frac{\mu}{\lh},
			\quad \gamma \le \frac{1}{\lh} \cdot \min \left\{ \frac{M\mu}{64\lh}, %
			\sqrt{\frac{2\lambda M}{\lc}}, \frac{\lambda \mu}{2\lc\lh }\right\}
			\quad  \text{and} \quad 	q \geq \max\left\{1- \frac{\gamma \mu}{4M}, \frac{1}{\sqrt{2}}\right\},
			\end{align*}
			where $M= \rloc\left(\frac{\mu}{8\lh}\right).$ Then, the iterates $z_{k} = \c(x_{k})$ satisfy
			$$\|z_{k} - \zs\|\leq M q^{k} \quad \text{for all }k \geq 0.$$
			
		\end{enumerate}
	\end{theorem}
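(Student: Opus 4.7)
The plan is to mirror the proof of the weak-alignment variant (Theorem~\ref{thm: onestepimprovement}, Item~\ref{item:nonsmooth-weak-polyak}), with the key difference that strong alignment supplies a \emph{constant} lower bound on the relevant singular values rather than one that degrades with $\|z - z^\star\|$. The skeleton is unchanged: induct on $k$, maintain $\|z_k - z^\star\| \le \|z_0 - z^\star\|$ as the inductive hypothesis, and appeal to Proposition~\ref{prop:master_polyak} for the one-step contraction.

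The only nontrivial step is verifying the cross-term bound $|\langle (I-P_k) v_k, z_k - z^\star\rangle| \le \tfrac14(h(z_k) - h^\star)$ required by Proposition~\ref{prop:master_polyak}. Apply Assumption~\ref{ass:strong-alignment} at $\rho = \mu/(8\lh)$ to obtain an index $j$ with $\|(I - \Pi_j^{x_k})(z_k - z^\star)\| \le (\mu/(8\lh))\|z_k - z^\star\|$ and $(\sigma_j^{x_k})^2 \ge \sig$. Following the decomposition~\eqref{eq:projected-angle}, the complementary part contributes at most $(\mu/(8\lh))\|z_k-z^\star\|$ directly from strong alignment, while Lemma~\ref{lem:boundontaylor}(Restricted eigenvalues) combined with Configuration~\ref{assum: dampingparameter} bounds the aligned part by
\begin{align*}
\frac{\lambda_k}{(\sigma_j^{x_k})^2 + \lambda_k}\,\|z_k - z^\star\| \le \frac{\cub \|z_k - z^\star\|^2}{\sig} \le \frac{\mu}{8\lh}\|z_k - z^\star\|,
\end{align*}
where the last inequality uses the initial-radius hypothesis $\|z_0 - z^\star\| \le \sig\mu/(8\cub\lh)$ and the inductive monotonicity. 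Multiplying by $\lh$ (via Item~\ref{item:nonsmoothpenalty:lip} of Assumption~\ref{assum:nonsmoothpenalty}) and invoking restricted sharpness then yields the required cross-term bound. Proposition~\ref{prop:master_polyak}, combined once more with restricted sharpness and Lipschitzness, gives the contraction $\|z_{k+1}-z^\star\|^2 \le (1 - \gamma\mu^2/(8\lh^2))\|z_k - z^\star\|^2$, which preserves the inductive hypothesis and closes the induction.

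The geometrically decaying stepsize case follows the analogous blueprint from Item~\ref{item:nonsmooth-weak-geom} of Theorem~\ref{thm: onestepimprovement}, with $\sig$ replacing $\sig(\mu/(8\lh))$ wherever the degrading lower bound was used. The conceptual contrast---and what makes the strong-alignment statement sharper---is that the weak case must constrain $\cub \le \sig(\mu/(8\lh)) \cdot \mu/(8\lh)$ to offset the vanishing of the singular-value lower bound near $z^\star$, whereas strong alignment lifts that constraint on $\cub$ and replaces it by a slightly stronger initial-radius condition. I expect the main obstacle to be purely algebraic bookkeeping: tracking how the decaying schedule of $\lambda_k$ interacts with both the linearization error from Lemma~\ref{lem:boundontaylor} and the cross-term bound, so as to certify the rate $q$ and the parameter thresholds stated in the theorem.
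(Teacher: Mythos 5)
Your proposal matches the paper's proof in both structure and substance: for the Polyak case you use the same decomposition from \eqref{eq:projected-angle}, bound the aligned part via $\lambda_k/((\sigma_j^{x_k})^2+\lambda_k)\le \cub\|z_k-\zs\|/\sig$ together with the initial-radius hypothesis (exactly the fourth inequality in the paper's chain), and then close with Proposition~\ref{prop:master_polyak} and restricted sharpness; for the geometric case your description of replacing the degrading lower bound $\sig(\rho)\|z_k-\zs\|$ by the constant $\sig$ is precisely how the paper obtains Lemma~\ref{lem:onestepgeometric_strong} and reruns the two-case induction. The paper phrases the Polyak induction as a standalone claim rather than explicitly tracking the monotonicity $\|z_k-\zs\|\le\|z_0-\zs\|$, but this is cosmetic since the contraction immediately preserves the radius; the arguments are the same.
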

}
We defer the proof of this result to Appendix~\ref{sec:proof_exact_nonsmooth}. There are two main differences between this result and Theorem~\ref{thm: onestepimprovement} regarding the Polyak stepsize: $(i)$ we replace Assumption~\ref{ass:weak-alignment} with Assumption~\ref{ass:strong-alignment} and $(ii)$ we substitute the bound on $\cub$ with an additional constraint on the initial distance to optimum. By setting $\lambda_{k}$ to ensure $\frac{\clb}{\lc} = \Theta(1)$, we derive a local rate of $O\left( \frac{\lh^2}{\mu^{2}} \cdot \log\left(\epsilon^{-1}\right)\right).$
In turn, this shows that a properly tuned Levenberg-Morrison-Marquardt method matches the rates of the Gauss-Newton method in the absence of overparameterization~\cite[Theorem 3.1]{davis2022linearly}.

\subsection{Guarantees for smooth losses}
Analogous arguments to the ones used for nonsmooth losses can be applied to derive linear convergence for composite losses where the outer function $\h$ is smooth and has quadratic growth.
We state these guarantees here and defer the proof of the next result to Appendix~\ref{sec:proof_over_smooth}.
{\begin{theorem}[\textbf{Convergence under weak alignment and smoothness}]\label{thm: onestepimprovement_smooth}
		Suppose that Assumptions~\ref{assum: assumptiononc},~\ref{ass:weak-alignment}, and~\ref{assum:smoothpenalty} hold.
		Further assume that $z_0 = \c(x_0)$ satisfies
		$\norm{z_0 - \zs}{} \le \rloc\left(\frac{\mus}{16\Lhs}\right).$ The following two hold.
		\begin{enumerate}
			\item{(\textbf{Polyak stepsize})}\label{item:smooth-weak-polyak} Suppose we ran Algorithm~\ref{alg:LM} initialized at $x_{0}$ using Configuration~\ref{assum: dampingparameter} with $\gamma \le \min\left\{1, \frac{\clb }{\lc }\right\}$ and $\cub \le\frac{\mus}{16\Lhs} \sig\left(\frac{\mus}{16\Lhs}\right)$.
			Then, the iterates $z_{k }= \c(x_{k})$ satisfy
			$$
			\norm{z_{k} - \zs}{2}^2 \le \left(1 - \frac{\gamma \mus}{32 \Lhs}\right)^k \norm{z_0 - \zs}{2}^2 \quad \text{for all } k\geq 0.
			$$

			\item{(\textbf{Constant stepsize})}  Suppose we ran Algorithm~\ref{alg:LM} initialized at $x_{0}$ using Configuration~\ref{assum:constant_stepsize} with 
			\begin{align*}
			&
			\lambda \le \frac{M \sig\left(\frac{\mus}{16\Lhs}\right)}{64}\frac{\mus }{\Lhs},
			\quad \gamma \le \frac{1}{\Lhs}  \cdot \min \left\{ \frac{1}{8}, \sqrt{\frac{32\lambda }{ \lc M}}, \frac{\lambda }{2\lc M}\right\}
			\quad 
			\text{and} \quad 	q \geq \max\left\{\sqrt{1-\frac{ \gamma\mus}{2}}, \frac{1}{\sqrt{2}}\right\},
			\end{align*}
			where $M= \rloc\left(\frac{\mus}{16\Lhs}\right).$ Then, the iterates $z_{k} = \c(x_{k})$ satisfy
			$$\|z_{k} - \zs\|\leq M q^{k} \quad \text{for all }k \geq 0.$$
			
		\end{enumerate}
	\end{theorem}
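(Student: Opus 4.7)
I would mirror the proof template used for Theorem~\ref{thm: onestepimprovement} in the nonsmooth setting, adapting each ingredient to the smooth regime. The plan is to proceed by induction on $k$: assuming $\|z_k - \zs\| \leq \rloc(\mus/(16\Lhs))$, I will establish a one-step improvement of the form
\begin{align*}
\|z_{k+1} - \zs\|^2 \leq \left(1 - \frac{\gamma \mus}{32 \Lhs}\right)\|z_k - \zs\|^2,
\end{align*}
which simultaneously propagates the neighborhood invariant and yields the claimed linear rate. Lemma~\ref{lem:aiming_smooth} supplies the one-sided convexity inequality $\dotp{\nabla h(z_k), z_k - \zs} \geq h(z_k) - h^\star \geq \frac{\mus}{2}\|z_k - \zs\|^2$, which replaces the subgradient inequality that drove the nonsmooth analysis.

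\textbf{Smooth one-step progress.} First I would derive the smooth analog of Proposition~\ref{prop:master_polyak}. Splitting $z_{k+1} - \zs$ through the triangle inequality into $T_1 = \|z_{k+1} - (z_k - \gamma_k P_k v_k)\|$ and $T_2 = \|(z_k - \gamma_k P_k v_k) - \zs\|$, the approximation error item of Lemma~\ref{lem:boundontaylor} bounds $T_1$ by $\frac{\gamma_k^2 \lc}{8\lambda_k}\|\Pi^{x_k}\nabla h(z_k)\|^2$, while $T_2^2$ expands exactly as in Lemma~\ref{lem:progressfromlineaization}, producing the Polyak-like descent term $-\gamma \frac{(h(z_k)-h^\star)^2}{\|\Pi^{x_k}\nabla h(z_k)\|^2}$ plus a cross term $2\gamma_k \dotp{(I-P_k)\nabla h(z_k), z_k - \zs}$. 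Under the working hypothesis $|\dotp{(I-P_k)\nabla h(z_k), z_k - \zs}| \leq \frac{1}{4}(h(z_k) - h^\star)$, combining $T_1$ and $T_2$ with $\gamma \leq \clb/\lc$ yields
\begin{align*}
\|z_{k+1} - \zs\|^2 \leq \|z_k - \zs\|^2 - \frac{\gamma}{8}\frac{(h(z_k)-h^\star)^2}{\|\Pi^{x_k}\nabla h(z_k)\|^2}.
\end{align*}
Rearranging Item~\ref{item:smoothpenalty:lip}(a) of Assumption~\ref{assum:smoothpenalty} gives $\|\Pi^{x_k}\nabla h(z_k)\|^2 \leq 2\Lhs(h(z_k) - h^\star)$, reducing the descent quantity to $\frac{1}{2\Lhs}(h(z_k) - h^\star)$; quadratic growth then converts this to the contraction factor $1 - \gamma\mus/(32\Lhs)$.

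\textbf{Verifying the orthogonality condition via weak alignment.} The remaining obligation is to discharge the hypothesis $|\dotp{(I-P_k)\nabla h(z_k), z_k - \zs}| \leq \frac{1}{4}(h(z_k) - h^\star)$ whenever $z_k$ lies in the neighborhood. Applying Assumption~\ref{ass:weak-alignment} at $\rho = \mus/(16\Lhs)$ supplies an index $j$ with $\|(I - \Pi_j^{x_k})(z_k - \zs)\| \leq \rho\|z_k - \zs\|$ and $(\sigma_j^{x_k})^2 \geq \sig(\rho)\|z_k - \zs\|$. Using Item~\ref{item:smoothpenalty:lip}(b) of Assumption~\ref{assum:smoothpenalty}, followed by the same triangle decomposition as in \eqref{eq:projected-angle} and the restricted-eigenvalue bound from Lemma~\ref{lem:boundontaylor} combined with $\lambda_k \leq \cub \|z_k - \zs\|$, the cross term is at most
\begin{align*}
\Lhs \|z_k - \zs\|\left(\frac{\cub}{\sig(\rho)+\cub} + \rho\right)\|z_k - \zs\|.
\end{align*}
The choice $\cub \leq \frac{\mus}{16\Lhs}\sig(\mus/(16\Lhs))$ makes this at most $\frac{\mus}{8}\|z_k - \zs\|^2$, which quadratic growth upgrades to $\frac{1}{4}(h(z_k) - h^\star)$, closing the induction. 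The main obstacle is exactly this matching of scales: smoothness upper bounds the cross term by $\Lhs \|z_k - \zs\|^2$, while quadratic growth only guarantees a lower bound of $\frac{\mus}{2}\|z_k - \zs\|^2$ on $h(z_k)-h^\star$, so the factor $\mus/\Lhs$ must be absorbed into the tolerance on $\cub$.

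\textbf{Constant stepsize.} For part two, without the Polyak normalization one must track the geometric decay of every error term against the distance itself. The plan is to establish by induction that $\|z_k - \zs\| \leq M q^k$ with $M = \rloc(\mus/(16\Lhs))$ and $\lambda_k = \lambda q^k$. The descent bound on $T_2^2$ now uses $\gamma_k \dotp{\nabla h(z_k), z_k - \zs} \geq \gamma \mus\|z_k-\zs\|^2/2$ directly, while $T_1 \lesssim \gamma^2 \lc/\lambda_k \cdot \|\nabla h(z_k)\|^2$ forces the condition $\gamma \lesssim \sqrt{\lambda M/\lc}/\Lhs$ to absorb the approximation error. Similarly, the alignment error contributes a term proportional to $\gamma \Lhs \|z_k - \zs\|\cdot(\lambda_k/((\sigma_j^{x_k})^2 + \lambda_k))\|z_k-\zs\|$; since $(\sigma_j^{x_k})^2 \geq \sig \|z_k - \zs\|$ and $\lambda_k$ scales like $\|z_k - \zs\|$ under the inductive hypothesis, the bound on $\lambda$ in the theorem is exactly what keeps this contribution below a constant fraction of $\gamma \mus \|z_k-\zs\|^2$. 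The prescribed $q$ is the smallest value guaranteeing that after taking square roots the bound $\|z_{k+1}-\zs\| \leq M q^{k+1}$ propagates.
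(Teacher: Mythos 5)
Your treatment of the Polyak-stepsize part is correct and follows essentially the same route as the paper: the paper likewise invokes the one-step progress bound (Proposition~\ref{prop:master_polyak}, which applies verbatim since $\nabla h(z_k)\in\partial h(z_k)$), verifies the hypothesis $|\dotp{(I-P_k)\nabla h(z_k),z_k-\zs}|\le \tfrac14(h(z_k)-h^\star)$ through Item~\ref{item:smoothpenalty:lip}(b), the restricted-eigenvalue bound of Lemma~\ref{lem:boundontaylor}, weak alignment at $\rho=\mus/(16\Lhs)$ and the constraint $\cub\le\rho\,\sig(\rho)$, and then converts the Polyak descent quantity via $\|\Pi^{x_k}\nabla h(z_k)\|^2\le 2\Lhs(h(z_k)-h^\star)$ and quadratic growth into the factor $1-\gamma\mus/(32\Lhs)$.

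The constant-stepsize part, however, has a genuine gap. Your control of the alignment error rests on the claim that ``$\lambda_k$ scales like $\|z_k-\zs\|$ under the inductive hypothesis,'' but the induction only gives the upper bound $\|z_k-\zs\|\le Mq^k$; it provides no lower bound, so $\lambda_k=\lambda q^k$ can be arbitrarily large relative to $\|z_k-\zs\|$. In that regime the ratio $\frac{\lambda_k}{(\sigma_j^{x_k})^2+\lambda_k}$ can be close to $1$, the cross term is of order $\gamma\Lhs\|z_k-\zs\|^2$, and it is \emph{not} dominated by a constant fraction of $\gamma\mus\|z_k-\zs\|^2$, so your per-step contraction argument breaks down. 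The paper's proof handles this by a two-regime case analysis: when $\|z_k-\zs\|\le \tfrac{M}{4}q^k$ it abandons contraction altogether, bounds the alignment ratio crudely by $1$, and uses the slack between $\tfrac{M}{4}q^k$ and $Mq^{k+1}$ (together with $q\ge 1/\sqrt2$, $\gamma\le 1/(8\Lhs)$, and the Taylor-error budget) to keep $\|z_{k+1}-\zs\|\le Mq^{k+1}$; only when $\tfrac{M}{4}q^k\le\|z_k-\zs\|\le Mq^k$ does one have $\lambda_k\le \tfrac{4\lambda}{M}\|z_k-\zs\|$, which is where the stated bound on $\lambda$ makes the alignment contribution small and a genuine contraction is established. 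Without this split your induction does not close. A smaller bookkeeping slip in the same part: balancing the Taylor error against the decay budget forces $\gamma\lesssim \frac{1}{\Lhs}\sqrt{\lambda/(\lc M)}$ (as in the theorem's $\sqrt{32\lambda/(\lc M)}/\Lhs$), not $\sqrt{\lambda M/\lc}/\Lhs$ as you wrote.
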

}

Again, the convergence rate depends solely on the conditioning of the outer function $h$. For matrix recovery problems, this results in a convergence rate of $O\left(\frac{\Lhs^{3}}{\mus^{3}} \log(\varepsilon^{-1})\right)$, which exhibits an undesirable cubic dependence on the condition number. %
As in the nonsmooth setting, we can further improve this convergence rate under conditions of strong alignment. The proof of the next result appears in Appendix~\ref{sec:onestepimprovement_smooth_exact_proof}.
{\begin{theorem}[\textbf{Convergence under strong alignment and smoothness}]\label{thm: onestepimprovement_smooth_exact}
		Suppose that Assumptions~\ref{assum: assumptiononc},~\ref{ass:strong-alignment}, and~\ref{assum:smoothpenalty} hold.
		Further assume that $z_0 = \c(x_0)$ satisfies
		$\norm{z_0 - \zs}{} \le \rloc\left(\frac{\mus}{16\Lhs}\right).$ The following two hold.
		\begin{enumerate}
			\item{(\textbf{Polyak stepsize})}\label{item:smooth-strong-polyak} Suppose we ran Algorithm~\ref{alg:LM} initialized at $x_{0}$ using Configuration~\ref{assum: dampingparameter} with $\gamma \le \min\left\{1, \frac{\clb}{\lc}\right\}$. Additionally, suppose that $\norm{z_0 - z^\star}{2} \le \frac{\sig \mus}{16\cub \Lhs}$.
			Then, the iterates $z_{k }= \c(x_{k})$ satisfy
			$$
			\norm{z_{k} - \zs}{2}^2 \le \left(1 - \frac{\gamma \mus}{32 \Lhs}\right)^k \norm{z_0 - \zs}{2}^2 \quad \text{for all } k\geq 0.
			$$	
			\item{(\textbf{Constant stepsize})}  Suppose we ran Algorithm~\ref{alg:LM} initialized at $x_{0}$ using Configuration~\ref{assum:constant_stepsize} with 
			\begin{align*}
			&
			\lambda \le \frac{\sig \mus}{16\Lhs},
			\quad \gamma \le  \frac{1}{\Lhs} \cdot \min \left\{ \frac{1}{8}, \sqrt{\frac{32\lambda }{ \lc M}}, \frac{\lambda }{2 \lc M}\right\}
			\quad \text{and} \quad 	q \geq \max\left\{\sqrt{1- \frac{\gamma \mus}{2}}, \frac{1}{\sqrt{2}}\right\},
			\end{align*}
			where $M= \rloc\left(\frac{\mus}{16\Lhs}\right).$ Then, the iterates $z_{k} = \c(x_{k})$ satisfy
			$$\|z_{k} - \zs\|\leq M q^{k} \quad \text{for all }k \geq 0.$$
			
		\end{enumerate}
	\end{theorem}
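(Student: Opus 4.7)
The plan is to mirror the proof of Theorem~\ref{thm: onestepimprovement_smooth}, replacing weak alignment by strong alignment and substituting the spurious bound on $\cub$ by the extra initialization condition $\|z_0 - \zs\| \leq \tfrac{\sig \mus}{16\cub \Lhs}$. For the Polyak part, I would reuse the smooth analog of Proposition~\ref{prop:master_polyak} underlying Theorem~\ref{thm: onestepimprovement_smooth}, whose one-step conclusion reads
\[
\|z_{k+1} - \zs\|^{2} \leq \|z_k - \zs\|^{2} - \tfrac{\gamma}{8}\,\tfrac{(\h(z_k) - h^\star)^{2}}{\|\Pi^{x_k} v_k\|^{2}}
\]
whenever $|\langle(I - P_k) v_k, z_k - \zs\rangle| \leq \tfrac{1}{4}(\h(z_k) - h^\star)$, with $v_k = \nabla \h(z_k)$. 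Combined with restricted quadratic growth and smoothness from Assumption~\ref{assum:smoothpenalty} through the chain $\tfrac{(\h(z_k) - h^\star)^{2}}{\|\Pi^{x_k} v_k\|^{2}} \geq \tfrac{\h(z_k) - h^\star}{2\Lhs} \geq \tfrac{\mus}{4\Lhs}\|z_k - \zs\|^2$, this delivers the desired contraction factor $1 - \tfrac{\gamma\mus}{32\Lhs}$.

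The crux is therefore verifying the inner-product hypothesis under strong alignment. Item~\ref{item:smoothpenalty:lip} of Assumption~\ref{assum:smoothpenalty} bounds the inner product by $\Lhs \|z_k - \zs\|\,\|(I - P_k)(z_k - \zs)\|$, so together with quadratic growth it suffices to show
\[
\|(I - P_k)(z_k - \zs)\| \;\leq\; \tfrac{\mus}{8\Lhs}\,\|z_k - \zs\|.
\]
Letting $j$ be the index supplied by Assumption~\ref{ass:strong-alignment} at $\rho = \tfrac{\mus}{16\Lhs}$, decompose
\[
\|(I - P_k)(z_k - \zs)\| \leq \|(I - P_k)\Pi_j^{x_k}(z_k - \zs)\| + \|(I - \Pi_j^{x_k})(z_k - \zs)\|.
\]
Strong alignment bounds the second summand by $\tfrac{\mus}{16\Lhs}\|z_k - \zs\|$. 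For the first, the restricted eigenvalue bound in Lemma~\ref{lem:boundontaylor} yields $\tfrac{\lambda_k}{(\sigma_j^{x_k})^2 + \lambda_k}\|z_k - \zs\|$, and strong alignment crucially guarantees $(\sigma_j^{x_k})^2 \geq \sig$ \emph{independent of} $\|z_k - \zs\|$ (in sharp contrast to weak alignment). Combined with $\lambda_k \leq \cub \|z_k - \zs\|$ and $\|z_k - \zs\| \leq \|z_0 - \zs\| \leq \tfrac{\sig \mus}{16 \cub \Lhs}$, one obtains $\tfrac{\lambda_k}{\sig} \leq \tfrac{\mus}{16\Lhs}$, bounding the first summand by $\tfrac{\mus}{16\Lhs}\|z_k - \zs\|$ as well.

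An outer induction assembles the pieces: assuming $\|z_k - \zs\| \leq \|z_0 - \zs\|$, the initialization bound is preserved at step $k$, the inner-product hypothesis holds, and the one-step contraction in particular yields $\|z_{k+1} - \zs\| \leq \|z_k - \zs\|$, closing the induction. The constant-stepsize statement follows the template of the constant-stepsize proof of Theorem~\ref{thm: onestepimprovement_smooth}: with $\lambda_k = \lambda q^k$ decaying in lockstep with the iterates and $\lambda \leq \tfrac{\sig \mus}{16\Lhs}$, the ratio $\lambda_k/\sig$ stays below $\tfrac{\mus}{16\Lhs}$ as long as $\|z_k - \zs\| \leq M q^k$, while the constant stepsize $\gamma$ combined with restricted smoothness and quadratic growth produces the required per-step contraction (Lemma~\ref{lem:boundontaylor}'s approximation-error bound handles the resulting linearization residual). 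The main obstacle—as in the nonsmooth counterpart Theorem~\ref{thm: onestepimprovement_exact}—is threading the initialization condition through the induction so that $\lambda_k/\sig \leq \mus/(16\Lhs)$ remains valid at every step; the restriction on $\|z_0 - \zs\|$ (rather than on $\cub$) is precisely what replaces the $\|z_k - \zs\|$-dependent factor from weak alignment, unlocking the faster local rate.
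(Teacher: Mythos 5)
Your proposal is correct and follows essentially the same route as the paper: it invokes Proposition~\ref{prop:master_polyak} (together with restricted quadratic growth and restricted smoothness to convert the Polyak decrement into the contraction factor $1 - \tfrac{\gamma\mus}{32\Lhs}$), reduces the work to verifying the inner-product hypothesis, and establishes it by splitting $z_k - \zs$ along $\Pi^{x_k}_j$ with the restricted-eigenvalue bound from Lemma~\ref{lem:boundontaylor} on the tangent piece and the strong-alignment $\rho$-bound on the complementary piece, exactly as the paper does, using the initialization bound $\|z_0-\zs\|\le \tfrac{\sig\mus}{16\cub\Lhs}$ in place of a constraint on $\cub$. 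The only cosmetic deviation is that you crudely bound $\tfrac{\lambda_k}{(\sigma_j^{x_k})^2+\lambda_k}\le \tfrac{\lambda_k}{\sig}$ rather than by $\tfrac{\cub\|z_k-\zs\|}{\sig+\cub\|z_k-\zs\|}$ as the paper does, but both yield $\tfrac{\mus}{16\Lhs}$ under the same hypotheses; and in the constant-stepsize case the clause ``as long as $\|z_k-\zs\|\le Mq^k$'' is unnecessary (under strong alignment $\sig$ is a fixed constant so $\lambda_k/\sig\le\lambda/\sig\le\mus/(16\Lhs)$ unconditionally), though this does not affect the correctness of the argument.
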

}

\subsection{Guarantees under local regularity}
\label{sec:local-guarantees}
In some applications, such as tensor factorization, Assumptions~\ref{assum: assumptiononc} and~\ref{ass:weak-alignment} do not hold, i.e., there is no global Lipchitzness of $\nabla \c$ or alignment. Instead, these two only hold locally. Notably, even under these weaker local conditions, all our previous rates still hold. In this section, we extend our guarantees to such a local regime.
We start with local alternatives of Assumptions~\ref{assum: assumptiononc} and~\ref{ass:weak-alignment}. Recall that $\cX^{\star} = \argmin_{x} \h \circ \c(x)$ is the set of minimizers.

\begin{assumption}[Locally Lipschitz Jacobian]\label{assum:local_lip_jacob}
	The map $\c$ is continuously differentiable, and for a fixed $\xs\in \RR^{d}$, there exists $\jacvarepsilon>0$ and $\lc \ge 0$ such that %
	\begin{align*}
	\norm{\nabla \c(x) - \nabla \c(y)}{\rm{op}} \le \lc\norm{x-y}{2} \quad \text{for all } x,y \in B_\jacvarepsilon (x^{\star}).
	\end{align*}
\end{assumption}}{}

\ifbool{showSquare}{
\begin{assumption}[Local weak alignment]\label{ass:local-weak-alignment}
	For a fixed $\xs \in \cX^\star$ and $\zs=F(\xs)$ there exist functions $\rloc \colon \RR_{+} \rightarrow \RR_{+}, \sig\colon \RR_{+} \rightarrow \RR_{+}$ and a scalar $\alignvarepsilon>0$ such that for all $\rho >0$ we have that if $x \in \B_{\alignvarepsilon}(x^\star)$, and $z = \c(x) \in \B_{\rloc(\rho)}(z^{\star})$ then there is an index $j$ for which 
	\begin{align*}
	\norm{(I- \P^{x}_{j})(z - \zs)}{2}  \le  \rho \norm{z - \zs}{2} \qquad \text{and} \qquad \left(\sigma_{j}^{x}\right)^{2} \ge \sig(\rho) \norm{z - \zs}{2}.
	\end{align*}
\end{assumption} 

Next, we state a local guarantee under these local regularity assumptions. Although we only state it for weakly aligned and nonsmooth problems using the Polyak stepsize, there are similar guarantees for the other scenarios considered in this section. We defer those and the proof of the following result to Appendix~\ref{app:local-regularity}. 
The key idea to establish this result is to show that the iterates $x_{k}$ stay in the region where the previous two assumptions hold, after which the argument follows precisely as it did for the global assumptions. }{}

\ifbool{showSquare}{
\begin{theorem}[\textbf{Convergence under local weak alignment and nonsmoothness}]\label{thm:local-onestepimprovement}
	Suppose Assumptions~\ref{assum:nonsmoothpenalty},~\ref{assum:local_lip_jacob} and~\ref{ass:local-weak-alignment} hold.  Define $ \tilde q :=  \sqrt{1 - \frac{\gamma \mu^2}{8 \lh^2}}$, 
	and let $x_{0}$ and $z_0 =\c(x_0)$ be points satisfying
	$$\|x_{0} - \xs\|_{2} \leq \varepsilon /2
	\quad\text{and}\quad
	\norm{z_0 - \zs}{2} \le \min\left\{\rloc\left(\frac{\mu}{8\lh}\right), \frac{\left(1- \sqrt{\tilde q}\right)^2\varepsilon^2 \clb}{2\gamma^2 }\right\},$$ 
	where $\varepsilon = \min\left\{ \jacvarepsilon, \alignvarepsilon \right\}$. Suppose we ran Algorithm~\ref{alg:LM} initialized at $x_{0}$ using Configuration~\ref{assum: dampingparameter} with $\gamma \le \min\left\{1, \frac{\clb}{\lc}\right\}$ and $\cub \leq \frac{\mu}{8\lh}\sig( \frac{\mu}{8\lh})  $. Then, the iterates $x_{k}$ satisfy
	$$\|x_{k} -  x^{\star}\|_{2} < \varepsilon \quad \text{for all }k \geq 0,$$
	and, moreover, the mapped iterates $z_{k }= \c(x_{k})$ satisfy
	$$
	\|z_{k} - \zs\|^2 \le \left(1 - \frac{\gamma \mu^2}{8 \lh^2}\right)^k \|z_0 - \zs\|^2 \quad \text{for all } k\geq 0.
	$$
	
\end{theorem}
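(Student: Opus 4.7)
The plan is to proceed by induction on $k$, maintaining two invariants simultaneously: $(a)$ $\|x_j - x^\star\|_2 < \varepsilon$ for every $j \le k$, which guarantees that both local assumptions (Assumptions~\ref{assum:local_lip_jacob} and~\ref{ass:local-weak-alignment}) apply at $x_j$; and $(b)$ the mapped iterates satisfy $\|z_j - \zs\|^2 \le (1 - \gamma \mu^2/(8L^2))^j \|z_0 - \zs\|^2$ for every $j \le k$. The base case is immediate from the hypothesis $\|x_0 - x^\star\| \le \varepsilon/2$. The bulk of the argument is the inductive step.

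Once $(a)$ is established up to step $k$, the line segment from $x_k$ to $x_{k+1}$ lies in $B_\varepsilon(x^\star) \subseteq B_{\jacvarepsilon}(x^\star)$ (provided we also confirm $x_{k+1} \in B_\varepsilon(x^\star)$, which is invariant $(a)$ at step $k+1$), so $\nabla F$ is $\lc$-Lipschitz on this segment and Proposition~\ref{prop:master_polyak} applies verbatim. Likewise, since $x_k \in B_{\alignvarepsilon}(x^\star)$, the local weak alignment assumption provides the index $j$ and the inequalities used in the proof of Theorem~\ref{thm: onestepimprovement}. Running exactly the chain of inequalities in that proof---bounding $|\langle (I-P_k)v_k, z_k - \zs\rangle|$ via the $j$-restricted eigenvalue bound from Lemma~\ref{lem:boundontaylor} and the choice of $\cub$---yields the one-step contraction $\|z_{k+1} - \zs\|^2 \le (1 - \gamma\mu^2/(8L^2))\|z_k - \zs\|^2$, which discharges invariant $(b)$ at step $k+1$.

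It remains to establish invariant $(a)$ at step $k+1$ by controlling the cumulative movement $\|x_{k+1} - x^\star\| \le \|x_0 - x^\star\| + \sum_{j=0}^{k} \|x_{j+1}-x_j\|$. For a single step, the SVD of $\nabla F(x_j)$ together with the fact that $\sigma/(\sigma^2 + \lambda_j)$ is maximized at $\sigma = \sqrt{\lambda_j}$ with value $1/(2\sqrt{\lambda_j})$ gives
\begin{equation*}
\|x_{j+1} - x_j\| \le \frac{\gamma_j \|\Pi^{x_j} v_j\|}{2\sqrt{\lambda_j}}.
\end{equation*}
Inserting the Polyak stepsize and applying Claim~\ref{claim:aligned-projected-subgradients} (whose hypothesis is already verified inside the previous step of the argument) yields $\gamma_j \|\Pi^{x_j} v_j\| = \gamma (h(z_j) - h^\star)/\|\Pi^{x_j} v_j\| \lesssim \gamma \|z_j - \zs\|$. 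Combined with $\lambda_j \ge \clb \|z_j - \zs\|$, this produces $\|x_{j+1} - x_j\| \lesssim (\gamma/\sqrt{\clb}) \sqrt{\|z_j - \zs\|}$. Invoking invariant $(b)$ gives $\sqrt{\|z_j - \zs\|} \le (\sqrt{\tilde q})^j \sqrt{\|z_0 - \zs\|}$, and summing the geometric series produces a total displacement bounded by $(\gamma/\sqrt{\clb}) \sqrt{\|z_0 - \zs\|}/(1-\sqrt{\tilde q})$. The assumed bound on $\|z_0 - \zs\|$ is precisely calibrated so that this sum does not exceed $\varepsilon/2$, which closes the induction.

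The main obstacle is calibrating the interaction between the linear contraction on $\|z_k - \zs\|$ and the square-root dependence of $\|x_{k+1} - x_k\|$ on $\|z_k - \zs\|$, which is the origin of the factor $(1 - \sqrt{\tilde q})^2$ and the quadratic dependence on $\varepsilon$ in the initial condition on $\|z_0 - \zs\|$. A secondary subtlety is that Proposition~\ref{prop:master_polyak} requires Lipschitzness of $\nabla F$ along the full segment $[x_k, x_{k+1}]$, not just at $x_k$; this is handled by the convexity of $B_\varepsilon(x^\star)$ once both endpoints are shown to lie inside it, so invariants $(a)$ and $(b)$ must be established in the correct interleaved order within each inductive step.
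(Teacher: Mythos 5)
Your proposal is correct and follows essentially the same route as the paper: the one-step displacement bound $\|x_{j+1}-x_j\|\lesssim (\gamma/\sqrt{\clb})\,\|z_j-\zs\|^{1/2}$ (via the $1/(2\sqrt{\lambda_j})$ estimate, the Polyak stepsize, and Claim~\ref{claim:aligned-projected-subgradients}), the localized contraction from the proof of Theorem~\ref{thm: onestepimprovement}, and the geometric-series summation calibrated by the hypothesis on $\|z_0-\zs\|$ are exactly the ingredients of Lemma~\ref{lem:one_step_x_bound} and Proposition~\ref{prop: local-lipshshitz-c-conditions-for-linear-convergence}. The only difference is organizational: the paper factors the two-invariant induction into the abstract Proposition~\ref{prop: local-lipshshitz-c-conditions-for-linear-convergence}, whereas you inline it, with the same interleaving of the displacement and contraction steps.
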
}{}

\section{Consequences for statistical recovery problems}\label{sec:examples}
In this section, we instantiate the general convergence guarantees from Section~\ref{sec:guarantees} for concrete recovery problems in signal processing and data science. \ifbool{showSquare}{To this end, we show that our alignment assumptions hold for three families of parameterizations: squared-variable formulations, low-rank matrix factorizations, and CP tensor factorizations. Further, we establish that our restricted conditioning assumptions on the outer convex function are satisfied whenever well-established notions of strong identifiability, e.g., restricted isometry property, hold. Armed with these results, we}{We}  derive local convergence rates \ifbool{showSquare}{for nonnegative least squares, robust matrix sensing, and tensor factorization under standard assumptions from the literature. All proofs are deferred to Appendix~\ref{app:examples}}{for PSD matrix sensing and tensor factorization}.

\ifbool{showSquare}{

\subsection{Squared-variable formulations} \label{sec:square}

Scientists dealing with unmixing problems often wish to minimize a convex function $h\colon \RR^{r}\rightarrow \RR$ over the positive orthant $\RR^r_{+}$. A prominent example of this type of problem is nonnegative least squares \cite{LH95}. These problems arise naturally across several domains, including acoustics, imaging, and genomics \cite{lin2004nonnegative, szlam2010split, li2000parametric}. This type of problem can be reformulated as a composite optimization problem via the squared-variable map $c\colon x \mapsto x\odot x$ (where $\odot$ denotes the component-wise product) \cite{ding2023squared, levin2024effect}.
Although other algorithmic solutions might be preferable for this particular problem, e.g., the projected subgradient method, we cover this example as it provides a clear and simple illustration of our framework.
\paragraph{Regularity of the parameterization.} Throughout we assume $h$ has a unique minimizer $\zs$ over $\RR^{r}_{+}$ 
and let $\xs\in\RR^{r}$ be any vector such that $\zs = \xs \odot \xs.$ 
It is immediate that $\nabla \c(x) = 2 \diag(x)$ and, 
the problem is ill-conditioned when
$\max_{i\in [r]} |\xs_{i}| \gg \min_{i\in[r]} |\xs_{i}|$, and overparameterized whenever $r^\star:= \# \supp(\xs) < r$. The next result establishes regularity for the squared-variable formulation with potential overparameterization; its proof appears in Appendix \ref{proof: hadamard-weak-alignment}. 

\begin{theorem}[Weak alignment of squared-variable map]
\label{thm: weak-alignment-hadamard}
The map $\c\colon \RR^r \rightarrow \RR^r$ given by $x \mapsto x \odot x$ satisfies Assumption~\ref{assum:mapc:smoothness} with %
$\lc = 2$ and Assumption~\ref{ass:weak-alignment} with
   $$\sig(\rho) = \frac{\rho}{\max\{\sqrt{r - \rs},1\}} \quad \text{and} \quad \rloc(\rho) =
   \min\left\{ \min_{\substack{i,j\in[r] \\ \zs_i \neq \zs_j}}\dfrac{|\zs_i - \zs_j|}{2}  , \,\, \min_{ \substack{ i\in [r] \\ \zs_i \neq 0}}\frac{\zs_i
}{1 + \sig(\rho)} , \,\, \min_{\substack{i\in [r] \\ \zs_i \neq 0}}\frac{\zs_i
}{2}  \right\}
$$
for any given $\zs\in \RR^r_+$ with $ r^\star = \#\supp(\zs)$. 
\end{theorem}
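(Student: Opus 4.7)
The plan is to verify Assumption~\ref{assum:mapc:smoothness} and Assumption~\ref{ass:weak-alignment} by leveraging the diagonal structure of the Jacobian $\nabla \c(x) = 2\diag(x)$, whose left singular vectors are the standard basis vectors (reordered so that the corresponding values $|x_i|$, equivalently $z_i = x_i^2$, are decreasing), and whose singular values are $2|x_i|$. The smoothness constant is immediate: since $\nabla \c(x) - \nabla \c(y) = 2\diag(x - y)$ has operator norm bounded by $2\|x - y\|_{\infty} \le 2\|x - y\|$, one obtains $\lc = 2$.

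For the alignment, the central idea is to choose $j$ adaptively via thresholding the coordinates of $z$. Fix $\rho > 0$ and let $z = \c(x)$ satisfy $\|z - \zs\| \le \rloc(\rho)$; the case $z = \zs$ is trivial, so assume otherwise. I would set
$$
T := \tfrac{\sig(\rho)}{4}\|z - \zs\| \qquad \text{and} \qquad j := \#\{i \in [r] : z_i \ge T\}.
$$
With this choice, $\P^x_j$ is exactly the coordinate projection onto $\{i : z_i \ge T\}$, so the smallest included singular value obeys $(\sigma_j^x)^2 = 4\min\{z_i : z_i \ge T\} \ge 4T = \sig(\rho)\|z - \zs\|$, which delivers the singular-value lower bound required by weak alignment.

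It remains to bound the residual $\|(I - \P^x_j)(z - \zs)\|$. First, I would argue that every $i \in \supp(\zs)$ belongs to $\{i : z_i \ge T\}$. The middle term of $\rloc(\rho)$ gives $|z_i - \zs_i| \le \rloc(\rho) \le \zs_i/(1 + \sig(\rho))$ for such $i$, and hence $z_i \ge \zs_i \sig(\rho)/(1 + \sig(\rho)) \ge T$. Consequently the excluded indices lie in $[r] \setminus \supp(\zs)$, a set of cardinality at most $r - \rs$, and each contributes $(z - \zs)_i = z_i < T$ to the residual. Summing yields
$$
\|(I - \P^x_j)(z - \zs)\|^2 \le (r - \rs)T^2 = \tfrac{(r - \rs)\sig(\rho)^2}{16}\|z - \zs\|^2 \le \tfrac{\rho^2}{16}\|z - \zs\|^2,
$$
where the last step uses $\sig(\rho) = \rho/\max\{\sqrt{r - \rs},1\}$ and is trivial when $r = \rs$.

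The main obstacle is the careful bookkeeping that ties the threshold $T$ to the prescribed bounds on $\rloc(\rho)$ and $\sig(\rho)$: $T$ must be small enough that every support coordinate survives the thresholding (which forces $\rloc \lesssim \zs_i/(1+\sig)$), yet large enough to lower-bound the smallest retained singular value. Once $T$ is set appropriately in terms of $\|z - \zs\|$, both inequalities of weak alignment fall out simultaneously from decomposing $z - \zs$ along $\supp(\zs)$ and its complement.
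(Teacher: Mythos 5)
Your proof is correct, but it takes a genuinely different route from the paper's. The paper verifies the three hypotheses of a general sufficient condition for weak alignment (Proposition~\ref{thm: sufficient-condition-weak-alignment}, a backward induction over candidate indices $k$), and to do so it needs two combinatorial lemmas: one showing $z=x\odot x$ and $\zs$ are similarly ordered on the ball (which is why the separation term $\min_{\zs_i\neq\zs_j}|\zs_i-\zs_j|/2$ appears in $\rloc$), and one giving the bound $\|(I-\Pi^x_k)(z-\zs)\|\le\sqrt{r-k}\,(\sigma^x_{k+1})^2$. You instead pick the index $j$ directly by thresholding the entries of $z$ at $T=\tfrac{\sig(\rho)}{4}\|z-\zs\|$, use the diagonal structure to identify $\P^x_j$ with the coordinate projection onto $\{i:z_i\ge T\}$ (legitimate, since $z_{(j)}\ge T>z_{(j+1)}$ guarantees the top-$j$ singular subspace is unambiguous), show every support coordinate of $\zs$ survives the threshold using only the middle term $\zs_i/(1+\sig(\rho))$ of $\rloc$, and then bound the residual by counting the at most $r-\rs$ off-support coordinates, each contributing less than $T$. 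This is more elementary and self-contained: it bypasses the ordering lemma entirely, uses only one of the three terms defining $\rloc$ (so you in fact establish the assumption on a possibly larger neighborhood, which still yields the stated theorem), and delivers the bounds with slack ($\rho/4$ instead of $\rho$). What the paper's route buys in exchange is uniformity: the same sufficient-condition proposition is reused verbatim for the Burer--Monteiro and asymmetric matrix maps, where a direct thresholding of "coordinates" is not available. The only shared blind spot is the degenerate case $\zs=0$ (where your $j$ could be $0$ for large $\rho$), but the paper's argument is equally vacuous there, so this is not a gap attributable to you.
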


\paragraph{Nonnegative least squares.}
We leverage the regularity of the squared-variable formulation to derive guarantees for
 nonnegative least squares
\cite{VBK04, KSD13, MFLS17, DLP+22}. For a matrix $A \in \RR^{m \times r}$ with $m \geq r$ and $b = A z^\star \in \RR^m$, \ifbool{showSquare}{define the smooth and nonsmooth formulations}{we define the problem} %
\begin{equation}
    \label{nnls-nonsmooth}
\min_{x \in \mathbb{R}^r } \frac{1}{2}\left\| A\left( x\odot x \right) - b\right\|_2^2, \quad
\text{ and }
\quad \min_{x \in \mathbb{R}^r } \left\| A\left( x\odot x \right) - b\right\|_2.
\end{equation}
 The following lemma is immediate, and so we omit its proof.
\begin{lemma}
\label{growth-assumption-nonsmooth-nonegative-least-square}
Suppose that $A$ has full rank. Then, the function $z \mapsto  \frac{1}{2} \|Az -  b\|^{2}_{2}$
satisfies Assumption \ref{assum:smoothpenalty} with constants $\mus =\sigma_{\min}(A)^2$ and $\Lhs = \sigma_{\max}(A)^2$. Similarly, the function $z \mapsto  \|Az -  b\|_{2}$
satisfies Assumption \ref{assum:nonsmoothpenalty} with constants $\mu = \sigma_{\min}(A)$ and $\lh=\sigma_{\max}(A)$.
\end{lemma}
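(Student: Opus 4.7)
The plan is to verify each clause of Assumptions~\ref{assum:smoothpenalty} and~\ref{assum:nonsmoothpenalty} one by one for the respective loss function; since both losses are of the form $\tilde h \circ A$ (composition with a linear map), every property reduces to elementary singular-value bounds.

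First I would handle the two properties shared by both cases. Since $A$ is full rank with $m \geq r$, it is injective, so the map $z \mapsto \|Az - b\|_2$ (and its square) attains its minimum at the unique point $z^\star$ satisfying $Az^\star = b$; this gives the unique-minimizer clauses. Convexity of $z \mapsto \frac{1}{2}\|Az - b\|_2^2$ and $z \mapsto \|Az - b\|_2$ is standard (composition of a convex function with an affine map).

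For the smooth case, writing $h(z) - h^\star = \tfrac{1}{2}\|A(z - z^\star)\|_2^2$ directly yields quadratic growth with $\mus = \sigma_{\min}(A)^2$. For the restricted smoothness condition~\ref{item:smoothpenalty:lip}, I would plug in $\nabla h(z) = A^\top A(z - z^\star)$: the bound $\|\Pi^x \nabla h(z)\| \leq \|\nabla h(z)\| \leq \sigma_{\max}(A)^2 \|z - z^\star\|$ is immediate, and the paired inequality $h(z) - h^\star \geq \frac{1}{2\Lhs}\|\Pi^x \nabla h(z)\|^2$ follows from the standard identity $\|A^\top A w\|^2 \leq \sigma_{\max}(A)^2 \cdot w^\top A^\top A w$ applied to $w = z - z^\star$. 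The inner-product condition is Cauchy-Schwarz: $|\langle A^\top A(z - z^\star),\, w\rangle| = |\langle A(z - z^\star),\, Aw\rangle| \leq \sigma_{\max}(A)^2 \|z - z^\star\| \|w\|$, taken at $w = (I - P(x,\lambda))(z - z^\star)$.

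For the nonsmooth case, sharpness with $\mu = \sigma_{\min}(A)$ is the one-liner $h(z) - h^\star = \|A(z - z^\star)\|_2 \geq \sigma_{\min}(A)\|z - z^\star\|$. Restricted Lipschitzness then follows from the subdifferential chain rule recalled in Section~\ref{sec:prelims}: any $v \in \partial h(F(x))$ has the form $v = A^\top u$ with $u \in \partial \|\cdot\|_2(Az - b)$, and since every element of the subdifferential of $\|\cdot\|_2$ lies in the closed unit ball, $\|v\| \leq \sigma_{\max}(A) = \lh$. Clause~\ref{item:nonsmoothpenalty:lip_a} then follows from the non-expansiveness of $\Pi^x$, and clause~\ref{item:nonsmoothpenalty:lip_b} from Cauchy-Schwarz applied to $\langle v,\, (I - P(x,\lambda))(z - z^\star)\rangle$. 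Overall, there is no real obstacle here: the full-rank hypothesis lets every condition collapse to a standard singular-value inequality, which is presumably why the authors describe the lemma as immediate.
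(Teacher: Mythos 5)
Your proof is correct, and since the paper declares this lemma immediate and omits its proof, your clause-by-clause verification is precisely the routine argument the authors intend: quadratic growth and sharpness from $\sigma_{\min}(A)$, the Lipschitz/smoothness clauses from $\|A^\top A w\|^2 \le \sigma_{\max}(A)^2\, w^\top A^\top A w$, Cauchy--Schwarz, nonexpansiveness of $\Pi^x$, and the chain rule $\partial h(z) = A^\top \partial\|\cdot\|_2(Az-b)$ with unit-ball subgradients. No gaps; nothing further is needed.
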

Recall that $\kappa(A)$ denotes the condition number of $A$, i.e., $\kappa(A) = \sigma_{\max(A)}/\sigma_{\min}(A).$
Equipped with these results, we are in good shape to derive a local convergence rate.
\begin{corollary}[\textbf{Smooth nonegative least squares}]\label{cor:nnls-smooth}
Suppose Algorithm \ref{alg:LM} is applied to the first nonnegative least squares objective \eqref{nnls-nonsmooth} (squared), initialized at some $x_{0} \in \RR^r$ using Configuration~\ref{assum: dampingparameter} with $\gamma \leq \min\left\{1 , \frac{\clb}{2} \right\}$, $\cub \leq (2^{8} \max\{\sqrt{r-r^\star},1\})^{-1} \kappa^{-4}\left(A \right)$ and
$$
\norm{ x_0 \odot x_0 - \zs}{2} \leq \frac{1}{2}\min \left\{ \min_{\substack{i,j\in[r] \\ \zs_i \neq \zs_j}}\left\{{\zs_i - \zs_j}\right\} , \min_{i\in [r] | \zs_i \neq 0} \zs_i\right\}.
$$
Then, the iterates $x_{k}$ satisfy
$$
\norm{x_k \odot x_k - \zs }{2}^2 \leq \left( 1 - \frac{\gamma}{32} \kappa^{-2}\left(A \right) \right)^k \norm{x_0 \odot x_0 - \zs}{2}^2 \quad \text{for all $k\geq 0$}.
$$
\end{corollary}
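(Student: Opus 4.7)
The plan is to deduce this corollary directly from Theorem~\ref{thm: onestepimprovement_smooth} (item~\ref{item:smooth-weak-polyak}, the Polyak stepsize case) by verifying its three ingredient assumptions for the choices $F(x) = x \odot x$ and $h(z) = \tfrac{1}{2}\|Az - b\|_2^2$, and then checking that the hyperparameter and initialization conditions in the corollary exactly match (or imply) those of the theorem with $\mus/\Lhs = \kappa^{-2}(A)$.

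First, I would invoke Theorem~\ref{thm: weak-alignment-hadamard} to conclude that $F\colon \RR^{r} \to \RR^{r}$ satisfies Assumption~\ref{assum:mapc:smoothness} with $\lc = 2$ and Assumption~\ref{ass:weak-alignment} around $\zs$ with the stated functions
$$ \sig(\rho) \;=\; \frac{\rho}{\max\{\sqrt{r - \rs},\,1\}}, \qquad \rloc(\rho) \;=\; \min\!\left\{\, \min_{\substack{i,j\\ \zs_i \neq \zs_j}} \tfrac{|\zs_i - \zs_j|}{2},\; \min_{\substack{i\\ \zs_i \neq 0}} \tfrac{\zs_i}{1+\sig(\rho)},\; \min_{\substack{i\\ \zs_i \neq 0}} \tfrac{\zs_i}{2}\,\right\}. $$
Second, Lemma~\ref{growth-assumption-nonsmooth-nonegative-least-square} yields that $h$ satisfies Assumption~\ref{assum:smoothpenalty} with $\mus = \sigma_{\min}(A)^{2}$ and $\Lhs = \sigma_{\max}(A)^{2}$, so that the key ratio becomes $\mus/(16\Lhs) = \kappa^{-2}(A)/16$.

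Third, I would check each hyperparameter constraint of Theorem~\ref{thm: onestepimprovement_smooth}. The bound $\gamma \le \min\{1, \clb/\lc\} = \min\{1, \clb/2\}$ is exactly what the corollary assumes. For the damping coefficient, the theorem requires $\cub \le \tfrac{\mus}{16\Lhs}\sig\!\left(\tfrac{\mus}{16\Lhs}\right)$; plugging in $\mus/(16\Lhs) = \kappa^{-2}(A)/16$ and $\sig(\rho) = \rho/\max\{\sqrt{r-\rs},1\}$, this simplifies to
$$ \cub \;\le\; \frac{\kappa^{-4}(A)}{256\max\{\sqrt{r - \rs},\,1\}}, $$
which is precisely the bound $\cub \le (2^{8}\max\{\sqrt{r - \rs},1\})^{-1}\kappa^{-4}(A)$ in the corollary's hypothesis.

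The one step requiring a brief argument is verifying that the corollary's initialization bound implies the theorem's condition $\|z_{0} - \zs\|_{2} \le \rloc\!\left(\tfrac{\mus}{16\Lhs}\right)$. Setting $\rho := \kappa^{-2}(A)/16 \le 1/16$, one has $\sig(\rho) \le \rho \le 1$, hence $\zs_{i}/(1+\sig(\rho)) \ge \zs_{i}/2$, so the middle term in $\rloc(\rho)$ is dominated by the third. Thus $\rloc(\rho) = \tfrac{1}{2}\min\{\min_{\zs_i \neq \zs_j}|\zs_{i} - \zs_{j}|,\, \min_{\zs_i \neq 0}\zs_{i}\}$, which matches the assumed bound on $\|x_{0}\odot x_{0} - \zs\|_{2}$ exactly. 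Once all three assumptions and all parameter conditions are in place, Theorem~\ref{thm: onestepimprovement_smooth} yields
$$ \|z_{k} - \zs\|_{2}^{2} \;\le\; \bigl(1 - \tfrac{\gamma\mus}{32\Lhs}\bigr)^{k}\|z_{0} - \zs\|_{2}^{2} \;=\; \bigl(1 - \tfrac{\gamma}{32}\kappa^{-2}(A)\bigr)^{k}\|z_{0} - \zs\|_{2}^{2}, $$
which is the claim. No step is genuinely hard; the only care required is matching the definitions of $\rloc$ and $\sig$ from Theorem~\ref{thm: weak-alignment-hadamard} against the corollary's cleaner bounds, which is where the ``$1/(1+\sig(\rho)) \ge 1/2$'' observation above is the mild technical point.
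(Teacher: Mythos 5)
Your proposal is correct and follows exactly the route the paper intends: the paper's proof is precisely the observation that the corollary ``follows directly'' from Theorem~\ref{thm: onestepimprovement_smooth} (Polyak case) once Theorem~\ref{thm: weak-alignment-hadamard} supplies $\lc = 2$ and the weak-alignment data and Lemma~\ref{growth-assumption-nonsmooth-nonegative-least-square} supplies $\mus = \sigma_{\min}^2(A)$, $\Lhs = \sigma_{\max}^2(A)$. Your bookkeeping of $\cub \le \rho\,\sig(\rho)$ with $\rho = \kappa^{-2}(A)/16$ and the simplification of $\rloc(\rho)$ via $1+\sig(\rho)\le 2$ is exactly the computation the paper leaves implicit, so nothing is missing.
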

\begin{corollary}[\textbf{Nonsmooth nonegative least squares}]\label{cor:nnls-nonsmooth}
Suppose Algorithm \ref{alg:LM} is applied to the second nonnegative least squares objective in \eqref{nnls-nonsmooth} (not squared), initialized at some $x_{0} \in \RR^r$ using Configuration~\ref{assum: dampingparameter} with $\gamma \leq \min\left\{1 , \frac{\clb}{2} \right\}$, $\cub \leq \frac{1}{64 \max\{\sqrt{r-r^\star},1\}} \kappa^{-2}\left(A \right)$ and
$$
\norm{ x_0 \odot x_0 - \zs}{2} \leq \frac{1}{2}\min \left\{ \min_{\substack{i,j\in[r] \\ \zs_i \neq \zs_j}}\left\{\zs_i - \zs_j\right\} , \min_{\substack{i\in [r] \\ \zs_i \neq 0}}\zs_i\right\}.
$$
Then, the iterates $x_{k}$ satisfy
$$
\norm{x_k \odot x_k - \zs }{2}^2 \leq \left( 1 - \frac{\gamma}{8} \kappa^{-2}\left(A \right) \right)^k \norm{x_0 \odot x_0 - \zs}{2}^2 \quad \text{for all $k\geq 0$}.
$$
\end{corollary}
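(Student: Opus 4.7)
The plan is to reduce the corollary to a direct application of Theorem~\ref{thm: onestepimprovement} (Polyak stepsize, nonsmooth weak alignment) by plugging in the constants already provided for the two building blocks: the squared-variable parameterization and the residual penalty. First, I would invoke Lemma~\ref{growth-assumption-nonsmooth-nonegative-least-square} to see that the outer function $z \mapsto \|Az - b\|_2$ satisfies Assumption~\ref{assum:nonsmoothpenalty} with sharpness $\mu = \sigma_{\min}(A)$ and Lipschitz constant $\lh = \sigma_{\max}(A)$, so that the key ratio $\mu/\lh$ is precisely $\kappa^{-1}(A)$. Then I would invoke Theorem~\ref{thm: weak-alignment-hadamard} to record that the map $F(x) = x \odot x$ is Lipschitz-smooth with constant $\lc = 2$ (matching Assumption~\ref{assum:mapc:smoothness}) and satisfies the weak-alignment Assumption~\ref{ass:weak-alignment} with the explicit $\sig$ and $\rloc$ given there.

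Next, I would verify that the two hyperparameter bounds in Theorem~\ref{thm: onestepimprovement} reduce exactly to the stated ones. Since $\lc = 2$, the constraint $\gamma \le \min\{1, \clb/\lc\}$ becomes $\gamma \le \min\{1, \clb/2\}$. For the $\cub$ bound, setting $\rho := \mu/(8\lh) = \kappa^{-1}(A)/8$ and substituting into Theorem~\ref{thm: weak-alignment-hadamard} gives $\sig(\rho) = \rho/\max\{\sqrt{r-r^\star},1\}$, so
\[
\frac{\mu}{8\lh}\,\sig\!\left(\frac{\mu}{8\lh}\right) \;=\; \frac{\kappa^{-2}(A)}{64\,\max\{\sqrt{r-r^\star},1\}},
\]
which is exactly the stated upper bound on $\cub$.

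After that, I would translate the initialization hypothesis into the local-radius condition required by Theorem~\ref{thm: onestepimprovement}. Concretely, I need to show that the assumed bound on $\|x_0 \odot x_0 - \zs\|_2$ implies $\|z_0 - \zs\|_2 \le \rloc(\mu/(8\lh))$. Since $\rho \le 1$ and $\sig(\rho) \le 1$, the term $\zs_i/(1+\sig(\rho))$ in the definition of $\rloc$ is at least $\zs_i/2$, so $\rloc(\rho)$ is bounded below by the minimum of $|\zs_i - \zs_j|/2$ and $\zs_i/2$, both of which dominate the assumed initialization radius. The only subtlety here is matching the two slightly different $\min$-structures in the $\rloc$ formula against the corollary's initialization bound; this is the only step requiring any care, and it is routine once one uses $\sig(\rho) \le 1$.

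Finally, with all assumptions in place, Item~\ref{item:nonsmooth-weak-polyak} of Theorem~\ref{thm: onestepimprovement} gives
\[
\|z_k - \zs\|_2 \;\le\; \left(1 - \frac{\gamma \mu^2}{8\lh^2}\right)^{k/2} \|z_0 - \zs\|_2.
\]
Substituting $\mu^2/\lh^2 = \kappa^{-2}(A)$ and squaring both sides yields the advertised rate $(1 - \gamma\kappa^{-2}(A)/8)^k$ on $\|z_k - \zs\|_2^2$, completing the proof. No step of this plan is a real obstacle; the entire argument is a bookkeeping exercise of verifying that the constants from the two building-block theorems slot cleanly into the general convergence guarantee.
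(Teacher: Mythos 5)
Your proposal is correct and is exactly the paper's intended argument: the paper proves this corollary by directly instantiating Theorem~\ref{thm: onestepimprovement} (Polyak stepsize) with the constants from Theorem~\ref{thm: weak-alignment-hadamard} ($\lc = 2$, $\sig(\rho)=\rho/\max\{\sqrt{r-r^\star},1\}$, the stated $\rloc$) and Lemma~\ref{growth-assumption-nonsmooth-nonegative-least-square} ($\mu=\sigma_{\min}(A)$, $\lh=\sigma_{\max}(A)$), just as you do, including the observation that $\sig(\rho)\le 1$ makes $\rloc(\mu/(8\lh))$ collapse to the stated initialization radius and that squaring the rate $(1-\gamma\mu^2/(8\lh^2))^{k/2}$ gives the advertised bound.
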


     These corollaries follow directly from Theorem~\ref{thm: onestepimprovement} and Theorem~\ref{thm: onestepimprovement_smooth}, respectively. While similar rates hold for geometrically decaying step sizes and better rates hold in the exactly parameterized case $r^\star = r$, we omit them for brevity. The condition number of $A$ appears in the convergence rate because we incorporated $A$ into the definition of $h$; this rate is standard for gradient descent applied to least squares. Interestingly, the convergence rate we derive for the nonsmooth formulation of the problem is faster than the convergence rate for its smooth counterpart. In particular, Corollary~\ref{cor:nnls-nonsmooth} only allows for $\gamma \asymp  \kappa\left(A \right)^{-2}$ which translates to a rate of $O(\kappa(A)^{4}\log(1/\varepsilon))$, while Corollary~\ref{cor:nnls-smooth} imposes $\gamma \asymp  \kappa\left(A \right)^{-4}$ translating to a rate of $O(\kappa(A)^{6}\log(1/\varepsilon))$. We observe experimentally that our method is slightly faster when applied to the nonsmooth formulation; see Section~\ref{sec:nonnegative-exp}.
 
}{}
\subsection{Matrix recovery problems}\label{sec:matrices}
 
Several modern data science tasks can be formulated as the problem of recovering a rank-$r^\star$ matrix $Z^\star$ from a small set of noisy measurements $b = \cA(Z^\star) + \varepsilon \in \RR^m$ where $\cA$ is a known linear map and $\varepsilon$ models noise. Applications arise in imaging, recommendation systems, control theory, and communications \cite{charisopoulos2021low, chi2019nonconvex, davenport2016overview, wright2022high, diaz2019nonsmooth}.
\ifbool{showSquare}{
Remarkably, even though $Z^\star$ may be a large $d_{1} \times d_{2}$ matrix, the number of measurements $m$ required for recovery is often much lower, typically on the order of $O(r^\star(d_{1}+d_{2}))$.
}{For simplicity, we will focus on the case where $Z^\star$ is a PSD matrix. Remarkably, even though $Z^\star$ may be a large $d \times d$ matrix, the number of measurements $m$ required for recovery is often much lower, typically on the order of $O(d r^\star )$.}
A popular approach to tackle this problem leverages low-rankness by solving \ifbool{showSquare}{}{the composite optimization problem}
\ifbool{showSquare}{ one of two formulations:
\begin{equation}\label{eq:matrix-problems}
    \min_{X \in \mathbb{R}^{d \times r}} \ell\left( \mathcal{A}( {XX^\top} ) - b \right) \qquad \text{or} \qquad \min_{X \in \mathbb{R}^{d_1 \times r}, \, Y \in \mathbb{R}^{d_2 \times r}} \ell\left( \mathcal{A}({XY^\top} ) - b \right),
\end{equation}
}{
\begin{equation}\label{eq:matrix-problems}
    \min_{X \in \mathbb{R}^{d \times r}} \left\| \mathcal{A}( XX^\top ) - b \right\|_1,
\end{equation}}
\ifbool{showSquare}{
depending on whether the matrix is symmetric positive semidefinite $Z^\star \in \SSS^{d}_+$ or asymmetric $Z^\star \in \R^{d_1 \times d_2}$.
Here $r$ is an upper bound on the true rank $r^\star$ and $\ell(\cdot)$ is a measure of discrepancy. %
Common choices for $\ell(\cdot)$ include the $\ell_{2}$ norm squared, which is effective against small unbiased noise \cite{chi2019nonconvex}, and the $\ell_1$ norm, which is robust against gross outliers \cite{charisopoulos2021low}. 
}{
where $r$ is an upper bound on the true rank $r^\star$. Other common choices for losses are the $\ell_2$ norm squared; however, we chose the $\ell_1$ norm since it is robust to gross outliers of the form
\begin{equation}\label{eq:outliers}
b = \begin{cases} \cA(\Zs)_{i} & \text{if }i \in \cI^{c} \\ \eta_{i} & \text{otherwise,} \end{cases}
\end{equation}
where $\mathcal{I} \subseteq [m]$ is a subset of the entries and $\eta_i$ is arbitrary.} Iterative methods for these formulations are appealing since they do not need to project onto the set of low-rank matrices, which involves costly matrix factorizations that are prohibitively costly in large-scale settings. 

\ifbool{showSquare}{
In this section, we develop rates for Algorithm~\ref{alg:LM} applied to composite problems where the parameterization can be either $\Fsym \colon \RR^{d\times r} \rightarrow \SSS^{d}$ or $\Fasym\colon \RR^{d_1 \times r} \times \RR^{d_2 \times r} \rightarrow \RR^{d_1 \times d_2}$ given by
\begin{equation}\label{eq:matrix-param}
    X \mapsto XX^\top \qquad \text{and} \qquad (X, Y) \mapsto XY^\top, \quad\text{respectively.}
\end{equation}
We consider two concrete losses: $h(\cdot) = \frac{1}{2}\|\cA(\cdot) - b\|_2^2$ and $h(\cdot) = \|\cA(\cdot) - b\|_1$. In what follows, we develop theory for linear maps satisfying the standard restricted isometry property (RIP) or a modified version involving the $\ell_1$-norm---both of which hold for linear maps with appropriately normalized iid Gaussian entries. %
We leverage these results to derive guarantees for the $\ell_1$ loss that hold even when gross outliers corrupt a constant fraction of the measurements.

\subsubsection{Regularity of the parameterization}

As a first step, we show that both parameterizations in \eqref{eq:matrix-param} are smooth (Assumption~\ref{assum: assumptiononc}) and establish weak alignment (Assumption~\ref{ass:weak-alignment}) for the PSD factorization and its local analogue  (Assumption~\ref{ass:local-weak-alignment}) for the asymmetric factorization. 
The proofs of the following two results are rather technical and require carefully characterizing the spectrum of the Jacobians of these parametrizations; we defer these arguments to Appendices~\ref{section:proof-thm-regularity-matrix-sym} and~\ref{section:proof-thm-regularity-matrix-asym}, respectively.

\begin{theorem}[\textbf{Weak alignment of PSD factorization}]\label{thm:regularity-matrix-sym}
The map $\Fsym \colon \RR^{d\times r} \rightarrow \SSS^{d}$ given by $X \mapsto XX^\top$ satisfies Assumption~\ref{assum: assumptiononc} with $\lc = 2$ and Assumption~\ref{ass:weak-alignment} with
$$
\sig(\rho) 
\;=\; 
\frac{4\rho}{\,\sqrt{2(\,r - r^\star + 1)\,}},
\quad \text{and} \quad
\rloc(\rho) 
\;=\; 
\min\left\{
  \frac{\rho}{\sqrt{2
    }},
  \frac{1}{1 + \sig(\rho)}
, \frac{1}{3}\right\}
\lambda_{\rs}\left(Z^\star\right).
$$
for any $Z^\star \in \SSS^{d}_{+}$ with $\rank(Z^\star) = \rs.$
\end{theorem}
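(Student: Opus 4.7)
The smoothness bound $\lc = 2$ follows from a short computation: $(\nabla \Fsym(X) - \nabla \Fsym(Y))[H] = (X-Y)H^\top + H(X-Y)^\top$, whose Frobenius norm is at most $2\|X-Y\|_F\|H\|_F$ by submultiplicativity.

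For weak alignment, the central tool is the quadratic identity
\begin{equation*}
\delta := Z - \Zs = \nabla \Fsym(X)[\Delta] - \Delta\Delta^\top, \qquad \Delta := X - \tilde{X}^\star,
\end{equation*}
valid for any $\tilde{X}^\star \in \RR^{d\times r}$ with $\tilde{X}^\star(\tilde{X}^\star)^\top = \Zs$. My plan is to pick $\tilde{X}^\star$ as the best orthogonal alignment of $X$ to the padded square root $[X^\star, 0]$ of $\Zs$, i.e., $\tilde{X}^\star = [X^\star, 0]R$ for the minimizer $R \in O(r)$. Expanding against the singular triple $(u_i, w_i, \sigma_i^X)$ of $\nabla \Fsym(X)$ yields the componentwise identity $\langle u_i, \delta\rangle = \sigma_i^X\langle w_i, \Delta\rangle - \langle u_i, \Delta\Delta^\top\rangle$, together with $\|(I - \Pi_{\mathrm{range}})\delta\|_F \le \|\Delta\Delta^\top\|_F$. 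Two complementary estimates then control the relevant quantities: Weyl's inequality applied to $XX^\top - \Zs$ gives $\sigma_i^2(X) \le \|\delta\|_F$ for $i > \rs$ and $\sigma_i^2(X) \ge \lambda_i(\Zs) - \|\delta\|_F$ for $i \le \rs$; and the standard orthogonal-alignment lemma, applied to the top $\rs$ columns of $X$, together with a Cauchy--Schwarz refinement $\sum_{i > \rs}\sigma_i^2(X) \le \sqrt{r-\rs}\,\bigl(\sum_{i > \rs}\sigma_i^4(X)\bigr)^{1/2} \le \sqrt{r-\rs}\,\|\delta\|_F$ (the inner sum read off from the $(U_2, U_2)$-block of $\delta$), will yield
\begin{equation*}
\|\Delta\|_F^2 \;\le\; \frac{C\|\delta\|_F^2}{\lambda_{\rs}(\Zs)} + \sqrt{r-\rs}\,\|\delta\|_F.
\end{equation*}

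Finally, I would choose $j$ as the largest index with $(\sigma_j^X)^2 \ge \sig(\rho)\|\delta\|_F$, so the singular-value condition holds by construction. For the alignment bound, writing $(I - \Pi_j^X)\delta = (I - \Pi_j^X)\nabla \Fsym(X)[\Delta] - (I - \Pi_j^X)\Delta\Delta^\top$ shows that the first piece is at most $\sigma_{j+1}^X\|\Delta\|_F$ by the standard tail estimate for truncated SVD, while the second is at most $\|\Delta\Delta^\top\|_F \le \|\Delta\|_{\op}\|\Delta\|_F$ with $\|\Delta\|_{\op} = O(\sqrt{\|\delta\|_F})$ (a consequence of $\sigma_{\rs+1}(X) \le \sqrt{\|\delta\|_F}$). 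Using $\|\delta\|_F \le \rloc(\rho)$ with the three thresholds in $\rloc(\rho)$ playing dedicated roles---$\rho/\sqrt 2$ to validate Weyl, $1/(1+\sig(\rho))$ to absorb $\|\Delta\Delta^\top\|_F$ into the alignment budget, and $1/3$ to keep the alignment lemma applicable, each scaled by $\lambda_{\rs}(\Zs)$---and running through the algebra should then produce the claimed $\sig(\rho) = 4\rho/\sqrt{2(r-\rs+1)}$. The principal obstacle is jointly balancing the three sources of alignment error (the tail of small singular values, the quadratic term $\|\Delta\Delta^\top\|_F$, and the out-of-range component); the factor $\sqrt{r-\rs+1}$ emerges precisely from the Cauchy--Schwarz refinement of the perturbation bound on $\|\Delta\|_F$, and is what enables $\sig(\rho)$ to scale linearly rather than quadratically in $\rho$.
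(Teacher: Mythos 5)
Your smoothness computation is fine, but the weak-alignment argument has a genuine gap, and it is located exactly where the overparameterized case is hard. You decompose $\delta := XX^\top - \Zs = \nabla \Fsym(X)[\Delta] - \Delta\Delta^\top$ with $\Delta = X - \tilde X^\star$ and then bound $\norm{(I-\Pi_j^X)\Delta\Delta^\top}{F} \le \norm{\Delta\Delta^\top}{F} \le \norm{\Delta}{\op}\norm{\Delta}{F}$, discarding the projection. This cannot work: take $X = [\Xs,\ \epsilon v]$ with $v$ a unit vector and $\tilde X^\star = [\Xs,\ 0]$; then $\Delta\Delta^\top = \epsilon^2 vv^\top = \delta$ exactly, so your bound on the quadratic term is $\norm{\delta}{F}$ itself, not $\rho\norm{\delta}{F}$, and no choice of the neighborhood $\rloc(\rho)$ repairs this. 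In the overparameterized directions the quadratic term \emph{is} the entire residual; the reason weak alignment still holds is that $\Delta\Delta^\top$ lies inside the span of the retained eigenvectors of $\nabla\Fsym(X)\nabla\Fsym(X)^\top$, so $(I-\Pi_j^X)$ annihilates it --- but to see that you must know what $\Pi_j^X$ is, which your argument never uses. A second, quantitative problem: your bound on the linear term is $\sigma_{j+1}\norm{\Delta}{F}$, and in the overparameterized regime $\norm{\Delta}{F}$ is only $O\bigl((r-\rs)^{1/4}\norm{\delta}{F}^{1/2}\bigr)$ (your own perturbation bound), while $\sigma_{j+1} \le \sqrt{\sig(\rho)\norm{\delta}{F}}$ by the choice of $j$; multiplying gives an alignment error of order $\sqrt{\rho}\,\norm{\delta}{F}$, not $\rho\norm{\delta}{F}$, so the claimed $\sig(\rho)$ linear in $\rho$ would not follow from this route (you would be forced into $\sig(\rho)\propto\rho^2$).

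The paper's proof never passes to factor space and never introduces $\Delta$. It first computes the exact spectral decomposition of $\nabla\Fsym(X)\nabla\Fsym(X)^\top$ (eigenvalues $2(\sigma_i^2(X)+\sigma_j^2(X))$ with eigenvectors proportional to $\ux_i(\ux_j)^\top + \ux_j(\ux_i)^\top$, Lemma~\ref{lem:spectral-bm}), so that $(I-\Pi_k^X)$ is known explicitly in the basis of left singular vectors of $X$. It then splits $(I-\Pi_k^X)[XX^\top - \Zs]$ into the $XX^\top$ part --- whose surviving eigendirections have eigenvalue at most $\tfrac14\sigma_{k+1}^2$, giving a bound $\tfrac14\sqrt{r-\rs+1}\,\sigma_{k+1}^2$ that is \emph{quadratic} in $\sigma_{k+1}$, which is what makes $\sig(\rho)\propto\rho$ possible --- and the $\Zs$ part, which reduces to $\norm{\cQ_X\Zs\cQ_X}{F}$ for the tail projector $\cQ_X$ and is bounded by $\norm{\delta}{F}^2/\lambda_{\rs}(\Zs)$ via the eigenvalue-ordering argument of Lemma~\ref{second-term-assymetric} (building on Lemma~\ref{lem:asymmetric_eigenvalue_projected_bound}). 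Finally the three conditions of the sufficient-condition Proposition~\ref{thm: sufficient-condition-weak-alignment} are verified with the stated $\sig(\rho)$ and $\rloc(\rho)$. If you want to salvage your approach, you would have to track how $(I-\Pi_j^X)$ acts on $\Delta\Delta^\top$ and on $\nabla\Fsym(X)[\Delta]$ in the eigenbasis of the Jacobian, at which point you are reconstructing the paper's spectral argument.
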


\begin{theorem}[\textbf{Weak alignment of asymmetric factorization}]  \label{thm:regularity-matrix-asym}  The map $\Fasym \colon \RR^{d_1\times r} \times \RR^{d_2 \times r} \rightarrow \RR^{d_{1} \times d_{2}}$ given by $(X, Y) \mapsto XY^\top$ satisfies Assumption~\ref{assum:local_lip_jacob} with $\lc = \sqrt{2}$ and $\jacvarepsilon = \infty$, and Assumption \ref{ass:local-weak-alignment} with $$\alignvarepsilon = \frac{1}{16\sqrt{2}}\frac{\min\left\{ \sigma^2_{\rs}\left(\Xs\right), \sigma_{\rs}^2\left(\Ys\right) \right\}}{\max\left\{ \sigma_1\left(\Xs\right), \sigma_1\left(\Ys\right) \right\}},\qquad
\sig(\rho) 
= 
\frac{\rho}{10\sqrt{2}\left(r - r^\star + 1\right)^2}\frac{\min\left\{ \sigma^2_{\rs}\left(\Xs\right), \sigma_{\rs}^2\left(\Ys\right) \right\}}{ \sigma^2_{\rs}\left(\Xs\right) + \sigma_{\rs}^2\left(\Ys\right)}, $$
$$ \text{and} \quad
\rloc(\rho) 
= \min\left\{
  \frac{\rho}{4}, \frac{1}{4\sig(\rho)}
\right\} \min\left\{ \sigma^2_{\rs}\left(\Xs\right), \sigma_{\rs}^2\left(\Ys\right) \right\}
$$
for any factorization $\Zs=\Xs\Ys^\top$ satisfying $\rankk{\Xs}=\rankk{\Ys} = \rs$ and the right singular vectors of the two factors match $V^\Xs= V^\Ys$.
\end{theorem}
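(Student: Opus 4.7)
The Jacobian of $\Fasym$ at $(X,Y)$ admits the explicit form
\[
\nabla \Fasym(X,Y)[\Delta_X,\Delta_Y] = \Delta_X Y^\top + X\Delta_Y^\top,
\]
so for any second pair $(X',Y')$, the triangle inequality and submultiplicativity of the Frobenius norm give
\[
\bigl\|(\nabla \Fasym(X,Y)-\nabla \Fasym(X',Y'))[\Delta_X,\Delta_Y]\bigr\|_F \le \|Y-Y'\|_F\|\Delta_X\|_F + \|X-X'\|_F\|\Delta_Y\|_F.
\]
An elementary inequality chain promotes this to an operator-norm bound with constant $\lc = \sqrt{2}$ valid for all $(X,Y)$, hence $\jacvarepsilon = +\infty$. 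This settles the first part of the theorem.

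For the local weak-alignment part, my strategy rests on an explicit spectral description of the Jacobian. A short adjoint computation shows $\nabla \Fasym(X,Y)^\top M = (MY, M^\top X)$, and therefore
\[
\nabla \Fasym(X,Y)\,\nabla \Fasym(X,Y)^\top M = M YY^\top + XX^\top M.
\]
The eigenvectors of this Sylvester-type operator are rank-one matrices $u_a v_b^\top$, where $u_a$ and $v_b$ are left singular vectors of $X$ and $Y$, with eigenvalues $\sigma_a(X)^2 + \sigma_b(Y)^2$. This fully describes the SVD of $\nabla \Fasym(X,Y)$ in terms of the singular systems of $X$ and $Y$, anchoring the rest of the argument.

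Using this spectral decomposition, I would verify the two conclusions of Assumption~\ref{ass:local-weak-alignment} in three steps. \emph{First}, exploit the hypothesis $V^{\Xs}=V^{\Ys}$ together with Weyl and Davis--Kahan perturbation bounds to show that for $(X,Y) \in \B_{\alignvarepsilon}(\xs)$ the top-$\rs$ left singular vectors and values of $X$ and $Y$ closely approximate those of $\Xs$ and $\Ys$, while any additional singular values of $X$ and $Y$ are at most of order $\sqrt{\|z-\zs\|}$; the calibration $\alignvarepsilon \propto \min\{\sigma_{\rs}^2(\Xs),\sigma_{\rs}^2(\Ys)\}/\max\{\sigma_1(\Xs),\sigma_1(\Ys)\}$ keeps these perturbations a small fraction of the signal level. \emph{Second}, expand $z - \zs = (X-\Xs)\Ys^\top + \Xs(Y-\Ys)^\top + (X-\Xs)(Y-\Ys)^\top$, choose $j$ to retain all pairs $(a,b)$ with $\sigma_a(X)^2+\sigma_b(Y)^2$ above a threshold proportional to $s(\rho)\|z-\zs\|$, and bound the residual $(I-\Pi^x_j)(z-\zs)$ by $\rho\|z-\zs\|$ using that the retained directions capture both first-order pieces of $z-\zs$ while the cross term is quadratic in $\|(X,Y)-(\Xs,\Ys)\|_F$. \emph{Third}, the lower bound $(\sigma_j^x)^2 \ge s(\rho)\|z-\zs\|$ follows directly from the choice of threshold.

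The main obstacle is the interaction of asymmetry with overparameterization. In contrast to the symmetric case (Theorem~\ref{thm:regularity-matrix-sym}), the excess-rank directions of $X$ and $Y$ need not align, and the factors can become unbalanced even when $(\Xs,\Ys)$ is perfectly balanced; a naive lift to $\tilde X = [X;Y]$ followed by invoking the symmetric result is therefore insufficient, because extracting the off-diagonal block of $\tilde X \tilde X^\top$ distorts both the relevant subspaces and the singular-value gap. Carefully propagating the shared right-basis condition $V^{\Xs}=V^{\Ys}$ through the perturbation analysis is what produces the ratio $\min\{\sigma_{\rs}^2(\Xs),\sigma_{\rs}^2(\Ys)\}/\max\{\sigma_1(\Xs),\sigma_1(\Ys)\}$ in $\alignvarepsilon$ and the sum $\sigma_{\rs}^2(\Xs)+\sigma_{\rs}^2(\Ys)$ in the denominator of $s(\rho)$; the $(r-\rs+1)^2$ factor emerges when bounding the overparameterized sector of $z-\zs$, whose singular values vanish only like $\sqrt{\|z-\zs\|}$, which in turn forces the $\|z-\zs\|$-scale of the lower bound on $(\sigma_j^x)^2$.
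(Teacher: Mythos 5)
Your smoothness argument and the spectral description of $\nabla \Fasym(X,Y)\nabla \Fasym(X,Y)^\top$ (eigenvalues $\sigma_i^2(X)+\sigma_j^2(Y)$ with eigenvectors $U^Y_j (U^X_i)^\top$) match the paper. The gap is in the alignment part, and it starts with your Step 1: the claim that ``any additional singular values of $X$ and $Y$ are at most of order $\sqrt{\|z-\zs\|}$'' is false in the asymmetric setting. Take $X = [\Xs,\; a]$ and $Y = [\Ys,\; 0]$: then $XY^\top = \Xs\Ys^\top$ exactly, so $\|z-\zs\| = 0$, while $\sigma_{\rs+1}(X) = \|a\|$ can be as large as $\alignvarepsilon$ (a constant). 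Unlike the PSD case, where $\sigma_{\rs+1}^2(X) \le \|XX^\top - \Zs\|_{\mathrm{op}}$, the excess singular values of the individual factors are only controlled by the factor-space radius, not by $\|z-\zs\|$. This is precisely why the theorem is only a \emph{local} weak alignment statement and why the quantity that matters is the pair structure $\sigma_i^2(X)+\sigma_j^2(Y)$: only directions with \emph{both} indices above $\rs$ can be small, while mixed directions retain signal of order $\min\{\sigma_{\rs}^2(\Xs),\sigma_{\rs}^2(\Ys)\}$. Your Step 2 inherits the problem: bounding the discarded part of $z-\zs$ by something ``quadratic in $\|(X,Y)-(\Xs,\Ys)\|_F$'' does not yield $\rho\|z-\zs\|$, because in the overparameterized regime that quadratic quantity is generically \emph{comparable} to $\|z-\zs\|$ (with a constant that is not small), not $\rho$ times it.

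What the paper actually does, and what your plan is missing, is a quantitative dichotomy argument fed into a backward-induction sufficient condition (Proposition~\ref{thm: sufficient-condition-local-weak-alignment}): one must show that \emph{if} $\sigma_k^2 \le \sig(\rho)\|z-\zs\|$ for some $k$ between $\rank \nabla\Fasym(\Xs,\Ys)$ and $\rank\nabla\Fasym(X,Y)$, \emph{then} $\|(I-\Pi^{(X,Y)}_{k-1})(z-\zs)\| \le \rho\|z-\zs\|$. Establishing this (Proposition~\ref{lem: asym-alignment}) splits $(I-\Pi_k)[XY^\top - \Xs\Ys^\top]$ into the piece hitting $XY^\top$, bounded by $(r-\rs+1)\sigma_{k+1}^2$ via the spectral characterization, and the piece hitting $\Xs\Ys^\top$, which requires: an eigenvalue-ordering lemma (indices with $\min\{i,j\}\le\rs$ precede the rest), the key quadratic estimate $\|\cQ_X \Xs\Ys^\top \cQ_Y\|_F \lesssim \|z-\zs\|^2/\min\{\sigma_{\rs}^2(\Xs),\sigma_{\rs}^2(\Ys)\}$ (Lemma~\ref{second-term-assymetric}), and a separate treatment of the cross-index blocks via Wedin's theorem and a Procrustes/$\sin\Theta$ bound, which is where the hypothesis $V^{\Xs}=V^{\Ys}$ and the choice of $\alignvarepsilon$ enter. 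None of these estimates follow from the Weyl/Davis--Kahan outline you give, and without them the required bound $\rho\|z-\zs\|$ is not reached; so as written the proposal does not close the argument.
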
}{}

\ifbool{showSquare}{
These two regularity guarantees combined with the immediate fact that $h(Z) = \|Z - Z^{\star}\|_{F}$ satisfies Assumption~\ref{assum:nonsmoothpenalty}, can be used to derive fast convergence guarantees for Algorithm~\ref{alg:LM} applied to matrix factorization problems as the one we covered in the introduction; see Figure~\ref{fig:intro-example}. We observe that for the asymmetric setting, the alignment is only local, and we require the right singular vectors of the two factors to be the same. Although this might sound restrictive, spectral initialization procedures guarantee closedness to such balanced factors~\cite{tu2016low,chi2019nonconvex}.
We focus instead on more general matrix sensing problems where the input $\cA$ is not simply the identity.}{}

\ifbool{showSquare}{
\subsubsection{Noiseless matrix sensing}
In this section, we consider noiseless measurements $b = \cA(Z^{\star})$ and the smooth objective $\h(\cdot) = \frac{1}{2}\|\cA(\cdot) - b\|^{2}_{2}$. Although we could instead work with the nonsquared loss, we restrict attention to the smooth objective since $(i)$ the next section will explore an arguably more interesting nonsmooth loss, and $(ii)$ most existing theory pertains to this setting. We will state definitions and some results only for the asymmetric case, since the extension to the positive semidefinite case follows immediately.}{}

\ifbool{showSquare}{Our guarantees apply to maps satisfying the restricted isometry property (RIP)---a popular notion of strong identifiability that
underpins most existing guarantees for linear inverse problems. %
  A linear map $\mathcal{A}\colon \RR^{d_1 \times d_2} \rightarrow \RR^{m} $ satisfies RIP if there exists $\delta \in (0,1)$ such that
  \begin{equation}\label{def:RIP}
(1-\delta) \norm{Z}{F}^2 \leq \norm{\mathcal{A}(Z)}{2}^2 \leq (1+
\delta) \norm{Z}{F}^2%
\end{equation}
for all matrices $Z$ of rank at most $r.$ In short, this property ensures that distances between low-rank matrices are approximately preserved after mapping by $\cA$. While the identity map trivially satisfies this property, more interesting random maps with low-dimensional images also exhibit this behavior.
We say that $\cA$ has i.i.d. entries if $\cA(Z)_{i} = \dotp{A_{i}, Z}$ where the entries of $A_{i} \in \RR^{d_{1} \times d_{2}}$ are drawn i.i.d. and the matrices $A_{i}$ are independent of each other.}{}

\ifbool{showSquare}{
\begin{lemma}[Theorem 2.3 in \cite{candes2011tight}] Fix $r \leq \min(d_1, d_2)$ and $\delta \in (0,1)$. Assume that  $\mathcal{A}$ has i.i.d. entries with distribution $N(0,1/m)$. There exist universal constants $c_1, c_2, c_{3} > 0$ such that if $m \geq c_1r(d_1 + d_2)$, then $\mathcal{A}$ satisfies \eqref{def:RIP} for all matrices $Z$ of rank at most $r$ with probability at least $1 - c_2\exp(-c_3m)$.
\end{lemma}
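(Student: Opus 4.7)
The plan is to prove this standard matrix RIP bound via the classical covering-plus-concentration argument. By the homogeneity of $\cA$, it suffices to establish the two-sided inequality $(1-\delta) \le \norm{\cA(Z)}{2}^2 \le (1+\delta)$ uniformly over the set $\cS = \{Z \in \RR^{d_1\times d_2} : \rankk{Z} \le r, \, \norm{Z}{F} = 1\}$ of unit-Frobenius, rank-at-most-$r$ matrices.

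First, I would establish pointwise concentration. For a fixed $Z \in \cS$, since each $A_i$ has i.i.d.\ $N(0,1/m)$ entries, the inner products $\dotp{A_i, Z}$ are independent $N(0, 1/m)$ random variables, so $m \norm{\cA(Z)}{2}^2$ is distributed as $\chi^2_m$. The Laurent-Massart tail bound then gives
\[
\PP\!\left[\,\big|\norm{\cA(Z)}{2}^2 - 1\big| > \tfrac{\delta}{2}\,\right] \;\le\; 2\exp(-c\,m\,\delta^2)
\]
for an absolute constant $c > 0$. Next, I would construct an $\epsilon$-net $\cN_\epsilon \subset \cS$ in Frobenius norm. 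The key geometric fact is that any $Z \in \cS$ decomposes via SVD as $Z = U \Sigma V^\top$ with $U \in O(d_1, r)$, $V \in O(d_2, r)$, and $\Sigma$ diagonal with $\norm{\Sigma}{F}=1$; covering each factor separately (Stiefel manifolds via volumetric bounds, and the unit-Frobenius diagonal via a standard simplex net) yields
\[
\#\cN_\epsilon \;\le\; \left(\tfrac{C_0}{\epsilon}\right)^{(d_1 + d_2 + 1)\,r}
\]
for an absolute constant $C_0 > 0$. This bound is the crux of the argument: it captures the intrinsic dimension $r(d_1+d_2)$ of the set of low-rank matrices rather than the ambient dimension $d_1 d_2$.

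Step three is the union bound. Combining the pointwise tail bound with the cardinality estimate, the event that the modified RIP holds with constant $\delta/2$ over $\cN_\epsilon$ has probability at least
\[
1 \;-\; 2\left(\tfrac{C_0}{\epsilon}\right)^{(d_1+d_2+1)r}\exp(-c\,m\,\delta^2).
\]
Fixing $\epsilon$ to be a small absolute constant (say $\epsilon = \delta/8$) and requiring $m \ge c_1 r(d_1+d_2)$ with $c_1$ large enough depending on $\delta$, the cardinality factor is absorbed into the exponential, leaving a clean bound of the form $1 - c_2\exp(-c_3 m)$.

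The final step transfers the bound from the net to all of $\cS$. I would use the chaining / successive-approximation trick of Baraniuk-Davenport-DeVore-Wakin, adapted to the rank-$r$ setting by Recht-Fazel-Parrilo and Candès-Plan: any $Z \in \cS$ can be written as a rapidly convergent series $Z = \sum_{j \ge 0} Z_j$ with each $Z_j$ a rescaled element of $\cN_\epsilon$, so that bilinearity and the net-level RIP yield the full uniform RIP bound, at the price of a constant factor in $c_1$. The main obstacle throughout is the correct covering number estimate in step two; the naive $\epsilon$-net in the ambient $\RR^{d_1 \times d_2}$ would produce an exponent of $d_1 d_2$, yielding the useless requirement $m \gtrsim d_1 d_2$. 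Exploiting the SVD parameterization to compress the net along the low-rank factors is precisely what gives the optimal sample complexity $m = O(r(d_1+d_2))$.
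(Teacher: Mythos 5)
The paper does not prove this lemma at all: it is imported verbatim as Theorem 2.3 of the cited work of Cand\`es and Plan, so there is no internal proof to compare against. Your sketch is, in essence, the standard proof of that cited theorem (Gaussian/$\chi^2$ concentration for a fixed unit-Frobenius matrix, an $\epsilon$-net of the rank-$r$ unit sphere of cardinality $(C_0/\epsilon)^{(d_1+d_2+1)r}$ built from the SVD factors, a union bound, and an extension from the net to the whole set), so the route is the right one and the first three steps are correct as stated, including the observation that $m\|\cA(Z)\|_2^2\sim\chi^2_m$ and the absorption of the entropy term once $m\geq c_1 r(d_1+d_2)$ with $c_1$ depending on $\delta$ (which is consistent with the lemma, since $\delta$ is fixed before the constants).

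The one place where your argument as written would not go through is the last step. You claim that any $Z\in\cS$ can be expanded as a rapidly convergent series $\sum_{j\ge 0}Z_j$ with each $Z_j$ a rescaled element of $\cN_\epsilon$; but the successive residuals $Z-Z_0$, etc., generically have rank up to $2r$, not $r$, so they are not rescaled net points and the net-level bound does not apply to them directly. The standard fixes, which the sources you cite actually use, are either (i) split each residual orthogonally into two matrices of rank at most $r$ (via the SVD of the residual) and control each piece separately with a self-bounding argument on the supremum $\kappa=\sup_{\rank Z\le r,\ \|Z\|_F=1}\|\cA(Z)\|_2$, yielding $\kappa\le(1+\delta/2)+2\epsilon\kappa$ and hence the upper RIP bound after choosing $\epsilon$ a small constant, with the lower bound then following from the triangle inequality; or (ii) run the entire net argument at rank $2r$ (or $6r$, as the paper's Lemma~5.6 implicitly needs) and adjust $c_1$. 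With that repair, your proof is complete and matches the argument in the reference the paper relies on.
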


In turn, RIP suffices for good conditioning. The proof of the next lemma is in Appendix~\ref{proof:lem-RIP-implies-assumptions}.

\begin{lemma}\label{lem: smooth-sensing-satisfies-assum}
  Suppose the map $\mathcal{A}$ satisfies~\eqref{def:RIP} for all matrices of rank at most $6r$, and that $b = \mathcal{A}(\Zs)$, with $\Zs\in \RR^{d_1 \times d_2}$ a rank $r$ matrix. Then, the function $h(\cdot) = \frac{1}{2} \norm{\mathcal{A}(\cdot) - b }{2}^2$ satisfies Assumption \ref{assum:smoothpenalty} with $\mus =(1-\delta)$ and $\Lhs = \frac{(1+\delta)^2}{(1-\delta)}$.
\end{lemma}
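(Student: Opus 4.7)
The plan is to verify each of the four conditions of Assumption~\ref{assum:smoothpenalty} in turn, relying on the RIP inequality~\eqref{def:RIP} applied to matrices of rank at most $2r$, $4r$, and $6r$ depending on what is being bounded. The gradient of $h$ is simply $\nabla h(Z) = \cA^*\cA(Z-\Zs)$, and the key structural fact about the parameterization we will exploit repeatedly is that for both $\Fsym$ and $\Fasym$, the range of $\nabla F(x)$ consists of matrices of rank at most $2r$ (since a typical element has the form $\Delta X Y^\top + X \Delta Y^\top$, each summand of rank at most $r$).

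\emph{Uniqueness, convexity, quadratic growth.} If $Z, Z' \in \Ima F$ both minimize $h$, then $Z - Z'$ has rank at most $2r$ and $\cA(Z - Z') = 0$, so RIP forces $Z = Z'$. Convexity of $h$ is immediate as it is a squared norm composed with an affine map. For quadratic growth over $\Ima F$, any $Z \in \Ima F$ yields $Z - \Zs$ of rank at most $2r$, and RIP gives $h(Z) = \tfrac{1}{2}\|\cA(Z-\Zs)\|^2 \ge \tfrac{1-\delta}{2}\|Z-\Zs\|_F^2$, so $\mus = 1-\delta$.

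\emph{Restricted smoothness, part (a).} For the first inequality, use the dual description $\|\Pi^x \nabla h(Z)\| = \sup\{\langle \cA(W), \cA(Z-\Zs)\rangle : W\in\range(\nabla F(x)), \|W\|_F\le 1\}$. Both $W$ and $Z-\Zs$ have rank at most $2r$, so two applications of RIP combined with Cauchy-Schwarz give $\|\Pi^x \nabla h(Z)\| \le (1+\delta)\|Z-\Zs\|_F$. For the second inequality, combine this bound with the quadratic growth lower bound already established, yielding $h(Z) - h^\star \ge \tfrac{1-\delta}{2(1+\delta)^2}\|\Pi^x\nabla h(Z)\|^2$; since $(1+\delta)^2/(1-\delta) \ge 1+\delta$, both subparts hold with the common constant $\Lhs = (1+\delta)^2/(1-\delta)$.

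\emph{Restricted smoothness, part (b).} This is the step requiring the most care and is the main obstacle, because the vector $W := (I - P(x,\lambda))(Z-\Zs)$ need not a priori have low rank. The critical observation is that $P(x,\lambda)$ shares its range with $\nabla F(x)$: writing $\nabla F(x) = Q\Sigma R^\top$ in reduced SVD, one gets $P(x,\lambda) = Q \Sigma^2(\Sigma^2 + \lambda I)^{-1} Q^\top$, so $P(x,\lambda) = Q D Q^\top$ with $D$ diagonal and entries in $[0,1)$, and $\Pi^x = QQ^\top$. Thus
\[
I - P(x,\lambda) \;=\; (I - \Pi^x) \;+\; Q(I-D)Q^\top,
\]
and applying this to $Z - \Zs$ expresses $W$ as the sum of $(Z-\Zs) - \Pi^x(Z-\Zs)$ (rank at most $4r$) and $Q(I-D)Q^\top(Z-\Zs)$ (rank at most $2r$, since it lies in $\range(\nabla F(x))$). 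Therefore $W$ has rank at most $6r$. Since $Z-\Zs$ has rank at most $2r \le 6r$, the RIP-restricted Cauchy-Schwarz bound yields
\[
|\langle \nabla h(Z), W\rangle| \;=\; |\langle \cA(Z-\Zs), \cA(W)\rangle| \;\le\; (1+\delta)\|Z-\Zs\|_F\,\|W\|_F,
\]
which holds with constant $(1+\delta) \le \Lhs$. Collecting all constraints establishes the assumption with $\mus = 1-\delta$ and $\Lhs = (1+\delta)^2/(1-\delta)$.
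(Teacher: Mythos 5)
Your proof is correct, and it follows the same overall skeleton as the paper's (verify each item of Assumption~\ref{assum:smoothpenalty}; quadratic growth from the RIP lower bound on rank-$\le 2r$ differences; for item 4(b), control the rank of $(I-P(x,\lambda))(z-\zs)$ using that $\Ima P(x,\lambda)\subseteq \Ima \nabla F(x)$ consists of matrices of rank at most $2r$). The substantive difference is in how the $(1+\delta)$ factor is produced: the paper invokes the restricted-orthogonality estimate of Lemma~\ref{lem: candes-lemma} (Cand\`es' $\langle \cA(M),\cA(M')\rangle \le \delta\|M\|_F\|M'\|_F + |\langle M,M'\rangle|$), applied to pairs of ranks $(2r,2r)$ and $(2r,4r)$, whereas you use plain Cauchy--Schwarz in $\RR^m$ together with the RIP upper bound on each factor separately. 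Your route is more elementary (it bypasses the Cand\`es lemma and only uses the upper RIP inequality for the smoothness estimates) and yields the identical constants $\mus=1-\delta$, $\Lhs=(1+\delta)^2/(1-\delta)$. Your rank bookkeeping for part (b) is also slightly different: the paper observes directly that $(I-P(x,\lambda))[M]\in M+\Ima\nabla F(x)$, giving rank at most $4r$, while your decomposition $I-P=(I-\Pi^x)+Q(I-D)Q^\top$ gives the lossier bound $6r$; both sit within the rank-$6r$ RIP hypothesis of the lemma (which the paper needs anyway because its Cand\`es-lemma application with ranks $2r$ and $4r$ requires RIP at rank $6r$), so nothing is lost. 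A minor simplification available to you: bounding $\rank\big((I-P)(z-\zs)\big)\le 4r$ directly via $P(z-\zs)\in\Ima\nabla F(x)$ avoids the SVD decomposition of $P$ altogether.
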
}{}

\ifbool{showSquare}{
Therefore, applying Theorem~\ref{thm:regularity-matrix-sym} (resp. Theorem~\ref{thm:regularity-matrix-asym}) in tandem with the preceding lemma shows that the assumptions of the general convergence guarantee Theorem~\ref {thm: onestepimprovement_smooth} (resp. Theorem~\ref{thm:local-onestepimprovement-weak-smooth} in Appendix~\ref{app:local-regularity}) are satisfied in the symmetric (resp. asymmetric) case.\footnote{To derive the corollary in the asymmetric case we used that $1-(1-x)^\alpha \geq \alpha x$ for all $x \in [0,1]$ and $\alpha \in (0,1)$} %
\begin{corollary}[\textbf{Convergence for PSD matrix sensing}]
  Suppose that the measuring map $\mathcal{A}\colon \SSS^{d} \rightarrow \RR^{m}$ satisfies \eqref{def:RIP} for all matrices $Z$ of rank at most $6r$ and $b = \cA(\Zs)$. Algorithm \ref{alg:LM} is applied to the first objective in \eqref{eq:matrix-problems} with $\ell(z) = \|z\|^2_2$, initialized at $X_{0}$ using Configuration \ref{assum: dampingparameter} with $\gamma \leq \min\{1,\tfrac{\clb}{2}\} $, $\cub \leq  \frac{1}{64\sqrt{2(r-r^\star + 1)}}\frac{(1-\delta)^4}{(1+\delta)^4}$, and
  \begin{equation*} \norm{ X_0 X_0^\top - Z^\star}{F} \leq \frac{1}{16 \sqrt{2}}  \frac{(1-\delta)^2}{(1+\delta)^2} \lambda_{\rs} \left(Z^\star\right).
  \end{equation*}
  Then, the iterates satisfy
$$
\norm{X_k X_k^\top  - Z^\star }{F}^2 \leq \left( 1 - \frac{\gamma}{32}\frac{(1-\delta)^{2}}{(1+\delta)^2}\right)^k \norm{X_0 X_0^\top  - Z^\star }{F}^2 \quad \text{for all $k\geq 0$}.
$$
\end{corollary}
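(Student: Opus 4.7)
The plan is to directly invoke the general convergence result for smooth losses under weak alignment (Theorem~\ref{thm: onestepimprovement_smooth}, Polyak stepsize branch) by verifying each of its hypotheses for the PSD matrix sensing instance. The three building blocks are already prepared elsewhere in the paper, so the argument amounts to bookkeeping on constants and checking that the quantities in the corollary match the general bounds.

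First, I would identify the outer function $h(\cdot) = \tfrac{1}{2}\|\cA(\cdot) - b\|_2^2$ and the parameterization $F = \Fsym$ with $F(X) = XX^\top$. By Theorem~\ref{thm:regularity-matrix-sym}, $\Fsym$ satisfies Assumption~\ref{assum: assumptiononc} with $\lc = 2$ and Assumption~\ref{ass:weak-alignment} with
\[
\sig(\rho) = \frac{4\rho}{\sqrt{2(r-r^\star+1)}}, \qquad \rloc(\rho) = \min\left\{\frac{\rho}{\sqrt{2}},\, \frac{1}{1+\sig(\rho)},\, \frac{1}{3}\right\}\lambda_{\rs}(\Zs).
\]
By Lemma~\ref{lem: smooth-sensing-satisfies-assum} and the RIP hypothesis, $h$ satisfies Assumption~\ref{assum:smoothpenalty} with $\mus = 1-\delta$ and $\Lhs = (1+\delta)^2/(1-\delta)$. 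Thus the hypotheses of Theorem~\ref{thm: onestepimprovement_smooth} are in force.

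Next I would translate the generic parameter bounds of that theorem into the explicit constants stated in the corollary. The stepsize constraint $\gamma \le \min\{1,\clb/\lc\}$ becomes $\gamma \le \min\{1,\clb/2\}$ since $\lc = 2$. The ratio driving the convergence is
\[
\frac{\mus}{16\Lhs} = \frac{(1-\delta)^2}{16(1+\delta)^2},
\]
so the required bound $\cub \le \tfrac{\mus}{16\Lhs}\sig\left(\tfrac{\mus}{16\Lhs}\right)$ simplifies, after plugging in $\sig$, to
\[
\cub \le \frac{(1-\delta)^4}{64(1+\delta)^4\sqrt{2(r-r^\star+1)}},
\]
matching the corollary. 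For the initialization, evaluating $\rloc$ at $\rho = \mus/(16\Lhs)$ and observing that the first entry of the minimum dominates whenever $(1-\delta)/(1+\delta) \le 1$ yields $\rloc(\rho) = \tfrac{(1-\delta)^2}{16\sqrt{2}(1+\delta)^2}\lambda_{\rs}(\Zs)$, matching the hypothesis of the corollary via $z_0 = X_0 X_0^\top$. Finally, the rate $(1 - \gamma\mus/(32\Lhs))^k$ from the general theorem simplifies to $(1 - \tfrac{\gamma}{32}\tfrac{(1-\delta)^2}{(1+\delta)^2})^k$, exactly as claimed, and this controls $\|z_k - \zs\|^2 = \|X_k X_k^\top - \Zs\|_F^2$.

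The only nonroutine step is verifying that the hypothesis $\|X_0 X_0^\top - \Zs\|_F \le \rloc(\mus/(16\Lhs))$ indeed reduces to the $\tfrac{\rho}{\sqrt{2}}$ branch of the minimum; this requires checking that $\tfrac{\rho}{\sqrt{2}} \le \tfrac{1}{1+\sig(\rho)}$ and $\tfrac{\rho}{\sqrt{2}} \le \tfrac{1}{3}$ for $\rho = \mus/(16\Lhs) \le 1/16$, both of which are straightforward since $\sig(\rho) \le 4\rho$. Once this bookkeeping is done, the conclusion follows immediately from Theorem~\ref{thm: onestepimprovement_smooth}, and I would not expect any new technical obstacle.
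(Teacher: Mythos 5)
Your proposal is correct and follows exactly the route the paper takes: combine Theorem~\ref{thm:regularity-matrix-sym} (weak alignment of $\Fsym$, with $\lc=2$), Lemma~\ref{lem: smooth-sensing-satisfies-assum} (RIP $\Rightarrow$ Assumption~\ref{assum:smoothpenalty} with $\mus=1-\delta$, $\Lhs=(1+\delta)^2/(1-\delta)$), and the Polyak-stepsize branch of Theorem~\ref{thm: onestepimprovement_smooth}, then substitute constants. Your constant calculations ($\mus/(16\Lhs)$, $\sig(\rho)$, the $\rho/\sqrt{2}$ branch of $\rloc$, and the final rate) are all correct and match the corollary.
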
}{}

\ifbool{showSquare}{
\begin{corollary}[\textbf{Convergence for asymmetric matrix sensing}] 
  Suppose that the measuring map $\mathcal{A}\colon \RR^{d_{1}} \times \RR^{d_{2}} \rightarrow \RR^{m}$ satisfies \eqref{def:RIP} for all matrices $Z$ of rank at most $6r$ and $b = \mathcal{A}(\Zs)$. Let $\Xs\Ys^\top=\Zs$ be a factorization satisfying $\rankk{\Xs} = \rankk{\Ys} = \rs$ and the right singular vectors of the two factors match $V^\Xs= V^\Ys$. Assume Algorithm \ref{alg:LM} is applied to the second objective in \eqref{eq:matrix-problems} with $\ell(z) = \|z\|_2^2$, initialized at $\left(X_{0}, Y_{0}\right)$ using Configuration \ref{assum: dampingparameter} with $\gamma \leq \min\{1,\tfrac{\clb}{\sqrt{2}}\}$, $\cub \leq \frac{1}{2^9 \cdot 5\sqrt{2} \left(r - r^\star + 1\right)^2} \frac{(1-\delta)^4}{(1+\delta)^4} \frac{\min\left\{ \sigma^2_{\rs}\left(\Xs\right), \sigma_{\rs}^2\left(\Ys\right) \right\}}{ \sigma^2_{\rs}\left(\Xs\right) + \sigma_{\rs}^2\left(\Ys\right)} $, 
    \begin{align*}
    \norm{(X_0,Y_0) - (\Xs,\Ys)}{F} &\leq \epsassymmatrixhalf, \qquad \text{and}\\
    \norm{ X_0 Y_0^\top - Z^\star}{F} &\leq \frac{1}{2^{23}}\frac{(1-\delta)^4}{(1+\delta)^4} \frac{\min\left\{ \sigma^4_{\rs}\left(\Xs\right), \sigma_{\rs}^4\left(\Ys\right) \right\}}{\min\left\{ \sigma^2_{1}\left(\Xs\right), \sigma_{1}^2\left(\Ys\right) \right\}} \clb .
    \end{align*}
 Then, the iterates satisfy
$$
\norm{X_k Y_k^\top  - Z^\star }{F}^2 \leq \left( 1 - \frac{\gamma}{32}\frac{(1-\delta)^2}{(1+\delta)^2} \right)^k \norm{X_0 Y_0^\top  - Z^\star }{F}^2 \quad \text{for all $k\geq 0$}.
$$
\end{corollary}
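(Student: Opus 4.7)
The plan is to obtain the corollary as a direct instantiation of the general local convergence guarantee in the smooth regime (the local analogue of Theorem~\ref{thm: onestepimprovement_smooth}, i.e., Theorem~\ref{thm:local-onestepimprovement-weak-smooth} in Appendix~\ref{app:local-regularity}) applied to the parameterization $F_{\operatorname{asym}}(X,Y)=XY^\top$ and outer function $h(\cdot)=\tfrac12\|\mathcal{A}(\cdot)-b\|_2^2$. The two building blocks have already been developed: Theorem~\ref{thm:regularity-matrix-asym} verifies Assumption~\ref{assum:local_lip_jacob} and Assumption~\ref{ass:local-weak-alignment} for $F_{\operatorname{asym}}$ with the stated $\lc,\jacvarepsilon,\alignvarepsilon,\sig(\cdot)$, and $\rloc(\cdot)$, while Lemma~\ref{lem: smooth-sensing-satisfies-assum} (invoked with $b=\mathcal{A}(Z^\star)$ and the hypothesis that $\mathcal{A}$ satisfies RIP on matrices of rank at most $6r$) certifies Assumption~\ref{assum:smoothpenalty} with $\mus=1-\delta$ and $\Lhs=(1+\delta)^2/(1-\delta)$.

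The bulk of the work is bookkeeping: match the generic quantity $\mus/(16\Lhs)$, which equals $(1-\delta)^2/(16(1+\delta)^2)$, into the alignment functions $\sig(\cdot)$ and $\rloc(\cdot)$ produced by Theorem~\ref{thm:regularity-matrix-asym}. Substituting this ratio into $\sig$ yields
\[
\sig\!\left(\tfrac{\mus}{16\Lhs}\right)\;=\;\frac{(1-\delta)^2}{16\cdot 10\sqrt{2}\,(1+\delta)^2\,(r-r^\star+1)^2}\cdot\frac{\min\{\sigma_{\rs}^2(\Xs),\sigma_{\rs}^2(\Ys)\}}{\sigma_{\rs}^2(\Xs)+\sigma_{\rs}^2(\Ys)},
\]
so that the required bound $\cub\le(\mus/(16\Lhs))\cdot\sig(\mus/(16\Lhs))$ reduces exactly to the stated constraint $\cub\le(2^9\cdot5\sqrt{2}(r-r^\star+1)^2)^{-1}\cdot\tfrac{(1-\delta)^4}{(1+\delta)^4}\cdot\tfrac{\min\{\sigma_{\rs}^2(\Xs),\sigma_{\rs}^2(\Ys)\}}{\sigma_{\rs}^2(\Xs)+\sigma_{\rs}^2(\Ys)}$. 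Likewise, the two initialization hypotheses of the local theorem—one controlling $\|(X_0,Y_0)-(\Xs,\Ys)\|_F\le\varepsilon/2$ with $\varepsilon=\min\{\jacvarepsilon,\alignvarepsilon\}=\alignvarepsilon$, and one controlling $\|z_0-z^\star\|$ by $\min\{\rloc(\mus/(16\Lhs)),\tfrac{(1-\sqrt{\tilde q})^2\varepsilon^2\clb}{2\gamma^2}\}$—are guaranteed by the two initialization bounds in the statement, after substituting the expressions for $\alignvarepsilon$ and $\rloc$.

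Once all assumptions of Theorem~\ref{thm:local-onestepimprovement-weak-smooth} are verified, the conclusion gives
\[
\|X_kY_k^\top-Z^\star\|_F^2\;\le\;\left(1-\frac{\gamma\mus}{32\Lhs}\right)^{k}\|X_0Y_0^\top-Z^\star\|_F^2,
\]
and inserting $\mus/\Lhs=(1-\delta)^2/(1+\delta)^2$ produces the rate in the statement. The stepsize cap $\gamma\le\min\{1,\clb/\sqrt{2}\}$ matches the generic constraint $\gamma\le\min\{1,\clb/\lc\}$ with $\lc=\sqrt{2}$ from Theorem~\ref{thm:regularity-matrix-asym}.

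The main obstacle, as in the PSD case, is purely algebraic: carefully tracking the constants through the composition of three separate inequalities (the alignment functions, the quadratic growth/smoothness of $h$, and the local convergence theorem) while enforcing the intersection of all parameter constraints. The choice $V^\Xs=V^\Ys$ and $\rank\Xs=\rank\Ys=\rs$ is needed only because Theorem~\ref{thm:regularity-matrix-asym} requires this balanced factorization; no further argument is required since standard spectral initialization schemes place iterates in this regime. No genuine new analysis is introduced—the entire proof amounts to substituting the RIP-derived conditioning constants and the asymmetric-factorization alignment constants into the abstract local guarantee.
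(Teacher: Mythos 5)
Your proposal is correct and follows essentially the same route as the paper: the corollary is obtained by combining Theorem~\ref{thm:regularity-matrix-asym} (local weak alignment of $F_{\operatorname{asym}}$) with Lemma~\ref{lem: smooth-sensing-satisfies-assum} ($\mus=1-\delta$, $\Lhs=(1+\delta)^2/(1-\delta)$) and instantiating the local guarantee Theorem~\ref{thm:local-onestepimprovement-weak-smooth}, with the remaining work being constant bookkeeping (the paper's only additional remark is that it bounds $(1-\sqrt{\tilde q})^2$ via $1-(1-x)^{\alpha}\geq\alpha x$ when verifying the second initialization condition, which is the same substitution you describe).
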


The only dependency on the conditioning of $\Zs$ appears in the size of the neighborhood where the algorithm exhibits linear convergence. The general guarantees under strong alignment can be used to derive faster rates in the exactly parameterized case; we omit such results for brevity.}{}

\ifbool{showSquare}{
\subsubsection{Robust matrix sensing}
In this section, we will study matrix sensing problems with gross outliers. That is, we consider corrupted measurements of the form 
\begin{equation}\label{eq:outliers}
b = \begin{cases} \cA(\Zs)_{i} & \text{if }i \in \cI^{c} \\ \eta_{i} & \text{otherwise,} \end{cases}
\end{equation}
where $\mathcal{I} \subseteq [m]$ is a subset of the entries and $\eta_i$ is arbitrary. Inspired by \cite{charisopoulos2021low}, we consider \eqref{eq:matrix-problems} with $\ell(z) = \|z\|_1$. Before stating our results for this loss, we take a small detour to show that for nonsmooth matrix problems, the rather complicated Assumption~\ref{assum:nonsmoothpenalty} is implied by a more standard form of restricted conditioning. This matches the assumptions for \texttt{ScaledGD} \cite{tong2021scaledsubgradient}. We defer the proof of the next lemma to Appendix~\ref{app:proof-restricted-regularity-implies-ugly-regularity}. 
\begin{lemma}\label{lem:restricted-regularity-implies-ugly-regularity}
    Let $h\colon\RR^{d_1\times d_2} \to \RR$ be a convex function and $Z^\star \in \RR^{d_1 \times d_2}$ satisfying the following two conditions.
    \begin{enumerate}
        \item (\textbf{Restricted sharpness}) For any $Z \in \RR^{d_1 \times d_2}$ with $\rank Z \leq r$ we have 
        \begin{equation}\label{eq:restricted-sharpness}
        \mu \|Z-\Zs\|_F \leq \left| h(Z) - h(\Zs)\right|.
        \end{equation}
        \item (\textbf{Restricted Lipschitzness}) For any pair $Z, \widetilde Z \in \RR^{d_1 \times d_2}$ with $\rank (Z-\widetilde Z) \leq 2r$ we have 
         \begin{equation}\label{eq:restricted-lipschitzness}
         \left| h(Z) - h(\widetilde Z)\right|\leq L \|Z-\widetilde Z\|_F.
        \end{equation}
    \end{enumerate}
    Then, $h$ satisfies Assumption~\ref{assum:nonsmoothpenalty} with $F = \Fasym$.
\end{lemma}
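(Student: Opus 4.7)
The plan is to verify the four items of Assumption~\ref{assum:nonsmoothpenalty} with $F = \Fasym$. Items 1--3 fall out quickly. Convexity of $h$ is a hypothesis. The restricted sharpness property on $\Ima \Fasym$ (item 3) is immediate, since any $Z \in \Ima \Fasym$ has rank at most $r$ and hence~\eqref{eq:restricted-sharpness} directly gives $h(Z) - h(\Zs) \ge \mu \|Z - \Zs\|_F$; uniqueness of the minimizer $\Zs$ over $\Ima \Fasym$ follows from sharpness, because any other minimizer $\widetilde Z \in \Ima \Fasym$ must satisfy $\mu \|\widetilde Z - \Zs\|_F \le |h(\widetilde Z) - h(\Zs)| = 0$.

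For item 4(a), fix $x = (X,Y) \in \EEE$ and $v \in \partial h(F(x))$, set $z = F(x)$, and note that every element of $\Ima \nabla F(x)$ is of the form $\Delta X\, Y^\top + X\, \Delta Y^\top$, so $\P^x v$ has rank at most $2r$. The subgradient inequality yields $h(z + t \P^x v) - h(z) \ge t \langle v, \P^x v\rangle = t \|\P^x v\|_F^2$, while~\eqref{eq:restricted-lipschitzness} (applicable since $\rank(t\P^x v) \le 2r$) gives $h(z + t \P^x v) - h(z) \le Lt \|\P^x v\|_F$. Comparing and dividing by $t>0$ produces $\|\P^x v\|_F \le L$.

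The heart of the argument is item 4(b). The key structural observation is that for $F = \Fasym$,
\begin{equation*}
\Ima \nabla F(X, Y) = \{M \in \RR^{d_1 \times d_2} : P_X^\perp M P_Y^\perp = 0\},
\end{equation*}
where $P_X, P_Y$ denote the orthogonal projectors onto the column spans of $X$ and $Y$; the inclusion $\subseteq$ follows from $P_X^\perp X = 0$ and $P_Y^\perp Y = 0$, and the reverse is routine. Consequently, the orthogonal projection onto $(\Ima \nabla F(x))^\perp$ acts by $M \mapsto P_X^\perp M P_Y^\perp$. Since $z = XY^\top$ satisfies $P_X^\perp z P_Y^\perp = 0$, applying this projector gives $(I - \P^x)(z - \Zs) = -P_X^\perp \Zs P_Y^\perp$, a matrix of rank at most $\rankk{\Zs} \le r$. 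Using $I - P(x, \lambda) = (I - \P^x) + (\P^x - P(x, \lambda))$, I decompose
\begin{equation*}
(I - P(x, \lambda))(z - \Zs) = A + B, \quad A := (I - \P^x)(z - \Zs), \quad B := (\P^x - P(x, \lambda))(z - \Zs).
\end{equation*}
Here $A$ has rank at most $r$ and lies in $(\Ima \nabla F(x))^\perp$, while the SVD of $\nabla F(x)$ shows that $\P^x - P(x, \lambda)$ has image contained in $\Ima \nabla F(x)$, so $B \in \Ima \nabla F(x)$. Hence $A \perp B$ and $\|A + B\|_F^2 = \|A\|_F^2 + \|B\|_F^2$.

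I bound $|\langle v, A \rangle|$ and $|\langle v, B \rangle|$ separately. For $A$, the subgradient inequality applied at the test points $z \pm tA$ combined with~\eqref{eq:restricted-lipschitzness} (legal since $\rank A \le r \le 2r$) yields $|\langle v, A \rangle| \le L \|A\|_F$. For $B$, since $B \in \Ima \nabla F(x)$, self-adjointness of $\P^x$ gives $\langle v, B \rangle = \langle \P^x v, B \rangle$, so Cauchy--Schwarz combined with item 4(a) yields $|\langle v, B \rangle| \le L \|B\|_F$. Combining these and using $\|A\|_F + \|B\|_F \le \sqrt{2}\sqrt{\|A\|_F^2 + \|B\|_F^2} = \sqrt{2}\,\|A + B\|_F$ produces
\begin{equation*}
|\langle v, (I - P(x, \lambda))(z - \Zs)\rangle| \le \sqrt{2}\, L \, \|(I - P(x, \lambda))(z - \Zs)\|_F,
\end{equation*}
which establishes item 4(b) after absorbing the constant $\sqrt{2}$ into $L$.

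The main obstacle is the rank accounting in item 4(b): a naive bound gives $\rank((I - P(x, \lambda))(z - \Zs)) \le 3r$, too large to invoke the rank-$2r$ Lipschitzness directly. The orthogonal decomposition above exploits the precise geometry of $\Ima \nabla \Fasym(x)$ to peel off a rank-$r$ orthogonal piece controlled by~\eqref{eq:restricted-lipschitzness} from a tangent-space piece controlled via item 4(a).
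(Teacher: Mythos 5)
Your proof is correct and largely parallels the paper's -- items 1--3 are routine and item 4(a) goes the same way -- but your treatment of item 4(b) uses a genuinely different decomposition. The paper applies its rank bound \eqref{eq:rank-bound} to conclude that $W := (I-P(x,\lambda))(z-\Zs)$ has rank at most $4r$, then splits $W = W_1 + W_2$ generically (via SVD) into two mutually orthogonal rank-$\le 2r$ matrices and bounds $|\langle v, W_i\rangle| \le L\|W_i\|_F$ for each via the rank-restricted Lipschitz hypothesis. You instead exploit that for $\Fasym$ the complementary projection $I - \P^x$ is the sandwich map $M \mapsto P_X^\perp M P_Y^\perp$, so that $A := (I-\P^x)(z-\Zs) = -P_X^\perp \Zs P_Y^\perp$ has rank $\le r$ and is dispatched directly by the hypothesis, while $B := (\P^x - P(x,\lambda))(z-\Zs)$ lies in $\Ima\nabla F(x)$ and is controlled by item 4(a) plus Cauchy--Schwarz. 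Your split is tailored to the Burer--Monteiro structure and makes the tangent-space geometry visible; the paper's is parameterization-agnostic and would apply unchanged to any $F$ whose Jacobian has range contained in the rank-$\le 2r$ matrices. Both splits, being two orthogonal pieces, lose the same $\sqrt{2}$ when passing from $\|A\|_F + \|B\|_F$ to $\|A+B\|_F$: you account for it by absorbing it into $L$, which is fine because Assumption~\ref{assum:nonsmoothpenalty} only requires \emph{some} constant $L$; by contrast, the paper's final displayed step $L^2(\|W_1\|_F + \|W_2\|_F)^2 = L^2\|W\|_F^2$ is not actually an equality and silently drops the same factor, so your version is in fact the more careful of the two.
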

A completely analogous result holds for the symmetric parameterization; we omit it to avoid repetition. Next, we establish these notions of restricted Lipschitzness and sharpness. 
}{}
\ifbool{showSquare}{Just as in the noiseless case, we will enforce a restricted isometry property, but in this case, a mixed version with the $\ell_1$ norm.}{Our results here hold for measurement maps $\cA$ that do not distort the distances between low-rank matrices.}
In particular, we say that a linear map $\ifbool{showSquare}{\mathcal{A}\colon \RR^{d_1 \times d_2}}{\mathcal{A}\colon \RR^{d \times d}} \to \RR^{m}$ satisfies $\ell_{1}/\ell_{2}$-RIP if there exist constants $\dl, \du > 0$ such that
\begin{equation}\label{eq:loRIP}
  \dl \norm{Z}{F} \leq \norm{\mathcal{A}(Z)}{1} \le \du \norm{Z}{F} %
  \end{equation}
for all matrices $Z$ of rank at most $r.$ In turn, $\ell_{1}/\ell_{2}$-RIP does not suffice to handle outliers.
Instead, we require a slightly more restrictive condition. The map $\mathcal{A}$ satisfies the $\mathcal{I}$-outlier bound if there exist a constant $\constI > 0$ such that
\begin{equation}\label{eq:outlier-bound}
      \constI \norm{Z}{F} \leq \left( \norm{\mathcal{A}_{\mathcal{I}^c}(Z)}{1}  - \norm{\mathcal{A}_{\mathcal{I}}(Z)}{1} \right) %
\end{equation}
for matrices $Z$ of rank at most $r,$ where $\mathcal{A}_{\mathcal{I}}(Z)$ and $\mathcal{A}_{\cI^{c}}(Z)$ are the subvectors of $\mathcal{A}(Z)$ indexed by $\cI$ and $\cI^{c}$. In turn, random Gaussian mappings also satisfy these properties.
\ifbool{showSquare}{\begin{lemma}[Theorem 6.4 in \cite{charisopoulos2021low}]
  Fix $r \leq \min(d_1,d_2)$ and $\mathcal{I} \subseteq [m]$ with $\#\cI < m/2$. Define $\pfail = \#\cI/m$ and suppose that $\cA\colon \RR^{d_{1} \times d_{2}} \rightarrow \RR^{m}$ has i.i.d. Gaussian entries with distribution $N(0, 1/m^{2})$. There exist universal constants $c_{1}, c_{2}, c_{3} > 0$ such that if $m \geq \frac{c_{1}}{(1-2\pfail)^{2}}\ln\left(c_{2}+ \tfrac{c_{2}}{(1-2\pfail)^{2}}\right) r(d_{1} + d_{2} + 1)$, then $\cA$ satisfies \eqref{eq:loRIP} and \eqref{eq:outlier-bound} for matrices $Z$ of rank at most $r$ with probability at least $1 - 4 \exp(-c_{3}(1-2\pfail) m).$  %
\end{lemma}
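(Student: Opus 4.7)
The plan is to establish both \eqref{eq:loRIP} and \eqref{eq:outlier-bound} via the standard two-step recipe for random-matrix RIP-type results: pointwise concentration on a single low-rank matrix, followed by an $\varepsilon$-net argument on the set of all rank-$r$ matrices of unit Frobenius norm. For a fixed $Z$ with $\|Z\|_F=1$, the coordinates $\cA_i(Z)=\langle A_i, Z\rangle$ are i.i.d.\ $N(0,1/m^2)$ Gaussians, so $\Expect |\cA_i(Z)| = \sqrt{2/\pi}/m$. Consequently
\[
\Expect\bigl[\|\cA_{\cI^c}(Z)\|_1 - \|\cA_{\cI}(Z)\|_1\bigr]
= (1-2\pfail)\sqrt{2/\pi},
\qquad
\Expect\|\cA(Z)\|_1 = \sqrt{2/\pi},
\]
and because $\|\cA(Z)\|_1$ is a $1/m$-Lipschitz function of $m\cdot (d_1 d_2)$ standard Gaussian entries (in Frobenius norm), Gaussian concentration (or Hoeffding on $|\cA_i(Z)|$, which are sub-Gaussian with proxy $1/m$) gives that for every fixed $Z$, both the $\ell_1/\ell_2$-RIP inequalities and the outlier-bound inequality hold up to deviation $t$ with probability at least $1 - 2\exp(-c\, t^2 m)$, for a universal $c>0$. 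Choosing $t$ proportional to $(1-2\pfail)$ isolates a strictly positive constant $\constI$ below the signed expectation.

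The second step discretizes the set $\cL_r=\{Z:\|Z\|_F=1,\,\rank(Z)\le r\}$. A standard SVD-based construction (see e.g.\ Candès--Plan) yields an $\varepsilon$-net $\cN_\varepsilon$ of $\cL_r$ in Frobenius norm with
\[
\#\cN_\varepsilon \le \left(\tfrac{9}{\varepsilon}\right)^{(d_1+d_2+1)r}.
\]
Fixing $\varepsilon$ a small universal constant and applying a union bound with the per-point failure probability $\exp(-c(1-2\pfail)^2 m)$, the two concentration inequalities hold simultaneously on $\cN_\varepsilon$ with probability at least $1-4\exp(-c_3(1-2\pfail)m)$ provided
\[
(1-2\pfail)^2 m \;\gtrsim\; (d_1+d_2+1)r \,\log(1/\varepsilon) + \log(1/\delta),
\]
where $\delta$ is the target failure probability. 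Calibrating $\delta$ to be of the same exponential order as the union-bound loss produces the logarithmic factor $\log\!\bigl(c_2+c_2/(1-2\pfail)^2\bigr)$ in the stated sample complexity.

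Finally, we promote the bounds from $\cN_\varepsilon$ to all of $\cL_r$. Writing any $Z\in\cL_r$ as a telescoping sum $Z = Z_0 + \sum_{j\ge 1}(Z_j - Z_{j-1})$ of successive rank-$2r$ approximants taken from finer nets with geometrically decreasing radius $\varepsilon^j$, each difference $Z_j-Z_{j-1}$ has rank at most $2r$ (covered by an analogous net) and Frobenius norm at most $2\varepsilon^j$. Since both $Z\mapsto \|\cA(Z)\|_1$ and the signed map defining the outlier bound are $1$-Lipschitz in $\cA$, summing the resulting geometric series transfers the inequalities to the whole set $\cL_r$ at the cost of a constant-factor adjustment in $\dl,\du,\constI$. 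The main obstacle in this program is tracking the dependence on $1-2\pfail$: the signed mean shrinks linearly in $1-2\pfail$ while its Gaussian deviation does not, which forces the squared factor $(1-2\pfail)^{-2}$ in the sample complexity and the additional logarithmic correction; getting these dependences sharp (rather than just finite) is exactly where the careful union-bound calibration above is needed.
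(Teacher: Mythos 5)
The paper itself never proves this lemma: it is imported verbatim as Theorem~6.4 of \cite{charisopoulos2021low}, so your proposal is by construction a different route — you are reconstructing the proof of the cited result. Your reconstruction follows the standard, and essentially correct, recipe: for fixed $Z$ with $\|Z\|_F=1$ the coordinates $\cA_i(Z)$ are i.i.d.\ $N(0,1/m^2)$, so $\|\cA(Z)\|_1$ and the signed sum $\|\cA_{\cI^c}(Z)\|_1-\|\cA_{\cI}(Z)\|_1$ concentrate around $\sqrt{2/\pi}$ and $(1-2\pfail)\sqrt{2/\pi}$; a union bound over a $(9/\varepsilon)^{(d_1+d_2+1)r}$-net of the unit-norm rank-$r$ matrices plus a Lipschitz/telescoping transfer then gives \eqref{eq:loRIP} and \eqref{eq:outlier-bound} uniformly. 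This is in spirit the same argument that underlies the cited theorem, so the approach is sound.

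Three caveats you should repair. First, the Lipschitz constant of $(G_1,\dots,G_m)\mapsto \tfrac1m\sum_i|\langle G_i,Z\rangle|$ with respect to the joint Euclidean norm is $1/\sqrt{m}$, not $1/m$; the tail $1-2\exp(-c\,t^2m)$ you quote is nevertheless the right conclusion. Second, and more substantively, taking $t\asymp 1-2\pfail$ your route yields a failure probability of order $\exp\bigl(-c(1-2\pfail)^2m\bigr)$ after the net entropy is absorbed by the sample-size hypothesis, whereas the statement promises $\exp\bigl(-c_3(1-2\pfail)m\bigr)$; as written your argument only delivers the squared dependence in the exponent, and you should either strengthen the concentration step or note that you prove the weaker exponent. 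Third, your attribution of the logarithmic factor to ``calibrating $\delta$'' is misplaced: it arises because in the lower-bound transfer the error $\|\cA(Z-Z_0)\|_1\le \du\|Z-Z_0\|_F$ — which requires the uniform upper bound over rank-$2r$ matrices to be established \emph{first}, so the order of the two uniformizations matters — must be small relative to the shrinking gap proportional to $(1-2\pfail)\constI$, forcing the net radius $\varepsilon$ to scale with $1-2\pfail$ and hence $\log(1/\varepsilon)\asymp\log\bigl(c_2+c_2/(1-2\pfail)^2\bigr)$ in the sample complexity. Your final paragraph gestures at the right mechanism, but the middle of the proposal assigns the factor to the wrong source; with these points fixed the sketch is a faithful reconstruction of the cited theorem.
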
 Several other random mappings satisfy \eqref{eq:loRIP} and \eqref{eq:outlier-bound}, including those used for phase retrieval and blind deconvolution \cite[Theorem 6.4]{charisopoulos2021low}.}{It is well-known that many random maps $\mathcal{A}$ satisfy these assumptions with high probability when $m \gtrsim (1-2p_{\textrm{fail}})^{-2}dr$ where $p_{\textrm{fail}}$ is the fraction of outliers in~\eqref{eq:outliers} \cite[Theorem 6.4]{charisopoulos2021low}. }
\ifbool{showSquare}{The following lemma shows that whenever the measurement map satisfies RIP and the outlier bound, the loss function $h(\cdot) = \norm{\mathcal{A}(\cdot) - b }{1}$ satisfies the restricted Lipschitz continuity and sharpness. The proof of this lemma appeared in a slightly different form in \cite{charisopoulos2021low}; we include it here for completeness. 
\begin{lemma}\label{matrix: regularity_assum} Suppose that $\mathcal{A}$ satisfies \eqref{eq:loRIP} and \eqref{eq:outlier-bound} for all matrices of rank at most $2r$ and that $b$ is taken as in \eqref{eq:outliers}.
Take the constants $\mu = \constI$ and $\lh=\du$. Then, the function $h(\cdot) = \norm{\mathcal{A}(\cdot) - b }{1}$ satisfies~\eqref{eq:restricted-sharpness} for all $Z$ with $rank Z \leq r$ and~\eqref{eq:restricted-lipschitzness} for all $Z, \widetilde Z$ with $\rank(Z - \widetilde Z)\leq 2r$.
\end{lemma}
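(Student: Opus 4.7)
Both assertions should follow by unpacking the definition of $h$ and invoking the two structural properties~\eqref{eq:loRIP} and~\eqref{eq:outlier-bound}. The plan is to handle restricted Lipschitzness first (which is immediate from the triangle inequality plus the upper half of $\ell_1/\ell_2$-RIP) and then focus effort on restricted sharpness, where the outlier bound is what does the real work.

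For restricted Lipschitzness, fix $Z, \widetilde Z$ with $\rank(Z - \widetilde Z) \le 2r$. The reverse triangle inequality for $\|\cdot\|_1$ gives
\begin{align*}
|h(Z) - h(\widetilde Z)| = \bigl|\,\|\cA(Z) - b\|_1 - \|\cA(\widetilde Z) - b\|_1\,\bigr| \le \|\cA(Z - \widetilde Z)\|_1,
\end{align*}
and applying the upper half of \eqref{eq:loRIP} to the rank-$2r$ matrix $Z - \widetilde Z$ yields the bound $\du \|Z - \widetilde Z\|_F$, as required.

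For restricted sharpness, fix $Z$ with $\rank Z \le r$ and set $\Delta = Z - \Zs$; assuming $\rank \Zs \le r$, we have $\rank \Delta \le 2r$. Split the sum according to the outlier set $\cI$. Because $b_i = \cA(\Zs)_i$ on $\cI^c$, the $\cI^c$ contribution to $h(\Zs)$ vanishes, while the $\cI^c$ contribution to $h(Z)$ equals $\|\cA_{\cI^c}(\Delta)\|_1$. On the corrupted indices $\cI$, write $r_i := \cA(\Zs)_i - \eta_i$, so that the reverse triangle inequality gives
\begin{align*}
\sum_{i \in \cI} \bigl(|\cA(\Delta)_i - r_i| - |r_i|\bigr) \ge -\sum_{i \in \cI} |\cA(\Delta)_i| = -\|\cA_{\cI}(\Delta)\|_1.
\end{align*}
Combining these two pieces yields $h(Z) - h(\Zs) \ge \|\cA_{\cI^c}(\Delta)\|_1 - \|\cA_{\cI}(\Delta)\|_1$, and the outlier bound \eqref{eq:outlier-bound} applied to the rank-$2r$ matrix $\Delta$ produces $h(Z) - h(\Zs) \ge \constI \|Z - \Zs\|_F$. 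Since the right-hand side is nonnegative, $|h(Z) - h(\Zs)| \ge \constI \|Z - \Zs\|_F$ as claimed.

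There is no real obstacle here beyond bookkeeping: the proof is essentially an exercise in splitting the $\ell_1$ norm over $\cI \cup \cI^c$ and invoking the relevant assumption on each piece. The only subtlety is making sure the rank of the difference $\Delta = Z - \Zs$ is at most $2r$ so that \eqref{eq:outlier-bound} is applicable, which is why the lemma's quantifier reads ``for all matrices of rank at most $2r$'' in the hypothesis.
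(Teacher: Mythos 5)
Your proposal is correct and follows essentially the same route as the paper's proof: split the $\ell_1$ sum over $\cI$ and $\cI^c$, use the reverse triangle inequality on the corrupted indices, and invoke the outlier bound \eqref{eq:outlier-bound} for the rank-$\le 2r$ matrix $Z-\Zs$ for sharpness, with Lipschitzness being the reverse triangle inequality plus the upper bound in \eqref{eq:loRIP}. The only quibble is an inconsequential sign in your corrupted-index term (with $r_i := \cA(\Zs)_i - \eta_i$ one has $\cA(Z)_i - b_i = \cA(Z-\Zs)_i + r_i$, not $-\,r_i$), which does not affect the bound.
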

\begin{proof}
 We start by establishing restricted sharpness. Label $\Delta =\left( \mathcal{A}(\Zs) - b \right)$, and let $Z$ be an arbitrary matrix with rank at most $r$. Applying the reverse triangle inequality yields
\begin{align*}
    \abs{h(Z) - h(\Zs) } &= \abs{ \norm{\mathcal{A}(Z - \Zs) + \Delta }{1}  -  \norm{\Delta }{1} }  \\
    & = \left(   \norm{\mathcal{A}_{\mathcal{I}^c}(Z - \Zs)}{1} + \sum_{i \in \mathcal{I}} \left( \abs{ \left[ \mathcal{A}(Z - \Zs) \right]_i +  \left[ \Delta \right]_i  } - \abs{\left[ \Delta \right]_i } \right) \right)\\
    &\geq \left( \norm{\mathcal{A}_{\mathcal{I}^c}(Z - \Zs)}{1} - \norm{\mathcal{A}_{\mathcal{I}}(Z - \Zs)}{1} \right) \\ 
    & \geq \constI \norm{Z - \Zs}{F},
\end{align*}
where the second inequality follows from \eqref{eq:outlier-bound}.    

 Next, we demonstrate that the function $h$ satisfies restricted Lipschitz continuity. Let $Z$ and $\Zprime$ be two matrices such that $\rankk{Z-\Zprime} \leq 2r$. 
  Once more, the reverse triangle inequality yields
\begin{align*}
    \abs{h(Z) - h(\Zprime) } &= \abs{ \norm{\mathcal{A}(Z) - b }{1} - \norm{\mathcal{A}(\Zprime) - b }{1} }  \\
    &\leq  \norm{\mathcal{A}(Z - \Zprime) }{1} \\
                        &\leq \du \norm{Z -\Zprime}{F},
\end{align*}
where the second inequality uses \eqref{eq:loRIP}. This concludes the proof.
\end{proof}}{}
\ifbool{showSquare}{These results allow us to invoke Theorems~\ref{thm: onestepimprovement} and~\ref{thm:local-onestepimprovement} to derive the following two corollaries.}{We leverage Theorem~\ref{thm: onestepimprovement} to derive the following corollary.}
\begin{corollary}[\textbf{Convergence for robust PSD matrix sensing}]
Suppose that the measurement map $\mathcal{A}\colon \SSS^{d} \rightarrow \RR^{m}$ satisfies~\eqref{eq:loRIP} and~\eqref{eq:outlier-bound} for all matrices $Z$ of rank at most $2r$, and that the vector $b\in \RR^m$ is taken as in~\eqref{eq:outliers}. Assume Algorithm \ref{alg:LM} is applied to the first objective in \eqref{eq:matrix-problems} with $\ell(z) = \|z\|_1$, initialized at $X_{0}$ using Configuration \ref{assum: dampingparameter} with $\gamma \leq \min\left\{1,\tfrac{\clb}{2}\right\}$, $\cub \leq  \frac{1}{16\sqrt{2(r-r^\star + 1)}}\frac{\omega_0^2}{\omega_2^2}$, and
  \begin{equation*} \norm{ X_0 X_0^\top - Z^\star}{F} \leq  \frac{1}{8\sqrt{2}}\frac{\omega_0}{\omega_2}\lambda_{\rs} \left(Z^\star\right).
  \end{equation*}
  Then, the iterates must satisfy
$$
\norm{X_k X_k^\top  - Z^\star }{F}^2 \leq \left( 1 - \frac{\gamma}{8}\frac{\omega_0^{2}}{\omega_2^2}\right)^k \norm{X_0 X_0^\top  - Z^\star }{F}^2 \quad \text{for all $k\geq 0$}.
$$
\end{corollary}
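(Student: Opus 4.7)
The plan is to obtain this corollary as a direct application of Theorem~\ref{thm: onestepimprovement} (Polyak stepsize case) applied to $F = \Fsym$ and $h(\cdot) = \|\mathcal{A}(\cdot) - b\|_1$. All required hypotheses have already been established as stand-alone building blocks earlier in the section; the work is to verify they combine with the stated parameter choices.

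First, I would certify the regularity of the outer function. Lemma~\ref{matrix: regularity_assum} shows that under the $\ell_1/\ell_2$-RIP~\eqref{eq:loRIP} and the $\mathcal{I}$-outlier bound~\eqref{eq:outlier-bound} for rank at most $2r$, $h(\cdot) = \|\mathcal{A}(\cdot)-b\|_1$ satisfies restricted sharpness with constant $\mu = \omega_0$ and restricted Lipschitzness with constant $L = \omega_2$ over all matrices (resp.\ matrix differences) of rank at most $r$ (resp.\ $2r$). Invoking the symmetric analogue of Lemma~\ref{lem:restricted-regularity-implies-ugly-regularity}, this upgrades to Assumption~\ref{assum:nonsmoothpenalty} for $h$ with the same constants $\mu = \omega_0$ and $L = \omega_2$, as required by Theorem~\ref{thm: onestepimprovement}. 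Note the rank budget matches because $X X^\top$ has rank at most $r$ and $XX^\top - Z^\star$ has rank at most $2r$.

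Next, I would certify the regularity of the parameterization. Theorem~\ref{thm:regularity-matrix-sym} gives Assumption~\ref{assum: assumptiononc} for $\Fsym$ with $\lc = 2$ and Assumption~\ref{ass:weak-alignment} with $\sig(\rho) = 4\rho/\sqrt{2(r - r^\star + 1)}$ and the stated $\rloc(\rho)$. Substituting $\rho = \mu/(8L) = \omega_0/(8\omega_2)$ yields $\sig(\mu/(8L)) = \omega_0/\bigl(2\omega_2 \sqrt{2(r-r^\star+1)}\bigr)$, hence
\[
\frac{\mu}{8L}\,\sig\!\left(\frac{\mu}{8L}\right) = \frac{\omega_0^2}{16\,\omega_2^2\,\sqrt{2(r-r^\star+1)}},
\]
so the corollary's bound $\cub \le \tfrac{1}{16\sqrt{2(r-r^\star+1)}}\,\omega_0^2/\omega_2^2$ implies the hypothesis $\cub \le \tfrac{\mu}{8L}\sig(\tfrac{\mu}{8L})$ of Theorem~\ref{thm: onestepimprovement}. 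Similarly, the first branch of $\rloc(\rho)$ evaluates to $\tfrac{1}{8\sqrt{2}}\tfrac{\omega_0}{\omega_2}\lambda_{\rs}(Z^\star)$, and since this is dominated by the other two branches, the corollary's initialization assumption $\|X_0 X_0^\top - Z^\star\|_F \le \tfrac{1}{8\sqrt{2}}\tfrac{\omega_0}{\omega_2}\lambda_{\rs}(Z^\star)$ matches $\rloc(\mu/(8L))$. The step-size constraint $\gamma \le \min\{1, \clb/\lc\} = \min\{1, \clb/2\}$ is identical to the one assumed in the corollary.

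With every hypothesis of Theorem~\ref{thm: onestepimprovement}, item~\ref{item:nonsmooth-weak-polyak} verified, its conclusion yields $\|z_k - z^\star\| \le (1 - \gamma\mu^2/(8L^2))^{k/2} \|z_0 - z^\star\|$. Substituting $z_k = X_k X_k^\top$, $z^\star = Z^\star$, $\mu = \omega_0$, $L = \omega_2$ and squaring gives exactly the stated bound. There is no substantive obstacle: the proof is a bookkeeping exercise of lining up constants. The only subtlety worth double-checking is that the rank bound $2r$ imposed on $\mathcal{A}$ is exactly what the symmetric version of Lemma~\ref{lem:restricted-regularity-implies-ugly-regularity} needs to produce Assumption~\ref{assum:nonsmoothpenalty} on the image of $\Fsym$; this is immediate because any two points in $\Ima \Fsym$ have difference of rank at most $2r$.
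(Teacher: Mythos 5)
Your proposal is correct and follows essentially the same route the paper intends: combine Lemma~\ref{matrix: regularity_assum} with the symmetric analogue of Lemma~\ref{lem:restricted-regularity-implies-ugly-regularity} to get Assumption~\ref{assum:nonsmoothpenalty} with $\mu=\omega_0$, $L=\omega_2$, use Theorem~\ref{thm:regularity-matrix-sym} for Assumptions~\ref{assum: assumptiononc} and~\ref{ass:weak-alignment}, and then invoke Theorem~\ref{thm: onestepimprovement} (Polyak case); your constant bookkeeping for $\cub$, $\rloc(\mu/(8L))$, $\gamma$, and the squared rate all checks out.
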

\ifbool{showSquare}{\begin{corollary}[\textbf{Convergence for robust asymmetric matrix sensing}]
  Suppose that $\mathcal{A}\colon \RR^{d_{1}} \times \RR^{d_{2}} \rightarrow \RR^{m}$ satisfies~\eqref{eq:loRIP} and~\eqref{eq:outlier-bound} for all matrices $Z$ of rank at most $2r$ and that the vector $b\in \RR^m$ is taken as in~\eqref{eq:outliers}. Let $\Xs\Ys^\top=\Zs$ be a factorization satisfying $\rankk{\Xs} = \rankk{\Ys} = \rs$ and the right singular vectors of the two factors match $V^\Xs= V^\Ys$. Assume Algorithm \ref{alg:LM}  is applied to the second objective in \eqref{eq:matrix-problems} with $\ell(z) = \|z\|_1$, initialized at $\left(X_{0}, Y_{0}\right)$ using Configuration \ref{assum: dampingparameter} with $\gamma \leq \min\left\{1, \frac{\clb}{\sqrt{2}} \right\}$, $\cub \leq \frac{1}{2^7\cdot 5\sqrt{2}\left(r - r^\star + 1\right)^2} \frac{\omega_0^2}{\omega_2^2} \frac{\min\left\{ \sigma^2_{\rs}\left(\Xs\right), \sigma_{\rs}^2\left(\Ys\right) \right\}}{ \sigma^2_{\rs}\left(\Xs\right) + \sigma_{\rs}^2\left(\Ys\right)}$,
  \begin{align*}
  \norm{(X_0,Y_0) - (\Xs,\Ys)}{F} &\leq \epsassymmatrixhalf, \qquad \text{and} \\
    \norm{ X_0 Y_0^\top - Z^\star}{F} &\leq \frac{1}{2^{19}}\frac{\omega_0^4}{\omega_2^4} \frac{\min\left\{ \sigma^4_{\rs}\left(\Xs\right), \sigma_{\rs}^4\left(\Ys\right) \right\}}{\min\left\{ \sigma^2_{1}\left(\Xs\right), \sigma_{1}^2\left(\Ys\right) \right\}} \clb.
  \end{align*}
Then, the iterates satisfy
$$
\norm{X_k Y_k^\top  - Z^\star }{F}^2 \leq \left( 1 - \frac{\gamma}{8}\frac{\omega_0^{2}}{\omega_2^2} \right)^k \norm{X_0 Y_0^\top  - Z^\star }{F}^2 \quad \text{for all $k\geq 0$}.
$$
\end{corollary}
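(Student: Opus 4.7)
The plan is to reduce this statement directly to the local convergence result Theorem~\ref{thm:local-onestepimprovement} by packaging together the regularity of the outer $\ell_1$ loss and the local weak alignment of the asymmetric factorization. First, I would verify the assumptions on the outer function $h(\cdot)=\|\mathcal{A}(\cdot)-b\|_1$. By Lemma~\ref{matrix: regularity_assum}, the $\ell_{1}/\ell_{2}$-RIP and the $\mathcal{I}$-outlier bound on rank-$2r$ matrices yield restricted $\omega_{0}$-sharpness in the sense of~\eqref{eq:restricted-sharpness} and restricted $\omega_{2}$-Lipschitzness in the sense of~\eqref{eq:restricted-lipschitzness}. Since for iterates of Algorithm~\ref{alg:LM} applied to $\Fasym$ every relevant difference $XY^\top - \tilde X \tilde Y^\top$ has rank at most $2r$, Lemma~\ref{lem:restricted-regularity-implies-ugly-regularity} (in its asymmetric version) promotes these bounds to the full Assumption~\ref{assum:nonsmoothpenalty} on $\operatorname{Im}\Fasym$, with $\mu=\omega_{0}$ and $L=\omega_{2}$.

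Next, I would invoke Theorem~\ref{thm:regularity-matrix-asym} at the given balanced factorization $\Zs=\Xs\Ys^\top$ with $V^{\Xs}=V^{\Ys}$ and $\rankk{\Xs}=\rankk{\Ys}=\rs$. This gives Assumption~\ref{assum:local_lip_jacob} with $\lc=\sqrt{2}$ and $\jacvarepsilon=\infty$, and the local weak alignment Assumption~\ref{ass:local-weak-alignment} with
$$
\alignvarepsilon=\epsassymmatrix,\qquad
\sig(\rho)=\frac{\rho}{10\sqrt{2}(r-\rs+1)^{2}}\frac{\min\{\sigma_{\rs}^{2}(\Xs),\sigma_{\rs}^{2}(\Ys)\}}{\sigma_{\rs}^{2}(\Xs)+\sigma_{\rs}^{2}(\Ys)}.
$$
Since $\jacvarepsilon=\infty$ I get $\varepsilon=\min\{\jacvarepsilon,\alignvarepsilon\}=\epsassymmatrix$, so the hypothesis $\|(X_{0},Y_{0})-(\Xs,\Ys)\|_{F}\leq \epsassymmatrixhalf$ is exactly the $\|x_{0}-\xs\|\leq \varepsilon/2$ requirement of Theorem~\ref{thm:local-onestepimprovement}.

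I would then match the remaining parameters. The bound $\gamma\leq\min\{1,\clb/\sqrt{2}\}$ is $\gamma\leq\min\{1,\clb/\lc\}$. Plugging $\mu=\omega_{0}$, $L=\omega_{2}$ into the required $\cub\leq \tfrac{\mu}{8L}\sig(\tfrac{\mu}{8L})$ and using the expression for $\sig$ produces
$$
\cub\leq \frac{1}{640\sqrt{2}\,(r-\rs+1)^{2}}\frac{\omega_{0}^{2}}{\omega_{2}^{2}}\frac{\min\{\sigma_{\rs}^{2}(\Xs),\sigma_{\rs}^{2}(\Ys)\}}{\sigma_{\rs}^{2}(\Xs)+\sigma_{\rs}^{2}(\Ys)},
$$
which rewrites as the stated $\cub$ bound since $640=2^{7}\cdot 5$.

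The delicate step, and the main obstacle, is checking that the hypothesis on $\|X_{0}Y_{0}^{\top}-\Zs\|_{F}$ implies the second initialization inequality
$\|z_{0}-\zs\|\leq\min\bigl\{\rloc(\tfrac{\mu}{8L}),\,\tfrac{(1-\sqrt{\tilde q})^{2}\varepsilon^{2}\clb}{2\gamma^{2}}\bigr\}$ in Theorem~\ref{thm:local-onestepimprovement}. For the first term, substituting $\rho=\omega_{0}/(8\omega_{2})$ into the formula for $\rloc$ in Theorem~\ref{thm:regularity-matrix-asym} gives a bound of order $\tfrac{\omega_{0}^{2}}{\omega_{2}^{2}}\min\{\sigma_{\rs}^{2}(\Xs),\sigma_{\rs}^{2}(\Ys)\}$ times an explicit absolute constant, which is dominated by the stated initialization bound. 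For the second term, I would use $\tilde q=\sqrt{1-\gamma\omega_{0}^{2}/(8\omega_{2}^{2})}$ together with the elementary estimate $(1-x)^{1/4}\leq 1-x/4$ for $x\in[0,1]$ to get $1-\sqrt{\tilde q}\geq \gamma\omega_{0}^{2}/(32\omega_{2}^{2})$, hence
$$
\frac{(1-\sqrt{\tilde q})^{2}\varepsilon^{2}\clb}{2\gamma^{2}}\geq \frac{\omega_{0}^{4}}{2048\,\omega_{2}^{4}}\,\varepsilon^{2}\clb,
$$
and then expand $\varepsilon^{2}=\epsassymmatrix^{2}$ to see that the stated right-hand side $\tfrac{1}{2^{19}}\tfrac{\omega_{0}^{4}}{\omega_{2}^{4}}\tfrac{\min\{\sigma_{\rs}^{4}(\Xs),\sigma_{\rs}^{4}(\Ys)\}}{\min\{\sigma_{1}^{2}(\Xs),\sigma_{1}^{2}(\Ys)\}}\clb$ is no larger than this lower bound (using $\max\{\sigma_{1}^{2}(\Xs),\sigma_{1}^{2}(\Ys)\}\geq \min\{\sigma_{1}^{2}(\Xs),\sigma_{1}^{2}(\Ys)\}$ to absorb the numerical constants). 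With all hypotheses of Theorem~\ref{thm:local-onestepimprovement} verified, its conclusion with $\mu=\omega_{0}$ and $L=\omega_{2}$ yields directly
$\|X_{k}Y_{k}^{\top}-\Zs\|_{F}^{2}\leq (1-\gamma\omega_{0}^{2}/(8\omega_{2}^{2}))^{k}\|X_{0}Y_{0}^{\top}-\Zs\|_{F}^{2}$, completing the proof.
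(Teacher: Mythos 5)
Your overall route is exactly the paper's: establish Assumption~\ref{assum:nonsmoothpenalty} for $h(\cdot)=\|\cA(\cdot)-b\|_1$ with $\mu=\omega_0$, $L=\omega_2$ via Lemma~\ref{matrix: regularity_assum} and Lemma~\ref{lem:restricted-regularity-implies-ugly-regularity}, take the local regularity of $\Fasym$ from Theorem~\ref{thm:regularity-matrix-asym}, and invoke Theorem~\ref{thm:local-onestepimprovement}. Your matching of $\gamma\le\min\{1,\clb/\sqrt2\}$, of the $\cub$ bound (indeed $\tfrac{\mu}{8L}\sig(\tfrac{\mu}{8L})$ gives the constant $2^7\cdot 5\sqrt2$), of the rate $1-\tfrac{\gamma}{8}\tfrac{\omega_0^2}{\omega_2^2}$, and of the factor-space condition $\|(X_0,Y_0)-(\Xs,\Ys)\|_F\le \alignvarepsilon/2$ is correct.

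The gap is in your final verification of the condition on $\|X_0Y_0^\top-\Zs\|_F$, where both comparisons are stated in the wrong direction. For the first term you need the corollary's bound to be \emph{at most} $\rloc\left(\tfrac{\omega_0}{8\omega_2}\right)=\tfrac{\omega_0}{32\omega_2}\min\{\sigma_{\rs}^2(\Xs),\sigma_{\rs}^2(\Ys)\}$, not the reverse as you wrote; this does hold, but only after using $\clb\le\cub$ together with the stated $\cub$ bound. More seriously, for the second term your own estimates give
$\tfrac{(1-\sqrt{\tilde q})^2\varepsilon^2\clb}{2\gamma^2}\ \ge\ \tfrac{1}{2^{20}}\tfrac{\omega_0^4}{\omega_2^4}\tfrac{\min\{\sigma_{\rs}^4(\Xs),\sigma_{\rs}^4(\Ys)\}}{\max\{\sigma_{1}^2(\Xs),\sigma_{1}^2(\Ys)\}}\clb$
(since $\varepsilon^2=\tfrac{1}{2^9}\tfrac{\min\{\sigma_{\rs}^4\}}{\max\{\sigma_1^2\}}$ and $1-\sqrt{\tilde q}\ge\gamma\omega_0^2/(32\omega_2^2)$), whereas the hypothesis to be checked is $\tfrac{1}{2^{19}}\tfrac{\omega_0^4}{\omega_2^4}\tfrac{\min\{\sigma_{\rs}^4\}}{\min\{\sigma_1^2\}}\clb$. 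The inequality $\max\ge\min$ goes the wrong way: you would need $\tfrac{1}{2^{19}\min\{\sigma_1^2\}}\le\tfrac{1}{2^{20}\max\{\sigma_1^2\}}$, i.e.\ $2\max\le\min$, which is false. So the step as written does not close; the mechanical application of Theorem~\ref{thm:local-onestepimprovement} delivers the conclusion under the slightly stronger initialization with $\max\{\sigma_1^2(\Xs),\sigma_1^2(\Ys)\}$ in the denominator and constant $2^{-20}$ (the sharper $9/16$ from Proposition~\ref{prop: local-lipshshitz-c-conditions-for-linear-convergence} recovers only a factor $9/8$, not the missing $2\max/\min$). You should either state the corollary with that corrected constant or flag that the displayed constant in the paper is generous at exactly this point rather than justify it with an inequality that runs backwards.
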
}{}

\subsection{Tensor factorization}\label{sec: tensor}

Tensors are generalizations of matrices that store information in $n$ modes as opposed to only two. They have numerous applications in recommender systems, biomedical imaging, quantum many-body simulations, and numerical linear algebra \cite{kolda2009tensor, landsberg2011tensors, Sidiropoulos00CPArray, cpimage, Morup06EEG, Miranda24CPBayesian, Acar11Netflix}. A major challenge in large-scale tensor analysis is the growth in storage and computation with increasing modes. To address this issue, practitioners typically employ low-rank tensor decompositions. Unlike the matrix SVD, tensor factorization admits no single canonical form; instead, a variety of models---such as canonical polyadic (CP), Tucker, and tensor train decompositions---are used, each with its properties and algorithmic trade-offs. 

In this section, we focus on finding a CP tensor factorization. Although we will work only with third-order tensors, many results here likely extend to arbitrary tensors. We start by introducing some notation. Intuitively, a third-order tensor $T$ can be viewed as a three-dimensional array of scalars. 
Given vectors $w \in \RR^{d_1}, x \in \RR^{d_2}$ and $y \in \RR^{d_3}$ we use $w \otimes x \otimes y$ to denote a tensor with components given by $(w \otimes x \otimes y)_{ijk} = w_{i} x_j y_k.$ A general tensor $T$ has a CP decomposition of rank $r$ if it can be written as $T = \sum_{i= 1}^r w^{(i)} \otimes x^{(i)} \otimes y^{(i)};$ further the decomposition is symmetric if $w^{(i)} = x^{(i)} = y^{(i)}$ for all $i$. The \emph{CP-rank} of $T$ is the minimum $r$ for which a CP decomposition exists; the \emph{symmetric CP-rank} is defined analogously. We refer the interested reader to~\cite{kolda2009tensor} for additional details. 
Our goal is, then, to factorize a three-dimensional tensor $T^\star$ with CP rank $r^\star$. To do so, we aim to fit the entries to one of the two explicit factorizations
\begin{equation}\label{eq:tensor-problems}
\min_{X\in \RR^{d\times r}} \left\|\sum_{j=1}^r X_j \otimes X_j \otimes X_j - \Ts\right\|_F \quad \text{or} \quad  \min_{\substack{
    W\in\RR^{d_1\times r},\;
    X\in\RR^{d_2\times r},\\
    Y\in\RR^{d_3\times r}
}} \left\|\sum_{j=1}^r W_j \otimes X_j \otimes Y_j - \Ts\right\|_F,
\end{equation}
depending on whether the tensor $T^\star$ is symmetric or not. Here, $X_{j}$ denotes the $j$th column of $X$ and the Frobenius norm is equal to the $\ell_2$ norm of the vectorized tensor. These are instances of composite optimization with  $h(T) = \| T - T^\star\|_F$ and parameterizations
\begin{equation}
    \label{eq:tensor-facotrizations} 
    \Fsym(X) := \sum_{j=1}^r X_j \otimes X_j \otimes X_j \qquad \text{and} \qquad \Fasym(W,X,Y):= \sum_{j=1}^r W_j \otimes X_j \otimes Y_j.
\end{equation}
Throughout, we assume the tensor of interest has CP-rank $\rs$.

\paragraph{Regularity of the parameterization.} We show that the symmetric and asymmetric factorization maps satisfy local strong alignment (Assumption~\ref{ass: local-strong-alignment}).
Recall that to streamline the exposition, we present this assumption and its implications (Theorem~\ref{thm:local-onestepimprovement-strong-nonsmooth}) only in Appendix~\ref{app:additional-local-guarantees}.
We defer the proof of these theorems to Appendices~\ref{proof-tensor-symmetric-reg-conditions} and \ref{proof-tensor-asymmetric-reg-conditions}, respectively.
\ifbool{showSquare}{
\begin{theorem}[\textbf{Strong alignment of the symmetric CP map}]
\label{thm-tensor-symmetric-reg-conditions}
   Let $\Xs \in \RR^{d\times r^\star}$ be a full-rank matrix and set $T^\star = \Fsym(\Xs)$. Then, the map $\Fsym$ with $r = \rs$ satisfies Assumption \ref{assum:local_lip_jacob} at $\Xs$ with $\jacvarepsilon=\|\Xs\|_F$ and $\lc = 12\norm{\Xs}{F}$, and Assumption~\ref{ass: local-strong-alignment} at $\Xs$ with
     \begin{align*}\alignvarepsilon = \epssymtensor, \quad \rloc(\rho) = \frac{\rho}{C}, \quad \text{ and } \quad
    \sig = \frac{1}{2}\sigma_{dr}\left( \nabla \Fsym(\Xs) \right)\end{align*}
  for some constants $R, C>0$ that depend only on $\Xs$.
\end{theorem}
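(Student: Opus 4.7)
The plan is to handle the two assumptions sequentially. For the local Lipschitz bound on $\nabla \Fsym$, I would start from the explicit formula $\nabla \Fsym(X)[H] = \sum_{j=1}^r \bigl(H_j \otimes X_j \otimes X_j + X_j \otimes H_j \otimes X_j + X_j \otimes X_j \otimes H_j\bigr)$. The difference $(\nabla \Fsym(X) - \nabla \Fsym(Y))[H]$ can be expanded via the identity $X_j \otimes X_j - Y_j \otimes Y_j = (X_j - Y_j) \otimes X_j + Y_j \otimes (X_j - Y_j)$ in each of the three tensor slots, producing six rank-one terms. Each is bounded in Frobenius norm by $\norm{H}{F}\norm{X-Y}{F}\max\{\norm{X}{F},\norm{Y}{F}\}$, and restricting to $X, Y \in \B_{\norm{\Xs}{F}}(\Xs)$ bounds the maximum by $2\norm{\Xs}{F}$. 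This delivers $\lc = 12\norm{\Xs}{F}$ and $\jacvarepsilon = \norm{\Xs}{F}$.

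For the strong alignment, the plan is to take the index $j = dr$, so that $\Pi^x_{dr}$ is the orthogonal projection onto $\range{\nabla \Fsym(x)}$. Weyl's perturbation inequality for singular values together with the Lipschitz bound just established gives $\bigl|\sigma_{dr}(\nabla \Fsym(x)) - \sigma_{dr}(\nabla \Fsym(\Xs))\bigr| \le 12\norm{\Xs}{F}\norm{x-\Xs}{F}$. Restricting to $\norm{x-\Xs}{F} \le \sigma_{dr}(\nabla \Fsym(\Xs))/(24\norm{\Xs}{F})$, which is exactly the middle component of $\alignvarepsilon$, then forces $\sigma^x_{dr} \ge \tfrac{1}{2}\sigma_{dr}(\nabla \Fsym(\Xs)) = \sig$.

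The heart of the proof is the quadratic bound on $(I - \Pi^x_{dr})(z-\zs)$. I would use the integral form of Taylor's theorem to write $z - \zs = \nabla \Fsym(x)[x-\Xs] + \int_0^1 \bigl[\nabla \Fsym(\Xs+t(x-\Xs)) - \nabla \Fsym(x)\bigr][x-\Xs]\,dt$. Applying $I - \Pi^x_{dr}$ kills the leading term, since it lives in $\range{\nabla \Fsym(x)}$, and the Lipschitz bound controls the remaining integral by $6\norm{\Xs}{F}\norm{x-\Xs}{F}^2$. A parallel splitting centered at $\Xs$, combined with the full-column-rank lower bound $\norm{\nabla \Fsym(\Xs)[x-\Xs]}{F} \ge \sigma_{dr}(\nabla \Fsym(\Xs))\norm{x-\Xs}{F}$, yields $\norm{z-\zs}{F} \ge \tfrac{1}{2}\sigma_{dr}(\nabla \Fsym(\Xs))\norm{x-\Xs}{F}$ on the same neighborhood. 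Chaining these estimates produces $\norm{(I-\Pi^x_{dr})(z-\zs)}{F} \le C \norm{z-\zs}{F}^2$ for a constant $C$ depending only on $\norm{\Xs}{F}$ and $\sigma_{dr}(\nabla \Fsym(\Xs))$, so setting $\rloc(\rho) = \rho/C$ gives the alignment bound.

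The main obstacle is bookkeeping constants: several nested smallness conditions (Lipschitzness, singular-value stability, comparability of $\norm{x-\Xs}{F}$ and $\norm{z-\zs}{F}$, and the $\rho$-dependent alignment) must be intersected and repackaged into the single $\alignvarepsilon$ and $\rloc(\rho)$ that appear in the statement. A cleaner alternative, if one wishes to avoid repeating the constant chase, is to verify the constant-rank property $\rankk{\nabla \Fsym(x)} \equiv dr$ on $\B_{\alignvarepsilon}(\Xs)$ and then invoke Lemma~\ref{constant_rank_implies_strong_alignment} as a black box, thereby reducing the second half of the argument to a rank verification.
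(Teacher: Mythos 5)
Your treatment of the Lipschitz part coincides with the paper's, and your Taylor-expansion route to the alignment inequality is in itself a viable (indeed, slightly leaner) alternative to the paper's argument, which instead verifies that $\rankk{\nabla \Fsym(X)}$ is constant near $\Xs$ and invokes Lemma~\ref{constant_rank_implies_strong_alignment} (constant rank $\Rightarrow$ manifold structure $\Rightarrow$ quadratic tangent approximation). However, your proposal has a genuine gap: every quantitative step in your second half --- the Weyl bound giving $\sigma^x_{dr}\ge \tfrac12\sigma_{dr}(\nabla\Fsym(\Xs))$, the ``full-column-rank lower bound'' $\norm{\nabla\Fsym(\Xs)[x-\Xs]}{F}\ge \sigma_{dr}(\nabla\Fsym(\Xs))\norm{x-\Xs}{F}$, and the positivity of $\sig$ demanded by Assumption~\ref{ass: local-strong-alignment} --- presupposes that $\nabla\Fsym(\Xs)\in\RR^{d^3\times dr}$ has full column rank, i.e.\ $\sigma_{dr}(\nabla\Fsym(\Xs))>0$. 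This is not a routine fact and is exactly the heart of the paper's proof: Proposition~\ref{prop: cpsym constant rank} establishes $\rankk{\nabla\Fsym(X)}=dr$ for full-rank $X$ via a probing-vector construction ($T_k=U^X(\tilde\Sigma^X)^{-1}(\tilde V^X_{k:})^\top$, showing the images $\nabla\Fsym(X)^\top(T_i\otimeskron T_j\otimeskron T_j)$ are linearly independent). That it requires proof is underscored by the order-two analogue $X\mapsto XX^\top$, whose Jacobian is \emph{not} of full column rank (its rank is $dr-\binom{r}{2}$), so the claim is special to the cubic map and cannot be waved through.

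You do gesture at this in your closing sentence (``reduce the second half to a rank verification''), but neither your main route nor the proposed alternative actually supplies that verification, and without it the argument is circular: $\sig$ could a priori be zero and $\alignvarepsilon$ degenerate, making the conclusion vacuous. If you add a proof that $\sigma_{dr}(\nabla\Fsym(\Xs))>0$ (e.g.\ by the paper's probing argument, or any other demonstration that the $dr$ columns of the Jacobian are linearly independent for full-rank $\Xs$), then your direct Taylor estimate --- $\norm{(I-\Pi^x_{dr})(z-\zs)}{}\le \tfrac{\lc}{2}\norm{x-\Xs}{}^2$ combined with $\norm{z-\zs}{}\ge\tfrac12\sigma_{dr}(\nabla\Fsym(\Xs))\norm{x-\Xs}{}$ on a suitably small ball --- closes the proof with explicit constants $C$ and $R$, avoiding the constant-rank theorem altogether (full column rank at $\Xs$ persists nearby by lower semicontinuity of rank, so the locally-constant-rank hypothesis is automatic in this setting).
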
 

\begin{theorem}[\textbf{Strong alignment of the asymmetric CP map}]
\label{thm-tensor-asymmetric-reg-conditions}
   Let $(\Ws,\Xs,\Zs) \in \RR^{d_1\times \rs} \times \RR^{d_2\times \rs} \times \RR^{d_3\times \rs}$ be full-rank matrices. Then, the map $\Fasym$ with $r = \rs$ satisfies Assumption \ref{assum:local_lip_jacob} at $(\Ws,\Xs, \Ys)$ with $\jacvarepsilon= \norm{({\Ws},{\Xs},{\Ys})}{F}$ and $\lc = 4\sqrt{3}\norm{({\Ws},{\Xs},{\Ys})}{F}$, and Assumption~\ref{ass: local-strong-alignment} at $(\Ws,\Xs, \Ys)$  with
    \begin{align*}&\alignvarepsilon = \epsAsymtensor, \qquad \rloc(\rho) = \frac{\rho}{C},\\&  \qquad \qquad \qquad \qquad\text{ and }\qquad
    \sig = \frac{1}{2}\sigma_{(d_1+d_2+d_3-2)r}\left( \nabla \Fasym(\Ws,\Xs,\Ys) \right)\end{align*}
  for some constants $R, C>0$ that depend only on $(\Ws,\Xs,\Ys)$.
\end{theorem}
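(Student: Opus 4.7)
The plan is to verify the two assumptions separately, following the template established for the symmetric CP case in Theorem~\ref{thm-tensor-symmetric-reg-conditions} and exploiting the multilinearity of $\Fasym$ together with the full-rank hypothesis on $(\Ws,\Xs,\Ys)$.

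For the Lipschitz bound on $\nabla \Fasym$ (Assumption~\ref{assum:local_lip_jacob}), I would compute the Jacobian explicitly via
\begin{align*}
\nabla \Fasym(W,X,Y)[\dot W,\dot X,\dot Y] = \sum_{j=1}^r \left( \dot W_j \otimes X_j \otimes Y_j + W_j \otimes \dot X_j \otimes Y_j + W_j \otimes X_j \otimes \dot Y_j \right).
\end{align*}
This is multilinear in $(W,X,Y)$, so within the ball $B_{\jacvarepsilon}((\Ws,\Xs,\Ys))$ with $\jacvarepsilon = \norm{(\Ws,\Xs,\Ys)}{F}$—where each factor has Frobenius norm at most $2\norm{(\Ws,\Xs,\Ys)}{F}$—the difference of two Jacobians telescopes into three bilinear terms in the perturbation and the ambient factors. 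Aggregating by triangle inequality and Cauchy--Schwarz over the three modes yields the announced $\lc = 4\sqrt{3}\norm{(\Ws,\Xs,\Ys)}{F}$, with the $\sqrt{3}$ arising from the three symmetric contributions.

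For strong alignment (Assumption~\ref{ass: local-strong-alignment}), the strategy is to argue that $\nabla \Fasym$ has locally constant rank, which unlocks the manifold machinery underlying Assumption~\ref{ass:strong-alignment}. At $(\Ws,\Xs,\Ys)$ the kernel of $\nabla \Fasym$ is precisely the infinitesimal gauge action $(\alpha_j W_j, \beta_j X_j, \gamma_j Y_j)_{j=1}^r$ with $\alpha_j+\beta_j+\gamma_j = 0$, which has dimension $2r$; by the full-rank hypothesis, $\sigma_{(d_1+d_2+d_3-2)r}(\nabla \Fasym(\Ws,\Xs,\Ys)) > 0$. Weyl's inequality combined with the Lipschitz bound above shows that for $(W,X,Y) \in B_{\alignvarepsilon}((\Ws,\Xs,\Ys))$ with $\alignvarepsilon \le \sigma_{(d_1+d_2+d_3-2)r}(\nabla \Fasym(\Ws,\Xs,\Ys))/(2\lc)$—exactly the middle term of $\epsAsymtensor$—the Jacobian retains rank $(d_1+d_2+d_3-2)r$ and its smallest nonzero singular value stays above $\sig = \tfrac{1}{2}\sigma_{(d_1+d_2+d_3-2)r}(\nabla \Fasym(\Ws,\Xs,\Ys))$. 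With constant rank in hand, the image of $\Fasym$ locally forms a smooth $(d_1+d_2+d_3-2)r$-dimensional manifold whose tangent at $\Fasym(W,X,Y)$ is $\range(\nabla \Fasym(W,X,Y))$; applying the constant-rank-implies-strong-alignment lemma referenced after Assumption~\ref{ass:strong-alignment} with $j = (d_1+d_2+d_3-2)r$ gives the quadratic manifold approximation $\norm{(I-\P_j^{(W,X,Y)})(T - \Ts)}{} \le C \norm{T - \Ts}{}^2$ for some $C$ depending only on $\lc/\sig$ and the extrinsic curvature, from which $\rloc(\rho) = \rho/C$ follows.

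The main obstacle is the quantitative coupling of the three local scales—the Lipschitz radius $\jacvarepsilon$, the singular-value stability radius, and the chord-to-tangent accuracy radius—so that all three are simultaneously controlled on $B_{\alignvarepsilon}((\Ws,\Xs,\Ys))$; this is precisely what forces the three-way minimum in the definition of $\epsAsymtensor$ and what the residual constant $R$ absorbs. A secondary subtlety is that the manifold argument must be applied uniformly across the gauge orbit rather than through a fixed section, so one has to verify that both the tangent projection $\P_j^{(W,X,Y)}$ and the residual $(I-\P_j^{(W,X,Y)})(T-\Ts)$ depend only on $T$ via the manifold and not on the particular representative $(W,X,Y)$—which is the technical heart of pinning down the constant $C$.
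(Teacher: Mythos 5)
Your overall route is the same as the paper's: establish the Lipschitz estimate on $\nabla \Fasym$ directly from the multilinear structure, show that $\nabla\Fasym$ has constant rank $(d_1+d_2+d_3-2)r$ near $(\Ws,\Xs,\Ys)$, and then invoke the constant-rank-implies-strong-alignment lemma (Lemma~\ref{constant_rank_implies_strong_alignment}) together with Weyl's inequality to obtain $\alignvarepsilon$, $\sig$, and $\rloc(\rho)=\rho/C$. The Lipschitz part and the final reduction are fine and match the paper's argument in spirit.

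The genuine gap is the sentence ``the kernel of $\nabla \Fasym$ is precisely the infinitesimal gauge action $(\alpha_j W_j,\beta_j X_j,\gamma_j Y_j)$ with $\alpha_j+\beta_j+\gamma_j=0$, which has dimension $2r$.'' The inclusion of these scaling directions in the kernel is immediate, but the reverse inclusion --- that nothing else lies in the kernel, equivalently that $\rank\big(\nabla\Fasym(\Ws,\Xs,\Ys)\big)=(d_1+d_2+d_3-2)r$ exactly --- is the entire technical content of the theorem and does not follow from the full-rank hypothesis by any short argument. It amounts to a local identifiability statement for the CP decomposition (up to the scaling gauge), and the paper spends Proposition~\ref{prop: cp constant rank}, Lemma~\ref{lem: cp_on_ts}, and two supporting claims on it: one constructs SVD-based dual vectors $T^M_i$, shows an explicit set $H^c$ of $d_1d_2d_3-(d_1+d_2+d_3-2)r$ such Kronecker products is annihilated by $\nabla\Fasym\nabla\Fasym^\top$ (upper bound on the rank), and then proves by a delicate probing/linear-independence argument --- pairing the images of the vectors in $H$ against maps of the form $I_{d_1}\otimeskron T^X_i\otimeskron T^Y_j$ and analyzing the resulting coefficient relations $\omega^{\alpha,\beta,i}$, etc. --- that the images of the remaining $(d_1+d_2+d_3-2)r$ vectors stay linearly independent (lower bound). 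Without some substitute for this step your proof establishes only that the rank is at least $2r$ short of full and at most whatever Weyl allows, which is not enough to invoke the constant rank theorem. Once that kernel characterization is supplied, the rest of your outline (gauge directions give the rank upper bound at every nearby full-rank triple, Weyl gives the matching lower bound, then Lemma~\ref{constant_rank_implies_strong_alignment}) does close the argument exactly as the paper does.
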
 }{}

We observe that, unlike our results for matrix factorization, here, we only handle ill-conditioning and fail to capture the overparameterized settings. Nonetheless, our numerical experiments (Section~\ref{sec:experiments}) suggest that Algorithm~\ref{alg:LM} converges linearly even for overparameterized problems.   
\ifbool{showSquare}{ 
\paragraph{Convergence rates.} The outer function for tensor factorization $h(T) = \norm{T - \Ts}{F}$ is trivially well-conditioned, in particular, it satisfies Assumption~\ref{assum:smoothpenalty} with $\mu= \lh=1$. Thus, applying Theorem~\ref{thm:local-onestepimprovement-strong-nonsmooth} yields the following two corollaries.}{}

\begin{corollary}[\textbf{Convergence rate for symmetric CP tensor factorization}]
    Let $T^\star \in \RR^{d} \otimes \RR^{d}\otimes \RR^{d}$ be a symmetric tensor with symmetric CP rank $r^\star$ and let $\Xs \in \RR^{d\times \rs}$ be such that $T^{\star} = \Fsym(\Xs).$ Consider the first problem \eqref{eq:tensor-problems} with $r = r^\star$ and suppose that we ran Algorithm~\ref{alg:LM} initialized at $X_0$ using Configuration~\ref{assum: dampingparameter} with $\gamma \leq \min \left\{1, \frac{\clb}{12\norm{\Xs}{F}} \right\}$ and
    \begin{align*}
            \norm{X_0 - \Xs}{F} &\leq\frac{1}{2} \min\left\{R, \frac{\sigma_{dr}( \nabla \Fsym(\Xs))}{24\norm{\Xs}{F}},\norm{\Xs}{F}  \right\},\\ 
        \norm{\Fsym(X_0) - \Ts}{F} &\leq \min  \left\{ \frac{1}{8 C}, \frac{\sigma_{dr}\left( \nabla \Fsym(\Xs) \right)}{16\cub},  \frac{1}{2^{10}} {\clb\min\left\{R, \frac{\sigma_{dr}\left( \nabla \Fsym(\Xs)  \right)}{24\norm{\Xs}{F}},\norm{\Xs}{F}  \right\}^2} \right\},
    \end{align*}
    where $C, R > 0$ are constants depending only on $X^\star$.
    Then, the iterates satisfy
    $$
    \norm{\Fsym(X_k) - \Ts}{F}^2 \leq \left( 1 - \frac{\gamma}{8} \right)^k\norm{\Fsym(X_0) - \Ts}{F}^2 \quad \text{for all $k\geq0$.}
    $$
\end{corollary}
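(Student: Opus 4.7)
The plan is to obtain this corollary as a direct specialization of the local strong-alignment convergence theorem (Theorem~\ref{thm:local-onestepimprovement-strong-nonsmooth}) to the tensor factorization setting. First, I would verify the three pieces of hypotheses needed by that theorem. The outer function $h(T) = \|T - T^\star\|_F$ is convex, admits $T^\star \in \Ima \Fsym$ as its unique minimizer (since $r = \rs$ guarantees the model is expressive enough), and trivially satisfies Assumption~\ref{assum:nonsmoothpenalty} with constants $\mu = L = 1$: restricted sharpness is the identity $h(T) - h(T^\star) = \|T - T^\star\|$, and the restricted Lipschitz bound on projected subgradients follows because any $v \in \partial h(T)$ is a unit vector.

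Second, I would invoke Theorem~\ref{thm-tensor-symmetric-reg-conditions} to obtain the required regularity of $\Fsym$ at $X^\star$: local Lipschitzness of the Jacobian with constant $\lc = 12\|X^\star\|_F$ on the ball of radius $\jacvarepsilon = \|X^\star\|_F$ (Assumption~\ref{assum:local_lip_jacob}), and local strong alignment at $X^\star$ (Assumption~\ref{ass: local-strong-alignment}) with $\alignvarepsilon = \epssymtensor$, $\rloc(\rho) = \rho/C$, and $\sig = \tfrac{1}{2}\sigma_{dr}(\nabla \Fsym(X^\star))$ for constants $R, C > 0$ depending only on $X^\star$.

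Third, I would plug these constants into the parameter-configuration requirements of Theorem~\ref{thm:local-onestepimprovement-strong-nonsmooth}. Setting the target tolerance $\rho = \mu/(8L) = 1/8$, the stepsize constraint $\gamma \le \min\{1,\clb/\lc\}$ becomes $\gamma \le \min\{1, \clb/(12\|X^\star\|_F)\}$, exactly matching the stated bound. The upper bound on the damping $\cub \le s\mu/(8L\cdot \rho^{-1})$ simplifies to $\cub \le \sigma_{dr}(\nabla \Fsym(X^\star))/16$, which, after rescaling, appears inside the second displayed initial-distance constraint. Finally, the local theorem requires $\|X_0 - X^\star\|_F \le \varepsilon/2$ with $\varepsilon = \min\{\jacvarepsilon, \alignvarepsilon\}$ and $\|z_0 - z^\star\| \le \min\{\rloc(1/8), (1-\sqrt{\tilde q})^2 \varepsilon^2 \clb/(2\gamma^2)\}$; the former is exactly the first stated bound on $\|X_0 - X^\star\|_F$ (using $\jacvarepsilon = \|X^\star\|_F$ and the explicit form of $\alignvarepsilon$), while the latter unpacks into the three terms inside the $\min$ of the second stated bound (with $\rloc(1/8) = 1/(8C)$ giving the first). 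The resulting contraction factor is $1 - \gamma\mu^2/(8L^2) = 1 - \gamma/8$, yielding the claimed rate.

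The main bookkeeping obstacle will be matching the third term in the initial image-distance bound---the $(1-\sqrt{\tilde q})^2 \varepsilon^2 \clb/(2\gamma^2)$ quantity from the local theorem---with the stated $2^{-10}\clb \min\{R, \sigma_{dr}/(24\|X^\star\|_F), \|X^\star\|_F\}^2$ expression. This requires bounding $(1-\sqrt{\tilde q})^2/\gamma^2$ below by an absolute constant independent of $\gamma$, using $\tilde q = \sqrt{1 - \gamma/8}$ together with the elementary inequality $1 - \sqrt{1-t} \ge t/2$ for $t \in [0,1]$; the numerical constants then consolidate into the $2^{-10}$ factor. Everything else is a direct substitution.
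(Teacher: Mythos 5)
Your proposal follows exactly the route the paper intends: invoke Theorem~\ref{thm:local-onestepimprovement-strong-nonsmooth} with the regularity of $\Fsym$ supplied by Theorem~\ref{thm-tensor-symmetric-reg-conditions}, specialize to $\mu=\lh=1$, and unpack the theorem's hypotheses into the stated parameter constraints. The identification of $\varepsilon=\min\{\jacvarepsilon,\alignvarepsilon\}=\min\{R,\sigma_{dr}(\nabla\Fsym(\Xs))/(24\norm{\Xs}{F}),\norm{\Xs}{F}\}$, the matching of $\gamma\le\min\{1,\clb/(12\norm{\Xs}{F})\}$, and the first two terms $\rloc(1/8)=1/(8C)$ and $\sig/(8\cub)=\sigma_{dr}/(16\cub)$ of the image-distance constraint are all correct.

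One point deserves a second look, though: you claim the constants from the elementary inequality ``consolidate into the $2^{-10}$ factor,'' but running through the arithmetic gives $2^{-11}$, not $2^{-10}$. Applying $1-\sqrt{1-t}\ge t/2$ twice (first to $1-\tilde q$, then to $1-\sqrt{\tilde q}$) gives $1-\sqrt{\tilde q}\ge\gamma/32$, so $(1-\sqrt{\tilde q})^2\varepsilon^2\clb/(2\gamma^2)\ge\varepsilon^2\clb/2048=\varepsilon^2\clb/2^{11}$. Thus the theorem's third requirement is only guaranteed if $\norm{\Fsym(X_0)-\Ts}{F}\le 2^{-11}\clb\varepsilon^2$, a factor of two tighter than the $2^{-10}\clb\varepsilon^2$ the corollary states. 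This is asymptotically tight as $\gamma\to 0$, so no sharper elementary bound will rescue the $2^{-10}$. The mismatch appears to be a slip in the paper's constant rather than in your method, but since you asserted the constants ``consolidate'' to $2^{-10}$, you should carry out that final step explicitly rather than wave at it — your own computation would reveal the discrepancy.
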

\ifbool{showSquare}{
\begin{corollary}[\textbf{Convergence for asymmetric CP tensor factorization}]
  Let $T^{\star} \in \RR^{d_{1}} \otimes \RR^{d_{2}}\otimes \RR^{d_{3}}$ be a tensor with CP rank $\rs$ and let $(\Ws, \Xs,\Ys) \in \RR^{d_{1}\times \rs} \times \RR^{d_{2}\times \rs} \times \RR^{d_{3}\times \rs} $ be such that $T^{\star} = \Fasym(\Ws, \Xs, \Ys)$.
    Consider the second problem in \eqref{eq:tensor-problems} with $r = \rs$ and suppose that we ran Algorithm~\ref{alg:LM} initialized at $W_0,X_0,Y_0$ using Configuration~\ref{assum: dampingparameter} with $\gamma \leq \min \left\{1, \frac{\clb}{4\sqrt{3} \norm{({\Ws},{\Xs},{\Ys}) }{F}} \right\}$, $$\norm{({W_0},{X_0},{Y_0}) - ({\Ws},{\Xs},{\Ys})}{F} \leq \epsAsymtensorhalf,$$ and 
    \begin{align*}
    &\norm{\Fasym(W_0,X_0,Z_0) - \Ts}{F} \\ 
    &\leq\min \Bigg\{ \frac{1}{8 C}, \frac{\sigma_{(d_1+d_2+d_3-2)r}\left( \nabla \Fasym(\Ws,\Xs,\Ys) \right)}{16\cub}, \\ &\hspace{1.5cm}\frac{1}{2^{10}} {\clb } \min\bigg\{R, \frac{\sigma_{(d_1+d_2+d_3-2)r}\left( \nabla \Fasym(\Ws,\Xs,\Zs)  \right)}{8\sqrt{3}\norm{({\Ws},{\Xs},{\Ys})}{F}}, \norm{({\Ws},{\Xs},{\Ys})}{F}  \bigg\}^2 \Bigg\},
    \end{align*}
    where $C, R > 0$ are constants depending only on $(W^{\star},X^\star, Y^{\star})$.
Then, the iterates satisfy
    $$
    \norm{\Fasym(W_k,X_k,Y_k) - \Ts}{F}^2 \leq \left( 1 - \frac{\gamma}{8} \right)^k\norm{\Fasym(W_0,X_0,Y_0) - \Ts}{F}^2 \quad \text{for all $k\geq0$.}
    $$
\end{corollary}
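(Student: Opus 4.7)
The plan is to obtain this corollary as a direct consequence of the local convergence theorem under strong alignment and nonsmoothness (Theorem~\ref{thm:local-onestepimprovement-strong-nonsmooth} in the appendix), by verifying its three hypotheses on the pair $(h,F)=(\|\cdot-T^\star\|_F,\Fasym)$ and then matching the resulting constants to those in the statement.

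First, I would check the regularity of the outer function. Since $h(T)=\|T-T^\star\|_F$ is $1$-Lipschitz and $1$-sharp, and $T^\star \in \Ima \Fasym$ is its unique minimizer (with $h^\star=0$), Assumption~\ref{assum:nonsmoothpenalty} is satisfied with $\mu=L=1$. The fourth condition in that assumption, which can look technical, reduces for the Frobenius norm to the Cauchy--Schwarz inequality applied to the unique subgradient $v=(T-T^\star)/\|T-T^\star\|$ at any $T\neq T^\star$, and is therefore immediate. Next, I would invoke Theorem~\ref{thm-tensor-asymmetric-reg-conditions} to supply the parameterization-side assumptions: it gives a locally Lipschitz Jacobian (Assumption~\ref{assum:local_lip_jacob}) with $\lc=4\sqrt{3}\|(W^\star,X^\star,Y^\star)\|_F$ on the ball of radius $\jacvarepsilon=\|(W^\star,X^\star,Y^\star)\|_F$, together with local strong alignment (Assumption~\ref{ass: local-strong-alignment}) at $(W^\star,X^\star,Y^\star)$ with $\alignvarepsilon=\epsAsymtensor$, $\rloc(\rho)=\rho/C$, and $\sig=\tfrac12\sigma_{(d_1+d_2+d_3-2)r}(\nabla \Fasym(W^\star,X^\star,Y^\star))$ for some $R,C>0$ depending only on $(W^\star,X^\star,Y^\star)$.

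Second, I would plug $\mu=L=1$ and these constants into the initialization and stepsize prescriptions of Theorem~\ref{thm:local-onestepimprovement-strong-nonsmooth}. The stepsize cap $\gamma\leq\min\{1,\clb/\lc\}$ directly yields the hypothesis $\gamma\leq\min\{1,\clb/(4\sqrt{3}\|(W^\star,X^\star,Y^\star)\|_F)\}$ assumed in the corollary. The radius condition on the parameters, $\|(W_0,X_0,Y_0)-(W^\star,X^\star,Y^\star)\|_F\leq \varepsilon/2$ with $\varepsilon=\min\{\jacvarepsilon,\alignvarepsilon\}=\alignvarepsilon$ (since $\alignvarepsilon\leq\|(W^\star,X^\star,Y^\star)\|_F=\jacvarepsilon$ by construction), becomes precisely $\|(W_0,X_0,Y_0)-(W^\star,X^\star,Y^\star)\|_F\leq\epsAsymtensorhalf$. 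The distance-to-image condition in the theorem takes the form $\|z_0-z^\star\|\leq\min\{\rloc(\mu/(8L)),\sig\mu/(8\cub L),(1-\sqrt{\tilde q})^2 \varepsilon^2\clb/(2\gamma^2)\}$, which, substituting the values of $\rloc(1/8)=1/(8C)$, $\sig$, $\cub$ and $\varepsilon$ (and using $\gamma\leq 1$), matches the three-way minimum imposed on $\|\Fasym(W_0,X_0,Y_0)-T^\star\|_F$.

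Third, once these hypotheses are verified, Theorem~\ref{thm:local-onestepimprovement-strong-nonsmooth} directly yields the claimed linear rate $\|\Fasym(W_k,X_k,Y_k)-T^\star\|_F^2\leq(1-\gamma\mu^2/(8L^2))^k\,\|\Fasym(W_0,X_0,Y_0)-T^\star\|_F^2=(1-\gamma/8)^k\,\|\Fasym(W_0,X_0,Y_0)-T^\star\|_F^2$, which is exactly the conclusion. The main obstacle, and the only non-bookkeeping step, is the careful matching of the scalar bounds: several of them involve nested minima over quantities depending on $R$, $C$, $\sigma_{(d_1+d_2+d_3-2)r}(\nabla \Fasym)$, and $\|(W^\star,X^\star,Y^\star)\|_F$, and one must check that the condition of the corollary implies each branch of the minimum in the general theorem (rather than the other way around). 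Everything else is substitution.
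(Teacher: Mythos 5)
Your proposal is correct and takes essentially the same route as the paper: the paper obtains this corollary by noting that $h(T)=\|T-T^\star\|_F$ satisfies the nonsmooth regularity conditions (Assumption~\ref{assum:nonsmoothpenalty}) with $\mu=L=1$, feeding in the constants $\lc=4\sqrt{3}\,\|(W^\star,X^\star,Y^\star)\|_F$, $\jacvarepsilon$, $\alignvarepsilon$, $\rloc(\rho)=\rho/C$, and $\sig=\tfrac12\sigma_{(d_1+d_2+d_3-2)r}(\nabla\Fasym(W^\star,X^\star,Y^\star))$ from Theorem~\ref{thm-tensor-asymmetric-reg-conditions}, and then instantiating Theorem~\ref{thm:local-onestepimprovement-strong-nonsmooth}. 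The only nontrivial bookkeeping is exactly the step you flagged: in the third branch of the initialization minimum one lower bounds $\left(1-\sqrt{\tilde q}\right)^2$ with $\tilde q=\sqrt{1-\gamma/8}$ via $1-(1-x)^{\alpha}\ge \alpha x$, which is how the power-of-two constant in the corollary arises.
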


}{} %

\noindent Unlike our results for matrices, our tensor guarantees only handle exactly parameterized problems. Moreover, by invoking Theorem~\ref{thm:local-onestepimprovement-strong-smooth}, we can derive similar rates with worse constants for the smooth loss $h(T) = \frac{1}{2} \norm{T-\Ts}{2}^2$.

\changelocaltocdepth{1}
\section{Numerical experiments}\label{sec:experiments}
In this section, we present numerical results that support our theoretical guarantees. Sections\ifbool{showSquare}{~\ref{sec:nonnegative-exp},}{} \ref{sec:matrix-exp}\ifbool{showSquare}{,}{} and \ref{sec:tensor-exp} include experiments for \ifbool{showSquare}{nonnegative least squares,} matrix sensing\ifbool{showSquare}{,}{} and tensor factorization, respectively. 
The code for reproducing these experiments is available 
 \ifbool{showSquare}{at \begin{quote} \centering \href{https://github.com/aglabassi/preconditioned\_composite\_opti}{https://github.com/aglabassi/preconditioned\_composite\_opti}.
 \end{quote}}{in \href{https://github.com/aglabassi/preconditioned\_composite\_opti}{this link}.}
\ifbool{showSquare}{}{We do not compare against other methods since, to our knowledge, no other methods can provably handle the settings we consider.}

\paragraph{Implementation details.} We run all methods on a Google Colab compute unit with 12GB of system RAM, and a \texttt{T4} GPU with 16GB of RAM (A100 for tensor experiments). We use \texttt{Python 3.11.11} and \texttt{Pytorch 2.5.1} paired with \texttt{Cuda 12.4}. Further, we use Pytorch's double-precision floating-point format.  For almost all experiments, we solve the linear systems via the Conjugate Gradient method %
with a maximum of 100 iterations and a tolerance level of $10^{-25}$. We use implicit evaluations of the matrix-vector products $\nabla F(x)^\top \nabla F(x) v $ using \ifbool{showSquare}{the derivations given in the Appendix~\ref{efficient}}{efficient action derivations}. \ifbool{showSquare}{The only expectation is nonnegative least squares, for which we use $( \nabla F(x)^\top \nabla F(x) + \lambda I )^{-1} v = v \odot \frac{1}{x\odot x + \lambda \mathbf{1} }$ directly.}{}

\ifbool{showSquare}{
\subsection{Nonnegative least squares with smooth and nonsmooth losses} \label{sec:nonnegative-exp}
For our first experiment, we consider the \ifbool{showSquare}{two}{} nonnegative least squares formulation\ifbool{showSquare}{s}{}\eqref{nnls-nonsmooth} from Section~\ref{sec:square}\ifbool{showSquare}{}{ and its squared counterpart (classical $\ell_2$-norm squared)}. We generate the ground truth via
\(
z^\star = \begin{bmatrix}
1, \ldots, \frac{1}{\tau} & 
\mathbf{0}_{r - r^\star}
\end{bmatrix}^\top \in \mathbb{R}^r,
\)
where $\tau$ and $r-r^\star$ respectvely control the ill-conditionedness and overparameterization of the map $F(x) = x\odot x$. Ill-conditioning of the map $F(x) = x\odot x$ at $z^\star$ occurs when $\max_{i \mid z_i^\star \neq 0 }{ \abs{z_i^\star}} \gg\min_{i \mid z_i^\star \neq 0 }{ \abs{z_i^\star}}$, and overparameterization when $ \dim(z^\star) > \norm{z^\star}{0} $. We vary $\tau \in \{1, 100\}$ and $r\in \{ 10,100 \},$ and take $ r^\star = 10.$ We generate matrices \( A \in \mathbb{R}^{m \times r} \) with \( m = 2 r \), and $\kappa(A)=10$, and set $b = A z^\star$. We initialize all methods at the same random $x_0$ satisfying $\norm{x_0 \odot x_0 - z^\star}{2} =  10^{-2}\norm{z^\star}{2}$.

\paragraph{Baselines.} We compare the performance of iterative methods applied to \ifbool{showSquare}{the smooth and nonsmooth formulations in \eqref{nnls-nonsmooth}}{the problem in \eqref{nnls-nonsmooth} and its squared counterpart}. For both losses, we test Algorithm~\ref{alg:LM} against the standard subgradient method and the Gauss-Newton subgradient method from \cite{davis2022linearly}. 
All methods use the Polyak-type stepsizes. For the subgradient method it is exactly the Polyak stepsize $(f(x_k) - \min f)/\|g_k\|^2$ with $g_k \in \partial f(x_k).$ For the other two methods, we use the stepsize from Configuration~\ref{assum: dampingparameter}. For Algorithm~\ref{alg:LM} we use $\lambda_k =  10^{-2} \norm{Ax-b}{2}$ as an estimator of the quantity $\norm{x_k\odot x_k - z^\star}{2}$, which emulates Configuration~\ref{assum: dampingparameter} without requiring access to $z^\star.$
\begin{figure}[t]
  \centering

  \begin{subfigure}[b]{0.48\linewidth}
    \centering
    \includegraphics[width=\linewidth]{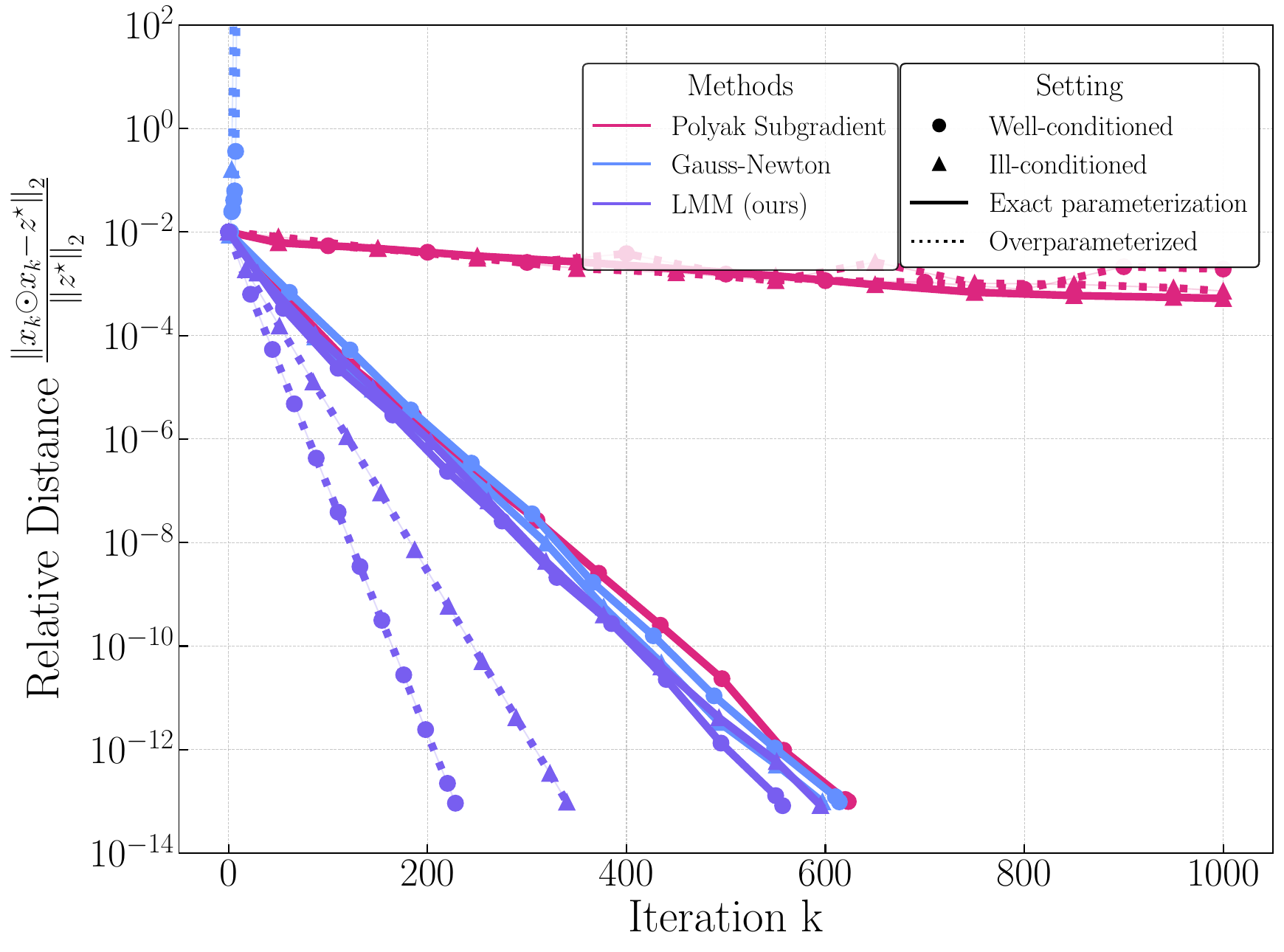}  %
    \caption{Objective $\frac{1}{2}\norm{A(x\odot x) - b}{2}^2$}                                   %
  \end{subfigure}
  \hfill
  \begin{subfigure}[b]{0.48\linewidth}
    \centering
    \includegraphics[width=\linewidth]{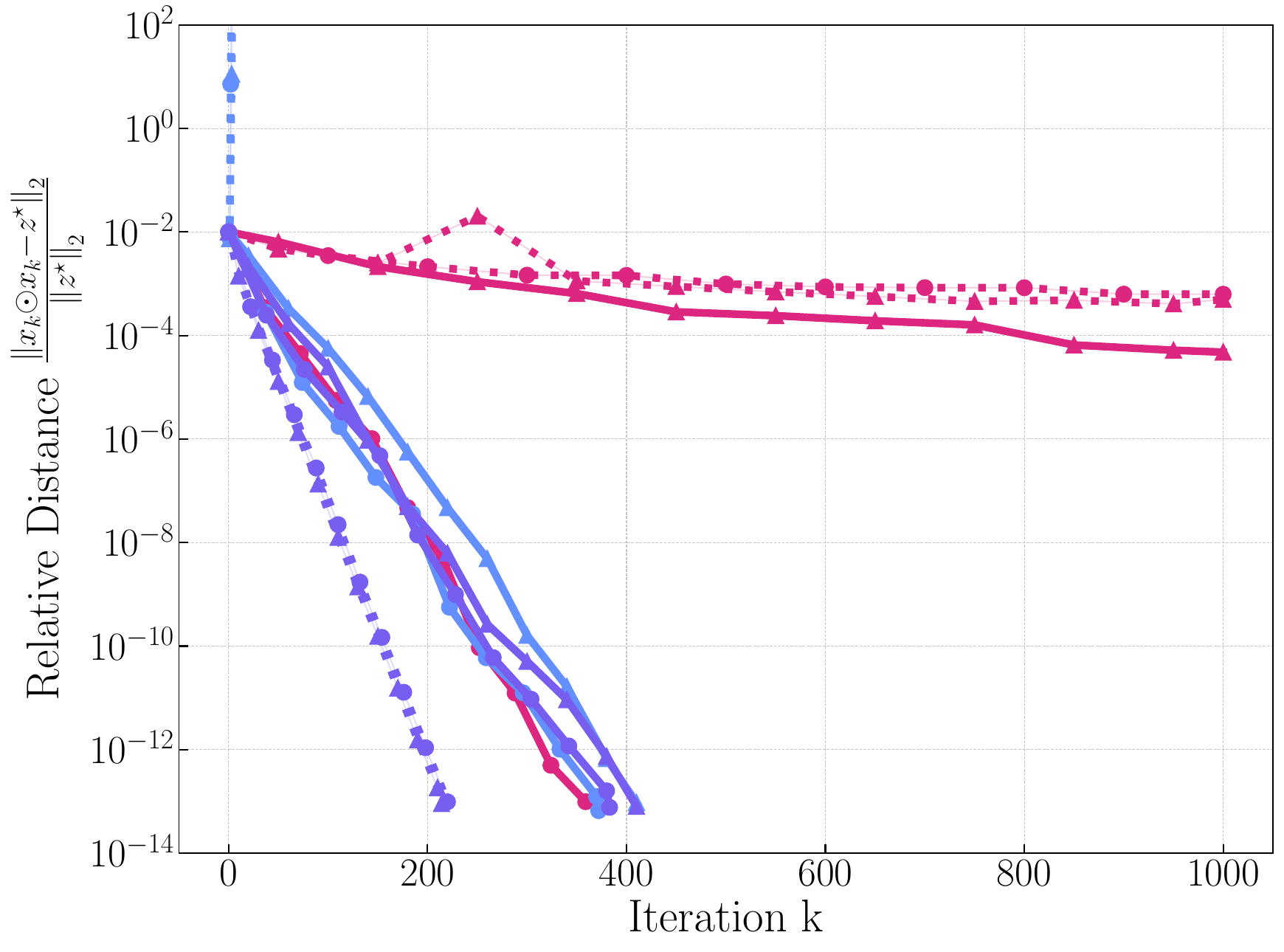}  %
    \caption{Objective $\norm{A(x\odot x) - b}{2}$}                                   %
  \end{subfigure}

  \caption{Relative distance against iteration count for nonnegative least squares losses \eqref{nnls-nonsmooth}. 
  }
  \label{fig:nonegative _least_squares}
\end{figure}

\paragraph{Discussion.}Figure~\ref{fig:nonegative _least_squares} displays the results. On the one hand, the Polyak subgradient method fails to converge linearly, whether there is ill-conditioning or overparameterization, and, further,  Gauss-Newton diverges in the overparameterization case, which is expected as the precondition is ill-defined. On the other hand, Algorithm~\ref{alg:LM} is robust and converges linearly in all settings.
Notably, the methods exhibit faster convergence when applied to the nonsmooth formulations, highlighting the benefit of using a nonsmooth loss for regression.
}{}

\ifbool{showSquare}{
\begin{figure}[ht]
    \centering
    
    \begin{subfigure}[b]{0.48\textwidth}
        \centering
        \includegraphics[width=\textwidth]{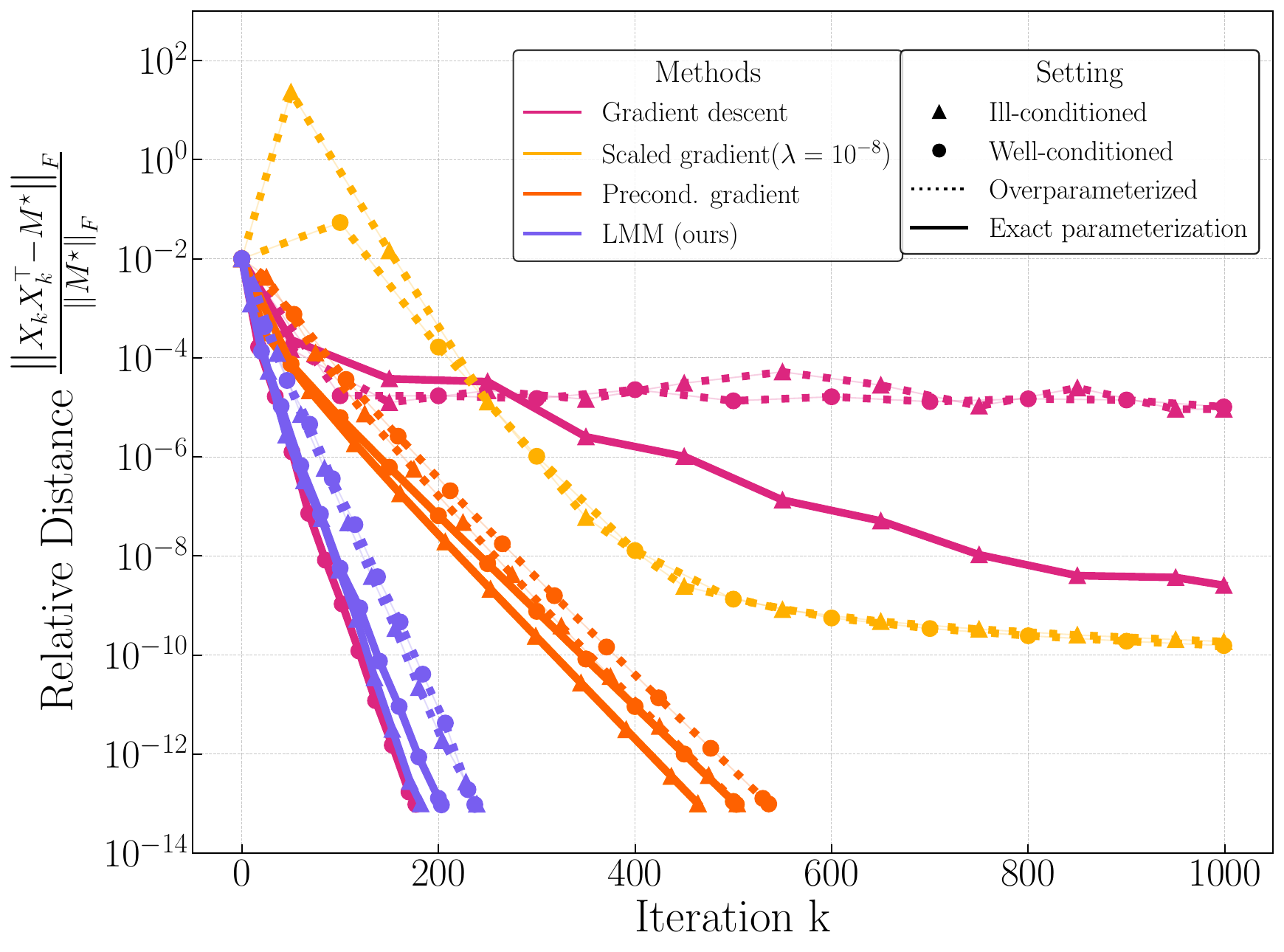}
        \caption{Symmetric Matrix ($d=100$)}
    \end{subfigure}
    \hfill
    \begin{subfigure}[b]{0.48\textwidth}
        \centering
        \includegraphics[width=\textwidth]{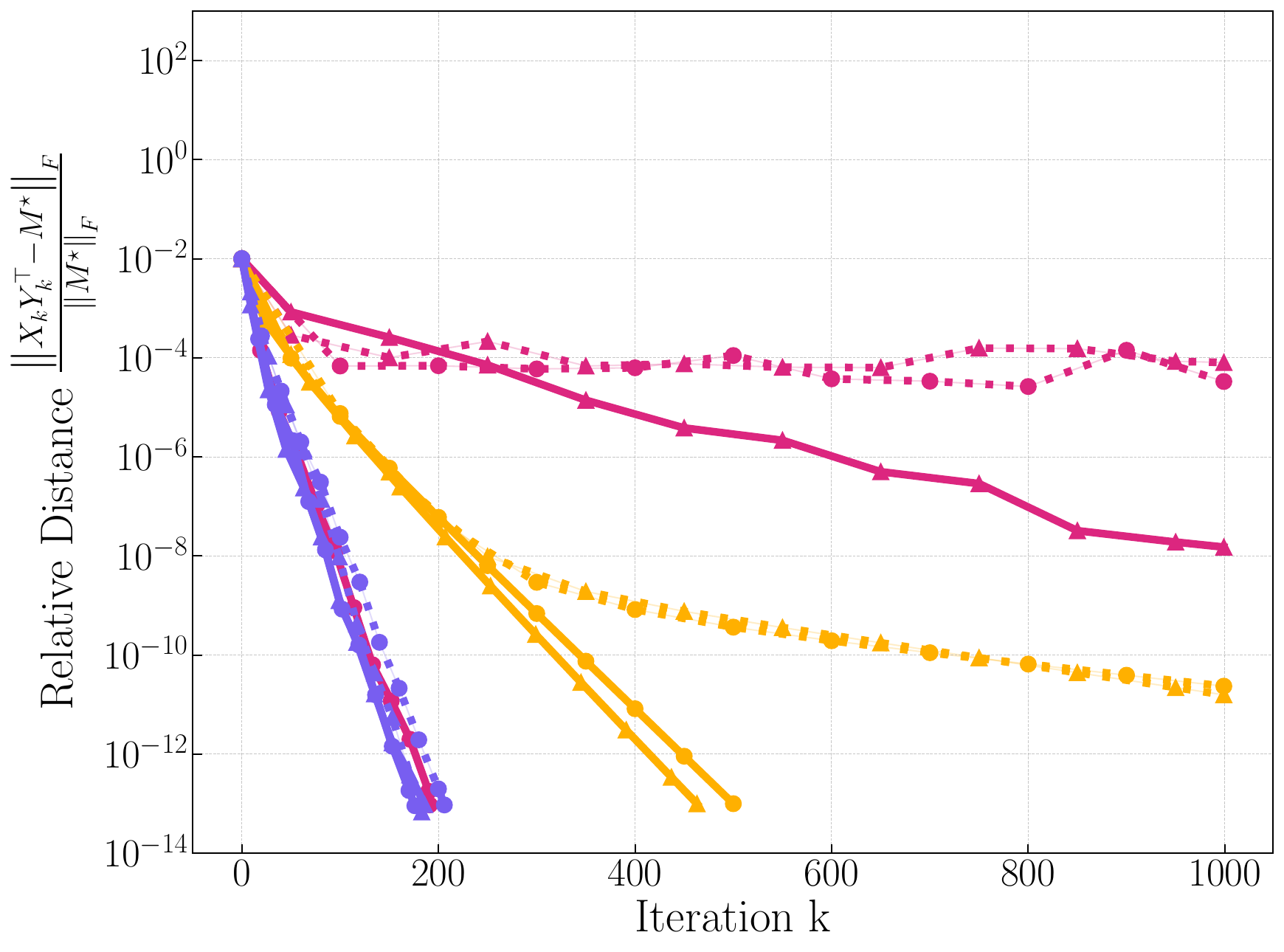}
        \caption{Asymmetric Matrix ($d=100$)}
    \end{subfigure}
    
    \vskip\baselineskip
    
    \begin{subfigure}[b]{0.48\textwidth}
        \centering
        \includegraphics[width=\textwidth]{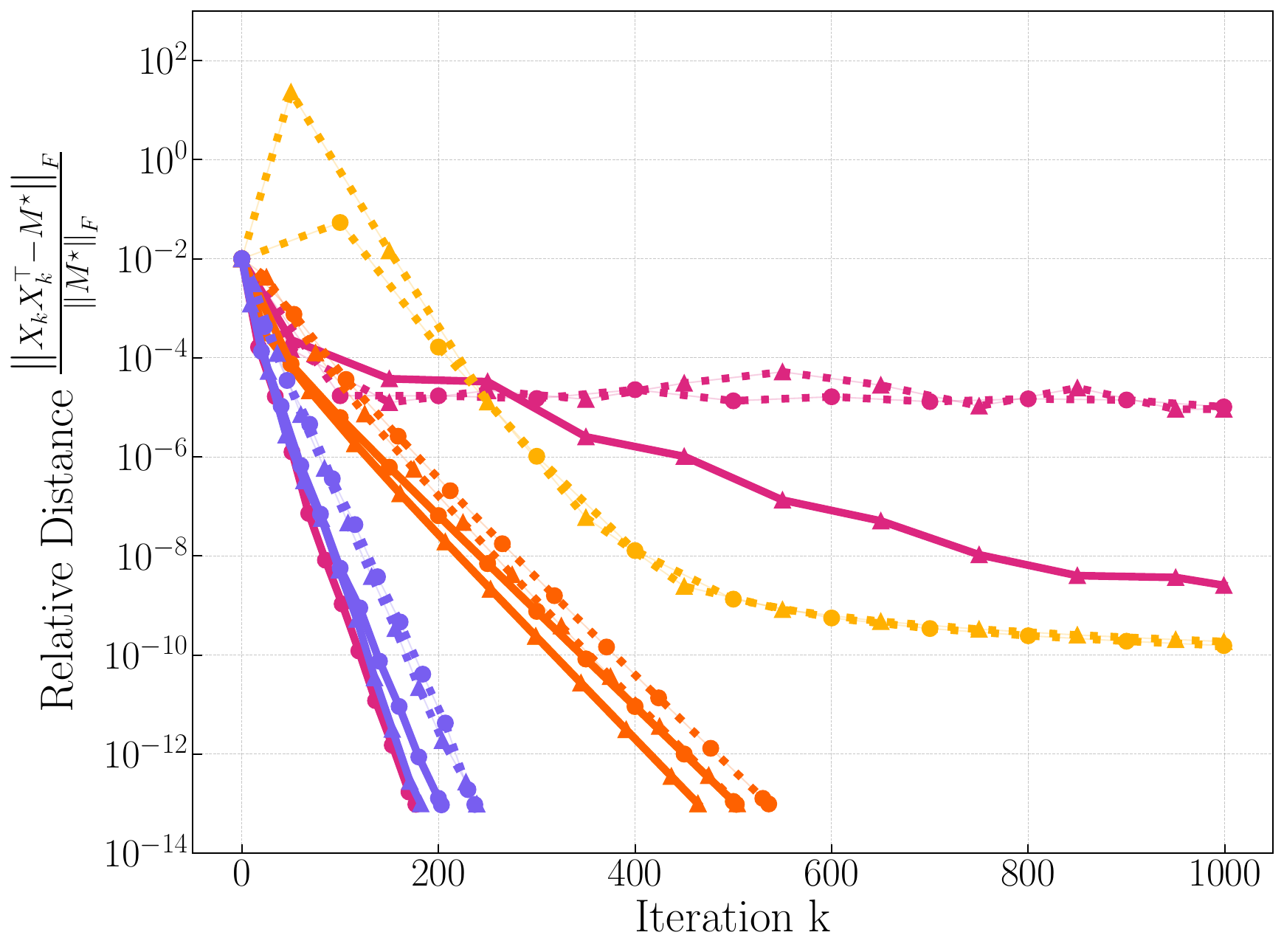}
        \caption{Symmetric Matrix ($d=200$)}
    \end{subfigure}
    \hfill
    \begin{subfigure}[b]{0.48\textwidth}
        \centering
        \includegraphics[width=\textwidth]{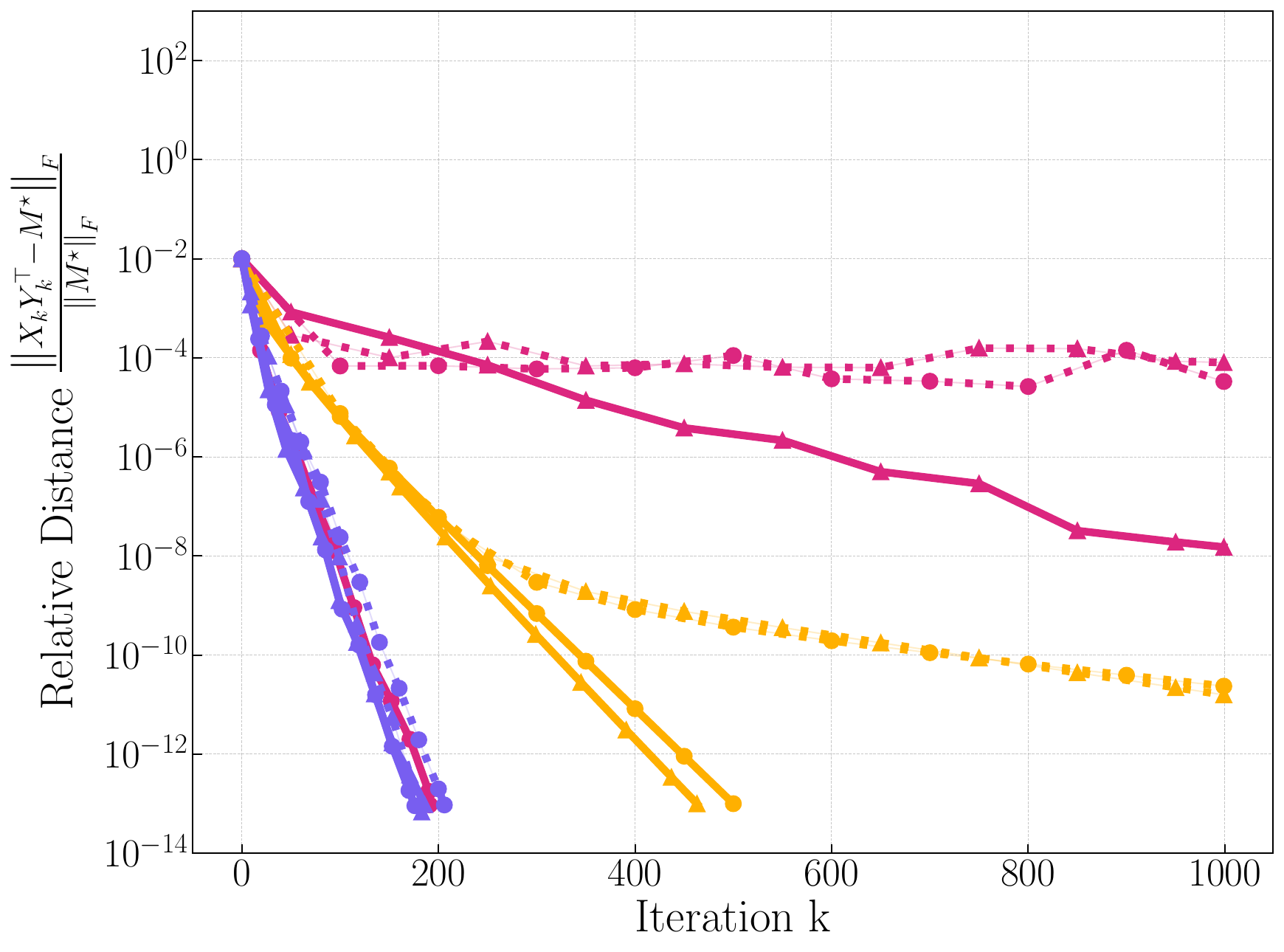}
        \caption{Asymmetric Matrix ($d=200$)}
    \end{subfigure}
    
    \caption{Smooth matrix sensing with the $\ell_2$-norm squared. We use $m =4dr$ ($m=2dr$ for symmetric), with $r^\star = 2$, $r\in\{2,5\}$.}
    \label{fig: matrix_smooth}
\end{figure}}{}

\ifbool{showSquare}{
\begin{figure}[ht]
    \centering
    
    \begin{subfigure}[b]{0.48\textwidth}
        \centering
        \includegraphics[width=\textwidth]{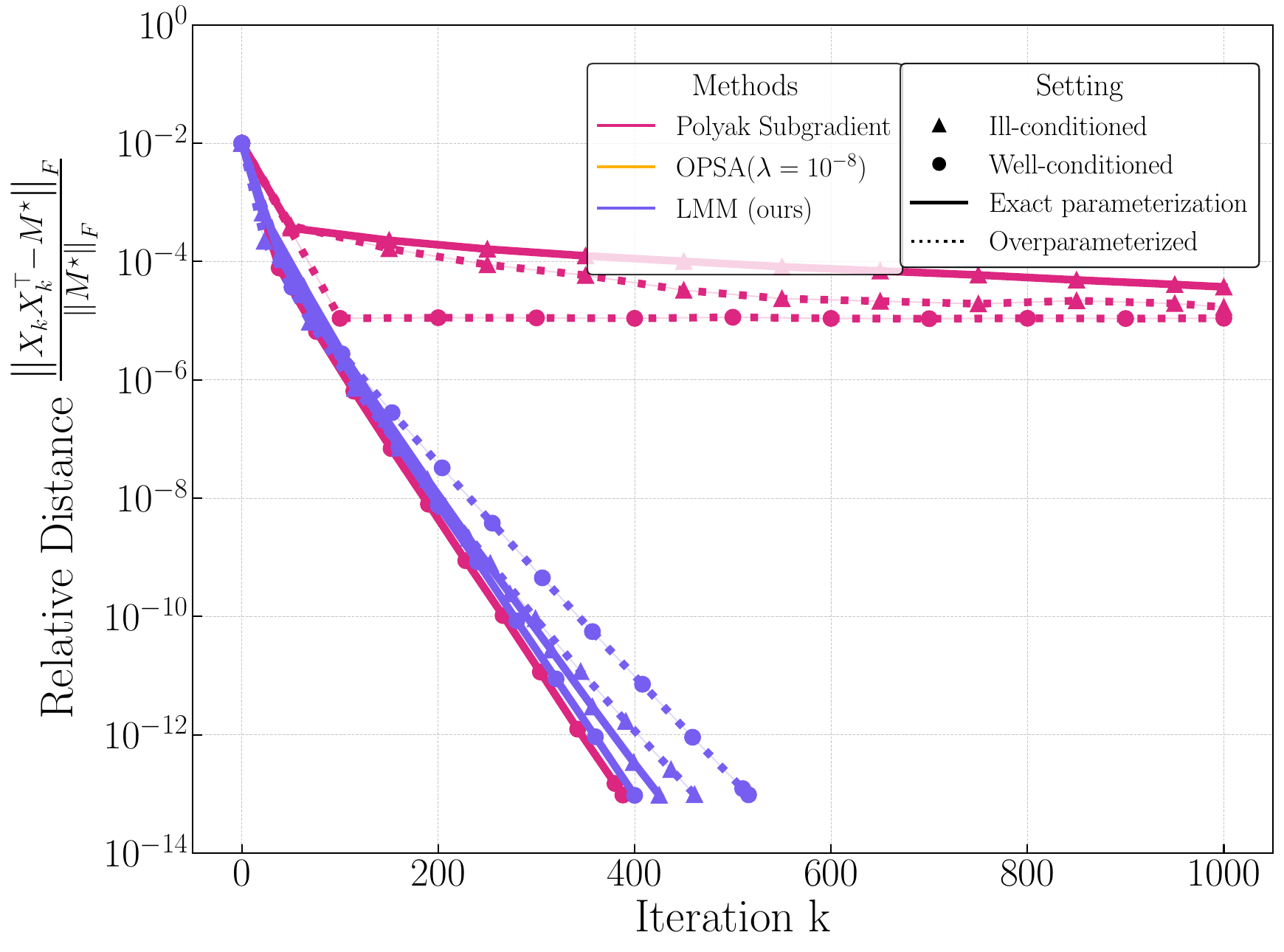}
        \caption{Symmetric Matrix ($d=100$)}
    \end{subfigure}
    \hfill
    \begin{subfigure}[b]{0.48\textwidth}
        \centering
        \includegraphics[width=\textwidth]{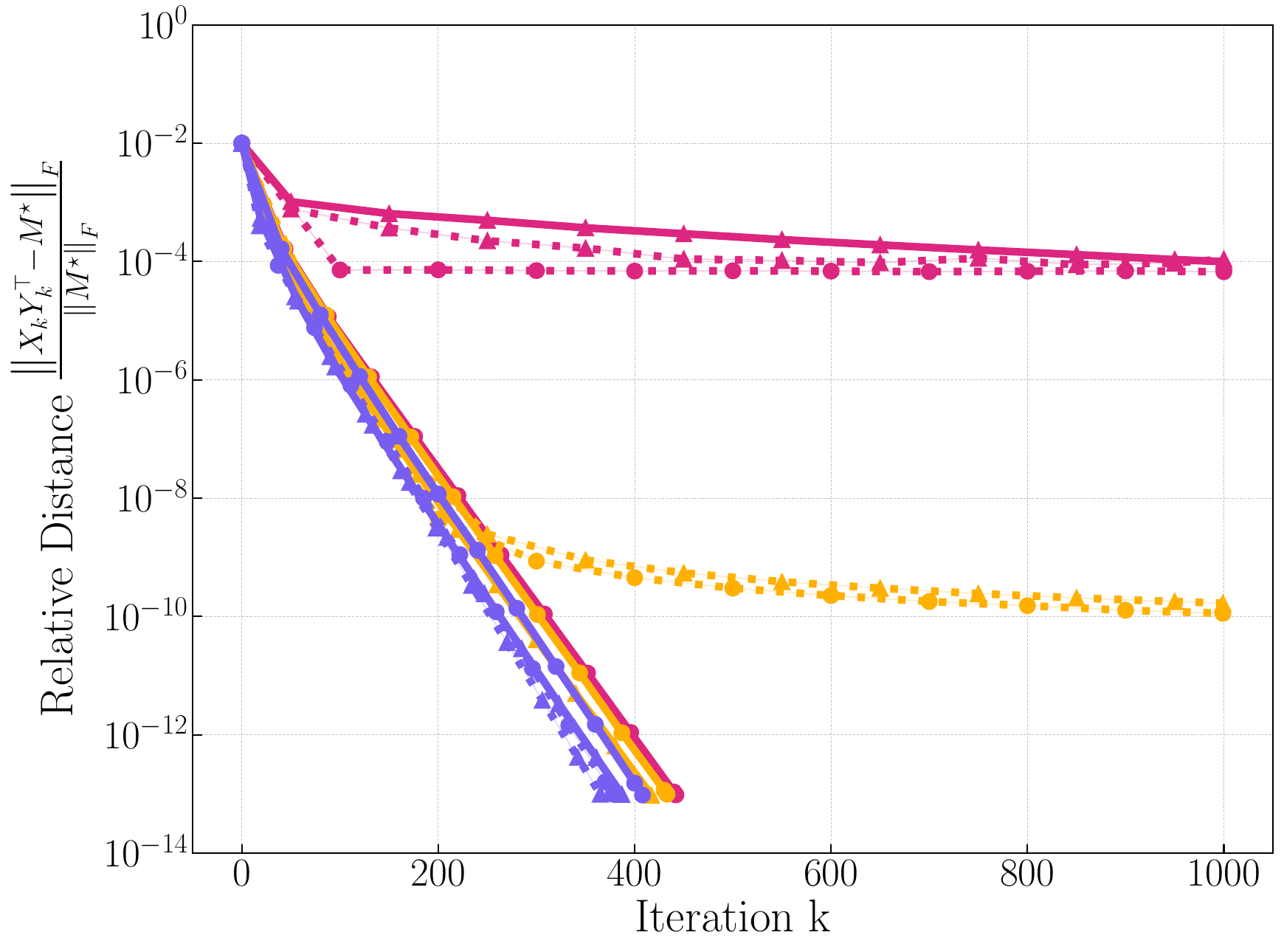}
        \caption{Asymmetric Matrix ($d=100$)}
    \end{subfigure}
    
    \vskip\baselineskip
    
    \begin{subfigure}[b]{0.48\textwidth}
        \centering
        \includegraphics[width=\textwidth]{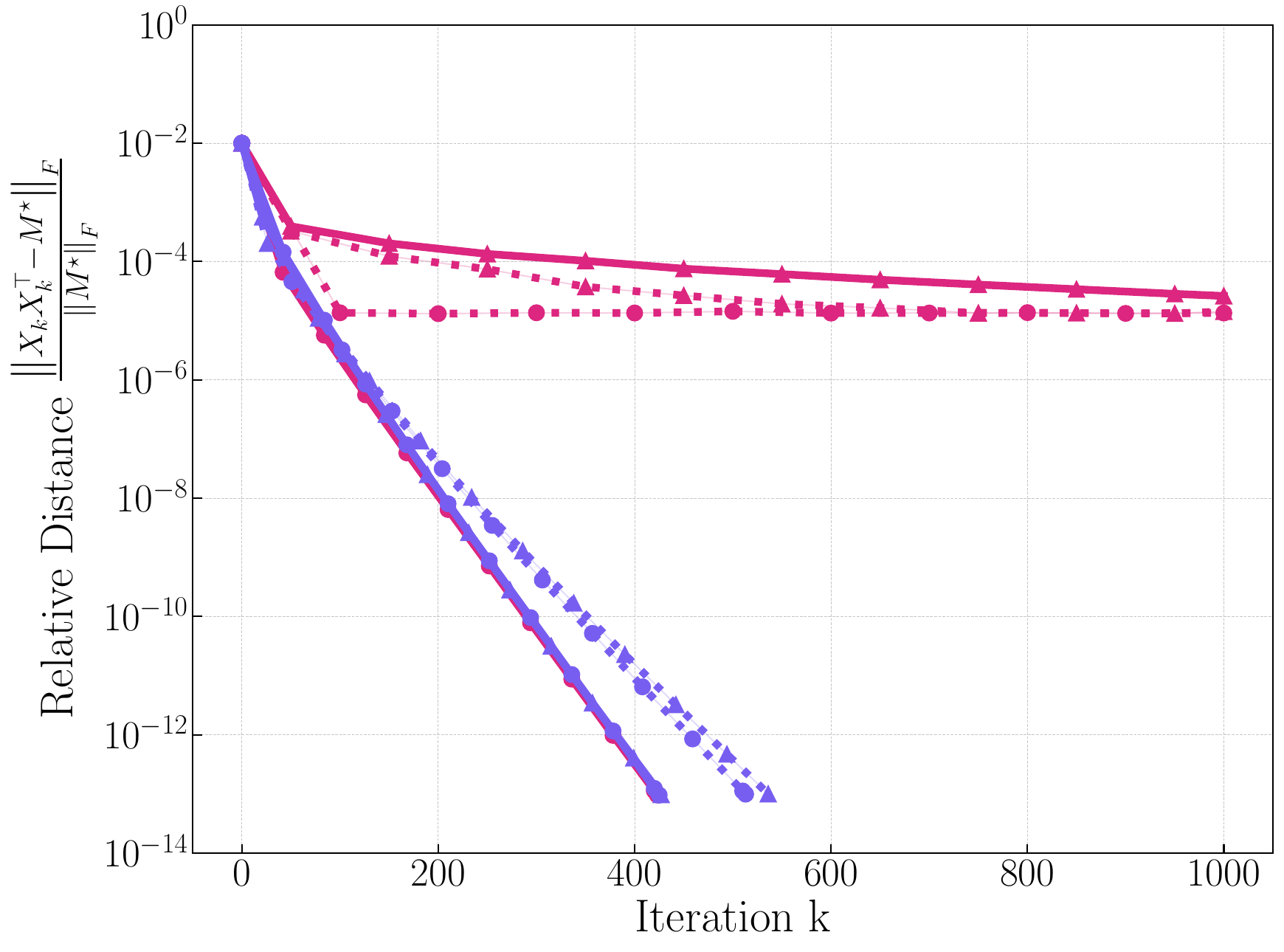}
        \caption{Symmetric Matrix ($d=200$)}
    \end{subfigure}
    \hfill
    \begin{subfigure}[b]{0.48\textwidth}
        \centering
        \includegraphics[width=\textwidth]{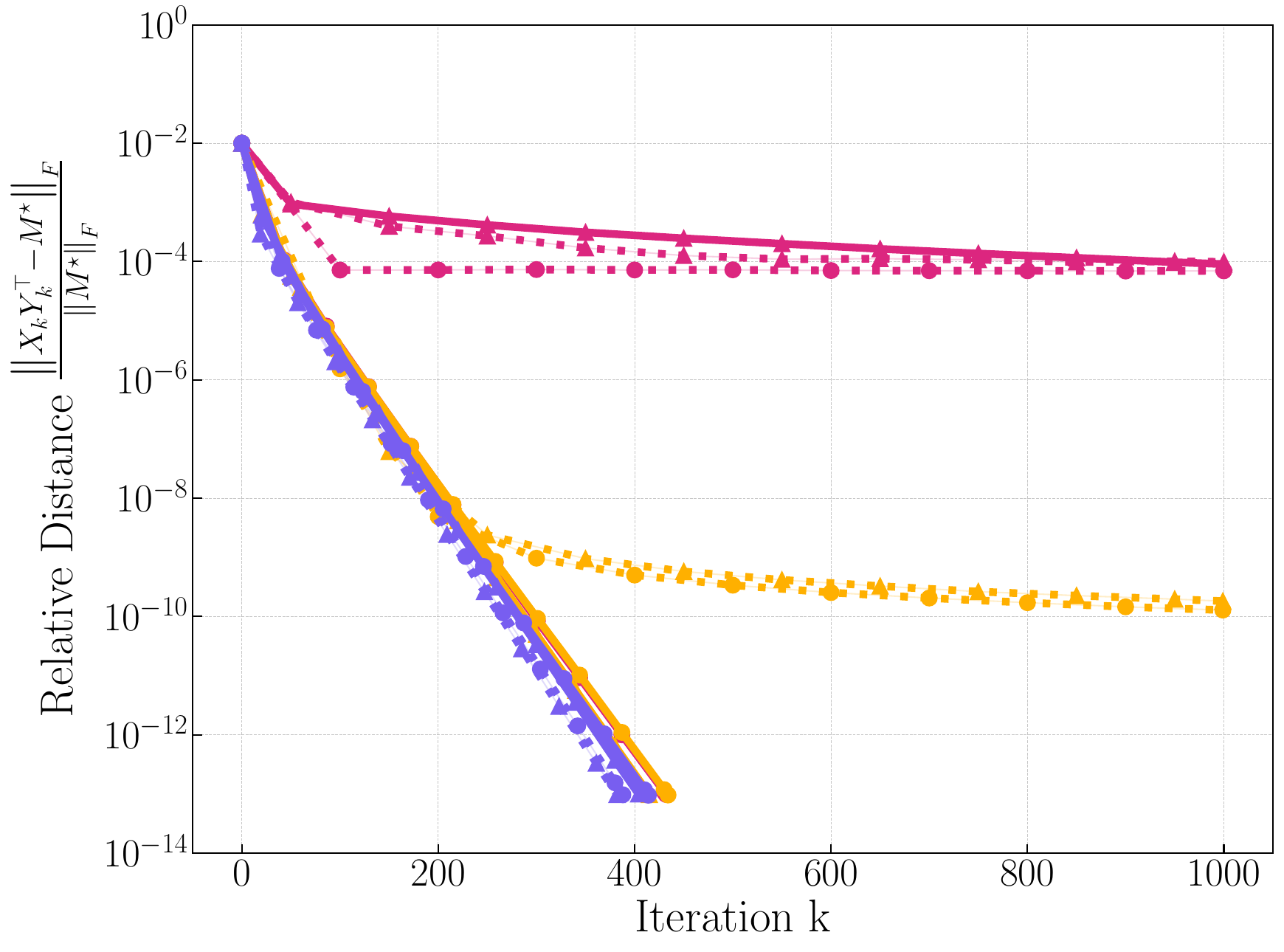}
        \caption{Asymmetric Matrix ($d=200$)}
    \end{subfigure}
    
    \caption{Matrix sensing with the $\ell_1$-norm. We use $m =4dr$ ($m=2dr$ for symmetric) with $r^\star = 2$, $r\in \{2,5\}$. \texttt{OPSA} \cite{giampouras2024guarantees} only applies to the asymmetric setting.}
    \label{fig: matrix_nonsmooth}
\end{figure}}{\begin{figure}[t]
    \centering
    
    \begin{subfigure}[b]{0.48\textwidth}
        \centering
        \includegraphics[width=\textwidth]{experiments/exp0_polyak/exp_l1_Symmetric_Matrix_100_iteration.pdf}
        \caption{Matrix sensing ($d=100$)}
    \end{subfigure}
    \hfill
  \begin{subfigure}[b]{0.48\textwidth}
        \centering
        \includegraphics[width=\textwidth]{experiments/exp0_polyak/exp_l1_Symmetric_Matrix_200_iteration.pdf}
        \caption{Matrix sensing ($d=200$)}
    \end{subfigure}
    \caption{Matrix sensing with the $\ell_1$-norm with different dimensions. OPSA \cite{giampouras2024guarantees} only applies to the asymmetric setting. We use $m =2dr$ measurements and set $r^\star = 2$, $r\in \{2,5\}$.
    }
    \label{fig: matrix_nonsmooth}
\end{figure}}

\subsection{Matrix Sensing}\label{sec:matrix-exp}

\ifbool{showSquare}{For our second batch of experiments, we}{We} consider the matrix problems introduced in~(\ref{eq:matrix-problems}). 
We run \ifbool{showSquare}{three}{two} experiments to evaluate $(i)$ convergence\ifbool{showSquare}{, $(ii)$ hyperparameter sensitivity,}{} and \ifbool{showSquare}{$(iii)$}{$(ii)$} robustness to outliers. \ifbool{showSquare}{All three}{The two} types of experiments use similar losses and parameter configurations, which we describe next.

\paragraph{Setup.} 
We solve \ifbool{showSquare}{}{positive definite} matrix sensing using \ifbool{showSquare}{the squared \(\ell_2\)-loss \(h(M)=\|\mathcal{A}(M)-b\|_2^2\) and}{}the \(\ell_1\)-loss \(h(M)=\|\mathcal{A}(M)-b\|_1\). \ifbool{showSquare}{We consider both PSD and general asymmetric ground truths: for PSD sensing we}{We} set \(M^\star = X^\star{X^\star}^\top\) where \(X^\star = U\,D^{1/2}\) with \(U\in\mathbb{R}^{d\times r^\star}\) drawn at random satisfying \(U^\top U=I\) and \(D=\mathrm{diag}(\xi_1,\dots,\xi_{r^\star})\) with $\xi_i$ linearly spaced in \([1/\tau,1]\) for \(\tau\in\{1,100\}\)\ifbool{showSquare}{; for asymmetric sensing we similarly draw \(Y^\star=V\,D^{1/2}\) and set \(M^\star=X^\star(Y^\star)^\top\) to ensure \(\kappa(M^\star)=\tau\)}{}. To test dimension‐independent convergence we vary \(d\in\{100,200\}\), and to probe overparameterization we fix \(r^\star=2\) while varying \(r\in\{2,5\}\). The map \(\mathcal{A}:\mathbb{R}^{d\times d}\to\mathbb{R}^m\) has i.i.d.\ \({\rm N}(0,\frac{1}{m})\) entries with \(m=2dr\) \ifbool{showSquare}{(or \(m=4dr\) for asymmetric)}{}, and all methods are initialized identically with relative error \(10^{-2}\).

\paragraph{Baselines.}
\ifbool{showSquare}{For the smooth problems, we compare with gradient descent, \texttt{PrecGD} \cite{zhang2021preconditioned} (symmetric only), \texttt{ScaledGD}$(\lambda)$ \cite{xu2023power}. In the nonsmooth setting,}{To the best of our knowledge, there is no preconditioned subgradient method for Burer-Monteiro matrix recovery. Thus,} we compare our method against the Polyak subgradient method\ifbool{showSquare}{, and \texttt{OPSA} \cite{giampouras2024guarantees} (asymmetric only)}{}. 
Unless otherwise stated, our method uses Polyak stepsizes (Configuration \ref{assum: dampingparameter}), where we set $\gamma=1$. \ifbool{showSquare}{For constant-stepsize methods, \texttt{PrecGD} and \texttt{ScaledGD}$(\lambda)$, we tune to select the largest parameter that leads to convergence, i.e., $\gamma_k = 1/2$. For \texttt{ScaledGD}$(\lambda)$ we set $\lambda = 10^{-8}.$ For \texttt{PrecGD} and Algorithm~\ref{alg:LM}}{Moreover,} we use a damping parameter of \ifbool{showSquare}{$\lambda_k = 2.5\cdot 10^{-3}\sqrt{f(x_k)}$ in the smooth setting, or}{} $\lambda_k =   10^{-5} \cdot f(x_k)$ \ifbool{showSquare}{in the nonsmooth one,}{} as an estimator for the quantity $\norm{z_k- z^\star}{2}$.

\paragraph{Experiment 1: convergence rates.} We generate noiseless observations \(b = \mathcal{A}(M^\star)\) and solve the recovery problem using\ifbool{showSquare}{ both the squared \(\ell_2\)-norm and}{} the \(\ell_1\)-norm. \ifbool{showSquare}{Figures~\ref{fig: matrix_smooth} and~\ref{fig: matrix_nonsmooth} report results for the smooth and nonsmooth formulations, respectively, benchmarked against established competitor methods. Algorithm~\ref{alg:LM} consistently matches or outperforms existing approaches across both problem classes. \texttt{ScaledGD} and \texttt{OPSA} employ a fixed damping parameter, which restricts their linear convergence to a neighborhood around the optimum; by contrast, \texttt{PrecGD} and our method sustain linear convergence all the way to the exact solution.}{Figure~\ref{fig: matrix_nonsmooth} reports the results} \ifbool{showSquare}{Furthermore,}{and} corroborate our theoretical finding that Algorithm~\ref{alg:LM} achieves a convergence rate independent of the problem dimension.

\ifbool{showSquare}{\paragraph{Experiment 2: hyperparameter sensitivity.}
In this experiment, we probe the robustness of Configuration \ref{assum:geometric} for the $\ell_1$ norm loss.  We set stepsizes to $\gamma_k = \gamma q^k$ and damping parameters to $\lambda_k = 10^{-5} q^k$ and vary $q\in\{0.95, 0.96, 0.97\}$ and $\gamma \in \{ 10^{-j} \mid j = 1, \dots, 8\}$. We cap the total number of iterations at $10^3$. Compared to the previous experiment, we take a smaller dimension $d =30$ and consider more aggressive ill-conditioning by varying $\tau \in\{1, 10^4\}$. Figure~\ref{fig: sensitivity} shows the median number of iterations needed to achieve a given relative error of $10^{-8}$ over $100$ trials.
This experiment suggests that Algorithm~\ref{alg:LM} converges efficiently across a broad spectrum of hyperparameter settings.

\begin{figure}[t]
    \centering
\begin{tabular}{@{}c@{\hspace{-0.05em}}c@{\hspace{-0.05em}}c@{\hspace{-0.05em}}c@{}}
        \stepcounter{imagerow}\raisebox{45pt}{%
          \rotatebox[origin=c]{90}{\strut \vspace{0cm}$q=0.95$}} &
        {\hspace{0.2cm}\includegraphics[width=0.31\textwidth]{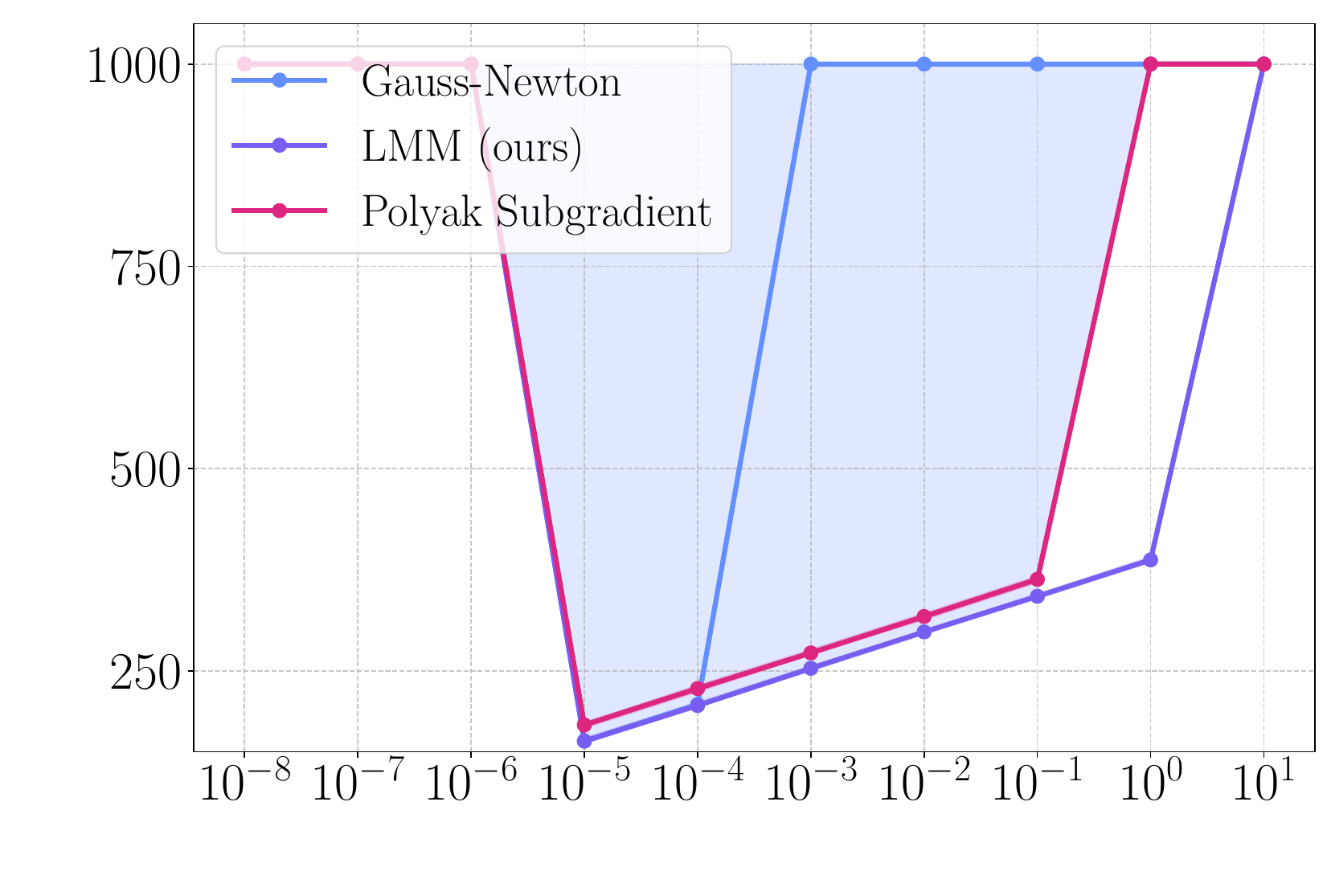}} &
        {\includegraphics[width=0.31\textwidth]{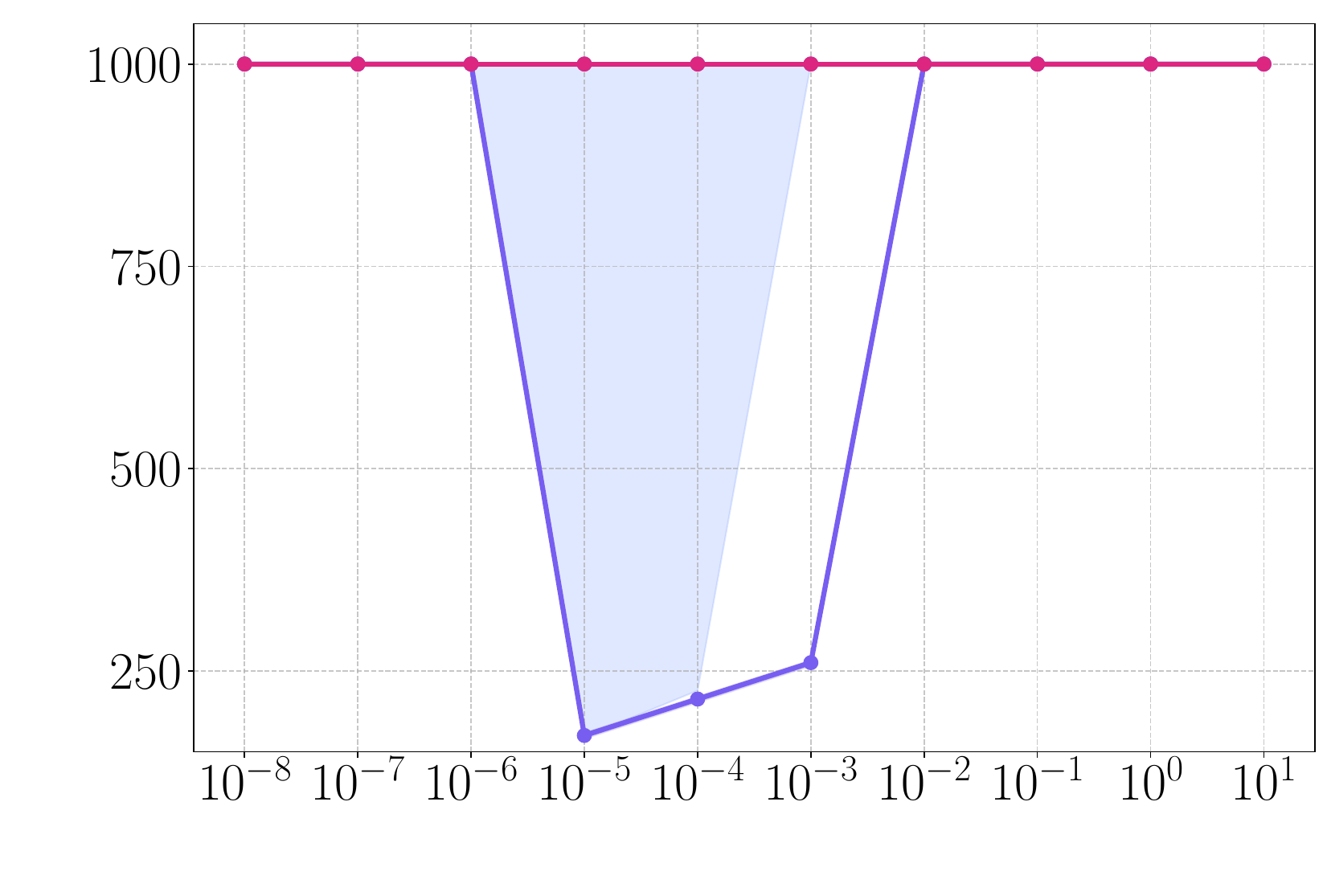}} &
        {\includegraphics[width=0.31\textwidth]{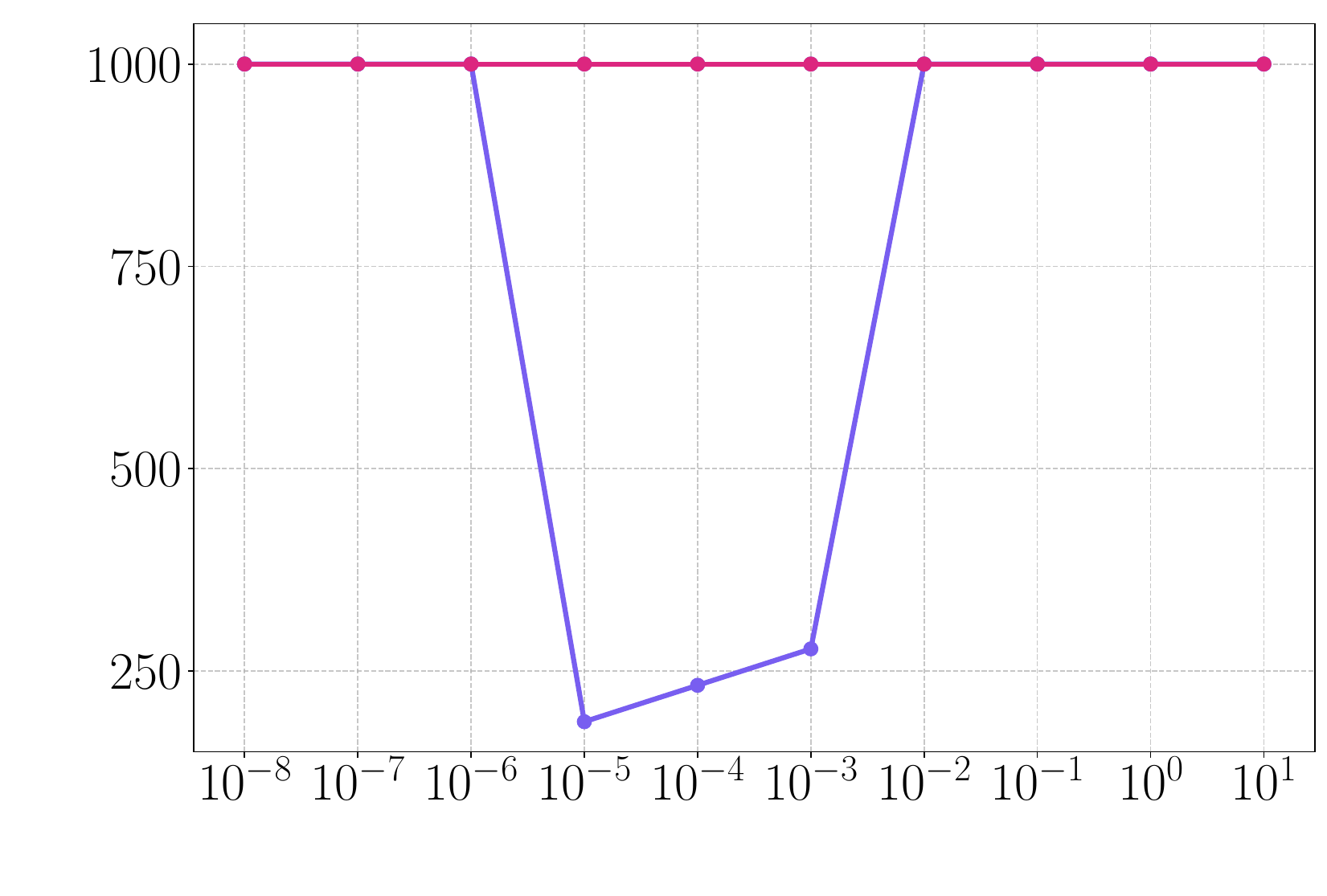}} \\
        \stepcounter{imagerow}\raisebox{45pt}{%
          \rotatebox[origin=c]{90}{\strut \vspace{0cm}$q=0.96$}} &
        {\hspace{0.2cm}\includegraphics[width=0.31\textwidth]{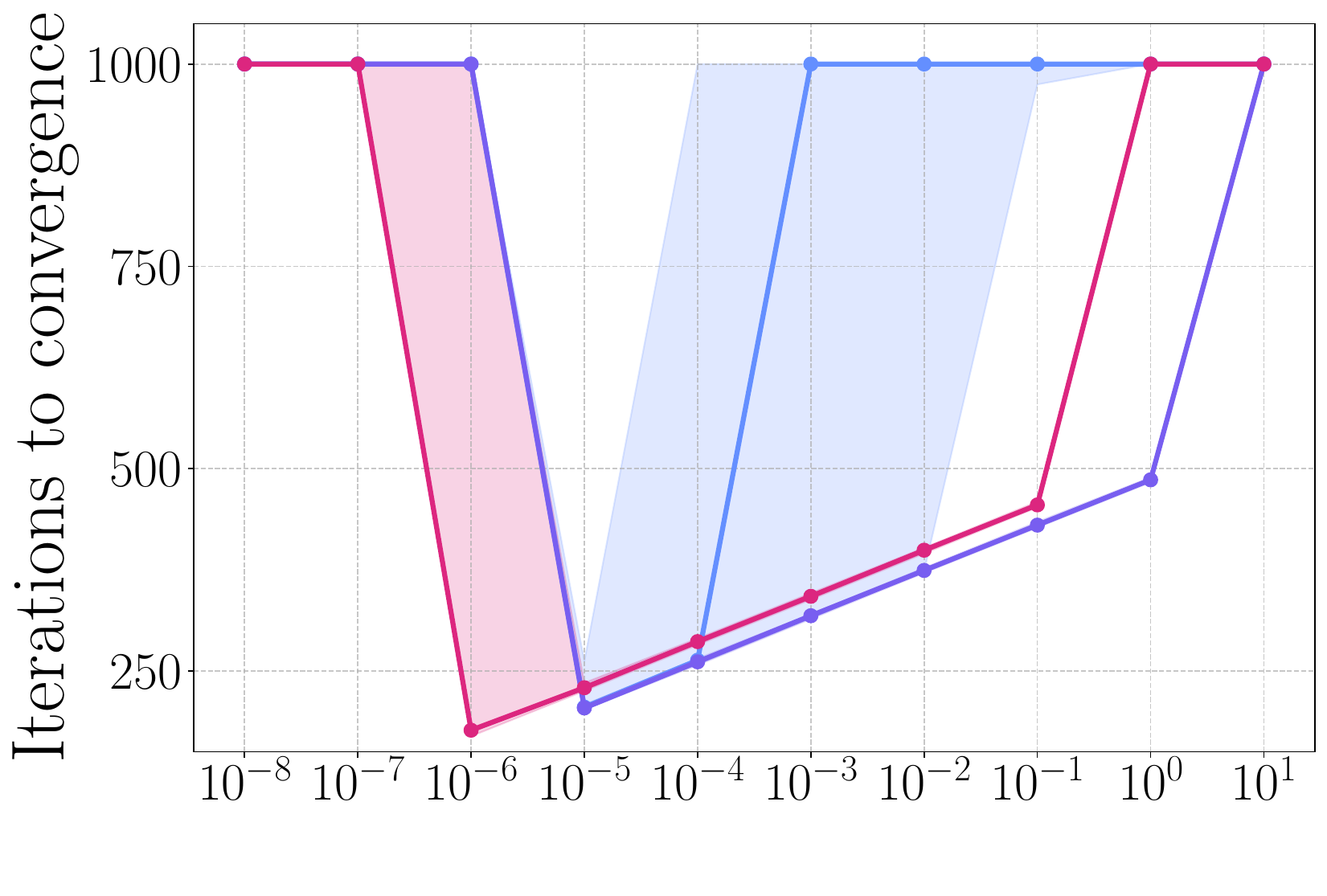}} &
        {\includegraphics[width=0.31\textwidth]{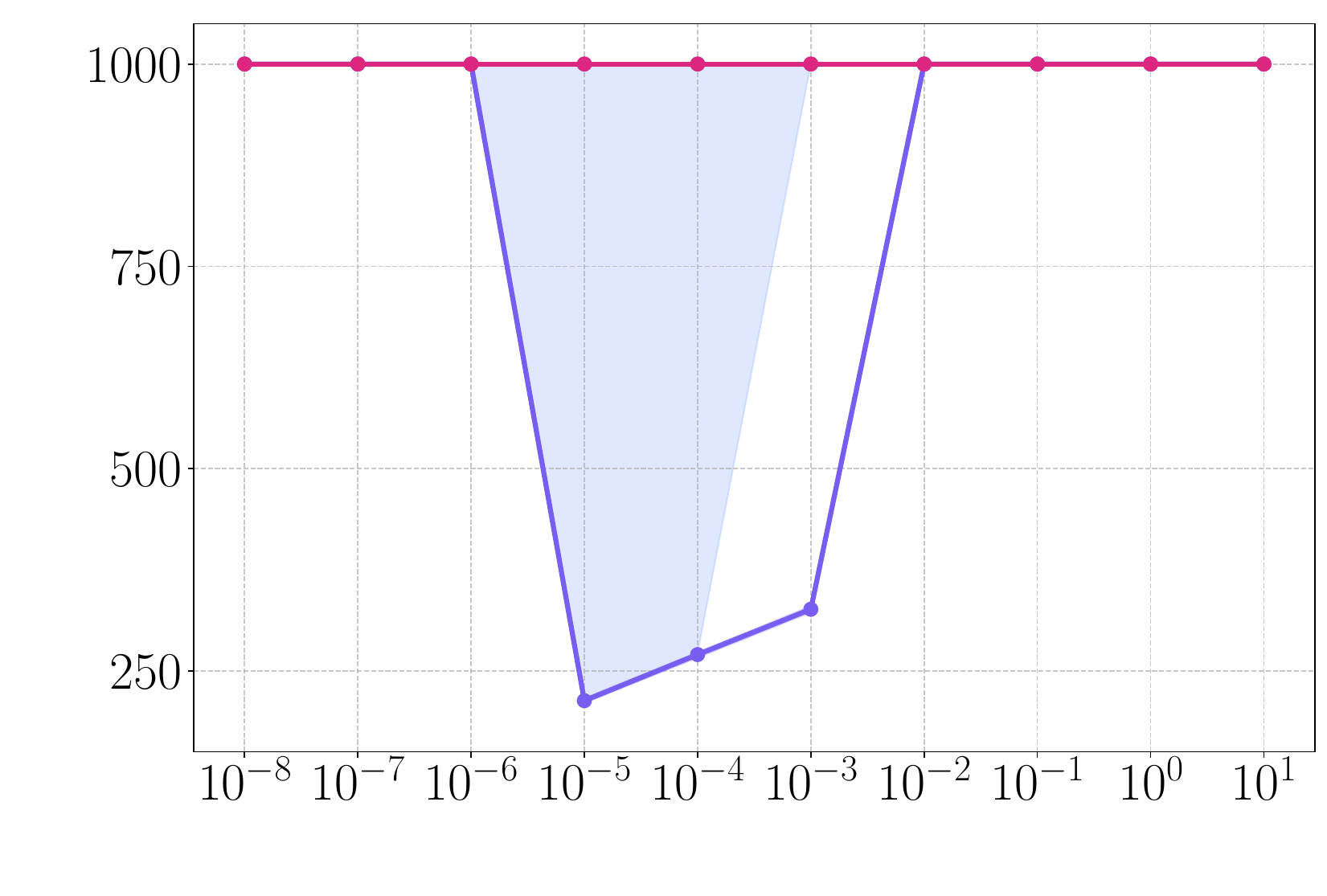}} &
        {\includegraphics[width=0.31\textwidth]{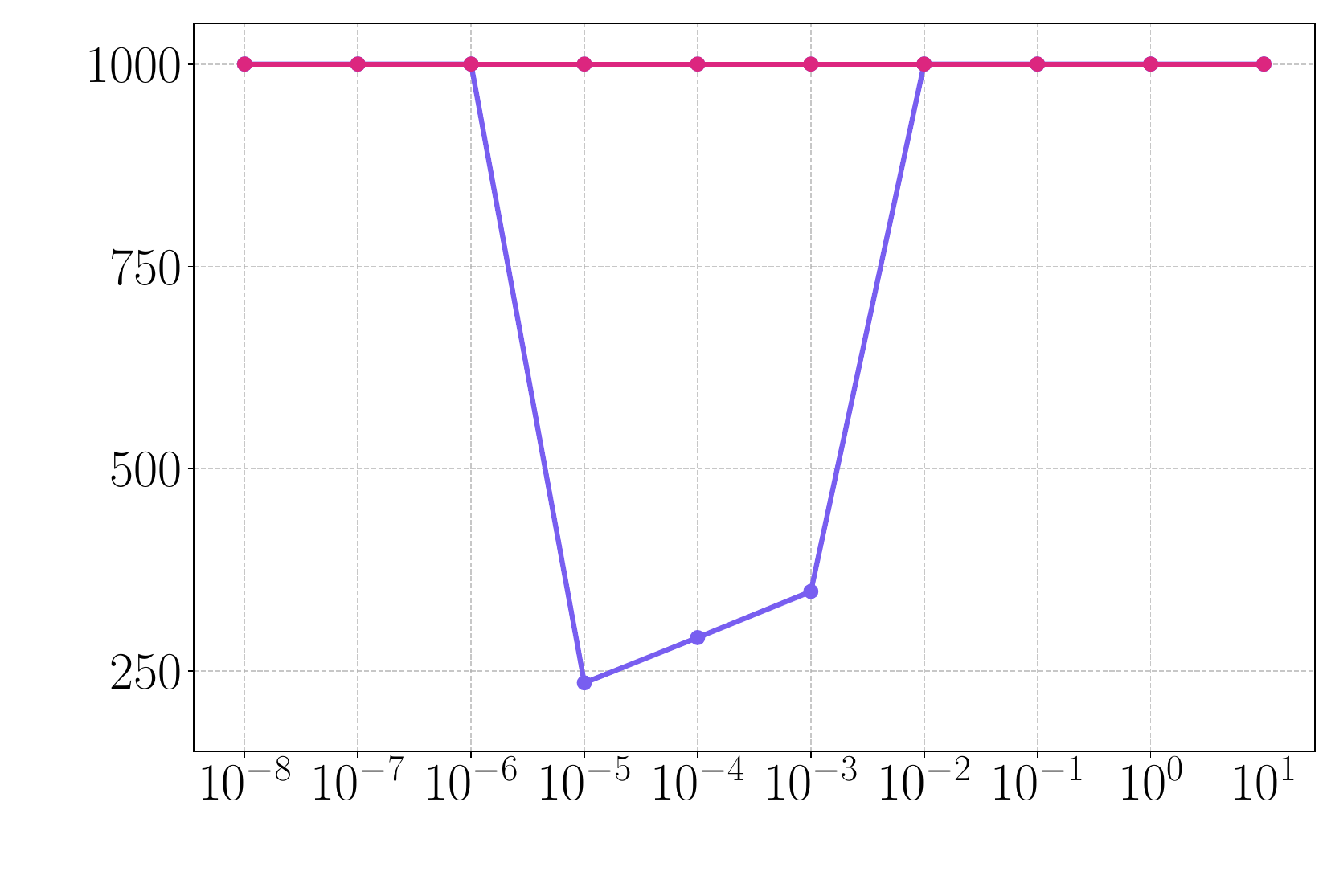}} \\
        \stepcounter{imagerow}\raisebox{45pt}{%
          \rotatebox[origin=c]{90}{\strut \vspace{0cm}\texttt{$q=0.97$}}} &
        \hspace{0.2cm}\subfloat[Well-conditioned]{\includegraphics[width=0.31\textwidth]{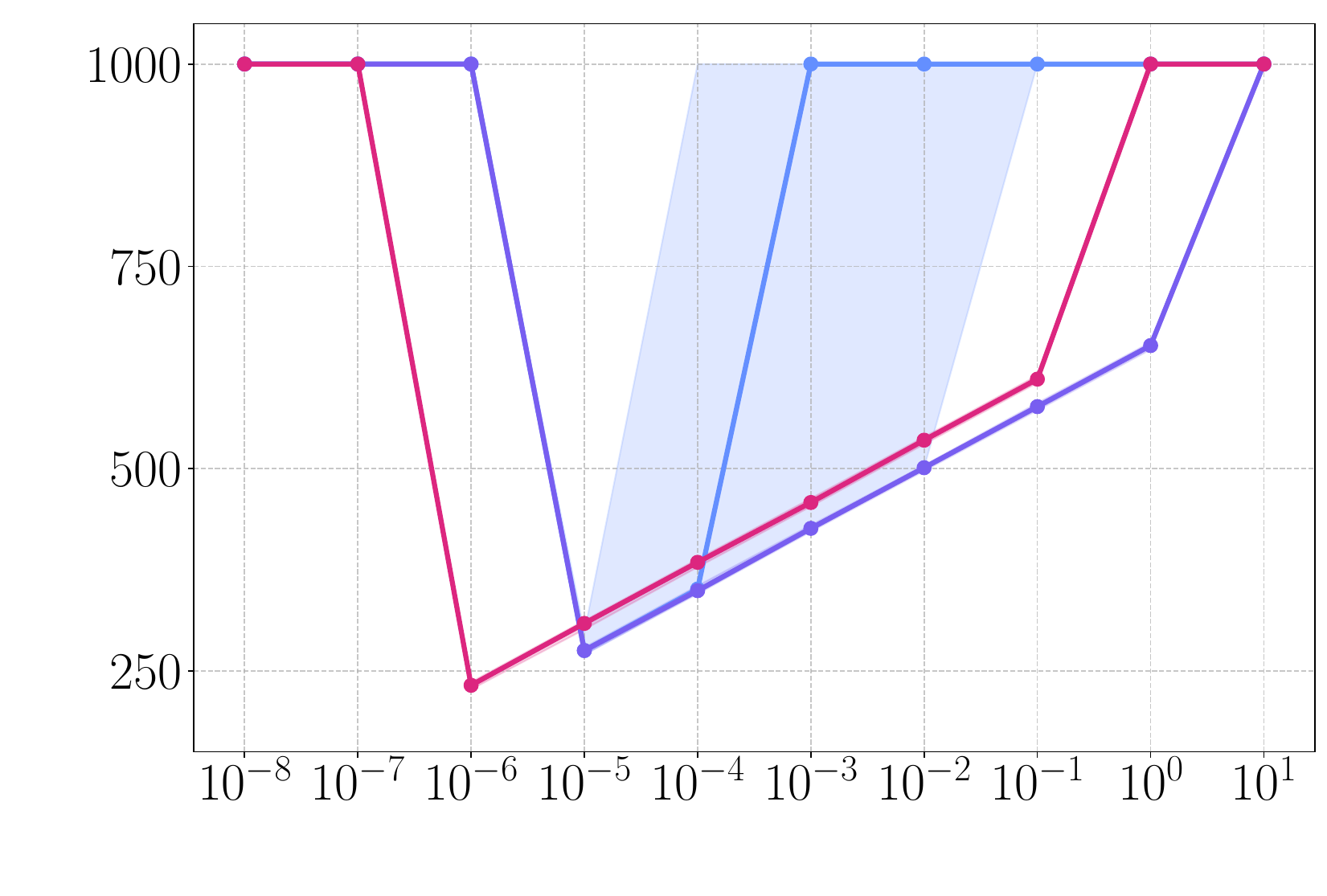}} &
        \subfloat[Ill-conditioned]{\includegraphics[width=0.31\textwidth]{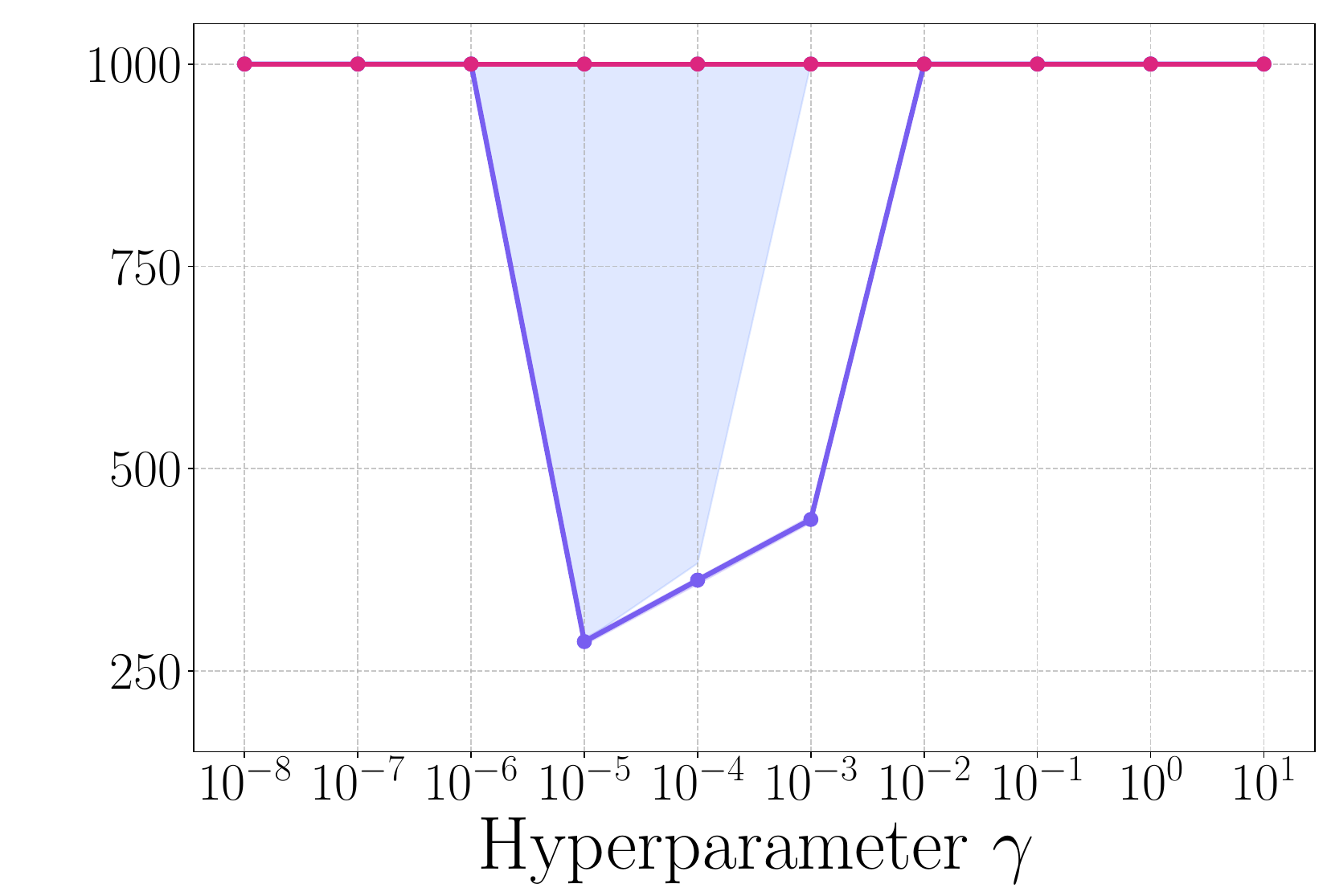}} &
        \subfloat[Ill-conditioned, Overparameterized]{\includegraphics[width=0.31\textwidth]{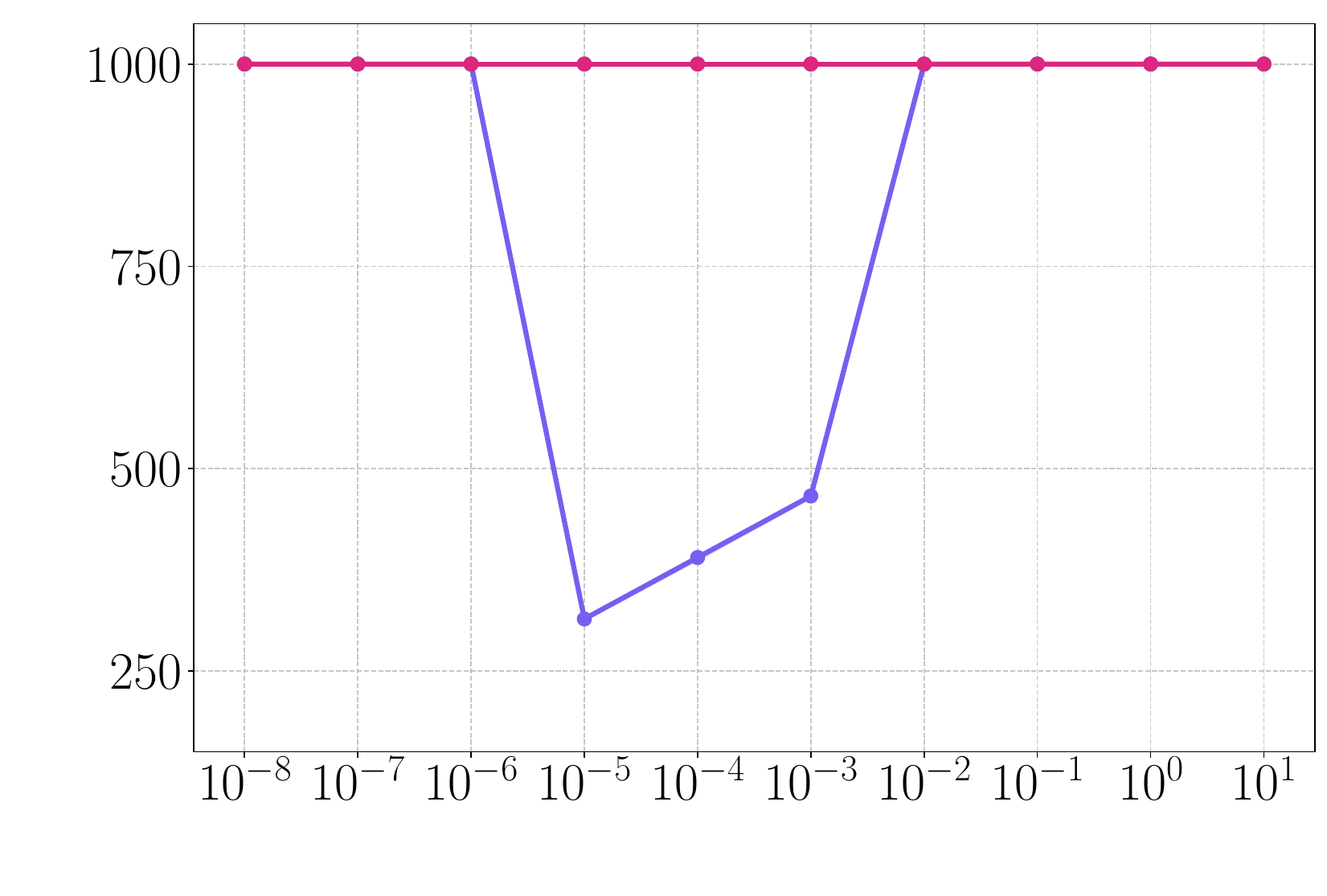}} \\
    \end{tabular}
    \caption{Median number of iterations to achieve convergence ($100$ draws) versus hyperparameter $\gamma$. We declare that a method converged when it reaches a relative error of $10^{-8}$ and cap the maximum number of iterations to $1000.$ The shaded area represents the $5$th and $95$th percentiles, respectively. 
    }
    \label{fig: sensitivity}
\end{figure}}{}

\paragraph{Experiment \ifbool{showSquare}{3}{2}: robustness to gross outliers.}
\ifbool{showSquare}{For our last batch of experiments,}{Next,} we test the ability of our method to solve the \ifbool{showSquare}{PSD, $\ell_1$ norm formulation,}{problem} with different levels of gross outliers. We set the dimension to $d =30$ and consider more aggressive ill-conditioning by varying $\tau \in\{1, 10^4\}$. We corrupt the vector \(b\) via \eqref{eq:outliers} where the outliers are set to $\eta_i = \mathcal{A}(\overline{M})_i$ for some other random matrix $\overline{M} \in \SSS^d_+.$
We vary the corruption level $p_{\mathrm{fail}} = \#\mathcal{I}/m$ between $0$ and $1/2.$ Notice that when $p_{\mathrm{fail}} > 1/2$, the solution switches to $\overline{M}.$
We compare against the standard \ifbool{showSquare}{and Gauss-Newton subgradient methods \cite{davis2022linearly}}{}. The Polyak stepsize is not applicable because the true minimum value \(\min f\) is unknown. Instead, we use Configuration~\ref{assum:geometric} with $
  \lambda = 10^{-5}, \gamma = 10^{-4},  \text{ and } q=0.97$. Figure~\ref{fig:transition_plot_matrix} displays the results with phase transition plots. For each pair \((m, p_{\mathrm{fail}})\), we run $20$ problem instances and report the success ratio. A run is successful if, within $500$ iterations, it achieves a relative error to fall below \(\epsilon = 10^{-8}\).\ifbool{showSquare}{ The Gauss-Newton preconditioned method exhibits unpredictable behavior when employing these geometrically decaying stepsizes; indeed, the guarantees in~\cite{davis2022linearly} do not cover this stepsize strategy.}{} \ifbool{showSquare}{On the other hand, Algorithm~\ref{alg:LM} displays more stable performance, supporting our theory.}{We observe that Algorithm~\ref{alg:LM} is able to recover the ground truth even with ill-conditioning and overparameterization.}

\ifbool{showSquare}{
\begin{figure}[t!]
  \def\arraystretch{0}%
  \setlength{\imagewidth}{\dimexpr \textwidth - 16\tabcolsep}%
  \divide \imagewidth by 3

  \begin{tabular}{@{}cIIIc@{}}                 %
    \stepcounter{imagerow}\raisebox{0.23\imagewidth}{\rotatebox[origin=c]{90}{\strut \shortstack[c]{\texttt{Polyak}\\\texttt{Subgradient}}}} &
    \includegraphics[width=0.95\imagewidth, trim=40 820 200 100, clip]{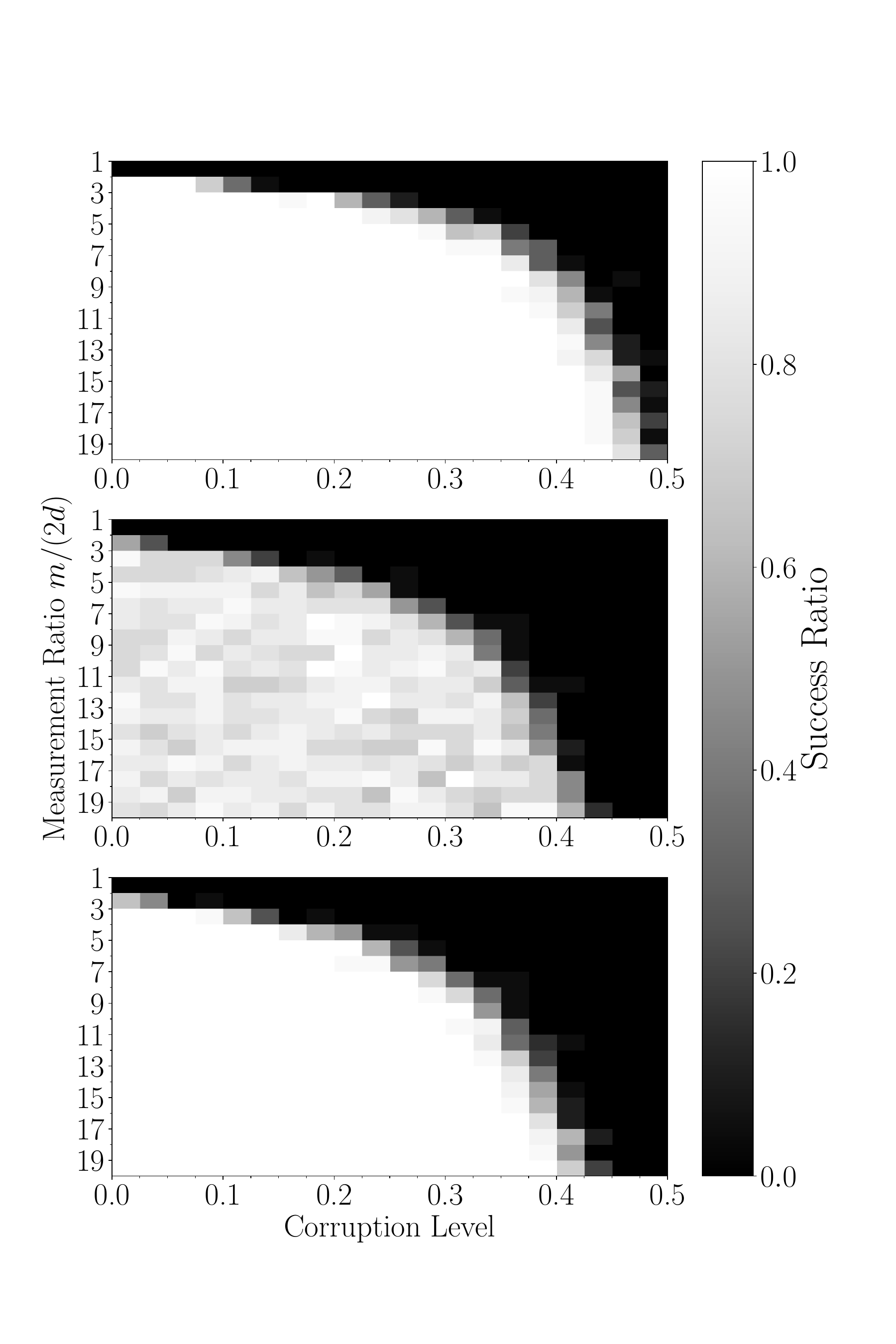} &
    \includegraphics[width=0.95\imagewidth, trim=40 820 200 100, clip]{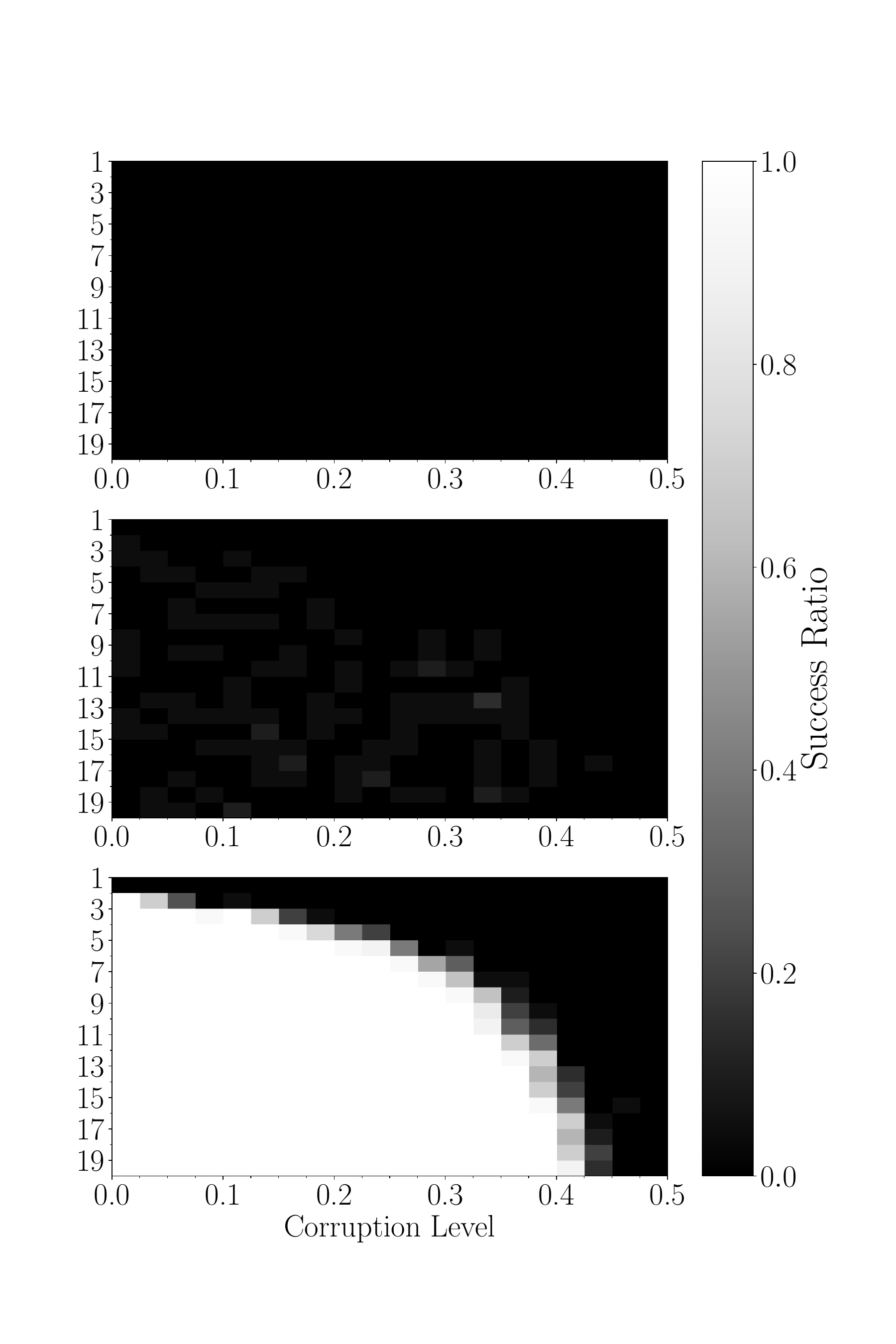} &
    \includegraphics[width=0.95\imagewidth, trim=40 820 200 100, clip]{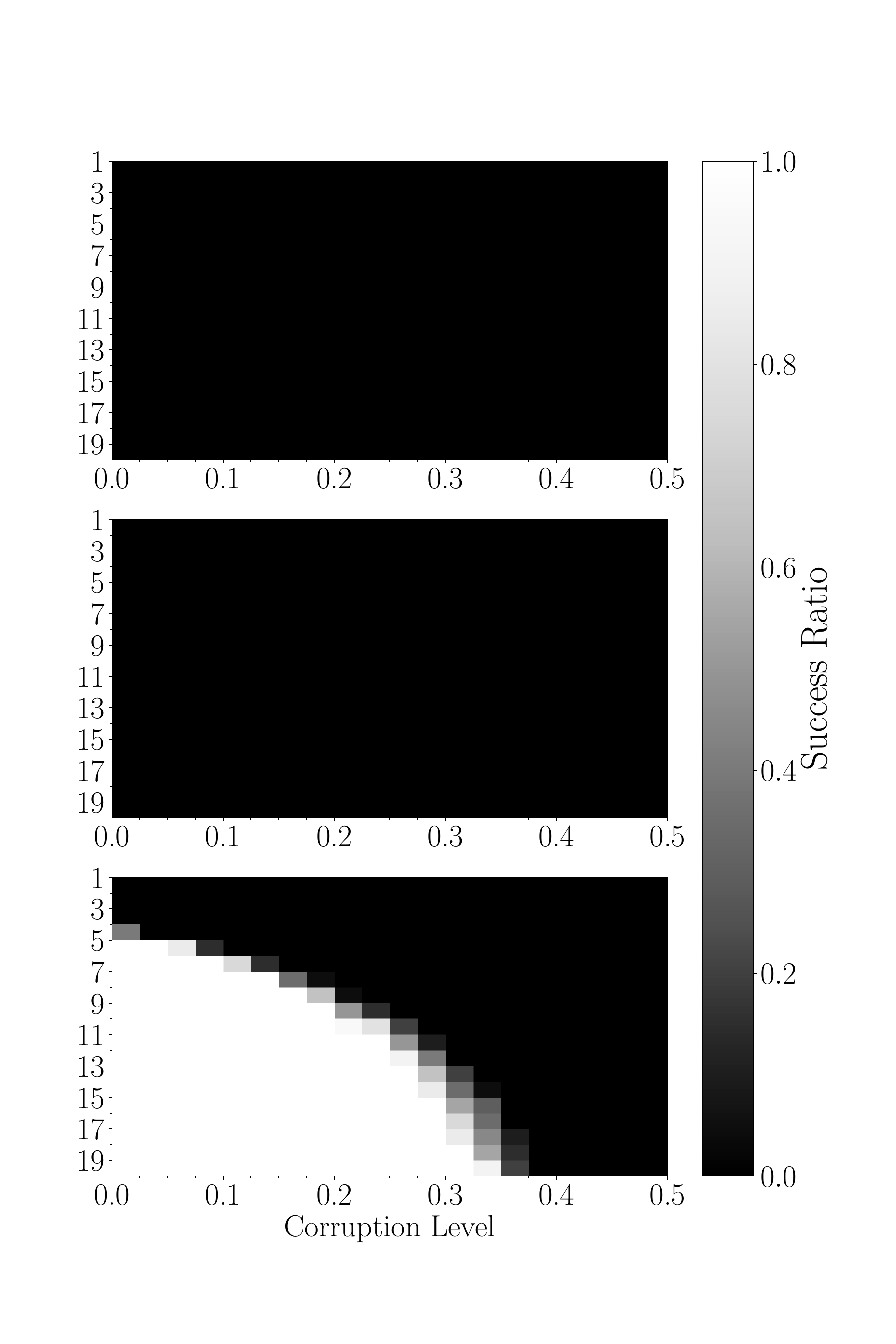} &
    \raisebox{65pt}{\multirow{3}{*}{{\includegraphics[height=220pt]{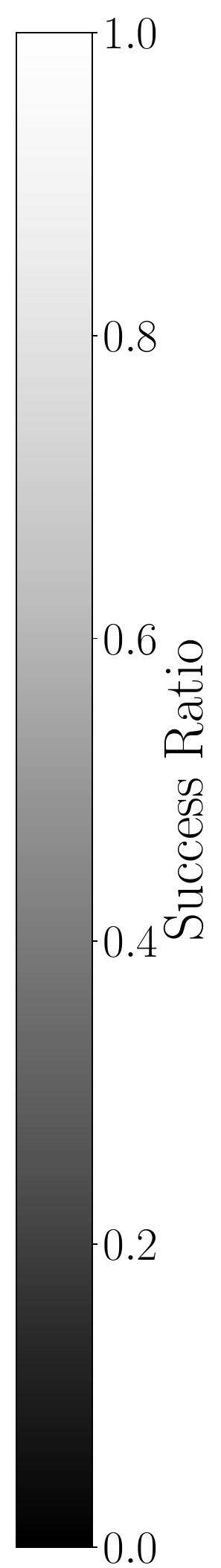}}}}\\[2\tabcolsep]
    \stepcounter{imagerow}\raisebox{30pt}{\rotatebox[origin=c]{90}{\strut  \shortstack[c]{\texttt{Gauss-}\\\texttt{Newton}}}} &
    \raisebox{66pt}{\includegraphics[width=0.95\imagewidth, trim=40 820 200 820, clip]{experiments/exp1_outliers_vs_measurements/_2,1_transition_plot.pdf}} &
    \raisebox{66pt}{\includegraphics[width=0.95\imagewidth, trim=40 820 200 820, clip]{experiments/exp1_outliers_vs_measurements/_2,100_transition_plot.pdf}} &
    \raisebox{66pt}{\includegraphics[width=0.95\imagewidth, trim=40 820 200 820, clip]{experiments/exp1_outliers_vs_measurements/_5,100_transition_plot.pdf}} &\\[2\tabcolsep]
    \stepcounter{imagerow}\raisebox{45pt}{\rotatebox[origin=c]{90}{\strut  \shortstack[c]{\texttt{LMM}\\\texttt{ (ours)}}}} &
    \raisebox{22pt}{\includegraphics[width=0.95\imagewidth, trim=40 130 200 820, clip]{experiments/exp1_outliers_vs_measurements/_2,1_transition_plot.pdf}} &
    \raisebox{15pt}{\includegraphics[width=0.95\imagewidth, trim=40 100 200 820, clip]{experiments/exp1_outliers_vs_measurements/_2,100_transition_plot.pdf}} &
    \raisebox{22pt}{\includegraphics[width=0.95\imagewidth, trim=40 130 200 820, clip]{experiments/exp1_outliers_vs_measurements/_5,100_transition_plot.pdf}} &\\[2\tabcolsep]
    \setcounter{imagecolumn}{0} & (a) Well‑conditioned &
    (b) Ill‑conditioned &
    (c) Ill‑conditioned,\\ & & &  \qquad overparameterized &
  \end{tabular}
  \caption{Matrix sensing transition plots of success rates (in \%) over 20 trials for each \((m, p_{\mathrm{fail}})\). Success is declared when the relative error is below \(\epsilon=10^{-8}\) with an iteration budget of 500.}
  \label{fig:transition_plot_matrix}
\end{figure}}{
\begin{figure}[t!]
\centering
\begin{tikzpicture}
    \node (tab) at (0,0) {%
      \begin{adjustbox}{valign=m}
      \begin{tabular}{@{}cIIc@{}}
        \raisebox{35pt}{\rotatebox[origin=c]{90}{\strut \shortstack[c]{\texttt{Polyak}\\\texttt{Subgradient}}}} &
        \includegraphics[width=0.3\textwidth, trim=40 820 200 100, clip]{experiments/exp1_outliers_vs_measurements/_2,1_transition_plot.pdf} &
        \includegraphics[width=0.3\textwidth, trim=40 820 200 100, clip]{experiments/exp1_outliers_vs_measurements/_5,100_transition_plot.pdf} &
        \raisebox{66.5pt}{\multirow{2}{*}{\includegraphics[height=156pt]{experiments/exp1_outliers_vs_measurements/ee.pdf}}} \\
        \raisebox{42pt}{\rotatebox[origin=c]{90}{\strut \shortstack[c]{\texttt{LMM (ours)}}}} &
        \raisebox{0pt}{\includegraphics[width=0.3\textwidth, trim=40 130 200 820, clip]{experiments/exp1_outliers_vs_measurements/_2,1_transition_plot.pdf}} &
        \raisebox{0pt}{\includegraphics[width=0.3\textwidth, trim=40 130 200 820, clip]{experiments/exp1_outliers_vs_measurements/_5,100_transition_plot.pdf}} & \\
        & \multicolumn{2}{c}{\raisebox{10pt}{\scriptsize Corruption level}} & \\[0pt]
        & (a) Well‑conditioned & (b) Ill‑conditioned, \\ 
        & & \qquad overparameterized & 
      \end{tabular}
      \end{adjustbox}
    };
    \node[rotate=90, anchor=south] at ([xshift=50pt, yshift=20pt]tab.west) {\scriptsize Measurement ratio $m/\!2d$};
\end{tikzpicture}

\caption{
  Matrix sensing transition plots of success rates (\%) over 20 trials for each $(m, p_{\mathrm{fail}})$. Success is declared when the relative error is below $\epsilon=10^{-8}$ with an iteration budget of 500. 
}
\label{fig:transition_plot_matrix}
\vspace{-10pt}
\end{figure}
}

\subsection{Tensor factorization and sensing}\label{sec:tensor-exp}
\ifbool{showSquare}{
\begin{figure}[t]
    \centering

    \begin{subfigure}[b]{0.48\textwidth}
        \centering
        \includegraphics[width=\textwidth]{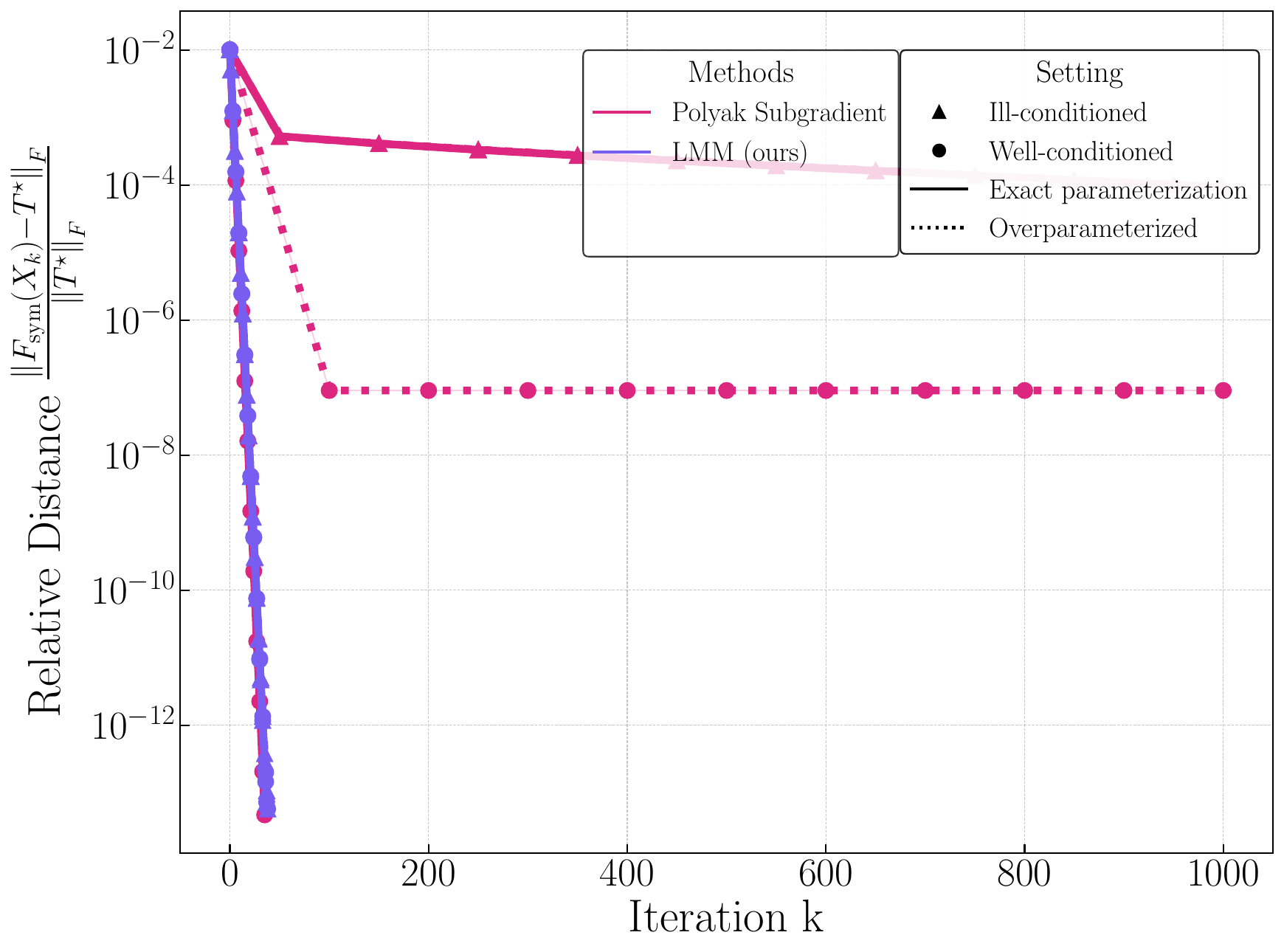}
        \caption{Symmetric Tensor ($d=500$)}
    \end{subfigure}
    \hfill
    \begin{subfigure}[b]{0.48\textwidth}
        \centering
        \includegraphics[width=\textwidth]{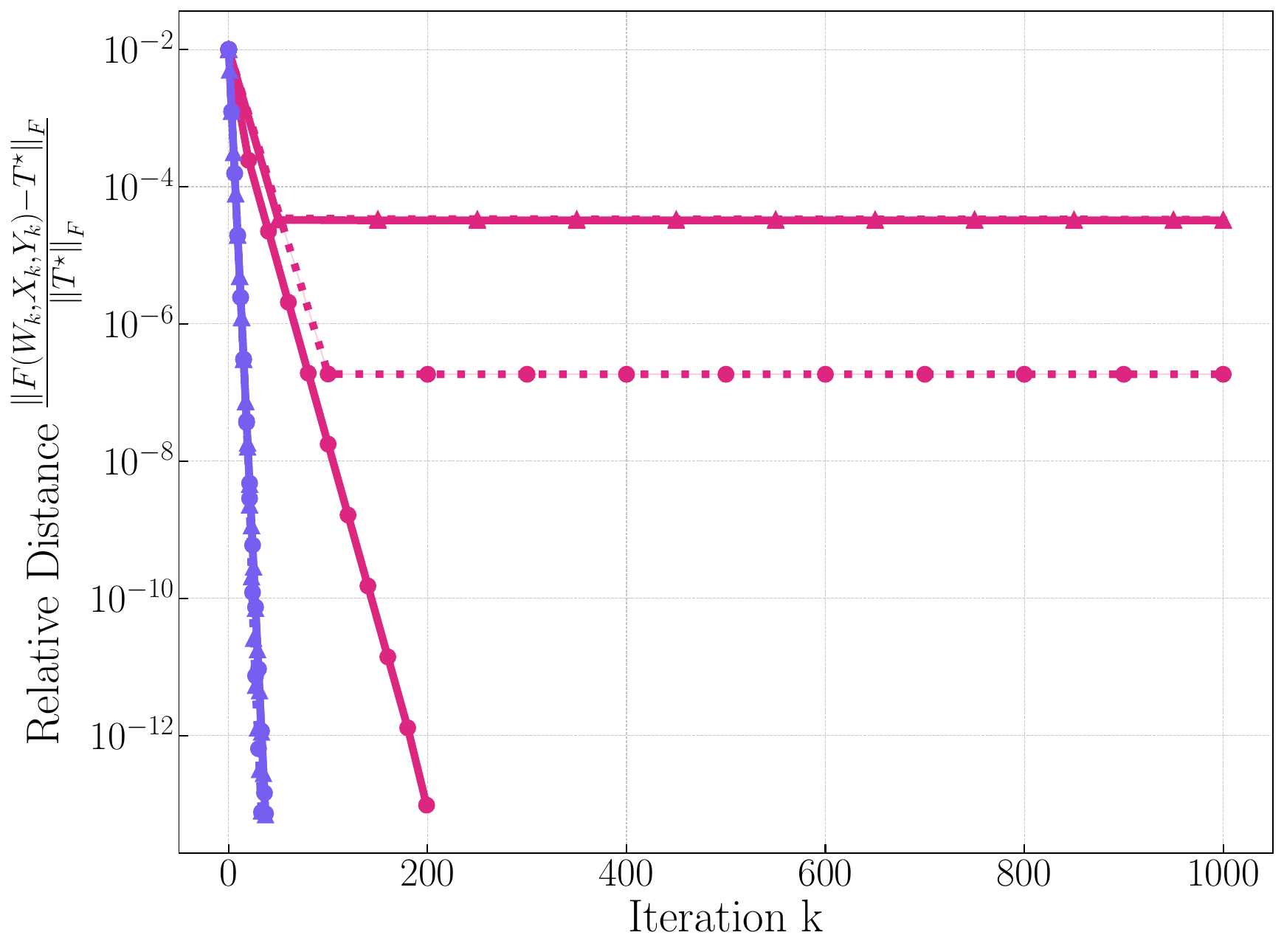}
        \caption{Asymmetric Tensor ($d=500$)}
    \end{subfigure}
    
    \caption{Tensor factorization with the $\ell_2$-norm. We use $r^\star = 2$, $r\in \{2,5\}$.}
    \label{fig: tensor_smooth}
\end{figure}

\begin{figure}[t]
    \centering
    
    \begin{subfigure}[b]{0.48\textwidth}
        \centering
        \includegraphics[width=\textwidth]{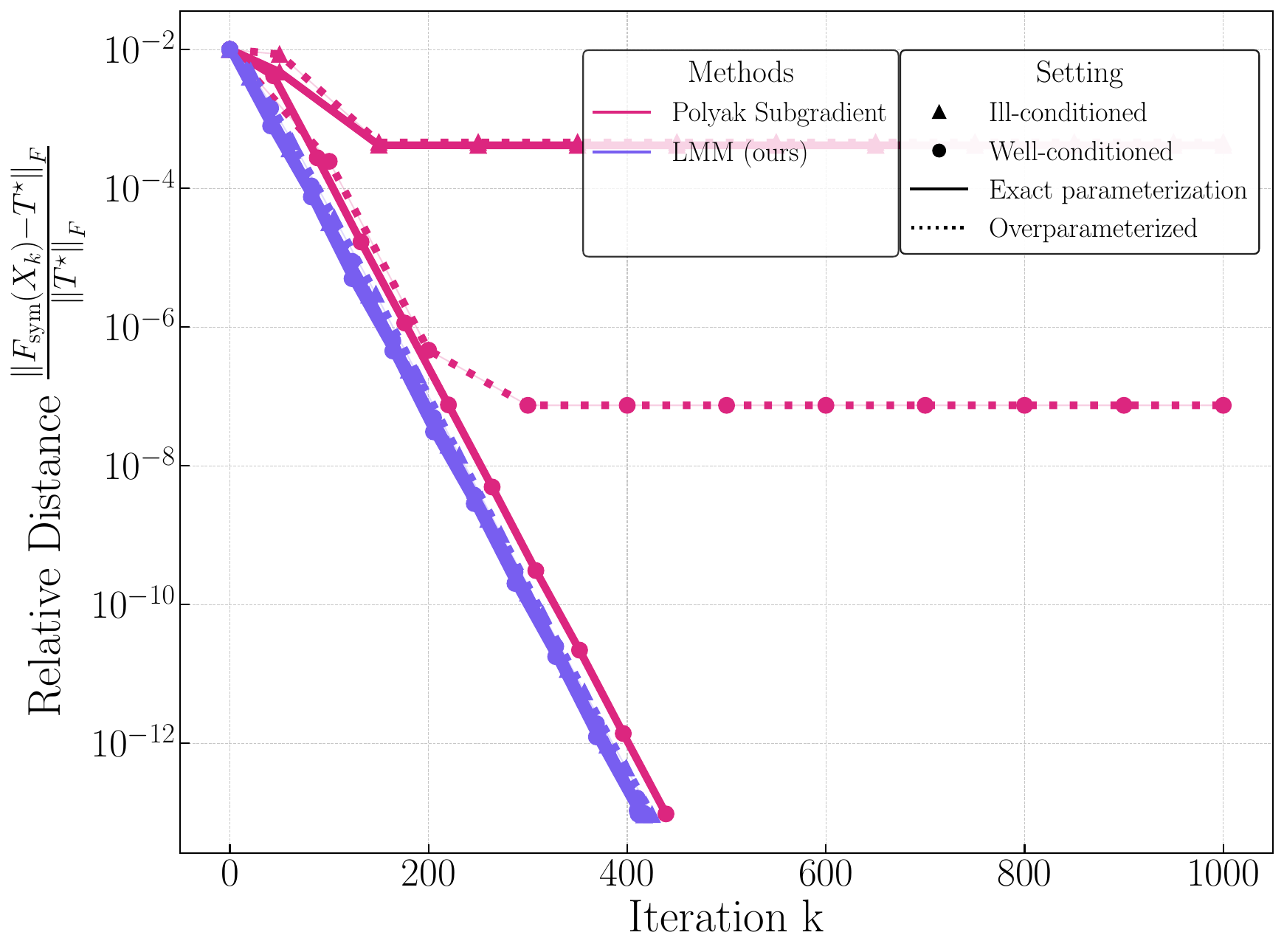}
        \caption{Symmetric Tensor ($d=50$)}
    \end{subfigure}
    \hfill
    \begin{subfigure}[b]{0.48\textwidth}
        \centering
        \includegraphics[width=\textwidth]{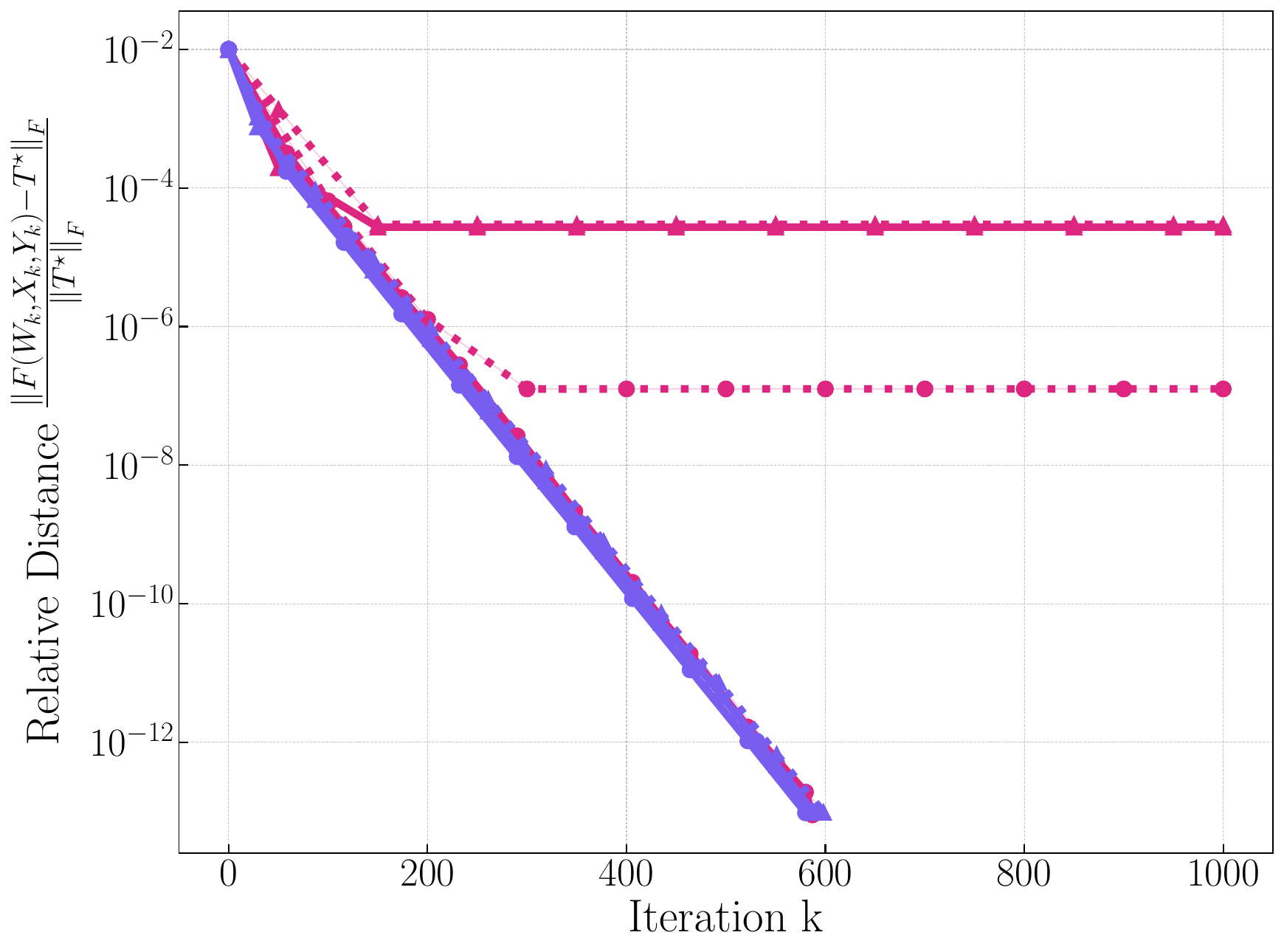}
        \caption{Asymmetric Tensor ($d=50$)}
    \end{subfigure}
    
    \caption{Robust tensor sensing with the $\ell_1$-norm. We use $m =5dr$ \ifbool{showSquare}{($30dr$ for asymmetric)}{} with $r^\star = 2$, $r\in \{2,5\}$ and $10\%$ of gross outliers.}
    \label{fig: tensor_nonsmooth}
\end{figure}}{

\begin{figure}[t]
    \centering

    \begin{subfigure}[b]{0.48\textwidth}
        \centering
        \includegraphics[width=\textwidth]{experiments/exp0_polyak/exp_l0.5_Symmetric_Tensor_500_iteration.pdf}
        \caption{Tensor factorization ($d=500$)}
        \label{fig: tensor factorization}
    \end{subfigure}
    \hfill
    \begin{subfigure}[b]{0.48\textwidth}
        \centering
        \includegraphics[width=\textwidth]{experiments/exp0_polyak/exp_l1_Symmetric_Tensor_50_iteration.pdf}
        \caption{Robust tensor sensing $(d=50)$}
        \label{fig: tensor sensing}
    \end{subfigure}
    
    \caption{Tensor factorization with the $\ell_2$-norm (left), and tensor sensing under the $\ell_1$-loss with $m=5dr$ measurements and $10\%$ of gross outliers (right).\MD{Subfigure (a) should have the legend, the other subfigure shouldn't include it. You included 2 in Figure 7 and none in Figure 8. It should be one and one. Fix all the other plots as well.}}
    \label{fig: tensor}
\end{figure}

}

Finally, in our last batch of experiments, we evaluate Algorithm~\ref{alg:LM} on both\ifbool{showSquare}{}{ symmetric} tensor factorization~\eqref{eq:tensor-problems} and\ifbool{showSquare}{}{ symmetric} robust tensor sensing. While our theoretical guarantees address only the factorization setting, the empirical results suggest that Algorithm~\ref{alg:LM} also works for tensor sensing. We leave the formal analysis of this case as an open question for future work.

    \paragraph{Setup.} For the factorization problem, we use the $\ell_2$-norm $h(T) = \norm{T-T^\star}{2}$. For the sensing problem, we use the \(\ell_1\)-loss \(h(T)=\|\mathcal{A}(T)-b\|_1\) with \(\mathcal{A}\) a linear measurement map\ifbool{showSquare}{ and consider both symmetric and asymmetric CP‐factorizations \eqref{eq:tensor-facotrizations}}{}. We generate \ifbool{showSquare}{factor matrices \(W^\star,X^\star,Y^\star\in\mathbb{R}^{d\times r^\star}\)}{the factor matrix $X^\star \in \RR^{d\times r}$} by drawing \(U\in\mathbb{R}^{d\times r^\star}\) uniformly with \(U^\top U=I\) and setting \(X^\star=U\,D^{1/3}\), where the diagonal matrix \(D\) has entries spanning \([1/\tau,1]\); the ground‐truth tensor is then \ifbool{showSquare}{\(T^\star=\Fsym(X^\star)\) or \(T^\star=\Fasym(W^\star,X^\star,Y^\star)\) depending on the experiment}{$T^\star=\Fsym(X^\star)$}. All methods are initialized at random with relative error \(10^{-2}\). We set \(d=500\) for factorization and \(d=50\) for sensing, vary \(\tau\in\{1,100\}\) and \(r\in\{2,5\}\), and draw \(\mathcal{A}\) with i.i.d.\ \(\mathcal{N}(0,1/m)\) entries with \(m=5dr\) \ifbool{showSquare}{(or \(30dr\) for asymmetric)}{}, taking observations \(b=\mathcal{A}(T^\star)\). 

\paragraph{Baselines.} 
To our knowledge, no preconditioned first‐order method offers convergence guarantees for CP tensor factorization.\footnote{A provably convergent version of \texttt{ScaledGD} exists for the Tucker asymmetric factorization \cite{dong2022fast}.} Thus, we only test against the subgradient method. 
For the factorization experiment, we use Configuration~\ref{assum: dampingparameter} with $\gamma = \frac{1}{2}$ and $\lambda_k = 10^{-3}f(x_k).$ For the robust sensing experiment, we use  Configuration~\ref{assum:geometric} with $\gamma = 10^{-3}$ \ifbool{showSquare}{($10^{-5}$ for asymmetric)}{}, $\lambda=10^{-5}$ and $q=0.94$\ifbool{showSquare}{($0.96$ for asymmetric)}{}.

\paragraph{Discussion.} \ifbool{showSquare}{Figure~\ref{fig: tensor_smooth}}{Figure~\hyperref[fig: tensor factorization]{4(a)}} shows the output for large tensor factorization using the $\ell_2$-norm. Algorithm~\ref{alg:LM} consistently displays fast convergence. Although we do not include a plot here, we observe that the convergence is much faster when using the unscared $\ell_2$ compared to its squared counterpart.\ifbool{showSquare}{ This observation is consistent with the nonnegative least squares experiment.}{} \ifbool{showSquare}{Figure~\ref{fig: tensor_nonsmooth}}{Figure~\hyperref[fig: tensor sensing]{4(b)}} shows the convergence for tensor sensing using the $\ell_1$-norm with $10\%$ gross outliers of the form $\eta_i = \mathcal{A}(\overline{T})_i$ for a spurious signal $\overline{T}\in \RR^{d\times d \times d}$. Consistently, Algorithm~\ref{alg:LM} outperforms the subgradient method while remaining robust to ill-conditioning and overparameterization.

 \section*{Acknowledgements}
 We thank Philippe Toint for pointing us to relevant literature, particularly for pointing us to the work of Morrison~\cite{morrison1960methods}, whose contribution to the development of Algorithm~\ref{alg:LM} for nonlinear least‐squares has too often gone unrecognized.
 
\begingroup
  \let\clearpage\relax
  \bibliographystyle{abbrvnat}
  \bibliography{biblio}

\appendix
\section{Missing proofs from Section~\ref{sec:algorithm}}
{ \subsection{Proof of Lemma~\ref{lem:boundontaylor}}\label{app:proof-lem-boundontaylor}
 First, we show the bound on the norm of $P_{k}$ and $I-P_k$.
  Set $d = \dim(\EEE),$ $m = \dim(\YY)$, and $r = \rank(\nabla F(x_{k}))$. Let $\nabla F(x_{k}) = U \Sigma V^\top$ be the economy SVD of matrix $\nabla F(x_{k})$, where $U \in O(m, r), V \in O(d, r), \Sigma = \diagg{\sigma},$ and $\sigma = \sigma\left(\nabla \c(x)\right)$, for notational convience we do not index these matrices with $x_k.$ Then, $$ P_k = U \Sigma (\Sigma^\top \Sigma + \lambda_k I)^{-1} \Sigma^{\top} U^\top.$$
  Let $w = \left( \frac{(\sigma_{1}^{x_k})^{2}}{(\sigma_{1}^{x_k})^{2} + \lambda_{k}}, \dots, \frac{(\sigma_{r}^{x_k})^{2}}{(\sigma_{r}^{x_k})^{2} + \lambda_{k}} \right).$ We know from linear algebra that $P_{k} = U\diag\left(w\right)U^{\top}$. The eigenvalues of $P_{k}$ are bounded by one since $\lambda_{k} > 0$, so $\norm{P_k}{\textrm{op}} \le 1.$ Similarly, let $v =  \left( \frac{\lambda_{k}}{(\sigma_{1}^{x_k})^{2} + \lambda_{k}}, \dots, \frac{\lambda_{k}}{(\sigma_{r}^{x_k})^{2} + \lambda_{k}} \right)$ we can write
  \begin{align}\label{eqn:I-Pk_SVD}
      I - P_k = U \diag(v) U^\top.
  \end{align}
  It's clear that $\norm{I-P_k}{\textrm{op}} \le 1$. Moreover, for any $v \in Y$, we have 
  \begin{align*}
      \norm{P_k v}{}  = \norm{U \diag(w) U^\top v}{} \le \norm{U^\top v}{} = \norm{\Pi^{x_k} v}{},
  \end{align*}
  where the last equality follows since $\Pi^{x_k} = UU^\top$ Therefore, the first item holds.

Next, we note that the second item holds immediately from~\eqref{eqn:I-Pk_SVD} and the monotonicity of singular values $\{\sigma_i^{x_k}\}_{i=1}^r$. Lastly, observe that $
	\gamma_k P_k v_k = \nabla F(x_k) ( x_k- x_{k+1})$, thus
	\begin{align}
	\norm{z_{k+1} - (z_k - \gamma_k P_k v_k)}{} &= \norm{F(x_{k+1}) - F(x_k) - \nabla F(x_k)(x_{k+1} - x_k))}{} \notag \\
	&\le  \frac{\lc}{2} \norm{x_{k+1} - x_k}{}^2\notag\\
	&=\frac{\lc}{2}\gamma_k^2 \norm{(\nabla F(x_k)^\top \nabla F(x_k) + \lambda_k I)^{-1} \nabla F(x_k)^\top v_k}{}^2, \label{eqn: taylor} 	\end{align}
where the inequality follows from Taylor's theorem. Just as before, we have that
$$ (\nabla F(x_k)^\top \nabla F(x_k) + \lambda_k I)^{-1} \nabla F(x_k)^\top = V(\Sigma^\top \Sigma + \lambda_k I)^{-1} \Sigma^{\top} U^\top. $$
Once again, the nonzero singular values of this matrix correspond to $\frac{\sigma_{i}}{\sigma_{i}^{2} + \lambda_{k}}.$
	By Young's inequality, we have $\sigma_i^2 + \lambda_k \ge 2 \sigma_i\sqrt{\lambda_k}$, so
	$\frac{\sigma_i}{\sigma_i^2 + \lambda_k}  \le \frac{1}{2\sqrt{\lambda_k}},$ which implies
    \begin{align} \label{eqn:nablaF_transpose_bound}
        \norm{(\nabla F(x_k)^\top \nabla F(x_k) + \lambda_k I)^{-1} \nabla F(x_k)^\top v_k }{} \le \frac{1}{2\sqrt{\lambda_k}} \norm{U^\top v_k}{} = \frac{1}{2\sqrt{\lambda_k}} \norm{\Pi^{x_k} v_k}{}.
    \end{align}
    The conclusion follows directly from the estimates~\eqref{eqn: taylor} and~\eqref{eqn:nablaF_transpose_bound}. This concludes the proof of Lemma~\ref{lem:boundontaylor}. \qed
}
\section{Missing proofs from Section~\ref{sec:guarantees}}\label{app:proofs-guarantees}
 \subsection{Proof of Theorem~\ref{thm: onestepimprovement} (Geometric decaying stepsizes)}\label{app:proof-nonsmooth-geometrically}
 We start by establishing a couple of auxiliary lemmas.

\begin{lemma}\label{lem: taylorerrorgeometric}
	Suppose that Assumptions~\ref{assum: assumptiononc} and~\ref{assum:nonsmoothpenalty} hold. For any $x_k, x_{k+1}$ generated by Algorithm~\ref{alg:LM}, let  $z_k = F(x_k)$ and $ z_{k+1} = F(x_{k+1})$. Then we have
	$$
	\norm{z_{k+1} - (z_k - \gamma_k P_k v_k)}{} \le \frac{\lc\lh^2 \gamma_k^2 }{8 \lambda_k}.
	$$
\end{lemma}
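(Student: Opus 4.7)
The plan is to derive this estimate as an immediate consequence of two results already available in the paper: the approximation error bound from Lemma~\ref{lem:boundontaylor} and the projected-subgradient bound from Assumption~\ref{assum:nonsmoothpenalty}. The key observation is that the only quantity in the bound of Lemma~\ref{lem:boundontaylor}(3) that depends on the subgradient is $\norm{\Pi^{x_k} v_k}{}^2$, and this is precisely what the Restricted Lipschitzness assumption controls.

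Concretely, I would first verify the hypothesis of Lemma~\ref{lem:boundontaylor}(3). Assumption~\ref{assum: assumptiononc} says that $\nabla F$ is globally $\lc$-Lipschitz, so a fortiori it is $\lc$-Lipschitz on the segment $[x_k, x_{k+1}]$. Applying Lemma~\ref{lem:boundontaylor}(3) then yields
\begin{equation*}
\norm{z_{k+1} - (z_k - \gamma_k P_k v_k)}{} \le \frac{\lc}{8\lambda_k} \gamma_k^2 \norm{\Pi^{x_k} v_k}{}^2.
\end{equation*}
Next, since $v_k \in \partial h(F(x_k))$ by construction of the LMM update (Algorithm~\ref{alg:LM}), Assumption~\ref{assum:nonsmoothpenalty}, item~\ref{item:nonsmoothpenalty:lip_a}, gives the uniform bound $\norm{\Pi^{x_k} v_k}{} \le L = \lh$. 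Substituting this into the previous display produces the claimed inequality.

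I do not foresee any real obstacle: the statement is a one-line corollary obtained by chaining an approximation-error bound with a subgradient norm bound. The only subtlety worth flagging in the write-up is to point out that the required line-segment version of the Lipschitz condition on $\nabla F$ follows from the global form in Assumption~\ref{assum: assumptiononc}, and that the use of $\Pi^{x_k}$ here is legitimate because $P_k$ is a compression of the same projector, so that bounding the projected subgradient suffices.
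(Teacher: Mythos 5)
Your proof is correct and follows exactly the same route as the paper's one-line argument: apply the approximation-error bound of Lemma~\ref{lem:boundontaylor}(3), then bound $\norm{\Pi^{x_k} v_k}{}\le L$ via the restricted Lipschitzness in Assumption~\ref{assum:nonsmoothpenalty}. You simply spell out the substitution and the (trivial) verification that global Lipschitzness of $\nabla\c$ implies the line-segment version needed by the lemma, which is fine.
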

\begin{proof}
	A combination of Lemma~\ref{lem:boundontaylor} and Assumption~\ref{assum:nonsmoothpenalty} yields the desired bound.
\end{proof}
{
\begin{lemma}\label{lem:onestepgeometric}
	Suppose that Assumptions~\ref{assum: assumptiononc},~\ref{ass:weak-alignment}, and \ref{assum:nonsmoothpenalty} hold. For any $z_k$ such that $\|z_k - \zs\| \le  \delta\left(\tfrac{\mu}{8\lh}\right)$, we have
	\begin{align*}
	\norm{z_k - \gamma_k P_k v_k - \zs}{}^2	&\le \norm{z_k - \zs}{}^2- \frac{3\mu \gamma_k}{2}\norm{z_k -  \zs}{}\\
	&\quad \quad +2\lh \gamma_k \frac{\lambda_k}{\sig\left(\tfrac{\mu}{8\lh}\right)\norm{z_k -  \zs}{} + \lambda_k} \norm{z_k -  \zs}{} + \gamma_k^2 \lh^2.
	\end{align*}
\end{lemma}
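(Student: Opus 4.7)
The plan is to expand the square and decompose the cross term $\langle P_k v_k, z_k - z^\star\rangle$ along the two directions induced by the projection $P_k$. Write
\[
\|z_k - \gamma_k P_k v_k - z^\star\|^2 = \|z_k - z^\star\|^2 - 2\gamma_k \langle v_k, z_k - z^\star\rangle + 2\gamma_k \langle (I-P_k)v_k, z_k - z^\star\rangle + \gamma_k^2 \|P_k v_k\|^2,
\]
using $P_k v_k = v_k - (I-P_k)v_k$. The quadratic residual $\gamma_k^2\|P_k v_k\|^2$ is immediately bounded by $\gamma_k^2 L^2$ by combining the nonexpansiveness of $P_k$ (item 1 of Lemma~\ref{lem:boundontaylor}) with the restricted Lipschitzness bound $\|\Pi^{x_k} v_k\|\le L$ from Assumption~\ref{assum:nonsmoothpenalty}(\ref{item:nonsmoothpenalty:lip_a}).

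For the ``aligned'' cross term, the aiming inequality (Lemma~\ref{lem:aiming}) combined with restricted sharpness gives
\[
-2\gamma_k \langle v_k, z_k - z^\star\rangle \;\le\; -2\gamma_k\bigl(h(z_k)-h^\star\bigr) \;\le\; -2\gamma_k \mu \|z_k - z^\star\|.
\]
The main work is bounding the ``orthogonal'' cross term $|\langle (I-P_k)v_k, z_k - z^\star\rangle|$, and I would proceed exactly as in equations~(\ref{eq:projected-angle})--(\ref{eqn:inner_prod_orthogonal}) of the Polyak proof. Namely, invoke Assumption~\ref{ass:weak-alignment} with $\rho = \mu/(8L)$ to obtain an index $j$ such that $\|(I-\Pi_j^{x_k})(z_k-z^\star)\|\le (\mu/(8L))\|z_k-z^\star\|$ and $(\sigma_j^{x_k})^2 \ge \sigma(\mu/(8L))\|z_k-z^\star\|$; this is where the local radius hypothesis $\|z_k-z^\star\|\le \delta(\mu/(8L))$ is used. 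Applying Assumption~\ref{assum:nonsmoothpenalty}(\ref{item:nonsmoothpenalty:lip_b}) together with triangle inequality on the splitting $(I-P_k)(z_k-z^\star) = (I-P_k)\Pi_j^{x_k}(z_k-z^\star) + (I-P_k)(I-\Pi_j^{x_k})(z_k-z^\star)$ and the restricted eigenvalue bound (item~\ref{item:restricted_eigen} of Lemma~\ref{lem:boundontaylor}) yields
\[
|\langle (I-P_k)v_k, z_k - z^\star\rangle| \;\le\; L\cdot \frac{\lambda_k}{\sigma(\mu/(8L))\|z_k-z^\star\|+\lambda_k}\|z_k-z^\star\| \;+\; \frac{\mu}{8}\|z_k-z^\star\|.
\]

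Multiplying this estimate by $2\gamma_k$ and summing with the aligned-term bound gives a combined linear-in-$\gamma_k$ coefficient $-2\mu + \mu/4 = -7\mu/4$ on $\|z_k-z^\star\|$, which is at most $-3\mu/2$, so the stated inequality follows after adding the $\gamma_k^2 L^2$ term. The main obstacle is the delicate splitting-plus-restricted-eigenvalue argument in the third step; essentially all of the weak-alignment information enters only through that bound, and the whole point of the lemma is to expose $\lambda_k$ explicitly in the error term so that, later, summing a geometric schedule $\lambda_k=\lambda q^k$ against $\gamma_k=\gamma q^k$ yields the desired contraction.
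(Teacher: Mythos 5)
Your proposal is correct and follows essentially the same route as the paper: expand the square, use aiming plus sharpness on the aligned cross term, nonexpansiveness plus restricted Lipschitzness on the quadratic term, and the weak-alignment splitting with the restricted-eigenvalue bound (as in the Polyak-stepsize argument) on the orthogonal term, keeping $\lambda_k$ explicit. The final bookkeeping, absorbing the $\mu/4$ loss into $-2\mu$ to arrive at $-3\mu/2$, also matches the paper's proof.
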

\begin{proof}
	Note that
	\begin{align}
	\norm{z_k - \gamma_k P_k v_k -  \zs}{}^2 &= \norm{z_k -\zs}{}^2- 2\gamma_k \dotp{v_k, z_k -\zs} + 2\gamma_k \dotp{(I-P_k)v_k, z_k - \zs} + \gamma_k^2 \norm{P_k v_k}{}^2 \notag\\
	&\le \norm{z_k - \zs}{}^2 - 2 \gamma_k \mu \norm{z_k -\zs}{} + 2\gamma_k \dotp{v_k, (I-P_k)(z_k - \zs)} + \gamma_k^2 \lh^2, \label{eqn: onestepbound}
	\end{align}
	where the  inequality follows from Lemma~\ref{lem:boundontaylor},  Lemma~\ref{lem:aiming}, and  Item~\ref{item:nonsmoothpenalty:lip} of Assumption~\ref{assum:nonsmoothpenalty}. On the other hand, by the same argument as in \eqref{eq:projected-angle} and~\eqref{eqn:inner_prod_orthogonal}, we have
	\begin{align*}
	\norm{(I-P_k) (z_k - \zs)}{} \le \left(\frac{\lambda_k}{\sig\left(\tfrac{\mu}{8\lh}\right)\norm{z_k -  \zs}{} + \lambda_k}+ \frac{\mu}{8\lh} \right)\norm{z_k -  \zs}{}.
	\end{align*}
	By Item~\ref{item:nonsmoothpenalty:lip} of Assumption~\ref{assum:nonsmoothpenalty}, we have
	\begin{align}\label{eqn: onestepboundCauchy}
	2\gamma_k \dotp{v_k, (I-P_k)(z_k -  \zs)} \le 2\lh \gamma_k \left(\frac{\lambda_k}{\sig\left(\tfrac{\mu}{8\lh}\right)\norm{z_k -  \zs}{}+ \lambda_k}+ \frac{\mu}{8\lh} \right)\norm{z_k -  \zs}{}.
	\end{align}
	The desired inequality follows from a combination of~\eqref{eqn: onestepbound} and~\eqref{eqn: onestepboundCauchy}.
\end{proof}
}
	We prove the theorem by induction. The conclusion holds for $k=0$ by assumption. Next, suppose that the conclusion holds for some $k \ge 0$. We consider two cases:
	\begin{enumerate}[leftmargin=0.2cm]
		\item[] \textbf{Case 1.} Suppose first that $\norm{z_k - \zs}{} \le \frac{M}{4}q^{k}.$ We have
		\begin{equation}\label{eqn: geometric_decay_first}
		\begin{aligned}
		\norm{z_k - \gamma_k P_k v_k - \zs}{}^2
		&\le \norm{z_k -\zs}{}^2 +2\lh \gamma_k \frac{\lambda_k}{\sig\left(\tfrac{\mu}{8\lh}\right)\norm{z_k -  \zs}{}+ \lambda_k}\norm{z_k -\zs}{} + \gamma_k^2 \lh^2\\
		&\le \norm{z_k - \zs}{}^2 +2\lh \gamma_k \norm{z_k - \zs}{} + \gamma_k^2 \lh^2\\
		&\le \left(\frac{M^2}{16} + \frac{\gamma \lh  M}{2} + \gamma^2 \lh^2\right) q^{2k}\\
		&\le \frac{M^2}{4} q^{2k+2},
		\end{aligned}
		\end{equation}
		where the first inequality follows from Lemma~\ref{lem:onestepgeometric}, the second inequality follows from  the fact that $$\frac{\lambda_k}{\sig\left(\tfrac{\mu}{8\lh}\right)\norm{z_k -  \zs}{} + \lambda_k} \le 1,$$ 
        the third inequality follows from the assumption that $\norm{z_k - \zs}{} \le \frac{M}{4}q^k$, and the last inequality follows from $q\ge \frac{1}{\sqrt{2}}$ and our assumption on $\gamma\leq M\mu/(64L^2) \leq M/(64L)$. As a result, $\norm{z_k - \gamma_k P_k v_k - \zs}{} \le \frac{M}{2}q^{k+1}$. Moreover, by Lemma~\ref{lem: taylorerrorgeometric}, the triangle inequality, and our assumption $\gamma^2\leq 2\lambda M/(\lc \lh^2)$, we have
		\begin{align*}
		\norm{z_{k+1} - \zs}{} &\le \frac{M}{2}q^{k+1} + \frac{\gamma^2 \lc \lh^2 }{8 \lambda} q^k\\
		&\le  Mq^{k+1}.
		\end{align*}
		
		\item[] \textbf{Case 2}. Now, suppose $\frac{M}{4}q^k \le \norm{z_k - \zs}{} \le M q^k.$ We have
		\begin{align*}
		&\norm{z_k - \gamma_k P_k v_k- \zs}{}^2\\
		&\le \norm{z_k - \zs}{}^2  - \frac{3\gamma \mu }{2}q^k \norm{z_k - \zs}{} + 2\gamma \lh \frac{\lambda q^k}{ \sig\left(\tfrac{\mu}{8\lh}\right)\norm{z_k -  \zs}{} + \lambda q^k} q^k\norm{z_k - \zs}{} + \gamma^2 \lh^2 q^{2k}\\
		&\le \norm{z_k - \zs}{}^2 - \frac{3\gamma \mu  }{2M} \norm{z_k - \zs}{}^2 +  2\gamma \lh  \frac{4\lambda}{ \sig\left(\tfrac{\mu}{8\lh}\right) M + 4\lambda} q^k\norm{z_k - \zs}{} + \frac{16\gamma^2 \lh^2}{M^2} \norm{z_k - \zs}{}^2\\
		&\le \norm{z_k - \zs}{}^2 - \frac{3\gamma \mu }{2M} \norm{z_k - \zs}{}^2 +  2\gamma \lh \frac{16\lambda}{ \sig\left(\tfrac{\mu}{8\lh}\right) M^2 } \norm{z_k - \zs}{}^2 + \frac{16\gamma^2 \lh^2}{M^2} \norm{z_k - \zs}{}^2\\
		&\le \left(1- \frac{\gamma \mu}{M}\right) \norm{z_k -\zs}{}^2,
		\end{align*}
		where the first inequality follows from Lemma~\ref{lem:onestepgeometric}, the second and third inequalities follow from the assumed bound on  $\norm{z_k - \zs}{}$, and the last inequality follows from our assumption on $\lambda$ and $\gamma$. Taking the square root of both sides, we have
		\begin{equation}\label{eq:one-more}
		\norm{z_k - \gamma_k P_k v_k- \zs}{} \le  \sqrt{1 - \frac{\gamma \mu}{M}} \norm{z_k - z^\star}{} \le \left(1 - \frac{\gamma \mu}{2M}\right) \norm{z_k - z^\star}{}
		\end{equation}
        where the second inequality follows since $(1-x)^{1/2} \leq 1-x/2$ for all $x \leq 1,$ which holds due to our constraints on $\gamma.$
		Then,
		\begin{align*}
		\norm{z_{k+1} -\zs}{} &\le  \norm{z_k - \gamma_k P_k v_k- \zs}{}+ \norm{z_{k+1} - (z_k - \gamma_k P_k v_k)}{} \\
		&\le \left(1 - \frac{\gamma \mu}{2M}\right) \norm{z_k - \zs}{} + \frac{ \gamma^2\lc \lh^2 }{8 \lambda} q^k\\
		&\le \left(1 - \frac{\gamma \mu}{4M}\right) \norm{z_k - \zs}{}\\
		&\le Mq^{k+1},
		\end{align*}
		where the first inequality follows from Lemma~\ref{lem: taylorerrorgeometric} and the triangle inequality, the second inequality follows  from~\eqref{eq:one-more}, Lemma~\ref{lem: taylorerrorgeometric}, and our assumption $\gamma \leq \lambda \mu /(2\lc \lh^2)$, and the last inequality follows from the inductive hypothesis and the fact that $1 - \gamma\mu/(4M) \leq q$. 
	\end{enumerate}
	The induction is complete, finishing the proof of Theorem~\ref{thm: onestepimprovement}.
\qed
\subsection{Proof of Theorem~\ref{thm: onestepimprovement_exact}}\label{sec:proof_exact_nonsmooth}
The proof of the following lemma is essentially the same as that of Lemma~\ref{lem:onestepgeometric}. We omit the details.
\begin{lemma}\label{lem:onestepgeometric_strong}
	Suppose that Assumptions~\ref{assum: assumptiononc},~\ref{ass:strong-alignment}, and \ref{assum:nonsmoothpenalty} hold. For any $z_k$ such that $\|z_k - \zs\| \le  \delta\left(\tfrac{\mu}{8\lh}\right)$, we have
	\begin{align*}
	\norm{z_k - \gamma_k P_k v_k - \zs}{}^2	\le \norm{z_k - \zs}{}^2- \frac{3\mu \gamma_k}{2}\norm{z_k -  \zs}{}  +2\lh \gamma_k \frac{\lambda_k}{\sig + \lambda_k} \norm{z_k -  \zs}{} + \gamma_k^2 \lh^2.
	\end{align*}
\end{lemma}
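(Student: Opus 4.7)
The plan is to follow the template of the proof of Lemma~\ref{lem:onestepgeometric} almost verbatim, making only the bookkeeping adjustments that reflect the stronger alignment condition. Specifically, I would start by expanding the squared distance, writing
\begin{align*}
\norm{z_k - \gamma_k P_k v_k - \zs}{}^2 &= \norm{z_k-\zs}{}^2 - 2\gamma_k \dotp{v_k, z_k-\zs} + 2\gamma_k \dotp{(I-P_k)v_k, z_k-\zs} + \gamma_k^2\norm{P_k v_k}{}^2,
\end{align*}
and then bound each of the last three summands in turn. The $\gamma_k^2$ term is controlled by the nonexpansiveness portion of Lemma~\ref{lem:boundontaylor} together with Item~\ref{item:nonsmoothpenalty:lip_a} of Assumption~\ref{assum:nonsmoothpenalty}, yielding $\norm{P_k v_k}{}^2\le\lh^2$. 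The linear $\dotp{v_k,z_k-\zs}$ term is bounded below by $\mu\norm{z_k-\zs}{}$ via Lemma~\ref{lem:aiming}; this accounts for the $-\tfrac{3}{2}\mu\gamma_k\norm{z_k-\zs}{}$ decrement (the factor $3/2$ comes from absorbing the alignment slack, as in Lemma~\ref{lem:onestepgeometric}).

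The only genuinely new computation is the bound on the cross term $\dotp{(I-P_k)v_k, z_k-\zs}$, and this is where the strong alignment condition enters and simplifies the statement. Applying Item~\ref{item:nonsmoothpenalty:lip_b} of Assumption~\ref{assum:nonsmoothpenalty} gives $|\dotp{(I-P_k)v_k, z_k-\zs}|\le \lh\,\norm{(I-P_k)(z_k-\zs)}{}$. I would then split via the top-$j$ singular projection:
\begin{align*}
\norm{(I-P_k)(z_k-\zs)}{} \le \norm{(I-P_k)\Pi_j^{x_k}(z_k-\zs)}{} + \norm{(I-P_k)(I-\Pi_j^{x_k})(z_k-\zs)}{}.
\end{align*}
Choosing $j$ via Assumption~\ref{ass:strong-alignment} at tolerance $\rho=\mu/(8\lh)$, the second summand is $\le \tfrac{\mu}{8\lh}\norm{z_k-\zs}{}$. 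For the first, Item~\ref{item:restricted_eigen} of Lemma~\ref{lem:boundontaylor} together with nonexpansiveness bounds the restricted eigenvalues of $I-P_k$ by $\lambda_k/((\sigma_j^{x_k})^2+\lambda_k)$; the key point is that under \emph{strong} alignment $(\sigma_j^{x_k})^2\ge \sig$ is a uniform constant (not $\sig(\rho)\norm{z_k-\zs}{}$ as in the weak case). Hence this term is at most $\tfrac{\lambda_k}{\sig+\lambda_k}\norm{z_k-\zs}{}$, producing exactly the denominator appearing in the claimed bound.

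Combining the three bounds and multiplying the cross term by $2\gamma_k$ gives the stated inequality. The main obstacle, as the author hints in saying the proof is ``essentially the same,'' is purely notational: keeping track of which constants inherit the $\rho$-dependence and confirming that the monotonicity argument $\sig(\rho)\norm{z_k-\zs}{}\mapsto \sig$ is the only substantive change from the weak-alignment proof. No new algebraic ideas are required.
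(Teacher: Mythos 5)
Your proposal is correct and follows exactly the route the paper intends: it reproduces the proof of Lemma~\ref{lem:onestepgeometric} (expand the square, bound $\norm{P_k v_k}{}\le\lh$, use Lemma~\ref{lem:aiming}, and bound the cross term via Item~\ref{item:nonsmoothpenalty:lip} plus the split through $\Pi_j^{x_k}$), with the single substantive change that strong alignment replaces $\sig(\rho)\norm{z_k-\zs}{}$ by the constant $\sig$ in the restricted-eigenvalue bound, which is precisely why the paper omits the details. The absorption of the $\tfrac{\mu}{8\lh}$ slack into the coefficient $-\tfrac{3}{2}\mu\gamma_k$ (from $-\tfrac{7}{4}\mu\gamma_k$) is handled correctly.
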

{\paragraph{Proof for Polyak stepsize.} By induction, it suffices to prove the following claim:
	\begin{claim}\label{claim:nonsmooth_strong}
		For any $z_k= \c(x_k)$ with
		$\norm{z_k - \zs}{} \le \min\left\{\rloc\left(\frac{\mu}{8\lh}\right), \frac{\sig \mu}{8\cub \lh} \right\},$ we have
		$$
		\norm{z_{k+1} - \zs}{}^2 \le \left(1 - \frac{\gamma \mu^2}{8 \lh^2}\right) \norm{z_k - \zs}{}^2.
		$$
	\end{claim}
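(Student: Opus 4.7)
The plan is to mirror the proof of Item~\ref{item:nonsmooth-weak-polyak} of Theorem~\ref{thm: onestepimprovement}, exploiting the stronger lower bound on the singular values granted by Assumption~\ref{ass:strong-alignment}. The target is again the aiming-type bound~\eqref{eqn:key_zk_inner_prod_bound}, after which Proposition~\ref{prop:master_polyak} immediately yields the claimed one-step contraction.

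First I would apply Assumption~\ref{ass:strong-alignment} at $\rho = \mu/(8\lh)$ and $z = z_k$; the hypothesis $\|z_k-\zs\|\le \rloc(\mu/(8\lh))$ supplies an index $j$ with
\[
\big\|(I-\P_j^{x_k})(z_k-\zs)\big\|\le \tfrac{\mu}{8\lh}\,\|z_k-\zs\| \qquad \text{and} \qquad (\sigma_j^{x_k})^2 \ge \sig.
\]
Following the template of~\eqref{eq:projected-angle}--\eqref{eqn:inner_prod_orthogonal}, I would split $(I-P_k)(z_k-\zs)$ along and orthogonal to $\spann(U^{x_k}_{1:j})$, control the principal component via the restricted-eigenvalue estimate of Item~\ref{item:restricted_eigen} of Lemma~\ref{lem:boundontaylor}, and invoke the restricted Lipschitz constant from Assumption~\ref{assum:nonsmoothpenalty} to arrive at
\[
\bigl|\dotp{(I-P_k)v_k,\,z_k-\zs}\bigr|\le \lh\left(\frac{\lambda_k}{\sig + \lambda_k} + \frac{\mu}{8\lh}\right)\|z_k-\zs\|.
\]

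The step where strong alignment pays off is the control of $\lambda_k/(\sig+\lambda_k)$. Under Configuration~\ref{assum: dampingparameter} we have $\lambda_k\le \cub\|z_k-\zs\|$, and the strengthened initialization $\|z_k-\zs\|\le \sig\mu/(8\cub\lh)$ then forces $\lambda_k\le \sig\mu/(8\lh)$. Since $\lambda_k/(\sig+\lambda_k)\le \lambda_k/\sig$, this yields $\lambda_k/(\sig+\lambda_k)\le \mu/(8\lh)$, hence
\[
\bigl|\dotp{(I-P_k)v_k,\,z_k-\zs}\bigr| \le \tfrac{\mu}{4}\|z_k-\zs\| \le \tfrac{1}{4}\bigl(h(z_k)-h^\star\bigr),
\]
where the final inequality uses restricted sharpness. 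Proposition~\ref{prop:master_polyak} combined with $\|\Pi^{x_k}v_k\|\le \lh$ and restricted sharpness then produces $\|z_{k+1}-\zs\|^2\le (1-\gamma\mu^2/(8\lh^2))\|z_k-\zs\|^2$. The contraction preserves both bounds $\rloc(\mu/(8\lh))$ and $\sig\mu/(8\cub\lh)$ required of $\|z_k-\zs\|$, so induction closes the argument.

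The difficulty here is conceptual rather than computational: the constant lower bound $\sig$ from strong alignment (as opposed to the $\sig(\rho)\|z-\zs\|$ scaling under weak alignment) lets one trade the spurious upper cap on $\cub$ that appeared in Theorem~\ref{thm: onestepimprovement} for the tighter initial-distance requirement $\|z_0-\zs\|\le \sig\mu/(8\cub\lh)$, which in turn unlocks the faster rate with $\gamma\asymp \clb/\lc$. All remaining ingredients---Lemma~\ref{lem:boundontaylor}, Proposition~\ref{prop:master_polyak}, and Assumption~\ref{assum:nonsmoothpenalty}---are already at our disposal.
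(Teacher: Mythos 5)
Your proposal is correct and follows essentially the same route as the paper's proof: strong alignment at $\rho = \mu/(8\lh)$, the restricted-eigenvalue bound from Lemma~\ref{lem:boundontaylor}, the restricted Lipschitz property, and the bound $\lambda_k \le \cub\|z_k-\zs\| \le \sig\mu/(8\lh)$ to control $\lambda_k/(\sig+\lambda_k)$, yielding \eqref{eqn:key_zk_inner_prod_bound} and then the contraction via Proposition~\ref{prop:master_polyak} with sharpness and $\|\Pi^{x_k}v_k\|\le \lh$. The only cosmetic difference is that you bound $\lambda_k/(\sig+\lambda_k)\le\lambda_k/\sig$ directly instead of invoking monotonicity of $b\mapsto b/(a+b)$, which is equivalent.
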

	
	To this end, we let  $j$ be the index provided by Assumption~\ref{ass:strong-alignment} when applied to $\rho = \frac{\mu}{8\lh}$ and $z = z_k = F(x_k)$, i.e.,
	$$\norm{(I- \P_{j}^{x})(z_k - \zs)}{} \le  \frac{\mu}{8\lh}\norm{z_k - \zs}{} \quad \text{and} \quad   (\sigma_{j}^x)^2 \ge \sig .$$
	Following the similar calculation as in~\eqref{eqn:inner_prod_orthogonal}, we have
	{\allowdisplaybreaks\begin{align*}
	|\dotp{(I - P_k)v_k, z_k -z^\star}| &\le  \lh \left(\norm{(I- P_k)\P_{j}^x(z_k - \zs)}{} + \norm{(I- P_k)(I - \P_{j}^x)(z_k - \zs)}{} \right) \\
	&\le \lh \left(\frac{\lambda_k}{(\sigma_i^x)^2 + \lambda_k} \norm{\P_{j}^x(z_k - \zs)}{} + \frac{\mu}{8\lh} \norm{z_k - \zs}{} \right)\\
	&\le  \lh \left( \frac{\cub \norm{z_k - \zs}{}}{\sig  + \cub\norm{z_k - \zs}{}} + \frac{\mu}{8\lh}\right) \norm{z_k - \zs}{} \\
	& \le \frac{\mu}{4}\norm{z_k - \zs}{} \\
	&\le \frac{\h(z_k) - h^\star}{4},
	\end{align*}}
	where the fourth inequality follows from the bound $\|z_k - z^\star\|  \le \frac{\sig \mu}{8\cub \lh}.$
	Invoking Proposition~\ref{prop:master_polyak} and Assumption~\ref{assum:nonsmoothpenalty} gives
	\begin{align*}
	\|z_{k+1}-\zs\|^{2}
	& \leq   \norm{z_k -\zs}{}^2 - \frac{\gamma}{8} \frac{(\h(z_k) - h^\star)^2}{\norm{\P^{x_k} v_k}{}^2} \\
	& \leq  \left(1 - \frac{\gamma \mu^2}{8{\lh^2}}\right)  \norm{z_k -\zs}{}^2,
	\end{align*}
	as desired.
}
\paragraph{Proof for geometrically decaying stepsize.} We prove it by induction. First, the conclusion holds for $k=0$. Now suppose that the conclusion holds for some $k\ge 0$. We consider two cases:
\begin{enumerate}[leftmargin=0.2cm]
	\item[] \textbf{Case 1.} Suppose $\norm{z_k - \zs}{} \le \frac{M}{4} q^k$. Using Lemma~\ref{lem:onestepgeometric_strong} and the same argument in~\eqref{eqn: geometric_decay_first}, we obtain $\norm{z_{k+1} - \zs}{} \le Mq^{k+1}$.
	\item[] \textbf{Case 2.} Suppose $\frac{M}{4} q^k \le \norm{z_k - \zs}{} \le {M} q^k$.  We have
	\begin{align*}
	\norm{z_k - \gamma_k P_k v_k - \zs}{}^2 &\le \norm{z_k - \zs}{}^2- \frac{3\mu \gamma_k}{2}\norm{z_k -  \zs}{}  +2\lh \gamma_k \frac{\lambda_k}{s + \lambda_k} \norm{z_k -  \zs}{} + \gamma_k^2 \lh^2\\
	&\le \norm{z_k - \zs}{}^2 - \frac{3 \gamma \mu  }{2M} \norm{z_k - \zs}{}^2 +  \frac{8 \gamma\lambda \lh}{ s M } \norm{z_k - \zs}{}^2 + \frac{16 \gamma^2\lh^2 }{M^2} \norm{z_k - \zs}{}^2\\
	&\le \left(1- \frac{\gamma \mu}{M}\right) \norm{z_k -\zs}{}^2,
	\end{align*}
	where the first inequality follows from Lemma~\ref{lem:onestepgeometric_strong}, the second inequality follows from the lower bound on $\norm{z_k - z^\star}{2}$, and the third inequality follows our bounds on $\lambda$ and $\gamma$. The rest of the proof follows from the same argument as the proof of Theorem~\ref{thm: onestepimprovement}.
\end{enumerate}
This completes the proof of Theorem~\ref{thm: onestepimprovement_exact}. \qed
\subsection{Proof of Theorem~\ref{thm: onestepimprovement_smooth}} \label{sec:proof_over_smooth}
We start by stating a couple of auxiliary lemmas.
{ \begin{lemma}\label{lem: taylorerrorgeometric_smooth}
	Suppose Assumptions~\ref{assum:mapc:smoothness} and~\ref{assum:smoothpenalty} hold, and let $x_k$ and $x_{k+1}$ be iterates generated by Algorithm~\ref{alg:LM} under Configuration~\ref{assum:constant_stepsize}. Define $z_k = \c(x_k)$ and $z_{k+1} = \c(x_{k+1})$. Then, we have
	$$
	\norm{z_{k+1} - (z_k - \gamma P_k \nabla \h(z_k))}{} \le \frac{ \lc\gamma^2 }{8 \lambda q^k} \norm{\P^{x_k}\nabla \h(z_k)}{}^2.
	$$
\end{lemma}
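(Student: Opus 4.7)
The plan is to show this result is a direct specialization of item 3 of Lemma~\ref{lem:boundontaylor} to the smooth setting together with Configuration~\ref{assum:constant_stepsize}. I would begin by observing that since $h$ is convex and (by Assumption~\ref{assum:smoothpenalty}) restrictedly smooth with $\partial h(z) = \{\nabla h(z)\}$ at every point on the image of $F$, the vector $v_k \in \partial h(F(x_k))$ chosen by Algorithm~\ref{alg:LM} must equal $\nabla h(z_k)$, so the linearization step $z_k - \gamma_k P_k v_k$ becomes $z_k - \gamma P_k \nabla h(z_k)$ under Configuration~\ref{assum:constant_stepsize}.

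Next, I would verify the Lipschitz hypothesis needed by item 3 of Lemma~\ref{lem:boundontaylor}. By Assumption~\ref{assum:mapc:smoothness}, $\nabla F$ is $\lc$-Lipschitz \emph{globally} on $\EEE$, hence a fortiori on the line segment connecting any two iterates $x_k$ and $x_{k+1}$ produced by the algorithm. Thus the third item of Lemma~\ref{lem:boundontaylor} applies without additional work.

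Finally, I would plug in the Configuration~\ref{assum:constant_stepsize} parameter choices $\gamma_k = \gamma$ and $\lambda_k = \lambda q^k$ into the bound
$$
\norm{z_{k+1} - (z_k - \gamma_k P_k v_k)}{} \le \frac{\lc}{8\lambda_k}\,\gamma_k^2\,\norm{\Pi^{x_k} v_k}{}^2
$$
supplied by Lemma~\ref{lem:boundontaylor}, together with $v_k = \nabla h(z_k)$. This yields exactly
$$
\norm{z_{k+1} - (z_k - \gamma P_k \nabla h(z_k))}{} \le \frac{\lc\,\gamma^2}{8\lambda q^k}\,\norm{\Pi^{x_k}\nabla h(z_k)}{}^2,
$$
which is the desired conclusion. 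There is essentially no obstacle here; the content of the lemma is already captured by Lemma~\ref{lem:boundontaylor}, and the only role of the present statement is to package that bound in the form most convenient for the constant-stepsize smooth analysis that follows.
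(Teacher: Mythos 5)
Your proof is correct and is essentially the paper's own argument: the paper also derives this as an immediate consequence of item 3 of Lemma~\ref{lem:boundontaylor} combined with the Configuration~\ref{assum:constant_stepsize} choices $\gamma_k = \gamma$ and $\lambda_k = \lambda q^k$ (with $v_k = \nabla h(z_k)$ in the smooth setting). Your additional remarks on the global Lipschitzness of $\nabla F$ and the identification $\partial h(z_k) = \{\nabla h(z_k)\}$ just make explicit what the paper leaves implicit.
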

\begin{proof}
	A combination of Lemma~\ref{lem:boundontaylor} and the choice of $\lambda_k$ and $\gamma_k$ in Configuration~\ref{assum:constant_stepsize} yields the desired result.
\end{proof}

\begin{lemma}\label{lem:onestep_geometric_smooth_weak}
	Suppose that Assumptions~\ref{assum: assumptiononc},~\ref{ass:weak-alignment}, and~\ref{assum:smoothpenalty} hold. Assume that we are under Configuration~\ref{assum:constant_stepsize}  and that $\gamma \le \frac{1}{8\Lhs}$. For any $z_k$ such that $\norm{z_k - \zs}{}\le \rloc\left(\frac{\mus}{16\Lhs}\right)$, we have
	\begin{align*}
	\norm{z_k - \gamma P_k \nabla \h(z_k) - \zs}{}^2 &\le \norm{z_k - \zs}{}^2 - \frac{7 \gamma}{4}\dotp{\nabla \h(z_k), z_k - \zs}  + 2\Lhs\gamma \frac{\lambda_k}{\sig\left(\tfrac{\mus}{16 \Lhs}\right)\norm{z_k -  \zs}{} + \lambda_k} \norm{z_k -  \zs}{}^2.
	\end{align*}
\end{lemma}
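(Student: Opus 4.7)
The plan is to start with the expansion
\begin{align*}
\norm{z_k - \gamma P_k \nabla h(z_k) - \zs}{}^2 = \norm{z_k - \zs}{}^2 &- 2\gamma\dotp{\nabla h(z_k), z_k - \zs} \\
&+ 2\gamma\dotp{(I-P_k)\nabla h(z_k), z_k - \zs} + \gamma^2\norm{P_k \nabla h(z_k)}{}^2,
\end{align*}
obtained by writing $P_k = I - (I-P_k)$ inside the inner product. From here, the three pieces on the right can be handled essentially independently, echoing the structure of the nonsmooth counterpart in Lemma~\ref{lem:onestepgeometric}, but with the quadratic term playing the role that $\gamma_k^2 L^2$ played there.

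First, I would fold the quadratic term into the leading linear term. By the nonexpansive bound in Lemma~\ref{lem:boundontaylor}, $\norm{P_k \nabla h(z_k)}{} \le \norm{\Pi^{x_k}\nabla h(z_k)}{}$, and then Item~\ref{item:smoothpenalty:lip}(a) of Assumption~\ref{assum:smoothpenalty} gives $\norm{\Pi^{x_k}\nabla h(z_k)}{}^2 \le 2\Lhs\bigl(h(z_k) - h^\star\bigr)$. Convexity of $h$ yields $h(z_k) - h^\star \le \dotp{\nabla h(z_k), z_k - \zs}$, so $\gamma^2\norm{P_k \nabla h(z_k)}{}^2 \le 2\gamma^2\Lhs\dotp{\nabla h(z_k), z_k - \zs}$. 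Combining with the $-2\gamma$ term and using $\gamma \le \tfrac{1}{8\Lhs}$ produces the coefficient $-2\gamma(1-\gamma\Lhs) \le -\tfrac{7\gamma}{4}$ in front of $\dotp{\nabla h(z_k), z_k - \zs}$.

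Next, I would control the cross term by applying Item~\ref{item:smoothpenalty:lip}(b) of Assumption~\ref{assum:smoothpenalty}, which gives the clean bound $|\dotp{\nabla h(z_k), (I-P_k)(z_k-\zs)}| \le \Lhs\norm{z_k-\zs}{}\,\norm{(I-P_k)(z_k-\zs)}{}$. To bound $\norm{(I-P_k)(z_k-\zs)}{}$, I invoke Assumption~\ref{ass:weak-alignment} at $\rho = \tfrac{\mus}{16\Lhs}$ (legitimate because $\norm{z_k-\zs}{} \le \rloc(\rho)$), obtaining an index $j$ with $\norm{(I-\Pi_j^{x_k})(z_k-\zs)}{} \le \rho\norm{z_k-\zs}{}$ and $(\sigma_j^{x_k})^2 \ge \sig(\rho)\norm{z_k-\zs}{}$. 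Splitting $I = \Pi_j^{x_k} + (I - \Pi_j^{x_k})$ and applying the restricted eigenvalue bound from Item~\ref{item:restricted_eigen} of Lemma~\ref{lem:boundontaylor} on the first piece, together with the operator-norm bound $\|I-P_k\|_{\mathrm{op}} \le 1$ on the second, yields
\begin{align*}
\norm{(I-P_k)(z_k-\zs)}{} \le \frac{\lambda_k}{\sig(\rho)\norm{z_k-\zs}{}+\lambda_k}\norm{z_k-\zs}{} + \rho\norm{z_k-\zs}{}.
\end{align*}
Multiplying by $2\gamma\Lhs\norm{z_k-\zs}{}$ produces the damping-type term in the target bound, plus a residual $\tfrac{\gamma\mus}{8}\norm{z_k-\zs}{}^2$ coming from $\rho = \tfrac{\mus}{16\Lhs}$.

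Finally, the residual is absorbed into the inner-product term using the aiming Lemma~\ref{lem:aiming_smooth}, which provides $\norm{z_k-\zs}{}^2 \le \tfrac{2}{\mus}\dotp{\nabla h(z_k), z_k-\zs}$, leaving enough budget inside the $-\tfrac{7\gamma}{4}$ coefficient. The main obstacle will be ensuring the constants line up correctly: the step-size slack $\gamma\Lhs \le \tfrac{1}{8}$ and the alignment slack $\rho = \tfrac{\mus}{16\Lhs}$ must be calibrated jointly so that the quadratic term consumes only part of the $-2\gamma$ budget while leaving enough to reabsorb the orthogonal alignment residual via aiming; naively splitting the budget evenly between the two sources of error would fail to deliver the stated coefficient.
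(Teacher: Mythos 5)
Your route is the same as the paper's through all the main steps: the expansion with $P_k = I - (I-P_k)$, the bound $\gamma^2\norm{P_k\nabla \h(z_k)}{}^2 \le 2\gamma^2\Lhs\dotp{\nabla\h(z_k), z_k-\zs}$ via nonexpansiveness, Item 4(a) of Assumption~\ref{assum:smoothpenalty} and convexity, the use of $\gamma\Lhs\le \tfrac18$ to reach the $-\tfrac{7\gamma}{4}$ coefficient, and the cross-term estimate via Item 4(b) together with weak alignment at $\rho=\tfrac{\mus}{16\Lhs}$ and the restricted-eigenvalue bound, yielding $\norm{(I-P_k)(z_k-\zs)}{}\le \bigl(\tfrac{\lambda_k}{\sig(\rho)\norm{z_k-\zs}{}+\lambda_k}+\rho\bigr)\norm{z_k-\zs}{}$. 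All of that is correct and mirrors the paper's argument (which cites the calculation in~\eqref{eqn:inner_prod_orthogonal}).

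The gap is in your final absorption step. After multiplying the cross-term bound by $2\gamma\Lhs\norm{z_k-\zs}{}$ you are left with the residual $\tfrac{\gamma\mus}{8}\norm{z_k-\zs}{}^2$, and absorbing it through Lemma~\ref{lem:aiming_smooth} costs an additional $\tfrac{\gamma}{4}\dotp{\nabla\h(z_k), z_k-\zs}$; since the quadratic term has already consumed the entire $\tfrac{\gamma}{4}$ slack between $-2\gamma$ and $-\tfrac{7\gamma}{4}$ (indeed $2\gamma^2\Lhs\le\tfrac{\gamma}{4}$ is tight at $\gamma=\tfrac{1}{8\Lhs}$), the resulting coefficient is $-\tfrac{3\gamma}{2}$, not the claimed $-\tfrac{7\gamma}{4}$; there is no way to "leave enough budget" under the stated hypotheses. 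For what it is worth, the paper's own proof does not attempt this absorption at all: it stops at the cross-term bound, which still carries the $\tfrac{\mus}{16\Lhs}\norm{z_k-\zs}{}^2$ contribution not present in the lemma statement, so the displayed inequality should really be read either with that residual retained or with the coefficient relaxed to $-\tfrac{3\gamma}{2}$; the downstream arguments (Cases 1 and 2 in the proof of Theorem~\ref{thm: onestepimprovement_smooth}) only ever use the weaker $-\tfrac{3\gamma}{2}$ form, so nothing breaks. But as written, your claim that the stated $-\tfrac{7\gamma}{4}$ survives the absorption is arithmetically false, and you should either keep the residual term explicitly or weaken the coefficient.
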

\begin{proof}
	By expanding the square and adding and subtracting $2\gamma \dotp{\nabla \h(z_k), z_k - \zs}$ we get
	\begin{align*}
	&\norm{z_k - \gamma P_k \nabla \h(z_k)  - \zs}{}^2\\
	 &\le  \norm{z_k - \zs}{}^2 - 2\gamma \dotp{\nabla \h(z_k), z_k - \zs} + 2\gamma \dotp{(I- P_k) \nabla \h(z_k), z_k - \zs} + \gamma^2 \norm{\P^{x_k} \nabla \h(z_k)}{}^2\\
	&\le \norm{z_k - \zs}{}^2 - \frac{7 \gamma}{4}\dotp{\nabla \h(z_k), z_k - \zs} + 2\gamma \dotp{(I- P_k) \nabla \h(z_k), z_k - \zs},
	\end{align*}
	where the first inequality follows from Lemma~\ref{lem:boundontaylor}, and the second inequality follows from Lemma~\ref{lem:aiming}, Item~\ref{item:smoothpenalty:lip} of Assumption~\ref{assum:smoothpenalty}, and $\gamma \le \frac{1}{8\beta}$.  We focus on bounding the inner product in the last term
	\begin{align*}
	|\dotp{(I- P_k) \nabla \h(z_k), z_k - \zs}| &\le \Lhs \norm{z_k - z^\star}{} \norm{(I-P_k)(z_k -\zs)}{}\\
	&\le \Lhs  \left(\frac{\lambda_k}{\sig\left(\tfrac{\mus}{16 \Lhs}\right)\norm{z_k -  \zs}{} + \lambda_k} + \frac{ \mus}{16 \Lhs}\right) \norm{z_k -  \zs}{}^2,
	\end{align*}
	where the first inequality follows from Item~\ref{item:smoothpenalty:lip} of Assumption~\ref{assum:smoothpenalty} and last inequality follows from the same calculation as~\eqref{eqn:inner_prod_orthogonal} with $v_k$ replaced by $\nabla h(z_k)$.
	This concludes the proof of the lemma.
\end{proof}
}

\paragraph{Proof for Polyak stepsize.}
{ By induction, it suffices to prove the following claim.
	\begin{claim}\label{claim:smooth_weak}
		For any $z_k = \c(x_k)$ with $\norm{z_k - \zs}{2} \le \rloc\left(\frac{ \mus}{16 \Lhs}\right)$, we have
		$$
		\norm{z_{k+1} - \zs}{}^2 \le \left(1 - \frac{\gamma \mus}{32 \Lhs}\right) \norm{z_k - \zs}{}^2.
		$$
	\end{claim}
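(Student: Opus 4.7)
The plan is to mirror the Polyak-stepsize proof of Theorem~\ref{thm: onestepimprovement}, replacing the subgradient $v_k$ by $\nabla h(z_k)$ and the sharpness/Lipschitz estimates with their smooth analogues. As before, I would start with the triangle inequality
\[
\|z_{k+1}-\zs\| \le \underbrace{\|z_{k+1}-(z_k-\gamma_k P_k \nabla h(z_k))\|}_{T_1}+\underbrace{\|(z_k-\gamma_k P_k \nabla h(z_k))-\zs\|}_{T_2},
\]
bound $T_1$ via the approximation error in Lemma~\ref{lem:boundontaylor}, and bound $T_2$ using Lemma~\ref{lem:progressfromlineaization}, which (as noted just after its statement) applies for any convex $h$ and in particular with $v_k=\nabla h(z_k)$.

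The crux is to verify the key inequality $|\langle (I-P_k)\nabla h(z_k), z_k-\zs\rangle|\le \tfrac{1}{4}(h(z_k)-h^\star)$. Apply the weak alignment Assumption~\ref{ass:weak-alignment} at $\rho=\tfrac{\mus}{16\Lhs}$ to obtain an index $j$ with $\|(I-\Pi_j^{x_k})(z_k-\zs)\|\le\tfrac{\mus}{16\Lhs}\|z_k-\zs\|$ and $(\sigma_j^{x_k})^2\ge \sig(\tfrac{\mus}{16\Lhs})\|z_k-\zs\|$. Then, using the restricted-smoothness bound from Item~\ref{item:smoothpenalty:lip} of Assumption~\ref{assum:smoothpenalty}, followed by the exact same splitting as in \eqref{eq:projected-angle}--\eqref{eqn:inner_prod_orthogonal} and the constraint $\cub\le \tfrac{\mus}{16\Lhs}\sig(\tfrac{\mus}{16\Lhs})$, one gets
\[
|\langle (I-P_k)\nabla h(z_k),z_k-\zs\rangle|\le \Lhs\!\left(\frac{\cub}{\sig(\rho)+\cub}+\frac{\mus}{16\Lhs}\right)\|z_k-\zs\|^2\le \frac{\mus}{8}\|z_k-\zs\|^2\le \frac{1}{4}(h(z_k)-h^\star),
\]
where the final inequality uses the restricted quadratic growth $h(z_k)-h^\star\ge \tfrac{\mus}{2}\|z_k-\zs\|^2$.

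Plugging this into Lemma~\ref{lem:progressfromlineaization} yields $T_2^2\le \|z_k-\zs\|^2-\tfrac{\gamma}{2}\tfrac{(h(z_k)-h^\star)^2}{\|\Pi^{x_k}\nabla h(z_k)\|^2}$. The restricted-smoothness bound $h(z_k)-h^\star\ge \tfrac{1}{2\Lhs}\|\Pi^{x_k}\nabla h(z_k)\|^2$ together with quadratic growth converts the Polyak-style ratio into a genuine contraction: $\tfrac{(h(z_k)-h^\star)^2}{\|\Pi^{x_k}\nabla h(z_k)\|^2}\ge \tfrac{h(z_k)-h^\star}{2\Lhs}\ge \tfrac{\mus}{4\Lhs}\|z_k-\zs\|^2$, so $T_2^2\le (1-\tfrac{\gamma\mus}{8\Lhs})\|z_k-\zs\|^2$. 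Finally, bound $T_1$ exactly as in \eqref{eq:boundTone} via $\lambda_k\ge \clb\|z_k-\zs\|$ and the definition of the Polyak stepsize, and expand $\|z_{k+1}-\zs\|^2\le T_1^2+T_2^2+2T_1T_2$; the assumption $\gamma\le\min\{1,\clb/\lc\}$ absorbs the $T_1^2$ and $2T_1T_2$ cross terms into the leading slack, yielding the claimed $\bigl(1-\tfrac{\gamma\mus}{32\Lhs}\bigr)$ contraction.

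The main obstacle is purely bookkeeping: tracking the constants so the sum of the $T_1^2$ and $2T_1T_2$ terms sits comfortably below half of the $\tfrac{\gamma\mus}{8\Lhs}\|z_k-\zs\|^2$ progress provided by $T_2$, so that one recovers a clean factor of $1/4$ (hence the $32$ in the denominator) rather than merely $1/2$. This is exactly the same arithmetic done in \eqref{eqn:one_step_zk_nonsmooth}, but with $h(z_k)-h^\star$ lower-bounded using quadratic growth instead of sharpness.
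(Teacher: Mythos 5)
Your argument follows the paper's route: the same triangle-inequality decomposition, the same verification of the key bound $|\langle (I-P_k)\nabla h(z_k),z_k-\zs\rangle|\le\tfrac14(h(z_k)-h^\star)$ via weak alignment at $\rho=\tfrac{\mus}{16\Lhs}$, the constraint on $\cub$, and quadratic growth; and the same final conversion $\tfrac{(h(z_k)-h^\star)^2}{\|\Pi^{x_k}\nabla h(z_k)\|^2}\ge\tfrac{\mus}{4\Lhs}\|z_k-\zs\|^2$. All of that is correct. The flaw is in the order of operations in your endgame. You convert the $T_2$ bound into a contraction, $T_2^2\le\bigl(1-\tfrac{\gamma\mus}{8\Lhs}\bigr)\|z_k-\zs\|^2$, \emph{before} cancelling the error terms, and then claim that $\gamma\le\min\{1,\clb/\lc\}$ lets you absorb $T_1^2+2T_1T_2$ into the slack $\tfrac{3\gamma\mus}{32\Lhs}\|z_k-\zs\|^2$. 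That absorption fails in general: with the Polyak stepsize and Claim~\ref{claim:aligned-projected-subgradients}, the best available bounds are $2T_1T_2\le\tfrac{\gamma^2\lc}{4\clb}\cdot\tfrac{(h(z_k)-h^\star)^2}{\|\Pi^{x_k}\nabla h(z_k)\|^2}\le\tfrac{\gamma}{4}\cdot\tfrac{(h(z_k)-h^\star)^2}{\|\Pi^{x_k}\nabla h(z_k)\|^2}$, and the ratio $\tfrac{(h(z_k)-h^\star)^2}{\|\Pi^{x_k}\nabla h(z_k)\|^2}$ carries no factor of $\mus/\Lhs$; it is generically of order $\|z_k-\zs\|^2$ (already for quadratics it equals roughly $\tfrac14\|z_k-\zs\|^2$). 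So the error terms are of order $\gamma\|z_k-\zs\|^2$, which cannot sit below $\tfrac{3\gamma\mus}{32\Lhs}\|z_k-\zs\|^2$ once the condition number $\Lhs/\mus$ exceeds a small constant, unless one additionally imposes $\gamma\lesssim\mus/\Lhs$ --- a restriction the claim does not make.

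The repair is exactly what your closing sentence gestures at and what the paper does via Proposition~\ref{prop:master_polyak}: keep \emph{both} the progress term and the error terms in the ratio form, so that $T_1^2$ and $2T_1T_2$ are cancelled against $-\tfrac{\gamma}{2}\tfrac{(h(z_k)-h^\star)^2}{\|\Pi^{x_k}\nabla h(z_k)\|^2}$ (yielding the net $-\tfrac{\gamma}{8}$ of that ratio, as in \eqref{eqn:one_step_zk_nonsmooth}, using $\gamma\le\min\{1,\clb/\lc\}$ and Claim~\ref{claim:aligned-projected-subgradients}), and only \emph{then} invoke $h(z_k)-h^\star\ge\tfrac{\mus}{2}\|z_k-\zs\|^2$ together with $h(z_k)-h^\star\ge\tfrac{1}{2\Lhs}\|\Pi^{x_k}\nabla h(z_k)\|^2$ to convert the net progress into the claimed $\bigl(1-\tfrac{\gamma\mus}{32\Lhs}\bigr)$ factor. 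With that reordering your proof coincides with the paper's.
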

	Let $j$ be the index provided by Assumption~\ref{ass:weak-alignment} when applied to $\rho = \frac{ \mus}{16 \Lhs}$ and $z = z_{k} = \c(x_{k})$, i.e.,
	\begin{equation}
	\norm{(I- \P_{j}^{x_{k}})(z_k - \zs)}{} \le  \frac{ \mus}{16 \Lhs}\norm{z_k - \zs}{} \quad \text{and} \quad   (\sigma_{i}^x)^2 \ge \sig\left(\tfrac{\mus}{16 \Lhs}\right) \norm{z_k - \zs}{}.
	\end{equation}
	Thus, we have
	\begin{align}
        \label{eqn:smooth_inner_prod_orthognal}
	\begin{split}
	\left|\dotp{(I-P_k)\nabla h(z_k), z_k -  \zs}\right|
	&\le  \Lhs \left( \frac{\cub }{\sig\left(\tfrac{\mus}{16 \Lhs}\right)  + \cub} + \frac{ \mus}{16 \Lhs}\right) \norm{z_k - \zs}{}^2 \\
    & \le \Lhs \frac{2\mus}{16\Lhs}\norm{z_k - \zs}{}^2 \\
&\le \frac{1}{4}\left(\h(z_k) - h^\star\right),
	\end{split}
	\end{align}
	where the first inequality follows from the same calculation as~\eqref{eqn:inner_prod_orthogonal} and the second inequality is due to $\cub \leq \frac{\mus}{16\Lhs} \sig(\frac{\mus}{16\Lhs} )$. Applying Proposition~\ref{prop:master_polyak} and  Assumption~\ref{assum:smoothpenalty}, we have
	\begin{align*}
	\norm{z_{k+1}-\zs}{}^2 &\le \norm{z_k -\zs}{}^2  - \frac{\gamma}{8} \frac{(\h(z_k) - h^\star)^2}{\norm{\P^{x_k} \nabla \h(z_k)}{}^2}\\
    & \le   \norm{z_k -\zs}{}^2  - \frac{\gamma}{8} \frac{h(z_k) - h^\star}{\|z_k - z^\star\|^2}\frac{h(z_k) - h^\star }{\norm{\P^{x_k} \nabla \h(z_k)}{}^2} \|z_k - z^\star\|^2\\
	& \le \left(1 - \frac{  \gamma \mus }{32 \Lhs}\right) 
    \norm{z_k -\zs}{}^2,
	\end{align*}
	concluding the proof for the Polyak stepsize.
}
\paragraph{Proof for geometrically decaying stepsize.}
We prove the rate by induction. Based on our assumption, the conclusion holds for $k=0$. Now suppose that the conclusion holds for some $k \ge 0$. We consider two cases:
\begin{enumerate}[leftmargin=0.2cm]
	\item[] \textbf{Case 1.} $\norm{z_k - \zs}{} \le \frac{M}{4}q^{k}.$ We have
	\begin{align*}
	\norm{z_k - \gamma P_k \nabla h(z_k) - \zs}{}^2 &\le \norm{z_k -\zs}{}^2 +2\Lhs \gamma \frac{\lambda_k}{\sig\left(\frac{\mus}{16 \Lhs}\right)\norm{z_k -  \zs}{}+ \lambda_k}\norm{z_k -\zs}{}^2\\
	&\le \norm{z_k - \zs}{}^2 +2\Lhs \gamma \norm{z_k - \zs}{}^2\\
	&\le \left(\frac{M^2}{16} + \frac{\Lhs \gamma M^2}{8} \right) q^{2k}\\
	&\le \frac{M^2}{4} q^{2k+2},
	\end{align*}
	where the first inequality follows from Lemma~\ref{lem:onestep_geometric_smooth_weak}, the second inequality follows from $$\frac{\lambda_k}{\sig\left(\frac{\mus}{16 \Lhs}\right)\norm{z_k -  \zs}{} + \lambda_k} \le 1,$$ the third inequality follows from $\norm{z_k - \zs}{} \le \frac{M}{4}q^k$, and the last inequality follows from $q\ge \frac{1}{\sqrt{2}}$ and our assumption on $\gamma$. As a result, $\norm{z_k - \gamma_k P_k \nabla h(z_k) - \zs}{} \le \frac{M}{2}q^{k+1}$. Moreover, by Lemma~\ref{lem: taylorerrorgeometric_smooth}, Item~\ref{item:smoothpenalty:lip} of Assumption~\ref{assum:smoothpenalty} , and our bound on $\gamma$, we derive
	\begin{align*}
	\norm{z_{k+1} - \zs}{} &\le \norm{z_k - \gamma P_k  \nabla h(z_k)- \zs}{} + \frac{ \lc\gamma^2 }{8 \lambda q^k} \norm{\P^{x_k} \nabla \h(z_k)}{}^2\\
	&\le \frac{M}{2}q^{k+1} + \frac{\Lhs^2\lc \gamma^2 M^2}{128 \lambda} q^k\\
	&\le  Mq^{k+1}.
	\end{align*}
    
	\item[] \textbf{Case 2.} $\frac{M}{4}q^k \le \norm{z_k - \zs}{} \le M q^k.$ We have
	\begin{align*}
	\norm{z_k - \gamma P_k \nabla h(z_k)- \zs}{}^2 &\le \norm{z_k - \zs}{}^2  - \frac{7 \gamma}{4}\dotp{\nabla \h(z_k), z_k - \zs}+ 2\Lhs \gamma \frac{\lambda q^k \norm{z_k - \zs}{}^2}{ \sig\left(\tfrac{\mus}{16 \Lhs}\right)\norm{z_k -  \zs}{} + \lambda q^k} \\
	&\le \norm{z_k - \zs}{}^2 - \frac{7 \gamma}{4}\dotp{\nabla \h(z_k), z_k - \zs} +  2\Lhs \gamma \frac{4\lambda}{ \sig\left(\tfrac{\mus}{16 \Lhs}\right) M } \norm{z_k - \zs}{}^2 \\
	&\le \norm{z_k - \zs}{}^2 - \frac{3 \gamma}{2}\dotp{\nabla \h(z_k), z_k - \zs},
	\end{align*}
	where the first inequality follows from Lemma~\ref{lem:onestep_geometric_smooth_weak}, the second inequality follows from the assumed bound on  $\norm{z_k - \zs}{}$,  and the last inequality follows from the bound on $\lambda$ and Lemma~\ref{lem:aiming_smooth}.
	As a result of Lemma~\ref{lem: taylorerrorgeometric_smooth} and triangle inequality, we have
\begin{align*}
	\norm{z_{k+1} - \zs}{}^2 &\le \left(\norm{z_k - \gamma P_k \nabla h(z_k) - \zs}{} + \frac{ \lc\gamma^2 }{8 \lambda q^k} \norm{\P^{x_k} \nabla \h(z_k)}{}^2 \right)^2\\
	&\le \left(\sqrt{\norm{z_k - \zs}{}^2 - \frac{3 \gamma}{2}\dotp{\nabla \h(z_k), z_k - \zs}} + \frac{\lc \gamma^2}{8 \lambda q^k} \norm{\P^{x_k}  \nabla \h(z_k)}{}^2 \right)^2\\
	&= \norm{z_k - \zs}{}^2 - \frac{3 \gamma}{2}\dotp{\nabla \h(z_k), z_k - \zs}  + \frac{\lc^2 \gamma^4}{64 \lambda^2 q^{2k}} \norm{\P^{x_k} \nabla \h(z_k)}{}^4   \\ 
    &\qquad +\frac{\lc \gamma^2}{4 \lambda q^k} \norm{z_k -\zs}{2}  \norm{\P^{x_k}  \nabla \h(z_k)}{}^2.
	\end{align*}
	Note that by  Lemma~\ref{lem:aiming} and Item~\ref{item:smoothpenalty:lip} of Assumption~\ref{assum:smoothpenalty}, we have 
	$$\norm{\P^{x_k}  \nabla \h(z_k)}{}^2 \le 2\Lhs \dotp{\nabla \h(z_k), z_k - \zs}.$$ 
	Combining with the upper bound on $\norm{z_k - \zs}{}$ and our assumption on $\gamma$, we have
	\begin{align*}
	\norm{z_{k+1} - \zs}{}^2 &\le  \norm{z_k - \zs}{}^2 - \gamma \dotp{\nabla \h(z_k), z_k - \zs}\\
	&\le \left(1- \frac{\gamma \mus}{2} \right)\norm{z_k - \zs}{}^2\\
	&\le M^2q^{2k+2}.
	\end{align*}
\end{enumerate}
The induction is complete, and so is the proof of Theorem~\ref{thm: onestepimprovement_smooth}. \qed
\subsection{Proof of Theorem~\ref{thm: onestepimprovement_smooth_exact}}\label{sec:onestepimprovement_smooth_exact_proof}
We start with an auxiliary result. 
\begin{lemma}\label{lem:onestep_geometric_smooth_strong}
	Suppose that Assumptions~\ref{assum: assumptiononc},~\ref{ass:strong-alignment}, and~\ref{assum:smoothpenalty} hold. Suppose that we are under Configuration~\ref{assum:constant_stepsize} and that $\gamma \le \frac{1}{8\Lhs}$. For any $z_k$ such that $\norm{z_k - \zs}{}\le \rloc\left(\frac{\mus}{16\Lhs}\right)$, we have
	\begin{align*}
	\norm{z_k - \gamma P_k \nabla \h(z_k) - \zs}{}^2 &\le \norm{z_k - \zs}{}^2 - \frac{7 \gamma}{4}\dotp{\nabla \h(z_k), z_k - \zs}  + 2\Lhs\gamma \frac{\lambda_k}{\sig+ \lambda_k} \norm{z_k -  \zs}{}^2.
	\end{align*}
\end{lemma}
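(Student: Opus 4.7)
The plan is to mirror the proof of Lemma~\ref{lem:onestep_geometric_smooth_weak} almost verbatim, with the single substantive change that the strong-alignment bound $(\sigma_j^{x_k})^2 \geq \sig$ replaces the weak-alignment bound $(\sigma_j^{x_k})^2 \geq \sig(\rho)\|z_k - \zs\|$. The author indeed signals this by saying ``essentially the same,'' so the main task is to track where the two assumptions diverge.

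First, I would expand the square and add-subtract $2\gamma\dotp{\nabla h(z_k), z_k - \zs}$ to obtain
\begin{align*}
\|z_k - \gamma P_k \nabla h(z_k) - \zs\|^2
&= \|z_k - \zs\|^2 - 2\gamma\dotp{\nabla h(z_k), z_k-\zs} \\
&\quad + 2\gamma\dotp{(I - P_k)\nabla h(z_k), z_k - \zs} + \gamma^2 \|P_k \nabla h(z_k)\|^2 .
\end{align*}
Using Lemma~\ref{lem:boundontaylor} (nonexpansiveness) I would bound $\|P_k \nabla h(z_k)\|^2 \leq \|\Pi^{x_k}\nabla h(z_k)\|^2$, then invoke Item~\ref{item:smoothpenalty:lip} of Assumption~\ref{assum:smoothpenalty} to get $\|\Pi^{x_k}\nabla h(z_k)\|^2 \leq 2\Lhs(h(z_k) - h^\star)$, then the aiming inequality (Lemma~\ref{lem:aiming_smooth}) to upgrade this to $2\Lhs\dotp{\nabla h(z_k), z_k - \zs}$. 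With $\gamma \leq 1/(8\Lhs)$, the quadratic term is absorbed into $\tfrac{\gamma}{4}\dotp{\nabla h(z_k), z_k - \zs}$, turning the $-2\gamma$ coefficient into $-7\gamma/4$. This is identical to the weak-alignment step and uses nothing about alignment.

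The only place where strong alignment enters is in bounding $2\gamma\dotp{(I - P_k)\nabla h(z_k), z_k - \zs}$. I would apply Item~\ref{item:nonsmoothpenalty:lip_b} of Assumption~\ref{assum:smoothpenalty} to get the upper bound $2\gamma\Lhs\|z_k - \zs\|\cdot\|(I-P_k)(z_k - \zs)\|$, then apply Assumption~\ref{ass:strong-alignment} with $\rho = \mus/(16\Lhs)$: there exists an index $j$ with $\|(I - \Pi_j^{x_k})(z_k - \zs)\| \leq \tfrac{\mus}{16\Lhs}\|z_k - \zs\|$ and $(\sigma_j^{x_k})^2 \geq \sig$ (a constant, independent of $\|z_k-\zs\|$). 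Splitting $(I - P_k)(z_k - \zs)$ via the triangle inequality into its $\Pi_j^{x_k}$ component and its complement, using Item~\ref{item:restricted_eigen} of Lemma~\ref{lem:boundontaylor} on the first term to bound its norm by $\tfrac{\lambda_k}{(\sigma_j^{x_k})^2 + \lambda_k}\|z_k-\zs\| \leq \tfrac{\lambda_k}{\sig + \lambda_k}\|z_k-\zs\|$, and $\|I - P_k\|_{\mathrm{op}} \leq 1$ on the second term yields
\[
\|(I - P_k)(z_k - \zs)\| \leq \left(\frac{\lambda_k}{\sig + \lambda_k} + \frac{\mus}{16\Lhs}\right)\|z_k - \zs\|.
\]

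Putting these pieces together gives
\[
2\gamma\dotp{(I - P_k)\nabla h(z_k), z_k - \zs} \leq 2\Lhs\gamma\frac{\lambda_k}{\sig + \lambda_k}\|z_k - \zs\|^2 + \frac{\gamma \mus}{8}\|z_k-\zs\|^2,
\]
and the stray $\tfrac{\gamma\mus}{8}\|z_k-\zs\|^2$ is absorbed into $-\tfrac{7\gamma}{4}\dotp{\nabla h(z_k), z_k-\zs}$ via restricted quadratic growth $\dotp{\nabla h(z_k), z_k - \zs} \geq \tfrac{\mus}{2}\|z_k-\zs\|^2$ (Lemma~\ref{lem:aiming_smooth}), leaving a coefficient of at most $-7\gamma/4$ as in the statement. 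No part of this argument is a real obstacle; strong alignment simplifies the bound by replacing $\sig(\rho)\|z_k-\zs\|$ with the constant $\sig$, which is precisely the only difference between the two proofs.
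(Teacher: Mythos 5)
Your proposal is correct and takes essentially the same route as the paper: the paper's proof of this lemma is a one-line deferral to the proof of Lemma~\ref{lem:onestep_geometric_smooth_weak}, namely expand the square, absorb $\gamma^2\norm{\Pi^{x_k}\nabla \h(z_k)}{}^2$ into $\tfrac{\gamma}{4}\dotp{\nabla \h(z_k), z_k-\zs}$ via part (a) of Item~\ref{item:smoothpenalty:lip} of Assumption~\ref{assum:smoothpenalty} and $\gamma\le\tfrac{1}{8\Lhs}$, and bound the cross term via part (b) of that item, strong alignment with $\rho=\tfrac{\mus}{16\Lhs}$, and the restricted-eigenvalue bound of Lemma~\ref{lem:boundontaylor}, with the constant $\sig$ replacing $\sig(\rho)\norm{z_k-\zs}{}$ --- exactly your steps.

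One bookkeeping caveat on your final sentence: absorbing the stray $\tfrac{\gamma\mus}{8}\norm{z_k-\zs}{}^2$ through $\dotp{\nabla \h(z_k), z_k-\zs}\ge\tfrac{\mus}{2}\norm{z_k-\zs}{}^2$ costs $\tfrac{\gamma}{4}\dotp{\nabla \h(z_k), z_k-\zs}$ and therefore leaves a coefficient of $-\tfrac{3\gamma}{2}$, not ``at most $-7\gamma/4$'' as you claim; strictly, your argument (and the paper's) proves the bound either with the extra $\tfrac{\gamma\mus}{8}\norm{z_k-\zs}{}^2$ term retained or with $-\tfrac{3\gamma}{2}$ in place of $-\tfrac{7\gamma}{4}$. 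This is not a defect of your proposal relative to the paper: the proof of Lemma~\ref{lem:onestep_geometric_smooth_weak} carries the same $\tfrac{\mus}{16\Lhs}$ slack in its inner-product bound and silently drops it in the statement, and every downstream use immediately relaxes $-\tfrac{7\gamma}{4}$ to $-\tfrac{3\gamma}{2}$, so the discrepancy is harmless to the subsequent theorems.
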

\begin{proof}
	The proof follows the same argument as the proof of Lemma~\ref{lem:onestep_geometric_smooth_weak}.
\end{proof}
{ \paragraph{Proof for Polyak stepsize.} By induction, it suffices to prove the following claim.
	\begin{claim}\label{claim:smooth_strong}
		For any $z_k = \c(x_k)$ with $\norm{z_k - \zs}{} \le \left\{r\left(\frac{ \mus}{16 \Lhs}\right), \frac{\sig \mus}{16 \cub \Lhs}\right\}$, we have
		$$
		\norm{z_{k+1} - \zs}{}^2 \le \left(1 - \frac{\gamma \mus}{32 \Lhs}\right) \norm{z_k - \zs}{}^2.
		$$
	\end{claim}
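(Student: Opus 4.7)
The plan is to mirror the argument used for Claim~\ref{claim:smooth_weak} (the weak-alignment/smooth Polyak case), replacing the invocation of Assumption~\ref{ass:weak-alignment} with Assumption~\ref{ass:strong-alignment} and exploiting the stronger bound on $z_0$ in place of the bound on $\cub$. Concretely, I would first let $j$ be the index supplied by Assumption~\ref{ass:strong-alignment} applied to $\rho = \frac{\mus}{16\Lhs}$ and $z = z_k$, so that $\|(I-\Pi_j^{x_k})(z_k-\zs)\| \le \frac{\mus}{16\Lhs}\|z_k-\zs\|$ and, crucially, $(\sigma_j^{x_k})^2 \ge \sig$ (a constant, not depending on $\|z_k-\zs\|$).

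Next, I would bound the ``error'' inner product exactly as in \eqref{eqn:smooth_inner_prod_orthognal}: by Assumption~\ref{assum:smoothpenalty}(\ref{item:smoothpenalty:lip}) and the triangle inequality,
\begin{align*}
|\langle (I-P_k)\nabla h(z_k), z_k-\zs\rangle|
&\le \Lhs\|z_k-\zs\|\,\|(I-P_k)(z_k-\zs)\| \\
&\le \Lhs\left(\tfrac{\lambda_k}{(\sigma_j^{x_k})^2+\lambda_k}\|\Pi_j^{x_k}(z_k-\zs)\| + \tfrac{\mus}{16\Lhs}\|z_k-\zs\|\right)\|z_k-\zs\|.
\end{align*}
Using $\lambda_k \le \cub\|z_k-\zs\|$, $(\sigma_j^{x_k})^2 \ge \sig$, and the hypothesis $\|z_k-\zs\| \le \frac{\sig\mus}{16\cub\Lhs}$, the first term is bounded by $\frac{\mus}{16\Lhs}\|z_k-\zs\|^2$, yielding
\[
|\langle (I-P_k)\nabla h(z_k), z_k-\zs\rangle| \le \tfrac{\mus}{8}\|z_k-\zs\|^2 \le \tfrac{1}{4}(h(z_k)-h^\star),
\]
where the last step uses quadratic growth (Assumption~\ref{assum:smoothpenalty}(\ref{item:smoothpenalty:sharpness})). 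This verifies the hypothesis \eqref{eqn:key_zk_inner_prod_bound} of Proposition~\ref{prop:master_polyak}.

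Finally, Proposition~\ref{prop:master_polyak} (applied with $v_k = \nabla h(z_k)$) gives $\|z_{k+1}-\zs\|^2 \le \|z_k-\zs\|^2 - \tfrac{\gamma}{8}\tfrac{(h(z_k)-h^\star)^2}{\|\Pi^{x_k}\nabla h(z_k)\|^2}$. Combining the smoothness bound $\|\Pi^{x_k}\nabla h(z_k)\|^2 \le 2\Lhs(h(z_k)-h^\star)$ from Assumption~\ref{assum:smoothpenalty}(\ref{item:smoothpenalty:lip}) with quadratic growth $h(z_k)-h^\star \ge \tfrac{\mus}{2}\|z_k-\zs\|^2$ delivers $\tfrac{(h(z_k)-h^\star)^2}{\|\Pi^{x_k}\nabla h(z_k)\|^2} \ge \tfrac{\mus}{4\Lhs}\|z_k-\zs\|^2$, and the claimed contraction by $1-\tfrac{\gamma\mus}{32\Lhs}$ follows. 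The main obstacle is only bookkeeping: ensuring that the ``restricted Lipschitzness'' bound from Assumption~\ref{assum:smoothpenalty}(\ref{item:smoothpenalty:lip})(b) is applied correctly (with $\nabla h$ in place of $v_k$) and that the hypothesis $\|z_0-\zs\| \le \rloc(\mus/(16\Lhs))$ ensures the induction can proceed, since the contraction automatically keeps subsequent iterates within the required neighborhood.
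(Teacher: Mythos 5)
Your proposal is correct and follows essentially the same route as the paper's own argument: invoke strong alignment at $\rho=\tfrac{\mus}{16\Lhs}$, use the restricted-eigenvalue bound together with $\lambda_k\le \cub\|z_k-\zs\|$, $(\sigma_j^{x_k})^2\ge \sig$, and $\|z_k-\zs\|\le \tfrac{\sig\mus}{16\cub\Lhs}$ to verify $|\langle (I-P_k)\nabla h(z_k), z_k-\zs\rangle|\le \tfrac14(h(z_k)-h^\star)$, then apply Proposition~\ref{prop:master_polyak} with the smoothness and quadratic-growth bounds to get the $1-\tfrac{\gamma\mus}{32\Lhs}$ contraction. The only cosmetic difference is that you drop the $+\lambda_k$ in the denominator and bound the damping factor by $\cub\|z_k-\zs\|/\sig$ directly, whereas the paper keeps the exact ratio and uses $\tfrac{1}{1+x^{-1}}+x\le 2x$; both land on the same $\tfrac{\mus}{8}\|z_k-\zs\|^2$ estimate.
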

	Let  $j$ be the index provided by Assumption~\ref{ass:strong-alignment} when applied to $\rho = \frac{ \mus}{16 \Lhs}$ and $z = z_k = F(x_k)$, i.e.,
	$$\norm{(I- \P_{j}^{x})(z_k - \zs)}{} \le  \frac{ \mus}{16 \Lhs}\norm{z_k - \zs}{} \quad \text{and} \quad   (\sigma_{j}^x)^2 \ge \sig .$$
	Following the similar calculation as~\eqref{eqn:inner_prod_orthogonal} and~\eqref{eqn:smooth_inner_prod_orthognal}, we have
	\begin{align*}
	|\dotp{(I - P_k) \nabla h(z_k), z_k -z^\star}| &\le  \Lhs \norm{z_k - z^\star}{}\left(\norm{(I- P_k)\P_{j}^x(z_k - \zs)}{} + \norm{(I- P_k)(I - \P_{j}^x)(z_k - \zs)}{} \right) \\
	&\le \Lhs \norm{z_k - z^\star}{} \left(\frac{\lambda_k}{(\sigma_j^x)^2 + \lambda_k} \norm{\P_{j}^x(z_k - \zs)}{} + \frac{ \mus}{16 \Lhs} \norm{z_k - \zs}{} \right)\\
	&\le  \Lhs \left( \frac{\cub \norm{z_k - \zs}{}}{\sig  + \cub\norm{z_k - \zs}{}} + \frac{ \mus}{16 \Lhs}\right) \norm{z_k - \zs}{}^2 \\
    &\le  \Lhs \left( \frac{1}{\frac{16\Lhs}{\mus}+1} + \frac{ \mus}{16 \Lhs}\right) \norm{z_k - \zs}{}^2 \\
	& \le \frac{\mus}{8}\norm{z_k - \zs}{}^2 \\
	&\le \frac{\h(z_k) - h^\star}{4},
	\end{align*}
	where the fourth inequality follows from the bound $\|z_k - z^\star\|  \le \frac{\sig \mus}{16\cub \Lhs}$ and for $\frac{1}{1+x^{-1}} + x \leq 2x$ for any $x\geq0$. Applying Proposition~\ref{prop:master_polyak} and  Assumption~\ref{assum:smoothpenalty}, we get
	\begin{align*}
	\norm{z_{k+1}-\zs}{}^2 &\le \norm{z_k -\zs}{}^2  - \frac{\gamma}{8} \frac{(\h(z_k) - h^\star)^2}{\norm{(U^{x_k})^\top \nabla \h(z_k)}{}^2}\\
	& \le \left(1 - \frac{\gamma \mus}{32 \Lhs}\right) \norm{z_k -\zs}{}^2,
	\end{align*}
	proving the result for the Polyak stepsize.
} 

\paragraph{Proof for geometrically decaying stepsize.} We prove the theorem by induction. Based on our assumption, the conclusion holds for $k=0$. Now suppose that the conclusion holds for some $k \ge 0$. We consider two cases:
\begin{enumerate}[leftmargin=0.2cm]
	\item[] \textbf{Case 1.}  $\norm{z_k - \zs}{} \le \frac{M}{4}q^{k}.$ We have
	\begin{align*}
	\norm{z_k - \gamma P_k \nabla h(z_k)- \zs}{}^2 &\le \norm{z_k -\zs}{}^2 +2\Lhs \gamma \frac{\lambda_k}{\sig+ \lambda_k}\norm{z_k -\zs}{}^2\\
	&\le \norm{z_k - \zs}{}^2 +2\Lhs \gamma \norm{z_k - \zs}{}^2\\
	&\le \left(\frac{M^2}{16} + \frac{\Lhs \gamma M^2}{8} \right) q^{2k}\\
	&\le \frac{M^2}{4} q^{2k+2},
	\end{align*}
	where the first inequality follows from Lemma~\ref{lem:onestep_geometric_smooth_strong}, the second inequality follows from $\frac{\lambda_k}{\sig + \lambda_k} \le 1$, the third inequality follows from the assumption that $\norm{z_k - \zs}{} \le \frac{M}{4}q^k$, and the last inequality follows from $q\ge \frac{1}{\sqrt{2}}$ and the bound on $\gamma$. As a result, $\norm{z_k - \gamma_k P_k \nabla h(z_k) - \zs}{} \le \frac{M}{2}q^{k+1}$. Moreover, by Lemma~\ref{lem: taylorerrorgeometric_smooth}, Item~\ref{item:smoothpenalty:lip} of Assumption~\ref{assum:smoothpenalty} , and our assumption on $\gamma$, we have
	\begin{align*}
	\norm{z_{k+1} - \zs}{} &\le \norm{z_k - \gamma P_k \nabla h(z_k) - \zs}{} + \frac{ \lc\gamma^2 }{8 \lambda q^k} \norm{\P^{x_k} \nabla \h(z_k)}{}^2\\
	&\le \frac{M}{2}q^{k+1} + \frac{\Lhs^2\lc \gamma^2 M^2}{128 \lambda} q^k\\
	&\le  Mq^{k+1}.
	\end{align*}

	\item[] \textbf{Case 2.}  $\frac{M}{4}q^k \le \norm{z_k - \zs}{2} \le M q^k.$ We have
	\begin{align*}
	\norm{z_k - \gamma P_k \nabla h(z_k)- \zs}{}^2 &\le \norm{z_k - \zs}{}^2  - \frac{7 \gamma}{4}\dotp{\nabla \h(z_k), z_k - \zs}+  \frac{2\Lhs \gamma\lambda  }{ \sig }\norm{z_k - \zs}{}^2 \\
	&\le \norm{z_k - \zs}{}^2 - \frac{3 \gamma}{2}\dotp{\nabla \h(z_k), z_k - \zs},
	\end{align*}
	where the first inequality follows from Lemma~\ref{lem:onestep_geometric_smooth_strong} and $q\le 1$,  and the last inequality follows from $\lambda \leq \frac{\sig \mus}{16\Lhs}$ and Lemma~\ref{lem:aiming_smooth}.
	The rest of the proof follows the same as the proof of Theorem~\ref{thm: onestepimprovement_smooth}.
\end{enumerate}
The induction is complete, and so is the proof of Theorem~\ref{thm: onestepimprovement_smooth_exact}. \qed

\subsection{Additional results and proofs from Section~\ref{sec:local-guarantees}}\label{app:local-regularity}
In this section, we prove Theorem~\ref{thm:local-onestepimprovement} and state additional local guarantees we omitted in Section~\ref{sec:local-guarantees}.
\subsubsection{Proof of Theorem~\ref{thm:local-onestepimprovement}}
The proofs of our local guarantees rely on the following two auxiliary results.
\begin{lemma}\label{lem:one_step_x_bound}
	Let $x_k$ and $x_{k+1}$ be iterates of Algorithm~\ref{alg:LM} under Configuration~\ref{assum: dampingparameter} and write $z_k = \c(x_k)$ and $z_{k+1} = \c(x_{k+1})$. Suppose that $\left|\dotp{(I-P_k) v_k, z_k-\zs}\right| \leq \frac{1}{4}(h(z_k) - h^\star)$ holds. 
    Then, we have
	\begin{align*}
	\norm{x_k- x_{k+1}} \le \frac{2\gamma}{3} \frac{\|z_k - z^\star\|^{1/2}}{\sqrt{\clb}}.
	\end{align*}
\end{lemma}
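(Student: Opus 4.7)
The plan is to bound $\|x_{k+1} - x_k\|$ by chaining four elementary estimates: a spectral bound on the preconditioned operator, the Polyak stepsize formula, Claim~\ref{claim:aligned-projected-subgradients}, and the lower bound on $\lambda_k$ in Configuration~\ref{assum: dampingparameter}.

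First I would write the economy SVD $\nabla \c(x_k) = U \Sigma V^\top$ (as in the proof of Lemma~\ref{lem:boundontaylor}). The update gives
\[
\|x_{k+1}-x_k\| = \gamma_k \, \bigl\|V (\Sigma^\top \Sigma + \lambda_k I)^{-1} \Sigma^\top U^\top v_k\bigr\|.
\]
The nonzero singular values of the operator $V (\Sigma^\top\Sigma + \lambda_k I)^{-1}\Sigma^\top U^\top$ are $\tfrac{\sigma_i}{\sigma_i^2 + \lambda_k}$, each of which is bounded by $\tfrac{1}{2\sqrt{\lambda_k}}$ via Young's inequality $\sigma_i^2 + \lambda_k \geq 2\sigma_i\sqrt{\lambda_k}$. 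Since $U^\top v_k = U^\top \Pi^{x_k} v_k$ and $U U^\top = \Pi^{x_k}$, this yields the operator-norm estimate
\[
\|x_{k+1}-x_k\| \le \frac{\gamma_k}{2\sqrt{\lambda_k}} \, \|\Pi^{x_k} v_k\|.
\]

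Next, I would substitute the Polyak choice $\gamma_k = \gamma\,(h(z_k)-h^\star)/\|\Pi^{x_k} v_k\|^2$ from Configuration~\ref{assum: dampingparameter} to obtain
\[
\|x_{k+1}-x_k\| \le \frac{\gamma\,(h(z_k)-h^\star)}{2\sqrt{\lambda_k}\,\|\Pi^{x_k} v_k\|}.
\]
The hypothesis $|\langle (I-P_k)v_k, z_k - z^\star\rangle| \le \tfrac{1}{4}(h(z_k)-h^\star)$ allows me to invoke Claim~\ref{claim:aligned-projected-subgradients}, which gives $h(z_k)-h^\star \le \tfrac{4}{3}\|\Pi^{x_k} v_k\|\,\|z_k-z^\star\|$. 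Plugging this in cancels $\|\Pi^{x_k} v_k\|$ and produces
\[
\|x_{k+1}-x_k\| \le \frac{2\gamma}{3}\,\frac{\|z_k-z^\star\|}{\sqrt{\lambda_k}}.
\]

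Finally, the damping lower bound $\lambda_k \ge \clb \|z_k-z^\star\|$ from Configuration~\ref{assum: dampingparameter} implies $\sqrt{\lambda_k} \ge \sqrt{\clb}\,\|z_k-z^\star\|^{1/2}$, so dividing through delivers the claimed bound $\|x_{k+1}-x_k\| \le \tfrac{2\gamma}{3}\,\|z_k-z^\star\|^{1/2}/\sqrt{\clb}$. There is no real obstacle here; the argument is purely bookkeeping, and the only step requiring a bit of care is verifying that all quantities (stepsize, spectral bound, aiming inequality, damping lower bound) conspire to produce the exponent $1/2$ on $\|z_k-z^\star\|$ rather than $1$.
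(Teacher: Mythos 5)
Your proposal is correct and follows essentially the same route as the paper: the spectral estimate $\|(\nabla \c(x_k)^\top\nabla \c(x_k)+\lambda_k I)^{-1}\nabla \c(x_k)^\top v_k\|\le \tfrac{1}{2\sqrt{\lambda_k}}\|\Pi^{x_k}v_k\|$ (which the paper simply cites from the proof of Lemma~\ref{lem:boundontaylor} rather than rederiving), the Polyak stepsize, Claim~\ref{claim:aligned-projected-subgradients}, and the lower bound $\lambda_k\ge\clb\|z_k-\zs\|$, chained in the same order. No gaps.
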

\begin{proof}
Recall from Claim~\ref{claim:aligned-projected-subgradients} that
 \begin{align}\begin{split} \label{eq:wannabe-L}
        \frac{3}{4} (\h(z_k) - \h^\star) &\leq \|\Pi^{x_k} v_k\|\|z_k - \zs\|
        \end{split}.
    \end{align}
	By the definition of Algorithm~\ref{alg:LM}, we have
	\begin{align*}
	\|{x_k - x_{k+1}}\| &= \gamma_k \left\|(\nabla F(x_k)^\top \nabla F(x_k) + \lambda_k I)^{-1} \nabla F(x_k)^\top {v_k}\right\|\\
	&\le \frac{\gamma (h(z_k) - h^\star)}{2\sqrt{\lambda_k} \|\Pi^{x_k}v_k\|}\\
	&\leq \frac{2\gamma}{3} \frac{\norm{z_k - z^\star}{2}^{1/2}}{\sqrt{\clb}},
	\end{align*}
	where the second line follows from \eqref{eqn:nablaF_transpose_bound} and the last line follows from \eqref{eq:wannabe-L} together with the lower bound on $\lambda_k \ge \clb \|z_k - z^\star\|.$
\end{proof}
\begin{proposition}
	\label{prop: local-lipshshitz-c-conditions-for-linear-convergence}
	Let $\{x_k\}_{k\ge 0}\subseteq \EEE$ and $\{z_k\}_{k \ge 0}\subseteq \YY$ be two sequences and let $\xs \in \EEE$ and $\zs \in \YY$ be two given points. Suppose that there exist constants $C>0$, $\varepsilon > 0$, $r>0$,  and $q \in (0,1)$ be constants such that the following two hold.
	\begin{enumerate}
		\item For any $k \ge 0$, $\|x_k - x_{k+1}\|\le C\|z_k - \zs\|^{1/2} $. \label{item:small_step}
		\item If $\|x_k - \xs\| \le \varepsilon$, $\|{x_{k+1}-\xs}\| \le \varepsilon$, and $\|{z_k - \zs}\| \le r$, then $\|{z_{k+1} - \zs}\| \le q \|{z_k - \zs}\|$. \label{item:contract}
	\end{enumerate}
	Then, if $\norm{x_0  - \xs}{2} \le \frac{\varepsilon}{2}$ and $\norm{z_0 - \zs}{2}\le \min \left\{\left(\frac{(1-q^{1/2})\varepsilon}{2C}\right)^{2}, r \right\}$ hold, we have
	\begin{align*}
	\norm{x_{k} -\xs}{2} \leq \varepsilon \quad \text{and} \quad \norm{z_k - \zs}{2} \le \norm{z_0 - \zs}{2} q^k , \qquad \forall k \ge 0.
	\end{align*}
\end{proposition}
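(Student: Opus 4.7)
The plan is to proceed by strong induction on $k$, establishing simultaneously both claimed bounds $\|x_k - \xs\| \le \varepsilon$ and $\|z_k - \zs\| \le \|z_0 - \zs\| q^k$. The base case $k=0$ is immediate from the two initialization assumptions. The inductive step is where the two conditions interact: we first use condition~\ref{item:small_step} (together with the geometric decay of $\|z_j - \zs\|$) to prove $\|x_{k+1} - \xs\| \le \varepsilon$, and then use this to invoke condition~\ref{item:contract}.

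More concretely, suppose the two bounds hold for all indices $j \le k$. A telescoping triangle inequality gives
\begin{align*}
\|x_{k+1} - \xs\| &\le \|x_0 - \xs\| + \sum_{j=0}^{k} \|x_{j+1} - x_j\| \le \frac{\varepsilon}{2} + C\sum_{j=0}^{k} \|z_j - \zs\|^{1/2} \\
& \le \frac{\varepsilon}{2} + C\|z_0 - \zs\|^{1/2}\sum_{j=0}^{\infty} q^{j/2} = \frac{\varepsilon}{2} + \frac{C\|z_0 - \zs\|^{1/2}}{1 - q^{1/2}} \le \varepsilon,
\end{align*}
where the second inequality uses condition~\ref{item:small_step}, the third uses the inductive hypothesis $\|z_j - \zs\| \le \|z_0-\zs\|q^j$, and the final step invokes the upper bound $\|z_0 - \zs\| \le \left((1-q^{1/2})\varepsilon/(2C)\right)^2$. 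This establishes the first claim for $k+1$.

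With $\|x_k - \xs\| \le \varepsilon$ and $\|x_{k+1} - \xs\| \le \varepsilon$ in hand, and noting that $\|z_k - \zs\| \le \|z_0 - \zs\| \le r$ by the inductive hypothesis and the upper bound on $\|z_0 - \zs\|$, condition~\ref{item:contract} directly yields
$$\|z_{k+1} - \zs\| \le q \|z_k - \zs\| \le q \cdot \|z_0 - \zs\| q^k = \|z_0 - \zs\| q^{k+1},$$
which closes the induction.

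The only potential obstacle is the circular-looking bootstrap: condition~\ref{item:contract} demands a priori membership in the $\varepsilon$-ball, which we can only obtain by summing displacements weighted by $\|z_j - \zs\|^{1/2}$, which in turn requires the geometric decay provided by condition~\ref{item:contract}. The induction untangles this by establishing the spatial bound on $x_{k+1}$ \emph{before} appealing to the contraction; the constant $(1-q^{1/2})\varepsilon/(2C)$ in the hypothesis on $\|z_0 - \zs\|$ is calibrated exactly so that the resulting geometric series $\sum q^{j/2}$ adds up to at most $\varepsilon/2$, leaving room for the initial error $\|x_0 - \xs\| \le \varepsilon/2$.
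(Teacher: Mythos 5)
Your proposal is correct and follows essentially the same argument as the paper: an induction that first bounds $\|x_{k+1}-\xs\|$ by summing the displacements $\|x_{j+1}-x_j\|\le C\|z_j-\zs\|^{1/2}$ against the geometric decay of $\|z_j-\zs\|$, with the hypothesis on $\|z_0-\zs\|$ calibrated so the series stays below $\varepsilon/2$, and only then invokes the contraction of condition~\ref{item:contract}. The only cosmetic difference is that the paper carries the partial sum inside its inductive hypothesis while you telescope from $x_0$ and bound by the full geometric series, which is equivalent.
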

\begin{proof}
	We apply induction to prove that for any $k \ge 0$,
	\begin{align}
	\norm{x_k- x^\star}{2} \le \frac{\varepsilon}{2} + \sum_{i=0}^{k-1} C\norm{z_0 - \zs}{2}^{1/2} q^{i/2} \leq \varepsilon, \qquad \norm{z_k - \zs}{2} \le \norm{z_0 - \zs}{2} q^k. \label{eqn:induction}
	\end{align}
	The bound~\eqref{eqn:induction} holds for $k=0$ by our assumption. Suppose that~\eqref{eqn:induction} holds for $k$. Note that $\norm{z_0 - \zs}{2} \le r$, we have $\|z_k - \zs\| \le r$. As a result of Item~\ref{item:small_step} and the induction hypothesis, we have
	\begin{align*}
	\norm{x_{k+1} -\xs}{2} &\le \norm{x_k -\xs}{2} + \norm{x_k - x_{k+1}}{2}\\
	&\le \frac{\varepsilon}{2} + \sum_{i=0}^{k-1} C\norm{z_0 - \zs}{2}^{1/2}q^{i/2} + C \norm{z_k -\zs}{2}^{1/2}\\
	&\le \frac{\varepsilon}{2} + \sum_{i=0}^{k} C\norm{z_0 - \zs}{2}^{1/2}q^{i/2}\\
	&\le \varepsilon.
	\end{align*}
	Additionally, by Item~\ref{item:contract}, we have
	\begin{align*}
	\norm{z_{k+1} - \zs}{2} \le q \norm{z_k -\zs}{2} \le \norm{z_0 - \zs}{2} q^{k+1}.
	\end{align*}
	The induction is complete, and the proof is finished.
\end{proof}
Armed with these results, we can prove Theorem~\ref{thm:local-onestepimprovement}.
\begin{proof}[Proof of Theorem~\ref{thm:local-onestepimprovement}]
	Notice that the iterates satisfy the assumption in Lemma~\ref{lem:one_step_x_bound} by the same argument we used in \eqref{eqn:inner_prod_orthogonal}. With this, we will verify the two conditions required by Proposition~\ref{prop: local-lipshshitz-c-conditions-for-linear-convergence}. First, notice that Lemma~\ref{lem:one_step_x_bound} directly implies Item~\ref{item:small_step} of Proposition~\ref{prop: local-lipshshitz-c-conditions-for-linear-convergence}, with $C = \frac{2\gamma}{3\sqrt{\clb}}$. Next, we establish Item~\ref{item:contract} using the same arguments from the proof of Theorem~\ref{thm: onestepimprovement}. Note that the derivation of~\eqref{claim:nonsmooth_weak} in that proof only relies on the Lipschitz continuity of $\nabla F$ along the line segment between $x_k$ and $x_{k+1}$, rather than requiring the stronger global Lipschitz condition outlined in Assumption~\ref{assum: assumptiononc}. Furthermore, due to Assumption~\ref{ass:local-weak-alignment}, the inequalities~\eqref{eq:applying-weak-aligntment} remain valid whenever $\norm{x_k - \xs}{2} \leq \alignvarepsilon$. Thus, if both points $x_k$ and $x_{k+1}$ are in the ball $B_{\alignvarepsilon}(\xs)$, we have
	$$\|z_{k+1} - z^\star\|_2 \le \left(1-\frac{\gamma \mu^2}{8L^2}\right)^{1/2} \|z_k - z^\star\|_2.$$
	Consequently, Item~\ref{item:contract} holds with parameters $r = \delta\left(\frac{\mu}{8L}\right)$ and $q = \left(1 - \frac{\gamma \mu^2}{8L^2}\right)^{1/2}$.
Having established both conditions, the theorem follows directly from Proposition~\ref{prop: local-lipshshitz-c-conditions-for-linear-convergence}.
\end{proof}

\subsubsection{Local convergence guarantees}\label{app:additional-local-guarantees}
Next, we present extensions to the other settings we considered. 

\begin{assumption}[Local strong alignment]\label{ass: local-strong-alignment}
    	For fixed $\xs \in \cX^\star$ and $\zs=F(\xs)$ there exist functions $\rloc \colon \RR_{+} \rightarrow \RR_{+}$ and scalars $\alignvarepsilon, \sig>0$ such that for all $\rho >0$, if $x \in \B_{\alignvarepsilon}(x^\star)$ and $z = \c(x) \in \B_{\rloc(\rho)}(z^{\star})$, then there is an index $j$ for which 
	\begin{align*}
	\norm{(I- \P^{x}_{j})(z - \zs)}{2}  \le  \rho \norm{z - \zs}{2} \qquad \text{and} \qquad \left(\sigma_{j}^{x}\right)^{2} \ge \sig.
	\end{align*}
\end{assumption}

We omit the proofs of the following three theorems since they follow an analogous argument to that in the proof of Theorem~\ref{thm:local-onestepimprovement}, with Claim~\ref{claim:nonsmooth_weak} replaced by Claims~\ref{claim:nonsmooth_strong}, \ref{claim:smooth_weak}, and \ref{claim:smooth_strong}, respectively.
\begin{theorem}[\textbf{Convergence under local strong alignment and nonsmoothness}]\label{thm:local-onestepimprovement-strong-nonsmooth}
	Suppose  Assumptions~\ref{assum:nonsmoothpenalty},~\ref{assum:local_lip_jacob} and~\ref{ass: local-strong-alignment} hold.  Define $ \tilde q :=  \sqrt{1 - \tfrac{\gamma \mu^2}{8 \lh^2}}$, 
	and let $x_{0}$ and $z_0 =\c(x_0)$ be points satisfying
	$$\|x_{0} - \xs\|_{2} \leq \varepsilon /2
	\quad\text{and}\quad
	\norm{z_0 - \zs}{2} \le \min\left\{\rloc\left(\frac{\mu}{8\lh}\right), \frac{\sig \mu}{8\cub \lh}, \frac{\left(1- \sqrt{\tilde q}\right)^2\varepsilon^2 \clb}{2\gamma^2 }\right\},$$ 
	where $\varepsilon = \min\left\{ \jacvarepsilon, \alignvarepsilon \right\}$. Suppose we ran Algorithm~\ref{alg:LM} initialized at $x_{0}$ using Configuration~\ref{assum: dampingparameter} with $\gamma \le \min\left\{1, \frac{\clb}{\lc}\right\}$. Then, the iterates $x_{k}$ satisfy
	$$\|x_{k} -  x^{\star}\|_{2} < \varepsilon \quad \text{for all }k \geq 0,$$
	and, moreover, the mapped iterates $z_{k }= \c(x_{k})$ satisfy
	$$
	\|z_{k} - \zs\|^2 \le \left(1 - \frac{\gamma \mu^2}{8 \lh^2}\right)^k \|z_0 - \zs\|^2 \quad \text{for all } k\geq 0.
	$$
    
\end{theorem}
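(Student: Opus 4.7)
The plan is to invoke Proposition~\ref{prop: local-lipshshitz-c-conditions-for-linear-convergence} with the sequences $\{x_k\}$ and $\{z_k = F(x_k)\}$ generated by Algorithm~\ref{alg:LM}, the anchor points $x^\star$ and $z^\star = F(x^\star)$, and with $\varepsilon = \min\{\jacvarepsilon, \alignvarepsilon\}$, $C = \tfrac{2\gamma}{3\sqrt{\clb}}$, $r = \rloc(\mu/(8\lh))$, and contraction factor $q = \tilde q^{\,2} = 1 - \gamma\mu^2/(8\lh^2)$. The initialization assumed in the theorem matches the hypothesis of the proposition: $\|x_0 - x^\star\| \le \varepsilon/2$ is direct, and $\|z_0 - z^\star\| \le (1-\sqrt{\tilde q})^2\varepsilon^2\clb/(2\gamma^2)$ (up to a harmless absolute constant) is exactly the condition $\|z_0 - z^\star\| \le ((1-\sqrt{q})\varepsilon/(2C))^2$.

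To verify Item~\ref{item:small_step}, I would apply Lemma~\ref{lem:one_step_x_bound} directly. This lemma requires the projected-subgradient bound $|\langle (I-P_k)v_k, z_k - z^\star\rangle| \le \tfrac14(h(z_k) - h^\star)$, which I will establish as part of the verification of Item~\ref{item:contract} below; given that bound, Lemma~\ref{lem:one_step_x_bound} yields $\|x_{k+1} - x_k\| \le \tfrac{2\gamma}{3\sqrt{\clb}}\|z_k - z^\star\|^{1/2}$, i.e., Item~\ref{item:small_step} with $C = \tfrac{2\gamma}{3\sqrt{\clb}}$.

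To verify Item~\ref{item:contract}, I would mimic the argument used to prove Claim~\ref{claim:nonsmooth_strong} (the Polyak case of Theorem~\ref{thm: onestepimprovement_exact}), but relying only on localized regularity. Suppose $x_k, x_{k+1} \in B_\varepsilon(x^\star)$ and $\|z_k - z^\star\| \le \rloc(\mu/(8\lh))$. Since $\varepsilon \le \jacvarepsilon$, the segment joining $x_k$ and $x_{k+1}$ lies in $B_{\jacvarepsilon}(x^\star)$, so Assumption~\ref{assum:local_lip_jacob} gives the $\lc$-Lipschitzness of $\nabla F$ needed by Lemma~\ref{lem:boundontaylor} and Proposition~\ref{prop:master_polyak}. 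Moreover, since $x_k \in B_{\alignvarepsilon}(x^\star)$, Assumption~\ref{ass: local-strong-alignment} applied with $\rho = \mu/(8\lh)$ yields an index $j$ with $\|(I - \Pi_j^{x_k})(z_k - z^\star)\| \le (\mu/(8\lh))\|z_k - z^\star\|$ and $(\sigma_j^{x_k})^2 \ge \sig$. Splitting $z_k - z^\star$ along $\Pi_j^{x_k}$ and using Item~\ref{item:restricted_eigen} of Lemma~\ref{lem:boundontaylor}, the restricted-Lipschitz Item~\ref{item:nonsmoothpenalty:lip} of Assumption~\ref{assum:nonsmoothpenalty}, and the bound $\|z_k - z^\star\| \le \sig \mu/(8\cub \lh)$ (exactly as in~\eqref{eqn:inner_prod_orthogonal}) delivers the inner-product bound
\[
|\langle (I-P_k)v_k,\, z_k - z^\star\rangle| \le \tfrac{\mu}{4}\|z_k - z^\star\| \le \tfrac14(h(z_k) - h^\star).
\]
Then Proposition~\ref{prop:master_polyak} combined with sharpness $(h(z_k) - h^\star)^2 \ge \mu^2 \|z_k - z^\star\|^2$ and the Lipschitz bound $\|\Pi^{x_k} v_k\| \le \lh$ gives $\|z_{k+1} - z^\star\|^2 \le (1 - \gamma\mu^2/(8\lh^2)) \|z_k - z^\star\|^2 = q \|z_k - z^\star\|^2$, which is Item~\ref{item:contract}. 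Proposition~\ref{prop: local-lipshshitz-c-conditions-for-linear-convergence} then yields both $\|x_k - x^\star\| \le \varepsilon$ for all $k$ and the claimed geometric decay.

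The main technical subtlety, rather than an obstacle, lies in bootstrapping: the one-step contraction in Item~\ref{item:contract} requires \emph{both} $x_k$ and $x_{k+1}$ to lie in $B_\varepsilon(x^\star)$, while the bound on $\|x_{k+1} - x^\star\|$ itself relies on Lemma~\ref{lem:one_step_x_bound} (which needs the projected-subgradient bound that in turn requires the alignment hypothesis at $x_k$). The abstract proposition cleanly resolves this circularity by inducting on $k$ with a geometric-series bound on the iterate displacements, and the initialization constraint on $\|z_0 - z^\star\|$ is precisely chosen so that $\sum_{i \ge 0} C \|z_0 - z^\star\|^{1/2} \sqrt{q}^{\,i} \le \varepsilon/2$, keeping all iterates inside $B_\varepsilon(x^\star)$.
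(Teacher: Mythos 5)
Your proposal matches the paper's intended argument essentially exactly: the paper omits this proof, stating that it follows the template of Theorem~\ref{thm:local-onestepimprovement} (Proposition~\ref{prop: local-lipshshitz-c-conditions-for-linear-convergence} plus Lemma~\ref{lem:one_step_x_bound}) with Claim~\ref{claim:nonsmooth_weak} replaced by the strong-alignment Claim~\ref{claim:nonsmooth_strong}, which is precisely what you do. One cosmetic correction: since Item~\ref{item:contract} of the proposition is stated for norms, you should feed it $q=\tilde q$ (your one-step bound gives $\|z_{k+1}-\zs\|\le \tilde q\,\|z_k-\zs\|$, not $\le \tilde q^2\|z_k-\zs\|$), and take $r=\min\left\{\rloc\left(\frac{\mu}{8\lh}\right),\frac{\sig\mu}{8\cub\lh}\right\}$ so the bound $\|z_k-\zs\|\le \frac{\sig\mu}{8\cub\lh}$ is available at every step; with these choices the initialization constant $(1-\sqrt{\tilde q})^2$ and the claimed rate come out exactly as stated.
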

\begin{theorem}[\textbf{Convergence under local weak alignment and smoothness}]\label{thm:local-onestepimprovement-weak-smooth}
	Suppose Assumptions~\ref{assum:smoothpenalty},~\ref{assum:local_lip_jacob} and~\ref{ass:local-weak-alignment} hold. Define $ \tilde q :=  \sqrt{1 - \frac{\gamma \mus}{32 \Lhs}}$ and let $x_{0}$ and $z_0 =\c(x_0)$ be points satisfying
	$$\|x_{0} - \xs\|_{2} \leq \varepsilon /2
	\quad\text{and}\quad
	\norm{z_0 - \zs}{2} \le \min\left\{\rloc\left(\frac{\mus}{16\Lhs}\right), \frac{\left(1- \sqrt{\tilde q}\right)^2\varepsilon^2 \clb}{2\gamma^2 }\right\},$$ 
	where $\varepsilon = \min\left\{ \jacvarepsilon, \alignvarepsilon \right\}$. Suppose we ran Algorithm~\ref{alg:LM} initialized at $x_{0}$ using Configuration~\ref{assum: dampingparameter} with $\gamma \le \min\left\{1, \frac{\clb}{\lc}\right\}$ and $\cub \leq \frac{\mus}{16\Lhs}\sig( \frac{\mus}{16\Lhs})  $. Then, the iterates $x_{k}$ must satisfy
	$$\|x_{k} -  x^{\star}\|_{2} < \varepsilon \quad \text{for all }k \geq 0$$
	and, moreover, the mapped iterates $z_{k }= \c(x_{k})$ satisfy
	$$
	\|z_{k} - \zs\|^2 \le \left(1 - \frac{\gamma \mus}{32 \Lhs}\right)^k \|z_0 - \zs\|^2 \quad \text{for all } k\geq 0.
	$$

\end{theorem}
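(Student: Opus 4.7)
The plan is to mirror the proof of Theorem~\ref{thm:local-onestepimprovement} (the nonsmooth local case), with Claim~\ref{claim:nonsmooth_weak} replaced by Claim~\ref{claim:smooth_weak}. The driving machinery is Proposition~\ref{prop: local-lipshshitz-c-conditions-for-linear-convergence}, which requires two ingredients: (i) a per-step displacement bound of the form $\|x_k - x_{k+1}\| \leq C\|z_k - z^\star\|^{1/2}$, and (ii) a contraction estimate $\|z_{k+1} - z^\star\| \leq q\|z_k - z^\star\|$ that holds whenever $x_k, x_{k+1}$ lie in a ball about $x^\star$ and $z_k$ is close to $z^\star$.

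First I would establish the smooth analog of Lemma~\ref{lem:one_step_x_bound}. Exactly as in~\eqref{eqn:smooth_inner_prod_orthognal} in the proof of Theorem~\ref{thm: onestepimprovement_smooth}, the hypothesis $\cub \le \frac{\mus}{16\Lhs}\sig(\frac{\mus}{16\Lhs})$ yields $|\langle (I-P_k)\nabla h(z_k), z_k - z^\star\rangle| \leq \tfrac14(h(z_k) - h^\star)$. A verbatim adaptation of Claim~\ref{claim:aligned-projected-subgradients} with $v_k = \nabla h(z_k)$ then gives $\tfrac34 (h(z_k) - h^\star) \leq \|\Pi^{x_k}\nabla h(z_k)\|\,\|z_k - z^\star\|$. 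Combining this with the bound $\|(\nabla F(x_k)^\top \nabla F(x_k)+\lambda_k I)^{-1}\nabla F(x_k)^\top v_k\| \leq \|\Pi^{x_k} v_k\|/(2\sqrt{\lambda_k})$ from~\eqref{eqn:nablaF_transpose_bound}, the Polyak stepsize formula in Configuration~\ref{assum: dampingparameter}, and the lower bound $\lambda_k \geq \clb\|z_k - z^\star\|$, I would conclude $\|x_k - x_{k+1}\| \leq \frac{2\gamma}{3\sqrt{\clb}}\|z_k - z^\star\|^{1/2}$, verifying Item~\ref{item:small_step} of Proposition~\ref{prop: local-lipshshitz-c-conditions-for-linear-convergence} with $C = 2\gamma/(3\sqrt{\clb})$.

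Next I would verify Item~\ref{item:contract}. Set $\varepsilon = \min\{\jacvarepsilon, \alignvarepsilon\}$. Whenever $x_k, x_{k+1} \in \B_\varepsilon(x^\star)$ and $\|z_k - z^\star\| \leq \rloc(\mus/(16\Lhs))$, Assumption~\ref{assum:local_lip_jacob} guarantees $\nabla F$ is $\lc$-Lipschitz along the segment between $x_k$ and $x_{k+1}$ (which is all Lemma~\ref{lem:boundontaylor} needs), and Assumption~\ref{ass:local-weak-alignment} supplies the alignment inequalities at $x_k$. With these in hand, the derivation of Claim~\ref{claim:smooth_weak} from Theorem~\ref{thm: onestepimprovement_smooth} goes through word-for-word, yielding
$$
\|z_{k+1} - z^\star\|^2 \leq \left(1 - \frac{\gamma\mus}{32\Lhs}\right)\|z_k - z^\star\|^2,
$$
i.e., $\|z_{k+1} - z^\star\| \leq \tilde q\|z_k - z^\star\|$ with $\tilde q := \sqrt{1 - \gamma\mus/(32\Lhs)}$, so Item~\ref{item:contract} holds with $q = \tilde q$ and $r = \rloc(\mus/(16\Lhs))$.

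Finally, I would check that the initialization conditions assumed in the theorem imply those of the Proposition: $\|x_0 - x^\star\| \leq \varepsilon/2$ and $\|z_0 - z^\star\| \leq \min\{r,\, ((1-\tilde q^{1/2})\varepsilon/(2C))^2\}$. Substituting $C = 2\gamma/(3\sqrt{\clb})$ gives $((1-\sqrt{\tilde q})\varepsilon/(2C))^2 = 9\clb(1-\sqrt{\tilde q})^2\varepsilon^2/(16\gamma^2)$, which is dominated by the weaker bound $(1-\sqrt{\tilde q})^2\varepsilon^2\clb/(2\gamma^2)$ stated in the theorem, so the hypothesis is sufficient. Invoking Proposition~\ref{prop: local-lipshshitz-c-conditions-for-linear-convergence} then produces simultaneously the forward invariance $\|x_k - x^\star\| < \varepsilon$ for all $k$ and the geometric decay $\|z_k - z^\star\|^2 \leq (1 - \gamma\mus/(32\Lhs))^k\|z_0 - z^\star\|^2$, completing the proof. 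The main obstacle is mostly bookkeeping: ensuring that every invocation of Lemma~\ref{lem:boundontaylor} and of the weak-alignment estimate in the reused proof of Claim~\ref{claim:smooth_weak} is guarded by $x_k, x_{k+1} \in B_{\jacvarepsilon}(x^\star)$ and $x_k \in B_{\alignvarepsilon}(x^\star)$, which is exactly what the forward invariance provided by the Proposition supplies inductively.
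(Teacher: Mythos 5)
Your proposal is correct and follows exactly the route the paper prescribes for this theorem: it reuses Proposition~\ref{prop: local-lipshshitz-c-conditions-for-linear-convergence}, verifying the displacement bound via the smooth analog of Lemma~\ref{lem:one_step_x_bound} (using the inner-product estimate \eqref{eqn:smooth_inner_prod_orthognal} guaranteed by the bound on $\cub$) and the contraction via Claim~\ref{claim:smooth_weak}, whose derivation only needs Lipschitzness of $\nabla \c$ on the segment between consecutive iterates and alignment at $x_k$, both supplied inductively by the forward invariance. Your constant bookkeeping also matches the paper's (the theorem's factor $\tfrac{1}{2}$ versus the Proposition's $\tfrac{9}{16}$ gives the needed slack), so the argument is complete as written.
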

\begin{theorem}[\textbf{Convergence under local strong alignment and smoothness}]\label{thm:local-onestepimprovement-strong-smooth} 
Suppose Assumptions~\ref{assum:smoothpenalty},~\ref{assum:local_lip_jacob} and~\ref{ass: local-strong-alignment} hold. Define $ \tilde q :=  \left(1 - \frac{\gamma \mus}{32 \Lhs}\right)^{1/2}$ and let $x_{0}$ and $z_0 =\c(x_0)$ be points satisfying
	$$\|x_{0} - \xs\|_{2} \leq \varepsilon /2
	\quad\text{and}\quad
	\norm{z_0 - \zs}{2} \le \min\left\{\rloc\left(\frac{\mus}{16\Lhs}\right), \frac{\sig \mus}{16\cub \Lhs},  \frac{\left(1- \sqrt{\tilde q}\right)^2\varepsilon^2 \clb}{2\gamma^2 }\right\},$$ 
	where $\varepsilon = \min\left\{ \jacvarepsilon, \alignvarepsilon \right\}$. If one runs Algorithm~\ref{alg:LM} initialized at $x_{0}$ using Configuration~\ref{assum: dampingparameter} with $\gamma \le \min\left\{1, \frac{\clb}{\lc}\right\}$, then, the iterates $x_{k}$ must satisfy
	$$\|x_{k} -  x^{\star}\|_{2} < \varepsilon \quad \text{for all }k \geq 0,$$
	and, moreover, the mapped iterates $z_{k }= \c(x_{k})$ satisfy
	$$
	\|z_{k} - \zs\|^2 \le \left(1 - \frac{\gamma \mus}{32 \Lhs}\right)^k \|z_0 - \zs\|^2 \quad \text{for all } k\geq 0.
	$$
\end{theorem}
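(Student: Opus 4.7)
The plan is to prove Theorem~\ref{thm:local-onestepimprovement-strong-smooth} by applying the generic local-to-global framework of Proposition~\ref{prop: local-lipshshitz-c-conditions-for-linear-convergence}, with the per-step contraction supplied by the smooth strong-alignment argument already developed in the proof of Theorem~\ref{thm: onestepimprovement_smooth_exact} (in particular Claim~\ref{claim:smooth_strong}), and the per-step displacement bound supplied by Lemma~\ref{lem:one_step_x_bound}. Concretely, I would instantiate Proposition~\ref{prop: local-lipshshitz-c-conditions-for-linear-convergence} with the sequences $x_k$, $z_k = F(x_k)$, the reference point $x^\star$ and its image $z^\star$, the radius $\varepsilon = \min\{\varepsilon_{\nabla F}, \varepsilon_{x^\star}\}$, contraction factor $q = (1 - \gamma\alpha/(32\beta))^{1/2}$, and local radius $r = \delta\left(\tfrac{\alpha}{16\beta}\right)$.

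The first step is to verify the aligned-subgradients condition $|\langle (I-P_k) \nabla h(z_k), z_k - z^\star\rangle| \le \tfrac{1}{4}(h(z_k) - h^\star)$ along the trajectory. Whenever $x_k \in B_{\varepsilon_{x^\star}}(x^\star)$ and $\|z_k - z^\star\| \le \min\{\delta(\tfrac{\alpha}{16\beta}), \tfrac{s\alpha}{16 c_{\texttt{ub}}\beta}\}$, I can pick the index $j$ produced by Assumption~\ref{ass: local-strong-alignment} at $\rho = \alpha/(16\beta)$; then the exact same splitting as in Claim~\ref{claim:smooth_strong} (with the eigenvalue bound for $I - P_k$ from Lemma~\ref{lem:boundontaylor} and the restricted-smoothness inequality of Assumption~\ref{assum:smoothpenalty}) yields the $\tfrac{1}{4}(h(z_k)-h^\star)$ bound. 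Given this bound, Lemma~\ref{lem:one_step_x_bound} (whose proof only uses convexity and the Polyak stepsize, so it applies verbatim to the smooth case with $v_k = \nabla h(z_k)$) delivers Item~\ref{item:small_step} of Proposition~\ref{prop: local-lipshshitz-c-conditions-for-linear-convergence} with constant $C = 2\gamma/(3\sqrt{c_{\texttt{lb}}})$.

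Next, for the contraction condition (Item~\ref{item:contract}), I would invoke Proposition~\ref{prop:master_polyak} on consecutive iterates with $x_k, x_{k+1} \in B_{\varepsilon}(x^\star) \subseteq B_{\varepsilon_{\nabla F}}(x^\star)$; on this segment $\nabla F$ is $L_F$-Lipschitz by Assumption~\ref{assum:local_lip_jacob}, so the proposition's hypothesis is met. Combining the one-step inequality with the aligned-subgradients bound just established, and then using quadratic growth plus the $\beta$-smoothness estimate on $\|\Pi^{x_k}\nabla h(z_k)\|^2$ from Assumption~\ref{assum:smoothpenalty} exactly as in the proof of Claim~\ref{claim:smooth_strong}, gives $\|z_{k+1} - z^\star\|^2 \le (1 - \gamma\alpha/(32\beta))\|z_k - z^\star\|^2$ whenever $\|z_k - z^\star\| \le r$. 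This verifies Item~\ref{item:contract} with the claimed $q$.

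Finally, with both items in hand, Proposition~\ref{prop: local-lipshshitz-c-conditions-for-linear-convergence} applied to the prescribed initial conditions $\|x_0-x^\star\|\le \varepsilon/2$ and $\|z_0 - z^\star\| \le \min\{r, ((1-\sqrt{\tilde q})\varepsilon/(2C))^2\}$ (which is exactly how the third term in the theorem's hypothesis is derived, after substituting $C = 2\gamma/(3\sqrt{c_{\texttt{lb}}})$ and slightly relaxing the constant) forces $x_k \in B_\varepsilon(x^\star)$ for all $k$ and yields linear convergence of $\|z_k - z^\star\|$ at rate $q^2 = 1 - \gamma\alpha/(32\beta)$. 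The main obstacle is not a deep analytic point but careful bookkeeping: one must check that the three upper bounds on $\|z_0 - z^\star\|$ in the hypothesis are simultaneously what is needed by (a) Assumption~\ref{ass: local-strong-alignment}, (b) the improved $c_{\texttt{ub}}$-free strong-alignment argument (which trades the global bound on $c_{\texttt{ub}}$ for the smallness $\|z_0-z^\star\|\le s\alpha/(16 c_{\texttt{ub}}\beta)$), and (c) the summability needed to keep every $x_k$ in $B_\varepsilon(x^\star)$; fortunately this matches precisely the template established for the weak-alignment nonsmooth case in Theorem~\ref{thm:local-onestepimprovement}, so no new ideas are required.
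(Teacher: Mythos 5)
Your proposal is correct and follows essentially the same route as the paper, which omits this proof precisely because it is the argument of Theorem~\ref{thm:local-onestepimprovement} with Claim~\ref{claim:nonsmooth_weak} replaced by Claim~\ref{claim:smooth_strong}: verify the aligned-subgradient condition locally via strong alignment, get the displacement bound from Lemma~\ref{lem:one_step_x_bound} with $C = \tfrac{2\gamma}{3\sqrt{\clb}}$, get the one-step contraction from Proposition~\ref{prop:master_polyak} under the local Lipschitz and alignment hypotheses, and conclude via Proposition~\ref{prop: local-lipshshitz-c-conditions-for-linear-convergence}. Your bookkeeping of the constants (including the trade of the $\cub$ bound for the smallness $\|z_0-\zs\|\le \tfrac{\sig\mus}{16\cub\Lhs}$ and the slack between $\tfrac{(1-\sqrt{\tilde q})^2\varepsilon^2\clb}{2\gamma^2}$ and the proposition's requirement) matches the paper's.
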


\section{Missing proofs from Section~\ref{sec:examples}}\label{app:examples}
In this section, we establish that weak and strong alignment hold for the parameterizations introduced in Section~\ref{sec:examples}. 

\subsection{Sufficient conditions for alignment}

Our proofs rely on establishing sufficient conditions for weak and strong alignment. In what follows, we present these conditions. We will use the symbols $\bar{r}^\star$ and $\bar{r}$ instead of $r^\star$ and $r$ to distinguish the rank of $\nabla F$ from the rank of its potential input, which will be particularly important when dealing with low-rank matrices.

\subsection{Strong alignment}
We start by showing that local strong alignment holds whenever the rank of the map $\c$ is constant near $\xs$, generalizing the assumptions for the Gauss-Newton subgradient method \cite{davis2022linearly}.
\begin{lemma}[Constant rank implies local strong alignment]\label{constant_rank_implies_strong_alignment}
    Let $\xs \in \RR^d$ and $\zs = F(\xs)$. Assume that the map $F$ satisfies Assumption~\ref{assum:local_lip_jacob} with $\jacvarepsilon > 0$ and $\lc\geq0$. Suppose there exists $\varepsilon>0$ with 
    $$
    \rankk{\nabla F(\xs)} = \rankk{\nabla \c(x)}=:\bar{r}^\star \qquad \text{for all $x \in \ball{\xs}{\varepsilon}$.}
    $$
    Then, there exist positive constants $R$ and $C$ such that $\c$ satisfies Assumption~\ref{ass: local-strong-alignment} with 
    \begin{align*}
        \delta(\rho) = \frac{\rho}{C},\quad j = \bar{r}^\star, \quad s = \frac{1}{2} \sigma_{\bar{r}^\star}\left( \nabla F(\xs) \right) \quad\text{ and }\quad
        \alignvarepsilon = \minn{R, \frac{\sigma_{\bar{r}^\star}\left( \nabla F(\xs) \right) }{2\lc},\jacvarepsilon}.
    \end{align*}
\end{lemma}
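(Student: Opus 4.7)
The plan is to exploit the fact that constant rank of $\nabla F$ near $x^\star$ forces $\mathrm{Im}(F)$ to be a $C^{1,1}$ submanifold of $\YY$ of dimension $\bar r^\star$ locally around $z^\star$. Indeed, by the constant rank theorem applied to $F$, which is $C^1$ with locally Lipschitz Jacobian, one can shrink a neighborhood $B_R(x^\star)$ so that $F(B_R(x^\star))$ agrees with a $C^{1,1}$ embedded manifold $M$. Crucially, at every $z = F(x) \in M$ the tangent space equals $\mathrm{Im}(\nabla F(x))$, and since $\mathrm{rank}(\nabla F(x)) = \bar r^\star$ throughout this neighborhood, the top $\bar r^\star$ left singular vectors span this tangent space. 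Hence $\Pi_{\bar r^\star}^{x} = \Pi_{T_z M}$ is the orthogonal projection onto the tangent of $M$ at $z$, and the natural choice of index in Assumption~\ref{ass: local-strong-alignment} is $j = \bar r^\star$.

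I would first handle the singular value bound. By Weyl's inequality together with Assumption~\ref{assum:local_lip_jacob},
\[
\sigma_{\bar r^\star}(\nabla F(x)) \;\ge\; \sigma_{\bar r^\star}(\nabla F(x^\star)) - L_{\nabla F}\,\|x - x^\star\|.
\]
Therefore, imposing $\|x - x^\star\| \le \sigma_{\bar r^\star}(\nabla F(x^\star))/(2 L_{\nabla F})$ yields $\sigma_{\bar r^\star}(\nabla F(x)) \ge \tfrac12 \sigma_{\bar r^\star}(\nabla F(x^\star))$, which matches the asserted lower bound on $(\sigma_j^x)^2$ up to straightforward tuning of the constant in the definition of $s$ (the radius $\varepsilon_{x^\star}$ is already tight enough to accommodate this). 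This is the easy half, since it only uses Lipschitzness of $\nabla F$, not the manifold structure.

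The alignment bound is the main step. Two standard facts about the $C^{1,1}$ manifold $M$ are needed near $z^\star$:
\[
\mathrm{(i)}\;\; \bigl\|(I-\Pi_{T_{z^\star} M})(z-z^\star)\bigr\| \le C_1\,\|z-z^\star\|^2,
\qquad
\mathrm{(ii)}\;\; \bigl\|\Pi_{T_z M} - \Pi_{T_{z^\star} M}\bigr\|_{\mathrm{op}} \le C_2\,\|z-z^\star\|.
\]
Both follow from the constant rank theorem by choosing a local $C^{1,1}$ parameterization of $M$ and differentiating: (i) is the usual second-order contact between a submanifold and its tangent plane, and (ii) is Lipschitz continuity of the Gauss map, which in turn reduces to Lipschitz continuity of $x \mapsto U_{1:\bar r^\star}^{x}$, an easy consequence of Assumption~\ref{assum:local_lip_jacob} together with the uniform spectral gap from the Weyl bound just established. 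Combining them via the triangle inequality,
\[
\bigl\|(I-\Pi_{T_z M})(z-z^\star)\bigr\|
\le \bigl\|(I-\Pi_{T_{z^\star} M})(z-z^\star)\bigr\| + \bigl\|(\Pi_{T_{z^\star} M}-\Pi_{T_z M})(z-z^\star)\bigr\|
\le (C_1+C_2)\,\|z-z^\star\|^2.
\]
Setting $C := C_1+C_2$ and $\delta(\rho) := \rho/C$, whenever $\|z-z^\star\|\le \delta(\rho)$ the right-hand side is bounded by $\rho\,\|z-z^\star\|$, which is exactly the first condition of Assumption~\ref{ass: local-strong-alignment}.

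The main obstacle will be making (i) and (ii) quantitative from the mild hypotheses available (only $C^{1,1}$ regularity of $F$ and constant rank on a ball). For (ii), after invoking the Weyl bound on the $\bar r^\star$-th singular value, I would use the Davis--Kahan $\sin\Theta$ theorem to control the deviation of the top-$\bar r^\star$ left singular subspaces of $\nabla F(x)$ from those of $\nabla F(x^\star)$, yielding $C_2$ proportional to $L_{\nabla F}/\sigma_{\bar r^\star}(\nabla F(x^\star))$. For (i), since $M$ is parameterized by $F$ restricted to a slice transverse to $\ker \nabla F(x^\star)$, a standard second-order Taylor expansion together with (ii) gives $C_1$ of the same order. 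Finally, the radius $\varepsilon_{x^\star} = \min\{R,\, \sigma_{\bar r^\star}(\nabla F(x^\star))/(2L_{\nabla F}),\, \varepsilon_{\nabla F}\}$ is dictated by the three requirements: $R$ for the validity of the manifold parameterization, the middle term for the Weyl bound, and $\varepsilon_{\nabla F}$ for the Lipschitz Jacobian assumption itself.
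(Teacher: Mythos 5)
Your overall architecture is the same as the paper's: invoke the constant rank theorem so that the local image of $F$ is a manifold, identify $\operatorname{range}(\nabla F(x))$ (equivalently the span of the top $\bar r^\star$ left singular vectors) with the tangent space at $z=F(x)$, and get the singular-value bound from Weyl's inequality plus Assumption~\ref{assum:local_lip_jacob}; that half is fine. Where you diverge is the alignment inequality: the paper directly cites a quadratic-contact estimate at the \emph{moving} point, $\|(I-\proj_{\mathcal{T}_{\mathcal M}(z)})(z^\star-z)\|\le C\|z-z^\star\|^2$, whereas you reconstruct it from contact at $z^\star$ plus a Lipschitz bound on the Gauss map, $\|\Pi_{T_zM}-\Pi_{T_{z^\star}M}\|_{\mathrm{op}}\le C_2\|z-z^\star\|$.

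The gap is in your justification of that second ingredient. Davis--Kahan applied to $\nabla F(x)$ versus $\nabla F(x^\star)$, with the spectral gap from Weyl, only yields
\begin{equation*}
\bigl\|\Pi^{x}_{\bar r^\star}-\Pi^{x^\star}_{\bar r^\star}\bigr\|_{\mathrm{op}} \;\lesssim\; \frac{\lc\,\|x-x^\star\|}{\sigma_{\bar r^\star}(\nabla F(x^\star))},
\end{equation*}
i.e.\ a bound in $\|x-x^\star\|$, not in $\|z-z^\star\|$. Because the rank is $\bar r^\star<d$ in the cases of interest, $F$ has nontrivial fibers, so $\|x-x^\star\|$ can be of order $\alignvarepsilon$ while $\|z-z^\star\|$ is arbitrarily small; there is no inequality $\|x-x^\star\|\lesssim\|z-z^\star\|$ to convert one into the other. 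Substituting the $x$-bound into your triangle inequality gives an error term of size $\frac{\lc\,\alignvarepsilon}{\sigma_{\bar r^\star}}\|z-z^\star\|$, which is first order in $\|z-z^\star\|$ with a constant that does not shrink as $\rho\to 0$; since in Assumption~\ref{ass: local-strong-alignment} the radius $\alignvarepsilon$ is fixed independently of $\rho$ (only $\delta(\rho)$ shrinks), this cannot produce the required bound $\rho\|z-z^\star\|$ for all small $\rho$. The statement of (ii) in the variable $z$ is true (the tangent projection depends only on $z\in M$, and the Gauss map of the $C^{1,1}$ image manifold is locally Lipschitz), but proving it needs a different argument: either cite the quadratic contact at the moving point directly, as the paper does, or use the constant rank theorem to produce a local section $z\mapsto \hat x(z)$ with $\|\hat x(z)-x^\star\|\lesssim\|z-z^\star\|$, note that $\Pi_{T_zM}$ may be computed at this canonical preimage, and only then apply Davis--Kahan at $\hat x(z)$.
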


\begin{proof}
     We start with the first inequality in Assumption~\ref{ass: local-strong-alignment}. By the Constant Rank Theorem~\cite[Theorem 4.12]{lee2003introduction}, there exists a constant $ R'>0$ such that the set $\mathcal{M}:=F\left(\B_{R'}(\xs)\right)$ is a $C^1$-smooth manifold. It is well-known that near any point the distance between a manifold and its tangent grows quadratically \cite[Lemma 3.2]{diaz2019local}, that is, there are constants $C$ and $R''$ such that for any $z:=\c(x) \in \B_{R''}(\zs)$ we have
\begin{equation}\label{eq:tangeant-space-characterization}
\|(I-\proj_{\mathcal{T}_{\mathcal{M}}(z)})(\zs-z)\| \leq C \|\zs-z\|^2,
 \end{equation}  
 where $\mathcal{T}_{\mathcal{M}}(z)$ is the tangent space of $\mathcal{M}$ at $z.$ Moreover, since $ \mathcal{T}_{\mathcal{M}}(z)=\operatorname{range}\left( \nabla \c(x) \right)$  \cite[Chapter 5]{lee2003introduction}. Therefore,
 \begin{align*}
 \norm{\left(I - \P_{j}^x\right)\left(z-\zs\right)}{} & \leq C\norm{z-\zs}{}^2  \leq \rho \norm{z-\zs}{},
 \end{align*}
  where the first inequality follows from  $\P_{j}^x = \proj_{\operatorname{range}\left( \nabla \c(x) \right)} = \proj_{\mathcal{T}_{\mathcal{M}}(z)}$, and the second inequality follows from  $\norm{z-\zs}{}\leq \delta(\rho) \leq \frac{\rho}{C}$. 
 
  To establish the lower bound on the singular value, we leverage Weyl's inequality. Since $F$ satisfies Assumption~\ref{assum:local_lip_jacob} with $\jacvarepsilon$ and $\lc\geq0$, we have that $\abs{\sigma_{\bar{r}^\star}\left(\nabla \c(x)\right) - \sigma_{\bar{ r}^\star}\left(\nabla F(\xs) \right)} \leq \lc \norm{x-\xs}{}.$ Thus, \begin{align*} \sigma_{\bar{r}^\star}\left(\nabla \c(x)\right) &\geq \sigma_{\bar{r}^\star}\left(\nabla F(\xs)\right) - \alignvarepsilon\lc \geq \frac{1}{2}\sigma_{\bar{r}^\star}\left(\nabla     F(\xs)\right).\end{align*} 
Since $F$ is continuous in $\ball{\xs}{\jacvarepsilon}$ with a Lipschitz constant $\jacvarepsilon L_{\nabla F} + \norm{\nabla F(\xs)}{op}$, we conclude upon taking $R = \min\left\{R', \frac{R''}{\jacvarepsilon L_{\nabla F} + \norm{\nabla F(\xs)}{op}}\right\}$.
\end{proof}

\subsection{Weak alignment}
Recall that given a point $x$ and a smooth map $\c \colon \EEE \rightarrow \YY$, we let $\sigma_j^x$ and $\Pi_j^x$ denote the $j$th singular value of $\nabla \c(x)$ and the projection onto the span of its top $j$ left singular vectors. The following Proposition extends the proof idea in~\cite[Lemma 24]{zhang2021preconditioned}.
\begin{proposition}[Sufficient condition for weak alignment] \label{thm: sufficient-condition-weak-alignment}
	 Fix $\zs \in \Ima \c$. %
     Suppose there are functions $\sig\colon \RR_+\rightarrow \RR_+$ and $\delta\colon\RR_+ \rightarrow \RR_+$ satisfying that for any $\rho>0$ and any $z = \c(x)$ with $\norm{z - \zs}{} \leq \delta(\rho)$, there exists an integer $\bar r ^\star$ with $\bar r^\star \leq \bar r := \rank (\nabla F(x))$ such that the following statements hold.
	\begin{enumerate}
		\item Projected differences are bounded $\norm{\left( I - \Pi^x_{\bar r}  \right) \left(z - \zs\right)}{} \leq  \rho \norm{ z- \zs}{}$.
		\item For all $k \in \{\bar r^\star + 1, \dots, \bar r\}$ we have
        $$(\sigma_k^{x})^2  \leq \sig(\rho) \norm{z-\zs}{} \implies  \norm{\left( I - \Pi_{{k-1}}^x  \right) \left(z - \zs\right) }{} \leq \rho \norm{z-z^\star}{}.$$ %
		\item The $\bar r^\star$-th singular value is lower bounded $\left(\sigma_{\bar r^\star}^x\right)^2 \geq \sig(\rho) \norm{z - z^\star}{}$.
	\end{enumerate}
	Then, the map $\c$ satisfies Assumption \ref{ass:weak-alignment}.
\end{proposition}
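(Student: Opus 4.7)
The plan is to select $j$ as the largest index at which the singular-value lower bound still holds, and then use the monotonicity of the singular values (which makes the two hypotheses dovetail) to deduce the projection bound from either condition 1 or condition 2. Concretely, fix $\rho>0$ and $z = F(x)$ with $\|z-\zs\| \leq \delta(\rho)$, and let $\bar r^\star$ be the integer guaranteed by the hypothesis. I would define
\[
j^\star := \max\bigl\{\,k \in \{1,\dots,\bar r\} : (\sigma_k^x)^2 \geq \sig(\rho)\,\|z-\zs\|\,\bigr\},
\]
which is well defined and satisfies $j^\star \geq \bar r^\star$ thanks to condition 3.

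With $j := j^\star$, the lower bound $(\sigma_j^x)^2 \geq \sig(\rho)\|z-\zs\|$ is immediate by construction, so only the projection estimate remains. I would split into two cases. If $j^\star = \bar r$, then $\Pi^x_j = \Pi^x_{\bar r}$ and condition 1 gives $\|(I - \Pi^x_j)(z-\zs)\| \leq \rho\|z-\zs\|$ at once. If instead $j^\star < \bar r$, then by the maximality in the definition of $j^\star$ we have $(\sigma_{j^\star+1}^x)^2 < \sig(\rho)\|z-\zs\|$; since $j^\star \geq \bar r^\star$ forces $j^\star+1 \in \{\bar r^\star+1,\dots,\bar r\}$, condition 2 applied at $k = j^\star+1$ supplies exactly $\|(I - \Pi^x_{j^\star})(z-\zs)\|\leq \rho\|z-\zs\|$. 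Together with the singular-value bound, this verifies Assumption~\ref{ass:weak-alignment} with the same functions $\sig$ and $\delta$ (and $j$ as above).

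There is no real obstacle beyond getting the indexing right: the only subtlety is ensuring $j^\star+1$ lands in the index range required by condition 2, which is precisely what condition 3 guarantees. The role of the three hypotheses is thus complementary: condition 1 handles the extreme case $j^\star = \bar r$, condition 2 propagates the projection bound downward from $\bar r$ to the last index where the singular values remain large, and condition 3 anchors the recursion so that the chosen $j$ never falls below $\bar r^\star$.
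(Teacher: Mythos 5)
Your proof is correct and captures exactly the content behind the paper's one-line remark that the result follows by backward induction on $k$. Defining $j^\star$ as the largest index where the singular-value threshold still holds, then invoking condition~1 when $j^\star = \bar r$ and otherwise applying condition~2 once at $k = j^\star+1$ (whose hypothesis is satisfied by maximality), is precisely what that backward induction unwinds to—you have simply collapsed the iteration into a single well-chosen application of condition~2. One small remark on hygiene rather than substance: it is worth noting explicitly that $j^\star+1 \le \bar r$ (so that $\sigma_{j^\star+1}^x$ is a genuine singular value of $\nabla F(x)$) and that the strict inequality $(\sigma_{j^\star+1}^x)^2 < \sig(\rho)\|z-\zs\|$ in particular gives the non-strict inequality condition~2 requires; both are immediate, but spelling them out would make the indexing airtight.
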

\begin{proof}
	The result follows immediately from backward induction on $k$.
\end{proof}
We also introduce a local version of this proposition.

\begin{proposition}[Sufficient condition for local weak alignment] \label{thm: sufficient-condition-local-weak-alignment}
	 Let $\xs \in \EEE$ and $\zs \in \Ima \c$ with $\zs = \c(\xs)$. %
    Suppose there is a scalar $\alignvarepsilon > 0$, and functions $\sig\colon \RR_+\rightarrow \RR_+$ and $\delta\colon\RR_+ \rightarrow \RR_+$ satisfying that for any $\rho>0$ and any $z = \c(x)$ with $\norm{z - \zs}{} \leq \delta(\rho)$ and $\norm{x-\xs}{} \leq \alignvarepsilon$ there exists an integer $\bar r ^\star$ with $\bar r^\star \leq \bar r := \rank (\nabla F(x))$ such that the following statements hold. 
	\begin{enumerate}
		\item Projected differences are bounded $\norm{\left( I - \Pi^x_{\bar r}  \right) \left(z - \zs\right)}{} \leq  \rho \norm{ z- \zs}{}$.
		\item For all $k \in \{\bar r^\star + 1, \dots, \bar r\}$ we have
        $$(\sigma^x_k)^2  \leq \sig(\rho) \norm{z-\zs}{} \implies  \norm{\left( I - \Pi_{{k-1}}^x  \right) \left(z - \zs\right) }{} \leq \rho \norm{z-z^\star}{}.$$ %
		\item The $\bar r^\star$-th singular value is lower bounded $\left(\sigma_{\bar r^\star}^x\right)^2 \geq \sig(\rho) \norm{z - z^\star}{}$.
	\end{enumerate}
	Then, the map $\c$ satisfies Assumption \ref{ass:local-weak-alignment}.
\end{proposition}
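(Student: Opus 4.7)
The plan is to mirror the proof strategy of the global version (Proposition~\ref{thm: sufficient-condition-weak-alignment}), which the authors stated follows by backward induction, and simply localize the argument. Concretely, fix $\rho > 0$ and pick any $x \in B_{\alignvarepsilon}(\xs)$ with $z = \c(x) \in B_{\delta(\rho)}(\zs)$. We need to produce an index $j$ for which both conclusions of Assumption~\ref{ass:local-weak-alignment} hold, namely $\|(I-\Pi_j^x)(z-\zs)\| \le \rho\|z-\zs\|$ and $(\sigma_j^x)^2 \ge \sig(\rho)\|z-\zs\|$.

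I would start the backward induction at $k = \bar r := \rank(\nabla F(x))$. Hypothesis~1 immediately gives $\|(I-\Pi_{\bar r}^x)(z-\zs)\| \le \rho\|z-\zs\|$, which seeds the induction. The inductive step is the following dichotomy: suppose at some index $k \in \{\bar r^\star, \dots, \bar r\}$ we already know $\|(I-\Pi_k^x)(z-\zs)\| \le \rho\|z-\zs\|$. If in addition $(\sigma_k^x)^2 \ge \sig(\rho)\|z-\zs\|$, we set $j = k$ and we are done. Otherwise $(\sigma_k^x)^2 < \sig(\rho)\|z-\zs\|$; condition~3 forces $k \ne \bar r^\star$ (since by hypothesis $(\sigma_{\bar r^\star}^x)^2 \ge \sig(\rho)\|z-\zs\|$), so $k \ge \bar r^\star + 1$ and condition~2 applies, yielding $\|(I-\Pi_{k-1}^x)(z-\zs)\| \le \rho\|z-\zs\|$, which lets us continue the induction at $k-1$.

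Because each step strictly decreases $k$ and condition~3 prevents us from passing below $\bar r^\star$, the procedure must terminate at some $j \in \{\bar r^\star, \dots, \bar r\}$ satisfying both required bounds. The localization in $x$ is automatic: all three hypotheses of the proposition are only posited on the set $\{x \in B_{\alignvarepsilon}(\xs) : \c(x) \in B_{\delta(\rho)}(\zs)\}$, and this is precisely the set on which the conclusion of Assumption~\ref{ass:local-weak-alignment} is required.

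I do not anticipate any genuine obstacle: the argument is a finite backward induction on the integer $k$, and no new estimate beyond the three stated hypotheses is needed. The only point worth spelling out carefully in writing is that the induction terminates (which follows because the index set $\{\bar r^\star, \dots, \bar r\}$ is finite and condition~3 provides the base case), and that we do not need any control on $\|x-\xs\|$ beyond what is used to invoke hypotheses~1--3, so the statement of Assumption~\ref{ass:local-weak-alignment} is satisfied verbatim with the same scalar $\alignvarepsilon$ and the same functions $\sig, \delta$.
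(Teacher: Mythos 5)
Your proof is correct and matches the paper's intent: the paper proves the global version (Proposition~\ref{thm: sufficient-condition-weak-alignment}) with a one-line remark that it "follows immediately from backward induction on $k$," and states the local version without further comment; your backward induction, seeded by hypothesis~1 at $k = \bar r$, using hypothesis~2 for the inductive step and hypothesis~3 to guarantee termination at or above $\bar r^\star$, is exactly that argument, and your observation that the localization in $x$ is automatic (the same ball $B_{\alignvarepsilon}(\xs)$ appears verbatim in both the hypotheses and Assumption~\ref{ass:local-weak-alignment}) correctly closes the loop.
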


\subsection{Proofs from Section~\ref{sec:square}}
\subsubsection{Proof of Theorem~\ref{thm: weak-alignment-hadamard}} \label{proof: hadamard-weak-alignment}
Let us prove that the Hadamard map $x \mapsto x \odot x$ is smooth with parameter $\lc = 2$.
 We have
\begin{align*}
 \norm{\nabla \c(x) - \nabla \c(y) }{\textrm{op}} = \norm{2\diagg{x - y}}{\textrm{op}} \le  2\norm{x-y}{2}.
\end{align*}

To prove that the map satisfies weak alignment Assumption~\ref{ass:weak-alignment}, we leverage Proposition~\ref{thm: sufficient-condition-weak-alignment}. Thus, we will establish the three conditions stated in that proposition. To do so, we first introduce some auxiliary lemmas. These require a bit of extra notation. For a vector $x\in \RR^r$, we let $S_x$ be the set of permutations of the indexes $[r]$ that orders the  entries of $x$ in non ascending order, i.e., $\pi\in S_x$
$$
x_{\pi(1)} \geq x_{\pi(2)} \geq \dots \geq  x_{\pi(r)}.
$$
Note that $S$ is not a singleton whenever there are ties.
We say that that two vectors $x, y \in \RR^r$ are similarly ordered if $S_x \cap S_y \neq \emptyset.$

\begin{lemma}
\label{lem:same-order}
    Let $\xs \in \RR^r$ be a fixed vector. Suppose $x \odot x \in \B_\varepsilon(\xs \odot \xs)$ with 
    $$\varepsilon =\min_{i,j | x_i \neq x_j } \frac{ | (\xs_{i})^2 - (\xs_{j})^2 | }{2} \bigwedge \min_{i\in [r] | \xs_i \neq 0 } \frac{(\xs_i)^2}{2} .$$ Then, $\# \supp(x) \geq \# \supp(\xs)$ and, moreover, the component-wise squares $x\odot x$ and $\xs\odot \xs$ are similarly ordered. The result holds trivially when $\varepsilon = +\infty$, i.e., all components of $\xs$ are equal.
\end{lemma}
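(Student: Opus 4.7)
The plan is to derive both conclusions from the coordinate-wise bound $|x_i^2 - (\xs_i)^2| \le \varepsilon$ for each $i \in [r]$, which follows immediately from the hypothesis $\|x \odot x - \xs \odot \xs\| \le \varepsilon$ (each coordinate is dominated by the norm of the full vector). The argument splits naturally according to the two quantities defining $\varepsilon$.

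First I would establish the support inclusion $\supp(\xs) \subseteq \supp(x)$. For any $i$ with $\xs_i \neq 0$, the second term in the definition of $\varepsilon$ gives $(\xs_i)^2 \ge 2\varepsilon$, and combining with $|x_i^2 - (\xs_i)^2| \le \varepsilon$ yields $x_i^2 \ge (\xs_i)^2 - \varepsilon \ge \varepsilon > 0$. Hence $x_i \neq 0$, and the cardinality bound $\#\supp(x) \ge \#\supp(\xs)$ follows.

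Next I would establish the similar-ordering claim. The key observation is that the first term in the definition of $\varepsilon$ forces a gap of at least $2\varepsilon$ between any two distinct values of $(\xs_i)^2$: for $i,j \in [r]$ with $(\xs_i)^2 > (\xs_j)^2$ we therefore get
\[
    x_i^2 \;\ge\; (\xs_i)^2 - \varepsilon \;\ge\; (\xs_j)^2 + \varepsilon \;\ge\; x_j^2.
\]
So a strict inequality in the $\xs\odot\xs$ ordering is weakly preserved in the $x\odot x$ ordering. To produce a permutation common to $S_{x\odot x}$ and $S_{\xs\odot \xs}$, I would sort indices lexicographically: primarily by $(\xs_i)^2$ in non-ascending order, breaking ties by $x_i^2$ in non-ascending order. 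By construction this $\pi$ lies in $S_{\xs\odot\xs}$. For two consecutive entries $\pi(k), \pi(k+1)$, either their $\xs^2$-values differ (and the displayed inequality above gives the needed $x^2$-ordering) or they coincide (in which case the tie-breaking rule directly enforces $x^2_{\pi(k)} \ge x^2_{\pi(k+1)}$). Hence $\pi \in S_{x\odot x}$ as well.

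Finally, I would dispense with the degenerate case $\varepsilon = +\infty$: both minima are then over empty sets, which forces $\xs = 0$, and both conclusions hold trivially. The proof requires no deep ideas; the only subtlety worth flagging is the tie-breaking step in constructing $\pi$, which is needed because coordinates $i,j$ with $(\xs_i)^2 = (\xs_j)^2$ need not satisfy $x_i^2 \ge x_j^2$ a priori.
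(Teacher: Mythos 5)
Your proof is correct and takes essentially the same approach as the paper: both derive the coordinate-wise bound $|x_i^2 - (\xs_i)^2| \le \varepsilon$, read the support claim off the second term of $\varepsilon$, and build a common permutation by observing that the $2\varepsilon$ gap between distinct values of $(\xs_i)^2$ forces the corresponding $x_i^2$ ordering, while ties in $\xs\odot\xs$ are broken using the order of $x\odot x$. Your lexicographic sort (primary key $(\xs_i)^2$, secondary key $x_i^2$) is a cleaner way to phrase exactly what the paper does with its explicit partition $B_1,\dots,B_\ell$ and iterative refinement of $\pi$. Two small points in your favor: checking only consecutive pairs and invoking transitivity streamlines the verification, and your observation that $\varepsilon = +\infty$ forces $\xs = 0$ is more accurate than the paper's parenthetical gloss ``i.e., all components of $\xs$ are equal'' (a nonzero constant vector makes the second minimum finite, so $\varepsilon < +\infty$).
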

\begin{proof}

    We defer the proof of $\# \supp(x) \geq \# \supp(\xs)$ to the end. Let us construct a permutation in the intersection of $S_{\xs \odot \xs}$ and $S_{x\odot x}.$ We start with a base permutation $\pi \in S_{\xs \odot \xs},$ which we will modify inductively. Cosider the partition $B_1, \dots, B_\ell$ of $ [r]$ such that for any $i, j \in B_k$ we have ${\xs_i}^2 = {\xs_j}^2$, and if $i \in B_n$ and $j \in B_m$ with $n<m$ then ${\xs_i}^2 > {\xs_j}^2.$ We claim that we also have $i \in B_n$ and $j \in B_m$ with $n<m$ then ${x}_i^2 > {x}_j^2.$ To see this, take $\ik = \argmin_{i \in [B_n]} x_{\pi(i)}^2$ and $\overline{j} = \argmax_{j \in [B_m]} x_{\pi(i)}^2$. Using the triangle inequality and the definition of the partition, we derive
    $$x_{\pi(\ik)}^2 - x_{\pi(\overline{j}))}^2 \geq {\xs_{\pi(\ik)}}^2 - {\xs_{\pi(\overline{j})}}^2 - 2\varepsilon > 0,$$
    where the strict inequality follows since $x\odot x \in \B_\varepsilon (\xs \odot \xs);$ which proves our claim.

    We construct $\pi'$ from $\pi$ as follows. Start with $\pi' = \pi$. We know that the indices in $B_1$ correspond to the top components in $x\odot x$ and $\xs \odot \xs$, so $\pi$ sends them to the first $\# B_1$ components. We can modify $\pi'$ to respect the ordering of the top $\# B_1$ entries of $x\odot x$. Since the new $\pi'$ only differs from $\pi$ in the $B_1$ block, it also belongs to $S_{\xs \odot \xs}.$ We can apply the same update with $B_2$, and so on until $B_\ell.$ After which, we have that $\pi' \in S_{\xs \odot \xs} \cap S_{x \odot x};$ proving that the squared vectors are similarly ordered.

    Take $\pi \in S_{\xs \odot \xs} \cap S_{x \odot x}$ and recall that $r^\star = \# \supp(\xs).$ Thus, $\left(\xs_{\pi(r^\star)}\right)^2 > 0.$ By the definition of $\varepsilon$,
    $$
    x_{\pi(r^\star)}^2 \geq \frac{(\xs_{\pi(r^\star)})^2}{2} > 0.
    $$
    Therefore, $\# \supp(x) \ge r^\star$. This finishes the proof.
\end{proof}

Recall that for $x \in \RR^r$, we let $\Pi_j^x$ and $\sigma^x_j$ correspond to the projection onto the subspace generated by the top $j$ singular vector of $\nabla \c(x)$ and its $j$th  top singular value, respectively.
\begin{lemma}
\label{lemma-had}
Let $\xs\in \RR^r$ with $r^\star $ nonzero entries and $x\in \RR^r$  such that $x \odot x \in B_\varepsilon(\xs \odot \xs)$ for  $$\varepsilon =\min_{i,j | x_i \neq x_j } \frac{ | (\xs_{i})^2 - (\xs_{j})^2 | }{2} \bigwedge \min_{i\in [r] | \xs_i \neq 0 } \frac{(\xs_i)^2}{2} .$$
Then, we have
\MDA{ If you want to use cases, use it after the $\leq$. But in this case, it is unnecessary.
$$\begin{cases}
\displaystyle
\| (I - \Pi^x_k) (x \odot x - x^* \odot x^*) \|_{2}
\;\le\; (r - r^\star)^{\tfrac12} \,\sigma_{k+1}^2,
& \quad \text{if }  k \in \{ r^\star \ldots r-1 \},\\[6pt]
\displaystyle
\| (I - \Pi^x_k) (x \odot x - x^* \odot x^*) \|_{2}
\;=\; 0,
& \quad \text{if $k=r$.}
\end{cases}
$$}
$$
\| (I - \Pi^x_k) (x \odot x - x^* \odot x^*) \|_{2} \leq \sqrt{r - k} \, (\sigma^x_{k+1})^2
\quad \text{for all }k \in \{r^\star, \dots, r\}.
$$
\end{lemma}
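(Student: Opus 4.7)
The plan is to exploit the diagonal structure of $\nabla F(x) = 2\diag(x)$ together with the joint sorting permutation provided by Lemma~\ref{lem:same-order}. First I would fix a permutation $\pi \in S_{x\odot x} \cap S_{\xs\odot \xs}$, which is nonempty by that lemma. Since $2\diag(x)$ is diagonal (up to permutation and sign), its singular values are exactly $\sigma_i^x = 2|x_{\pi(i)}|$, and its top-$k$ left singular vectors may be taken to span $\{e_{\pi(1)}, \dots, e_{\pi(k)}\}$; when $k$ exceeds $\#\supp(x)$, the natural extension $e_{\pi(i)}$ still works since those indices correspond to zero singular values. Consequently, $\Pi_k^x$ zeroes out exactly the coordinates indexed by $\{\pi(k+1),\ldots,\pi(r)\}$.

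With this identification, the claim reduces to an elementary calculation. The squared left-hand side becomes $\sum_{i=k+1}^{r} (x_{\pi(i)}^2 - (\xs_{\pi(i)})^2)^2$, and for any $k \geq r^\star$ and $i \geq k+1$ the index $\pi(i)$ lies in the tail of $\pi$; since $\pi$ sorts $\xs \odot \xs$ in non-increasing order and $\xs$ has only $r^\star$ nonzero coordinates, we get $\xs_{\pi(i)}=0$. The sum therefore collapses to $\sum_{i=k+1}^{r} x_{\pi(i)}^4$. Invoking the monotonicity $x_{\pi(i)}^2 \leq x_{\pi(k+1)}^2$ for $i \geq k+1$ then yields
\[
\big\|(I - \Pi_k^x)(x\odot x - \xs \odot \xs)\big\|_2 \;\leq\; \sqrt{r-k}\, x_{\pi(k+1)}^2 \;=\; \sqrt{r-k}\,\tfrac{(\sigma_{k+1}^x)^2}{4} \;\leq\; \sqrt{r-k}\,(\sigma_{k+1}^x)^2,
\]
which is the stated bound (in fact with a factor of four to spare). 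The boundary case $k=r$ is trivial, since $\Pi_r^x = I$ makes both sides vanish.

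The main---and really only---delicate point is bookkeeping the SVD of $\nabla F(x)$ when $x$ has zero coordinates or when $|x|$ has ties, because then neither $\Pi_k^x$ nor the top-$k$ left singular vectors are uniquely determined. Lemma~\ref{lem:same-order} resolves this by supplying a single permutation that sorts both $x\odot x$ and $\xs\odot\xs$, and picking $e_{\pi(i)}$ as singular vectors even for the zero singular values aligns the tail of $\pi$ precisely with the zero coordinates of $\xs$. That alignment is exactly what allows the simplification to $\sum x_{\pi(i)}^4$ to go through uniformly for every $k \in \{r^\star,\ldots,r\}$.
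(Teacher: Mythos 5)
Your proof is correct and follows essentially the same route as the paper's: invoke Lemma~\ref{lem:same-order} to get a simultaneous sorting permutation, read off the explicit SVD of $\nabla F(x)=2\operatorname{diag}(x)$ so that $(I-\Pi_k^x)$ kills the top-$k$ sorted coordinates, note the surviving coordinates of $\xs\odot\xs$ vanish since $k\ge r^\star$, and bound the remaining tail $\sum_i x_{\pi(i)}^4$ by $(r-k)(\sigma^x_{k+1})^4$. The only difference is cosmetic (you track the spare factor of four explicitly), so there is nothing to add.
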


\begin{proof}
By Lemma~\ref{lem:same-order} there is a relabeling of the indexes \(({j_i})_{i\in [r]} \) that simultaneously sorts the magnitude of the entries of $x$ and $\xs$ in nonascending order, i.e.,
$
|x_{j_1}| \geq \ldots \geq |x_{j_r}|
$ and $|\xs_{j_1}| \geq \ldots \geq |\xs_{j_r}|$. Since
\(\nabla \c(x) = 2\,\diag(x),\) its SVD is given by
\begin{align*}
U = & (e_{j_1}, e_{j_2}, \ldots, e_{j_r}), \\
\Sigma = & 2\,\diag(|x_{j_1}|, |x_{j_2}|, \ldots, |x_{j_r}|), \\
V = & \diag(\operatorname{sign}(x_{j_1}), \operatorname{sign}(x_{j_2}), \ldots, \operatorname{sign}(x_{j_r})),
\end{align*}
where $e_k$ is the $k$-th standard vector basis.
Hence, we have
\begin{align}\label{eqn:ordering}
[(I - \Pi^x_k) v ]_i =
\begin{cases}
    0 & \text{if }  i \in \{j_1, \dots, j_k\}\\
    v_i & \text{otherwise},
\end{cases}
\end{align}
for an arbitrary vector $v \in \RR^r$. Therefore,  for any $k \in \{r^\star, \dots, r\},$ we have
{ \allowdisplaybreaks
\begin{align*}
 \norm{ (I - \Pi^x_k) (x\odot x - \xs \odot \xs) }{2}^2 &= \norm{ \left(I - \Pi^x_k\right)  x \odot x  -  \left(I - \Pi^x_k \right)  \xs\odot \xs}{2}^2 \\
 &= \norm{ (0, \ldots, 0, x_{j_{k+1}}^2, \ldots, x_{j_{r}}^2)^\top - (0, \ldots, 0)^\top }{2}^2 \\
 &= \norm{ (x_{j_{k+1}}^2, \ldots, x_{j_{r}}^2)^\top }{2}^2 \\
 &= \sum_{i=k+1}^r x_{j_{i}}^4 \\
 &\leq \left(r - k  \right) (\sigma^x_{k+1})^4,
\end{align*}
}
where the second equality follows from~\eqref{eqn:ordering}. This establishes the result.
\end{proof}

We can now finish the proof of Theorem \ref{thm: weak-alignment-hadamard}.
 Recall that $r^\star = \# \supp(\xs)$, $z = x \odot x$, and $\zs = \xs \odot \xs$.  We show that $x \mapsto x \odot x$ satisfies the three conditions from Proposition~\ref{thm: sufficient-condition-weak-alignment} and
$$\sig(\rho) = \frac{\rho}{\sqrt{r - r^\star} \vee 1} \text{ and } \delta(\rho) =  \left( \min_{i,j \mid  x^\star_i \neq  x^\star_j} \frac{ |{\xs_i}^2 - {\xs_j}^2|}{2} \right) \bigwedge \frac{ \lambda_{r^\star} ( \operatorname{diag}(\zs))}{1 + \sig(\rho)}\bigwedge \frac{ \lambda_{r^\star} ( \operatorname{diag}(\zs))}{2} .
$$
Suppose that $\|z - \zs\| \leq \delta(\rho).$
\begin{enumerate}
    \item By Lemma \ref{lemma-had},  setting $k=r$, we have that  $
    \norm{\left( I - \Pi^x_{r}  \right) (z - \zs)}{2}^2   = 0 \leq \rho \norm{z - \zs}{2}.$
    \item Take any $k \in \{ r^\star + 1, \ldots, r \}$, by Lemma \ref{lemma-had}, we have  $
    \norm{\left( I - \Pi^x_{k-1}  \right) (z - \zs)}{2}^2   \leq (r-k - 1) \sigma_k^4
    $. Thus if $ \sigma_k^2 \leq \frac{\rho }{ \sqrt{ r - r^\star} \vee 1} \norm{z -\zs}{2}$, then, $$\norm{\left( I - \Pi^x_{k-1}  \right) (z - \zs) }{2} \leq \rho\frac{\sqrt{r - k - 1}}{\sqrt{r - r^\star} \vee 1} \norm{z - \zs}{2} \leq \rho \norm{z - \zs}{2} .$$
    \item By Weyl's inequality, we have $\lambda_{r^\star} ( \operatorname{diag}(z)) \geq \lambda_{r^\star} (  \operatorname{diag}(\zs)) - \norm{z -  \zs}{2} $. By the choice of the neighborhood $\delta(\rho)$,
   we have $\lambda_{r^\star} (  \operatorname{diag}(\zs)) \geq ( 1 + \sig(\rho)) \norm{z - \zs}{2}$, we conclude that $\sigma_{r^\star}^2 = \lambda_{r^\star} \diagg{z} \geq \sig(\rho) \norm{z - \zs}{2}$.
\end{enumerate}
Therefore, by Proposition~\ref{thm: sufficient-condition-weak-alignment}, Assumption~\ref{ass:weak-alignment} holds; completing the proof.

\subsection{Proofs from Section~\ref{sec:matrices}}

\subsubsection{Proof of Theorem~\ref{thm:regularity-matrix-sym}} \label{section:proof-thm-regularity-matrix-sym}
We show that the symmetric Burer-Monteiro map $\Fsym(X) = XX^\top$ is smooth with parameter $\lc = 2$. A straightforward computation reveals that the Jacobian of this map and its adjoint act on $Y \in \RR^{d \times r}$ and $Z \in \SSS^d$ via
\begin{equation}\label{action: nablac-and-nablacT-bm} \nabla \Fsym(X)[Y] = YX^\top + XY^\top \quad \text{and} \quad \nabla \Fsym(X)^\top [Z] =  (Z + Z^\top)X.\end{equation}
Therefore,
\begin{align*}
\norm{\nabla \Fsym(X) - \nabla \Fsym(Y) }{\textrm{op}} &= \sup_{ \norm{W}{F}=1} \norm{\nabla \Fsym(X)[W] - \nabla \Fsym(Y)[W] }{F} \\
&%
=\sup_{\norm{W}{F}=1} \norm{\left( WX^\top + XW^\top \right)  -  \left(  WY^\top + YW^\top \right)  }{F} \\
&\leq \sup_{\norm{W}{F} = 1 } 2 \norm{W(X-Y)^\top}{F} \\
&\leq 2 \norm{X-Y}{F}.
\end{align*}
Thus, $\lc = 2$ as claimed.

Next, we prove weak alignment. Toward this goal, we state two auxiliary results. Consider any two $Z, Z^\star \in \SSS^d_+$ with $\rs = \rank Z^\star \leq \rank Z \leq r \le d$ and let $X, \Xs \in \RR^{d \times r}$ be any matrices such that $Z = XX^\top$ and $Z^\star = \Xs (\Xs)^\top.$ We  denote the SVD decompositions of $X$ and $\nabla \Fsym(X)$ as $\svd{X}$ and $U \Sigma V^\top$, %
respectively. We use $U_i^X, U_i$ to denote the $i$-th column of $U^x$ and $U$, respectively. With slight abuse of notation, we imagine completing the columns of $U^X$ and $U$ by choosing additional vectors in $\RR^d$ such that $\{U^X_i\}_{i=1}^{d}$ and $\{U_i\}_{i=1}^{\binom{d+1}{2}}$ forms an orthonormal basis of $\RR^d$ and $\RR^{\binom{d+1}{2}}$ (we identify $\SSS^d$ with $\RR^{\binom{d+1}{2}}$), respectively. Further, we let $\Pi^X_j$ be the orthogonal projection onto the span of the top $j$ left singular vectors of $\nabla \Fsym(X)$ and $\sigma_j = \Sigma_{jj}$ be its $j$-th singular value.

\begin{proposition}\label{lem: bm-alignment}
 Let $ \Xs \in \RR^{d\times r}$. %
 For any $\rho > 0$ and any $X \in \RR^{d \times r}$ with $$\norm{ X X^\top  - \Xs {\Xs}^\top  }{F} \leq \min\left\{ \frac{\rho }{\sqrt{2}}, \frac{1}{3} \right\} \sigma_{\rs}^2 \left( \Xs \right),$$ we have that
\begin{align*}
    \norm{ \left(I - \Pi^X_k \right)\left[{X X^\top  - \Xs {\Xs}^\top }\right]}{F}^2
    & \leq \frac{1}{16}\left(r- \rs + 1 \right) { \sigma_{k+1}^4}+ \frac{\rho^2}{2} \norm{ X X^\top  - \Xs {\Xs}^\top  }{F}^2
\end{align*}
for any %
$k\in \left\{ \rank(\nabla \Fsym (\Xs)), \ldots,  \rank(\nabla \Fsym(X)) \right\}$.
\end{proposition}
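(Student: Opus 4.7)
I would reduce the proposition to two separated tasks: a Procrustes-type decomposition of $XX^\top - \Xs(\Xs)^\top$ into a linear term lying in $\Ima \nabla \Fsym(X)$ plus a quadratic remainder, and a spectral analysis of $\nabla \Fsym(X)$ in a basis adapted to the SVD of $X$. Concretely, let $P \in O(r)$ be the Procrustes-optimal rotation minimizing $\|X - \Xs P\|_F$ and set $\Delta := X - \Xs P$. Since $P$ is orthogonal, a direct expansion together with the action formula \eqref{action: nablac-and-nablacT-bm} gives the standard identity $XX^\top - \Xs(\Xs)^\top = \nabla \Fsym(X)[\Delta] - \Delta \Delta^\top$. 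Applying $I - \Pi^X_k$ and using $(a+b)^2 \le 2a^2 + 2b^2$ then reduces the claim to bounding each summand of
\begin{equation*}
\norm{(I - \Pi^X_k)[XX^\top - \Xs(\Xs)^\top]}{F}^2 \le 2\norm{(I - \Pi^X_k)\nabla \Fsym(X)[\Delta]}{F}^2 + 2\norm{\Delta \Delta^\top}{F}^2.
\end{equation*}

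For the quadratic remainder, the hypothesis $\norm{XX^\top - \Xs(\Xs)^\top}{F} \le \tfrac{1}{3}\sigma_{\rs}^2(\Xs)$ places us in the regime of the classical matrix-Procrustes lemma (see, e.g., \cite{tu2016low}), yielding $\norm{\Delta}{F}^2 \le \tfrac{1}{2(\sqrt 2 - 1)}\norm{XX^\top - \Xs(\Xs)^\top}{F}^2/\sigma_{\rs}^2(\Xs)$. Invoking then the sharper assumption $\norm{XX^\top - \Xs(\Xs)^\top}{F} \le (\rho/\sqrt 2)\sigma_{\rs}^2(\Xs)$ to absorb one factor of $\norm{XX^\top - \Xs(\Xs)^\top}{F}/\sigma_{\rs}^2(\Xs)$, the trivial inequality $\norm{\Delta\Delta^\top}{F}^2 \le \norm{\Delta}{F}^4$ produces a bound of the form $2\norm{\Delta\Delta^\top}{F}^2 \le \tfrac{\rho^2}{2}\norm{XX^\top - \Xs(\Xs)^\top}{F}^2$, matching the second summand in the target up to a routine check of the constants.

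The heart of the argument is therefore the bound $\norm{(I-\Pi^X_k)\nabla \Fsym(X)[\Delta]}{F}^2 \le \tfrac{1}{32}(r-\rs+1)\sigma_{k+1}^4$. Writing $X = U^X \Sigma^X (V^X)^\top$ and representing $YV^X$ in the basis $U^X \oplus (U^X)^\perp$ as a pair $(A,B)$ puts the Jacobian in the block form
\begin{equation*}
\nabla \Fsym(X)[Y] = \begin{bmatrix} A\Sigma^X + \Sigma^X A^\top & \Sigma^X B^\top \\ B\Sigma^X & 0 \end{bmatrix},
\end{equation*}
from which a direct computation exposes the nonzero singular values in three families: $\{2\sigma^X_i\}_{i}$, $\{\sqrt{2((\sigma^X_i)^2+(\sigma^X_j)^2)}\}_{i<j}$, and $\{\sqrt 2\,\sigma^X_i\}$ repeated $d-\rank X$ times. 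Since $k \ge \rank \nabla \Fsym(\Xs) = d\rs - \binom{\rs}{2}$, Weyl's inequality applied to $\nabla \Fsym$ gives $\sigma_{k+1} \le 2\norm{\Delta}{F}$, and Procrustes-optimality forces $(\Xs P)^\top \Delta$ to be symmetric. Combining these facts with the block description, one can organize the right singular vectors $V_i$ beyond index $k$ by family and show that only $r-\rs+1$ of them carry appreciable inner product with $\Delta$, each contribution satisfying $\sigma_i |\dotp{V_i,\Delta}| \le \sigma_{k+1}^2/4$; squaring and summing yields the desired bound.

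The main obstacle is precisely this final accounting. Although the total count of small singular values of $\nabla \Fsym(X)$ can scale like $d(r-\rs)$, the claim demands that at most $r-\rs+1$ of the corresponding directions contribute nontrivially. Establishing this hinges on exploiting the symmetry of $(\Xs P)^\top \Delta$ to eliminate contributions from the block-$B$ and off-diagonal block-$A$ families aligned with large singular values of $\Xs$, together with Weyl-type bounds on the small family to convert each remaining inner product into an $O(\sigma_{k+1}^2)$-sized quantity. Plugging the resulting estimate back into the decomposition above then yields the claimed inequality.
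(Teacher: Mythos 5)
Your proposed Procrustes decomposition $XX^\top - \Xs\Xs^\top = \nabla\Fsym(X)[\Delta] - \Delta\Delta^\top$ is algebraically correct, but bounding the quadratic remainder $2\|\Delta\Delta^\top\|_F^2$ by $\tfrac{\rho^2}{2}\|XX^\top - \Xs\Xs^\top\|_F^2$ fails precisely in the overparameterized regime that this proposition exists to handle. The ``classical matrix-Procrustes lemma'' you cite gives $\|\Delta\|_F^2 \lesssim \|XX^\top - \Xs\Xs^\top\|_F^2 / \sigma_{\rs}^2(\Xs)$ only when $X$ and $\Xs$ both have rank $\rs$ (exact parameterization). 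When $\rank X > \rs$, the correct scaling is only linear: $\|\Delta\|_F^2 \asymp \|XX^\top - \Xs\Xs^\top\|_F$, because the extra singular directions of $X$ enter $\Delta$ at first order but enter $XX^\top$ at second order. A two-line example makes this concrete: take $d=r=2$, $\rs=1$, $\Xs = \mathrm{diag}(1,0)$, $X = \mathrm{diag}(1,\epsilon)$. Then $\|XX^\top - \Xs\Xs^\top\|_F = \epsilon^2$, the optimal $\Delta = X - \Xs$, $\|\Delta\|_F = \epsilon$, and $2\|\Delta\Delta^\top\|_F^2 = 2\epsilon^4$, which is never $\le \tfrac{\rho^2}{2}\epsilon^4$ when $\rho < 2$. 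So your ``routine check of constants'' cannot succeed; the second summand is not small, and in fact its content has to be absorbed by the $\sigma_{k+1}^4$ term, which your term-by-term plan does not permit.

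The paper sidesteps this entirely by never introducing $\Delta$: it applies $I - \Pi^X_k$ directly to $XX^\top$ and to $\Xs\Xs^\top$ separately. Using Lemma~\ref{lem:spectral-bm} (the eigendecomposition of $\nabla\Fsym(X)\nabla\Fsym(X)^\top$ is indexed by pairs $(i,j)$ with eigenvectors built from left singular vectors $\ux_i \ux_j^\top + \ux_j \ux_i^\top$ of $X$) and the ordering guaranteed by Lemma~\ref{prop:conv-ord}, the projection of $XX^\top$ beyond index $k$ only retains diagonal contributions $\sigma_i^2(X)\ux_i\ux_i^\top$ with $i > \rs$ and $\sigma_i^2(X) \le \tfrac{1}{4}\sigma_{k+1}^2$, giving the $\tfrac{1}{16}(r-\rs+1)\sigma_{k+1}^4$ term. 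The projection of $\Xs\Xs^\top$ is then bounded via a spectral-gap argument (Lemma~\ref{second-term-assymetric}) against $\|XX^\top - \Xs\Xs^\top\|_F^2$ divided by $\sigma_{\rs}^2(\Xs)$, from which the $\rho^2$ emerges by the hypothesis $\|XX^\top-\Xs\Xs^\top\|_F \le (\rho/\sqrt{2})\sigma_{\rs}^2(\Xs)$. If you want to salvage a Procrustes-flavored argument, you would at minimum need an overparameterized Procrustes bound that carefully tracks how the rank excess contributes to $\sigma_{k+1}$, which is essentially reproducing the paper's spectral analysis in a less natural coordinate system.
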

\begin{lemma}\label{lem:base_case_sym}  If $\norm{XX^\top  - \Xs {\Xs}^\top  }{\operatorname{op}} \leq \frac{1}{3} \sigma_{\rs}^2 \left( \Xs \right)$, then one has $\sigma_{\bar{r}^\star}^2 \geq \sigma_{\rs}^2 \left( X \right).$
\end{lemma}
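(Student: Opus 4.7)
I aim to lower bound $\sigma_{\bar r^\star}^2$, the squared $\bar r^\star$-th singular value of $\nabla \Fsym(X)$, where $\bar r^\star = \rank\nabla \Fsym(\Xs) = \rs d - \binom{\rs}{2}$ (since $\rank(\Xs)=\rs$). The first step is to apply Weyl's inequality to the eigenvalues of the PSD matrices $XX^\top$ and $\Xs(\Xs)^\top$:
\[
\sigma_\rs(X)^2 = \lambda_\rs(XX^\top) \geq \lambda_\rs(\Xs(\Xs)^\top) - \|XX^\top - \Xs(\Xs)^\top\|_{\op} \geq \tfrac{2}{3}\sigma_\rs(\Xs)^2 > 0,
\]
so $X$ has rank at least $\rs$ and the bound to prove is nontrivial.

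The main step uses the variational (min-max) characterization of singular values: it suffices to exhibit a $\bar r^\star$-dimensional subspace $\mathcal{S}\subseteq\RR^{d\times r}$ on which $\|\nabla\Fsym(X)[Y]\|_F \geq \sigma_\rs(X)\|Y\|_F$. Writing the SVD $X = U^X\Sigma^X(V^X)^\top$ and completing the columns of $U^X$ to an orthonormal basis $\{U^X_i\}_{i=1}^d$ of $\RR^d$, I would take $\mathcal{S}$ as the span of three orthonormal families of unit matrices: (i) the diagonals $U^X_i(V^X_i)^\top$ for $i\in[\rs]$; (ii) the off-diagonal combinations $\bigl(\sigma_i(X)^2 + \sigma_j(X)^2\bigr)^{-1/2}\bigl(\sigma_j(X)\,U^X_i(V^X_j)^\top + \sigma_i(X)\,U^X_j(V^X_i)^\top\bigr)$ for $1\leq i<j\leq \rs$; and (iii) the tall matrices $U^X_i(V^X_j)^\top$ for $j\leq \rs < i \leq d$. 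The three families have disjoint supports in the tensor basis $\{U^X_i(V^X_j)^\top\}$, making them pairwise orthonormal, and their cardinalities $\rs + \binom{\rs}{2} + \rs(d-\rs) = \bar r^\star$ match the target dimension.

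A direct calculation with $\nabla\Fsym(X)[Y] = YX^\top + XY^\top$ shows that, in the orthonormal basis of $\SSS^d$ given by $U^X_i(U^X_i)^\top$ together with $\tfrac{1}{\sqrt 2}\bigl(U^X_i(U^X_j)^\top + U^X_j(U^X_i)^\top\bigr)$, the three families above are mapped into distinct orthogonal axes with magnitudes $2\sigma_i(X)$, $\sqrt{2(\sigma_i(X)^2 + \sigma_j(X)^2)}$, and $\sqrt{2}\sigma_j(X)$, respectively. Each magnitude is at least $\sqrt{2}\sigma_\rs(X)$, so the restriction of $\nabla\Fsym(X)$ to $\mathcal{S}$ is diagonal in orthonormal bases with smallest singular value $\sqrt{2}\sigma_\rs(X)$. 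The min-max theorem then yields $\sigma_{\bar r^\star}^2 \geq 2\sigma_\rs(X)^2 \geq \sigma_\rs(X)^2$, completing the proof.

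The main technical hurdle lies in the off-diagonal block $1 \leq i < j \leq \rs$: the two-dimensional span of $U^X_i(V^X_j)^\top$ and $U^X_j(V^X_i)^\top$ contains the one-dimensional kernel direction $\sigma_i(X)U^X_i(V^X_j)^\top - \sigma_j(X)U^X_j(V^X_i)^\top$ of $\nabla\Fsym(X)$, so one must choose its orthogonal complement with the precise weighting in (ii) to recover a nonzero singular value. This same bookkeeping is what produces the $-\binom{\rs}{2}$ correction in $\bar r^\star = \rs d - \binom{\rs}{2}$.
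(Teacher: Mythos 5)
Your proof is correct, and it takes a genuinely different route from the paper. The paper obtains the lemma as an immediate consequence of two prerequisite results: the explicit eigendecomposition of the Gram matrix $\nabla \Fsym(X)\nabla \Fsym(X)^\top$ (Lemma~\ref{lem:spectral-bm}, with eigenvalues $2(\sigma_i^2(X)+\sigma_j^2(X))$ and eigenvectors built from the $U^X_i$) together with the ordering result Lemma~\ref{prop:conv-ord}, which says that under the operator-norm hypothesis the top $\bar r^\star = \#\Delta_{\rs}$ eigenvalues are precisely those indexed by $\Delta_{\rs}$; hence $\sigma_{\bar r^\star}^2 = \min_{(i,j)\in\Delta_{\rs}}2(\sigma_i^2(X)+\sigma_j^2(X)) \ge 2\sigma_{\rs}^2(X)$. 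You instead bypass the ordering lemma entirely by working on the domain side: you explicitly build a $\bar r^\star$-dimensional subspace of $\RR^{d\times r}$ (the three families of unit matrices, carefully chosen to avoid the kernel directions $\sigma_i(X)U^X_i(V^X_j)^\top - \sigma_j(X)U^X_j(V^X_i)^\top$) on which $\nabla\Fsym(X)$ acts diagonally in orthonormal bases with all diagonal entries at least $\sqrt{2}\,\sigma_{\rs}(X)$, and then invoke the Courant--Fischer variational characterization. Your construction in effect re-derives the essential content of Lemma~\ref{lem:spectral-bm} on the right-singular-vector side, but it dispenses with Lemma~\ref{prop:conv-ord}, and this is a real simplification: your argument makes it clear that the bound $\sigma_{\bar r^\star}^2 \ge 2\sigma_{\rs}^2(X)$ holds \emph{unconditionally} (for any $X$, since there are always at least $\bar r^\star$ Gram eigenvalues $\ge 2\sigma_{\rs}^2(X)$), and the operator-norm hypothesis is used only to guarantee $\sigma_{\rs}(X)>0$ via Weyl, making the statement nontrivial. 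Both routes yield the sharper factor-of-two bound $\sigma_{\bar r^\star}^2 \ge 2\sigma_{\rs}^2(X)$, of which the stated inequality is a weakening. The paper's proof is shorter given its prerequisites; yours is more self-contained and more transparent about what the hypothesis is actually doing.
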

  We will prove these two results soon. Before delving into their proof, let us use these results to derive weak alignment. To this end, we show that the Burer-Monteiro map satisfies the three conditions from Proposition \ref{thm: sufficient-condition-weak-alignment} with
   $$\sig(\rho) = \frac{4\rho}{\sqrt{2(r - \rs + 1)}} \text{ and } \delta(\rho) =\min\left\{ \frac{\rho}{\sqrt{2}}, \frac{1}{1 + \sig(\rho)}  , \frac{1}{3}   \right\}\lambda_{r^\star} \left(\Zs\right).
$$
Define $\bar r^\star = \rankk{\nabla \Fsym(\Xs)}$ and take any $Z \in \Ima \Fsym$ such that  $\norm{Z - \Zs}{F} \leq \delta(\rho)$.
\begin{enumerate}

\item Using Proposition~\ref{lem: bm-alignment} with $k=\rank \nabla \Fsym(X)$, we have  $$\norm{ \left(I - \Pi^X_{\bar r} \right) \left[Z  - \Zs\right] }{F}^2 \leq 0 + \frac{\rho^2}{2} \norm{Z-\Zs}{F}^2 \leq \rho^2 \norm{Z - \Zs}{F}^2.$$
\item Let $k \in \{ \bar{r}^\star +1, \ldots, \rank \nabla \Fsym(X) \}$ and assume $\sigma_{k}^2 \leq \sig(\rho) \norm{Z-\Zs}{F}$. Again by Proposition~\ref{lem: bm-alignment}, we have
\begin{align*}
\norm{ \left(I - \Pi^X_{k-1} \right)\left[Z-\Zs\right] }{F}^2 &  \leq  \frac{1}{16}(r - \rs +1 ) \sigma_{k}^4 + \frac{\rho^2}{2} \norm{Z-\Zs}{F}^2 \\ & \leq \left( \frac{1}{16}(r -\rs + 1)\sig(\rho)^2 + \frac{\rho^2}{2} \right) \norm{Z -\Zs}{F}^2 \\ &= \rho^2 \norm{Z -\Zs}{F}^2,
\end{align*}
where the last line follows from the definition of $\sig(\rho).$ %
\item By Lemma \ref{lem:base_case_sym}, we have that $\sigma_{\bar{r}^\star}^2 \geq \sigma_{\rs}^2 \left( X \right)$. By Weyl's inequality, we have
$ \sigma_{\rs}^2 \left( X \right) \geq \sigma_{\rs}^2 \left( \Xs \right) - \norm{Z - \Zs}{F} $. By the choice of $\delta(\rho)$, we get $\sigma_{\rs}^2 \left( \Xs \right)  \geq (\sig(\rho) + 1 - 1) \norm{Z - \Zs}{F}$ and thus $\sigma_{\bar{r}^\star}^2 \geq \sig(\rho) \norm{Z - \Zs}{F}$.
\end{enumerate}
Then, invoking Proposition~\ref{thm: sufficient-condition-weak-alignment} establishes Theorem~\ref{thm:regularity-matrix-sym}.
To complete the proof, we must still prove Proposition~\ref{lem: bm-alignment} and Lemma~\ref{lem:base_case_sym}. The following are auxiliary results for such a purpose. Lemma~\ref{lem:base_case_sym} follows directly from Lemma~\ref{lem:spectral-bm} and Lemma~\ref{prop:conv-ord}.

\begin{lemma}[{Spectral characterization for Burer-Monteiro}]
    \label{lem:spectral-bm}
    Let $\Fsym : \RR^{d\times r} \rightarrow \SSS^{d}$ be given by $\Fsym(X) = XX^\top.$ Then, the eigenpairs of $\nabla \Fsym(X) \nabla \Fsym(X)^\top$ are given by

      \[
    \left( 2\left(\sigma_{i}^2\left( X \right)+\sigma_{j}^2\left( X \right)\right),  \frac{1}{c_{ij}}\left(\ux_{i}  {\ux_{j}}^\top  + \ux_{j} {\ux_{i}}^\top \right) \right)
    \]
    for all $(i, j) \in [d] \times [d]$ with $i\le j$. Here the normalizing constants are  $c_{i,j} = 2$ if $i = j$ and $\sqrt{2}$ otherwise. Moreover, the eigenvectors form an orthonormal basis of $\SSS^d$.
\end{lemma}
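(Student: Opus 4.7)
The plan is to verify the identity directly by plugging the candidate eigenvector into the operator $\nabla \Fsym(X)\nabla \Fsym(X)^\top$ and exploiting the SVD of $X$. From the explicit Jacobian formulas in~\eqref{action: nablac-and-nablacT-bm}, for any $W \in \SSS^d$ the composition simplifies to
\[
\nabla \Fsym(X)\nabla \Fsym(X)^\top [W] \;=\; \nabla \Fsym(X)[2WX] \;=\; 2\bigl(WXX^\top + XX^\top W\bigr),
\]
since symmetry gives $(W+W^\top)X = 2WX$. So the task reduces to computing how $XX^\top$ acts on the candidate $W_{ij} := c_{ij}^{-1}(U^X_i (U^X_j)^\top + U^X_j (U^X_i)^\top)$ from the left and from the right.

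First, I would diagonalize $XX^\top = \sum_{k}(\sigma_k^X)^2 U^X_k (U^X_k)^\top$ using the SVD $X = U^X \Sigma^X (V^X)^\top$ (with the convention $\sigma_k^X = 0$ for $k > \min(d,r)$, so that all indices in $[d]$ are treated uniformly). Multiplying out and using orthonormality of the $U^X_k$'s gives
\begin{align*}
W_{ij}XX^\top &= c_{ij}^{-1}\bigl((\sigma_j^X)^2 U^X_i(U^X_j)^\top + (\sigma_i^X)^2 U^X_j(U^X_i)^\top\bigr),\\
XX^\top W_{ij} &= c_{ij}^{-1}\bigl((\sigma_i^X)^2 U^X_i(U^X_j)^\top + (\sigma_j^X)^2 U^X_j(U^X_i)^\top\bigr).
\end{align*}
Adding and multiplying by $2$ produces exactly $2((\sigma_i^X)^2+(\sigma_j^X)^2)\,W_{ij}$, establishing the claimed eigenvalue and eigenvector pair.

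Next, I would verify the normalization. For $i\neq j$, the two rank-one terms $U^X_i(U^X_j)^\top$ and $U^X_j(U^X_i)^\top$ are orthogonal (their Frobenius inner product equals $\langle U^X_i,U^X_j\rangle^2 = 0$) and each has unit Frobenius norm, so $\|W_{ij}\|_F^2 = 2/c_{ij}^2$, forcing $c_{ij}=\sqrt{2}$; for $i=j$, the sum collapses to $2U^X_i(U^X_i)^\top$ of norm $2$, giving $c_{ii}=2$. Orthonormality across distinct pairs $(i,j)\neq(k,l)$ with $i\leq j$, $k\leq l$ follows from the same rank-one orthogonality, since $\{U^X_a(U^X_b)^\top\}_{a,b\in[d]}$ is an orthonormal basis of $\R^{d\times d}$. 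Finally, the number of such pairs is $\binom{d+1}{2}=\dim \SSS^d$, so the constructed orthonormal set is a basis of $\SSS^d$.

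There is no real conceptual obstacle here; the argument is a direct computation. The only subtlety worth being careful about is the bookkeeping for the completed bases $\{U^X_i\}_{i=1}^d$ when $d>r$: one must observe that the vanishing singular values for $k>r$ make the formula continue to hold and correctly account for the kernel of $\nabla \Fsym(X)\nabla \Fsym(X)^\top$, i.e.\ eigenvalue zero with multiplicity $\binom{d-r+1}{2}$ coming from pairs $(i,j)$ with $i,j>r$.
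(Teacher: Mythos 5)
Your proposal is correct and follows essentially the same route as the paper's proof: apply the explicit Jacobian/adjoint formulas to the symmetrized candidate $Z = \ux_i{\ux_j}^\top + \ux_j{\ux_i}^\top$, use the spectral decomposition $XX^\top = \sum_k \sigma_k^2(X)\,\ux_k{\ux_k}^\top$ to read off the eigenvalue $2(\sigma_i^2(X)+\sigma_j^2(X))$, and then establish orthogonality of the candidates plus a dimension count $\binom{d+1}{2}=\dim\SSS^d$ to conclude they form a basis. Your explicit normalization check of $c_{ij}$ and the remark on the zero eigenvalues when $d>r$ are harmless additions not present in, but consistent with, the paper's argument.
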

\noindent Lemma~\ref{lem:spectral-bm} likely already exists in the literature. We include a proof in Appendix~\ref{sec:proof-spectral-bm} for completeness. In turn, we need to understand how to conveniently index the eigenvalues and eigenvectors of $\nabla\Fsym(X)^\top \nabla \Fsym(X).$ The next few results develop such an indexing. A direct result is Lemma~\ref{lem:base_case_sym}.

\begin{corollary}
\label{cor:eigen} Let $\Delta =  \{ (i,j) \mid 1\leq i\leq j\leq d \}.$ Then, there exists a bijection $\tau \colon \Delta \rightarrow \left[\binom{d+1}{2}\right]$ such that for $(i, j) \in \Delta$, we have
\begin{align}\begin{split}\label{eq:bijection}
\lambdaa{\nabla \Fsym(X) }{\tau(i,j)} &= 2\left(\sigma_{i}^2\left( X \right) + \sigma_{j}^2\left( X \right)\right),  \quad \text{and}\\ 
U_{\tau(i,j)} &= \frac{1}{c_{ij}}{\ux_{i} \otimeskron {\ux_{j}} + \ux_{j} \otimeskron {\ux_{i}}}.
\end{split}
\end{align}

\end{corollary}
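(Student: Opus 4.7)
The plan is to recognize that Corollary~\ref{cor:eigen} is essentially a bookkeeping restatement of Lemma~\ref{lem:spectral-bm}. By that lemma, the set
$$
\left\{ \tfrac{1}{c_{ij}}\!\left(\ux_{i}{\ux_{j}}^\top + \ux_{j}{\ux_{i}}^\top\right) : (i,j) \in \Delta \right\}
$$
is an orthonormal eigenbasis of $\nabla\Fsym(X)\nabla\Fsym(X)^\top$ inside $\SSS^d$, with the eigenvalue attached to index $(i,j)$ equal to $2(\sigma_i^2(X)+\sigma_j^2(X))$. The first thing I would verify is the count: $|\Delta| = \binom{d+1}{2} = \dim \SSS^d$, so this family is a complete basis and there are exactly the right number of eigenpairs to enumerate.

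Next, I would construct the bijection $\tau$ explicitly. Since the symbol $\lambda_k(\cdot)$ denotes the $k$-th eigenvalue in nonincreasing order (as fixed in the notation section), I choose $\tau$ to be any bijection from $\Delta$ to $[\binom{d+1}{2}]$ that satisfies the monotonicity property
$$
\tau(i,j) \le \tau(i',j') \iff \sigma_i^2(X) + \sigma_j^2(X) \ge \sigma_{i'}^2(X) + \sigma_{j'}^2(X),
$$
with ties broken arbitrarily. By construction, the $\tau(i,j)$-th largest eigenvalue of $\nabla\Fsym(X)\nabla\Fsym(X)^\top$ is precisely $2(\sigma_i^2(X) + \sigma_j^2(X))$, which is the first identity in~\eqref{eq:bijection}. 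For the second identity, I would simply pick $U_{\tau(i,j)}$ to be the eigenvector from Lemma~\ref{lem:spectral-bm} attached to that index; orthonormality of the collection (already given by the lemma) ensures that this is a valid choice of an SVD left-singular-vector matrix for $\nabla\Fsym(X)$.

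The one cosmetic point to address is the mismatch between the outer-product form $\ux_i{\ux_j}^\top + \ux_j{\ux_i}^\top \in \SSS^d$ used in Lemma~\ref{lem:spectral-bm} and the Kronecker-product form $\ux_i \otimeskron \ux_j + \ux_j \otimeskron \ux_i$ used in the corollary. Under the identification $\SSS^d \cong \RR^{\binom{d+1}{2}}$ explicitly invoked in the setup, these two expressions represent the same element via the standard vectorization identity $\operatorname{vec}(ab^\top) = b \otimeskron a$; symmetrizing $i$ and $j$ removes the convention ambiguity, so the two formulas agree.

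The main (and essentially only) obstacle is this notational matching together with a careful tie-breaking rule for $\tau$ — there is no genuine computation beyond what Lemma~\ref{lem:spectral-bm} already delivers.
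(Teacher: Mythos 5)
Your proposal is correct and follows essentially the same route as the paper, which simply declares Corollary~\ref{cor:eigen} a direct consequence of Lemma~\ref{lem:spectral-bm}; your write-up just makes explicit the three bookkeeping steps the paper leaves implicit (the count $|\Delta|=\binom{d+1}{2}=\dim\SSS^d$, the choice of $\tau$ as an eigenvalue-sorting bijection with arbitrary tie-breaking so that $\lambda_{\tau(i,j)}$ is indeed the $\tau(i,j)$-th largest eigenvalue, and the identification of $\ux_i\ux_j^\top+\ux_j\ux_i^\top$ with $\ux_i\otimeskron\ux_j+\ux_j\otimeskron\ux_i$ via vectorization). No gaps.
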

\begin{corollary}
\label{cor:rank}
    Let $X \in \mathbb{R}^{d\times r}$ of rank $\rprime$. Then, the rank of $\nabla \Fsym(X)$ is $d\rprime - \binom{\rprime}{2}.$
\end{corollary}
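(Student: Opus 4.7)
The plan is to deduce the corollary directly from the spectral characterization in Lemma~\ref{lem:spectral-bm}. Since the rank of $\nabla \Fsym(X)$ coincides with the rank of $\nabla \Fsym(X)\nabla \Fsym(X)^\top$, it suffices to count the number of nonzero eigenvalues of the latter operator.

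By Lemma~\ref{lem:spectral-bm}, the eigenvalues of $\nabla \Fsym(X)\nabla \Fsym(X)^\top$ are precisely
$$2\bigl(\sigma_i^2(X) + \sigma_j^2(X)\bigr) \quad \text{for } 1 \leq i \leq j \leq d,$$
and the associated eigenvectors form an orthonormal basis of $\SSS^d$, so no cancellation can occur. Since the singular values $\sigma_1(X) \geq \sigma_2(X) \geq \cdots \geq \sigma_d(X) \geq 0$ are nonincreasing and $X$ has rank $\rprime$, the values $\sigma_i(X)$ are positive precisely for $i \leq \rprime$. Hence $2(\sigma_i^2(X) + \sigma_j^2(X)) > 0$ if and only if $\min\{i,j\} \leq \rprime$, which, under the constraint $i \leq j$, is equivalent to $i \leq \rprime$.

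It remains a simple counting exercise. For each fixed $i \in \{1, \dots, \rprime\}$, the index $j$ ranges over $\{i, i+1, \dots, d\}$, contributing $d - i + 1$ nonzero eigenvalues. Summing over $i$ yields
$$\sum_{i=1}^{\rprime}(d - i + 1) \;=\; \rprime d + \rprime - \sum_{i=1}^{\rprime} i \;=\; \rprime d + \rprime - \frac{\rprime(\rprime+1)}{2} \;=\; \rprime d - \binom{\rprime}{2},$$
which gives the claimed rank. There is no real obstacle here: once the spectral description from Lemma~\ref{lem:spectral-bm} is in hand and one uses that the described eigenvectors are orthonormal (so nonzero eigenvalues contribute exactly to the rank), the result reduces to a one-line combinatorial identity.
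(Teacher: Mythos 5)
Your proof is correct and follows the same route the paper intends: the paper simply declares Corollary~\ref{cor:rank} a direct consequence of the spectral characterization in Lemma~\ref{lem:spectral-bm}, and your argument fills in exactly that counting of pairs $(i,j)$ with $i\le j$ and $\sigma_i^2(X)+\sigma_j^2(X)>0$, together with the correct identity $\sum_{i=1}^{\rprime}(d-i+1)=d\rprime-\binom{\rprime}{2}$. No gaps.
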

\noindent These two are direct corollaries of Lemma~\ref{lem:spectral-bm}. In particular, $\bar r^\star = d\rs - \binom{\rs}{2}$ and the maximum rank of $\nabla \Fsym(X)$ is $dr - \binom{r}{2}.$ Consider the partition of $\Delta$ given by
$$\Delta_{\rs}=\{ (i,j) \mid 1\leq i \leq j \leq d \text{ and }  i \leq \rs \} \quad \text{ and } \quad \Delta^{c}_{\rs} = \Delta \setminus \Delta_{\rs}.$$

\begin{lemma}
 \label{prop:conv-ord}
 If $\norm{XX^\top  - \Xs {\Xs}^\top  }{\operatorname{op}} \leq \frac{1}{3} \sigma_{\rs}^2 \left( \Xs \right)$, then there is a bijection $\tau \colon \Delta \rightarrow \left[\binom{d+1}{2}\right]$ satisfying \eqref{eq:bijection} and
    \begin{equation}\label{eq:senior-bijection}
    \tau(i,j) < \tau(n,m)\quad \text{ for all $(i,j) \in \Delta_{\rs}$ and $(n, m) \in \Delta^c_{\rs}$ }.
    \end{equation}
 \end{lemma}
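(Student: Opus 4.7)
My plan is to establish the statement by showing that the eigenvalues indexed by $\Delta_{\rs}$ are uniformly at least as large as those indexed by $\Delta^c_{\rs}$; once this is shown, any bijection $\tau$ from Corollary~\ref{cor:eigen} can be re-arranged (by permuting within equal-eigenvalue groups) so that the top $\#\Delta_{\rs}$ positions are occupied by pairs in $\Delta_{\rs}$, which is exactly what \eqref{eq:senior-bijection} asks for.

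The heart of the argument is a Weyl-type perturbation bound on the singular values of $X$. Using $\sigma_i^2(X) = \lambda_i(XX^\top)$ together with $\|XX^\top - \Xs(\Xs)^\top\|_{\mathrm{op}} \le \tfrac{1}{3}\sigma_{\rs}^2(\Xs)$ and the fact that $\sigma_{\rs+1}(\Xs) = 0$, I will derive the two bounds
\begin{equation*}
\sigma_{\rs}^2(X) \;\ge\; \tfrac{2}{3}\,\sigma_{\rs}^2(\Xs)
\qquad\text{and}\qquad
\sigma_{\rs+1}^2(X) \;\le\; \tfrac{1}{3}\,\sigma_{\rs}^2(\Xs),
\end{equation*}
which together give the key comparison $\sigma_{\rs}^2(X) \ge 2\,\sigma_{\rs+1}^2(X)$.

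With this in hand, I will argue the comparison of eigenvalues across the two blocks. For $(i,j)\in\Delta_{\rs}$ we have $i\le \rs$, so monotonicity of singular values yields $\sigma_i^2(X)+\sigma_j^2(X)\ge \sigma_{\rs}^2(X)$. For $(n,m)\in\Delta^c_{\rs}$ we have $n\ge \rs+1$ and $m\ge n$, so $\sigma_n^2(X)+\sigma_m^2(X)\le 2\sigma_{\rs+1}^2(X)$. Chaining these with the key comparison shows
\begin{equation*}
2\bigl(\sigma_i^2(X)+\sigma_j^2(X)\bigr) \;\ge\; 2\bigl(\sigma_n^2(X)+\sigma_m^2(X)\bigr),
\end{equation*}
which is exactly the eigenvalue ordering required for the block $\Delta_{\rs}$ to occupy the top positions.

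Finally, I will invoke Corollary~\ref{cor:eigen} to obtain some bijection $\tau_0\colon\Delta\to[\binom{d+1}{2}]$ satisfying \eqref{eq:bijection}, and construct the desired $\tau$ by post-composing $\tau_0$ with a permutation that breaks ties in favor of $\Delta_{\rs}$: within each group of equal eigenvalues, list all elements of $\Delta_{\rs}$ before any element of $\Delta^c_{\rs}$. Since this permutation only swaps indices among pairs sharing the same eigenvalue/eigenspace, the resulting $\tau$ still satisfies \eqref{eq:bijection}, and by the eigenvalue comparison above it now satisfies \eqref{eq:senior-bijection}. The main obstacle is handling ties correctly; this is why the argument is stated as $\ge$ rather than strict inequality and why the tie-breaking step is essential. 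No other delicate issue arises beyond carefully applying Weyl's inequality.
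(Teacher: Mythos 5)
Your proposal is correct and follows essentially the same route as the paper: apply Weyl's inequality to the perturbation bound to get a factor-two gap between $\sigma_{\rs}^2(X)$ and $\sigma_{\rs+1}^2(X)$ (the paper phrases this as $\lambda_n(XX^\top)\le \tfrac{1}{2}\sigma_i^2(X)$ for $n>\rs$, $i\le\rs$), deduce that eigenvalues of $\nabla \Fsym(X)\nabla \Fsym(X)^\top$ indexed by $\Delta_{\rs}$ dominate those indexed by $\Delta^c_{\rs}$, and then fix ties by reordering the bijection from Corollary~\ref{cor:eigen} within equal-eigenvalue groups. Your explicit tie-breaking discussion is if anything slightly more careful than the paper's one-line remark.
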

 \begin{proof}
   Consider the bijection $\tau$ furnished by Corollary~\ref{cor:eigen}.
   We claim that for any for any  $i\in \{1, \ldots, \rs\}$ and $n \in \{\rs+1, \ldots, d\}$,
   \begin{equation}\label{eq:tiny-eigenvalues}
 \lambdax{n} \leq \frac{\sigma_{i}^2\left( X \right)}{2}.
  \end{equation}
  To show this inequality, we repeatedly apply Weyl's inequality
  \begin{align*}
    \lambdax{n} \leq \lambdax{\rs + 1} &= \lambdax{\rs + 1} - \sigma_{\rs+1}^2\left( \Xs \right)\\
    & \leq \norm{XX^\top  - \Xs {\Xs}^\top  }{\operatorname{op}}  \\
    & \leq \frac{1}{2} \left(\sigma_{\rs}^2\left( \Xs \right) - \norm{XX^\top  - \Xs {\Xs}^\top  }{\operatorname{op}}  \right) \\
    & \leq \frac{1}{2} \lambdax{\rs} \leq \frac{1}{2}\sigma_{i}^2\left( X \right),
\end{align*}
where the second inequality follows since $\norm{XX^\top  - \Xs {\Xs}^\top  }{\operatorname{op}} \leq \tfrac{\sigma_{\rs}^2\left( \Xs \right)}{3}$.
     Then, for any $(i,j)\in \Delta_{\rs}$ and $(n, m) \in \Delta_{\rs}^{c}$ we have
     \begin{align*}
             \lambda_{\tau(n,m)}\left( \nabla \Fsym(X)\nabla \Fsym(X)^{\top} \right) &= 2\lambdax{n} + 2\lambdax{m} \\
                                                   &\leq 2 \sigma_{i}^2\left( X \right) \\
       &\leq 2\sigma_{i}^2\left( X \right) + 2 \sigma_{j}^2\left( X \right) = \lambda_{\tau(i,j)}\left( \nabla \Fsym(X)\nabla \Fsym(X)^{\top} \right),
     \end{align*}
     where the first inequality follows from \eqref{eq:tiny-eigenvalues}. Therefore, by~\eqref{eq:bijection} we derive $\tau(i,j)\leq \tau(n, m)$; if this inequality does not hold strictly, we could modify $\tau$ to enforce strictness without breaking bijectivity. This establishes the result.
\end{proof}

We are now ready to prove Proposition~\ref{lem: bm-alignment}.

\begin{proof}[Proof of Proposition~\ref{lem: bm-alignment}]
 Recall that $U_{i}$ and $\sigma_{i}$ denote the $i$th top left-singular vector and singular value of $\nabla \Fsym(X),$ respectively. We use ${\Xs}=\svd{{\Xs}}$ to denote the SVD of $\Xs$, further we use $\sigma^X_i = \Sigma_{ii}^{X}$ and $\sigma^{\Xs}_{i} = \Sigma_{ii}^{\Xs}$. Let $\tau$ be the bijection provided by Lemma~\ref{prop:conv-ord} and expand
 \begin{align*}
  &\left(I - \Pi^X_k \right) [X X^\top  - \Xs {\Xs}^\top] \\ &=  \left( \sum_{\ell=k+1}^{\binom{d+1}{2}} U_\ell U_\ell^\top \right)   \vect{X X^\top  - \Xs {\Xs}^\top}\\
  & = \sum_{\substack{ 1 \leq i\leq j\leq d, \\ \tau(i,j)\geq k+1 }} \frac{1}{ c_{i,j}} \left( \ux_{i} \otimeskron \ux_{j} +\ux_{j} \otimeskron \ux_{i}  \right) \left( \ux_{i} \otimeskron \ux_{j} +\ux_{j} \otimeskron \ux_{i}  \right)^\top
    \vect{X X^\top  - \Xs {\Xs}^\top },
\end{align*}
where the last equality follows from Corollary \ref{cor:eigen}, with $c_{i,j}=(2 + 2\mathbbm{1}_{i=j})$. %
The next claim will help us understand this sum.
\begin{claim}
\label{action_nablac_1}
 For any $1 \leq i \leq j \leq d$, we have
     \begin{align*}
&  \left( {\ux_{i}} \otimeskron {\ux_{j}} + {\ux_{j}} \otimeskron {\ux_{i}} \right) \left( {\ux_{i}} \otimeskron {\ux_{j}} + {\ux_{j}} \otimeskron {\ux_{i}} \right)^\top  \vect{ \Xs\Xs^\top}\\
&\qquad = 2 {\ux_i}^{\top} \Xs {\Xs}^\top \ux_j \left( {\ux_{i}} \otimeskron {\ux_{j}} + {\ux_{j}} \otimeskron {\ux_{i}} \right) 
\end{align*}
and
\begin{align*}
&  \left( {\ux_{i}} \otimeskron {\ux_{j}} + {\ux_{j}} \otimeskron {\ux_{i}} \right) \left( {\ux_{i}} \otimeskron {\ux_{j}} + {\ux_{j}} \otimeskron {\ux_{i}} \right)^\top  \vect{ XX^\top}\\
&\qquad = \begin{cases}
    4 \sigma_{i}^2\left( X \right)\cdot \ux_{i} \otimeskron {\ux_{i}}  & \text{if } i = j, \\
    0 & \text{otherwise.}
    \end{cases}
\end{align*}

\end{claim}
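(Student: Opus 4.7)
The plan is to reduce both identities to a direct computation using the basic identity
\begin{equation*}
(a \otimeskron b)^\top \vect{M} = b^\top M a, \qquad a, b \in \RR^d, \; M \in \RR^{d\times d},
\end{equation*}
which follows from the column-stacking vectorization convention. Applying this to the rank-one outer product $uu^\top$ with $u = \ux_i \otimeskron \ux_j + \ux_j \otimeskron \ux_i$ will reveal that $uu^\top \vect{M}$ is simply $\big(u^\top \vect{M}\big) \cdot u$, so it suffices to understand the scalar $u^\top \vect{M}$ for $M \in \{\Xs\Xs^\top, XX^\top\}$.

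First, I would compute $u^\top \vect{M}$ by splitting it into two terms and applying the identity above to each piece:
\begin{equation*}
u^\top \vect{M} = \ux_j^\top M \ux_i + \ux_i^\top M \ux_j = 2\, \ux_i^\top M \ux_j,
\end{equation*}
where the last equality uses the symmetry of $M$. For $M = \Xs\Xs^\top$ this yields precisely the scalar coefficient $2\,\ux_i^\top \Xs\Xs^\top \ux_j$ appearing in the first identity, so the first identity drops out by rewriting $uu^\top \vect{\Xs\Xs^\top} = \big(u^\top \vect{\Xs\Xs^\top}\big)\, u$.

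For the second identity, I would use the SVD $X = \svd{X}$ together with the orthonormality of the columns of $U^X$ to compute
\begin{equation*}
\ux_i^\top X X^\top \ux_j = \ux_i^\top U^X (\Sigma^X)^2 (U^X)^\top \ux_j = e_i^\top (\Sigma^X)^2 e_j = \sigma_i^2(X)\, \delta_{ij},
\end{equation*}
where $e_i$ is the $i$-th standard basis vector in $\RR^d$ and $\delta_{ij}$ is the Kronecker delta. (For indices exceeding $\rank X$, the corresponding left singular vector lies in the kernel of $X^\top$ and so the formula still holds with $\sigma_i(X) = 0$.) When $i \neq j$ this is zero and so $uu^\top \vect{XX^\top} = 0$. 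When $i = j$, the vector $u$ collapses to $2(\ux_i \otimeskron \ux_i)$ and the scalar $u^\top \vect{XX^\top}$ equals $2\sigma_i^2(X)$, hence $uu^\top \vect{XX^\top} = 2\sigma_i^2(X)\cdot 2(\ux_i \otimeskron \ux_i) = 4\sigma_i^2(X)\cdot \ux_i \otimeskron \ux_i$, matching the claim.

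There is no real obstacle beyond keeping the vectorization convention consistent across the two factors of $uu^\top$; in particular, making sure the identity $(a \otimeskron b)^\top \vect{M} = b^\top M a$ is applied in the form stated (rather than with $a$ and $b$ swapped) is the only place where bookkeeping could go wrong. Once this is fixed, both identities follow from pure symbol manipulation.
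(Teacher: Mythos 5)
Your proposal is correct and follows essentially the same route as the paper: both reduce the claim to the scalar $\left( \ux_{i} \otimeskron \ux_{j} + \ux_{j} \otimeskron \ux_{i} \right)^\top \vect{M} = 2\,{\ux_i}^\top M\, \ux_j$ via the basic Kronecker/vectorization identity, then specialize $M = \Xs{\Xs}^\top$ for the first identity and use the spectral decomposition of $XX^\top$ together with orthonormality of the (completed) left singular vectors for the second. Your explicit handling of the $i=j$ factor of $4$ and of indices beyond $\rank X$ matches what the paper leaves implicit.
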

\noindent To prove this claim, for any $\tilde X \in \RR^{d\times r}$, we apply properties of the Kronecker product, \eqref{kr_p1} and \eqref{kr_p2} to $\Xprime {\Xprime}^\top$ to derive
  \begin{align*}
 \left( {\ux_{i}} \otimeskron {\ux_{j}} + {\ux_{j}} \otimeskron {\ux_{i}} \right)^\top  \vect{ \Xprime {\Xprime}^\top} & = 2{\ux_{i}}^{\top}\Xprime {\Xprime}^{\top}\ux_{j}  \\
 & = 2 \sum_{\ell = 1}^{r}\lambda_{\ell}(\Xprime {\Xprime}^{\top})\inner{\uxprime_{\ell}}{\ux_{i}}\inner{\uxprime_{\ell}}{\ux_{j}}.
  \end{align*}
 The first and second equations imply the first and second statements, respectively.
  
  Applying Claim~\ref{action_nablac_1} and properties of the Kronecker product yields
  \begin{align*}
    \left(I - \Pi^X_k \right) [X X^\top  - \Xs {\Xs}^\top] &= \underbrace{ 4 \sum_{\substack{ 1 \leq i \leq d, \\ \tau(i,i)\geq k+1 }}  \frac{1}{4}\left( \ux_{i} \otimeskron \ux_{i} \right) \sigma_{i}^2\left( X \right)}_{T_{1} := } \\  & \quad - \underbrace{2\sum_{\substack{ 1 \leq i\leq j\leq d \\ \tau(i,j)\geq k+1 }} \frac{1}{c_{i,j}}   {\ux_i}^{\top} \Xs {\Xs}^\top \ux_j \left( {\ux_{i}} \otimeskron {\ux_{j}} + {\ux_{j}} \otimeskron {\ux_{i}} \right)}_{T_{2} :=}.
    \end{align*}
with $c_{i,j}=(2 + 2\mathbbm{1}_{i=j})$. Taking the Frobenius norm and applying Young's inequality, the inequality yields
\begin{align*}
  \left\| \left(I - \Pi^X_k \right)  \left[X X^\top  - \Xs {\Xs}^\top \right] \right\|_{F}^{2} \leq 2\|T_{1}\|_{2}^{2} + 2\|T_{2}\|_{2}^{2}.
\end{align*}
We focus on bounding each term. 

Since $k \geq \rank \nabla \Fsym(\Xs) = \# \Delta_{\rs}$, and $\tau$ satisfies \eqref{eq:bijection} and \eqref{eq:senior-bijection} must have that \begin{equation}\label{eq:damn-you-k} \tau(i,i) \ge k+1 \quad \text{implies both} \quad i > r^\star \quad \text{and} \quad \lambda_i(XX^\top) \le \frac{1}{4} \sigma_{k+1}^2.
   \end{equation} 
Equipped with these facts, we use the triangle inequality to obtain
\begin{align*}
  \norm{T_{1}}{2}^2 &           = \norm{ \sum_{\substack{ 1 \leq i \leq d, \\ \tau(i,i)\geq k+1 }} \left( \ux_{i} \otimeskron \ux_{i} \right) \sigma_{i}^2\left( X \right) }{2}^2 \\
& =  \sum_{\substack{ 1 \leq i \leq d, \\ \tau(i,i)\geq k+1 }} \lambda_i^2(XX^\top) \cdot \norm{  \left( \ux_{i} \otimeskron \ux_{i} \right) }{2}^2 \\
& =   \sum_{\substack{ 1 \leq i \leq d, \\ \tau(i,i)\geq k+1 }}  \frac{1}{16}\sigma_{k+1}^4\\
& \leq \frac{1}{16}(r -\rs+1)\sigma_{k+1}^4,
\end{align*}
where the second line follows from the orthonormality of $\ux_{i} \otimeskron \ux_{i} $, and the last two lines follows from~\eqref{eq:damn-you-k}.

Finally, we turn to the bounding $T_{2}.$
Thus, expanding $T_{2},$ we get
{\allowdisplaybreaks
\begin{align*}
  \norm{T_{2}}{2}^{2}
    &= \norm{ 2\sum_{\substack{ 1 \leq i\leq j\leq d \\ \tau(i,j)\geq k+1 }} \frac{1}{(2 + 2\mathbbm{1}_{i=j})}  {\ux_i}^{\top} \Xs {\Xs}^\top \ux_j \left( {\ux_{i}} \otimeskron {\ux_{j}} + {\ux_{j}} \otimeskron {\ux_{i}} \right)}{2}^2\\
     & \leq  \norm{ \sum_{\substack{ 1 \leq i\leq j\leq d \\ \tau(i,j)\geq k+1 }}  {\ux_{i}}^\top \Xs {\Xs}^\top {\ux_{j}} \left( {\ux_{i}} \otimeskron {\ux_{j}} \right)  }{2}^2\\
     &=  \norm{ \sum_{\substack{ 1 \leq i\leq j\leq d \\ \tau(i,j)\geq k+1 }}  \ux_{i} {\ux_{i}}^\top \Xs {\Xs}^\top {\ux_{j}}  {\ux_{j}}^\top   }{F}^2,
  \end{align*}}
  where the inequality follows from Cauchy-Schwarz inequality and  $(2 + 2\mathbbm{1}_{i=j}) \geq 2$. By the argument as \eqref{eq:damn-you-k}, we  have that \begin{align*}
      \tau(i,j) \ge k +1 \quad  \text{implies} \quad \min\{i,j\} > r^\star.
  \end{align*} 
  Hence, we  have
{\allowdisplaybreaks \begin{align*}
 \norm{T_{2}}{2}^{2}&\leq
    \norm{  \sum_{i=r^\star + 1}^{d}  {\ux_{i}} {\ux_{i}}^\top  \Xs {\Xs}^\top   \sum_{\substack{1\leq j \leq d\\\tau(i,j) \geq k+1}} {\ux_{j}} {\ux_{j}}^\top }{F}^2 \\
 &\leq \norm{ \left( \sum_{i=\rs+1}^{d} {\ux_{i}} {\ux_{i}}^\top \right) \Xs{\Xs}^\top  \left( \sum_{j=\rs+1}^d {\ux_{j}} {\ux_{j}}^\top \right) }{F}^2 \\
                    &\leq \frac{\rho^2}{2} \norm{ X X^\top  - \Xs {\Xs}^\top  }{F}^2,
\end{align*}}
where the second and third inequalities follow from Lemma~\ref{lem:adding_stuff_to_orhogonal} and Lemma~\ref{second-term-assymetric}, respectively.

\end{proof}
\noindent This completes the proof of Theorem~\ref{thm:regularity-matrix-sym}.

\subsubsection{Proof of Theorem~\ref{thm:regularity-matrix-asym}} \label{section:proof-thm-regularity-matrix-asym}
We first show that the asymmetric matrix factorization map $\Fasym(X,Y) = XY^\top$ satisfies Assumption~\ref{assum:mapc:smoothness} with parameter $\lc = \sqrt{2}$.  A straight computation establishes that the Jacobian of this map and its adjoint acts on $(\Xprime, \Yprime) \in \RR^{d_1\times r} \times \RR^{d_2 \times r}$ and $Z \in \RR^{d_1\times d_2}$  via \begin{equation}\label{action: nablac-and-nablacT-asym} \nabla \Fasym(X,Y)[(\Xprime, \Yprime)] = X\Yprime^\top + \Xprime Y^\top \quad \text{and} \quad \nabla \Fasym(X,Y)^\top [Z] = (ZY, Z^\top X).\end{equation}
Therefore,
\begin{align*}
\norm{\nabla \Fasym(X,Y) - \nabla \Fasym(\Xprime, \Yprime) }{\textrm{op}} &= \sup_{\norm{(A_1,A_2)}{F} = 1} \norm{ \left(\nabla \Fasym(X,Y) - \nabla \Fasym(\Xprime, \Yprime) \right) [(A_1,A_2)] }{F} \\
&= \sup_{\norm{(A_1,A_2)}{F} = 1} \norm{\left(  (X-\Xprime) {A_2}^\top \right) + \left( A_1(Y-\Yprime)^\top  \right) }{F} \\
&\leq \sup_{\norm{(A_1,A_2)}{F} = 1}  \norm{\left(X - \Xprime\right)A_2^\top}{F} + \norm{A_1\left(Y-\Yprime \right)}{F } \\
&\leq  \norm{X - \Xprime}{F} + \norm{Y-\Yprime}{F } \\
& \leq \sqrt{2} \norm{(X,Y) - (\Xprime, \Yprime)}{F},
\end{align*}
where the last inequality comes from Young's inequality. Thus $\lc=\sqrt{2}$.

We turn to proving local weak alignment. Let us introduce some notation. Recall that we fixed a factorization $Z^\star = X^\star (Y^\star)^\top$ with $\rankk{X^\star} = \rankk{Y^\star} = \rankk{Z^\star} = \rs$. Consider any matrix $Z$ with $\rs \leq \rank (Z) \leq r$, and let $X \in \RR^{d_1 \times r}$, $Y \in \RR^{d_2 \times r}$ be any matrices such that $Z = XY^\top$. We  denote the SVD decompositions of $X,Y$ and $\nabla \Fasym(X,Y)$ as $\svd{X}$, $\svd{Y}$, and $U \Sigma V^\top$, %
respectively.  We use $U_i^X, U_i^Y,U_i$ to denote the $i$-th column of $U^X$ , $U^Y$, and $U$, respectively. With slight abuse of notation, we imagine completing the columns of $U^X$, $U^Y$, and $U$ and adding additional vectors such that $\{U^X_i\}_{i=1}^{d_1}$, $\{U^Y_i\}_{i=1}^{d_2}$,  and $\{U_i\}_{i=1}^{d_1d_2}$ form an orthonormal basis of $\RR^{d_1}$, $\RR^{d_2}$, and $\RR^{d_1d_2}$, respectively. Further, we let $\Pi^{(X,Y)}_j$ be the orthogonal projection onto the span of the top $j$ left singular vectors of $\nabla \Fasym(X,Y)$ and $\sigma_j$ be its $j$th singular value. Moreover, we denote by $\bar r$ the rank of $\nabla \Fasym(X,Y)$ and by $\bar r^\star$ the rank of $\nabla \Fasym(\Xs,\Ys)$. We state two key results that underline our arguments. 
\begin{proposition}
\label{lem: asym-alignment}
    Let $(\Xs,\Ys) \in \mathbb{R}^{d_1 \times r} \times \mathbb{R}^{d_2\times r}$ be a factorization of $Z^\star = X^\star (Y^\star)^\top$ satisfying $ \rankk{\Xs} = \rankk{\Ys} = \rs $ and $V^\Xs = V^\Ys$. Let $(X,Y) \in \mathbb{R}^{d_1 \times r} \times \mathbb{R}^{d_2\times r}$ be a pair of factors that satisfies $\norm{(X,Y)-(\Xs,\Ys)}{F} \leq \epsassymmatrix.$ Then, for any $\rho > 0$, if
$$\norm{XY^\top - \Xs\Ys^\top}{F} \leq \rho \frac{ \min\left\{\sigma^2_{\rs}\left( \Xs \right), \sigma^2_{\rs}\left( \Ys \right)\right\}}{4},
$$
we have that
    \begin{align*}
    &\norm{\left(I - \Pi^{(X,Y)}_{k} \right) [XY^\top - \Xs\Ys^\top ]}{F}^2 \\ 
& \qquad \leq \left(5\sqrt{2}\frac{ \sigma^2_{\rs}\left( \Xs \right) + \sigma^2_{\rs}\left( \Ys \right) }{\min\left\{\sigma^2_{\rs}\left( \Xs \right), \sigma^2_{\rs}\left( \Ys \right)\right\}} (r-\rs+1)^2 \right)^2 \sigma^4_{k+1} + \frac{\rho^2}{2} \norm{XY^\top - \Xs\Ys^\top}{F}^2
\end{align*}
    for any $k\in \left\{ \rank(\nabla \Fasym (\Xs,\Ys)), \ldots,  \rank(\nabla \Fasym(X,Y)) \right\}$.  %
\end{proposition}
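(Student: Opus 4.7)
The plan is to follow the same template as the proof of Proposition~\ref{lem: bm-alignment}, adapted to the asymmetric parameterization. The first ingredient will be an asymmetric analogue of Lemma~\ref{lem:spectral-bm}. Using \eqref{action: nablac-and-nablacT-asym}, one computes $\nabla\Fasym(X,Y)\nabla\Fasym(X,Y)^\top[Z] = XX^\top Z + ZYY^\top$ for $Z\in\RR^{d_1\times d_2}$. This Sylvester-type operator has eigenpairs
\[
\bigl(\sigma_i^2(X)+\sigma_j^2(Y),\ U^X_i(U^Y_j)^\top\bigr),\qquad (i,j)\in[d_1]\times[d_2],
\]
whose eigenvectors form an orthonormal basis of $\RR^{d_1\times d_2}$. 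I would then order the eigenvalues via a bijection $\tau\colon[d_1]\times[d_2]\to[d_1 d_2]$, analogous to Corollary~\ref{cor:eigen}. The neighborhood hypothesis $\|(X,Y)-(\Xs,\Ys)\|_F\leq\epsassymmatrix$ combined with Weyl's inequality controls the singular values of $X$ and $Y$, giving $\sigma_{\rs+1}(X),\sigma_{\rs+1}(Y)$ of order $\epsassymmatrix$ and $\sigma_{\rs}(X),\sigma_{\rs}(Y)$ at least a constant fraction of $\sigma_{\rs}(\Xs),\sigma_{\rs}(\Ys)$. This ensures that for any $k\geq \bar r^\star = \rs(d_1+d_2-\rs)$, the condition $\tau(i,j)>k$ forces both $i>\rs$ and $j>\rs$, the asymmetric analogue of Lemma~\ref{prop:conv-ord}.

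With the spectral structure in place, I would expand the projected difference in this eigenbasis as $(I-\Pi^{(X,Y)}_k)[XY^\top-\Xs\Ys^\top] = T_1 - T_2$, where $T_1 = \sum_{\tau(i,j)>k}\langle U^X_i(U^Y_j)^\top,XY^\top\rangle U^X_i(U^Y_j)^\top$ and $T_2$ is the analogous sum with $XY^\top$ replaced by $\Xs\Ys^\top$. Young's inequality gives $\|T_1-T_2\|_F^2\leq 2\|T_1\|_F^2+2\|T_2\|_F^2$, reducing the problem to bounding each term. For $T_1$, the coefficient equals $\sigma_i(X)\sigma_j(Y)(V^X_i)^\top V^Y_j$; bounding $|(V^X_i)^\top V^Y_j|\leq 1$ and invoking $\sigma_i^2(X)+\sigma_j^2(Y)\leq \sigma_{k+1}^2$ (from the ordering of $\tau$) together with AM-GM yields $(\sigma_i(X)\sigma_j(Y))^2\leq \tfrac{1}{4}\sigma_{k+1}^4$; summing over the at most $(r-\rs+1)^2$ surviving pairs produces the leading $\sigma_{k+1}^4$ term in the claimed bound, and the balance prefactor $(\sigma_{\rs}^2(\Xs)+\sigma_{\rs}^2(\Ys))/\min\{\sigma_{\rs}^2(\Xs),\sigma_{\rs}^2(\Ys)\}$ emerges from converting between $\sigma_i(X)\sigma_j(Y)$ and $\sigma_i^2(X)+\sigma_j^2(Y)$ in the unbalanced regime.

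For $T_2$, I would regroup the sum to obtain the sandwiched expression
\begin{align*}
\|T_2\|_F^2 \leq \Bigl\|\Bigl(\sum_{i>\rs}U^X_i(U^X_i)^\top\Bigr)\Xs\Ys^\top\Bigl(\sum_{j>\rs}U^Y_j(U^Y_j)^\top\Bigr)\Bigr\|_F^2,
\end{align*}
mirroring the final estimate in the symmetric proof, and then invoke an asymmetric analogue of Lemma~\ref{second-term-assymetric} to bound it by $\tfrac{\rho^2}{2}\|XY^\top-\Xs\Ys^\top\|_F^2$.

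The main technical obstacle is the bound on $T_2$: unlike the symmetric case, the tail projectors $\sum_{i>\rs}U^X_i(U^X_i)^\top$ and $\sum_{j>\rs}U^Y_j(U^Y_j)^\top$ do not literally annihilate $\Xs\Ys^\top$, since $(U^X_i)^\top \Xs$ and $(\Ys)^\top U^Y_j$ are only small, not zero. Showing they are sufficiently small requires exploiting the assumption $V^{\Xs}=V^{\Ys}$ together with the perturbation bound $\|(X,Y)-(\Xs,\Ys)\|_F\leq\epsassymmatrix$ through a Davis--Kahan type analysis of the principal singular subspaces of $X$ and $Y$ relative to those of $\Xs$ and $\Ys$. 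Careful accounting of how the balance between $\Xs$ and $\Ys$ propagates to $X,Y$ under this perturbation is what introduces the factor $(\sigma_{\rs}^2(\Xs)+\sigma_{\rs}^2(\Ys))/\min\{\sigma_{\rs}^2(\Xs),\sigma_{\rs}^2(\Ys)\}$ and upgrades the $r-\rs+1$ scaling of the symmetric case to $(r-\rs+1)^2$ here.
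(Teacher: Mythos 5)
Your skeleton does match the paper's: the spectral characterization of $\nabla \Fasym(X,Y)\nabla \Fasym(X,Y)^\top$ (Lemma~\ref{lem: spectral_asymmetric}), the ordering bijection and the fact that surviving indices have both $i>\rs$ and $j>\rs$ (Lemma~\ref{lem:fatahi-asym-convenable-ordering}), the split into a $XY^\top$-part and a $\Xs\Ys^\top$-part, and the bound on the first part via $\sigma_i^X\sigma_j^Y\inner{V_i^X}{V_j^Y}$, AM--GM, and counting $(r-\rs+1)^2$ pairs are all exactly the paper's steps (and note, in passing, that the condition-number prefactor does \emph{not} arise there: AM--GM gives $T_1\lesssim (r-\rs+1)\sigma_{k+1}^2$ with no such factor, so your attribution of the prefactor to the "unbalanced conversion" in $T_1$ is off). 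The genuine gap is in the $T_2$ bound, which is where essentially all of the work in the asymmetric case lies. You propose to bound $\norm{\cQ_X\Xs\Ys^\top\cQ_Y}{F}^2$ directly by $\tfrac{\rho^2}{2}\norm{XY^\top-\Xs\Ys^\top}{F}^2$ by invoking Lemma~\ref{second-term-assymetric}; but for the iterates one generically has $V^X\neq V^Y$, so only the unbalanced form of that lemma applies, and it bounds $\norm{\cQ_X\Xs\Ys^\top\cQ_Y}{F}$ by $\min\{\sigma_{\rs}^2(\Xs),\sigma_{\rs}^2(\Ys)\}^{-1}\norm{(I-\cQ_X)XY^\top(I-\cQ_Y)-\Xs\Ys^\top}{F}^2$. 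After adding and subtracting $XY^\top$, you are left with the leakage term $\norm{(I-\cQ_X)XY^\top(I-\cQ_Y)-XY^\top}{F}^2$, i.e.\ the head--tail cross terms $\sigma_i^X\sigma_j^Y\inner{V_i^X}{V_j^Y}$; controlling these is the real content of the proof, via Wedin/Procrustes sin-theta bounds (Lemmas~\ref{lem:orthogonal-matrices-sine} and~\ref{lem:procrustes-sin-theta}) on the \emph{right} singular subspaces of $X$ and $Y$ against the common $V^{\Xs}=V^{\Ys}$, crucially exploiting that each such inner product is multiplied by a tail singular value $\sigma_{\ik}^2(X)$ or $\sigma_{\jk}^2(Y)\le\sigma_{k+1}^2$. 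That is where the factor $\tfrac{\sigma_{\rs}^2(\Xs)+\sigma_{\rs}^2(\Ys)}{\min\{\sigma_{\rs}^2(\Xs),\sigma_{\rs}^2(\Ys)\}}(r-\rs+1)^2$ actually comes from.

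Your proposed resolution of the "main technical obstacle" --- a Davis--Kahan analysis showing $\cQ_X\Xs$ and $\Ys^\top\cQ_Y$ are small --- would not close this gap quantitatively. Those subspace angles are only controlled by the fixed factor radius $\alignvarepsilon$, a constant depending on $(\Xs,\Ys)$, so the resulting estimate on $\norm{\cQ_X\Xs\Ys^\top\cQ_Y}{F}$ is constant-order and does not vanish as $\sigma_{k+1}\to0$ and $XY^\top\to\Xs\Ys^\top$, which the claimed inequality requires (under overparameterization $XY^\top$ can equal $\Xs\Ys^\top$ while $(X,Y)$ stays a fixed distance from $(\Xs,\Ys)$). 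Likewise, the naive split $\cQ_X\Zs\cQ_Y=\cQ_X XY^\top\cQ_Y-\cQ_X(XY^\top-\Zs)\cQ_Y$ gives the distance term with coefficient $1$ rather than $\rho$. The missing idea is precisely the quadratic improvement of Lemma~\ref{second-term-assymetric} (which itself rests on the eigenvalue-ordering argument of Lemma~\ref{lem:asymmetric_eigenvalue_projected_bound} and Corollary~\ref{cor:eigenproj-vs-eigen}): it converts the linear dependence on $\norm{XY^\top-\Xs\Ys^\top}{F}$ into a quadratic one, which the radius hypothesis $\norm{XY^\top-\Xs\Ys^\top}{F}\le\rho\min\{\sigma_{\rs}^2(\Xs),\sigma_{\rs}^2(\Ys)\}/4$ then turns into the $\rho$-weighted term, while the residual leakage of $XY^\top$ is absorbed into the $\sigma_{k+1}^4$ term through the Wedin analysis described above. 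Without this two-step mechanism (or an equivalent substitute), the $T_2$ estimate --- and hence the proposition --- does not follow from your sketch.
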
 
\begin{lemma}\label{lem:base_case_asym}
    Suppose $(\Xs,\Ys) \in \mathbb{R}^{d_1 \times r} \times \mathbb{R}^{d_2\times r}$ satisfies $\rankk{\Xs} = \rankk{\Ys} = \rs$. Further, let $(X,Y) \in \mathbb{R}^{d_1 \times r} \times \mathbb{R}^{d_2\times r}$ be matrices satisfying $\norm{(X,Y)-(\Xs,\Ys)}{F} \leq  \frac{1}{16\sqrt{2}} \frac{ \min  \left\{  \sigma_{\rs}^2\left(\Xs\right), \sigma_{\rs}^2\left(\Ys\right) \right\}}  {\max \left\{ {\sigma_1\left(\Xs\right)}, {\sigma_1\left(\Ys\right)}  \right\}}.$ Then,
     $$\sigma_{\bar r^\star}^2 \geq \min\left\{ \sigma_{\rs}^2 \left( X \right), \sigma_{\rs}^2\left(Y\right) \right\}.$$
\end{lemma}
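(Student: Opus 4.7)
The plan is to mirror the structure of the proof of Lemma~\ref{lem:base_case_sym} for the symmetric case by first establishing an explicit spectral decomposition of $\nabla \Fasym(X,Y)\,\nabla \Fasym(X,Y)^\top$ in terms of the SVDs of the factors $X$ and $Y$, and then performing a careful counting argument to locate $\bar r^\star$ eigenvalues bounded below by the desired quantity.

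As a first step, I would compute the composition $\nabla\Fasym(X,Y)\nabla\Fasym(X,Y)^\top$ using~\eqref{action: nablac-and-nablacT-asym}. A direct calculation shows that for any $Z\in\RR^{d_1\times d_2}$,
\begin{equation*}
\nabla\Fasym(X,Y)\,\nabla\Fasym(X,Y)^\top[Z] \;=\; XX^\top Z + Z\,YY^\top.
\end{equation*}
In particular, taking $Z = U^{X}_{i}\,(U^{Y}_{j})^\top$ yields
\begin{equation*}
\nabla\Fasym(X,Y)\,\nabla\Fasym(X,Y)^\top\bigl[U^{X}_{i}(U^{Y}_{j})^\top\bigr] \;=\; \bigl(\sigma_i^{2}(X)+\sigma_j^{2}(Y)\bigr)\,U^{X}_{i}(U^{Y}_{j})^\top,
\end{equation*}
so the family $\{U^{X}_{i}(U^{Y}_{j})^\top\}_{(i,j)\in[d_1]\times[d_2]}$ forms an orthonormal eigenbasis of $\RR^{d_1\times d_2}$ with eigenvalues $\sigma_i^{2}(X)+\sigma_j^{2}(Y)$. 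This is the asymmetric analog of Lemma~\ref{lem:spectral-bm}, and this computation is the main technical step---but entirely routine.

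Next, I would use the closeness hypothesis together with Weyl's inequality to deduce $\rank(X),\rank(Y)\ge r^\star$. Indeed, $\|X-\Xs\|_F \le \|(X,Y)-(\Xs,\Ys)\|_F \le \tfrac{1}{16\sqrt 2}\tfrac{\sigma_{r^\star}^{2}(\Xs)}{\sigma_1(\Xs)} < \sigma_{r^\star}(\Xs)$, and similarly for $Y$. Then I would observe that by definition $\bar r^\star = \rank\nabla\Fasym(\Xs,\Ys) = r^\star(d_1+d_2-r^\star)$, and that the set of index pairs
\begin{equation*}
S \;:=\; \bigl\{(i,j)\in[d_1]\times[d_2]\,:\,i\le r^\star \text{ or } j\le r^\star\bigr\}
\end{equation*}
has cardinality exactly $r^\star d_2 + (d_1-r^\star)r^\star = \bar r^\star$ by inclusion--exclusion.

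Finally, I would observe that for every $(i,j)\in S$, the associated eigenvalue satisfies $\sigma_i^{2}(X)+\sigma_j^{2}(Y)\ge \min\{\sigma_{r^\star}^{2}(X),\sigma_{r^\star}^{2}(Y)\}$, since if $i\le r^\star$ then $\sigma_i^{2}(X)\ge \sigma_{r^\star}^{2}(X)$, and symmetrically for $j\le r^\star$. Thus $\nabla\Fasym(X,Y)\nabla\Fasym(X,Y)^\top$ has at least $\bar r^\star$ eigenvalues that are no smaller than $\min\{\sigma_{r^\star}^{2}(X),\sigma_{r^\star}^{2}(Y)\}$, which forces its $\bar r^\star$-th largest eigenvalue---that is, $\sigma_{\bar r^\star}^{2}$---to satisfy the same bound, concluding the proof. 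I do not expect any serious obstacle; the only subtlety is making sure the multiplicities in the eigendecomposition are accounted for correctly, which is handled by the orthonormality of the $U^{X}_{i}(U^{Y}_{j})^\top$.
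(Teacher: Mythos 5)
Your proof is correct, and it takes a genuinely simpler route than the paper after the common spectral step. The paper also relies on Lemma~\ref{lem: spectral_asymmetric} (the eigenpair characterization you re-derive), but then invokes Lemma~\ref{lem:fatahi-asym-convenable-ordering} --- whose hypothesis is fed by the closeness assumption via Lemma~\ref{lem:distance_factors_implies_distance_op_asymmetric} --- to establish the stronger fact that the pairs $(i,j)$ with $i\le \rs$ or $j\le \rs$ are exactly the indices of the $\bar r^\star$ largest eigenvalues of $\nabla\Fasym(X,Y)\nabla\Fasym(X,Y)^\top$; the desired bound then follows by evaluating each such eigenvalue. Your counting argument dispenses with the ordering step entirely: since the $\bar r^\star$ eigenvalues indexed by $S=\Delta_{\rs}$ are each at least $\min\{\sigma_{\rs}^2(X),\sigma_{\rs}^2(Y)\}$ and $\#S = \bar r^\star$, the $\bar r^\star$-th largest eigenvalue must meet the same bound regardless of where the remaining eigenvalues fall. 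One consequence worth flagging: your argument never actually uses the closeness hypothesis --- the Weyl/rank step you include is harmless but superfluous, since if $\rank(X)<\rs$ then $\sigma_{\rs}(X)=0$ and the claim is vacuous. So you have in fact proved the lemma under weaker hypotheses. The paper, by contrast, genuinely needs the closeness bound to drive Lemma~\ref{lem:fatahi-asym-convenable-ordering}, which is reused in the proof of Proposition~\ref{lem: asym-alignment}, so keeping the ordering lemma around pays for itself elsewhere; your route is the right one if the goal is only this lemma.
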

 We will soon prove these two results. Before that, let us use them to derive the local weak alignment property. To this end, we show that the asymmetric map satisfies the three conditions from Proposition~\ref{thm: sufficient-condition-local-weak-alignment} with
\begin{align*}
\alignvarepsilon & =\epsassymmatrix, \\
\sig(\rho) &= \frac{\rho \min\left\{\sigma^2_{\rs}\left( \Xs \right), \sigma^2_{\rs}\left( \Ys \right)\right\}}{10\sqrt{2}\left( \sigma^2_{\rs}\left( \Xs \right) + \sigma^2_{\rs}\left( \Ys \right) \right) (r-\rs+1)^2}, \quad \text{and} \\
\delta(\rho) &= \min\left\{ \frac{\rho}{4}, \frac{1}{4 \sig(\rho)} \right\} \min\left\{ \sigma^2_{\rs}\left( \Xs \right), \sigma^2_{\rs}\left( \Ys \right) \right\}.
\end{align*}
Take any $XY^\top = Z \in \Ima F$ such that $\norm{Z-Z^\star}{F} \leq \delta(\rho)$ and $\norm{(X,Y) - (\Xs,\Ys)}{F} \le \alignvarepsilon$.
\begin{enumerate}
\item Applying Proposition~\ref{lem: asym-alignment} with $\bar r=\rank \nabla F(X,Y)$, we have  $$\norm{ \left(I - \Pi^{(X,Y)}_{\bar r} \right)  \left[ Z  - Z^\star\right] }{F}^2 \leq \rho^2 \norm{Z-Z^\star}{F}^2.$$
\item Let $k \in \{ \bar r^\star + 1, \ldots , \bar r \}$ and assume $\sigma_{k}^2 \leq \sig(\rho) \norm{Z-Z^\star}{F}$. Again by Proposition~\ref{lem: asym-alignment}, we have
\begin{align*}
&\norm{ \left(I - \Pi^{(X,Y)}_{k-1} \right) \left[Z  - Z^\star \right] }{F}^2  
\\
& \qquad \leq \left( 5\sqrt{2} \frac{ \sigma^2_{\rs}\left( \Xs \right) + \sigma^2_{\rs}\left( \Ys \right) }{\min\left\{\sigma^2_{\rs}\left( \Xs \right), \sigma^2_{\rs}\left( \Ys \right)\right\}} (r-\rs+1)^2 \right)^2 \sigma_{k}^4 + \frac{\rho^2}{2} \norm{Z-Z^\star}{F}^2 
\\ & \qquad \leq \left( \left( 5\sqrt{2} \frac{ \sigma^2_{\rs}\left( \Xs \right) + \sigma^2_{\rs}\left( \Ys \right) }{\min\left\{\sigma^2_{\rs}\left( \Xs \right), \sigma^2_{\rs}\left( \Ys \right)\right\}} (r-\rs+1)^2 \right)^2\sig(\rho)^2 + \frac{\rho^2}{2} \right) \norm{Z-Z^\star}{F}^2 
\\ & \qquad = \rho^2 \norm{Z-Z^\star}{F},
\end{align*}
where the last equality follows from the definition of $s(\rho)$.
\item  Assume without loss of generality that $\min\left\{ \sigma_{\rs}^2 \left( X \right), \sigma_{\rs}^2 \left( Y \right)\right\} = \sigma_{\rs}^2 \left( X \right)$.  We have:
\begin{align*}
 \sigma_{\bar r^\star}^2 &\geq \min\left\{ \sigma_{\rs}^2 \left( X \right), \sigma_{\rs}^2 \left( Y \right)\right\} \\
&\geq \left(\sigma_{\rs}\left( \Xs \right) - \norm{X - \Xs}{F}\right)^2 \\ 
&\geq \frac{1}{4}\min\left\{ \sigma^2_{\rs}\left( \Xs \right), \sigma^2_{\rs}\left( \Ys \right) \right\} \\
&\geq  \sig(\rho) \norm{XY^\top - \Xs\Ys^\top}{F},
\end{align*}
where the first line follows from Lemma~\ref{lem:base_case_asym}, the second line follows from Weyl's inequality and the choice of $\alignvarepsilon$, and the last line follows from the choice of $\delta(\rho)$. 
\end{enumerate} 
Then, the assumptions of Proposition~\ref{thm: sufficient-condition-local-weak-alignment} hold, which establishes Theorem~\ref{thm:regularity-matrix-asym}.

To complete the proof, we must still prove Proposition~\ref{lem: asym-alignment} and Lemma~\ref{lem:base_case_asym}. The following are auxiliary results for such a purpose. Lemma~\ref{lem:base_case_asym} follows directly from Lemma~\ref{lem: spectral_asymmetric} and Lemma~\ref{lem:fatahi-asym-convenable-ordering}.
\begin{lemma}[{Spectral Characterization}]
\label{lem: spectral_asymmetric}
    The eigenpairs of $\nabla F(X,Y) \nabla F(X,Y)^\top$ are given by
    $$
    \sigma_{i}^2\left( X \right) +  \sigma_{j}^2\left( Y \right)  \text{ with eigenvector }   \uy_{j} {\ux_{i}}^\top 
    $$
    for all $i\in [d_1]$ and $j \in [d_2]$. Moreover, these eigenvectors form an orthonormal basis.
\end{lemma}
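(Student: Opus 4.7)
The plan is to compute the operator $\nabla F(X,Y)\,\nabla F(X,Y)^\top$ explicitly, and then simply verify by substitution that the proposed eigenpairs work, using the SVDs $X = U^X \Sigma^X (V^X)^\top$ and $Y = U^Y \Sigma^Y (V^Y)^\top$. Concretely, combining the two adjoint formulas in \eqref{action: nablac-and-nablacT-asym}, for any $Z \in \RR^{d_1 \times d_2}$ we have
\begin{align*}
\nabla F(X,Y)\,\nabla F(X,Y)^\top [Z] \;=\; \nabla F(X,Y)\bigl[(ZY,\, Z^\top X)\bigr] \;=\; X X^\top Z + Z Y Y^\top.
\end{align*}
This identity is the single workhorse of the proof: the operator splits as a sum of a left-multiplication by $XX^\top$ and a right-multiplication by $YY^\top$, two commuting operators, so they are simultaneously diagonalizable.

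Next, I would verify by direct computation that the candidate eigenvectors $E_{ij} := U^X_i (U^Y_j)^\top \in \RR^{d_1 \times d_2}$ are indeed eigenvectors (modulo the transpose convention in the stated lemma). Using $X X^\top = U^X (\Sigma^X)^2 (U^X)^\top$ and $Y Y^\top = U^Y (\Sigma^Y)^2 (U^Y)^\top$ together with the orthonormality of the columns of $U^X$ and $U^Y$, one gets
\begin{align*}
X X^\top E_{ij} = (\sigma_i^X)^2 E_{ij}, \qquad E_{ij} Y Y^\top = (\sigma_j^Y)^2 E_{ij},
\end{align*}
so summing gives $\nabla F(X,Y)\nabla F(X,Y)^\top [E_{ij}] = \bigl((\sigma_i^X)^2 + (\sigma_j^Y)^2\bigr) E_{ij}$, as claimed.

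It then remains to verify that the family $\{E_{ij}\}_{i\in[d_1],\,j\in[d_2]}$ is an orthonormal basis of $\RR^{d_1 \times d_2}$. For orthonormality, note that under the trace inner product
\begin{align*}
\langle E_{ij}, E_{i'j'}\rangle = \tr\bigl(U^Y_j (U^X_i)^\top U^X_{i'} (U^Y_{j'})^\top\bigr) = \langle U^X_i, U^X_{i'}\rangle \cdot \langle U^Y_j, U^Y_{j'}\rangle = \delta_{ii'}\delta_{jj'},
\end{align*}
by orthonormality of the $U^X$ and $U^Y$ columns. Since $\dim(\RR^{d_1\times d_2}) = d_1 d_2$ equals the number of such eigenvectors, the family is a complete orthonormal basis.

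There is essentially no hard step here; this is a routine spectral calculation, and the only mild subtlety is matching conventions (identifying $\RR^{d_1\times d_2}$ with $\RR^{d_1 d_2}$ via vectorization, which makes $\nabla F\nabla F^\top$ literally equal $I\otimes_{\mathrm{Kr}} XX^\top + YY^\top \otimes_{\mathrm{Kr}} I$, whose eigenstructure is exactly the Kronecker sum of the spectra of $XX^\top$ and $YY^\top$). I would present the argument in this vectorized form if I wanted to import the identity $(A\otimes_{\mathrm{Kr}} B)\vect(Z) = \vect(B Z A^\top)$ and derive the eigenstructure in one line, or keep the direct matrix computation above for maximal transparency.
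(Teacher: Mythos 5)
Your proposal is correct and follows essentially the same route as the paper: both compute $\nabla F(X,Y)\nabla F(X,Y)^\top[Z] = XX^\top Z + Z\,YY^\top$ from \eqref{action: nablac-and-nablacT-asym}, verify the candidate rank-one matrices are eigenvectors via the orthonormality of the columns of $U^X$ and $U^Y$, and check pairwise orthonormality under the trace inner product (your explicit dimension count $d_1 d_2$ and the Kronecker-sum remark are harmless additions). The transpose-convention issue you flag is real but cosmetic: the paper's own proof works with $U^X_i (U^Y_j)^\top$, which matches $\uy_j \otimeskron \ux_i$ under vectorization.
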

\noindent Lemma~\ref{lem: spectral_asymmetric} likely already exists in the literature. We include a proof in Appendix~\ref{sec:proof-spectral-asym}. %

\begin{lemma}\label{lem:distance_factors_implies_distance_op_asymmetric}
Suppose that
\(
\norm{(X,Y)-(\Xs,\Ys)}{F} \leq \epsassymmatrix.
\)
Then, 
\[
\norm{XX^\top- \Xs\Xs^\top}{F}+\norm{YY^\top- \Ys\Ys^\top}{F} \leq \frac{1}{2\sqrt{2}}\min\left\{\sigma^2_{\rs}\left( \Xs \right), \sigma^2_{\rs}\left( \Ys \right)\right\}.
\]
\end{lemma}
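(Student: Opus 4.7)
The plan is to reduce the claim to a short perturbation calculation by starting with the telescoping identity
\[
XX^\top - \Xs(\Xs)^\top \;=\; (X-\Xs)X^\top + \Xs(X-\Xs)^\top
\]
(and the analogous one for $Y$), then applying $\norm{AB}{F}\le \norm{A}{\op}\norm{B}{F}$. Setting $\varepsilon := \norm{(X,Y)-(\Xs,\Ys)}{F}$, so that both $\norm{X-\Xs}{F}$ and $\norm{Y-\Ys}{F}$ are at most $\varepsilon$, this gives
\[
\norm{XX^\top - \Xs(\Xs)^\top}{F}\;\le\; \norm{X-\Xs}{F}\bigl(\norm{X}{\op}+\norm{\Xs}{\op}\bigr)\;\le\; \bigl(2\norm{\Xs}{\op}+\varepsilon\bigr)\varepsilon,
\]
once I also use $\norm{X}{\op}\le \norm{\Xs}{\op}+\norm{X-\Xs}{F}\le \norm{\Xs}{\op}+\varepsilon$; the symmetric bound holds for $Y$.

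Next I would add the two bounds and write them in terms of $M := \max\{\sigma_1(\Xs),\sigma_1(\Ys)\}$ and $m := \min\{\sigma_\rs^2(\Xs),\sigma_\rs^2(\Ys)\}$, obtaining an upper estimate of the form $4M\varepsilon + 2\varepsilon^2$. Substituting the hypothesis $\varepsilon \le \tfrac{1}{16\sqrt{2}}\cdot \tfrac{m}{M}$ then controls the linear term by $\tfrac{m}{4\sqrt{2}}$, and the trivial inequality $m\le M^2$ (which follows from $\sigma_\rs \le \sigma_1$ for each factor) controls the quadratic term by a much smaller multiple of $m$. A short numerical comparison then yields the target bound $\tfrac{m}{2\sqrt{2}}$.

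The main obstacle, such as it is, is purely bookkeeping: tracking the absolute constants carefully so the quadratic-in-$\varepsilon$ slack is absorbed without loss. No structural ingredient beyond the outer-product identity and the Weyl-type control $\norm{X}{\op}\le \norm{\Xs}{\op}+\varepsilon$ is required.
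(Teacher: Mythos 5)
Your proposal is correct and follows essentially the same route as the paper: the telescoping identity $XX^\top-\Xs(\Xs)^\top=(X-\Xs)X^\top+\Xs(X-\Xs)^\top$, the bound $\|AB\|_F\le\|A\|_F\|B\|_{\op}$ together with $\|X\|_{\op}\le\|\Xs\|_{\op}+\varepsilon$, and then constant bookkeeping (the paper absorbs the quadratic term via $\varepsilon\le\sigma_1(\Xs)$, you via $m\le M^2$, which is the same observation). The constants indeed close with room to spare, so the argument is fine.
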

\noindent We defer the proof of this lemma to Appendix~\ref{proof: lem-distance_factors_implies_distance_op_asymmetric}.
\begin{corollary}
\label{cor: ass-eig}
    There exists a bijection $\tau\colon [d_1] \times [d_2] \mapsto [d_1d_2]$ such that
    \begin{equation}\label{eq: ass-eig}
        \sigma^2_{\tau(i,j)} = \sigma_{i}^2 \left( X \right) + \sigma_{j}^2 \left( Y \right) \text{ and } U_{\tau(i,j)} = U^Y_j {U^X_i}^{\top}.
    \end{equation}
\end{corollary}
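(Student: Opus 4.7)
The plan is to derive Corollary~\ref{cor: ass-eig} as a direct consequence of Lemma~\ref{lem: spectral_asymmetric} by a simple counting and sorting argument. First I would observe that Lemma~\ref{lem: spectral_asymmetric} produces $d_1 d_2$ eigenpairs of $\nabla \Fasym(X,Y)\nabla \Fasym(X,Y)^\top$, one for each pair $(i,j) \in [d_1]\times[d_2]$, and that the corresponding eigenvectors $\{U^Y_j (U^X_i)^\top\}_{(i,j)\in[d_1]\times[d_2]}$ form an orthonormal basis. Since $\nabla \Fasym(X,Y)\nabla \Fasym(X,Y)^\top$ acts on $\RR^{d_1\times d_2}$, which has dimension $d_1 d_2$, these are exactly all the eigenpairs.

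Next I would invoke the convention set earlier in the paper that $U_k$ and $\sigma_k$ denote, respectively, the $k$th left singular vector and the $k$th singular value of $\nabla \Fasym(X,Y)$ enumerated in non-increasing order of singular values. Since $\sigma_k^2$ coincides with the $k$th eigenvalue of $\nabla \Fasym(X,Y)\nabla \Fasym(X,Y)^\top$ in non-increasing order, and since the eigenvalues produced by Lemma~\ref{lem: spectral_asymmetric} are precisely $\{\sigma_i^2(X) + \sigma_j^2(Y)\}_{(i,j)\in[d_1]\times[d_2]}$, there exists a bijection $\tau\colon [d_1]\times[d_2]\to [d_1 d_2]$ such that
\[
\sigma_{\tau(i,j)}^2 = \sigma_i^2(X) + \sigma_j^2(Y) \quad \text{for all } (i,j)\in[d_1]\times[d_2].
\]
Up to reindexing of repeated eigenvalues (which is always permissible since we may choose any consistent orthonormal basis within each eigenspace), we may set $U_{\tau(i,j)} = U^Y_j (U^X_i)^\top$, establishing \eqref{eq: ass-eig}.

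The only potential obstacle is ensuring that the identification $U_{\tau(i,j)} = U^Y_j (U^X_i)^\top$ is consistent even in the presence of repeated singular values; this is handled by noting that any ordering of eigenvectors within a degenerate eigenspace is a valid choice of singular vectors, so we may simply pick $\tau$ to be compatible with the basis furnished by Lemma~\ref{lem: spectral_asymmetric}. Thus the corollary follows essentially without calculation.
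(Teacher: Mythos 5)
Your proposal is correct and matches the paper's argument: the paper treats Corollary~\ref{cor: ass-eig} as an immediate consequence of the spectral characterization in Lemma~\ref{lem: spectral_asymmetric}, exactly as you do, with the bijection $\tau$ obtained by sorting the $d_1 d_2$ eigenvalues and identifying them with the squared singular values of $\nabla \Fasym(X,Y)$. The only elaboration you add (handling of repeated eigenvalues by a consistent choice of orthonormal basis within each eigenspace) is a fine point the paper leaves implicit.
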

\begin{corollary}
 Let $X,Y \in \mathbb{R}^{d_1\times r} \times  \mathbb{R}^{d_2\times r} $ be of the ranks $r_1, r_2$. Then, the rank of $\nabla F(X,Y)$ is $d_1r_2 + d_2 r_1 - r_1 r_2$.
\end{corollary}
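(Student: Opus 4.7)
My plan is to deduce this corollary directly from the spectral characterization in Lemma~\ref{lem: spectral_asymmetric}. That lemma gives an orthonormal eigenbasis $\{U^Y_j (U^X_i)^\top\}_{(i,j) \in [d_1] \times [d_2]}$ for $\nabla F(X,Y)\nabla F(X,Y)^\top$ with corresponding eigenvalues $\sigma_i^2(X) + \sigma_j^2(Y)$. Since these eigenvectors span the entire ambient space, the rank of $\nabla F(X,Y)\nabla F(X,Y)^\top$—which equals the rank of $\nabla F(X,Y)$—is exactly the number of pairs $(i,j) \in [d_1] \times [d_2]$ for which $\sigma_i^2(X) + \sigma_j^2(Y) > 0$.

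The next step is a simple counting argument. A pair $(i,j)$ contributes a zero eigenvalue if and only if both $\sigma_i(X) = 0$ and $\sigma_j(Y) = 0$. By the hypothesis $\rank X = r_1$ and $\rank Y = r_2$, the indices $i$ with $\sigma_i(X) = 0$ number $d_1 - r_1$, and similarly $d_2 - r_2$ for $Y$. Hence, the number of vanishing eigenvalues is $(d_1 - r_1)(d_2 - r_2)$, yielding
\[
\rank \nabla F(X,Y) \;=\; d_1 d_2 - (d_1 - r_1)(d_2 - r_2) \;=\; d_1 r_2 + d_2 r_1 - r_1 r_2,
\]
which is the desired identity.

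There is no genuine obstacle here; the entire argument is a bookkeeping exercise on top of the spectral lemma. The only thing to be careful about is making sure that the eigenvectors indexed by distinct $(i,j)$ pairs are indeed linearly independent (which is already guaranteed by the orthonormality statement in Lemma~\ref{lem: spectral_asymmetric}), so that no cancellations artificially reduce the count. Given that, the corollary is immediate.
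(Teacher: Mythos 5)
Your proposal is correct and matches the paper's approach: the paper treats this corollary as a direct consequence of Lemma~\ref{lem: spectral_asymmetric}, and your counting of the pairs $(i,j)$ with $\sigma_i^2(X)+\sigma_j^2(Y)>0$ (namely $d_1d_2-(d_1-r_1)(d_2-r_2)$) is exactly the bookkeeping the paper leaves implicit.
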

\noindent These two corollaries are direct consequences of Lemma~\ref{lem: spectral_asymmetric}. In particular, $\bar r^\star = (d_1+d_2 - \rs)\rs$, and the maximum rank of $\nabla F(X,Y)$ is $(d_1+d_2 - r)r$. Consider the partition of $[d_1]\times [d_2]$ given by
$$\Delta_{\rs}=\{ (i,j) \mid i\leq \rs \text{ or }  j \leq \rs \} \quad \text{ and } \quad \Delta^{c}_{\rs} = \Delta \setminus \Delta_{\rs},$$and observe that $\card{\Delta_{\rs}}= \rankk{\nabla F(\Xs,\Ys)}$. We derive a useful lemma for alignment.

\begin{lemma}\label{lem:fatahi-asym-convenable-ordering}
If $
    \norm{ XX^\top- \Xs\Xs^\top }{\operatorname{op}} + \norm{ YY^\top- \Ys\Ys^\top }{\operatorname{op}} \leq \frac{1}{2} \min\left\{\sigma^2_{\rs}\left( \Xs \right), \sigma^2_{\rs}\left( \Ys \right) \right\},$ then there exists a bijection $\tau$ satisfying~(\ref{eq: ass-eig}) such that 
    $$
        \tau(i,j)  < \tau(l,m)  \quad \text{for all } (i,j) \in \Delta_{\rs}\text{ and }(l,m) \in \Delta_{\rs}^c.
    $$
Consequently, if $\tau(l,m) > \rankk{ \nabla F(\Xs, \Ys)}$, then $l > \rs$ and $m > \rs$. 
\end{lemma}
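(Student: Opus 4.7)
The plan is to start from the bijection $\tau$ furnished by Corollary~\ref{cor: ass-eig} and argue that, after possibly relabeling ties, every pair in $\Delta_{\rs}$ maps to a smaller index than every pair in $\Delta_{\rs}^c$. By \eqref{eq: ass-eig}, this is equivalent to proving the pointwise inequality
\begin{equation*}
\sigma_l^2(X) + \sigma_m^2(Y) \;\le\; \sigma_i^2(X) + \sigma_j^2(Y) \qquad \text{for all $(i,j)\in \Delta_{\rs}$ and $(l,m)\in \Delta_{\rs}^c$.}
\end{equation*}
Once this inequality is established, we redefine $\tau$ within each set of tied eigenvalues so that indices in $\Delta_{\rs}$ come before those in $\Delta_{\rs}^c$; this preserves the eigen-pair identity in \eqref{eq: ass-eig} and yields the desired strict ordering. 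The consequence concerning $\tau(l,m)>\rankk{\nabla F(\Xs,\Ys)}$ then follows immediately because $\card{\Delta_{\rs}}=\rankk{\nabla F(\Xs,\Ys)}$, so any index exceeding this cardinality must correspond to a pair in $\Delta_{\rs}^c$, which by definition has both coordinates exceeding $\rs$.

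To prove the pointwise inequality I will use Weyl's inequality for PSD matrices applied to the pairs $(XX^\top, \Xs\Xs^\top)$ and $(YY^\top, \Ys\Ys^\top)$. Setting $E_X = \norm{XX^\top - \Xs\Xs^\top}{\op}$ and $E_Y = \norm{YY^\top - \Ys\Ys^\top}{\op}$, Weyl's inequality gives $\sigma_{\rs+1}^2(X)\le E_X$ (since $\sigma_{\rs+1}^2(\Xs)=0$), $\sigma_{\rs+1}^2(Y)\le E_Y$, as well as $\sigma_{\rs}^2(X)\ge \sigma_{\rs}^2(\Xs) - E_X$ and $\sigma_{\rs}^2(Y)\ge \sigma_{\rs}^2(\Ys) - E_Y$. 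Combining these with the hypothesis $E_X + E_Y \le \tfrac12 \min\{\sigma_{\rs}^2(\Xs),\sigma_{\rs}^2(\Ys)\}$ yields
\begin{equation*}
\sigma_{\rs+1}^2(X) + \sigma_{\rs+1}^2(Y) \;\le\; E_X + E_Y \;\le\; \tfrac{1}{2}\sigma_{\rs}^2(\Xs) \;\le\; \sigma_{\rs}^2(X),
\end{equation*}
and symmetrically $\sigma_{\rs+1}^2(X)+\sigma_{\rs+1}^2(Y)\le \sigma_{\rs}^2(Y)$.

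To conclude, take any $(l,m)\in\Delta_{\rs}^c$, i.e.\ $l>\rs$ and $m>\rs$, so that $\sigma_l^2(X)+\sigma_m^2(Y)\le \sigma_{\rs+1}^2(X)+\sigma_{\rs+1}^2(Y)$ by monotonicity of singular values. For any $(i,j)\in\Delta_{\rs}$, by definition either $i\le \rs$ or $j\le \rs$. In the first case, $\sigma_i^2(X)\ge \sigma_{\rs}^2(X)$ and so $\sigma_i^2(X)+\sigma_j^2(Y)\ge \sigma_{\rs}^2(X)\ge \sigma_{\rs+1}^2(X)+\sigma_{\rs+1}^2(Y)$; in the second, we use the symmetric estimate with $\sigma_{\rs}^2(Y)$. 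Either way, the desired inequality holds, completing the argument. The only mild subtlety is the tie-breaking step, which is handled by the freedom in choosing the bijection $\tau$ among eigenvectors with identical eigenvalues.
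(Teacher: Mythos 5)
Your proposal is correct and follows essentially the same route as the paper: both arguments reduce the ordering claim to the single inequality $\sigma_{\rs+1}^2(X)+\sigma_{\rs+1}^2(Y)\le \min\{\sigma_{\rs}^2(X),\sigma_{\rs}^2(Y)\}$, established via Weyl's inequality combined with the factor-$\tfrac12$ hypothesis on the operator-norm perturbations, and then handle ties by relabeling the bijection from Corollary~\ref{cor: ass-eig}. Your write-up merely spells out more explicitly the "sufficiency" step and the final consequence about $\tau(l,m)>\rankk{\nabla F(\Xs,\Ys)}$, which the paper leaves implicit.
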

\begin{proof}[Proof of Lemma~\ref{lem:fatahi-asym-convenable-ordering}]
To establish that $\sigma_{i}^2 \left( X \right) + \sigma_{j}^2 \left( Y \right) \geq \sigma_{l}^2 \left( X \right) + \sigma_{m}^2 \left( Y \right)$ for any $(i,j) \in \Delta_{\rs}\text{ and }(l,m) \in \Delta_{\rs}^c$, it is sufficient to show 
\begin{equation}\label{eq: main}
\sigma_{\rs+1}^2 \left( X \right) + \sigma_{\rs+1}^2 \left( Y \right) \leq  \min\left\{ \sigma_{\rs}^2 \left( X \right) ,  \sigma_{\rs}^2 \left( Y \right)\right\}.
\end{equation} 
The bound on $\norm{ XX^\top- \Xs\Xs^\top }{\operatorname{op}} + \norm{ YY^\top- \Ys\Ys^\top }{\operatorname{op}}$ %
implies that 
\begin{align}\label{ineq:intermed}
\norm{ XX^\top- \Xs\Xs^\top }{\operatorname{op}}& +  \norm{ YY^\top- \Ys\Ys^\top }{\operatorname{op}} \nonumber \\ &\leq \min\left\{ \sigma^2_{\rs}\left( \Xs \right)  - \norm{ XX^\top- \Xs\Xs^\top }{\operatorname{op}}, \sigma^2_{\rs}\left( \Ys \right)  - \norm{ YY^\top- \Ys\Ys^\top }{\operatorname{op}} \right\} \nonumber \\ & \leq \min\left\{ \sigma_{\rs}^2 \left( X \right), \sigma_{\rs}^2 \left( Y \right) \right\},
\end{align}
where the second inequality follows from Weyl's inequality.
To establish~\eqref{eq: main}, we bound 
\begin{align*}
\sigma_{\rs+1}^2 \left( X \right) + \sigma_{\rs+1}^2 \left( Y \right)  & \leq \norm{ XX^\top- \Xs\Xs^\top }{\operatorname{op}} +\norm{ YY^\top- \Ys\Ys^\top }{\operatorname{op}}  \leq \min\left\{ \sigma_{\rs}^2 \left( X \right), \sigma_{\rs}^2 \left( Y \right) \right\},
\end{align*}
where the first and second inequality follow from Weyl's and~\eqref{ineq:intermed}. This concludes the proof.
\end{proof}

We are now ready to prove Proposition~\ref{lem: asym-alignment}. 

\begin{proof}[Proof of Proposition~\ref{lem: asym-alignment}] 
We start by invoking the triangle inequality to decompose
$$\norm{\left(I - \Pi^{(X,Y)}_{k} \right) [XY^\top - \Xs\Ys^\top ]}{F} \leq \underbrace{\norm{\left(I - \Pi^{(X,Y)}_{k} \right) [XY^\top]}{F}}_{T_1:=} + \underbrace{\norm{\left(I - \Pi^{(X,Y)}_{k} \right) [\Xs\Ys^\top]}{F}}_{T_2:=}.$$ 
We will provide upper bounds for both $T_1$ and $T_2$.
We begin with $T_1$, Corollary~\ref{cor: ass-eig} yields
{ \allowdisplaybreaks
\begin{align*}
    T_1^2
    &= \norm{\sum_{i={k+1}}^{d_1d_2} U_{i} U_{i}^\top \Vect \left( XY^\top\right) }{2}^2 \\
    & =\norm{\sum_{\substack{(i,j) \mid \tau(i,j) \geq {k+1}}} \left( {\uy_{j}} \otimeskron {\ux_{i}} \right) \left( {\uy_{j}} \otimeskron {\ux_{i}} \right)^\top \Vect \left( XY^\top\right) }{2}^2 \\
    & \overset{(i)}{=} \norm{\sum_{\substack{(i,j) \in [r] \times [r] \\ \tau(i,j) \geq {k+1}}} \sigma_i^X \sigma_j^Y \inner{V_i^X}{V_j^Y} \left( {\uy_{j}} \otimeskron {\ux_{i}} \right)}{2}^2\\
    &\overset{(ii)}{=} \sum_{\substack{(i,j) \in [r] \times [r] \\ \tau(i,j) \geq {k+1}}} \norm{\sigma_i^X \sigma_j^Y \inner{V_i^X}{V_j^Y} \left( {\uy_{j}} \otimeskron {\ux_{i}} \right) }{2}^2 \\
    &\overset{(iii)}{\leq} \sum_{\substack{(i,j) \in [r] \times [r] \\ \tau(i,j) \geq {k+1}}} \frac{1}{4} \left( \sigma_{i}^2\left( X \right) + \sigma_{j}^2\left( Y \right) \right)^2 \\
    &\overset{(iv)}{\leq} \frac{1}{4} \#{\Big\{ (i,j) \in [r]^2 \mid \rank \nabla F(\Xs, \Ys) + 1 \leq \tau(i,j) \leq \rank \nabla F(X,Y)\Big\}} \sigma_{k+1}^4.
\end{align*}
}
Here, $(i)$ follows since $XY^\top= \sum_{k=1}^r \sum_{\ell=1}^r \sigma_k^X \sigma_\ell^Y \inner{V_k^X}{V_\ell^Y} {\ux_k} {\uy_{\ell}}^\top$ and using \eqref{kr_p2} with \eqref{kr_p1} we derive
\[
( {\uy_{j}}^\top \otimeskron {\ux_{i}}^\top ) \Vect \left( XY^\top\right) = \begin{cases}
    \sigma_i^X \sigma_j^Y \inner{V_i^X}{V_j^Y} & \text{if $i \le r$ and $j \le r$},\\
    0 & \text{otherwise.}
\end{cases}
\]
On the other hand, $(ii)$ follows from orthogonality and cancellation of cross-terms, $(iii)$ from the Cauchy–Schwarz inequality and from Young's inequality, and $(iv)$ follows from Lemma~\ref{lem: spectral_asymmetric}. Combining this bound with the fact that $\#\Big\{ (i,j) \in [r]^2 \mid \rank\nabla F(\Xs, \Ys) + 1 \leq \tau(i,j) \leq \rank \nabla F(X,Y)\Big\} \leq (r-\rs+1)^2$ yields $T_1 \leq \frac{1}{2}(r-\rs + 1) \sigma_{k+1}^2.$

We continue by bounding the term $T_2$. Let $(\ik,\jk)$ be the pair such that $\tau(\ik,\jk)= k+1$, then 
{\allowdisplaybreaks 
\begin{align*}
    T_2&\overset{}{=}
    \norm{ \sum_{\substack{(i,j) \mid \tau(i,j) \geq k+1}}
 \left( {\uy_{j}} \otimeskron {\ux_{i}} \right) \left( {\uy_{j}}\otimeskron {\ux_{i}}\right)^\top 
     \vect{\Xs\Ys^\top} }{2}
    \\
     &\overset{(i)}{\leq}
    \norm{ \sum_{i=\ik}^{d_1}\sum_{j=\jk}^{d_2} \left( {\uy_{j}} \otimeskron {\ux_{i}} \right) \left( {\uy_{j}}\otimeskron {\ux_{i}}\right)^\top 
     \vect{\Xs\Ys^\top}}{2}
    \\
     &\overset{(ii)}{=}
     \norm{ \left( \sum_{i=\ik}^{d_1}
        {\ux_{i}} {\ux_{i}}^\top \right) \Xs\Ys^\top
     \left( \sum_{j=\jk}^{d_2}
    {\uy_{i}} {\uy_{i}}^\top \right)}{F}
    \\
    &\overset{(iii)}{\leq} \frac{1}{\min\left\{\sigma^2_{\rs}\left( \Xs \right), \sigma^2_{\rs}\left( \Ys \right)\right\}} \norm{ \left( \sum_{i=1}^{\ik-1}
    {\ux_{i}} {\ux_{i}}^\top \right) XY^\top   \left( \sum_{j=1}^{\jk - 1}
    {\uy_{j}} {\uy_{j}}^\top \right) - \Xs\Ys^\top}{F}^2 \\
    &\overset{(iv)}{\leq }  \frac{2\left(  \norm{  \left( \sum_{i=1}^{\ik-1}
    {\ux_{i}} {\ux_{i}}^\top \right) XY^\top   \left(\sum_{j=1}^{\jk - 1}
     {\uy_{j}} {\uy_{j}}^\top \right) - XY^\top}{F}^2 + \norm{ XY^\top- \Xs\Ys^\top}{F}^2 \right)}{\min\left\{\sigma^2_{\rs}\left( \Xs \right), \sigma^2_{\rs}\left( \Ys \right)\right\}} \\ 
    & \overset{(v)}{\leq }  \underbrace{\frac{2\left(  \norm{ \left(\sum_{i=1}^{\ik-1} {\ux_{i}} {\ux_{i}}^\top \right) XY^\top    \left( \sum_{j=1}^{\jk - 1}{\uy_{j}} {\uy_{j}}^\top \right) - XY^\top}{F}^2 \right)}{\min\left\{\sigma^2_{\rs}\left( \Xs \right), \sigma^2_{\rs}\left( \Ys \right)\right\}}}_{T_3:=}  + \frac{\rho}{2} \norm{ XY^\top- \Xs\Ys^\top}{F},
\end{align*}}
where $(i)$ follows from the definition of $(\ik,\jk)$ and from Lemma~\ref{lem:adding_stuff_to_orhogonal}, $(ii)$ follows from the Kronecker product properties~\eqref{kr_p1},~\eqref{kr_p2}, and~\eqref{kr_p3},
$(iii)$ follows from Lemma~\ref{second-term-assymetric} together with Lemma~\ref{lem:distance_factors_implies_distance_op_asymmetric} and Lemma~\ref{lem:fatahi-asym-convenable-ordering}, $(iv)$ follows from adding and substracting $XY^\top$ in conjunction with Young's inequality, and $(v)$ follows from the initial condition $\norm{XY^\top - \Xs\Ys^\top}{F} \leq \rho \frac{ \min\left\{\sigma^2_{\rs}\left( \Xs \right), \sigma^2_{\rs}\left( \Ys \right)\right\}}{4}$.

Next, we provide a bound for the first term of the right-hand side of (vi). Let us denote $\mathcal I_{QQ} := \{(i,j)\in\mathbb N^{2} : \ik\le i\le r, \jk\le j\le r\}$, $\mathcal I_{PQ} := \{(i,j)\in\mathbb N^{2} : 1\le i<\ik, \jk\le j\le r\}$, $\mathcal I_{QP} := \{(i,j)\in\mathbb N^{2} : \ik\le i\le r, 1\le j<\jk\}$, and $\mathcal{I} := \mathcal{I}_{QQ} \cup \mathcal{I}_{PQ} \cup \mathcal{I}_{QP}$. Then by orthonormality of the basis  $\{U^Y_j \otimes U^X_i \mid (i,j) \in [d_1] \times [d_2] \}$ and since $\text{vec}(U^X_i {U^Y_j}^\top) = U^Y_j \otimeskron U^X_i$, we have that 
\begin{align*}
    T_3 = \underbrace{2\frac{ \sum_{(i,j)\in \mathcal{I}_{QQ}} (\sigma_i^X)^2 (\sigma_j^Y)^2 \inner{V^X_i}{V^Y_j}^2}{\min\left\{\sigma^2_{\rs}\left( \Xs \right), \sigma^2_{\rs}\left( \Ys \right)\right\}}}_{T_4:=}  + \underbrace{2\frac{\sum_{(i,j)\in \mathcal{I}_{PQ} \cup\mathcal{I}_{QP}} (\sigma_i^X)^2 (\sigma_j^Y)^2 \inner{V^X_i}{V^Y_j}^2 }{\min\left\{\sigma^2_{\rs}\left( \Xs \right), \sigma^2_{\rs}\left( \Ys \right)\right\}}}_{T_5:=} .
     \end{align*}
We next bound $T_4$ and $T_5$. For $T_4$, \begin{align*}
T_4 &\leq 2\frac{ \sigma_{\rs}(X) \sigma_{\rs}(Y) \sum_{i=\ik}^r \sum_{j=\jk}^r \sigma_i^X\sigma_j^Y  }{\min\left\{\sigma^2_{\rs}\left( \Xs \right), \sigma^2_{\rs}\left( \Ys \right)\right\}}
\\ &\leq \frac{1}{2}\frac{ \sigma_{\rs}^2 \left( X \right) + \sigma_{\rs}^2 \left( Y \right) }{\min\left\{\sigma^2_{\rs}\left( \Xs \right), \sigma^2_{\rs}\left( \Ys \right)\right\}} \sum_{i=\ik}^r \sum_{j=\jk}^r \left( \sigma_{i}^2 \left( X \right) + \sigma_{j}^2 \left( Y \right) \right )
\\ &\leq 2\frac{ \sigma^2_{\rs}\left( \Xs \right) + \sigma^2_{\rs}\left( \Ys \right) }{\min\left\{\sigma^2_{\rs}\left( \Xs \right), \sigma^2_{\rs}\left( \Ys \right)\right\}} (r-\rs+1)^2 \sigma_{k+1}^2 ,
\end{align*}
where the first inequality is due to Cauchy–Schwarz, the second is due to Young's inequality applied twice, and the third is due to Corollary~\ref{cor: ass-eig} and the initial conditions in conjunction with Weyl's inequality. For the $T_5$, we define $R:=\argmin_{\Rprime\in O(r-\rs)} \norm{\left( V^Y_{ \{\rs + 1 \ldots r\}} - V^{\Ys}_{\{ \rs + 1 \ldots r \}}\Rprime \right)}{F}^2$. Next we only bound the terms in $T_5$ associated with the indices in $\cI_{PQ}$, that is
{\allowdisplaybreaks \begin{align*}
&\frac{ \sum_{(i,j)\in \mathcal{I}_{PQ}} (\sigma_i^X)^2 (\sigma_j^Y)^2 \inner{V^X_i}{V^Y_j}^2}{\min\left\{\sigma^2_{\rs}\left( \Xs \right), \sigma^2_{\rs}\left( \Ys \right)\right\}} \\ 
& \overset{(i)}{=} 2\frac{ \sum_{i=1}^{\rs}  \sum_{j=\jk}^{r}   (\sigma_i^X)^2 (\sigma_j^Y)^2 \inner{V^X_i}{V^Y_j}^2 + \sum_{i=\rs+1}^{\ik-1}  \sum_{j=\jk}^{r}   (\sigma_i^X)^2 (\sigma_j^Y)^2 \inner{V^X_i}{V^Y_j}^2 }{\min\left\{\sigma^2_{\rs}\left( \Xs \right), \sigma^2_{\rs}\left( \Ys \right)\right\}}  \\ 
& \overset{(ii)}{\leq} 2\frac{ \sum_{i=1}^{\rs}  \sum_{j=\jk}^{r}   (\sigma_i^X)^2 (\sigma_j^Y)^2 \inner{V^X_i}{V^Y_j}^2 + \sigma^2_{\rs}\left( \Xs \right)  \sigma_{\jk}^2 \left( Y \right) \sum_{i=\rs+1}^{\ik-1}  \sum_{j=\jk}^{r}   1 }{\min\left\{\sigma^2_{\rs}\left( \Xs \right), \sigma^2_{\rs}\left( \Ys \right)\right\}}  \\ 
& \overset{(iii)}{\leq} 2\frac{  \left(\sigma_1^X\right)^2 \left(\sigma_{\jk}^Y\right)^2 \norm{\left( V^X_{\{1\ldots \rs\}} \right)^\top V^Y_{ \{\rs + 1 \ldots r\}}   }{F}^2   + (r-\rs+1)^2\sigma^2_{\rs}\left( \Xs \right)  \sigma_{\jk}^2 \left( Y \right) }{\min\left\{\sigma^2_{\rs}\left( \Xs \right), \sigma^2_{\rs}\left( \Ys \right)\right\}}
\\ & \overset{(iv)}{\leq} 2\frac{  2\left(\sigma_1^X\right)^2 \left(\sigma_{\jk}^Y\right)^2 \left( \norm{\left( V^X_{\{1\ldots \rs\}} \right)^\top V^\Xs_{\{ \rs + 1 \ldots r\}} R }{F}^2+ \norm{\left( V^X_{\{1\ldots \rs\}} \right)^\top \left( V^Y_{ \{\rs + 1 \ldots r\}} - V^{\Ys}_{\{ \rs + 1 \ldots r \}} R \right)   }{F}^2 \right)}{\min\left\{\sigma^2_{\rs}\left( \Xs \right), \sigma^2_{\rs}\left( \Ys \right)\right\}} 
\\ & \qquad + 2 \frac{(r-\rs+1)^2\sigma^2_{\rs}\left( \Xs \right)  \sigma_{\jk}^2 \left( Y \right) }{\min\left\{\sigma^2_{\rs}\left( \Xs \right), \sigma^2_{\rs}\left( \Ys \right)\right\}} 
\\ & \overset{(v)}{\leq} 2\frac{  2\left(\sigma_1^X\right)^2 \left(\sigma_{\jk}^Y\right)^2 \norm{\left( V^X_{\{1\ldots \rs\}} \right)^\top V^\Xs_{\{ \rs + 1 \ldots r\}}}{F}^2  + 2\left(\sigma_1^X\right)^2 \left(\sigma_{\jk}^Y\right)^2 \norm{ V^Y_{ \{\rs + 1 \ldots r\}} - V^{\Ys}_{\{ \rs + 1 \ldots r \}}R }{F}^2}{\min\left\{\sigma^2_{\rs}\left( \Xs \right), \sigma^2_{\rs}\left( \Ys \right)\right\}} 
\\ & \qquad + 2 \frac{(r-\rs+1)^2\sigma^2_{\rs}\left( \Xs \right)  \sigma_{\jk}^2 \left( Y \right) }{\min\left\{\sigma^2_{\rs}\left( \Xs \right), \sigma^2_{\rs}\left( \Ys \right)\right\}},
\end{align*}}
where $(i)$ follows from rearranging, $(ii)$ follows from Weyl's inequality applied to $\sigma_{\rs+1}^X$ and from the Cauchy--Schwarz inequality, $(iii)$ follows from Lemma~\ref{lem:fatahi-asym-convenable-ordering} and from adding nonnegative components to the Frobenius norm, $(iv)$ follows from Young's inequality and from the assumption that $V^{\Xs} = V^{\Ys}$. Finally, $(v)$ follows from the Courant--Fisher theorem applied to orthogonal operators. We further bound, 
{\allowdisplaybreaks \begin{align*}
&2\frac{  2\left(\sigma_1^X\right)^2 \left(\sigma_{\jk}^Y\right)^2 \norm{\left( V^X_{\{1\ldots \rs\}} \right)^\top V^\Xs_{\{ \rs + 1 \ldots r\}}}{F}^2  + 2\left(\sigma_1^X\right)^2 \left(\sigma_{\jk}^Y\right)^2 \norm{ V^Y_{ \{\rs + 1 \ldots r\}} - V^{\Ys}_{\{ \rs + 1 \ldots r \}}R  }{F}^2}{\min\left\{\sigma^2_{\rs}\left( \Xs \right), \sigma^2_{\rs}\left( \Ys \right)\right\}} 
\\ & \qquad + 2 \frac{(r-\rs+1)^2\sigma^2_{\rs}\left( \Xs \right)  \sigma_{\jk}^2 \left( Y \right) }{\min\left\{\sigma^2_{\rs}\left( \Xs \right), \sigma^2_{\rs}\left( \Ys \right)\right\}}
\\ & \overset{(i)}{\leq} 2\frac{  2\left(\sigma_1^X\right)^2 \left(\sigma_{\jk}^Y\right)^2 \frac{4\norm{X-\Xs}{F}^2}{\min\left\{\sigma^2_{\rs}\left( \Xs \right), \sigma^2_{\rs}\left( \Ys \right)\right\}}  + 2\left(\sigma_1^X\right)^2 \left(\sigma_{\jk}^Y\right)^2 \frac{8\norm{Y-\Ys}{F}^2}{\min\left\{\sigma^2_{\rs}\left( \Xs \right), \sigma^2_{\rs}\left( \Ys \right)\right\}} }{\min\left\{\sigma^2_{\rs}\left( \Xs \right), \sigma^2_{\rs}\left( \Ys \right)\right\}} 
\\ & \qquad + 2 \frac{(r-\rs+1)^2\sigma^2_{\rs}\left( \Xs \right)  \sigma_{\jk}^2 \left( Y \right) }{\min\left\{\sigma^2_{\rs}\left( \Xs \right), \sigma^2_{\rs}\left( \Ys \right)\right\}}
\\ & \overset{}{\leq} \frac{  32 \left(\sigma_1^X\right)^2 \left(\sigma_{\jk}^Y\right)^2 }{\min\left\{\sigma^4_{\rs}\left( \Xs \right), \sigma^4_{\rs}\left( \Ys \right)\right\}} \norm{(X,Y) - (\Xs,\Ys)}{F}^2 
+ 2 \frac{(r-\rs+1)^2\sigma^2_{\rs}\left( \Xs \right)  \sigma_{\jk}^2 \left( Y \right) }{\min\left\{\sigma^2_{\rs}\left( \Xs \right), \sigma^2_{\rs}\left( \Ys \right)\right\}} 
\\ &\overset{(ii)}{ \leq }  \frac{1}{16}\frac{  \left(\sigma_1^X\right)^2\left(\sigma_{\jk}^Y\right)^2  }{\max \left\{ {\sigma_1^2\left(\Xs\right)}, {\sigma_1^2\left(\Ys\right)}  \right\} } 
+ 2 \frac{(r-\rs+1)^2\sigma^2_{\rs}\left( \Xs \right)  \sigma_{\jk}^2 \left( Y \right) }{\min\left\{\sigma^2_{\rs}\left( \Xs \right), \sigma^2_{\rs}\left( \Ys \right)\right\}} 
\\ &\overset{(iii)}{ \leq }  %
\frac{\sigma_1^2\left(\Xs\right) +  \sigma_{\rs}^2\left(\Xs\right)}{8 \max \left\{ {\sigma_1^2\left(\Xs\right)}, {\sigma_1^2\left(\Ys\right)}  \right\} } \left(\sigma_{\jk}^Y\right)^2 
+ 2 \frac{(r-\rs+1)^2\sigma^2_{\rs}\left( \Xs \right)  \sigma_{\jk}^2 \left( Y \right) }{\min\left\{\sigma^2_{\rs}\left( \Xs \right), \sigma^2_{\rs}\left( \Ys \right)\right\}} 
\\ &\overset{}{ \leq }  \frac{1}{4} \left(\sigma_{\jk}^Y\right)^2 + 2 \frac{(r-\rs+1)^2\sigma^2_{\rs}\left( \Xs \right)  \sigma_{\jk}^2 \left( Y \right) }{\min\left\{\sigma^2_{\rs}\left( \Xs \right), \sigma^2_{\rs}\left( \Ys \right)\right\}}.
\\ &\overset{}{ \leq }  \frac{1}{4} \left(\sigma_{\jk}^Y\right)^2 + 2 \frac{(r-\rs+1)^2\sigma^2_{\rs}\left( \Xs \right)  \sigma_{\jk}^2 \left( Y \right) }{\min\left\{\sigma^2_{\rs}\left( \Xs \right), \sigma^2_{\rs}\left( \Ys \right)\right\}}
\\ &\overset{}{ \leq }  \frac{9}{4} \frac{(r-\rs+1)^2\sigma^2_{\rs}\left( \Xs \right) }{\min\left\{\sigma^2_{\rs}\left( \Xs \right), \sigma^2_{\rs}\left( \Ys \right)\right\}} \sigma_{\jk}^2 \left( Y \right).
\end{align*}}
Here, $(i)$ follows from a combination of results: first, we rewrite $\norm{\left( V^X_{\{1\ldots \rs\}} \right)^\top V^\Xs_{\{ \rs + 1 \ldots r\}}}{F}$ using Lemma~\ref{lem:orthogonal-matrices-sine}; second, we bound $\norm{ V^Y_{ \{\rs + 1 \ldots r\}} - V^{\Ys}_{\{ \rs + 1 \ldots r \}}R }{F}$ using Lemma~\ref{lem:procrustes-sin-theta}; and third, we invoke Wedin's theorem \cite[Theorem 2.9]{Chen_2021} on both of these terms, which is applicable since $\norm{(X,Y) - (\Xs, \Ys)}{F} \leq \epsassymmatrix \leq \frac{1}{4} \min\left\{ \sigma_{\rs}\left(\Xs\right), \sigma_{\rs}(\Ys) \right\}$. %
 Finally, inequalities $(ii)$ and $(iii)$ are due to the bound on the condition $\norm{(X,Y)-(\Xs,\Ys)}{F}$ %
together with Weyl's and Young's inequality.
Using a similar argument, one can bound the rest of the terms in $T_5$ by 
$$
2\frac{ \sum_{(i,j)\in \mathcal{I}_{QP}} (\sigma_i^X)^2 (\sigma_j^Y)^2 \inner{V^X_i}{V^Y_j}^2}{\min\left\{\sigma^2_{\rs}\left( \Xs \right), \sigma^2_{\rs}\left( \Ys \right)\right\}} \leq \left( \frac{9}{4}\frac{(r-\rs+1)^2\sigma^2_{\rs}\left( \Ys \right)}{\min\left\{\sigma^2_{\rs}\left( \Xs \right), \sigma^2_{\rs}\left( \Ys \right)\right\}} \right) \sigma_{\ik}^2 \left( X \right),
$$
so that
\begin{align*}
T_5 &\leq \left( \frac{9}{4}\frac{(r-\rs+1)^2\max \left\{ \sigma^2_{\rs}\left( \Xs \right) , \sigma^2_{\rs}\left( \Ys \right) \right\}}{\min\left\{\sigma^2_{\rs}\left( \Xs \right), \sigma^2_{\rs}\left( \Ys \right)\right\}} \right) \left( \sigma_{\ik}^2 \left( X \right) + \sigma_{\jk}^2 \left( Y \right)\right) 
\\ &=\left( \frac{9}{4}\frac{(r-\rs+1)^2\max \left\{ \sigma^2_{\rs}\left( \Xs \right) , \sigma^2_{\rs}\left( \Ys \right) \right\}}{\min\left\{\sigma^2_{\rs}\left( \Xs \right), \sigma^2_{\rs}\left( \Ys \right)\right\}} \right)\sigma_{k+1}^2,
\end{align*}
and thus, adding $T_4$ yields
$$
T_4 + T_5  \leq  \frac{17}{4}\frac{ \sigma^2_{\rs}\left( \Xs \right) + \sigma^2_{\rs}\left( \Ys \right) }{\min\left\{\sigma^2_{\rs}\left( \Xs \right), \sigma^2_{\rs}\left( \Ys \right)\right\}} (r-\rs+1)^2 \sigma_{k+1}^2.
$$

To conclude, since $T_1 \leq \frac{1}{2}(r-\rs + 1) \sigma_{k+1}^2 \leq \frac{1}{2} \frac{ \sigma^2_{\rs}\left( \Xs \right) + \sigma^2_{\rs}\left( \Ys \right) }{\min\left\{\sigma^2_{\rs}\left( \Xs \right), \sigma^2_{\rs}\left( \Ys \right)\right\}} (r-\rs+1)^2 \sigma_{k+1}^2$, we obtain 
\begin{align*}
    \| &\left(I - \Pi^{(X,Y)}_{k} \right) [XY^\top - \Xs\Ys^\top ] \|_F  \\ &\leq  \left(5\frac{ \sigma^2_{\rs}\left( \Xs \right) + \sigma^2_{\rs}\left( \Ys \right) }{\min\left\{\sigma^2_{\rs}\left( \Xs \right), \sigma^2_{\rs}\left( \Ys \right)\right\}} (r-\rs+1)^2 \right) \sigma^2_{k+1} + \frac{\rho}{2} \norm{XY^\top - \Xs\Ys^\top}{F}.  
\end{align*}
Taking the square of both sides and applying Young's inequality yields the desired result.

\end{proof}

\subsubsection{Proof of Lemma~\ref{lem: smooth-sensing-satisfies-assum}} \label{proof:lem-RIP-implies-assumptions}
The objective $h$ is a composition of a linear map with a convex function, and so it's convex, proving Item~\ref{item:smoothpenalty:convexity} of Assumption~\ref{assum:smoothpenalty}. To establish Items~\ref{item:smoothpenalty:uniqueminimizer} and \ref{item:smoothpenalty:sharpness}, we establish quadratic growth with $\zs = \Ms$. Notice that $\Ima \c$ corresponds with the set of rank $r$ matrices. Let an arbitrary $M\in \Ima \c$. Applying the reverse triangle inequality yields
  \begin{align}
  \label{quadgrowth}
      \frac{1}{2} \norm{\mathcal{A}(M) - b}{2}^2 -   \frac{1}{2} \norm{\mathcal{A}(\Ms) - b}{2}^2 %
      \geq \frac{1}{2} \norm{\mathcal{A}\left(M - \Ms\right)}{2}^2 %
      \geq \frac{1}{2} (1-\delta)\norm{M-\Ms}{2}^2,
  \end{align}
  where %
  the second inequality follows from (\ref{def:RIP}) since $M-\Ms$ has rank at most 2r.

We will use the following lemma for our proof of Item~\ref{item:smoothpenalty:lip} in Assumption~\ref{assum:smoothpenalty}.
\begin{lemma}[Lemma 3.3 in \cite{candes2011tight} and Lemma 31 in \cite{tong2021accelerating}]\label{lem: candes-lemma}
Assume that $\mathcal{A}$ satisfies~\eqref{def:RIP} for matrices of rank at most $2r$. Then one has
$$
\inner{\mathcal{A}(M)}{\mathcal{A}(\Mprime)} \leq \delta \norm{M}{F} \norm{\Mprime}{F} + \abs{\inner{M}{\Mprime}},
$$
for any matrices $M$ and $\Mprime$ of rank at most $r$.
\end{lemma}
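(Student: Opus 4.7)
The plan is to deploy the classical polarization trick pioneered by Cand\`es--Plan: although $M$ and $\Mprime$ each have rank at most $r$, the sum $M+\Mprime$ and the difference $M-\Mprime$ have rank at most $2r$, which is exactly the regime in which RIP is assumed to hold. Thus RIP applied to $M\pm \Mprime$ should, after an algebraic identity, produce precisely the stated bound.

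First I would reduce to the unit-norm case. By bilinearity, if either $M$ or $\Mprime$ is zero, the conclusion is trivial, and otherwise we may replace $M$ by $M/\|M\|_F$ and $\Mprime$ by $\Mprime/\|\Mprime\|_F$, which rescales both sides of the desired inequality by $\|M\|_F \|\Mprime\|_F$. It therefore suffices to prove the inequality under the normalization $\|M\|_F = \|\Mprime\|_F = 1$, with the target reducing to $\inner{\mathcal{A}(M)}{\mathcal{A}(\Mprime)} \leq \delta + |\inner{M}{\Mprime}|$.

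Next, I would apply the polarization identity
\begin{equation*}
\inner{\mathcal{A}(M)}{\mathcal{A}(\Mprime)} = \tfrac{1}{4}\bigl(\|\mathcal{A}(M+\Mprime)\|_2^2 - \|\mathcal{A}(M-\Mprime)\|_2^2\bigr).
\end{equation*}
Since both $M+\Mprime$ and $M-\Mprime$ have rank at most $2r$, the hypothesis~\eqref{def:RIP} bounds the first term above by $(1+\delta)\|M+\Mprime\|_F^2$ and the second term below by $(1-\delta)\|M-\Mprime\|_F^2$. Combining yields
\begin{equation*}
\inner{\mathcal{A}(M)}{\mathcal{A}(\Mprime)} \leq \tfrac{1}{4}\bigl(\|M+\Mprime\|_F^2 - \|M-\Mprime\|_F^2\bigr) + \tfrac{\delta}{4}\bigl(\|M+\Mprime\|_F^2 + \|M-\Mprime\|_F^2\bigr).
\end{equation*}
The first parenthesis equals $4 \inner{M}{\Mprime}$ by polarization in Frobenius inner product, and the second equals $2(\|M\|_F^2 + \|\Mprime\|_F^2) = 4$ by the parallelogram law and the normalization. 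This gives $\inner{\mathcal{A}(M)}{\mathcal{A}(\Mprime)} \leq \inner{M}{\Mprime} + \delta$, which in turn is bounded by $|\inner{M}{\Mprime}| + \delta$. Unwinding the homogeneity rescaling recovers the claim.

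There is no substantive obstacle here; the only point to be mindful of is that RIP must be invoked at rank $2r$ rather than $r$, which is consistent with the hypothesis of the lemma. The argument is short and purely algebraic once polarization is chosen.
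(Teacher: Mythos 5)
Your argument is correct: the polarization identity applied to $\mathcal{A}(M\pm\Mprime)$, combined with RIP at rank $2r$ (valid since $\rank(M\pm\Mprime)\le 2r$) and the parallelogram law, yields exactly the stated bound, and the normalization/rescaling step is harmless by bilinearity. The paper does not prove this lemma itself but imports it from Cand\`es--Plan and Tong et al., and the proof in those references is precisely this polarization argument, so your approach coincides with the standard one.
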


We use $\Pi$ as a shorthand for the projection $\Pi^{(X, Y)}$ onto the image of $\nabla \c(X,Y)$. Recall from \eqref{action: nablac-and-nablacT-asym} that this image lies within the set of matrices of rank at most $2r$ and so \begin{equation} \label{eq: proj_subset_low_rank}
  \Ima {\Pi} \subseteq \{ Z\in \RR^{d_1\times d_2} \mid \rankk{Z} \leq 2r\}.
  \end{equation}
Therefore,  {\allowdisplaybreaks \begin{align*}
  \norm{\Pi\nabla h(M)}{2}  &= \norm{\Pi \left[\mathcal{A}^\ast \mathcal{A}\left(M-M^\star\right] \right)}{F}   
  \\ &=\sup_{W \in \RR^{d_1\times d_2} \mid \norm{W}{F}=1} \inner{\Pi \left(\mathcal{A}^\ast \mathcal{A}\left(M-M^\star\right) \right)}{W}  \\ &=\sup_{W \in \RR^{d_1\times d_2} \mid \norm{W}{F}=1} \inner{\mathcal{A}^\ast \mathcal{A}\left(M-M^\star\right)}{\Pi[W]}  \\ &\leq \sup_{\substack{W \in \RR^{d_1\times d_2} \mid \norm{W}{F}=1 \\ \rankk{W} \leq 2r}} \inner{\mathcal{A}^\ast \mathcal{A}\left(M-M^\star\right)}{W} \\ & = \sup_{\substack{W \in \RR^{d_1\times d_2} \mid \norm{W}{F}=1 \\ \rankk{W} \leq 2r}} \inner{\mathcal{A}\left(M-M^\star\right)}{\mathcal{A}\left(W\right)} \\ &\leq \delta\norm{M-\Ms}{F}  + \sup_{W \in \RR^{d_1\times d_2} \mid \norm{W}{F}=1}\inner{M-\Ms}{W} \\&=(1+\delta) \norm{M-\Ms}{F} ,
   \end{align*}}
   where the first inequality follows from~\eqref{eq: proj_subset_low_rank}, and the second inequality uses Lemma~\ref{lem: candes-lemma}. Using this result in tandem with~\eqref{quadgrowth} yields 
    $$
    \frac{(1-\delta)}{2(1+\delta)^2} \norm{\Pi \nabla h(M)}{2}^2 \leq   h(M) - h(M^\star), 
    $$
    thus, establishing part (a) of Item~\ref{item:smoothpenalty:lip}. For part (b), recall from the definition of $P((X, Y), \lambda)$, \eqref{eq:matrix-P}, that $\Ima (I - P((X,Y), \lambda)) = \Ima \nabla \c(X,Y)$ and so $(I - P((X,Y), \lambda))[M] \in M + \Ima \nabla \c(X, Y)$ and consequently \begin{equation}\label{eq:rank-bound}\rank((I - P((X,Y), \lambda))[M]) \leq \rank M + 2r.
    \end{equation}
 Hence,
    \begin{align*}
        &\inner{\mathcal{A}^\ast \mathcal{A}\left(M-\Ms\right)}{(I - P((X,Y), \lambda))[M-\Ms]} \\ &\hspace{.5cm} =   \inner{ \mathcal{A}\left(M-\Ms\right)}{\mathcal{A}\left((I - P((X,Y), \lambda))[M-\Ms]\right)}
        \\& \hspace{.5cm}\leq \delta\norm{M-\Ms}{F} \norm{(I - P((X,Y), \lambda))[M-\Ms]}{F} +\abs{\inner{M-\Ms}{(I - P((X,Y), \lambda))[M-\Ms]}} \\ & \hspace{.5cm}\leq (1+\delta) \norm{M-\Ms}{F} \norm{(I - P((X,Y), \lambda))(M-\Ms)}{F},
    \end{align*}
    where the first inequality follows from Lemma~\ref{lem: candes-lemma}, which applies due to~\eqref{eq:rank-bound}, and the second inequality follows from Cauchy–Schwarz.
    The proof concludes by taking $\mus =(1-\delta)$ and $\Lhs = \max\left\{ \frac{(1+\delta)^2}{(1-\delta)}, (1+\delta) \right\} = \frac{(1+\delta)^2}{(1-\delta)}$.
\subsubsection{Proof of Lemma~\ref{lem:restricted-regularity-implies-ugly-regularity}}\label{app:proof-restricted-regularity-implies-ugly-regularity}    Items~\ref{item:nonsmoothpenalty:uniqueminimizer},~\ref{item:nonsmoothpenalty:convexity} and ~\ref{item:nonsmoothpenalty:sharpness} hold automatically and so we focus on proving Item~\ref{item:nonsmoothpenalty:lip}. Take $M\in \RR^{d_1\times d_2}$ and $V\in \partial f(M)$. A key ingredient to this proof is the fact that for any matrix $W$ of rank at most $2r$ we have
    \begin{align}\label{eq:keyprop}
     \inner{V}{W} &= \inner{V}{(W+M)-M} \nonumber \\ &\leq f(W+M) - f(M)\nonumber\\
     &\leq L\norm{W}{F},
     \end{align}
where the first inequality holds since $f$ is convex and $V\in \partial f(M)$, and the second holds since $f$ satisfies restricted $L$ Lipschitzness. 

To establish Item~\ref{item:nonsmoothpenalty:lip_a} of the Assumption, we have
\begin{align*}
    \norm{\Pi^{(X, Y)} [V]}{F} &\leq \sup_{\substack{W \mid \rankk{W}\leq 2r,\\ \norm{W}{F}=1}} \inner{W}{V} \\& \leq \sup_{W\mid \norm{W}{F}=1} L \norm{W}{F} = 1.
\end{align*}
Recall that $\Pi^{(X, Y)}$ denotes the projection onto $\operatorname{range} \nabla F(X,Y)$, which is a subset of $\{ {W} \mid W \in \RR^{d_1\times d_2} \text{ and } \rankk{W}\leq 2r \}$. Thus, establishing the first inequality, while the second inequality follows from~\eqref{eq:keyprop}.

To prove Item~\ref{item:nonsmoothpenalty:lip_b}, the matrix $W = (I-P((X,Y), \lambda))[M - M^{\star}]$ has rank $4r$ due to \eqref{eq:rank-bound}. We can further decompose $W$ into two rank-$2r$ matrices $W_{1}$ and $W_{2}$ such that $\dotp{W_{1},W_{2}} = 0.$ Therfore,
\begin{align*}
  \abs{\inner{V}{W} }^{2} &= \abs{\inner{V}{W_{1}} + \inner{V}{W_{2}}}^{2} \\&= \abs{\inner{V}{W_{1}}}^{2} + 2 \abs{\inner{V}{W_{1}}\inner{V}{W_{2}}} +  \abs{\inner{V}{W_{2}}}^{2}
  \\ &\leq L^{2}\norm{W_{1}}{F}^{2} +2 \norm{W_{1}}{F} \norm{W_{2}}{F}+L^{2} \norm{W_{2}}{F}^{2}
\\& = L^{2}\left(\norm{W_{1}}{F}^{2} + \norm{W_{2}}{F}\right)^{2} \\&= L^{2} \norm{W}{F}^{2} ,
\end{align*}
where the inequality follows from \eqref{eq:keyprop} and the last equality uses fact that $\norm{W}{F}^{2} = \norm{W_{1}}{F}^{2} + \norm{W_{2}}{F}^{2}$, since $W_{1}$ and $W_{2}$ are orthogonal.
This concludes the proof.
\subsection{Proofs from Section \ref{sec: tensor}}
We start with a few explicit definitions that will play a role in our arguments.
In what follows, we use $M_{j:}$ and $M_{i}$ to refer to the $j$-th row and $i$-th column, respectively.

\begin{definition}[Column-major vectorization of matrices and tensors]
Let $M\in\RR^{d_1\times d_2}$ and $T\in\RR^{d_1\times d_2\times d_3}$. The vectors $\vect{M}\in\RR^{d_1d_2}$ and $\vect{T}\in\RR^{d_1d_2d_3}$ are defined by
\begin{align*}
  \vect{M}_{(i_2-1)d_1+i_1}
    &= M_{i_1,i_2}& \text{for }
    i_1\in [d_1],\, i_2\in [d_2],\\
  \vect{T}_{(i_3-1)d_1d_2+(i_2-1)d_1+i_1}
    &= T_{i_1,i_2,i_3}& \text{for }
    i_1\in [d_1],\, i_2\in [d_2],\, i_3\in [d_3].
\end{align*}
\end{definition}

\begin{definition}[Matricization of tensors]
Let $T\in\RR^{d_1\times d_2\times d_3}$. The mode-1 matricization $\cM_1(T)\in\RR^{d_1\times(d_2d_3)}$ is given by
\begin{align*}
  \cM_1(T)_{i_1,\,(i_3-1)d_2+i_2}
    &= T_{i_1,i_2,i_3}& \text{for }
    i_1\in [d_1],\, i_2\in [d_2],\, i_3\in [d_3].
\end{align*}
Similarly, the mode-2 and mode-3 matricizations $\cM_2(T)\in\RR^{d_2\times(d_1d_3)}$ and $\cM_3(T)\in\RR^{d_3\times(d_1d_2)}$ are
\begin{align*}
  \cM_2(T)_{i_2,\,(i_3-1)d_1+i_1}
    &= T_{i_1,i_2,i_3}& \text{for }
    i_1\in [d_1],\, i_2\in [d_2],\, i_3\in [d_3],\\
  \cM_3(T)_{i_3,\,(i_2-1)d_1+i_1}
    &= T_{i_1,i_2,i_3}& \text{for }
    i_1\in [d_1],\, i_2\in [d_2],\, i_3\in [d_3].
\end{align*}
\end{definition}

\begin{definition}[Permutations of tensor matricization]\label{Pis}
Given $i\in\{2,3\}$ and $T\in\RR^{d_1\times d_2\times d_3}$, let $P_i\in\RR^{d_1d_2d_3\times d_1d_2d_3}$ be the permutation matrix such that
\begin{align*}
  P_i\,\vect{\cM_i(T)} &= \vect{\cM_1(T)} = \vect{T}.
\end{align*}
\end{definition}

\begin{definition}[Kronecker product for matrices and vectors]
Given $M\in\mathbb{R}^{d_1\times d_2}$ and $N\in\mathbb{R}^{d_3\times d_4}$, their Kronecker product $M\otimeskron N\in\mathbb{R}^{d_1d_3\times d_2d_4}$ is defined entrywise by
\begin{align*}
  (M\otimeskron N)_{(i_1-1)d_3+i_3,\,(i_2-1)d_4+i_4}
    &= M_{i_1,i_2}\,N_{i_3,i_4}& \text{for }
    i_1\in [d_1],\, i_2\in [d_2],\, i_3\in [d_3],\, i_4\in [d_4].
\end{align*}
Moreover, for vectors $u\in\mathbb{R}^{d_1}$ and $v\in\mathbb{R}^{d_2}$, one has
$$
(u\otimeskron v)_{(i_1-1)d_2+i_2} = u_{i_1}v_{i_2} \quad \text{for } i_1\in[d_1], i_2\in[d_2].
$$
\end{definition}

\begin{definition}\label{psi}
    Given any two matrices $M \in\RR^{d_1\times r}$ and $N\in \RR^{d_2\times r}$, we define 
    \begin{equation*}
      \ext{M}{N} =  [\, M_1 \otimeskron N_1 \;\cdots\; M_r \otimeskron N_r\,] \in \RR^{d_1d_2 \times r}\text { and } \Psi(M,N) = \sum_{j=1}^r M_j M_j^\top \otimeskron N_j N_j^\top \in \RR^{d_1d_2 \times d_1d_2},
\end{equation*}
      where $M_{j}$ and $N_{j}$ denote the $j$th columns of $M$ and $N$, respectively.
\end{definition}
\subsubsection{Proof of Theorem \ref{thm-tensor-symmetric-reg-conditions}}\label{proof-tensor-symmetric-reg-conditions}
We start with the actions of the Jacobian and its adjoint.  
\begin{lemma}
\label{lemma: action-nablac-symmetric}
Let $X\in \RR^{d\times r}$. Then, the action of $\nabla \Fsym(X) $ on a direction $D \in \RR ^{d\times r}$ is given by
\begin{align*}
    \nabla \Fsym(X) \vect{D} &= \sum_{\ell=1}^r \left(D_\ell \otimeskron X_\ell \otimeskron X_\ell +   X_\ell \otimeskron D_\ell \otimeskron X_\ell +  X_\ell \otimeskron X_\ell \otimeskron D_\ell\right) \\ & = \left( I + P_3 + P_2 \right) \sum_{\ell=1}^r D_\ell \otimeskron X_\ell \otimeskron X_\ell 
    \in \RR^{d^3}.
\end{align*} Moreover, the action of the adjoint $\nabla \Fsym(X)^\top$ on a rank-$1$ tensor $a \otimeskron b \otimeskron c \in \RR^{d^3}$ is given by
    $$
    \nabla \Fsym(X)^\top \left( a \otimeskron b \otimeskron c \right)=\vect{ \left( a  (b^\top \otimeskron c^\top) \;+\; b  (a^\top \otimeskron c^\top)\;+\; c  (a^\top \otimeskron b^\top) \right)  \extone{X}} \in \RR^{dr}.
    $$
\end{lemma}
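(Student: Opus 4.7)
I would establish the two formulas separately, starting from first principles.

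For the Jacobian action, I would directly differentiate: since $F_{\operatorname{sym}}(X) = \sum_{\ell=1}^r X_\ell \otimes X_\ell \otimes X_\ell$ is a polynomial map, the product rule gives
\[
\nabla F_{\operatorname{sym}}(X)[D] = \frac{d}{dt}\Big|_{t=0} F_{\operatorname{sym}}(X + tD) = \sum_{\ell=1}^r \bigl(D_\ell \otimes X_\ell \otimes X_\ell + X_\ell \otimes D_\ell \otimes X_\ell + X_\ell \otimes X_\ell \otimes D_\ell\bigr),
\]
which, after vectorizing, is exactly the first stated expression. This is the routine, low-risk step.

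For the rewrite in terms of $P_2$ and $P_3$, the key claim I need is the identities
\[
P_2\,\vect{D_\ell \otimes X_\ell \otimes X_\ell} = \vect{X_\ell \otimes D_\ell \otimes X_\ell}, \qquad P_3\,\vect{D_\ell \otimes X_\ell \otimes X_\ell} = \vect{X_\ell \otimes X_\ell \otimes D_\ell}.
\]
To show the first, I would set $T := X_\ell \otimes D_\ell \otimes X_\ell$, so that by Definition \ref{Pis} we have $P_2 \vect{\cM_2(T)} = \vect{T}$, and then verify at the level of indices that $\vect{\cM_2(T)}$ coincides with $\vect{D_\ell \otimes X_\ell \otimes X_\ell}$: both vectors carry the same scalar $X_{i\ell} D_{j\ell} X_{k\ell}$, and comparing the positional formulas of $\vect{\cM_2}$ and of the standard vectorization shows they land in the same slot after renaming indices. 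The $P_3$ identity is analogous with the mode-3 matricization. Summing these identities over $\ell$ and over the three terms gives the factored form $(I + P_3 + P_2)\sum_\ell D_\ell \otimes X_\ell \otimes X_\ell$. This index-chasing step is the main bookkeeping obstacle, but it is purely combinatorial.

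For the adjoint action, I would use the defining duality $\langle \nabla F_{\operatorname{sym}}(X)[D],\, a \otimes b \otimes c\rangle = \langle D,\, \nabla F_{\operatorname{sym}}(X)^\top(a \otimes b \otimes c)\rangle$ and compute the left-hand side from the Jacobian formula above:
\[
\sum_{\ell=1}^r \bigl[(D_\ell^\top a)(X_\ell^\top b)(X_\ell^\top c) + (X_\ell^\top a)(D_\ell^\top b)(X_\ell^\top c) + (X_\ell^\top a)(X_\ell^\top b)(D_\ell^\top c)\bigr].
\]
Reading off the coefficient of $D_\ell$ in each term identifies the $\ell$-th column of $\nabla F_{\operatorname{sym}}(X)^\top(a \otimes b \otimes c)$ as
\[
(X_\ell^\top b)(X_\ell^\top c)\,a + (X_\ell^\top a)(X_\ell^\top c)\,b + (X_\ell^\top a)(X_\ell^\top b)\,c.
\]
Finally, using the Kronecker-product identity $(u^\top \otimeskron v^\top)(X_\ell \otimeskron X_\ell) = (u^\top X_\ell)(v^\top X_\ell)$ applied to each of the three terms, and recalling that the $\ell$-th column of $\psi(X,X)$ is $X_\ell \otimeskron X_\ell$, this column equals the $\ell$-th column of $\bigl(a(b^\top \otimeskron c^\top) + b(a^\top \otimeskron c^\top) + c(a^\top \otimeskron b^\top)\bigr)\psi(X,X)$, which is the claimed formula. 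I expect no further obstacles here, since the computation is essentially a one-line application of the mixed-product property of Kronecker products.
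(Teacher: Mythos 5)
Your proposal is correct and follows essentially the same route as the paper's proof: expand the difference quotient to get the Jacobian action, verify the $P_2,P_3$ identities on the rank-one summands directly from the matricization/vectorization definitions, and derive the adjoint formula from the duality pairing together with the mixed-product property of the Kronecker product. The only cosmetic difference is that you attribute the two permuted terms to $P_2$ and $P_3$ in the opposite order from the paper's in-proof remark, which is immaterial since only the sum $(I+P_2+P_3)$ applied to $\sum_\ell D_\ell\otimes X_\ell\otimes X_\ell$ enters the final identity.
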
 
The proof is deferred to Appendix \ref{proof: action-nablac-symmetric}. Given this result, it is straightforward to establish Assumption~\ref{assum:local_lip_jacob}. Consider an arbitrary pair $X,\Xprime \in \RR^{d \times r}$ satisfying $\max\left\{ \norm{X-\Xs}{F}, \norm{\Xprime-\Xs}{F} \right\} \leq \norm{\Xs}{F}$. 
Using the variational characterization of the operator norm, we have
\begin{subequations}
\begin{align*}
    \norm{ \nabla \Fsym(X) - \nabla \Fsym(\Xprime) }{\operatorname{op}} &= \sup_{A \in \RR^{d\times r} ,  \norm{A}{F} = 1} \norm{\left(\nabla \Fsym(X) - \nabla \Fsym(\Xprime)\right)  \vect{A}}{2}\nonumber \\
    & \overset{(i)}{=}\sup_{\norm{A}{F} = 1} \norm{ \left(I + P_2 + P_3 \right)  \left( \sum_{\ell=1}^r A_\ell \otimeskron X_\ell \otimeskron X_\ell -  A_\ell \otimeskron \Xprime_\ell \otimeskron \Xprime_\ell \right)  }{2}\nonumber\\
    &\overset{(ii)}{\leq} 3 \sup_{\norm{A}{F} = 1}  \norm{\sum_{\ell=1}^r A_\ell \otimeskron \left( X_\ell \otimeskron X_\ell -  \Xprime_\ell \otimeskron \Xprime_\ell\right) }{2} \nonumber\\
    &\overset{(iii)}{=} 3 \sup_{\norm{A}{F} = 1}  \norm{ \left(  \extone{X} - \extone{\Xprime} \right) A^\top  }{F}\nonumber \\
        &\overset{(iv)}{\leq} 3 \norm{\extone{X} - \extone{\Xprime}}{F\nonumber},
    \end{align*}
        \end{subequations}
     where $(i)$ follows from the Lemma~\ref{lemma: action-nablac-symmetric},  $(ii)$ follows from the fact that permutation matrices have operator norm one, $(iii)$ follows from $\sum_{\ell=1}^r A_\ell \otimeskron B_\ell = \vect{B A^\top}$ for any $A$ and $B$,
     and $(iv)$ follows from the submultiplicativity of the Frobenius norm. 
     Leveraging the fact that $\sum_{\ell=1}^r \norm{v_\ell}{2}^2 \leq \left( \sum_{\ell=1}^r \norm{v_\ell}{2} \right)^2$, we have
    \begin{subequations}\begin{align*}
           \norm{ \nabla \Fsym(X) - \nabla \Fsym(\Xprime) }{\operatorname{op}} &\leq 3\sum_{\ell=1}^r \norm{X_\ell \otimeskron X_\ell - \Xprime_\ell \otimeskron \Xprime_\ell}{2} \nonumber \\
    &= 3\sum_{\ell=1}^r \norm{X_\ell \otimeskron X_\ell - X_\ell \otimeskron \Xprime_\ell + X_\ell \otimeskron \Xprime_\ell - \Xprime_\ell \otimeskron \Xprime_\ell}{2}\nonumber \\
    &\overset{(i)}{\le} 3\sum_{\ell=1}^r \left( \norm{X_\ell \otimeskron (X_\ell - \Xprime_\ell)}{2} + \norm{(X_\ell - \Xprime_\ell) \otimeskron \Xprime_\ell}{2} \right)\nonumber\\
    &\overset{(ii)}{\le} 3\left( \norm{X}{F} + \norm{\Xprime}{F} \right)  \norm{X - \Xprime}{F} \\ 
        &\overset{(iii)}{\le} 12\norm{\Xs}{F} \norm{X - \Xprime}{F},
\end{align*}
\end{subequations} 
where $(i)$ follows from the triangle inequality and bilinearity of the Kronecker product, $(ii)$ follows from the fact that $\norm{a \otimes b}{2} = \norm{a}{2} \norm{b}{2}$ for any $a$ and $b$, together with the Cauchy-Schwarz inequality, and $(iii)$ holds since by assumption $\max \left\{ \norm{X-\Xs}{F}, \norm{\Xprime - \Xs}{F} \right\} \leq \norm{\Xs}{F}$, which implies $\max \left\{ \norm{X}{F}, \norm{\Xprime}{F} \right\} \leq 2\norm{\Xs}{F}$ by the reverse triangle inequality.

 We now proceed to proving strong alignment for this map. Let $\Ts \in \RR^{d\times d\times d}$ be an arbitrary tensor and $\Xs \in \RR^{d\times r}$ be any full-rank matrix with $\Ts=\Fsym(\Xs)$. To establish alignment, we use the following result. \begin{proposition}[Constant rank of the symmetric CP map]\label{prop: cpsym constant rank}
 For any full rank matrix $X\in \RR^{d\times r}$,
    \[
    \rank \left(\nabla \Fsym(X) \right) = dr.
    \]    
\end{proposition}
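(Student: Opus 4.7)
My plan is to prove the proposition by showing that $\nabla \Fsym(X)\colon \RR^{dr} \to \RR^{d^3}$ is injective whenever $X \in \RR^{d \times r}$ has full column rank (the standard meaning of ``full rank'' in the CP context, and what is implicitly required since $r \leq d$). Since the domain has dimension $dr$, injectivity immediately yields $\rank \nabla \Fsym(X) = dr$.

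Fix $D \in \RR^{d \times r}$ with $\nabla \Fsym(X)\,\vect{D} = 0$. By Lemma~\ref{lemma: action-nablac-symmetric}, this is equivalent to the vanishing of the symmetric tensor
\[
T \;:=\; \sum_{\ell=1}^{r}\Big(D_\ell \otimes X_\ell \otimes X_\ell \;+\; X_\ell \otimes D_\ell \otimes X_\ell \;+\; X_\ell \otimes X_\ell \otimes D_\ell\Big) \;=\; 0.
\]
I would view $T$ as a trilinear form on $\RR^d$ and extract constraints on $D$ by evaluating $T(u,v,w)$ at carefully chosen triples. Decomposing $u = X\alpha + u_\perp$ with $u_\perp \in \ker X^\top$ (and analogously for $v, w$), and setting $G := X^\top D \in \RR^{r \times r}$, the identity $T(u,v,w) = 0$ splits cleanly into a part polynomial in $(\alpha,\beta,\gamma)$ driven by $G$, plus a part linear in the orthogonal pairings $D_\ell^\top u_\perp$, $D_\ell^\top v_\perp$, $D_\ell^\top w_\perp$.

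The extraction would proceed in two steps. First, setting $u_\perp = v_\perp = w_\perp = 0$ and specializing to standard basis vectors, the choice $\alpha = e_m$, $\beta = \gamma = e_\ell$ with $m \neq \ell$ forces $G_{m\ell} = 0$, while $\alpha = \beta = \gamma = e_\ell$ gives $3G_{\ell\ell} = 0$. Hence $X^\top D = 0$, i.e., every column $D_\ell$ lies in $\operatorname{col}(X)^\perp$. Second, take $\alpha = 0$ (so $u = u_\perp$), $v_\perp = w_\perp = 0$, and $\beta = \gamma = e_\ell$; only the term $D_\ell^\top u_\perp$ survives, and its vanishing for all $u_\perp \in \ker X^\top$ and all $\ell$ shows $D_\ell \in (\ker X^\top)^\perp = \operatorname{col}(X)$. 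Combining, $D_\ell \in \operatorname{col}(X) \cap \operatorname{col}(X)^\perp = \{0\}$ for each $\ell$, so $D = 0$.

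The main technical subtlety is the interplay between the two halves: Step~1 alone only forces $D_\ell \perp \operatorname{col}(X)$, while Step~2 alone only forces $D_\ell \in \operatorname{col}(X)$. Neither is sufficient in isolation, and making them work in tandem requires carefully isolating both the polynomial and orthogonal contributions of the test vectors within the trilinear identity---once isolated, the Khatri--Rao-like structure of the symmetric terms makes the computation essentially mechanical.
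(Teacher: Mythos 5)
Your injectivity argument is sound in substance and, once one specialization is repaired, gives a complete proof; it is best described as the dual of the paper's argument rather than a genuinely different route. The paper probes the adjoint: it constructs vectors $T_i$ with $\langle T_i, X_\ell\rangle = \mathbbm{1}_{i=\ell}$ (dual to the columns of $X$ for $i\le r$, spanning $\ker X^\top$ for $i>r$) and checks that the $dr$ images $\nabla \Fsym(X)^\top\left(T_i\otimeskron T_j\otimeskron T_j\right)$, $i\in[d]$, $j\in[r]$, are scaled columns of an invertible matrix, hence linearly independent, which lower-bounds the rank by $dr$; the upper bound is the trivial one from the domain dimension. You instead show $\ker \nabla\Fsym(X)=\{0\}$ by testing the trilinear form against vectors split along $\operatorname{range}(X)\oplus\ker(X^\top)$ --- but the useful test vectors are exactly the same dual/kernel vectors the paper uses, so the two proofs are mirror images: yours avoids the SVD bookkeeping, the paper's makes the rank lower bound explicit through an invertible matrix of images.

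The step that fails as literally written is the ``standard basis'' specialization. With $u=X\alpha$, $\alpha=e_m$, and $v=w=Xe_\ell$, you have $X_k^\top u=(X^\top X)_{km}$, not $\mathbbm{1}_{k=m}$, so the identity you obtain is $\sum_k G_{mk}(X^\top X)_{k\ell}^2 + 2\sum_k (X^\top X)_{km}(X^\top X)_{k\ell}G_{\ell k}=0$, from which no single entry of $G$ can be read off unless $X$ has orthonormal columns; the same issue affects your Step~2. The fix is immediate and stays within your framework: test with the dual vectors $u=X(X^\top X)^{-1}e_m$ (legitimate since $X$ has full column rank), for which $X_k^\top u=\mathbbm{1}_{k=m}$. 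Then Step~1 yields $(X^\top X)^{-1}X^\top D=0$, i.e.\ $X^\top D=0$, and Step~2 (with $v=w=X(X^\top X)^{-1}e_\ell$ and $u\in\ker X^\top$) yields $D_\ell^\top u=0$ for all such $u$ and all $\ell$, whence $D_\ell\in\operatorname{range}(X)\cap\operatorname{range}(X)^\perp=\{0\}$ and $D=0$. Equivalently, keep $u=X\alpha$ but let $\alpha,\beta,\gamma$ range over all of $\RR^r$ and change variables through the invertible Gram matrix $X^\top X$. Note that these dual vectors are precisely the paper's probing vectors restricted to indices $\le r$, which is why the two arguments coincide so closely.
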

Given this result, invoking Lemma~\ref{constant_rank_implies_strong_alignment} gives us that the map $\Fsym$ satisfies Assumption \ref{ass: local-strong-alignment} with $j=dr$ and with the quantities 
 \begin{align*}\alignvarepsilon = \epssymtensor, \quad \rloc(\rho) = \frac{\rho}{C}, \quad \text{ and } \quad
    \sig = \frac{1}{2}\sigma_{dr}\left( \nabla \Fsym(\Xs) \right)\end{align*}
for some positive constants $R$ and $C$ that depend only on $\Xs$. This establishes the result, provided that we show Proposition~\ref{prop: cpsym constant rank}; we now proceed to prove it.

\begin{proof}[Proof of Proposition~\ref{prop: cpsym constant rank}]
Let $X\in \RR^{d\times r}$ be of rank $r$. The proof consists of two steps. Firstly, we will construct a set $H$ of probing vectors, i.e., $dr$ linearly independent vectors in $\RR^{d^3}$. Secondly, we will prove that the probing set remains linearly independent after applying $\nabla \Fsym(X)^\top$. This shows a lower bound $\rank (\nabla \Fsym(X)^\top)  \geq dr$. Since $ \nabla \Fsym(X) \in \RR^{d^3 \times dr}$, the rank of $\nabla \Fsym(X)$ is at most $dr$, we must have $\rank(\nabla \Fsym(X)) = dr$.

We take the full SVD of $X$ as $\svd{X}$, where $U^X \in \RR^{d\times d}$, $\Sigma^X \in \RR^{d\times r} \text{ and } (V^X)^\top \in \RR^{r\times r}$. Consider the extended SVD $$\Tilde{\Sigma}^X := \diagg{ \sigma^X_1, \ldots, \sigma^X_r, 1, \ldots 1} \in \RR^{d \times d}, \text{ and } \Tilde{V}^X := \begin{pmatrix} V_X & 0 \\ 0 & I \end{pmatrix} \in \RR^{d \times d}.$$ Observe that with this notation we have $X_{\ell} = U^X\tilde{\Sigma}^X{(\tilde{V}^X_{\ell:})^\top}$ for all $\ell \in [r].$

\paragraph{Probing set.}
First, since $X$ is full rank, the vectors $$T_k := U^X \left(\tilde{\Sigma}^X\right)^{-1}(\tilde{V}^X _{k:})^\top \in \RR^{d} \quad \text{are well-defined for all } k \in [d].$$ We construct $H= \{ T_i \otimeskron T_j \otimeskron T_j \mid i \in [d], j \in [r] \}$. By~\eqref{kr_p3}, Kronecker products of invertible matrices are invertible. Then, the matrix $U^X \left(\tilde{\Sigma}^X\right)^{-1} \left(\tilde{V}^X\right)^\top \otimeskron U^X \left(\tilde{\Sigma}^X\right)^{-1} \left(\tilde{V}^X\right)^\top \otimeskron U^X \left(\tilde{\Sigma}^X\right)^{-1} \left(\tilde{V}^X\right)^\top \in \RR^{d^3 \times d^3}$ is invertable. Since $H$ is a subset of columns of this matrix, the vectors in $H$ are linearly independent.
\paragraph{Rank lower bound.} Let $i\in [d], j\in[r]$. By Lemma \ref{lemma: action-nablac-symmetric}, we have that 
{\allowdisplaybreaks
\begin{align*}
&\nabla \Fsym(X)^\top \left( T_i \otimeskron T_j \otimeskron T_j\right)
\\&= \vect{\left( T_i\left( T_j^\top\otimeskron T_j^\top \right)
+ T_j\left( T_i^\top\otimeskron T_j^\top \right)
+ T_j\left( T_i^\top\otimeskron T_j^\top \right) \right)
\begin{bmatrix} X_1 \otimeskron X_1 & \cdots & X_r \otimeskron X_r \end{bmatrix}} \\
&\overset{(i)}{=} \vect{T_i\begin{bmatrix}
\left\langle T_j, X_1\right\rangle^2 \\
\vdots \\
\left\langle T_j, X_r\right\rangle^2
\end{bmatrix}^\top
+ 2T_j\begin{bmatrix}
\left\langle T_i, X_1 \right\rangle\left\langle T_j, X_1 \right\rangle \\
\vdots \\
\left\langle T_i, X_r \right\rangle\left\langle T_j, X_r \right\rangle
\end{bmatrix}^\top} \\
&\overset{(ii)}{=} \vect{T_i\begin{bmatrix}
\inner{\tilde{V}^X_{j:}}
{\tilde{V}^X_{1:}}^2 \\
\vdots \\
\inner{\tilde{V}^X_{j:}}
{\tilde{V}^X_{r:}}^2
\end{bmatrix}^\top + 2T_j\begin{bmatrix}
\inner{\tilde{V}^X_{i:}}
{\tilde{V}^X_{1:}}
\inner{\tilde{V}^X_{j:}}
{\tilde{V}^X_{1:}} \\
\vdots \\
\inner{\tilde{V}^X_{i:}}
{\tilde{V}^X_{r:}}
\inner{\tilde{V}^X_{j:}}
{\tilde{V}^X_{r:}}
\end{bmatrix}^\top} \\
&\overset{(iii)}{=} e_j \otimeskron T_i
+ 2\mathbbm{1}_{i=j }\left(e_i \otimeskron T_j\right) \\
&= \begin{cases}
3 e_i \otimeskron T_i & \text{if } i=j, \\[6pt]
e_j \otimeskron T_i & \text{otherwise}
\end{cases} \in \RR^{dr},
\end{align*}
}
where $(i)$ follows from \eqref{kr_p3}, $(ii)$ follows from $
\inner{T_i}{X_\ell} = \tilde{V}^X_{i:} (\tilde{\Sigma}^X)^{-1} (U^X)^\top U^X (\tilde{\Sigma}^X) \left(\tilde{V}^X_{\ell:}\right)^{\top} = \left\langle \tilde{V}^X_{i:},\tilde{V}^X_{\ell:} \right\rangle,
$ and $(iii)$ follows from~\eqref{kr_p4}, here $\{ e_j \}_{j\in[r]}\subseteq \RR^{r}$ denotes the canonical basis for $\RR^{r}$.
The resulting $dr$ vectors are scaled versions of different columns of the invertible matrix $I_r \otimeskron  \left( U^X \left( \tilde{\Sigma}^X \right)^{-1} \left( \tilde{V}^X \right)^\top \right)$, thus they are linearly independent, which completes the proof of Proposition~\ref{prop: cpsym constant rank}.
\end{proof}
This concludes the proof of Theorem~\ref{thm-tensor-symmetric-reg-conditions}.

\subsubsection{Proof of Theorem~\ref{thm-tensor-asymmetric-reg-conditions}} \label{proof-tensor-asymmetric-reg-conditions}

Let $W\in \RR^{d_1 \times r}, X \in \RR^{d_2 \times r} \text{ and } Y \in \RR^{d_3 \times r}$ be arbitrary matrices. Denote the full SVD factorization of each one of these matrices as
\begin{equation}\label{svds-cp-proof}
    W = \svd{W}, \quad X = \svd{X} \quad \text{ and } \quad Y = \svd{Y}.
\end{equation}
 Moreover, have $\nabla \Fasym(W,X,Y) = U \Sigma V^\top \in \RR^{d_1d_2d_3 \times (d_1+d_2+d_3)r}$. We start with a basic result for the analytical expression of the Jacobian.
\begin{lemma}[Corollary 4.2 in \cite{Acar2011}, Lemma 1 in \cite{Karim_2024}]
        \label{jac: cp}
    The Jacobian of $\Fasym$ is given by        \begin{equation*}
        \nabla \Fasym(W, X, Y) =
            \begin{pmatrix}
            J^{W} & J^{X} & J^{Y}
            \end{pmatrix}  \in \RR^{d_1d_2d_3 \times (d_1+d_2+d_3)r}
    \end{equation*}
    where
    \begin{align*}
        J^{W} &=
            I_{d_1} \otimeskron
            \psi(X,Y) \in \RR^{d_1d_2d_3 \times d_1r}, \\
            J^{X} &=
            P_2 \left( I_{d_2} \otimeskron
           \psi(W,Y)\right)  \in \RR^{d_1d_2d_3 \times d_2r}, \text{ and} \\
            J^{Y} &=
            P_3\left( I_{d_3} \otimeskron
           \psi(W,X) \right) \in \RR^{d_1d_2d_3 \times d_3r},
    \end{align*}
    with $P_i$ and $\psi$ introduced in Definitions~\ref{Pis} and~\ref{psi}, respectively.
\end{lemma}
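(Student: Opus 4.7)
The plan is to exploit the trilinearity of $\Fasym$: for fixed $X,Y$ the map $W \mapsto \Fasym(W,X,Y)$ is linear (and similarly for the other two arguments), so the full Jacobian decomposes as a horizontal concatenation $(J^W, J^X, J^Y)$, with each block capturing the derivative with respect to a single argument. We can therefore establish the three claimed expressions independently. Both cited sources (Acar et al.\ and Karim et al.) prove essentially the same formula, so at worst we could import the result by citation, but it is instructive to sketch the underlying computation.

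To compute $J^W$, fix $X,Y$ and take a direction $D \in \RR^{d_1 \times r}$. Linearity gives the directional derivative $\sum_{j=1}^r D_j \otimes X_j \otimes Y_j$. A direct index computation shows that the mode-1 matricization $\cM_1$ of this tensor coincides with the matrix product $D\,\ext{X}{Y}^{\top}$, since the $j$-th column of $\ext{X}{Y}$ is precisely $X_j \otimeskron Y_j \in \RR^{d_2 d_3}$. Applying the standard vectorization identity for such a product, together with the relation $\vect{T} = \vect{\cM_1(T)}$ recorded in Definition~\ref{Pis}, then yields $(I_{d_1} \otimeskron \ext{X}{Y})\,\vect{D}$, which gives the claimed form of $J^W$.

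The arguments for $J^X$ and $J^Y$ follow the same template but use the mode-2 and mode-3 matricizations, respectively. For a direction $D \in \RR^{d_2 \times r}$, the mode-2 matricization of $\sum_j W_j \otimes D_j \otimes Y_j$ equals $D\,\ext{W}{Y}^{\top}$, so the analogous identity gives a Kronecker factor $I_{d_2} \otimeskron \ext{W}{Y}$ acting on $\vect{D}$. However, this computation produces $\vect{\cM_2(\cdot)}$ rather than the target $\vect{\cdot} = \vect{\cM_1(\cdot)}$; applying the permutation $P_2$ of Definition~\ref{Pis} to translate between the two vectorizations introduces the $P_2$ prefactor in $J^X$. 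The same argument with mode-3 matricization and $P_3$ yields $J^Y$.

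The principal obstacle is careful bookkeeping across several compatible but distinct index conventions: the multi-index orderings used in $\vect{\cdot}$ for matrices and tensors, the Kronecker product $\otimeskron$, the matricizations $\cM_i$, and the permutations $P_i$. Once these are consistently tracked, the identities above reduce to elementary matrix algebra; the only substantive fact used is the Kronecker-vectorization identity applied to $D\,\ext{\cdot}{\cdot}^\top$, which in turn is a bilinear expansion of the entries $(D\,\ext{\cdot}{\cdot}^\top)_{i,k}$ in terms of the columns of $D$ and the columns of $\ext{\cdot}{\cdot}$.
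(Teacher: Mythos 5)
First, note that the paper itself offers no proof of this lemma: it is imported by citation from Acar et al.\ and Karim et al. Your overall route---trilinearity giving the block structure $(J^W,J^X,J^Y)$, per-block directional derivatives, the CP unfolding identity, the vec--Kronecker identity \eqref{kr_p1}, and the permutations $P_2,P_3$ to pass from mode-$i$ to mode-$1$ vectorization---is the standard derivation and is sound in outline; it is essentially what the cited references do.

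The gap is that the two identities you actually commit to do not cohere with the paper's stated conventions, and these orderings are precisely the content of the lemma. Under the paper's Definition of $\cM_1$ (column index $(i_3-1)d_2+i_2$) and its Kronecker convention, the mode-1 unfolding of $\sum_j D_j\otimes X_j\otimes Y_j$ is $D\,\ext{Y}{X}^\top$, with the Khatri--Rao factors in reversed order (the usual ``last mode first'' formula), not $D\,\ext{X}{Y}^\top$ as you write. Moreover, even granting your unfolding, \eqref{kr_p1} applied to $\vect{D\,\ext{X}{Y}^\top}$ gives $\bigl(\ext{X}{Y}\otimeskron I_{d_1}\bigr)\vect{D}$, not $\bigl(I_{d_1}\otimeskron\ext{X}{Y}\bigr)\vect{D}$; the latter arises only as $\vect{\ext{X}{Y}\,D^\top}=\bigl(I_{d_1}\otimeskron\ext{X}{Y}\bigr)\vect{D^\top}$, i.e.\ one must take the ``mode-1 slowest'' ordering $\vect{a\otimes b\otimes c}=a\otimeskron b\otimeskron c$ for the output and the row-wise vectorization $\vect{D^\top}$ for the input block. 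Those are the conventions the paper uses implicitly later (e.g.\ in Lemma~\ref{lem: cp_on_ts} and Claim~\ref{lemma: action nablaTnabla tensor asymmetric}), but they conflict with the literal definitions you anchor to via $\vect{T}=\vect{\cM_1(T)}$, and $A\otimeskron B$ versus $B\otimeskron A$ are genuinely different matrices (equal only up to row/column permutations). The same mismatch recurs for $J^X$ and $J^Y$ (the paper's $\cM_2$ gives $D\,\ext{Y}{W}^\top$, not $D\,\ext{W}{Y}^\top$). So as written your chain produces a permuted variant of the stated Jacobian rather than the formula in the lemma; to close the argument you must fix one consistent vectorization convention for the tensor and for each factor block, restate the unfolding identity with the correct Khatri--Rao order for that convention, and only then apply \eqref{kr_p1} and the permutations $P_2,P_3$.
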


 Consider an arbitrary pair $(W,X,Y), (\Wprime, \Xprime, \Zprime) \in \RR^{d_1 \times r} \times \RR^{d_2 \times r} \times \RR^{d_3 \times r}$ satisfying  $$\max\left\{ \norm{(W,X,Y)-(\Ws,\Xs,\Ys)}{F}, \norm{(\Wprime,\Xprime,\Yprime)-(\Ws,\Xs,\Ys)}{F} \right\} \leq \norm{(\Ws,\Xs,\Ys)}{F}.$$
Using the variational characterization of the operator norm, we have
  {\allowdisplaybreaks
\begin{align}
\label{smoothmaptensorasymm}
      & \norm{ \nabla \Fasym(W,X,Y) - \nabla \Fasym(\Wprime, \Xprime, \Yprime) }{\operatorname{op}}\nonumber\\ &= \sup_{\norm{A}{F} = 1} \norm{\left(\nabla \Fasym(W,X,Y) - \nabla \Fasym(\Wprime, \Xprime, \Yprime)\right)  \vect{A}}{2}\nonumber \\
            &\overset{}{=} \sup_{\substack{(A_1,A_2,A_3)\in\RR^{d_1\times r}\times\RR^{d_2\times r}\times \RR^{d_3\times r} \\ \norm{(A_1,A_2,A_3)}{F}=1 } }\norm{  \left(J^W-J^{\Wprime}\right)\vect{A_1}   +  \left(J^X-J^{\Xprime}\right)\vect{A_2}  +  \left(J^Y-J^{\Yprime}\right)\vect{A_3}}{2}\nonumber \\
    & \overset{(i)}{\leq}  \sup_{\substack{(A_1,A_2,A_3)\in\RR^{d_1\times r}\times\RR^{d_2\times r}\times \RR^{d_3\times r} \\ \norm{(A_1,A_2,A_3)}{F}=1 } }\Bigg(\norm{  \vect{\left(\ext{X}{Y} - \ext{\Xprime}{\Yprime} \right)  A_1^\top}}{2}\nonumber\\
      &\hspace{170pt}
        + \norm{ P_2 \vect{\left( \ext{W}{Y} - \ext{\Wprime}{\Yprime} \right)  A_2^\top}}{2}
        \nonumber \\
      &\hspace{200pt}
        + \norm{P_3 \vect{\left(\ext{W}{X} -\ext{\Wprime}{\Xprime} \right)  A_3^\top}}{2} \Bigg), \nonumber
\end{align}
}
where $(i)$ follows from applying Lemma \ref{jac: cp} in tandem with the triangle inequality and from \eqref{kr_p1}. 
Leveraging the fact that the operator norm of a permutation matrix is at most one, we derive
{\allowdisplaybreaks
\begin{align*}
  & \norm{ \nabla \Fasym(W,X,Y) - \nabla \Fasym(\Wprime, \Xprime, \Yprime) }{\operatorname{op}} \nonumber\\
  & \leq \sup_{ \norm{(A_1, A_2, A_3)}{F} = 1} \Bigg( \norm{\left(\ext{X}{Y} - \ext{\Xprime}{\Yprime} \right)  A_1^\top}{F} \\
  &\hspace{120pt} + \norm{\left( \ext{W}{Y} - \ext{\Wprime}{\Yprime} \right)  A_2^\top}{F} \nonumber \\
  &\hspace{140pt} + \norm{\left(\ext{W}{X} -\ext{\Wprime}{\Xprime} \right)  A_3^\top}{F} \Bigg) \nonumber \\
      &  \leq\norm{ \ext{X}{Y} - \ext{\Xprime}{\Yprime}}{F}
            + \norm{\ext{W}{Y} - \ext{\Wprime}{\Yprime}}{F}
            + \norm{\ext{W}{X} - \ext{\Wprime}{\Xprime}}{F} \nonumber\\
  &\overset{}{=} \sum_{\ell=1}^r \Bigg( \norm{ \left( X_{\ell} - \Xprime_{\ell} \right) \otimeskron Y_{\ell} + \Xprime_{\ell} \otimeskron \left( Y_{\ell} - \Yprime_{\ell} \right) }{2} \nonumber \\ &\hspace{40pt}+ \norm{ \left( W_{\ell} - \Wprime_{\ell} \right) \otimeskron Y_{\ell} + \Wprime_{\ell} \otimeskron \left( Y_{\ell} - \Yprime_{\ell} \right) }{2}\nonumber \\ & \hspace{80pt}+ \norm{ \left( W_{\ell} - \Wprime_{\ell} \right) \otimeskron X_{\ell} + \Wprime_{\ell} \otimeskron  \left( X_{\ell} - \Xprime_{\ell} \right) }{2} \Bigg)\nonumber\\ & \overset{(i)}{\leq}
  \left( \norm{W}{F} + \norm{X}{F}  + \norm{Y}{F}  + \norm{\Wprime}{F} + \norm{\Xprime}{F} + \norm{\Yprime}{F} \right) \left( \norm{W-\Wprime}{F} + \norm{X-\Xprime}{F} + \norm{Y-\Yprime}{F} \right) \nonumber \\ & \overset{(ii)}{\leq} \sqrt{3}\left( \norm{({W},{X},{Y})}{F} + \norm{({\Wprime},{\Xprime},{\Yprime})}{F} \right)  \norm{({W},{X},{Y}) -({\Wprime},{\Xprime},{\Yprime})}{F} \nonumber
  \\ & \overset{(iii)}{\leq} 4\sqrt{3}\norm{(\Ws,\Xs,\Ys)}{F}\norm{({W},{X},{Y}) -({\Wprime},{\Xprime},{\Yprime})}{F},
\end{align*}
}
where $(i)$ follows from the fact that $\norm{v \otimeskron w}{2} = \norm{v}{2} \norm{w}{2}$ for any vectors $v$ and $w$ together with Cauchy-Schwarz and $(ii)$  uses that $ \left(\abs{a} + \abs{b} + \abs{c} \right)^2  \overset{}{\leq}3\left(\abs{a}^2 + \abs{b}^2 + \abs{c}^2 \right) $ for $a,b,c \in \RR$, and  $(iii)$ holds since by assumption $\max \left\{ \norm{(W,X,Y)-(\Ws,\Xs,\Ys)}{F}, \norm{(\Wprime,\Xprime,\Yprime) - (\Ws,\Xs,\Ys)}{F} \right\} \leq \norm{(\Ws,\Xs,\Ys)}{F}$, which implies $\max \left\{ \norm{(W,X,Y)}{F}, \norm{(\Wprime,\Xprime,\Yprime)}{F} \right\} \leq 2\norm{(\Ws,\Xs,\Ys)}{F}$ by the reverse triangle inequality.

We now proceed to prove Assumption~\ref{ass: local-strong-alignment} for this map. Let $\Ts \in \RR^{d_1\times d_2\times d_2}$ be an arbitrary tensor and let $\Ws \in \RR^{d_1\times r}, \Xs \in \times \RR^{d_2\times r} \text{ and }\Ys\in \RR^{d_3\times r}$ be any full-rank matrices such that $\Ts=\Fasym(\Ws,\Xs,\Ys)$. %
\begin{proposition}[Constant Rank of Asymmetric Canonical Polyadic Map] \label{prop: cp constant rank}
 For any full rank matrices \( W \in \mathbb{R}^{d_1 \times r}, X\in \mathbb{R}^{d_2 \times r} \text{ and } Y\in \mathbb{R}^{d_3 \times r}  \),  we have
    \[
    \rank \left(\nabla \Fasym(W,X,Y) \right) = (d_1 + d_2 + d_3 - 2)r.
    \]
\end{proposition}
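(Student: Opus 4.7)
My proof plan is to characterize $\ker\nabla \Fasym(W,X,Y)$ exactly and then invoke rank-nullity. Since the parameter space has dimension $(d_1+d_2+d_3)r$, it suffices to prove that this kernel has dimension $2r$.

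\emph{Kernel lower bound.} The CP parameterization enjoys a scaling invariance that produces a $2r$-dimensional subspace of the kernel. For any scalars $(\alpha_j,\beta_j,\gamma_j)_{j=1}^{r}$ satisfying $\alpha_j+\beta_j+\gamma_j=0$, the triple $(W\diag(\alpha),\,X\diag(\beta),\,Y\diag(\gamma))$ is sent by $\nabla \Fasym(W,X,Y)$ to $\sum_{j}(\alpha_j+\beta_j+\gamma_j)\,W_j\otimes X_j\otimes Y_j=0$. Under the full-rank hypothesis the assignment $(\alpha,\beta,\gamma)\mapsto(W\diag(\alpha),X\diag(\beta),Y\diag(\gamma))$ is injective, so the resulting subspace $K\subseteq\ker\nabla \Fasym$ has dimension $2r$.

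\emph{Kernel upper bound.} I will mimic the biorthogonal probing used in Proposition~\ref{prop: cpsym constant rank} to show the reverse inclusion $\ker\nabla \Fasym\subseteq K$. For each mode $\mu\in\{W,X,Y\}$, construct an extended SVD basis $\{T^\mu_i\}_{i=1}^{d_\mu}$ of $\RR^{d_\mu}$ satisfying $\langle \mu_\ell,T^\mu_i\rangle=\delta_{i\ell}$ for $\ell,i\in[r]$ and $\langle \mu_\ell,T^\mu_i\rangle=0$ for $\ell\in[r]$, $i>r$. Given $(D_W,D_X,D_Y)\in\ker\nabla \Fasym$, contract the identity $\sum_{j}D_{W,j}\otimes X_j\otimes Y_j+W_j\otimes D_{X,j}\otimes Y_j+W_j\otimes X_j\otimes D_{Y,j}=0$ with the probe $T^X_i\otimes T^Y_k$ in modes $2$ and $3$ for each $(i,k)\in[d_2]\times[d_3]$. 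A four-way case analysis yields: (i) if $i=k\le r$, then $D_{W,i}=-(\langle D_{X,i},T^X_i\rangle+\langle D_{Y,i},T^Y_i\rangle)W_i$; (ii) if $i\ne k$ are both in $[r]$, then $W_k\langle D_{X,k},T^X_i\rangle+W_i\langle D_{Y,i},T^Y_k\rangle=0$, and the linear independence of $\{W_j\}$ forces both inner products to vanish; (iii) if $i\in[r]$, $k>r$, then $\langle D_{Y,i},T^Y_k\rangle=0$; (iv) if $i>r$, $k\in[r]$, then $\langle D_{X,k},T^X_i\rangle=0$. Combining these constraints, $\langle D_{X,k},T^X_i\rangle=0$ for every $i\in[d_2]$ with $i\ne k$; since $\{T^X_i\}_{i=1}^{d_2}$ is a basis and $X_k$ is, up to scaling, the unique vector satisfying $\langle X_k,T^X_i\rangle=\delta_{ki}$ for all $i\in[d_2]$, we deduce $D_{X,k}=\beta_k X_k$, and by the symmetric argument $D_{Y,k}=\gamma_k Y_k$. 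Case (i) then reads $D_{W,k}=\alpha_k W_k$ for some scalar $\alpha_k$.

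\emph{Conclusion.} Plugging these forms back into the kernel equation yields $\sum_j(\alpha_j+\beta_j+\gamma_j)\,W_j\otimes X_j\otimes Y_j=0$. Matricizing along the third mode turns this identity into $\psi(W,X)\,\diag(\alpha+\beta+\gamma)\,Y^\top=0$; since $\psi(W,X)$ has full column rank $r$ (the implication $W\diag(c)X^\top=0\Rightarrow c=0$ follows from the full column ranks of $W$ and $X$) and $Y$ has full column rank, the diagonal matrix must vanish, giving $\alpha_j+\beta_j+\gamma_j=0$ for every $j$. Hence $\ker\nabla \Fasym=K$ has dimension exactly $2r$, proving $\rank(\nabla \Fasym(W,X,Y))=(d_1+d_2+d_3-2)r$.

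The main technical obstacle is the case analysis in Step~2, together with the key observation that biorthogonality automatically extends to all of $[d_\mu]$ through the SVD construction, so that $\langle X_k,T^X_i\rangle=\delta_{ki}$ holds for all $i\in[d_2]$ and not only for $i\in[r]$. This extension is what pins $D_{X,k}$ into $\spann{X_k}$ rather than merely into some arbitrary one-dimensional complement, and it is the crucial ingredient that makes the dimension count come out to precisely $2r$.
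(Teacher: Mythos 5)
Your proof is correct, and it takes a genuinely different route from the paper's. The paper argues on the codomain side: it forms the Gram operator $\nabla \Fasym\,\nabla \Fasym^{\top}$ acting on $\RR^{d_1d_2d_3}$, builds a probing basis $H\cup H^{c}$ out of Kronecker products of the same biorthogonal vectors $T^{W}_i\otimeskron T^{X}_j\otimeskron T^{Y}_k$, shows the $d_1d_2d_3-(d_1+d_2+d_3-2)r$ vectors in $H^{c}$ are annihilated (upper bound) and that the images of the $(d_1+d_2+d_3-2)r$ vectors in $H$ stay linearly independent (lower bound); this requires an explicit computation of the Gram operator's action (the $\Psi(M,N)$ calculations) and a somewhat delicate coefficient argument for the independence step. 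You instead work entirely in the domain: you identify the $2r$-dimensional scaling-symmetry subspace $K=\{(W\diag(\alpha),X\diag(\beta),Y\diag(\gamma)):\alpha_j+\beta_j+\gamma_j=0\}$ inside $\ker\nabla \Fasym$, and prove the reverse inclusion by contracting the kernel identity against $T^{X}_i\otimeskron T^{Y}_k$ in modes $2$ and $3$, using linear independence of the columns of $W$ and the fact that biorthogonality $\langle X_\ell,T^{X}_i\rangle=\mathbbm{1}_{\ell=i}$ extends to all $i\in[d_2]$ (your cases (i)--(iv) and the final full-column-rank argument for $\psi(W,X)$ and $Y$ all check out), then conclude by rank--nullity on $\nabla \Fasym$ itself. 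Your route is more elementary—no Gram-operator computations are needed—and it yields an explicit, conceptually transparent description of the kernel as the tangent space of the rescaling symmetry; the paper's route, in exchange, exhibits an explicit basis of the null space of the Gram operator on the tensor side, which is the analogous information in the codomain. A minor redundancy: case (i) already forces $\alpha_k=-(\beta_k+\gamma_k)$, so your final matricization step re-deriving $\alpha_j+\beta_j+\gamma_j=0$ is not strictly needed, though it is harmless and correct.
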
 
Equipped with this constant rank result, we invoke Lemma~\ref{constant_rank_implies_strong_alignment} to establish that the map $\Fasym$ satisfies Assumption~\ref{ass: local-strong-alignment} with the  quantities

\begin{align*}&\alignvarepsilon = \epsAsymtensor, \qquad \rloc(\rho) = \frac{\rho}{C} \\&  \qquad \qquad \qquad \qquad\text{ and }\qquad
    \sig = \frac{1}{2}\sigma_{(d_1+d_2+d_3-2)r}\left( \nabla \Fasym(\Ws,\Xs,\Ys) \right)\end{align*}
for some positive constants $R$ and  $C$ that depend only on the solution $(\Ws,\Xs,\Ys) $ and with $ j = \rank(\nabla \Fasym(\Ws,\Xs,\Ys)) = (d_1+d_2+d_3-2)r$.
This establishes the result, if we prove Proposition~\ref{prop: cp constant rank}.

\begin{proof}[Proof of Proposition~\ref{prop: cp constant rank}]
The proof structure involves three steps. For the first step, we will construct two sets $H$ and $H^c$ of vectors with cardinalities $(d_1+d_3+d_3-2)r$ and  $d_1d_2d_3 - (d_1+d_3+d_3-2)r$, respectively, such that $H\cup H^c$ forms a basis for $\RR^{d_{1} d_{2}d_{3}}.$ For the second step, we will show that the set $H^c \subseteq \nulll{ \nabla \Fasym(W,X,Y) \nabla \Fasym(W,X,Y)^\top} $. This establishes an upper bound of $(d_1+d_3+d_3-2)r$ on $\rankk{\nabla \Fasym(W,X,Y)}$. For the final step, we will prove that the set $H$ remains linearly independent after applying $\nabla \Fasym(W,X,Y) \nabla \Fasym(W,X,Y)^\top$. This establishes a lower bound of $(d_1+d_3+d_3-2)r$ on $\rankk{\nabla \Fasym(W,X,Y)}$, finishing the proof.

To start, we introduce some notation. Define the index sets
\begin{align}\label{eq:I_indices}
  \begin{split}
     I_0 &:= \left\{ (l,l,l)\mid l \in [r] \right\}, \\
     I_1 &:=  \left\{  (l,l,k) \mid l \in [r],\ k \in [d_3],\ l \neq k \right\},  \\
     I_2 &:=  \left\{(l,k,l) \mid l \in [r],\  k \in [d_2],\ l \neq k \right\},\quad \text{and} \\
    I_3 &:=  \left\{ (k,l,l) \mid l \in [r],\ k \in [d_1],\ l \neq k \right\}.
          \end{split}
\end{align}
In what follows, we use $M$ and $N$ as placeholders for $W, X$, or $Y$. Further, $d_{M}$ and $d_{N}$ denote the number of rows of $M$ and $N$, respectively. Define the sets
\begin{align}
    \label{eq: diag-set}
  \begin{split}
    \mathcal{T}_{\mathrm{off}}^{M, N}&:= \{ (i,j) \in [d_M]\times[d_N] \mid i > r \text{ or } \ j > r \text{ or }\ i \neq j \}, \quad \text{and}  \\
    \mathcal{T}_{\mathrm{on}}^{M, N}&:= (\mathcal{T}_{\mathrm{off}}^{d_M, d_N})^c =\{ (i,j) \in [d_M]\times[d_N] \mid i=j \text{ and } i\leq r\}.
                                      \end{split}
\end{align}
The sets in \eqref{eq:I_indices} form a partition of $I:=\{(i,j,k) \mid (i,j) \in  \mathcal{T}_{\mathrm{on}}^{W, X} \text{ or }  (j,k) \in \mathcal{T}_{\mathrm{on}}^{X, Y} \text{ or } (i,k)\in  \mathcal{T}_{\mathrm{on}}^{W, Y}\}$.

\paragraph{Probing sets.} For $M \in \{W, X, Y\}$, define
\begin{equation}
    \label{eq: vectors_row_pseudoinverse}
    T_i^M = U^M \left(\tilde{\Sigma}^M\right)^{-1} \left(\tilde{V}^M_{i:}\right)^\top,
\end{equation}
 where $\Tilde{\Sigma}^M = \diagg{  \sigma^M_1, \ldots, \sigma^M_r, 1, \ldots 1} \in \RR^{d_M \times d_M}, \text{ and } \Tilde{V}^M = \begin{pmatrix} V^M & 0 \\ 0 & I \end{pmatrix} \in \RR^{d_M \times d_M}$, where $U^M$ and $(V^M)^\top$ are the left and right eigenvectors of $M$, and $\sigma_i$'s are its singular values. %
 We construct $H:= \{ T_i^W \otimeskron T_j^X \otimeskron T_k^Y \mid (i,j,k) \in I \}$ and $H^c:=  \{ T_i^W \otimeskron T_j^X \otimeskron T_k^Y \mid (i,j,k) \in [d_1] \times [d_2] \times [d_3] \setminus I\}$. These sets are linearly independent since they correspond to column vectors of the matrix
$$U^W \left(\tilde{\Sigma}^W\right)^{-1} \left(\tilde{V}^W\right)^\top  \otimeskron U^X \left(\tilde{\Sigma}^X\right)^{-1} \left(\tilde{V}^X\right)^\top \otimeskron U^Y \left(\tilde{\Sigma}^Y\right)^{-1} \left(\tilde{V}^Y\right)^\top \in \RR^{d_1d_2d_3 \times d_1d_2d_3} $$ 
which again is invertible by~\eqref{kr_p3}. A relevant property about these vectors is
\begin{equation}\label{eq:orto-probing}
  \dotp{M_{i},T_{j}^{M}} = \mathbbm{1}_{i=j}.
  \end{equation}

\paragraph{Upper bound.} We use the following lemma; whose proof is deferred to Appendix~\ref{proof: cp_on_ts}.
\begin{lemma}\label{lem: cp_on_ts}
Let $(W, X, Y) \in \RR^{d_1 \times r} \times \RR^{d_2 \times r} \times \RR^{d_3 \times r}$ be full-rank matrices. Then,
{\allowdisplaybreaks
  \begin{flalign*}
    & \nabla  \Fasym(W,X,Y)  \nabla \Fasym(W,X,Y)^\top \left( T^W_{i} \otimeskron T^X_{j} \otimeskron T^Y_{k} \right) \\&= \mathbbm{1}_{(i,j)\in \mathcal{T}_{\mathrm{on}}^{W, X}} \left( W_{i} \otimeskron X_{j} \otimeskron T^Y_{k} \right) 
  +\; \mathbbm{1}_{(i,k)\in \mathcal{T}_{\mathrm{on}}^{W, Y}} \left( W_{i} \otimeskron T^X_{j} \otimeskron Y_{k} \right)
  + \mathbbm{1}_{(j,k)\in \mathcal{T}_{\mathrm{on}}^{X, Y}} \left( T^W_{i} \otimeskron X_{j} \otimeskron Y_{k} \right).
\end{flalign*}
}
\end{lemma}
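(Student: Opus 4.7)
The plan is to exploit the block decomposition
\[
\nabla \Fasym(W,X,Y)\, \nabla \Fasym(W,X,Y)^\top \;=\; J^W (J^W)^\top \;+\; J^X (J^X)^\top \;+\; J^Y (J^Y)^\top
\]
furnished by Lemma~\ref{jac: cp}, and separately compute the action of each summand on $v := T^W_i \otimeskron T^X_j \otimeskron T^Y_k$. The three summands are symmetric under cyclic relabeling of the factors, so I would present the calculation only for $J^W(J^W)^\top v$ and obtain the other two by identical reasoning after accounting for the permutations $P_2, P_3$.

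To avoid bookkeeping with $P_2$ and $P_3$, I would identify each triple Kronecker product $a\otimeskron b\otimeskron c$ with the rank-one tensor $a\otimes b\otimes c \in \RR^{d_1}\otimes \RR^{d_2}\otimes \RR^{d_3}$. Under this identification, the $(p,l)$-th column of $J^M$ (for $M\in\{W,X,Y\}$) is simply the directional derivative of $\Fasym$ obtained by replacing the $l$-th column of $M$ with $e_p$; concretely, the columns of $J^W$, $J^X$, $J^Y$ become the rank-one tensors $e_p\otimeskron X_l\otimeskron Y_l$, $W_l\otimeskron e_p\otimeskron Y_l$ and $W_l\otimeskron X_l\otimeskron e_p$, respectively. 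This interpretation is precisely the role of the permutations $P_2,P_3$ in Lemma~\ref{jac: cp}: they realign the column orderings so that all three blocks live in the same copy of $\RR^{d_1 d_2 d_3}$.

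With this in hand, the $(p,l)$-entry of $(J^W)^\top v$ factors as
$\dotp{e_p, T^W_i}\,\dotp{X_l, T^X_j}\,\dotp{Y_l, T^Y_k}$,
since inner products on $\RR^{d_1 d_2 d_3}$ factor over Kronecker products. Invoking the orthogonality relation~\eqref{eq:orto-probing}, $\dotp{X_l, T^X_j} = \mathbbm{1}_{l=j}$ and $\dotp{Y_l, T^Y_k} = \mathbbm{1}_{l=k}$ for $l\in[r]$, so the product vanishes unless $j=k=l\leq r$, equivalently unless $(j,k)\in\mathcal{T}_{\mathrm{on}}^{X,Y}$. On that event, $(J^W)^\top v$ reshapes into a $d_1\times r$ matrix whose only nonzero column is $T^W_i$ in slot $j$; multiplying by $J^W$ substitutes this column back into the $l=j$ summand to yield $T^W_i\otimeskron X_j\otimeskron Y_j = T^W_i\otimeskron X_j\otimeskron Y_k$ (using $j=k$). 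The cyclic analogues deliver the two remaining indicator terms, and summing gives the identity.

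The main obstacle is the permutation bookkeeping: the columns of $J^X$ and $J^Y$ are produced by the formula in Lemma~\ref{jac: cp} in orderings that do not naively match the ambient $\RR^{d_1 d_2 d_3}$, and one must check that the tensor identification above is compatible with the paper's $\otimeskron$ convention. This is cleanest if one verifies once and for all that $P_i$ is exactly the permutation taking the column-major vectorization of the rank-one tensor $e_p\otimeskron W_l\otimeskron Y_l$ (with $e_p$ in the $i$-th slot) to $W_l\otimeskron e_p\otimeskron Y_l$—so that after this identification the $P_i$'s disappear from the computation and the three cyclic calculations are genuinely parallel.
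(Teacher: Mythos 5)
Your proposal is correct and follows essentially the same route as the paper: both decompose $\nabla \Fasym \nabla \Fasym^\top$ into the three Gram blocks $J^W(J^W)^\top + J^X(J^X)^\top + J^Y(J^Y)^\top$ from Lemma~\ref{jac: cp} and then annihilate cross terms using the biorthogonality relation~\eqref{eq:orto-probing} between the probing vectors $T^M_\ell$ and the columns of $W,X,Y$, with the permutations $P_2,P_3$ handled by the same one-time alignment check. The only difference is presentational: the paper factors each block as $I\otimeskron \Psi(\cdot,\cdot)$ and evaluates $\Psi(M,N)\bigl(T^M_i\otimeskron T^N_j\bigr)$ through an SVD computation (Claim~\ref{lemma: nullspace of psi}), whereas you compute $(J^M)^\top v$ entrywise from the rank-one column structure and~\eqref{eq:orto-probing} directly, which is an equivalent and slightly more elementary execution of the same step.
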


By Lemma~\ref{lem: cp_on_ts},
it is easy to see that $\nabla  \Fasym(W,X,Y)  \nabla \Fasym(W,X,Y)^\top \left( T^W_{i} \otimeskron T^X_{j} \otimeskron T^Y_{k} \right)$
is  zero
when $
T^W_{i} \otimeskron T^X_{j} \otimeskron T^Y_{k} \in H^c
$. Therefore, $$
\spann{H^{c}} \subseteq \nulll{ \nabla \Fasym(W,X,Y) \nabla \Fasym(W,X,Y) ^\top}.$$ Since $H^c$ is linearly independant, and since $\card{H^c} = d_1d_2d_3 - (d_1+d_2+d_3-2)r$, then $\dimm{\spann{H^{c}}} =  d_1d_2d_3 - (d_1+d_2+d_3-2)r$. Therefore,
\begin{equation*}%
    d_1d_2d_3 - (d_1+d_2+d_3-2)r \leq \dimm{ \nulll{\nabla \Fasym(W,X,Y)\nabla \Fasym(W,X,Y)^\top} }.
\end{equation*}
Then, the rank-nullity theorem yields $$\rankk{\nabla \Fasym(W,X,Y)}=\rankk{\nabla \Fasym(W,X,Y)\nabla \Fasym(W,X,Y)^\top}  \leq (d_1+d_2+d_3 -2)r.$$
\paragraph{Lower bound.} We will show that the elements in $H$ remain linearly indepent after applying $\nabla \Fasym(W,X,Y) \nabla \Fasym(W,X,Y)^\top$, which shows that $\rankk{\nabla \Fasym(W,X,Y)\nabla \Fasym(W,X,Y)^\top}  \ge (d_1+d_2+d_3 -2)r$. By Lemma \ref{lem: cp_on_ts}, we have that elements of such a set are given by
\begin{align*}
&\nabla \Fasym(W,X,Y)  \nabla \Fasym(W,X,Y)^\top  \left( T_i^W \otimeskron  T_j^X \otimeskron  T_k^Y  \right) \\
&\qquad =
\begin{cases}
W_i \otimeskron X_i \otimeskron T_{i}^Y + W_i \otimeskron T_{i}^X \otimeskron Y_i + T_{i}^W \otimeskron X_i \otimeskron Y_i, & \text{if } (i,j,k) \in I_0, \\
W_i \otimeskron X_i \otimeskron T_{k}^Y, & \text{if } (i,j,k) \in I_1, \\
W_i \otimeskron T_{j}^X \otimeskron Y_i, & \text{if } (i,j,k) \in I_2, \\
  T_{i}^W \otimeskron X_j \otimeskron Y_j, & \text{if } (i,j,k) \in I_3, \\
  0 & \text{otherwise}.
\end{cases}
\end{align*}
Recall that $T_{i}^W \otimeskron T_j^X \otimeskron T_k^Y \in H$ if $(i,j,k) \in I = I_0 \cup I_1\cup I_2 \cup I_3.$ To show that these vectors are linearly independent, we will show that any linear combination of them that equates to zero has to have zero coefficients. Thus, suppose that we have vectors of coefficients $\alpha, \beta, \gamma, \delta$ such that the following linear combination is equal to zero
\begin{align}
  \label{eq:super-simple-fourth-of-july}
  \begin{split}
L^H := & \sum_{l=1 }^r \alpha_l \left( W_l \otimeskron X_l \otimeskron T_{l}^Y
+ W_l \otimeskron T_{l}^X \otimeskron Y_l
+ T_{l}^W \otimeskron X_l \otimeskron Y_l \right) \\
&\quad + \sum_{l \in [r],\, k \in [d_3] \setminus \{l\} } \beta_{l,k}  \left( W_l \otimeskron X_l \otimeskron T_{k}^Y \right) \\
&\quad\quad + \sum_{l \in [r],\, k \in [d_2] \setminus \{l\} } \gamma_{l,k}  \left( W_l \otimeskron T_{k}^X \otimeskron Y_l \right)\\
&\quad \quad\quad + \sum_{l \in [r],\, k \in [d_1] \setminus \{l\} } \delta_{l,k}  \left( T_{k}^W \otimeskron X_l \otimeskron Y_l\right) = 0.
  \end{split}
\end{align}
For the rest of the proof, we focus on showing that the coefficients $\alpha_l, \beta_{l,k}, \gamma_{l,k}, \delta_{l,k} \in \RR$ are all zero. To this end, we probe the equality above with several linear maps, which allows us to derive conclusions for specific coefficients. In particular, for fixed $i\in [r],\ j \in [d_3]\setminus \{i\}$,
we apply the linear transformation
$
I_{d_1} \otimeskron T^X_i \otimeskron T^Y_j
$
on both sides of the equality, which yields
 {\allowdisplaybreaks \begin{align}\label{eqn:linear_indep}
    \begin{split}
0 &= \left( I_{d_1} \otimeskron T^X_i \otimeskron T^Y_j \right)  L^H \\
&\overset{(i)}{=} \sum_{l=1}^r \alpha_l \left( \inner{X_l}{T_{i}^X} \inner{Y_l}{T_{j}^Y}
+ \inner{X_l}{T_{i}^X} \inner{T_l^Y}{T_{j}^Y}  + \inner{T_l^X}{T_{i}^X} \inner{Y_l}{T_{j}^Y}  \right) W_l \\
  &\qquad + \sum_{l \in [r],\, k \in [d_3] \setminus \{l\} } \beta_{l,k} \left( \inner{X_l}{T_{i}^X} \inner{T_{k}^Y}{T_{j}^Y}\right) W_l\\
      &\qquad \qquad + \sum_{l \in [r],\, k \in [d_2] \setminus \{l\} } \gamma_{l,k} \left( \inner{T_{k}^X}{T_{i}^X} \inner{Y_l}{T_{j}^Y}\right) W_l \\
&\qquad\qquad \qquad + \sum_{l \in [r],\, k \in [d_1] \setminus \{l\} } \delta_{l,k} \left( \inner{X_l}{T_{i}^X} \inner{Y_l}{T_{j}^Y}\right) T_{k}^W\\
  &\overset{(ii)}{=} \inner{\alpha_i T_{i}^Y}{T_{j}^Y}  W_i +  \inner{\alpha_j  T_{j}^X}{T_{i}^X}  W_j  \mathbbm{1}_{j \leq r}  \\
      & \qquad + \inner{\sum_{k \in [d_3] \setminus \{ i\}} \beta_{i,k}T_{k}^Y}{T_{j}^Y} W_i + \inner{\sum_{k \in [d_2] \setminus \{ j \}} \gamma_{j,k}T_{k}^X}{T_{i}^X} W_j  \mathbbm{1}_{j \leq r},
    \end{split}
    \end{align}
    } where $(i)$ follows from \eqref{kr_p3} and $(ii)$ use the choice of indices together with \eqref{eq:orto-probing}. Since $W$ has full column rank, the vectors  $\{W_k \mid k \in [r]\}$ are linearly independent. Thus, \eqref{eqn:linear_indep} implies that the coefficient associated with the vector $W_i$ is zero. Consequently,
$
\inner{ \alpha_i T^Y_i + \sum_{k \in [d_3] \setminus \{ i\}} \beta_{i,k} T_{k}^Y}{T_{j}^Y } = 0,
$
which can be rewritten equivalently as
\begin{align*}
  \inner{ T^Y \omega^{\alpha, \beta, i}}{ T_{j}^Y } = 0 \qquad \text{where} \qquad  \omega^{\alpha,\beta,i} :=\left(
\beta_{i,1} ,
  \dots ,
  \beta_{i, i-1},
  \alpha_i ,
  \beta_{i, i+1},
\dots ,
\beta_{i,d_3}\right)^{\top}.
\end{align*}
Since this holds for all $j \in [d_3] \setminus \{ i \}$, then $T^Y \omega^{\alpha, \beta, i}$ is orthogonal to all columns of $T^Y$ except the $i$-th column. The vector $Y_i$ is also orthogonal to all these $T^Y_j$ due to \eqref{eq:orto-probing}. Thus, $T^Y\omega^{\alpha, \beta, i}$ and $Y_i$ are orthogonal to the same $d_3-1$ dimensional subspace, consequently colinear. Hence, for any $i \in [r]$, there exists $c^{\alpha, \beta, i} \in \RR \setminus \{ 0 \}$ such that $T^Y \omega^{\alpha, \beta, i} = c^{\alpha, \beta, i}$ and, consequently,
$$\omega^{\alpha, \beta, i} = c^{\alpha, \beta, i} \left( T^Y \right)^{-1} Y_i = c_i^{\alpha, \beta} \tilde{V}^Y \left({\tilde{\Sigma}^Y} \right)^{2} \left( \tilde{V}^Y_{i:} \right)^\top .$$
By an equivalent argument, for any $i \in [r]$,  there exist $c^{\alpha, \gamma, i}, c^{\alpha, \delta, i} \in \RR$ such that \begin{equation} \label{closed_form}\omega^{\alpha, \gamma, i}  = c^{\alpha, \gamma,i} \tilde{V}^X \left( {\tilde{\Sigma}^X} \right)^{2} \left( \tilde{V}_{i:}^X \right)^\top,\qquad \text{and}\qquad \omega^{\alpha, \delta, i}  = c^{\alpha, \delta,i} \tilde{V}^W \left( {\tilde{\Sigma}^W}\right)^{2} \left( \tilde{V}^W_{i:} \right)^\top.\end{equation}
If we prove these vectors are zero, then the original coefficients in \eqref{eq:super-simple-fourth-of-july} would also be zero. Equipped with this closed-form expression for the coefficients, we derive
{\allowdisplaybreaks
\begin{align}
\label{lh_linear}
L^H &= \sum_{l=1}^r \left(W_l \otimeskron  X_l \otimeskron\left(\alpha_l T_l^Y + \sum_{k \in [d_3] \setminus \{l\} }   \beta_{l,k} T_k^Y \right) \right)  \nonumber \\ & \qquad+  \sum_{l=1}^r \left( W_l \otimeskron  \left(  \alpha_l T_l^X + \sum_{k \in [d_2] \setminus \{l\} }   \gamma_{l,k} T_k^X \right) \otimeskron Y_l \right) \nonumber \\
&\qquad \qquad  + \sum_{l=1}^r\left(\left(\alpha_l T_l^W +  \sum_{k \in [d_1] \setminus \{l\} }   \delta_{l,k} T_k^W \right) \otimeskron  X_l \otimeskron Y_l\right) \nonumber \\
&= \sum_{l=1}^r \left(W_l \otimeskron X_l \otimeskron T^Y  \omega^{\alpha,\beta, l} \right) + \left( W_l \otimeskron T^X  \omega^{\alpha,\gamma, l} \otimeskron Y_l \right) + \left( T^W  \omega^{\alpha,\delta,l} \otimeskron X_l \otimeskron Y_l \right)  \nonumber \\
&= \sum_{l=1}^r \; \left( c^{\alpha, \beta, l} + c^{\alpha,\gamma,l} + c^{\alpha,\delta,l} \right)  \left(  W_l \otimeskron X_l \otimeskron Y_l \right),
\end{align}
}
where the first equality follows from distributing the sum over $l$ and rearranging, the second equality rewrites $\alpha_l T_l^Y + \sum_{k \in [d_3] \setminus \{l\}} \beta_{l,k}T^Y_k$  as $T^Y\omega^{\alpha,\beta, l}$, and the third equality follows from the definition of $c^{\alpha, \beta, l}, c^{\alpha, \gamma, l},$ and $c^{\alpha, \delta,  l}$ and the multilinearity of the Kronecker product.

Since the vectors $\{ W_l \otimeskron X_l \otimeskron Y_l \mid l \in [r] \}$ are linearly independent, we conclude that $c^{\alpha, \beta, l} + c^{\alpha,\gamma,l} +c^{\alpha,\delta,l} = 0 $ for all $l$. Let's show that each term is also zero. Using~\eqref{closed_form}, we can extract the $l$th component from $\omega^{\alpha, \beta, l}, \omega^{\alpha, \gamma, l}$, and $\omega^{\alpha, \delta, l}$ via
\begin{align*}
\alpha_l = c^{\alpha,\beta,l} \, \tilde V^Y_{l:} \left(\tilde\Sigma^Y\right)^2 \left(\tilde V^Y_{l:}\right)^\top= c^{\alpha,\gamma,l}  \tilde V^X_{l:}\left(\tilde \Sigma^X\right)^2 \left(\tilde V^X_{l:}\right)^\top = c^{\alpha,\delta,l}  \tilde V^W_{l:}  \left(\tilde \Sigma^W\right)^2 \left(\tilde V^W_{l:}\right)^\top.
\end{align*}
Observe that all the quadratic forms in these equalities are strictly positive because $\tilde \Sigma^W$, $\tilde \Sigma^X$, and $\tilde \Sigma^Y$ are positive definite matrices.
Thus $c^{\alpha, \beta, l}$, $c^{\alpha,\gamma, l}$ and $c^{\alpha,\delta, l}$ have the same sign, which in turn implies that $c^{\alpha, \beta, l} + c^{\alpha,\gamma,l} +c^{\alpha,\delta,l} = 0 $ if, and only if, $ c^{\alpha, \beta, l} = c^{\alpha,\gamma,l} =c^{\alpha,\delta,l} = 0 $. Thus, by~\eqref{closed_form}, that $\omega^{\alpha,\beta, l} = \omega^{\alpha,\gamma, l} = \omega^{\alpha,\delta,l} = 0$ and, consequently all the coefficients in \eqref{eq:super-simple-fourth-of-july} are also zero. Hence, the vectors $\{\nabla \Fasym(W,X,Y)^\top \nabla \Fasym(W,X,Y)v \mid v \in H \}$ are linearly independent; this finishes the proof of Proposition~\ref{prop: cp constant rank}.
\end{proof}
This concludes the proof of Theorem~\ref{thm-tensor-asymmetric-reg-conditions}.

    \section{Auxiliary proofs and results}
    In this section, we summarize some auxiliary results that we use throughout the paper.

\begin{lemma}[Properties of the Kronecker Product]
\label{lem:kronecker_properties}[\cite{schacke2004kronecker}]
Let \(A, B, C, D\) be matrices (or vectors) of compatible dimensions. Then, the Kronecker product satisfies the following properties.
\begin{enumerate}%
    \item Multiplication from the right and left can be succinctly written as
    \begin{equation} \label{kr_p1}
        \vect{A  B  C}
        =
        \left(C^\top \otimeskron A\right)\,\vect{B}.
    \end{equation}
    \item The transpose commutes with the Kronecker product
    \begin{equation} \label{kr_p2}
        (A \otimeskron B)^\top
        \;=\;
        A^\top \otimeskron B^\top.
    \end{equation}
    \item The matrix product commutes with the Kronecker product
    \begin{equation} \label{kr_p3}
        (A  B) \otimeskron (C  D)
        =
        (A \otimeskron C)(B \otimeskron D).
    \end{equation}
    \item
    Trivially, if \(x \in \mathbb{R}^n\) and \(y \in \mathbb{R}^m\), the vectorized outer product is equal to the Kronecker product
    \begin{equation} \label{kr_p4}
        y \otimeskron x
        =
        \vect{xy^\top}.
    \end{equation}
\end{enumerate}
\end{lemma}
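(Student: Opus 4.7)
The plan is to verify each of the four identities by direct entrywise computation from the definitions of the Kronecker product and the column-major vectorization operator given earlier in the excerpt. Since all four properties are standard and noncircular, I would treat them as independent exercises in bookkeeping indices; the strategy is to pick canonical index conventions once and reuse them across all four parts. Concretely, for matrices $A\in\RR^{m\times n}$, $B\in\RR^{p\times q}$, I would fix the convention $(A \otimeskron B)_{(i-1)p+k,\,(j-1)q+\ell}=A_{ij}B_{k\ell}$ and $\vect{M}_{(j-1)m+i}=M_{ij}$, which matches the definitions stated in the excerpt.

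For identity \eqref{kr_p4}, I would expand the $((i-1)m+j)$-th entry of $y\otimeskron x$ as $y_i x_j$ and observe this equals $(xy^\top)_{ji}$, which by definition is the same entry of $\vect{xy^\top}$. For identity \eqref{kr_p2}, I would compare the $((j-1)q+\ell,(i-1)p+k)$-th entry of $(A\otimeskron B)^\top$ with the corresponding entry of $A^\top\otimeskron B^\top$; both equal $A_{ij}B_{k\ell}$. For identity \eqref{kr_p3}, assuming compatible sizes $A\in\RR^{m\times n}$, $B\in\RR^{n\times p}$, $C\in\RR^{q\times r}$, $D\in\RR^{r\times s}$, I would compute the $((i-1)q+k,(j-1)s+\ell)$-th entry of both sides: the left side gives $(AB)_{ij}(CD)_{k\ell}=\sum_{a,b}A_{ia}B_{aj}C_{kb}D_{b\ell}$, while the right side, using the block structure of $A\otimeskron C$ and $B\otimeskron D$, gives $\sum_{(a,b)} (A\otimeskron C)_{(i-1)q+k,(a-1)r+b}(B\otimeskron D)_{(a-1)r+b,(j-1)s+\ell}=\sum_{a,b}A_{ia}C_{kb}B_{aj}D_{b\ell}$, matching exactly.

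For identity \eqref{kr_p1}, I would derive it from \eqref{kr_p3} and \eqref{kr_p4} rather than from scratch. Writing $B=\sum_{i,j}B_{ij}e_i e_j^\top$, linearity reduces the claim to rank-one $B=e_ie_j^\top$, in which case $ABC=(Ae_i)(C^\top e_j)^\top$, so $\vect{ABC}=(C^\top e_j)\otimeskron(Ae_i)$ by \eqref{kr_p4}, while $(C^\top\otimeskron A)\vect{e_ie_j^\top}=(C^\top\otimeskron A)(e_j\otimeskron e_i)=(C^\top e_j)\otimeskron(Ae_i)$ by \eqref{kr_p3}. Extending by linearity completes the argument.

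There is no real obstacle here; the only mild care required is to keep the row/column indexing of the Kronecker product consistent with the column-major vectorization convention stated in the excerpt, so that the shifts $(i-1)p+k$ versus $(k-1)m+i$ in the two operators line up correctly. Since the lemma is cited to \cite{schacke2004kronecker}, one could alternatively just invoke that reference, but the entrywise verification above is short enough to include inline.
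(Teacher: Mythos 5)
Your verification is correct, but note that the paper itself offers no proof of this lemma: it is quoted verbatim from the cited reference \cite{schacke2004kronecker}, so there is no internal argument to compare against. Your route is the natural self-contained one: the entrywise checks of \eqref{kr_p2}, \eqref{kr_p3}, \eqref{kr_p4} against the paper's explicit index conventions are exactly what is needed, and deducing \eqref{kr_p1} from \eqref{kr_p3} and \eqref{kr_p4} by linearity after reducing to rank-one $B=e_ie_j^\top$ is clean and avoids a separate index computation for the three-factor product. The only blemish is the index slip in your check of \eqref{kr_p4}: with $x\in\RR^{n}$ and $y\in\RR^{m}$, the relevant entry of $y\otimeskron x$ is the $((i-1)n+j)$-th one (the inner block length is the dimension of $x$, not of $y$), which then equals $y_i x_j=(xy^\top)_{j,i}=\vect{xy^\top}_{(i-1)n+j}$; this is precisely the row/column bookkeeping you flagged as the only point requiring care, and it does not affect the validity of the argument.
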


\begin{lemma}\label{lem:adding_stuff_to_orhogonal}
Let $U\in \RR^{d\times d}$ be an orthogonal matrix. Then for any $I\subset J \subseteq [d]$ and vector $v \in \RR^{d}$ we have
$$
\norm{U_{I}U_{I}^\top v}{2} \leq \norm{U_{J}U_{J}^\top v}{2}. 
$$
\end{lemma}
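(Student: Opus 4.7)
The statement is an elementary fact about orthogonal projections onto nested subspaces, so the proof should be short. The plan is to exploit the orthonormality of the columns of $U$ to decompose the larger projector as a sum of two orthogonal pieces, one of which is exactly the smaller projector.

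First, I would note that since the columns of $U$ are orthonormal, both $U_I U_I^\top$ and $U_J U_J^\top$ are orthogonal projections onto $\Span(U_I)$ and $\Span(U_J)$ respectively. Writing $J = I \sqcup (J \setminus I)$ (a disjoint union), one obtains the identity
$$
U_J U_J^\top = U_I U_I^\top + U_{J\setminus I} U_{J\setminus I}^\top,
$$
simply by expanding the outer product column by column. The two summands on the right are projections onto orthogonal subspaces (since distinct columns of $U$ are mutually orthogonal), so their images are orthogonal.

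Applying both sides to $v$ and using the Pythagorean identity gives
$$
\norm{U_J U_J^\top v}{2}^2 = \norm{U_I U_I^\top v}{2}^2 + \norm{U_{J\setminus I} U_{J\setminus I}^\top v}{2}^2 \geq \norm{U_I U_I^\top v}{2}^2,
$$
and taking square roots yields the desired inequality. There is no serious obstacle here; the only thing to verify carefully is the disjointness $I \cap (J \setminus I) = \emptyset$ used to split the projection, which is immediate from $I \subseteq J$.
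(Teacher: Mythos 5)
Your proof is correct and follows essentially the same route as the paper: both split $U_J U_J^\top$ into $U_I U_I^\top + U_{J\setminus I} U_{J\setminus I}^\top$ and discard the nonnegative contribution of the $J\setminus I$ block (the paper phrases this via $\langle v, U_{J\setminus I} U_{J\setminus I}^\top v\rangle \geq 0$ rather than your Pythagorean formulation, but the argument is the same).
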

\begin{proof}
We have $$\norm{U_{J} U_{J}^{\top} v}{2}^2 %
= \inner{v}{U_{J} U_{J}^{\top} v} %
= \inner{v}{U_{J \setminus I} U_{J \setminus I}^{\top} v} + \inner{v}{U_{I} U_{ I}^{\top} v} 
\geq \norm{U_I U_I^{\top} v}{2}^2,$$ where the first equality follows since orthogonal projections are symmetric and idempotent. 
\end{proof}
\begin{lemma}[Lemma 2.5 in \cite{Chen_2021}]
\label{lem:orthogonal-matrices-sine}
Suppose $U = [U_0, U_1]$ and $V = [V_0, V_1]$ are square orthonormal matrices, where $U_0, V_0 \in \mathbb{R}^{d \times k}$. Let $\theta_1, \ldots, \theta_k$ denote the principal angles between $\operatorname{span}(U_0)$ and $\operatorname{span}(V_0)$, and denote $\Theta(U_0,V_0) = \diagg{\theta_1, \ldots, \theta_k}$. Then
\begin{equation}
\norm{U_0^\top V_1}{F} = \norm{\sin \Theta(U_0,V_0)}{F}.
\end{equation}
\end{lemma}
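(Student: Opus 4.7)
The plan is to reduce the identity to a statement about singular values of $U_0^\top V_0$ via the standard characterization of principal angles, and then exploit the orthogonality of both $U$ and $V$ to relate $U_0^\top V_1$ to the ``sine block'' of this decomposition. Specifically, the principal angles $\theta_1,\dots,\theta_k$ between $\operatorname{span}(U_0)$ and $\operatorname{span}(V_0)$ are defined (equivalently characterized) so that the singular values of $U_0^\top V_0 \in \mathbb{R}^{k\times k}$ are precisely $\cos\theta_1,\dots,\cos\theta_k$. I will take this as the working definition and write an SVD $U_0^\top V_0 = A\,\cos\Theta\, B^\top$, where $A,B\in O(k,k)$ and $\cos\Theta := \operatorname{diag}(\cos\theta_1,\dots,\cos\theta_k)$.

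The key algebraic step uses that $V = [V_0\ V_1]$ is square orthonormal, so $V_0 V_0^\top + V_1 V_1^\top = I_d$. Multiplying on the left by $U_0^\top$ and on the right by $U_0$ and using $U_0^\top U_0 = I_k$ gives
\begin{equation*}
    I_k \;=\; U_0^\top V_0 V_0^\top U_0 \;+\; U_0^\top V_1 V_1^\top U_0.
\end{equation*}
Substituting the SVD yields $U_0^\top V_0 V_0^\top U_0 = A\cos^2\Theta\, A^\top$, and therefore
\begin{equation*}
    U_0^\top V_1 V_1^\top U_0 \;=\; A(I_k - \cos^2\Theta)A^\top \;=\; A\sin^2\Theta\, A^\top,
\end{equation*}
where I use $\sin^2\Theta := I_k - \cos^2\Theta = \operatorname{diag}(\sin^2\theta_1,\dots,\sin^2\theta_k)$.

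The conclusion follows by taking traces, since $\|M\|_F^2 = \operatorname{tr}(M^\top M)$ and the trace is invariant under conjugation by an orthogonal matrix:
\begin{equation*}
    \|U_0^\top V_1\|_F^2 \;=\; \operatorname{tr}\!\bigl(U_0^\top V_1 V_1^\top U_0\bigr) \;=\; \operatorname{tr}\!\bigl(A\sin^2\Theta\, A^\top\bigr) \;=\; \operatorname{tr}(\sin^2\Theta) \;=\; \|\sin\Theta(U_0,V_0)\|_F^2.
\end{equation*}
Taking square roots gives the claim. There is no real obstacle here; the only subtle point is to be explicit about the convention defining principal angles through the singular values of $U_0^\top V_0$, so that $\sin\Theta$ is unambiguously $\sqrt{I_k - \cos^2\Theta}$ with $\theta_i\in[0,\pi/2]$. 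Everything else is a one-line matrix identity plus a trace computation.
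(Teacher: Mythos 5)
Your proof is correct: the identity $I_k = U_0^\top V_0 V_0^\top U_0 + U_0^\top V_1 V_1^\top U_0$, combined with the SVD $U_0^\top V_0 = A\cos\Theta\,B^\top$ (the standard characterization of principal angles) and a trace computation, gives exactly the claimed equality. Note that the paper does not prove this lemma at all—it is imported verbatim as Lemma 2.5 of the cited reference \cite{Chen_2021}—and your argument is the standard one underlying that result, so there is nothing to reconcile.
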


\begin{lemma}[Lemma 2.6 in \cite{Chen_2021}]
\label{lem:procrustes-sin-theta}
Let \(U,V\in\RR^{d\times k}\;(k\le d)\) have orthonormal columns. Write the principal angles between \(\operatorname{span}(U)\) and \(\operatorname{span}(V)\) as
\(\theta_1,\dots,\theta_k\) and denote
\(\Theta(U,V)=\diagg{\theta_1,\dots,\theta_k}\).
Then, $$\min_{Q\in O(k)} \norm{U-VQ}{F}
        \leq \sqrt{2}\norm{\sin\Theta(U,V)}{F}.$$
\end{lemma}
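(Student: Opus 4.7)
The plan is a direct application of the orthogonal Procrustes reduction combined with the singular-value characterization of principal angles. First, since $U$ and $V$ have orthonormal columns and $Q \in O(k)$ preserves Frobenius norm,
$$
\norm{U - VQ}{F}^2 = \norm{U}{F}^2 + \norm{VQ}{F}^2 - 2\tr(Q^\top V^\top U) = 2k - 2\tr(Q^\top V^\top U).
$$
Minimizing over $Q \in O(k)$ is thus equivalent to maximizing $\tr(Q^\top V^\top U)$. Writing the SVD $V^\top U = \hat U \Sigma \hat V^\top$, the classical orthogonal Procrustes solution $Q^{\star} = \hat U \hat V^\top$ attains this maximum with value $\tr(\Sigma)$, the sum of singular values of $V^\top U$.

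Next, I would invoke the standard identity that the singular values of $V^\top U$ are exactly the cosines $\cos\theta_1,\dots,\cos\theta_k$ of the principal angles between $\spann{U}$ and $\spann{V}$ (the same identity that underlies the preceding Lemma~\ref{lem:orthogonal-matrices-sine}). Substituting back yields
$$
\min_{Q \in O(k)} \norm{U - VQ}{F}^2 = 2\sum_{i=1}^k (1 - \cos\theta_i).
$$

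To close the argument, I would apply the elementary scalar inequality $1 - \cos\theta \leq \sin^2\theta$, valid whenever $\cos\theta \geq 0$---which is automatic here, since principal angles are by convention in $[0,\pi/2]$. This inequality comes from $1-\cos\theta \leq (1-\cos\theta)(1+\cos\theta) = \sin^2\theta$. Summing over $i$ and taking a square root yields
$$
\min_{Q \in O(k)} \norm{U - VQ}{F} \leq \sqrt{2}\,\norm{\sin \Theta(U,V)}{F},
$$
as claimed. Honestly, no genuine obstacle arises: the argument rests only on the closed form of the orthogonal Procrustes optimizer and the cosine-SVD characterization of principal angles, both textbook facts that the authors already use elsewhere in the paper.
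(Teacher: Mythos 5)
Your proof is correct. The paper does not include its own proof of this statement---it is cited directly as Lemma 2.6 of \cite{Chen_2021}---so there is nothing to compare against; your argument via the orthogonal Procrustes identity $\min_{Q}\|U-VQ\|_F^2 = 2\sum_i(1-\cos\theta_i)$ and the scalar bound $1-\cos\theta \le \sin^2\theta$ for $\theta\in[0,\pi/2]$ is the standard textbook derivation and is sound.
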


\subsection{Proof of Lemma~\ref{lem:spectral-bm}}\label{sec:proof-spectral-bm}

\paragraph{Eigenpairs.}
For any pair of indexes $(i, j)$ with $i \leq j$, label $Z = \ux_{i}{ \ux_{j} }^\top + \ux_{j}{\ux_{i}  }^\top$. %
Then,{\allowdisplaybreaks
        \begin{align*}
            \nabla \Fsym(X) \nabla \Fsym(X)^\top [Z] &= 2\nabla \Fsym(X) \left[ Z X \right] \\
            &= 2ZXX^\top + 2XX^\top Z \\
            &= 2\left(Z\sum_{k=1}^n \sigma_{k}^2\left( X \right)  {\ux_k} { \ux_k }^\top + \sum_{k=1}^n \sigma_{k}^2\left( X \right) {\ux_k} { \ux_k }^\top Z\right) \\
            &= 2\left(\sigma_{i}^2\left( X \right)  \left(\ux_{i}{ \ux_{j} }^\top + \ux_{j}{\ux_{i}  }^\top \right) + \sigma_{j}^2\left( X \right)  \left(\ux_{i}{ \ux_{j} }^\top + \ux_{j}{\ux_{i}  }^\top\right)\right) \\
            &= 2\left(\sigma_{i}^2\left( X \right) + \sigma_{j}^2\left( X \right)\right)Z,
        \end{align*}
        }
        where the first two lines follow from~\eqref{action: nablac-and-nablacT-bm} and the last two lines follow by definition of $Z$.
         \paragraph{Orthonormal basis.}
Recall that the image of $\nabla \c$ is $\SSS^{d},$ thus the number of eigenvectors above matches the number of dimensions of $\SSS^{d}.$ It suffices to prove that they are orthogonal. Let $U'_{i,j}$ be a placeholder for
\[
 U'_{i,j} =
    \text{vec}\left( \ux_{i}{ \ux_{j} }^\top + \ux_{j}{\ux_{i}  }^\top \right).
\]
Let $\left(i,j\right), \left(k,\ell\right) \in \left[d\right] \times \left[d\right]$ with $i \leq j$ and $k \leq \ell$. We shall show that $U'_{i,j}$ is nonzero, and if $\left(i, j\right) \neq \left(k, \ell\right),$ then $\inner{ U'_{i,j}} {U'_{k,\ell} } = 0$. Then,
    {\allowdisplaybreaks \begin{align*}
        \inner{ U'_{ij}}{U'_{k,\ell} } &= \inner{ \ux_{i}{ \ux_{j} }^\top + \ux_{j}{\ux_{i}  }^\top} { {\ux_k} {\ux_\ell}^\top + \ux_\ell { \ux_k }^\top } \\
        &= \trace{\ux_{i}{ \ux_{j} }^\top \ux_\ell { \ux_k }^\top } + \trace{\ux_{i}{ \ux_{j} }^\top {\ux_k} {\ux_\ell}^\top } \\
        &\qquad + \trace{\ux_{j}{\ux_{i}  }^\top \ux_\ell { \ux_k }^\top } + \trace{\ux_{j}{\ux_{i}  }^\top {\ux_k} {\ux_\ell}^\top } \\
        &= 2 \inner{ \ux_{i}} { {\ux_k} } \inner{ \ux_{j}} { \ux_\ell } + 2 \inner{ \ux_{i}} { \ux_\ell } \inner{ \ux_{j}} { {\ux_k} }.
    \end{align*}}
    First if $\left(i,j\right) = \left(k,\ell\right)$, then $\norm{U'_{i,j}}{2}^2 = \begin{cases} 2, & i \neq j \\ 4, & \text{otherwise} \end{cases} \neq 0$ and so $U'_{i,j}$ is nonzero. Second, if $\left(i,j\right) \neq \left(k,\ell\right)$, then either $i \neq k$ or $j \neq \ell$. Without loss of generality, assume that $i \neq k$; thus, the first term in the last expression is zero. Seeking contradiction, assume that the second term is nonzero. Since $\left\{\ux_{i}\right\}$ forms an orthonormal basis, we derive that $i = \ell \geq  k = j$ and by assumption $i \leq j$, therefore, $i = j = k = \ell$, which is a contradiction. %
    This completes the proof of the lemma.

\subsection{Proof of Lemma \ref{lem:distance_factors_implies_distance_op_asymmetric}} \label{proof: lem-distance_factors_implies_distance_op_asymmetric}
Recall that we denote by $\rs$ the ranks of $\Xs$ and $\Ys$. One has 
{\allowdisplaybreaks \begin{align*}
    \norm{XX-\Xs\Xs^\top }{F} &\leq \norm{\left( X-\Xs \right)X^\top +  \Xs \left( X- \Xs \right)^\top }{F}  
    \\ &\overset{(i)}{\leq} \norm{\left( X-\Xs \right)X^\top }{F} + \norm{\Xs \left( X-\Xs \right)^\top }{F}      
        \\ &\overset{(ii)}{\leq} 2 \max \left\{ \sigma_1(\Xs), \sigma_1(X) \right\} \norm{ X-\Xs  }{F} 
        \\ &\overset{(iii)}{\leq} 2  \left( \sigma_1(\Xs) + \norm{X-\Xs}{F} \right) \norm{X-\Xs}{F} 
        \\ &\overset{(iv)}{\leq}\frac{\min\left\{ \sigma_{\rs}^2(\Xs), \sigma_{\rs}^2(\Ys)\right\} }{8 \sqrt{2}} + \frac{\min\left\{ \sigma_{\rs}^2(\Xs), \sigma_{\rs}^2(\Ys)\right\}}{8\sqrt{2} \sigma_1(\Xs)} \norm{X-\Xs}{F}  
        \\ &\overset{(v)}{\leq}\frac{\min\left\{ \sigma_{\rs}^2(\Xs), \sigma_{\rs}^2(\Ys)\right\} }{4\sqrt{2}},
\end{align*}}
where $(i)$ follows from the triangle inequality, $(ii)$ follows from the variational characterization of singular values, $(iii)$ follows from Weyl's inequality, $(iv)$ holds since $ \norm{X - \Xs}{F} \leq \frac{\min\left\{ \sigma_{\rs}^2(\Xs), \sigma_{\rs}^2(\Ys)\right\}}{16\sqrt{2} \sigma_1(\Xs)}$ and $(v)$ holds since $\frac{\min\left\{ \sigma_{\rs}^2(\Xs), \sigma_{\rs}^2(\Ys)\right\}}{16\sqrt{2} \sigma_1(\Xs)} \leq \sigma_1(\Xs)$. A similar argument yields
$$
\norm{YY-\Ys\Ys^\top }{F} \leq\frac{\min\left\{ \sigma_{\rs}^2(\Xs), \sigma_{\rs}^2(\Ys)\right\} }{4\sqrt{2}}.
$$
Adding both bounds, we conclude that $ \norm{XX - \Xs\Xs^\top}{F} + \norm{YY-\Ys\Ys^\top }{F}\leq \frac{\min\left\{ \sigma_{\rs}^2(\Xs), \sigma_{\rs}^2(\Ys)\right\} }{2\sqrt{2}}$.

\subsection{Proof of Lemma \ref{lem: spectral_asymmetric}}\label{sec:proof-spectral-asym}
\paragraph{Eigenpairs.} Let $i,j \in [d_1]\times [d_2]$ with corresponding eigenpairs $\left(\sigma_{i}^2\left( X \right), { \ux_{i} } \right)$ and $\left(\sigma_{j}^2\left( Y \right), { \uy_{j} }\right)$ for respectively $XX^\top$ and $YY^\top$. Let $M$ be a placeholder for  $\ux_{i} {\uy_{j}}^\top$. We have
\begin{align*}
\nabla F(X,Y) \, \nabla F(X,Y)^\top M &= M \, YY^\top + XX^\top M \\
&= \sigma_{j}^2\left( Y \right) \left( \ux_{i} {\uy_{j}}^\top \right) + \sigma_{i}^2\left( X \right) \left( \ux_{i} {\uy_{j}}^\top \right) \\
&= \left( \sigma_{i}^2\left( X \right) + \sigma_{j}^2\left( Y \right) \right) M,
\end{align*}
where the first equality follows from~\eqref{action: nablac-and-nablacT-asym} and the second equality is due to the orthonormality of the eigenvectors of both $XX^\top \text{ and } YY^\top$.

\paragraph{Orthonormal basis. } Let $(i,j), (k,\ell) \in [d_1] \times [d_2]$. We will show that if $(i,j) = (k,\ell)$, then $\inner{{ \ux_{i} }{{ \uy_{j} }}^\top }{{{\ux_k}}{\uy_{\ell}}^\top} \neq 0$ and that if   $(i,j) \neq (k,l)$, then $\inner{ { \ux_{i} }{{ \uy_{j} }}^\top }{{{\ux_k}}{\uy_{\ell}}^\top} = 0$.

 \begin{enumerate}
     \item[] \textit{Case 1.} $(i,j) = (k,l)$. Then  $\inner{{ \ux_{i} }{{ \uy_{j} }}^\top }{{ \ux_{i} }{{ \uy_{j} }}^\top} = \operatorname{tr}(  { \ux_{i} } {{ \uy_{j} }}^\top {{ \uy_{j} }} {{ \ux_{i} }}^\top ) = \inner{{{ \ux_{i} }}}{{{ \ux_{i} }}} \inner{{{ \uy_{j} }}}{{{ \uy_{j} }}} = 1 \neq 0$.
     \item[] \textit{Case 2.}$(i,j) \neq (k,l)$. Then  $\inner{ {{ \ux_{i} }}{{ \uy_{j} }}^\top }{{{\ux_k}}{\uy_{\ell}}^\top} = \inner{{{ \ux_{i} }}}{{{\ux_k}}} \inner{{{ \uy_{j} }}}{{\uy_{\ell}}} = 0$.
\end{enumerate}

\subsection{Proof of Lemma \ref{lemma: action-nablac-symmetric}} \label{proof: action-nablac-symmetric}

Fix $X, D \in \RR^{d\times r}$ arbitrary. For the action of the Jacobian, expanding the denominator in $\nabla \Fsym(X) \vect{D} = \lim_{t\downarrow 0} \frac{\Fsym(X + tD) - \Fsym(X)}{t}$ leads to \begin{equation}\label{eq: action-symcp-jac}
\nabla \Fsym(X) \vect{D} = \sum_{\ell=1}^r \left(D_\ell \otimeskron X_\ell \otimeskron X_\ell +   X_\ell \otimeskron D_\ell \otimeskron X_\ell +  X_\ell \otimeskron X_\ell \otimeskron D_\ell\right).
\end{equation}
\noindent A straight computation shows that the permutation matrices $P_2$ and $P_3$ from Definition~\ref{Pis} satisfy $P_2(a\otimeskron b \otimeskron b) = b \otimeskron b \otimeskron a$  and $P_3(a\otimeskron b \otimeskron b) = b\otimeskron a \otimeskron b $ for arbitary vectors $a,b$, so that $\nabla \Fsym(X) \vect{D}$ can be written as $ \sum_{\ell=1}^r \left( I+ P_3 + P_2 \right) \left(D_\ell \otimeskron X_\ell \otimeskron X_\ell \right).$

For the action of the adjoint, recall that by definition, we have
$$
\inner{\nabla \Fsym(X)^\top  ( a \otimeskron b \otimeskron c )}{\vect{D}} = \inner{\nabla \Fsym(X)  \vect{D}}{a\otimeskron b \otimeskron c}.
$$
Using~\eqref{eq: action-symcp-jac}, we have
{\allowdisplaybreaks
\begin{align*}
    \langle \nabla &\Fsym(X)  \vect{D}, a\otimeskron b \otimeskron c\rangle \\&= \inner{ \sum_{\ell\in[r]} \left(X_\ell \otimeskron X_\ell \otimeskron D_\ell +   X_\ell \otimeskron D_\ell \otimeskron X_\ell +  D_\ell \otimeskron X_\ell \otimeskron X_\ell\right)}{ a\otimeskron b \otimeskron c}\\
    &= \sum_{i,j,k \in [d]^3} \sum_{\ell\in[r]} \left( X_{i\ell }X_{j\ell }D_{k\ell } + X_{i\ell }D_{j\ell }X_{k\ell }+ D_{i\ell }X_{j\ell }X_{k\ell } \right) a_i b_j c_k \\
    &= \sum_{i,j,k \in [d]^3} \sum_{\ell\in[r]}  (X_{i\ell }X_{j\ell }D_{k\ell } a_i b_j c_k +  X_{i\ell }D_{j\ell }X_{k\ell }a_i b_j c_k +  D_{i\ell }X_{j\ell }X_{k\ell } a_i b_j c_k).
\end{align*}} Next, we derive an expression for the first term of the above equation. Changing the order of summation, we have
\begin{align*}
\sum_{i,j,k \in [d]^3} \sum_{\ell\in[r]}  X_{i\ell }X_{j\ell }D_{k\ell }a_i b_j c_k &= \sum_{k\in [d], \ell \in [r]} D_{k \ell} \left( c_k \sum_{i,j \in [d]^2} X_{i\ell} X_{j \ell}a_i b_j \right).
\end{align*} 
The matrix $M$ with components $M_{k\ell} = c_k \sum_{i,j \in [d]^2} X_{i\ell} X_{j \ell}a_i b_j$ can be compactly written as $M=c  ( a^\top \otimeskron b^\top) [\, X_1 \otimeskron X_1 \;\cdots\; X_r \otimeskron X_r\,]$. Therefore,
$$
\sum_{i,j,k \in [d]^3} \sum_{\ell=1}^r  X_{i\ell }X_{j\ell }D_{k\ell }a_i b_j c_k =\inner{D}{c  ( a^\top \otimeskron b^\top) [\, X_1 \otimeskron X_1 \;\cdots\; X_r \otimeskron X_r\,]}.
$$
Similarly, for the two remaining terms
$$
\sum_{i,j,k \in [d]^3} \sum_{\ell=1}^r  X_{i\ell }D_{j\ell }X_{k\ell }a_i b_j c_k  =\inner{D}{b  ( a^\top \otimeskron c^\top) [\, X_1 \otimeskron X_1 \;\cdots\; X_r \otimeskron X_r\,]},
$$  and
$$
\sum_{i,j,k \in [d]^3} \sum_{\ell=1}^r   D_{i\ell }X_{j\ell }X_{k\ell }a_i b_j c_k  =\inner{D}{a  ( b^\top \otimeskron c^\top) [\, X_1 \otimeskron X_1 \;\cdots\; X_r \otimeskron X_r\,]}.
$$ By linearity of the inner product and matrix multiplication, we obtain
$$
 \inner{\nabla \Fsym(X)^\top  ( a \otimeskron b \otimeskron c )}{\vect{D}} = \inner{  \left(c  ( a^\top \otimeskron b^\top) + b  ( a^\top \otimeskron c^\top) + a  ( b^\top \otimeskron c^\top) \right)  \extone{X}}  {D}.$$
 Since $D$ was arbitrary, one can show component-wise that
$\nabla \Fsym(X)^\top  ( a \otimeskron b \otimeskron c )$ matches $\left(c  ( a^\top \otimeskron b^\top) + b  ( a^\top \otimeskron c^\top) + a  ( b^\top \otimeskron c^\top) \right)  \extone{X}$. This concludes the proof.

\subsection{Proof of Lemma~\ref{lem: cp_on_ts}}\label{proof: cp_on_ts}

The proof relies heavily on the following two claims.
\begin{claim}\label{lemma: action nablaTnabla tensor asymmetric} Let $u\otimeskron v \otimeskron w\in \RR^{d_1d_2d_3}$ be arbitrary. Then,
\begin{align*}
\nabla \Fasym(W,X,Y) \nabla \Fasym(W,X,Y)^\top  \left( u \otimeskron v \otimeskron w \right)   & =  u \otimeskron \left[ \Psi(X,Y)  \left( v \otimeskron w \right)  \right]  \\ & \qquad + P_2  \left( v \otimeskron \left[ \Psi(W,Y)  \left( u \otimeskron w \right) \right] \right)\\& \qquad \qquad + P_3 \left( w \otimeskron \left[ \Psi(W,X)  \left( u \otimeskron v \right) \right] \right),
\end{align*}
where $P_i$ and $\Psi$ are introduced in Definitions~\ref{Pis}
 and~\ref{psi}, respectively.\end{claim}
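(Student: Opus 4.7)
The plan is to expand $\nabla \Fasym \nabla \Fasym^\top$ into the three block contributions
$$\nabla \Fasym(W,X,Y)\,\nabla \Fasym(W,X,Y)^\top = J^W(J^W)^\top + J^X(J^X)^\top + J^Y(J^Y)^\top$$
using Lemma~\ref{jac: cp}, and then to act on the rank-one vector $u\otimeskron v\otimeskron w$ term by term. The core algebraic identity that drives everything is a short computation for $\psi$ and $\Psi$: for any $A\in\RR^{a\times r}$ and $B\in\RR^{b\times r}$,
$$\psi(A,B)\,\psi(A,B)^\top=\sum_{\ell=1}^{r} (A_\ell\otimeskron B_\ell)(A_\ell\otimeskron B_\ell)^\top=\sum_{\ell=1}^{r} A_\ell A_\ell^\top \otimeskron B_\ell B_\ell^\top = \Psi(A,B),$$
where I have used the mixed-product property~\eqref{kr_p3}. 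Plugging this into the three blocks gives
$$J^W(J^W)^\top = I_{d_1}\otimeskron \Psi(X,Y), \quad J^X(J^X)^\top = P_2\bigl(I_{d_2}\otimeskron\Psi(W,Y)\bigr)P_2^\top, \quad J^Y(J^Y)^\top = P_3\bigl(I_{d_3}\otimeskron\Psi(W,X)\bigr)P_3^\top,$$
where the permutations carry over from the $P_i$ factors in Lemma~\ref{jac: cp} (and $P_i P_i^\top = I$, so squaring them just conjugates the middle block).

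The first block is then immediate: applying the mixed-product rule once more,
$$\bigl(I_{d_1}\otimeskron \Psi(X,Y)\bigr)(u\otimeskron v\otimeskron w) = u\otimeskron \bigl(\Psi(X,Y)(v\otimeskron w)\bigr),$$
which reproduces the first summand. The remaining two summands follow from the same mechanism, but require handling the permutations $P_2,P_3$ carefully. The strategy here is: first apply $P_i^\top$ to $u\otimeskron v\otimeskron w$ to reshuffle the tensor so that its first mode becomes the mode the middle block acts on as identity, then apply $I_{d_i}\otimeskron \Psi(\cdot,\cdot)$ via the mixed-product rule, and finally apply $P_i$ to undo the reshuffling. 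Concretely, by Definition~\ref{Pis}, $P_i^\top$ maps $\vect{T}$ to $\vect{\cM_i(T)}$, so that $P_2^\top(u\otimeskron v\otimeskron w)$ places the $v$-factor in the first slot and similarly $P_3^\top(u\otimeskron v\otimeskron w)$ places the $w$-factor first. Applying $I_{d_i}\otimeskron\Psi(\cdot,\cdot)$ and then $P_i$ yields the expressions $P_2\bigl(v\otimeskron[\Psi(W,Y)(u\otimeskron w)]\bigr)$ and $P_3\bigl(w\otimeskron[\Psi(W,X)(u\otimeskron v)]\bigr)$, matching the second and third summands of the claim.

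The main (really the only) technical point is bookkeeping the action of $P_2$ and $P_3$ on a Kronecker-structured vector. This amounts to unwinding Definition~\ref{Pis} on a rank-one tensor once and for all: for $T=u\otimes v\otimes w$ one has $\cM_2(T)=v(w\otimeskron u)^\top$ and $\cM_3(T)=w(v\otimeskron u)^\top$, so the reshuffled vectorizations needed above follow directly. Everything else is the repeated use of $(A\otimeskron B)(c\otimeskron d)=Ac\otimeskron Bd$ and the identity $\psi\psi^\top=\Psi$ derived above.
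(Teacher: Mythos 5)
Your proposal is correct and follows essentially the same route as the paper: decompose $\nabla \Fasym\nabla \Fasym^\top$ into the three blocks $J^W(J^W)^\top+J^X(J^X)^\top+J^Y(J^Y)^\top$ via Lemma~\ref{jac: cp}, use the mixed-product identity $\psi(A,B)\psi(A,B)^\top=\Psi(A,B)$, and track how $P_2^\top,P_3^\top$ reshuffle the rank-one vector before and after applying the middle Kronecker block. Your explicit matricization formulas for a rank-one tensor are a slightly more detailed justification of the paper's step ``by the definition of $P_i$,'' but the argument is otherwise identical.
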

\begin{proof}[Proof of Claim~\ref{lemma: action nablaTnabla tensor asymmetric}]
  By Lemma \ref{jac: cp}, we have
  {\allowdisplaybreaks 
\begin{align*}
 \nabla \c(W, X, Y) \nabla \c(W,X, Y)^{\top}&= J^{W}\left(J^{W}\right)^\top + J^{X}\left(J^{X}\right)^\top + J^{Y}\left(J^{Y}\right)^\top \\&=
I_{d_1} \otimeskron \left(  \sum_{l=1}^r \left( X_lX_l^\top \right) \otimeskron \left( Y_lY_l^\top \right) \right) \\&\qquad + P_2 \left( I_{d_2} \otimeskron \left(  \sum_{l=1}^r \left( W_lW_l^\top \right) \otimeskron \left( Y_lY_l^\top \right) \right) \right) P_2^\top \\ &\qquad\qquad +  P_3 \left( I_{d_3} \otimeskron \left( \sum_{l=1}^r \left( W_lW_l^\top \right) \otimeskron \left( X_lX_l^\top \right) \right)\right) P_3^\top,
\end{align*}}
where the second equality we use the Kronecker property \eqref{kr_p2} to take the transpose and \eqref{kr_p3} to simplify the product. Given a vector $u \otimeskron v \otimeskron w$ we expand
{\allowdisplaybreaks
\begin{align*}
&\nabla F(W,X,Y)\,\nabla F(W,X,Y)^\top  \left( u \otimeskron v \otimeskron w \right) \\
&= \left( I_{d_1} \otimeskron \left(\sum_{l=1}^r \left( X_lX_l^\top \right) \otimeskron \left( Y_lY_l^\top \right) \right)\right) (u \otimeskron v \otimeskron w) \\
&\quad+ P_2 \left( I_{d_2} \otimeskron \left(\sum_{l=1}^r \left( W_lW_l^\top \right) \otimeskron \left( Y_lY_l^\top \right) \right) \right)P_2^\top (u \otimeskron v \otimeskron w) \\
&\quad \quad+  P_3 \left( I_{d_3} \otimeskron \left( \sum_{l=1}^r \left( W_lW_l^\top \right) \otimeskron \left( X_lX_l^\top \right) \right)  \right) P_3^\top (u \otimeskron v \otimeskron w) \\
&\overset{(i)}{=} \left( I_{d_1} \otimeskron \left( \sum_{l=1}^r \left( X_lX_l^\top \right) \otimeskron \left( Y_lY_l^\top \right) \right) \right) (u \otimeskron v \otimeskron w) \\
&\quad+ P_2 \left( I_{d_2} \otimeskron \left(\sum_{l=1}^r \left( W_lW_l^\top \right) \otimeskron \left( Y_lY_l^\top \right) \right) \right) (v \otimeskron u \otimeskron w) \\
&\quad\quad+  P_3 \left( I_{d_3} \otimeskron \left(\sum_{l=1}^r \left( W_lW_l^\top \right) \otimeskron \left( X_lX_l^\top \right) \right)\right) (w \otimeskron u \otimeskron v) \\
&\overset{(ii)}{=} u \otimeskron \left( \left(\sum_{l=1}^r X_lX_l^\top \otimeskron Y_lY_l^\top\right) \left( v \otimeskron w \right) \right)
+ P_2 \left[ v \otimeskron \left(\left( \sum_{l=1}^r W_lW_l^\top \otimeskron Y_lY_l^\top \right) \left( u \otimeskron w \right) \right) \right] \\
&\quad+ P_3 \left[ w \otimeskron \left(\left( \sum_{l=1}^r W_lW_l^\top \otimeskron X_lX_l^\top \right) \left( u \otimeskron v \right) \right) \right]\\
&=u \otimeskron \left[ \Psi(X,Y)  \left( v \otimeskron w \right)  \right] + P_2  \left( v \otimeskron \left[ \Psi(W,Y)  \left( u \otimeskron w \right) \right] \right) + P_3 \left( w \otimeskron \left[ \Psi(W,X)  \left( u \otimeskron v \right) \right] \right),
\end{align*} 
}
where $(i)$ follows from the definition of $P_{i}$ and $(ii)$ follows from~\eqref{kr_p3}. This concludes the proof.
\end{proof}

\begin{claim}\label{lemma: nullspace of psi}
The following identity holds $$\Psi(M,N)\left( T^M_{i} \otimeskron T^N_{j} \right)=\mathbbm{1}_{(i,j) \in \mathcal{T}_{\mathrm{on}}^{M, N}}(M_i \otimeskron N_j).$$
\end{claim}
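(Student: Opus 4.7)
My plan is to prove this claim by a direct computation that unpacks $\Psi(M,N)$ via the mixed-product property of the Kronecker product and then exploits the biorthogonality relation~\eqref{eq:orto-probing} between the columns of $M,N$ and the probing vectors $T^M_i, T^N_j$.

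First, I would write out
\begin{equation*}
\Psi(M,N)\bigl(T^M_i \otimeskron T^N_j\bigr) = \sum_{l=1}^r \bigl(M_l M_l^\top \otimeskron N_l N_l^\top\bigr)\bigl(T^M_i \otimeskron T^N_j\bigr)
\end{equation*}
and apply the Kronecker identity \eqref{kr_p3} factorwise, namely $(A\otimeskron B)(u\otimeskron v) = (Au)\otimeskron(Bv)$. This reduces the sum to
\begin{equation*}
\sum_{l=1}^r \langle M_l, T^M_i\rangle\,\langle N_l, T^N_j\rangle\, \bigl(M_l \otimeskron N_l\bigr).
\end{equation*}

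Next, I would invoke the biorthogonality relation $\langle M_l, T^M_i\rangle = \mathbbm{1}_{l=i}$ (and the analogue for $N$), which is the same identity used in \eqref{eq:orto-probing} and follows immediately from the definition \eqref{eq: vectors_row_pseudoinverse} of $T^M_i$ together with the orthogonality of $U^M$ and $\tilde V^M$: writing $M_l = U^M \tilde\Sigma^M (\tilde V^M_{l:})^\top$ for $l\in[r]$ and unfolding the inner product collapses to $\tilde V^M_{l:} (\tilde V^M_{i:})^\top = \mathbbm{1}_{l=i}$. Since $l$ only ranges over $[r]$ in the sum, the product $\mathbbm{1}_{l=i}\mathbbm{1}_{l=j}$ is nonzero for some $l$ if and only if $i=j$ and $i \leq r$, which is precisely the condition $(i,j)\in\mathcal{T}_{\mathrm{on}}^{M,N}$ from \eqref{eq: diag-set}.

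Finally, I would conclude that if $(i,j) \in \mathcal{T}_{\mathrm{on}}^{M,N}$, the unique surviving term is obtained at $l=i=j$ and equals $M_i\otimeskron N_j$; otherwise the sum vanishes identically. This yields the desired identity. The argument is essentially routine linear algebra, and I do not anticipate a significant obstacle: the only subtlety is ensuring the biorthogonality formula is invoked with the correct range of indices (the sum over $l$ is only over $[r]$, whereas $i,j$ range over $[d_M],[d_N]$), which is what forces the indicator restriction to $\mathcal{T}_{\mathrm{on}}^{M,N}$ rather than merely to $\{i=j\}$.
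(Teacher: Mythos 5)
Your proof is correct, and it takes a cleaner route than the paper's. The paper proves the claim by first expanding $\Psi(M,N)$ entirely in terms of the SVD factors of $M$ and $N$ (producing a quintuple sum over indices), then computing its action on $T^M_i \otimeskron T^N_j$ with a case split between $\max\{i,j\}>r$ and $i,j\in[r]$, collapsing the sums via orthonormality of $U^M,U^N$ and of the rows of $V^M,V^N$. You instead exploit the rank-one structure of $\Psi(M,N)=\sum_{l}M_lM_l^\top\otimeskron N_lN_l^\top$ directly: the mixed-product identity gives
\begin{equation*}
\Psi(M,N)\bigl(T^M_i\otimeskron T^N_j\bigr)=\sum_{l=1}^r\langle M_l,T^M_i\rangle\,\langle N_l,T^N_j\rangle\,\bigl(M_l\otimeskron N_l\bigr),
\end{equation*}
and the biorthogonality $\langle M_l,T^M_i\rangle=\mathbbm{1}_{l=i}$ (valid for all $i\in[d_M]$, $l\in[r]$, which is exactly \eqref{eq:orto-probing} and also disposes of the $i>r$ case without a separate argument) finishes the proof. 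The two arguments rest on the same underlying fact — the probing vectors are dual to the columns of $M$ and $N$ — but yours avoids the SVD expansion and the case analysis altogether, so it is shorter and arguably more transparent; the paper's SVD-based computation has the minor advantage of making explicit how the singular values cancel, which mirrors the style of the surrounding lemmas. Your closing remark about the index ranges (the sum over $l$ is restricted to $[r]$ while $i,j$ range over $[d_M],[d_N]$) is precisely the point that turns the naive condition $i=j$ into membership in $\mathcal{T}_{\mathrm{on}}^{M,N}$, so the argument is complete as stated.
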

\begin{proof}[Proof of Claim~\ref{lemma: nullspace of psi}]
To establish the claim, we express the operator $\Psi(M,N)$ using the SVD of $M$ and $N$. Since $M_l = \sum_{k=1}^r \sigma_k^M V^M_{kl} U^M_k$, then $ \sum_{l=1}^r M_l M_l^\top = \sum_{l, k_1, k_2 \in [r]^3}  \sigma_{k_1}^M  \sigma_{k_2}^MV^M_{k_1l} V^M_{k_2l} U^M_{k_1} {U^M_{k_2}}^\top$. Moreover, by bilinearity of the Kronecker product, we have $$M_l M_l^\top \otimeskron N_l N_l^\top =\sum_{k_1, k_2, \ell_1, \ell_2 \in [r]^4}   \sigma_{k_1}^M \sigma_{k_2}^M \sigma_{\ell_1}^N \sigma_{\ell_2}^N V^M_{k_1l} V^M_{k_2l}   V^N_{\ell_1l}V^N_{\ell_2l}  \left( U^M_{k_1}{U^M_{k_2}}^\top \right) \otimeskron \left( U^N_{\ell_1} {U^N_{\ell_2}}^\top \right), $$
and so, taking a sum over $\ell$ yields $$
\Psi(M,N) =\sum_{l, k_1, k_2, \ell_1, \ell_2 \in [r]^5}   \sigma_{k_1}^M \sigma_{k_2}^M\sigma_{\ell_1}^N\sigma_{\ell_2}^N V^M_{k_1l} V^M_{k_2l}   V^N_{\ell_1l} V^N_{\ell_2l}  \left( U^M_{k_1}{U^M_{k_2}}^\top \right) \otimeskron \left( U^N_{\ell_1} {U^N_{\ell_2}}^\top \right).
$$
We are now ready to establish the action of this operator in a vector of the form $T_i^M \otimeskron T_j^N$. Recall the definition of $\mathcal{T}_{\mathrm{on}}^{M, N}$ given in \eqref{eq: diag-set}. First, assume that $\max\{i, j\} > r $, without loss of generality, suppose that $i  > r$. Then, $T_i^M = U_i^M$, and $U_{k_2}^\top T_i^M = 0$ for all $k_2 \in [r]$ and, consequently, $\Phi(M, N)(T_{i}^{M}\otimeskron T_{j}^{N}) = 0$. Second, assuming both $i,j \in [r]$, we have that $T_i^M \otimeskron T_j^N = U^M \left( \Sigma^M \right)^{-1}\left(V^M_{:i}\right)^\top \otimeskron  U^N \left( \Sigma^N \right)^{-1}\left(V^N_{:j}\right)^\top$. Then 
\begin{align}
\Psi&(M,N)  \left( T_i^M \otimeskron T_j^N \right) \notag \\ &=    \sum_{l, k_1, k_2, \ell_1, \ell_2 \in [r]^5}  \sigma^M_{k_1}  \sigma^M_{k_2} \sigma^N_{\ell_1} \sigma^N_{\ell_2} V^M_{ l k_1} V^M_{ l k_2} V^N_{ l \ell_1} V^N_{ l \ell_2}  \left( \um_{k_1}{\um_{k_2}}^\top U^M  (\Sigma^M)^{-1}  \left(V^M_{:i}\right)^\top \right) \notag \\ & \hspace{250pt} \otimeskron \left( \un_{\ell_1} {\un_{\ell_2}}^\top  U^N  (\Sigma^N)^{-1}   \left(V^N_{:j}\right)^\top \right)  \\
&\overset{(i)}{=} \sum_{l, k_1, k_2, \ell_1, \ell_2 \in [r]^5} \sigma^M_{k_1}  \sigma^M_{k_2} \sigma^N_{\ell_1} \sigma^N_{\ell_2}V^M_{ l k_1} V^M_{ l k_2} V^N_{ l \ell_1} V^N_{ l \ell_2}  V^M_{i k_2}  (\sigma^{M}_{k_2})^{-1}   V^N_{ j \ell_2} (\sigma^{N}_{\ell_2})^{-1}  \left( \um_{k_1}\otimeskron \un_{\ell_1}  \right) \notag \\
&\overset{(ii)}{=} \sum_{l, k_1, k_2, \ell_1, \ell_2 \in [r]^5}  \sigma^M_{k_1}  \sigma^N_{\ell_1}  V^M_{ l k_1} V^M_{ l k_2} V^N_{ l \ell_1} V^N_{ l \ell_2}  V^M_{i k_2}  V^N_{ j \ell_2}  \left( \um_{k_1}\otimeskron \un_{\ell_1}  \right) \notag\\
&\overset{(iii)}{=} \sum_{l, k_1, \ell_1 \in [r]^3}  \sigma^M_{k_1}  \sigma^N_{\ell_1}
V^M_{ l k_1} V^N_{ l \ell_1} \inner{V^M_{:l}}{V^M_{:i}} \inner{V^N_{:l}}{V^N_{:j}}   \left( \um_{k_1}\otimeskron \un_{\ell_1}  \right) \\
&= \mathbbm{1}_{i=j} \sum_{k_1, \ell_1 \in [r]^2}  \sigma^M_{k_1}  \sigma^N_{\ell_1}  V^M_{ i k_1} V^N_{ j \ell_1}   \left( \um_{k_1}\otimeskron \un_{\ell_1}  \right)  \notag \\
&= \mathbbm{1}_{(i,j)\in \mathcal{T}_{\mathrm{on}}^{M, N}}\left(M_i \otimeskron N_j \right) \notag,
\end{align}
 where $(i)$ follows since ${\um_{k_2}}^\top U^M  (\Sigma^M)^{-1}  \left(V^M_{:i}\right)^\top = \left(\sigma_{k_2}^M\right)^{-1} V_{ik_2}^M$ and ${\um_{\ell_2}}^\top U^N  (\Sigma^N)^{-1}  \left(V^N_{:i}\right)^\top = \left(\sigma_{\ell_2}^N \right)^{-1} V_{i\ell_2}^N$, $(ii)$ follows from the fact that the columns of $U^M$ and $U^{N}$ form orthonormal bases, and $(iii)$ follows from factorizing out the dot product. This finishes the proof of the claim. 
\end{proof}

We apply Claim~\ref{lemma: action nablaTnabla tensor asymmetric} with
\(
u = T^W_{i},
v = T^X_{j},\text{ and }
w = T^Y_{k},
\)
to derive \begin{align*}
\nabla \Fasym(W,X,Y)\,\nabla F(W,X,Y)^\top 
  \left( T^W_{i} \otimeskron T^X_{j} \otimeskron T^Y_{k} \right)
       & =
T^W_{i} \otimes\left[\Psi\left(X,Y\right)
\left(T^X_{j} \otimes T^Y_{k}\right)\right] \\
& \quad \quad +
P_2 \left[
T^X_{j} \otimes \Psi\left(W,Y\right)\left(T^W_{i} \otimes T^Y_{k}\right)
\right]
  \\ & \quad \quad \quad
       +
P_3 \left[
T^Y_{k} \otimes\Psi\left(W,X\right)\left(T^W_{i} \otimes T^X_{j}\right)
\right].
\end{align*}
Using Lemma~\ref{lemma: nullspace of psi} in tandem with Definition~\ref{Pis} of \(P_2\) and \(P_3\) we obtain
\begin{align*}
\nabla F(W,X,Y)\,\nabla F(W,X,Y)^\top 
\left( T^W_{i} \otimeskron T^X_{j} \otimeskron T^Y_{k} \right)
& = \mathbbm{1}_{(i,j)\in \mathcal{T}_{\mathrm{on}}^{W, X}} \left( W_{i} \otimeskron X_{j} \otimeskron T^Y_{k} \right) 
    \\ & \quad\quad
  +\; \mathbbm{1}_{(i,k)\in \mathcal{T}_{\mathrm{on}}^{W, Y}} \left( W_{i} \otimeskron T^X_{j} \otimeskron Y_{k} \right)
\\ & \qquad \quad
  + \mathbbm{1}_{(j,k)\in \mathcal{T}_{\mathrm{on}}^{X, Y}} \left( T^W_{i} \otimeskron X_{j} \otimeskron Y_{k} \right).
\end{align*}
This completes the argument.

\subsection{Alignment lemmas}

\begin{lemma}\label{second-term-assymetric}

    Let $X,\Xs \in \mathbb{R}^{d_1 \times r}$ and $Y, \Ys \in \mathbb{R}^{d_2 \times r}$ with $\rank\left(\Xs\right)=\rank \left( \Ys \right) =\rs$. Let $k_1\in\{\rs \ldots r\}$, $k_2 \in\{\rs \ldots r \}$,  and define $\mathcal{Q}_X:= \sum_{i=k_1+1}^{d_1} U^X_i {U^X_i}^\top \quad \text{and} \quad \mathcal{Q}_Y := \sum_{i=k_2 + 1}^{d_2} U^Y_i {U^Y_i}^\top$. Under the assumption that $V^\Xs= V^\Ys$, if $$\norm{XX^\top - \Xs \Xs^\top}{F} + \norm{YY^\top - \Ys \Ys^\top}{F} \leq  \frac{1}{2\sqrt{2}} \min\left\{ \sigma^2_{\rs}\left( \Xs \right),  \sigma^2_{\rs}\left( \Ys \right) \right\},$$ one has
\[
 \norm{ \cQ_{X} \Xs{\Ys}^\top \cQ_{Y}    }{F} \leq  \frac{1}{\min\left\{ \sigma^2_{\rs}\left( \Xs \right),  \sigma^2_{\rs}\left( \Ys \right) \right\}}  \norm{ (I-\cQ_{X}){X}{Y}^\top (I-\cQ_{Y}) - {\Xs}{\Ys}^\top }{F}^2.
\]
Moreover, if $k_1=k_2$ and $V^X = V^Y$, one has 
\[
 \norm{ \cQ_{X} \Xs{\Ys}^\top \cQ_{Y}    }{F} \leq  \frac{1}{\min\left\{ \sigma^2_{\rs}\left( \Xs \right),  \sigma^2_{\rs}\left( \Ys \right) \right\}}  \norm{{X}{Y}^\top  - {\Xs}{\Ys}^\top }{F}^2.
\]

\end{lemma}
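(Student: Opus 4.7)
My plan is to apply a rank-$\rs$ Schur–complement identity to $M := \Xs \Ys^\top$. Set $A := I - \cQ_X$ and $B := I - \cQ_Y$; then $A, A_\perp := \cQ_X$ and $B, B_\perp := \cQ_Y$ are pairs of complementary orthogonal projections. I would first decompose the target into four blocks,
\[
M \;=\; AMB \;+\; AMB_\perp \;+\; A_\perp MB \;+\; A_\perp MB_\perp,
\]
and observe that the four summands are mutually Frobenius–orthogonal. Writing $E := AXY^\top B - \Xs\Ys^\top$ and expanding $\Xs\Ys^\top$ block-wise,
\[
E \;=\; A(XY^\top - M)B \;-\; AMB_\perp \;-\; A_\perp MB \;-\; A_\perp MB_\perp,
\]
so $\|E\|_F^2$ equals the sum of the four squared block norms; in particular $\|AMB_\perp\|_F, \|A_\perp MB\|_F \le \|E\|_F$. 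The same block decomposition will handle the \emph{moreover} case: when $V^X = V^Y$ and $k_1 = k_2$, the cross blocks $AXY^\top B_\perp$ and $A_\perp XY^\top B$ vanish (the SVD expressions pick up factors $(V^X_{[k]})^\top V^Y_{[k+1:]} = 0$), so the analogous orthogonal decomposition of $\|XY^\top - \Xs\Ys^\top\|_F^2$ yields $\|AMB_\perp\|_F, \|A_\perp MB\|_F \le \|XY^\top - \Xs\Ys^\top\|_F$.

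Next, I would use that $\rank M = \rs$ together with $\rank(AMB) = \rs$ (to be verified in the last step) to invoke the rank-revealing Schur-complement identity
\[
A_\perp M B_\perp \;=\; (A_\perp M B)\,(AMB)^{\dagger}\,(AMB_\perp).
\]
Taking Frobenius norms, using $\|(AMB)^\dagger\|_{\op} = 1/\sigma_{\rs}(AMB)$, and applying AM–GM,
\[
\|A_\perp M B_\perp\|_F \;\leq\; \frac{\|A_\perp MB\|_F\cdot\|AMB_\perp\|_F}{\sigma_{\rs}(AMB)} \;\leq\; \frac{\|A_\perp MB\|_F^2 + \|AMB_\perp\|_F^2}{2\,\sigma_{\rs}(AMB)}.
\]
Combining with the bounds from the first paragraph gives $\|A_\perp MB_\perp\|_F \le \|E\|_F^2/(2\sigma_{\rs}(AMB))$ in the general case and $\|A_\perp MB_\perp\|_F \le \|XY^\top-\Xs\Ys^\top\|_F^2/(2\sigma_{\rs}(AMB))$ in the moreover case.

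The remaining step is to lower bound $\sigma_{\rs}(AMB)$. Exploiting $V^\Xs = V^\Ys$, I would rewrite $M = U^\Xs_{[\rs]}\,\Sigma^\Xs \Sigma^\Ys\,(U^\Ys_{[\rs]})^\top$ so that $AMB = (AU^\Xs_{[\rs]})\,\Sigma^\Xs \Sigma^\Ys\,(BU^\Ys_{[\rs]})^\top$. Multiplicativity of smallest nonzero singular values (via a thin QR of $AU^\Xs_{[\rs]}$ and $BU^\Ys_{[\rs]}$) gives
\[
\sigma_{\rs}(AMB) \;\geq\; \sigma_{\rs}(AU^\Xs_{[\rs]})\,\sigma_{\rs}(BU^\Ys_{[\rs]})\,\sigma^{\Xs}_{\rs}\sigma^{\Ys}_{\rs}.
\]
To bound $\sigma_{\rs}(AU^\Xs_{[\rs]})^2 = 1 - \|\cQ_X U^\Xs_{[\rs]}\|_{\op}^2$, I would mirror the Wedin-type calculation used elsewhere in the paper: decompose $\cQ_X \Xs\Xs^\top \cQ_X = \cQ_X(\Xs\Xs^\top - XX^\top)\cQ_X + \cQ_X XX^\top \cQ_X$, bound the first summand by $\|XX^\top - \Xs\Xs^\top\|_{\op}$ and the second by $(\sigma_{k_1+1}^X)^2 \le \|XX^\top - \Xs\Xs^\top\|_{\op}$ via Weyl (using $k_1 \ge \rs = \rank \Xs$). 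This yields $\|\cQ_X U^\Xs_{[\rs]}\|_{\op}^2 \le 2\|XX^\top - \Xs\Xs^\top\|_F/\sigma_{\rs}^2(\Xs)$ and a symmetric bound for $Y$. Under the hypothesis, each is at most $1/\sqrt 2$, which also certifies $\rank(AMB) = \rs$ for the Schur step. Together with $\sigma^\Xs_{\rs}\sigma^\Ys_{\rs} \ge \min\{\sigma_{\rs}^2(\Xs),\sigma_{\rs}^2(\Ys)\}$, this delivers a lower bound of the advertised form $\sigma_{\rs}(AMB) \gtrsim \min\{\sigma_{\rs}^2(\Xs),\sigma_{\rs}^2(\Ys)\}$.

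The main obstacle will be the precise book-keeping of constants: the factor $(1-1/\sqrt 2)$ that drops out of the Weyl-based subspace estimate must compose with the $1/2$ from AM–GM to exactly match the calibration $1/(2\sqrt 2)$ in the hypothesis and produce the clean constant $1/\min\{\sigma_{\rs}^2(\Xs),\sigma_{\rs}^2(\Ys)\}$. A secondary subtlety is the verification that $\rank(AMB) = \rs$ so that the Schur-complement identity holds exactly (rather than up to a residual term), which I would extract as a byproduct of the same Wedin bound that controls $\|\cQ_X U^\Xs_{[\rs]}\|_{\op} < 1$.
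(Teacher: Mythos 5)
Your overall architecture is sound and genuinely different in the middle from the paper's: the paper never forms a pseudo-inverse, but instead lower-bounds $\|\cQ_X\Xs\Ys^\top(I-\cQ_Y)\|_F^2+\|(I-\cQ_X)\Xs\Ys^\top\cQ_Y\|_F^2$ directly via the variational characterization of singular values and then Young/Cauchy--Schwarz, whereas you pass through the rank-$\rs$ Schur-complement identity $A_\perp M B_\perp=(A_\perp M B)(AMB)^{\dagger}(AMB_\perp)$. That identity is correct here (given $\rank(AMB)=\rs$), the orthogonal block decomposition giving $\|A_\perp MB\|_F^2+\|AMB_\perp\|_F^2\le\|E\|_F^2$ is exactly the paper's first step, and your handling of the ``moreover'' case is right.

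The genuine gap is quantitative, and it is exactly the obstacle you flagged without resolving: your Weyl-based subspace estimate is too weak to produce the stated constant. Your route needs $2\sigma_{\rs}(AMB)\ge\min\{\sigma_{\rs}^2(\Xs),\sigma_{\rs}^2(\Ys)\}$, i.e.\ $\sigma_{\rs}((I-\cQ_X)\Xs)\,\sigma_{\rs}((I-\cQ_Y)\Ys)\ge\tfrac12\,\sigma_{\rs}(\Xs)\sigma_{\rs}(\Ys)$. What Weyl gives under the radius $\tfrac{1}{2\sqrt2}\min\{\cdot\}$ is $\|\cQ_XU^{\Xs}_{[\rs]}\|_{\op}^2\le\tfrac{1}{\sqrt2}$, hence $\sigma_{\rs}((I-\cQ_X)U^{\Xs}_{[\rs]})^2\ge 1-\tfrac{1}{\sqrt2}$ per factor, so $\sigma_{\rs}(AMB)\ge\bigl(1-\tfrac{1}{\sqrt2}\bigr)\min\{\cdot\}\approx 0.29\min\{\cdot\}$, not $\tfrac12\min\{\cdot\}$. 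Combined with the $\tfrac12$ from AM--GM, your final bound is $\|\cQ_X\Xs\Ys^\top\cQ_Y\|_F\le\frac{1}{2-\sqrt2}\cdot\frac{\|E\|_F^2}{\min\{\sigma_{\rs}^2(\Xs),\sigma_{\rs}^2(\Ys)\}}$, an extra factor $\frac{1}{2-\sqrt2}\approx 1.71$ over the claimed inequality, so the lemma as stated is not proved (and this lemma is consumed downstream with its explicit constants). Closing the gap requires the sharper eigenvalue comparison the paper invokes: Lemma~\ref{lem:asymmetric_eigenvalue_projected_bound} (Lemma~33 of \cite{zhang2021preconditioned}, proved via a Hoffman--Wielandt argument, not Weyl) and its Corollary~\ref{cor:eigenproj-vs-eigen}, which give $\lambda_{\rs}\bigl(\Xs^\top(I-\cQ_X)\Xs\bigr)\ge\tfrac12\lambda_{\rs}(\Xs^\top\Xs)$ under the same radius; substituting this for your Weyl step yields $\sigma_{\rs}(AMB)\ge\tfrac12\sigma_{\rs}(\Xs)\sigma_{\rs}(\Ys)$ and then your Schur-complement argument does recover the exact constant. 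The same ingredient also certifies $\rank(AMB)=\rs$, so nothing else in your outline needs to change.
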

\begin{proof}
We will denote $E:=(I-\cQ_{X}){X}{Y}^\top (I-\cQ_{Y}) - \Xs\Ys^\top$. We can decompose the error $E$ as
{\allowdisplaybreaks 
\begin{align*}
E = (I - \cQ_X) E (I - \cQ_Y) + \cQ_X E (I-\cQ_Y) + (I - \cQ_X) E \cQ_Y +  \cQ_X E \cQ_Y.
\end{align*} 
 Observe that all the terms in this sum are pairwise orthogonal in the Frobenius inner product. Therefore,
\begin{align*}
\|E\|_F^2 &=\left\|(I - \cQ_X) E (I - \cQ_Y)\right\|_F^2 +\left\|\cQ_X E (I-\cQ_Y)\right\|_F^2 +\left\|(I - \cQ_X) E \cQ_Y\right\|_F^2 + \left\|\cQ_X E \cQ_Y\right\|_F^2 \nonumber \\
&\geq\left\|\cQ_X E (I-\cQ_Y)\right\|_F^2 +\left\|(I - \cQ_X) E \cQ_Y\right\|_F^2 \nonumber \\
 \\
&\overset{(i)}=\left\|\cQ_X \Xs\Ys^\top (I-\cQ_Y)\right\|_F^2 +\left\|(I - \cQ_X) \Xs \Ys^\top \cQ_Y\right\|_F^2\nonumber \\ 
&\overset{(ii)}{=} \left\|\cQ_X \left(\Xs V^{\Xs}\right)_{\{ 1,\ldots, \rs \}} \left( \left(\Ys V^{\Ys}\right)_{\{ 1,\ldots, \rs \}} \right)^\top (I-\cQ_Y) \right\|_F^2 \\ & \qquad +\left\|(I - \cQ_X) \left(\Xs V^{\Xs}\right)_{\{ 1,\ldots, \rs \}} \left( \left(\Ys V^{\Ys}\right)_{\{ 1,\ldots, \rs \}} \right)^\top  \cQ_Y\right\|_F^2\nonumber \\ 
&\overset{(iii)}{\geq}  \sigma_{\rs}^2\left( (I-\cQ_Y) \left(\Ys V^{\Ys}\right)_{\{ 1,\ldots, \rs \}} \right) \norm{\cQ_X \left(\Xs V^{\Xs}\right)_{\{ 1,\ldots, \rs \}}}{F}^2 \\
& \qquad +  \sigma_{\rs}^2\left((I-\cQ_X) \left(\Xs V^{\Xs}\right)_{\{ 1,\ldots, \rs \}} \right) \norm{\cQ_Y \left(\Ys V^{\Ys}\right)_{\{ 1,\ldots, \rs \}}}{F}^2   \nonumber 
\\ &\overset{(iv)}{\geq}\frac{1}{2} \min\left\{ \sigma^2_{\rs}\left( \Xs \right),  \sigma^2_{\rs}\left( \Ys \right) \right\} \left( \norm{\cQ_X \left(\Xs V^{\Xs}\right)_{\{ 1,\ldots, \rs \}}}{F}^2 + \norm{\cQ_Y \left(\Ys V^{\Ys}\right)_{\{ 1,\ldots, \rs \}}}{F}^2 \right) \\ &
\overset{(v)}{\geq} \min\left\{ \sigma^2_{\rs}\left( \Xs \right),  \sigma^2_{\rs}\left( \Ys \right) \right\} \norm{\cQ_X X^\star \Ys^\top \cQ_Y}{F},
\end{align*}}
where $(i)$ follows from $\mathcal{Q}_X(I - \mathcal{Q}_X) = 0$ and $\mathcal{Q}_Y(I - \mathcal{Q}_Y) = 0$; $(ii)$ follows from $V^\Xs = V^\Ys$ and $\rs = \rs$; $(iii)$ follows from the variational characterization of singular values; $(iv)$ follows from Corollary~\ref{cor:eigenproj-vs-eigen} and $(v)$ follows from Young's inequality in conjunction with the Cauchy--Schwarz inequality. Moreover, if $V^X= V^Y$ and $k_1=k_2$, then the same argument holds for $E:={X}{Y}^\top - \Xs\Ys^\top$, since equality $(i)$ holds. 
This concludes the proof.  
\end{proof}

\begin{lemma}[Lemma 33 in \cite{zhang2021preconditioned}] \label{lem:asymmetric_eigenvalue_projected_bound} Let $\Xs \in \RR^{d \times \rs}$ of rank $\rs$ and let $X \in \RR^{d\times r}$. Assume that $\norm{XX^\top - \Xs \Xs^\top}{F} \leq  \frac{1}{2\sqrt{2}} \sigma^2_{\rs}\left( \Xs \right)$. Let $\cQ_{X}= \sum_{i=k_1+1}^{d_1} \ux_i {\ux_i}^\top$ with $k_1\geq \rs$. Then %
$$
\lambda_{\rs} \left(\Xs^\top (I-\cQ_X) \Xs \right) \geq \lambda_{1} \left(\Xs^\top \cQ_X \Xs \right).
$$ 
\end{lemma}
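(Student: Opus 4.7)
The plan is to split the ambient Gram matrix and then bound a top eigenvalue by a perturbation argument. Concretely, I would start from the identity
\[
\Xs^\top \Xs \;=\; \Xs^\top (I-\cQ_X)\Xs \;+\; \Xs^\top \cQ_X \Xs,
\]
in which both summands are PSD, and apply Weyl's inequality for sums of symmetric matrices to obtain
\[
\lambda_{\rs}\!\left(\Xs^\top (I - \cQ_X) \Xs\right) \;\geq\; \sigma_{\rs}^2(\Xs) \;-\; \lambda_{1}\!\left(\Xs^\top \cQ_X \Xs\right).
\]
This reduces the target inequality to the scalar bound $\lambda_{1}(\Xs^\top \cQ_X \Xs) \leq \tfrac{1}{2}\sigma_{\rs}^2(\Xs)$.

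To establish this upper bound, I would use that $\lambda_{1}(\Xs^\top \cQ_X \Xs) = \|\cQ_X \Xs\Xs^\top \cQ_X\|_{\op}$, together with the decomposition
\[
\cQ_X \Xs\Xs^\top \cQ_X \;=\; \cQ_X XX^\top \cQ_X \;+\; \cQ_X\!\left(\Xs\Xs^\top - XX^\top\right)\cQ_X.
\]
Since $\cQ_X$ is the spectral projector of $XX^\top$ onto eigenspaces whose indices exceed $k_1$, the first summand has operator norm exactly $\sigma_{k_1+1}^2(X)$. Because $k_1 \geq \rs$ and $\sigma_{\rs+1}(\Xs)=0$, Weyl's inequality applied to $XX^\top$ and $\Xs\Xs^\top$ gives $\sigma_{k_1+1}^2(X) \leq \sigma_{\rs+1}^2(X) \leq \|XX^\top - \Xs\Xs^\top\|_{\op}$. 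The second summand is at most $\|XX^\top - \Xs\Xs^\top\|_{\op}$ since $\|\cQ_X\|_{\op} \leq 1$. Thus a direct bound gives $\lambda_{1}(\Xs^\top \cQ_X \Xs) \leq 2\,\|XX^\top - \Xs\Xs^\top\|_{\op}$.

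The hard part will be squeezing the constant to match the hypothesis. Naively using $\|\cdot\|_{\op} \leq \|\cdot\|_F$ yields only $\lambda_{1}(\Xs^\top \cQ_X \Xs) \leq \tfrac{1}{\sqrt{2}}\sigma_{\rs}^2(\Xs)$, which is larger than the required $\tfrac{1}{2}\sigma_{\rs}^2(\Xs)$. To close this gap I would sharpen both contributions using the Frobenius structure of the hypothesis: since $\Xs\Xs^\top$ is a feasible rank-$\rs$ approximation of $XX^\top$, the best rank-$\rs$ tail bound yields $\sum_{i>\rs}\sigma_{i}^4(X) \leq \|XX^\top - \Xs\Xs^\top\|_F^2$, improving the control on $\sigma_{k_1+1}^2(X)$; and because $\cQ_X \Xs\Xs^\top \cQ_X$ has rank at most $\rs$, a rank-aware estimate of its operator norm, combined with the orthogonality between the range of $\cQ_X$ and the top-$\rs$ eigenspace of $XX^\top$, should recover the constant $\tfrac{1}{2}$ needed to close the argument.
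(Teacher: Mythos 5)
Your proposal takes a genuinely different route from the paper's, but it has a gap at the step you yourself flag, and the sharpenings you gesture at are unlikely to close it as stated.

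The paper argues by contradiction. It sets $\alpha_1 = \lambda_{\rs}(\Xs^\top(I-\cQ_X)\Xs)$ and $\alpha_2 = \lambda_1(\Xs^\top\cQ_X\Xs)$, decomposes $\|XX^\top-\Xs\Xs^\top\|_F^2$ into the three blocks $(I-\cQ_X)(\cdot)(I-\cQ_X)$, $(I-\cQ_X)(\cdot)\cQ_X$, and $\cQ_X(\cdot)\cQ_X$, and keeps all three contributions. The crucial one is the cross block: since $\cP_X XX^\top\cQ_X = 0$ (with $\cP_X := I-\cQ_X$), the cross term equals $2\|\cP_X\Xs\Xs^\top\cQ_X\|_F^2 \geq 2\alpha_1\alpha_2$. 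The two diagonal blocks are then lower-bounded via Hoffman--Wielandt, and minimizing over admissible spectra gives an extra $\frac{1}{2}(\alpha_1-\alpha_2)^2$ whenever $\alpha_1 < \alpha_2$. Altogether, assuming the conclusion fails yields $\|XX^\top-\Xs\Xs^\top\|_F^2 \geq \frac{1}{2}(\alpha_1+\alpha_2)^2 \geq \frac{1}{2}\sigma_{\rs}^4(\Xs)$, contradicting the hypothesis. So the factor $\frac{1}{2\sqrt{2}}$ is exactly what falls out of combining the $2\alpha_1\alpha_2$ cross contribution with the diagonal contributions.

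Your proposal discards the cross block entirely: you only use the single diagonal block $\cQ_X(\cdot)\cQ_X$, which is why you land on $\lambda_1(\Xs^\top\cQ_X\Xs) \leq 2\|XX^\top-\Xs\Xs^\top\|_{\op} \leq \frac{1}{\sqrt{2}}\sigma_{\rs}^2(\Xs)$ instead of the required $\frac{1}{2}\sigma_{\rs}^2(\Xs)$. That factor of $\sqrt{2}$ is not an artifact that a finer inequality on the same single block can remove. Your first proposed sharpening, the Eckart--Young tail bound $\sum_{i>\rs}\sigma_i^4(X) \leq \|XX^\top-\Xs\Xs^\top\|_F^2$, gives $\sigma_{k_1+1}^2(X) \leq \|XX^\top-\Xs\Xs^\top\|_F$, which is the same control you already had from Weyl plus $\|\cdot\|_{\op}\leq\|\cdot\|_F$; it does not improve the constant. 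Your second, a ``rank-aware'' bound on $\|\cQ_X\Xs\Xs^\top\cQ_X\|_{\op}$, is not a standard inequality: low rank does not sharpen an operator-norm estimate, and the orthogonality you cite (between the range of $\cQ_X$ and the top-$\rs$ eigenspace of $XX^\top$) is about $XX^\top$, not about $\Xs\Xs^\top$, so it does not control the operator you need. A further structural concern: your Weyl reduction asks you to prove $\lambda_1(\Xs^\top\cQ_X\Xs) \leq \frac{1}{2}\sigma_{\rs}^2(\Xs)$, which is sufficient but strictly stronger than the stated conclusion whenever $\alpha_1+\alpha_2 > \sigma_{\rs}^2(\Xs)$; you are thus committing to a target that may be harder than the original inequality, with no compensating leverage. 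To repair the argument you would need to bring the cross block $\|\cP_X\Xs\Xs^\top\cQ_X\|_F^2$ (equivalently, $\tr\bigl((\Xs^\top\cP_X\Xs)(\Xs^\top\cQ_X\Xs)\bigr) \geq \alpha_1\alpha_2$) back into play, at which point you are essentially reproducing the paper's decomposition.
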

\begin{proof}
We will use as placeholders
\(
\alpha_{1} := \lambda_{\rs}\left(\Xs^{\top}(I-\cQ_X) \Xs \right) \text{ and } 
\alpha_{2} := \lambda_{1}\left(\Xs^{\top}\cQ_X\Xs\right).
\)
Our argument follows by contradiction, we will prove that \( \alpha_{1} < \alpha_{2} \) implies \( \frac{\norm{XX^\top - \Xs \Xs^\top}{F}}{\sigma_{\rs}^2\left(\Xs\right)} \geq \frac{1}{\sqrt{2}} > \frac{1}{2\sqrt{2}} \) which contradicts the hypothesis of the lemma. %
We bound
{\allowdisplaybreaks \begin{align}
\label{ineq:decompose_metric_with_projection}
\norm{XX^\top - \Xs \Xs^\top}{F}^2
& \overset{(i)}{=}\left\|(I-\cQ_X) X X^{\top} (I-\cQ_X) - (I-\cQ_X) \Xs \Xs^{\top} (I-\cQ_X)\right\|_F^2 \\
& \qquad + 2\left\|(I-\cQ_X) \Xs \Xs^{\top} \cQ_X \right\|_{F}^{2} \nonumber\\
& \qquad \qquad +\left\|\cQ_X X X^{\top} \cQ_X - \cQ_X \Xs \Xs^{\top} \cQ_X\right\|_{F}^{2}\nonumber \\
& \overset{(ii)}{\geq} \left\|(I-\cQ_X) X X^{\top} (I-\cQ_X) - (I-\cQ_X) \Xs \Xs^{\top} (I-\cQ_X)\right\|_{F}^{2} \nonumber
\\
& \qquad +\left\|\cQ_X X X^{\top} \cQ_X - \cQ_X \Xs \Xs^{\top} \cQ_X \right\|_{F}^{2}+ 2 \sigma_{\rs}^2\left((I-\cQ_{X})\Xs\right) \sigma_{1}^2\left(\cQ_{X}\Xs\right)  \\
& \overset{(iii)}{=} \left\|(I-\cQ_X) X X^{\top} (I-\cQ_X) - (I-\cQ_X) \Xs \Xs^{\top} (I-\cQ_X)\right\|_{F}^{2} \nonumber
\\
& \qquad +\left\|\cQ_X X X^{\top} \cQ_X - \cQ_X \Xs \Xs^{\top} \cQ_X \right\|_{F}^{2}+ 2 \alpha_1 \alpha_2,
\end{align}} \noindent where $(i)$ follows by expanding the square and using orthogonality and $(ii)$ follows from the variational characterization of singular values, and $(iii)$ holds since $\sigma_{k}^2\left(P\Xs \right) = \lambda_{k}\left( {\Xs}^\top P^\top P \Xs  \right) = \lambda_{k} \left( {\Xs}^\top P \Xs \right)$ for any $k \in \{ 1\ldots \rs \}$ and any orthogonal projection matrix $P \in \RR^{d_1\times d_1}$. We claim 
\begin{align}
\|(I-\cQ_X) X X^{\top} & (I-\cQ_X) - (I-\cQ_X) \Xs \Xs^{\top} (I-\cQ_X)\|_{F}^{2} 
+\left\|\cQ_X X X^{\top} \cQ_X - \cQ_X \Xs \Xs^{\top} \cQ_X \right\|_F^2 \notag \\
\ge 
& \min_{\beta_1, \beta_2 \in \mathbb{R}_+ \mid \beta_1 \geq \beta_2} 
\left\{ (\beta_1 - \alpha_1)^2 + (\beta_2 - \alpha_2)^2 \right\},
\label{eq:optim_alpha}
\end{align} 
let us defer the proof of this inequality until after we establish the result. Given \eqref{eq:optim_alpha}, if \( \alpha_1 < \alpha_2 \), then the optimal solution of the lower bound occurs at \( \beta_1 = \beta_2 = \frac{\alpha_1+\alpha_2}{2}\), so the minimum value becomes \( \frac{1}{2}(\alpha_1 - \alpha_2)^2 \). Substituting this into \eqref{ineq:decompose_metric_with_projection} gives
\[
\norm{XX^\top - \Xs \Xs^\top}{F}^2 \ge \frac{1}{2}(\alpha_1 - \alpha_2)^2 + 2 \alpha_1 \alpha_2 
= \frac{1}{2}(\alpha_1 + \alpha_2)^2 \geq \frac{1}{2} \sigma_{\rs}^4\left(\Xs\right),
\]
where we used Weyl's inequality in the last step to bound $$\alpha_1 + \alpha_2 \ge \lambda_{\rs} \left( \Xs^\top (I-\cQ_X) \Xs  + \Xs^\top \cQ_X\Xs  \right) = \lambda_{\rs} \left( \Xs^\top \Xs \right) = \sigma^2_{\rs}\left( \Xs \right).$$ Taking the square roots on both side implies \( \norm{XX^\top - \Xs \Xs^\top}{F}^2 \ge  \frac{1}{\sqrt{2}} \sigma^2_{\rs}\left( \Xs \right)\), a contradicts the radius hypothesis.
We turn to proving \eqref{eq:optim_alpha}, we have that %
\begin{align*}
\Big\|(I-\cQ_X) &X X^{\top} (I-\cQ_X) - (I-\cQ_X) \Xs \Xs^{\top} (I-\cQ_X)\Big\|_{F}^{2} 
+ \left\|\cQ_X X X^{\top} \cQ_X - \cQ_X \Xs \Xs^{\top} \cQ_X\right\|_{F}^{2} \\ 
& \overset{(i)}{\geq}\min_{S_1 \succeq 0, S_2 \succeq 0 \mid \sigma_{\rs}(S_1) \geq \sigma_{1}(S_2)} \left\| S_1- (I-\cQ_X) \Xs \Xs^{\top} (I-\cQ_X) \right\|_{F}^{2}  + \left\| S_2 - \cQ_X \Xs \Xs^{\top} \cQ_X \right\|_{F}^{2} \notag \\ & \overset{(ii)}{\geq} \min_{\beta_1,\beta_2 \in \RR_{+} \mid \beta_1 \geq \beta_2}  (\beta_1 - \alpha_1)^2 + (\beta_2 - \alpha_2)^2,\notag
\label{eq:opt_problem}
\end{align*}
where $(i)$ is due to $\sigma_{\rs}\left( (I-\cQ_X) X X^{\top} (I-\cQ_X) \right) \geq \sigma_{1} \left( \cQ_X X X^{\top} \cQ_X \right)$ and $(ii)$ follows from the Hoffman-Wielandt Theorem \cite[Problem III.6.15]{bhatia2013matrix}. %
This concludes the proof. 
\end{proof}

\begin{corollary}
    \label{cor:eigenproj-vs-eigen}
    Let $\Xs \in \RR^{d \times \rs}$ of rank $\rs$. If $\norm{XX^\top - \Xs \Xs^\top}{F} \leq  \frac{1}{2\sqrt{2}} \sigma^2_{\rs}\left( \Xs \right)$, we have that 
     $$\lambda_{\rs}({\Xs}^\top {\Xs}) \leq 2 \lambda_{\rs}\left({\Xs}^\top (I-\cQ_{X}) \Xs\right).$$
\end{corollary}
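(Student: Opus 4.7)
The plan is to derive the corollary as a short consequence of Lemma~\ref{lem:asymmetric_eigenvalue_projected_bound}. The key observation is the orthogonal decomposition
\begin{equation*}
\Xs^\top \Xs \;=\; \Xs^\top (I - \cQ_X)\Xs \;+\; \Xs^\top \cQ_X \Xs,
\end{equation*}
where both summands are positive semidefinite since $I - \cQ_X$ and $\cQ_X$ are orthogonal projections.

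First, I would apply Weyl's inequality for sums of Hermitian matrices to this decomposition, which yields
\begin{equation*}
\lambda_{\rs}(\Xs^\top \Xs) \;\leq\; \lambda_{\rs}\bigl(\Xs^\top (I - \cQ_X)\Xs\bigr) \;+\; \lambda_{1}\bigl(\Xs^\top \cQ_X \Xs\bigr).
\end{equation*}
Next, invoking Lemma~\ref{lem:asymmetric_eigenvalue_projected_bound}---whose hypothesis $\|XX^\top - \Xs\Xs^\top\|_F \leq \tfrac{1}{2\sqrt{2}}\sigma_{\rs}^2(\Xs)$ is exactly the hypothesis of the corollary---I can replace $\lambda_{1}(\Xs^\top \cQ_X \Xs)$ by $\lambda_{\rs}(\Xs^\top (I - \cQ_X)\Xs)$ in the above bound to obtain
\begin{equation*}
\lambda_{\rs}(\Xs^\top \Xs) \;\leq\; 2\, \lambda_{\rs}\bigl(\Xs^\top (I - \cQ_X)\Xs\bigr),
\end{equation*}
which is the desired conclusion.

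There is no real obstacle here: the whole content was packed into Lemma~\ref{lem:asymmetric_eigenvalue_projected_bound}, and the corollary is simply a rearrangement via Weyl. The only point to be careful about is confirming that $\cQ_X$ and $I - \cQ_X$ are genuine orthogonal projections onto complementary subspaces (so that the decomposition of $\Xs^\top \Xs$ is valid), which follows directly from the definition $\cQ_X = \sum_{i=k_1+1}^{d_1} U^X_i (U^X_i)^\top$ with $\{U^X_i\}$ orthonormal.
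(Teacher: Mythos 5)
Your proof is correct and matches the paper's argument exactly: the same decomposition $\Xs^\top \Xs = \Xs^\top (I-\cQ_X)\Xs + \Xs^\top \cQ_X \Xs$, followed by Weyl's inequality and the substitution $\lambda_{1}(\Xs^\top \cQ_X \Xs) \leq \lambda_{\rs}(\Xs^\top (I-\cQ_X)\Xs)$ from Lemma~\ref{lem:asymmetric_eigenvalue_projected_bound}. Nothing further is needed.
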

\begin{proof}
    One has {\allowdisplaybreaks \begin{align*}
\lambda_{\rs}\left({\Xs}^\top {\Xs}\right) &= \lambda_{\rs}\left({\Xs}^\top (I-\cQ_{X}) \Xs + {\Xs}^\top \cQ_{X}^\top \cQ_{X} \Xs\right) \\
&\overset{(i)}{\leq} \lambda_{\rs}\left({\Xs}^\top (I-\cQ_{X}) \Xs\right) + \lambda_{1}\left({\Xs}^\top \cQ_{X} \Xs\right) \\
&\overset{(ii)}{\leq} 2 \lambda_{\rs}\left({\Xs}^\top (I-\cQ_{X}) \Xs\right),
\end{align*}}where $(i)$ follows from Weyl's inequality and $(ii)$ follows from Lemma~\ref{lem:asymmetric_eigenvalue_projected_bound}.
\end{proof}

\section{Computing the preconditioner}
\label{efficient}

In this section, we elaborate on how to compute the preconditioners $$
\left(\nabla \c(x)^\top \nabla \c(x) + \lambda I\right)^{-1}  g
$$
given a fixed $g \in \RR^{d}$. For this task, we use the conjugate gradients method (CG), which
converges linearly at a rate that depends on the condition number of the matrix $P(x,\lambda) = \nabla \c(x)^\top \nabla \c(x) + \lambda I.$ We have found empirically that executing around ten iterations of CG suffices to obtain fast convergence of LMM. The main subroutine necessary for CG is the matrix-vector product $y \mapsto P(x, \lambda)y$; in what follows, we study the complexity of this subroutine in the examples we studied.

\subsection{Square-variable map}
For the component-wise square, we have that $m=d$, and that $\nabla \c(x) =2 \diagg{x}$. Thus, we have that the quantity $P(x, \lambda) y = 4 x \odot x \odot y + \lambda y$ which can be computed with $\mathcal{O}(d)$ flops.

\subsection{Burer-Monteiro factorization}
For Burer-Monteiro map, given inputs $X \in \RR^{d \times r}$ we have
$$
P(X, \lambda) [\Xprime] = \vect{ \Xprime X^\top  X +  X\Xprime^\top X + \lambda \Xprime}  \qquad \text{for } \Xprime \in \RR^{d\times r}.
$$
The computation follows from~\eqref{action: nablac-and-nablacT-bm}. This action can be computed with $\mathcal{O}(dr^2)$ flops.

\subsection{Asymmetric matrix factorization}
For the asymmetric matrix factorization map, given inputs $X \in \RR^{d_1 \times r}$ and $Y \in \RR^{d_2\times r}$ we have
$$
P((X, Y), \lambda) \begin{bmatrix} \Xprime \\ \Yprime \end{bmatrix}  =  \begin{pmatrix} X {\Yprime}^\top Y + \Xprime Y^\top Y \\[5pt] Y {\Xprime}^\top X + \Yprime X^\top X \end{pmatrix}  \qquad \text{for }\Xprime \in \RR^{d_1 \times r} \text{ and }\Yprime \in \RR^{d_2 \times r},
$$
where the computation follows from~\eqref{action: nablac-and-nablacT-asym}. This action can be computed with $\mathcal{O}\left((d_1 + d_2)r^2\right)$ flops.

\subsection{Symmetric CP factorization}
For the symmetric canonical polyadic map, given an input $X\in \RR^{d\times r}$ we have
\begin{align*}
P\left( X, \lambda \right) [\Xprime] &
= 3\Xprime  \left( X^\top X \odot X^\top X\right) + 6 X  \left( {\Xprime}^\top X \odot X^\top X \right) + \lambda \Xprime \qquad \text{for }\Xprime \in \RR^{d\times r}.
\end{align*}
This computation follows from Lemma \ref{lemma: action-nablac-symmetric}. As with the matrix case, this action can be computed with $\mathcal{O}(dr^2)$ flops.
\subsection{CP factorization}
For the canonical polyadic map, given inputs
\(
W \in \RR^{d_1 \times r},\;
X \in \RR^{d_2 \times r},\;
Y \in \RR^{d_3 \times r},
\) we have

\begin{align*}
 P((W,X, Y), \lambda) \begin{bmatrix} \Wprime \\ \Xprime \\ \Yprime \end{bmatrix} =
  \begin{pmatrix}
    \Wprime \left( X^{\top} X \odot Y^{\top} Y\right) + W \left(\Xprime^{\top} X \odot Y^{\top} Y + X^{\top}X \odot \Yprime^\top Y \right) + \lambda \Wprime\\
    \Xprime \left( W^{\top} W \odot Y^{\top} Y\right) + X \left(\Wprime^{\top} W \odot Y^{\top} Y + W^{\top}W \odot \Yprime^\top Y \right) + \lambda \Xprime\\
\Yprime \left( W^{\top} W \odot X^{\top} X\right) + Y \left(\Wprime^{\top} W \odot X^{\top} X + W^{\top}W \odot \Xprime^\top X \right) + \lambda \Yprime
    \end{pmatrix}
  \end{align*}
for \(\Wprime \in \RR^{d_1 \times r},\; \Xprime \in \RR^{d_2 \times r},\; \Yprime \in \RR^{d_3 \times r}.\) This computation follows from Lemma~\ref{jac: cp}. Once more, this action can be computed with $\mathcal{O}((d_1+d_2+d_3)r^{2})$ flops.

\endgroup
	
\end{document}